\newcommand{\Zo}{\mathcal{Z}}
\newcommand{\Norm}{\mathcal{N}}
\newcommand{\Pro}{\mathrm{P}}
\newcommand{\ud}{\mathrm{d}}
\newcommand{\Diff}{\mathrm{Diff}}
\newcommand{\R}{\mathbb{R}}
\newcommand{\mV}{\mathcal{V}}
\newcommand{\Sing}{\mathrm{Sing}}
\newcommand{\Ker}{\mathrm{Ker}}
\newcommand{\mesh}{\mathrm{mesh}}%\mathcal{P}
\def\va{ \varepsilon}
\newtheorem{thm}{Theorem}[section]
\newtheorem*{thm*}{Theorem.}
\newtheorem*{thma}{Theorem A}
\newtheorem*{thmb}{Theorem B}
\newtheorem*{thmc}{Theorem C}
\newtheorem*{thmd}{Theorem D}
\newtheorem*{thme}{Theorem E}
\newtheorem{proposition}{Proposition}[subsection]
\newtheorem{cor}{Corollary}[subsection]
\newtheorem{lemma}{Lemma}[subsection]
\newtheorem{definition}{Definition}[subsection]
\newtheorem{remark}{Remark}[subsection]
\newcommand{\RL}[1]{Z_{#1}}
\newcommand{\Ra}{\mathcal{R}}
\newcommand{\V}{\mathcal{V}}
\newcommand{\RLp}[1]{Z^{(#1)}}
\newcommand{\kei}[1]{{k(I)}}
\newcommand{\norm}[1]{\Vert #1 \Vert}
\newcommand{\D}{\mathrm{D}}
\newcommand{\vect}[1]{#1}
\newcommand{\be}{\begin{equation}}
\newcommand{\ee}{\end{equation}}
\newcommand{\bes}{\begin{equation*}}
\newcommand{\ees}{\end{equation*}}
\newcommand{\st}{\,  | \, \, }
\title{A priori bounds for GIETs, affine shadows  and rigidity of foliations  in genus two}
\author{Selim Ghazouani}
\author{Corinna Ulcigrai}
\begin{document}

\begin{abstract}
We prove a rigidity result for foliations on surfaces of genus two, which {can be seen} as a generalization to higher genus of Herman's theorem on circle diffeomorphisms and, correspondingly, flows on the torus. We prove in particular that, if a smooth, orientable foliation with non-degenerate {(Morse)} singularities on a closed surface of genus two is minimal, then, under a full measure condition for the rotation number, it is \emph{differentiably} conjugate to a \emph{linear} foliation. 

The corresponding result at the level of Poincar{\'e} sections is that, for a full measure set of (standard) interval exchange transformations (IETs for short) with $d=4$ or $d=5$ continuity intervals and irreducible combinatorics, any generalized interval exchange transformation (GIET for short) which is topologically conjugate to a standard IET from this set and satisfies an obstruction expressed in terms of boundary operator (which is automatically satisfied when the GIET arises as a Poincar{\'e} map of a smooth foliation) is $\mathcal{C}^1$-conjugate to it. This in particular settles a conjecture by Marmi, Moussa and Yoccoz in genus two. Our results also show that this conjecture on the rigidity of GIETs can be reduced to the study of affine IETs, or more precisely of {Birkhoff sums of piecewise constant observables over standard IETs}, in genus $g \geq 3$. 
 
Our approach is via renormalization, namely we exploit a suitable  %\emph{Oseledets regular}
 acceleration of the Rauzy-Veech induction ({\color{black}an acceleration which makes  Oseledets generic \emph{effective}}) on the space of GIETs. For infinitely renormalizable, irrational GIETs of \emph{any} number of intervals $d\geq 2$ we prove a dynamical dichotomy on the behaviour of the orbits under renormalization, by proving that either an orbit is \emph{recurrent} to certain bounded sets in the space of GIETs, or it diverges and it is approximated (up to lower order terms) by the orbit of an affine IET (a case that we refer to as \emph{affine shadowing}). This result can in particular  be used, in conjunction with previous work by Marmi-Moussa and Yoccoz on the existence of wandering intervals for affine IETs,  to prove, \emph{a priori bounds} in genus two and  is  therefore at the base of the rigidity result.
\end{abstract}

\maketitle 

\tableofcontents

\section{Introduction and main results}
In this article we extend some aspects of the theory of \emph{circle diffeomorphisms} and of \emph{flows on the torus} to the context of \textit{generalized interval exchange transformations} and of \textit{foliations on higher genus surfaces}. Exploiting a renormalization approach, we are able in particular to prove a \emph{rigidity} result in genus two which can be seen as a generalization of { a celebrated theorem by Herman} in genus one and proves a conjecture  by Marmi, Moussa and Yoccoz in \cite{MMY3} in the case of generalized interval exchange transformations which correspond to minimal surface flows in genus two. We start by giving an introduction to {geometric} rigidity problems in dynamics and some key results on circle diffeomorphisms and (generalized) interval exchange transformations.

\subsection{{Geometric rigidity} in dynamics.}
%\smallskip
%\noindent {\bf {Geometric rigidity} problems in dynamics.} 
A natural problem in the theory of smooth dynamical systems is to establish which classes of dynamical systems are geometrically rigid in the following sense. 
We say that a class of dynamical systems (whether it be an endomorphism of a manifold, a foliation or a flow) is  \textit{geometrically rigid} (or just \textit{rigid}) if a topological conjugacy (namely a homeomorphism which intertwines the dynamics on the two systems, see below for the definition) between two elements in this class is necessarily differentiable. 
 A natural problem in the theory of smooth dynamical systems is to establish which classes of dynamical systems are \textit{geometrically rigid}.  

\smallskip
It is well-known that periodic orbits provide obstructions to geometric rigidity for \emph{hyperbolic dynamical systems}: for a diffeomorphism, the product of the derivatives along a period orbit is a $\mathcal{C}^1$-conjugacy invariant. In particular, Anosov diffeomorphisms or flows can easily be deformed to modify this conjugacy invariant, without changing the topological structure (by structural stability). 
{\color{black}The absence of periodic orbits is on the other hand possible (and actually prevalent) in \emph{entropy zero dynamics}, which is therefore a natural setting to investigate geometric rigidity.} 

\smallskip
A fundamental class of (entropy zero)  systems in which geometric rigidity has been shown to be prevalent are (minimal) \emph{circle diffeomorphisms}, a class of dynamical systems that have played a central role in the development of the theory since the work of Poincaré onwards.   
In addition to asking that there are no-periodic orbits (which in the setting of circle diffeomorphisms is equivalent to the assumption that the rotation number is \emph{irrational}),  to prove that a diffeomorphism $T:M\to M$  is rigid one often needs to impose a  \textit{quantitative} version of the absence of periodic orbits, for example asking that there exists $c>0$ and $\alpha>0$ such that
\begin{equation}\label{DC} d(x, T^n(x) ) \geq \frac{c}{n^{\alpha}}, \qquad \mathrm{for\ all}\ n\in\mathbb{N}, \ x\in M
\end{equation} 
(where $d$ is a distance function on $M$). When $M$ is the \emph{circle} $\mathbb{S}^1=\mathbb{R}/\mathbb{Z}$, it is well known that a diffeomorphism $T: \mathbb{S}^1\to \mathbb{S}^1$ satisfies \eqref{DC} for a \emph{full measure} set  of \emph{rotation numbers} (actually with $\alpha>1$). In this setting, indeed, \eqref{DC} is equivalent to assuming that the rotation number $\rho=\rho(T)$ of $T$ is \emph{Diophantine}\footnote{We  recall that one says that $\rho\in\mathbb{R}$ is \emph{Diophantine} with Diophantine exponent $\tau>0$ iff there exists $c>0$ such that  for all $p,q\in\mathbb{Z}$, $q\neq 0$, $\left|\rho- \frac{p}{q}\right|\geq \frac{c}{q^{2+\tau}} $. Since rotations are homogeneous, multiplying by $q$ one gets \eqref{DC} with $\alpha=1+\tau$.} or, as equivalent terminology, satisfies a \emph{Diophantine Condition}. 
%\footnote{{\color{black}We recall that the rotation number $\rho(T)\in \mathbb{R}$ of a circle diffeomorphism $T$ is a conjugacy invariant that encodes the average twisting and  can be defined as $\lim_{n\to \infty} \widetilde{T}^n(x)/n$ where $\widetilde{T}:\mathbb{R}\to \mathbb{R}$ is any lift of $T$.}}, 
(For this reason, an assumption like \eqref{DC} is sometimes called a \emph{Diophantine-type} condition). %, since for diffeomorphisms of the circle it reduces to the classical \emph{Diophantine}-condition 

\smallskip
A celebrated result by Michael Herman \cite{Herman}, combined with later work by Jean-Christophe Yoccoz \cite{Yoccoz}, then shows that circle diffeomorphisms which satisfy a Diophantine Condition are \emph{geometrically rigid}.

% A global theory was brought about by the work of Herman \cite{Herman} and completed by Yoccoz \cite{Yoccoz}.
%\emph{geometric rigid} (or versions of it) has, been perhaps surprisingly, shown to hold true in several settings. The most emblematic is certainly the one of \emph{diffeomorphisms of the circle}, that have played a central role in the development of the theory since the work of Poincaré onwards. 

%%%Given that periodic orbits are known obstructions to rigidity, let us assume that the diffeomorphism has no periodic orbits (or equivalently, has \emph{irrational} rotation number. 
 
% absence of periodic orbits, to show geometric rigidity, one often requires \emph{Diophantine-type} or \emph{Diophantine conditions}, which  be thought of as a \textit{quantitative} version of the absence of periodic orbits. For example, the classical Diophantine condition for flows on tori can be defined asking that 
%%\begin{equation}\label{DC} d(x, T^n(x) ) \geq \frac{1}{n^{\alpha}}.
%\end{equation} 
%\noindent The condition \eqref{DC} is to be thought of as a \textit{quantitative} version of the absence of periodic orbits, and is often called an \textit{Diophantine-type condition}. 

% is known to hold true in surprisingly few cases. 
%and in particular in one-dimensional dynamical systems, it is possible to prove geometrically rigidity $T$ is geometrically rigid.
% addition to thehen %This conjecture (or a version of it) is known to hold true in surprisingly few cases. 

\smallskip
\noindent Let us briefly summarize some of the works and settings in which geometric rigidity has been verified, {that are perhaps surprinsigly few:}
\begin{enumerate}
\item {\color{black}In the above mentioned setting of circle diffeomorphisms}, the first (local) result in this direction of rigidity was obtained by Arnol'd in \cite{Arnold} by applying methods from KAM theory. {\color{black}The} global theory was brought about by the work of Herman \cite{Herman} and completed by Yoccoz \cite{Yoccoz}. It was later revisited in terms of renormalization theory, see \cite{KS:Her, KT:Her}. 

\smallskip
\item If one allows to replace the ambient Riemannian manifold by a minimal invariant closed set, certain smooth \textit{unimodal maps} of the interval $[0,1]$ are known to be geometrically rigid. These were first numerically discovered by physicists Feigenbaum \cite{Feigenbaum} and Coullet-Tresser \cite{CoulletTresser} in the late 1970s. A deep and rigorous theory,  nowadays sometimes referred to as \emph{Sullivan-McMullen-Lyubich theory} was established only later, in the Nineties, through the introduction of complex methods into the picture, see for instance Sullivan \cite{Sullivan}, McMullen \cite{McMullen1,McMullen2}, Lyubich \cite{Lyubich} and Avila-Lyubich \cite{AvilaLyubich}.
\smallskip
\item In \emph{one-dimensional dynamics}, several other classes
of rigid dynamical systems were discovered and much studied, such as circle maps with a \emph{critical point} (see e.g. the works by de Faria and de Melo \cite{deFariadeMelo, deFariadeMelo2} {\color{black}or by Yampolsky \cite{Ya:att,Ya:hyp}}), circle homeomorphisms which are differentiable away from a point, {known as circle maps with \emph{breaks} (see  for example the works} \cite{KhaninKhmelev, KhaninTeplinsky, KhaninKocicMazzeo} by Khanin, Khmelev, Teplinsky, Kocic, Mazzeo, {\color{black}or Cunha and Smania \cite{CS:ren, CS:rig} for more breaks}) or Lorenz maps (see Martens and Winckler, \cite{Winckler, MartensWinckler}).

\smallskip
\item KAM theory establishes the \textit{local} rigidity of Diophantine translations on the torus $\mathbb{T}^d = \mathbb{R}^d / \mathbb{Z}^d$ (see for instance \cite{Karaliolios} and references therein). However, no global rigidity result is known in this context when $d \geq 2$. 

\end{enumerate} {
\noindent In the setting $(4)$, geometric rigidity was proposed as a conjecture by Raphaël Krikorian (see \cite{Kh:sur}), who asked,  when $M=\mathbb{T}^d$, with $d \geq 2$, is a higher dimensional torus, whether given any $\mathcal{C}^\infty$ diffeomorphism $T:M\to M$ which  is  topologically conjugate to a translation of $\mathbb{T}^d$ and whose rotation vector satisfies a Diophantine Condition,\footnote{{\color{black}A translation of $\mathbb{T}^d=\mathbb{R}^d/\mathbb{Z}^d$ with rotation vector $\rho=(\rho_1,\dots,\rho_d)$ is the map which sends $x=(x_1,\dots,x_d) \in \mathbb{T}^d$ to $x+\rho=(x_1+\rho_1,\dots, x_d+\rho_d)\in \mathbb{T}^d$. We say that the vector $\rho$ satisfies a Diophantine condition with exponent $\tau>0$ if there exists $c>0$ such that for every non-zero integer vector $k=(k_1,\dots, k_d)\in\mathbb{Z}^d$, $|\langle k , \rho \rangle|=|k_1 \rho_1 + \dots k_d \rho_d|\geq c/||k||^\tau$.}} then the conjugacy is $\mathcal{C}^1$ and actually $\mathcal{C}^\infty$. A bold generalization of this conjecture was suggested by Konstantin Khanin in his ICM address \cite{Kh:sur}, namely that any minimal $T \in \mathrm{Diff}^{\infty}(M)$ where  $M$ is a smooth, closed Riemannian manifold which satisfies a Diophantine-type condition as in \eqref{DC} is  geometrically rigid. Little evidence  is available towards this conjecture and further obstructions other than periodic orbits (related for example to the presence of invariant distributions) may play a role in this greater generality.}
%. Assume that $T \in \mathrm{Diff}^{\infty}(M)$  is minimal (\textit{i.e.} every orbit is \textit{dense} in $M$) and that for some $\alpha >0$, for all $x \in M$ and for all $n \geq 0$ we have 
%\begin{equation}\label{DC} d(x, T^n(x) ) \geq \frac{1}{n^{\alpha}}.
%\end{equation} Then $T$ is geometrically rigid.
%\end{conjecture}
%{
%\noindent This conjecture was suggested by Khanin  as a generalisation of a conjecture proposed by Raphaël Krikorian (also stated in  \cite{Kh:sur}), who formulated it in particular case in which  $M=\mathbb{T}^d$, with $d \geq 2$, is a higher dimensional torus and $T$ is a Diophantine translation.} 
%\vspace{1mm}

%{What about other rigidity results in parabolic word, e.g. Ratner etc or for higher rank actions? should we mention them at all? or at least earlier, when we say that rigidity in dynamics is a key topic... maybe we should be more clear that here we focus on elliptic world, but if we make a point that higher genus is parabolic, we can leave out the hyperbolic/higher rank world but it's hard to ignore horocycle and unipotent flows... to discuss}

\medskip
\subsection{Geometric rigidity in genus two}
In this article we provide a new class of \emph{geometrically rigid} dynamical systems, by  proving a \emph{global} rigidity theorem for foliations on surfaces of genus $2$ (Theorem A here below), or, more in general, for a broader class of \textit{interval exchange maps} (see Theorem B and the remarks afterwards). The result for foliations is the following. We explain the meaning of some key words just below (and refer the reader to \S~\ref{foliations} for precise definitions  of notions involved in the statement).

{
\begin{thma}
\label{thmA}
Let $S$ be a closed orientable surface of genus $2$ and $\mathcal{F}$ a  {\color{black}\emph{minimal}} {orientable} foliation on $S$ of class $\mathcal{C}^3$, with non-degenerate (Morse type) singularities. 
  %which is defined by a $1$-form of class $\mathcal{C}^3$ with { isolated zeros}. \
Under a full-measure\footnote{{The space of topological conjugacy classes of such foliations can be parametrised by finitely many parameters, and here \emph{full-measure} means 'for almost every parameter' with respect to the Lebesgue measure. This notion of full measure on minimal foliations is also related to the \emph{Katok fundamental class}, see \S~\ref{sec:foliationmeasure} for details. Furthermore, it corresponds to  a full measure set of (combinatorial) rotation numbers (see Definition~\ref{def:rotnumber}) in the sense of Definition~\ref{def:fullmeasure}.}} Diophantine-type condition, $\mathcal{F}$ is geometrically rigid.
\end{thma}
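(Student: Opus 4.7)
The plan is to reduce Theorem A to the rigidity statement for GIETs announced in the abstract (Theorem B) via the classical Poincar\'e-section correspondence between foliations and one-dimensional maps. Choose in $S$ an embedded transverse arc $\gamma$ meeting every leaf of $\mathcal{F}$ and whose endpoints lie on separatrices of the Morse saddles; since $\mathcal{F}$ is minimal, orientable, of class $\mathcal{C}^3$ and has Morse singularities, the first-return map $T$ to $\gamma$ is a GIET on $d\in\{4,5\}$ intervals (the exact value being dictated by the genus-two Euler--Poincar\'e count with Morse saddles), with irreducible combinatorial data $\pi$ and $\mathcal{C}^3$ regularity on each continuity interval. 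A homeomorphism conjugating two such foliations can be isotoped along leaves so as to preserve $\gamma$, and thereby induces a topological conjugacy $h$ between the associated GIETs $T_1,T_2$.

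It is a classical fact that a GIET arising as the first-return map of a smooth orientable foliation automatically satisfies the \emph{boundary operator obstruction}, i.e.\ the geometric compatibility on the jumps of $\log DT$ at the discontinuities which expresses the vanishing of the total logarithmic distortion around each singular point. Moreover, the full-measure Diophantine-type condition on $\mathcal{F}$ in the statement of Theorem A corresponds, via the rotation-number parametrization of minimal foliations, precisely to the full-measure Diophantine-type condition on the combinatorial rotation number of $T$ that appears in Theorem B. Thus Theorem A will follow at once from Theorem B applied to the pair $(T_1,T_2)$.

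For the proof of Theorem B I would proceed by renormalization, using the Zorich-type acceleration $\mathcal{R}$ of the Rauzy--Veech induction acting on the space of GIETs of fixed combinatorics; the central technical goal is to establish \emph{a priori bounds}, namely that the orbit $\{\mathcal{R}^n T\}_{n\geq 0}$ remains in a pre-compact subset of that space. To achieve this I would prove the dynamical dichotomy advertised in the abstract, valid for every $d\geq 2$: either $\{\mathcal{R}^n T\}$ is recurrent to an explicit bounded set, or it diverges in a structured way, being shadowed up to lower-order terms by the orbit of an affine IET. In genus two (the case $d\in\{4,5\}$) the shadowing alternative is ruled out using the Marmi--Moussa--Yoccoz theory of affine IETs: for a full-measure set of rotation numbers the shadowing affine IET has a wandering interval, and the shadowing estimate transfers such a wandering interval to the original GIET $T$, contradicting the hypothesis that $T$ is topologically conjugate to a minimal standard IET. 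Hence only the recurrent alternative can occur, yielding the required a priori bounds.

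The final step, in the spirit of Herman and Yoccoz, is to upgrade a priori bounds to $\mathcal{C}^1$-rigidity. With the iterates $\mathcal{R}^n T$ confined to a compact set, Koebe-type distortion control (available thanks to the $\mathcal{C}^3$ hypothesis) forces the nonlinear part of $\mathcal{R}^n T$ to decay; coupled with the effective Oseledets behaviour of the Kontsevich--Zorich cocycle on the base (which is exactly the reason the Diophantine-type condition is imposed), this yields exponentially fast convergence of $\mathcal{R}^n T$ to a standard IET in the $\mathcal{C}^1$ topology, which in turn implies that the conjugacy $h$ is $\mathcal{C}^1$. The main obstacle, and what I expect to be the technical heart of the argument, is to establish the dichotomy together with the shadowing statement in a form precise enough that the nonlinear error terms cannot destroy the wandering intervals of the affine model; this is precisely where the \emph{effective} version of Oseledets' theorem, obtained through the acceleration of renormalization, will play the decisive role.
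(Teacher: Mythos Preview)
Your proposal is correct and follows essentially the same route as the paper: reduce Theorem~A to Theorem~B via a Poincar\'e section (the paper's Lemma~\ref{prop:folgiet} and Lemma~\ref{Basholonomy}, with the Morse hypothesis ensuring vanishing holonomies and hence $\mathcal{B}(T)=0$), then prove Theorem~B through the renormalization dichotomy (Theorem~C), rule out the affine-shadowing branch in genus two via the Marmi--Moussa--Yoccoz wandering-interval result, and upgrade the resulting a~priori bounds to $\mathcal{C}^1$-conjugacy through exponential convergence of renormalization (Theorem~E). One small correction: with non-degenerate Morse saddles on a genus-two surface one has exactly two simple saddles, so $\kappa=2$ and $d=2g+\kappa-1=5$; the case $d=4$ corresponds to a single degenerate $6$-pronged saddle and is covered by Theorem~B but not by the Morse hypothesis of Theorem~A.
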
 }

\noindent This result can be seen as a generalisation of Herman's global rigidity theorem for circle diffeomorphisms, reformulated in the language of minimal foliations on the torus (as we explain at the end of the next section of this introduction). { We recall that  in higher genus, foliations are necessarily \emph{singular}. \emph{Minimality} in this context means that {all \emph{bi-infinite} leaves are dense (see Definition~\ref{def:minimal})}. By \emph{Morse-type} singularities we mean that the leaves of the foliation in a neighborhood of a singularity are level-sets of a Morse function (i.e.~a function with non-degenerate zeros).  This is a \emph{generic} (open and dense) condition. These assumptions imply in particular that singular points of the foliation are \emph{saddles}\footnote{{Morse type singularities are \emph{simple saddles} (with $4$-separatrices or \emph{prongs}) and \emph{centers}: centers are excluded since 
if there is a center, the foliation has closed orbits in the neighbourhood of the center and thefore is not minimal. 
%Furthermore, since the foliation is orientable,  all saddles all have an even number $2k$ separatrices, or  \emph{prongs}, where $k\geq 2$ is an integer. 
A minimal foliation in  genus is two can have either $2$ simple saddles (with $4$ prongs each), or one (degenerate) saddle with $6$ prongs. The assumption of Morse singularities implies that we are in the first case, but is included in Theorem A only to have a simpler statement: the case of one saddle with $6$-prongs is also covered by Theorem $B$ stated below, see also \S~\ref{foliations}.}} and  that the \emph{holonomy} of the foliation around each saddle points is zero (see \S~\ref{foliations} for details).} {Finally, we define the measure class on minimal foliations with respect of which the Diophantine-type condition (which is given by Definition~\ref{def:DCfol}) has \emph{full measure} in \S~\ref{sec:foliationmeasure}.}

\smallskip
 Note that all the examples of geometrically rigid dynamical systems  that we listed above, with the exception of the \emph{local} rigidity results given by KAM theory, %namely $(1)-(3)$, 
are   one-dimensional and combinatorially equivalent to either a translation on a torus (in the case of circle diffeomorphisms, critical circle maps, or circle maps with breaks), or an odometer (in the case of unimodal maps and Lorenz maps). Our result is, to the best of our knowledge, the first (global) rigidity result on surfaces of higher genus, which have a much richer\footnote{{\color{black}This \emph{richness} can be formalized in various ways: flows on surfaces are described by more \emph{frequencies}; the combinatorial information in this setting can be described  \emph{higher dimensional continued fraction algorithms}, which produce cocycles in $SL(d,\mathbb{Z})$ with $d\geq 2$; the combinatorial information can also be encoded in a Bratteli-Vershik diagram with $d\geq 2$ vertices, while odometers and rotations both correspond to Bratteli-Vershik diagrams with $d=2$. 
 Finally, in virtue of this higher dimensional nature, one lacks in general Denjoy-Koksma inequality and a priori bounds, see a later subsection of this introduction.}}  combinatorial structure.

Before we state the other main results of this article (in \S~\ref{sec:GIETrigidityintro} and \S~\ref{sec:renormalizationintro}), we summarise the main results on  diffeomorphisms of the circle and (generalized) interval exchange transformations that motivated our work.

\subsection{Diffeomorphisms of the circle and foliations on the torus.}

Flows and foliations on surfaces have been a topic of interest since the work of Poincaré, who singled out the analysis of flows on the torus as the simplest toy-model to investigate the stability of the solar system.
Poincaré introduced the \textit{rotation number}, which is an invariant which fully accounts for the combinatorial structure of orbits of circle diffeomorphisms (and equivalently flows on the torus).

The rigidity theory of circle diffeomorphisms was started by Denjoy in \cite{Denjoy}. Recall that two homeomorphisms $f,g:S^1\to S^1$ of the circle  $S^1$ are \emph{topologically conjugate} if there exists a homeomorphism $h:S^1\to S^1$ (the \emph{conjugacy} map) such that $f\circ h=g\circ f$.  
Denjoy in \cite{Denjoy}
proved that a sufficiently regular circle diffeomorphism $f$ with irrational rotation number must be \emph{topologically conjugate} to the \emph{rigid rotation} $R_\rho$ with the same rotation number $\rho$, given by $x \mapsto R_\rho(x):= x + \rho$. 
%%We recall that a \emph{topological conjugacy} is a \emph{homeomorphism} (thus a continuous map with continous inverse) $h:S^1\to S^1$ such that $f\circ h=g\circ f$.  
{ The existence of a topological conjugacy} implies in particular that $f$ cannot have \emph{wandering intervals}, namely there does not exist intervals $I\subset S^1$ such that the iterates $f^n(I)$, $n\in\mathbb{Z}$ are all disjoint. 

A landmark result is the \emph{local rigidity} theorem of Arnol'd \cite{Arnold}, who successfully applied KAM theory to show that under a suitable Diophantine-type condition on the rotation number $\rho$, sufficiently small analytic deformations of $x \mapsto x + \rho$, whose rotation number is equal to $\rho$, must be \textit{analytically} conjugate to $x \mapsto x + \rho$. Arnol'd went on to conjecture that such a statement should hold true without any assumption on the closeness to rotations. 

\smallskip This \emph{global rigidity} conjecture was proved to be true { in the (more general) $\mathcal{C}^{\infty}$ setting}\footnote{Herman in his thesis \cite{Herman} considers not just the not just $C^{\infty}$ regularity, but also $C^r$, for $r \geq 3$ as well as the analytic settings.} by {\color{black}Michael} Herman \cite{Herman} in a spectacular treaty, %introducing a new set of methods and ideas, and 
whose legacy still lives on. A few years later, Jean-Christophe Yoccoz \cite{Yoccoz} succeeded in showing that 
%The important contribution of Yoccoz was to prove that 
Herman’s
result indeed extends to all \emph{Diophantine numbers}, { thus providing the optimal arithmetic condition} { in the smooth setting (and later on also the optimal condition in the analytic setting, see \cite{Yo:CIME})}. Combining Herman's \cite{Herman}  and Yoccoz'  \cite{Yoccoz} results, we have the following theorem:
\begin{thm*}[Herman \cite{Herman}, Yoccoz \cite{Yoccoz}]
If $\rho$ is a Diophantine number, then any $T \in \mathrm{Diff}^{\infty}(S^1)$ of rotation number $\rho$ is $\mathcal{C}^{\infty}$-conjugate to $x \mapsto x + \rho$. In particular, smooth circle diffeomorphisms of Diophantine rotation number are (geometrically) rigid.
\end{thm*}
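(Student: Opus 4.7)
The plan is to upgrade the topological conjugacy supplied by Denjoy's theorem to a $\mathcal{C}^{\infty}$ diffeomorphism, via a renormalization approach that anticipates the broader philosophy of the present paper. Denjoy's theorem, applied to any $T\in\Diff^{\infty}(\mathbb{S}^1)$ of irrational rotation number $\rho$, already yields a homeomorphism $h:\mathbb{S}^1\to\mathbb{S}^1$ with $h\circ R_\rho=T\circ h$; the task is to prove that, under the Diophantine assumption on $\rho$, this $h$ is necessarily $\mathcal{C}^{\infty}$.

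First I would introduce the continued fraction expansion $\rho=[a_1,a_2,\dots]$ with denominators $(q_n)_{n\geq 0}$, and the associated sequence of renormalizations $(T_n)_{n\geq 0}$: at step $n$, take the first return map of $T$ to an appropriate fundamental domain $I_n$ of length $\|q_n\rho\|$ and rescale $I_n$ to unit size. The Diophantine assumption translates into polynomial upper bounds on $q_n$ (equivalently on the partial quotients $a_n$). The central analytic step is to establish \emph{a priori bounds}, namely that the orbit $(T_n)_{n\geq 0}$ remains relatively compact in $\Diff^{r}([0,1])$ for every $r\geq 2$. I would obtain these via Denjoy--Koksma type distortion estimates: since $\log DT$ has bounded variation, its Birkhoff sums over the $q_n$ closest returns are uniformly bounded, which prevents the non-linearity of $T_n$ from degenerating.

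With compactness secured, I would then establish \emph{exponential convergence} of the renormalizations to the corresponding renormalizations of $R_\rho$: the rigid rotation renormalizes to a sequence of rigid rotations $(R_n)_{n\geq 0}$ whose rotation numbers are the Gauss iterates of $\rho$, and the renormalization operator contracts exponentially transversally to this affine family, so that the a priori bounds deliver an estimate of the form $\|T_n-R_n\|_{\mathcal{C}^{r}}\leq C_r\lambda^n$ for some $\lambda<1$. The conjugacy $h$ can then be reconstructed as a telescoping composition of the affine rescalings used at each renormalization step; the exponential decay of $\|T_n-R_n\|_{\mathcal{C}^{r}}$, combined with the polynomial control on $q_n$ coming from the Diophantine condition, produces $\mathcal{C}^r$ regularity of $h$ for every $r$, and hence $h\in\mathcal{C}^{\infty}$.

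The main obstacle is the a priori bounds step. One has to rule out degeneration of the non-linearity as $n\to\infty$, which is specific to the circle setting and crucially relies on Denjoy--Koksma type inequalities, themselves available because irrational rotations are uniquely ergodic with strong cohomological control on Birkhoff sums of bounded variation observables. Once the bounds are in place, the hyperbolicity of renormalization near the rotation fixed points is essentially a linearized computation, and the reconstruction of $h$ is a standard telescoping argument; it is therefore the analytic compactness of the renormalization orbit that carries all the weight, and it is precisely this same obstacle that the present paper must overcome in higher genus, where no analog of Denjoy--Koksma is directly available.
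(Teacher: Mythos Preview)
The paper does not contain a proof of this theorem. It appears in the introduction purely as a classical background result, attributed to Herman and Yoccoz, and the paper merely remarks that it was later revisited via renormalization by Khanin--Sinai and Khanin--Teplinsky. The statement serves as the genus-one prototype that Theorems~A and~B aim to generalize; there is nothing in the paper to compare your proposal against.

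On your sketch itself: what you have written is essentially the Khanin--Sinai renormalization approach rather than Herman's original argument, and the outline is broadly correct, but one step is understated. Denjoy--Koksma controls Birkhoff sums of $\log DT$ and hence yields $\mathcal{C}^1$-type a priori bounds on the renormalizations $T_n$; it does \emph{not} by itself give relative compactness in $\mathcal{C}^r$ for $r\geq 2$. Obtaining higher-regularity bounds (or, equivalently, exponential convergence in $\mathcal{C}^r$) requires the Schwarzian derivative mechanism --- this is Herman's crucial analytic insight, which the paper itself generalizes in \S\ref{sec:Moebius}. So your sentence ``the orbit remains relatively compact in $\Diff^{r}([0,1])$ for every $r\geq 2$ \dots\ via Denjoy--Koksma type distortion estimates'' skips exactly the nontrivial part. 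Likewise, describing the exponential contraction as ``essentially a linearized computation'' is too quick: in the renormalization proofs this is the heart of the matter, not a formality. Your closing diagnosis --- that the a priori bounds carry the weight and that their absence in higher genus is the obstacle the paper addresses --- is, however, exactly right and aligns with the paper's narrative.
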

An equivalent geometric reformulation of the above theorem in the language of foliations on surfaces is the following. 
Let $S$ be a torus, namely a \emph{genus one} closed orientable surface. Then, given  any \emph{minimal} (orientable) foliation on $S$ which is topologically conjugate to a linear flow on the torus with Diophantine rotation number\footnote{Let us recall that a \emph{linear flow} on the torus $\mathbb{T}^2=\mathbb{R}^2 / \mathbb{Z}^2$ 
is the flow $(\varphi_t)_{t\in\mathbb{R}}: \mathbb{T}^2 \to \mathbb{T}^2$ 
given by  $\underline{x}\mapsto \underline{x}+t \underline{\alpha}$, where $\underline{x},\underline{\alpha}\in \mathbb{T}^2$. This flow  has rotation number $\rho=\alpha_2 / \alpha_1$.} is smoothly conjugated (in the sense of foliations\footnote{We recall that the regularity of conjugacy of foliations is {\color{black}expressed} in terms of the transverse structure; thus, this is equivalent to the conjugacy of $f$ and $R_\rho$.}) 
to the linear flow foliation. {In particular, the theorem implies} that minimal foliations on  \emph{genus one} surfaces under a full measure  {Diophantine-type} condition are geometrically rigid.   { It is this latter statement that is generalized by Theorem A to \emph{genus two}.}

\subsection{Flows in higher genus and interval exchange transformations.}
The extent to which the theory of diffeomorphisms of surfaces and flows on the torus generalises to flows on higher genus surfaces and their Poincar{\'e} maps is a natural question. %and their one-dimensional counterparts, \textit{generalized interval exchange transformations}.
 The objects which play the role of rigid rotations in this context are \textit{standard interval exchange transformations} (IETs for short), orientation-preserving bijections of $[0,1]$ which are piecewise translations (see Definition~\ref{def:IET}). These transformations  naturally arise as Poincaré maps of \emph{linear flows} on higher genus surfaces (namely \textit{translation flows} on translation surfaces, or, correspondingly,  \textit{measured foliations}\footnote{A translation surface determines indeed a vertical (and a horizontal) measure foliation. The leaves of the vertical linear flow are leaves of the measured foliation and the other foliation determines a transverse invariant measure.}), see \S~\ref{gietandfoliations}.  \emph{Linear flows} on (translation) surfaces in turn play the role of linear flows on (flat) tori. 
The non-linear counterparts are \emph{generalized interval exchange transformations} (or GIETs), which arise as Poincaré maps of minimal flows on higher genus surfaces. Notice that in higher genus the presence of singularities is unavoidable and the corresponding (orientable) foliations have singularities { (corresponding to \emph{fixed points} for the flow).}

\subsubsection{Renormalization and combinatorics.}
To generalize Poincar{\'e} and Denjoy work, one needs first of all a combinatorial invariant which extends the notion of rotation number. Such an invariant can be produced by recording the combinatorial data of a renormalization process. Renormalization operators in this context, similarly to the case of circle diffeomorphisms,  
 %$\mathcal{X}^r $ to itself
are obtained associating to a  
%A key remark that allows to define renormalization operators on the space  $\mathcal{T}^r $   is that, given 
 given a GIET $T:I\to I$ on $I=[0,1]$, %in $\mathcal{X}^r $ 
 another GIET %which we will call $\mathcal{R}(T)$ 
which is obtained by suitably choosing an  subinterval $I'\subset I$ and considering the \emph{induced map} of $T$. The interval $I'$ is chosen so that the induced map is well defined and is again a GIET  $T'$ of the same number of intervals. Correspondingly, at the level of (minimal) flows (or foliations) on surfaces, this process corresponds to taking a smaller Poincar{\'e} section. The image $\mathcal{R}(T)$ of $T$ under the renormalization operator is
 then by definition the GIET acting on $I=[0,1]$ obtained by \emph{normalising}, i.~.e.~conjugating by the affine transformation which maps $I'$ to $I$, so that the image is again a GIET on $I$. 

 A classical algorithm to \emph{renormalize} standard IET is the \emph{Rauzy-Veech algorithm}, also called \emph{Rauzy-Veech induction} (whose definition we recall in \S~\ref{Rauzysec}), first introduced by Rauzy \cite{Ra:ech} and used starting from the seminal papers by Veech \cite{Ve:gau, Ve:inI} to study fine ergodic properties of standard IETs, see e.g.~\cite{Zo:dev,  AF:wea, Ch:dis}. The ergodic properties of this renormalization dynamics in parameter space is by now well understood (see e.g.~\cite{Zo:gau,  Bu:dec, AGY, AB:exp}, or \cite{Yoc:B} for a brief survey).
 
Rauzy-Veech induction is well defined also on GIETs with no connections (as defined in \S~\ref{combinatorics}) and can be used (as shown e.g.~in \cite{MMY, MarmiYoccoz}, see also the notes \cite{Yoc:Clay}) to define a notion of \emph{rotation number} (see Definition~\ref{def:rotnumber}) and \emph{irrationality} for IETs and GIETs (see Definition~\ref{def:irrational}). One can then show that two \emph{irrational} GIETs with the same rotation number are \emph{semi-conjugated}, a result that we call Poincar{\'e}-Yoccoz theorem (see Theorem~\ref{thm:PY}).

\subsubsection{Absence of a Denjoy theorem  and wandering intervals.} One of the crucial differences between GIETs and circle diffeomorphisms, though, is the absence of a \emph{Denjoy Koksma inequality}\footnote{We recall that the \emph{Denjoy-Koksma inequality} is an ergodic-theoretic statement which gives boundedness of Birkhoff sums of bounded variation observables at special times: given $f:I\to I$ is a function of bounded variation on $I=[0,1]$ and $R_\rho$ a rotation by $\rho$, if $p_n/q_n$ are the \emph{convergents} of $\rho$ (given by $p_n/q_n=[a_0,\dots, a_n]$ where $[a_0,\dots, a_n,\dots]$ is the continued fraction expansion of $\rho$), the Birkhoff sums $\sum_{k=0}^{q_n}f(x)$ at times $q_n$ are uniformely bounded, independently on $n\in\mathbb{N}$ and $x\in I$.} and \emph{a priori bounds}. This has far-reaching consequences, the most spectacular of which being the absence of Denjoy theorem: there are smooth GIETs that are semi-conjugate to a minimal IET for which the semi-conjugacy is \textit{not} a conjugacy, in other words they have wandering intervals. This phenomenon, first discovered by Levitt but for a non-uniquely ergodic example, was later expored  by Camelier Gutierrez \cite{ CamelierGutierrez}, Cobo \cite{Cobo} and Bressaud, Hubert and Mass \cite{BressaudHubertMaass} for special (families of) uniquely ergodic examples with periodic-type combinatorics. It is important to stress that this is is not a low-regularity phenomena, nor it is related to special  arithmetic assumptions, as these examples exist for AIETs with \emph{almost every} rotation number, as shown later in the  work \cite{MMY2} by Marmi, Moussa and Yoccoz, which actually indicates that the existence of wandering interval is in some sense \emph{typical}\footnote{See for example the statement of Proposition~\ref{AIETwi}, which is taken from \cite{MMY}: wandering intervals are shown to exist for a full measure set of rotation numbers as long as the log-slope vector of the AIET has a typical projection on the Oseledets filtration (and, conjecturally, as long as it projects on any positive Oseledets exponent).}.

\subsubsection{Cohomological equations and obstructions to linearisation.}
  A crucial step in the KAM approach developed by Arnol'd for circle diffeomorphisms is to solve a \textit{linearised} version of the conjugacy equation  $ h \circ R_{\rho} = T \circ h $, which amounts to finding  a (smooth) $f$ which satisfy the functional equation $ f \circ R_{\rho}  - f = g$ for a given  (smooth) $g$.  This equation, known as \textit{cohomological equation}, is easily solved { in the smooth setting}  using Fourier analysis in the case where $R_{\rho} := x \mapsto x + \rho$ is a rotation satisfying a full measure arithmetic condition, under the necessary (and in this setting the only) obstruction that $g$ has to have  zero-mean.
  
 For a long time it has been unknown whether the cohomological equation could be solved under suitable assumptions for IETs (or flows on surfaces such as translation flows), until the pioneering work of Forni \cite{Fo:ann} (see also \cite{Forni2}), who brought to light the existence of a  \textit{finite} number of obstructions to solving it. The existence of obstructions to solve the cohomological equation has been since then discovered to be a characteristic phenomenon in \emph{parabolic dynamics}, see for example the works by Flaminio and Forni on the cohomological equation for horocycle flows  \cite{FF}  and \cite{FF2} for  nilflows on nilmanfolds (which are other key examples of \emph{parabolic} flows, in the sense that they present a subexponential form of sensitive dependence of initial conditions, see for example the surveys~\cite{Ul:she, Fo:asy}). Forni's work is a breakthrough that paved the way for the development of a linearisation theory in higher genus. 
 
Another  breakthrough was  achieved by  Marmi-Moussa-Yoccoz in their work \cite{MMY3} (and related works \cite{MMY, MarmiYoccoz}). In \cite{MMY}, in particular, they reproved Forni's result on the cohomological equation using a renormalization approach based on Rauzy-Veech induction, thus describing explicitely a full measure Diophantine-type condition on the IET (a condition that, in analogy with rotations, they called \emph{Roth type}, see \cite{MMY} and also \cite{MarmiYoccoz} for a variation of this condition). Furthermore, the improved regularity in their result could then be exploited in \cite{MMY}, combined with a generalization of  Herman's \emph{Schwarzian derivative trick}, to prove a linearisation result, showing that 
the  high regularity ($\mathcal{C}^r$ for $r \geq 2$)-local conjugacy classes of smooth IETs  form a submanifold of the expected finite codimension (the codimension being related to the number of obstructions to solve the cohomological equation, see \cite{MMY3}). 
{An analogous result for the $\mathcal{C}^1$-local conjugacy classes was suggested as a conjecture in  \cite{MMY3}.  Recently, the first author has proved it in a special case, namely for the (measure zero) set of IETs which have (hyperbolic) periodic-type rotation number (in the sense of Definition~\ref{def:periodictype} below), which hence correspond to periodic points of the renormalization operator.}

\subsubsection{Rigidity conjecture.}
In \cite{MMY3}, Marmi, Moussa and Yoccoz formulate a number of fundamental open questions and conjectures left open in the theory of linearisation of GIETs (see the Open Problems section~1.2 in \cite{MMY3}). One of them, stated as Problem 2 in \cite{MMY3}, is a geometric rigidity question/conjecture\footnote{Immediately before formulating it as a question, Marmi, Moussa and Yoccoz provide an  heuristic \emph{rationale} which explains why it should be true and, just after, formulate a slight extension of what they now call 'one of the previous two \emph{conjectures}'.}. They ask whether it is true that, for  a full measure set of standard IETs $T_0$, any GIET $T$ of class $\mathcal{C}^4$  to $T_0$ and such that the value of a conjugacy invariant that they call \emph{boundary} (see below and, for a  definition, \S~\ref{sec:boundaryGIET})  is zero\footnote{More precisely, Problem $2$ in \cite{MMY3} is first stated for GIETs which are a \emph{simple deformation} of $T_0$ (i.e.~a deformation which does not perturb $T_0$ in a neighbourhood of the discontinuities and endpoints, see \cite{MMY3}). Being a simple deformation implies in particular that the boundary is the same than the boundary of $T_0$ and the latter is indeed zero. Immediately after, they say that the conjecture can be formulated in a slightly more general setting (not restricted to simple deformations) using the boundary conjugacy invariant that they introduce later.}, is actually also $\mathcal{C}^1$-conjugate to $T_0$. 

{We prove in this paper  that this conjecture is  true in genus two (see Theorem B below)}. We also show  that the result in any genus can be reduced to a statement on dynamical partitions of affine IETs, or equivalently, to problem concerning Birkhoff sums of piecewise constant observables over standard IETs (see the comments below, or \S~\ref{sec:rigidity}  and in particular Proposition~\ref{reduction}).

\subsection{Rigidity result for GIETs}\label{sec:GIETrigidityintro}
%Main results on GIETs, a priori bounds and renormalization.}
We have already anticipated one of our rigidity results in the language of foliations (Theorem A stated above).  We will now formulate our main result in the language of GIETs (Theorem B below).

\smallskip
 We denote by $\mathcal{I}_d$ the space of standard irreducible interval exchange transformations with $d$ branches (see \S~\ref{combinatorics} for the definition of irreducible). This space  is a finite union of $d\!-\!1$ simplexes (see \S~\ref{parameters}) and thus carry a natural Lebesgue measure. Full-measure sets and full-measure Diophantine-type conditions are defined using this measure.  
Associated to a GIET $T$, {there} is an important $\mathcal{C}^1$-conjugacy invariant, called the \emph{boundary} of $T$ and here denoted by  $\mathcal{B}(T)$  (for the definition of  $\mathcal{B}$, which is based on Marmi-Moussa-Yoccoz \emph{boundary} operator from \cite{MMY3, MarmiYoccoz}, see Definition~\ref{GIET:boundary} and \S~\ref{sec:boundarycomb}). 
Our main rigidity result in the language of interval exchange maps is  the following.
\begin{thmb}[Rigidity of GIETs with $d=4$ or $d=5$]
\label{thmAb}
There is a full measure\footnote{{Here \label{Lebfootnote} the measure is the Lebesgue measure on the parameter \emph{standard} IETs $\mathcal{I}_d$, i.e.~a result holds for a full measure set of IETs in $\mathcal{I}_d$, if it holds for all \emph{irreducible} combinatorial data and Lebesgue-almost every choice of \emph{lenghts} of the continuity intervals. See \S~\ref{sec:measures} for details.}}  subset $\mathcal{F} \subset \mathcal{I}_4 \cup \mathcal{I}_5$ such that the following holds. If  $T_0 \in \mathcal{F}$ and a  $\mathcal{C}^3$-generalized interval exchange map $T$, whose boundary $\mathcal{B}(T)$ vanishes, is topologically conjugate to $T_0 $, then the conjugacy between $T$ and $T_0$ is actually a diffeomorphism of $[0,1]$ of class $\mathcal{C}^1$. In other words, almost every standard irreducible IET with $4$ or $5$ continuity intervals is  geometrically rigid.
\end{thmb}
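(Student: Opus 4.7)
The plan is to approach the rigidity problem via renormalization, following the paradigm introduced by Herman, Yoccoz and Khanin--Sinai in the one-dimensional case, but in the setting of an accelerated Rauzy--Veech induction on the space of GIETs. Since $T$ is topologically conjugate to $T_0 \in \mathcal{F}$ and $T_0$ is irrational (as is almost every IET), Poincar\'e--Yoccoz theory ensures that $T$ is also irrational and infinitely renormalizable, and that $T$ and $T_0$ share the same combinatorial rotation number, which (by construction of $\mathcal{F}$) satisfies a full-measure Diophantine-type condition designed to make Oseledets genericity \emph{effective} along the Rauzy--Veech trajectory.

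The heart of the argument is a dynamical dichotomy for renormalization orbits of infinitely renormalizable GIETs, announced in the abstract. Denoting by $\mathcal{R}$ the (accelerated) Rauzy--Veech renormalization operator on GIETs, for any such $T$ the orbit $(\mathcal{R}^n T)_{n\in \N}$ either returns infinitely often to a bounded set in the space of GIETs---in which case one obtains \emph{a priori bounds}, i.e.~uniform bounds on the non-linearity of the iterates of $T$ computed on the Rauzy--Veech dynamical partitions---or it diverges to infinity while being \emph{affinely shadowed}, meaning that up to lower-order corrections it is approximated by the Zorich-cocycle orbit of an affine interval exchange map $A=A(T)$ naturally associated to $T$. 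My strategy is to rule out the affine shadowing alternative using the hypothesis $\mathcal{B}(T)=0$ together with the structure of the Kontsevich--Zorich spectrum in genus two, and then to promote the a priori bounds furnished by the recurrence case to a $\mathcal{C}^1$-conjugacy.

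In the affine shadowing case, the shadow $A$ is determined by a log-slope vector whose asymptotics under the Zorich cocycle are governed by the Kontsevich--Zorich Oseledets filtration. Modulo the Marmi--Moussa--Yoccoz boundary operator, the class of this log-slope vector is controlled by $\mathcal{B}(T)$, so that the vanishing of $\mathcal{B}(T)$ pushes it into a subspace avoiding the stable and central obstructions. The arithmetic of genus two (i.e.~$d=4$ or $d=5$, where the Kontsevich--Zorich spectrum has exactly two strictly positive exponents and the boundary condition kills the remaining central direction) then forces the log-slope vector to project non-trivially onto a positive Oseledets direction. Proposition~\ref{AIETwi} of Marmi--Moussa--Yoccoz then implies that such an $A$ must have wandering intervals, and transferring these back along the shadowing produces wandering intervals for $T$ itself, contradicting the fact that $T$ is topologically conjugate to the minimal standard IET $T_0$. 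Hence affine shadowing does not occur and only the recurrence case survives. In this case, the a priori bounds give uniform control on the distortion of iterates of $T$ along the Rauzy--Veech partitions; combined with the $\mathcal{C}^3$ regularity and a Schwarzian derivative argument in the spirit of Herman and of Marmi--Moussa--Yoccoz, this forces the iterates of $T$ to become asymptotically affine on the finer and finer dynamical partitions. A standard limiting argument along the renormalization tower, analogous to the Katznelson--Ornstein / Khanin--Teplinsky proof of Herman's theorem adapted to the GIET setting, upgrades the topological conjugacy between $T$ and $T_0$ to a $\mathcal{C}^1$-diffeomorphism of $[0,1]$.

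The main obstacle will be the proof of the dichotomy itself, and in particular making the Oseledets genericity \emph{effective} for almost every rotation number in genus two, so that the shadowing error is small enough to be absorbed by the $\mathcal{C}^3$ regularity. The other delicate point is to verify that the boundary-vanishing condition really does confine the affine shadow to a direction in which the Marmi--Moussa--Yoccoz wandering interval theorem applies: this mechanism is precisely what restricts Theorem B to $d=4,5$, since in higher genus the positive Kontsevich--Zorich exponents minus the codimension cut out by the boundary operator leave neutral directions that lie outside the current scope of the wandering interval theorem for affine IETs.
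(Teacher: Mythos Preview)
Your overall architecture matches the paper's: establish the dichotomy (Theorem~\ref{shadowing}), rule out the affine-shadowing alternative via wandering intervals (Proposition~\ref{reduction} combined with Proposition~\ref{AIETwi}), and in the recurrent case use a priori bounds and exponential convergence of renormalization (Theorem~\ref{convergencethm}) to upgrade the conjugacy to $\mathcal{C}^1$ (Proposition~\ref{conjugacy}).

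However, you have misplaced the role of the hypothesis $\mathcal{B}(T)=0$. In the paper the boundary condition is \emph{not} used to rule out the affine-shadowing case. The shadow $v$ already lies in the unstable space $E^u$ by the construction in Theorem~\ref{shadowing}, so in genus two (where there are exactly two positive exponents and hence $E_3=E^{cs}$) one automatically has $v\notin E_3$. The remaining point, $v\notin E_1\setminus E_2$, is handled by an elementary argument unrelated to $\mathcal{B}(T)$: if $v$ projected onto the top exponent, then for large $n$ all entries of $Q(0,n)v$, and hence of $\omega_n$, would have the same sign, forcing $\mathcal{R}^nT$ to be everywhere expanding (or contracting) on $[0,1]$, which is impossible. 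Thus $v\in E_2\setminus E_3$ and Proposition~\ref{AIETwi} applies. Likewise, the restriction to $d=4,5$ has nothing to do with ``boundary codimension leaving neutral directions''; it is purely that Proposition~\ref{AIETwi} is presently proved only for $v\in E_2\setminus E_3$, whereas in higher genus the shadow could lie in $E_i\setminus E_{i+1}$ for $i>2$.

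The hypothesis $\mathcal{B}(T)=0$ enters only in the recurrent case, and there it does more work than you indicate. A priori bounds plus the Schwarzian argument yield convergence only to the \emph{Moebius} locus $\mathcal{M}_d$; a separate non-linearity-decay argument (using the affine regime $\sum_s b_s=0$) then gives convergence to $\mathcal{A}_d$; and the full condition $\mathcal{B}(T)=0$ is what forces the log-slope vectors $\omega_n$ themselves to decay exponentially, yielding convergence to $\mathcal{I}_d$ (this is the chain of \S\ref{sec:Moebius}--\S\ref{IETsconvergence}). Without $\mathcal{B}(T)=0$ renormalization would stall at the Moebius or affine locus and no $\mathcal{C}^1$ conjugacy to the standard IET $T_0$ would follow.
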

Thus, Theorem B proves the rigidity conjecture by Marmi-Moussa-Yoccoz \cite{MMY3} for {irreducible} IETs with $d=4$ or $d=5$ intervals (which correspond to Poincar{\'e} sections of flows in genus two). {The $d=5$ case implies Theorem $A$ (see \S~\ref{foliations}) and,  more in general, for $d=4$, the analogous statement for minimal orientable foliations in genus two with a degenerate saddle.} 
{\color{black} The set $\mathcal{F}$ of \emph{standard} IETs, which  has full measure with respect to the \emph{Lebesgue} measure on $\mathcal{I}_4$ or $\mathcal{I}_5$ (
(see footnote~\ref{Lebfootnote} and \S~\ref{sec:measures} for details) is described by a \emph{Diophantine-type} condition} that we call $(RDC)$. We comment on its nature below (see \S~\ref{DC} and Definition~\ref{def:RDC} for the precise condition).  

We remark that a great part of the intermediate results which are proved to deduce Theorem B (see e.g.~Theorem C and Theorem E stated below), are proved in greater generality, namely for any $d\geq 2$ (and hence for any genus in the language of foliations in Theorem A). The result which is exploited in the proof and reduces the validity of the rigidity conclusion to $d=4,5$ (and respectively genus two foliations) is a result  on existence of wandering intervals for \emph{affine IETs} (and equivalently on the control of Birkhoff sums of piecewise constant observables), which was proved by Marmi, Moussa and Yoccoz in \cite{MMY2} and known only under a technical condition on Lyapunov exponents (which is automatically satisfied for $d=4,5$). 

%\smallskip
{\color{black}
\subsubsection{Regularity of the conjugacy}
The reader familiar with the theory of circle diffeomorphisms will have noticed that Theorem B only gives that the conjugating map is of class $\mathcal{C}^1$. We believe that it should be possible to prove that the regularity is indeed $\mathcal{C}^{1+\alpha}$ (i.e.~that the derivative is $\alpha$-H{\"o}lder) for some $0<\alpha<1$ but that indeed the conjugacy is not typically $\mathcal{C}^2$.  We stress that this is not due to a shortcoming in our approach, rather this loss of regularity is an essential feature of the problem, which corresponds to Forni's and Marmi-Moussa-Yoccoz  non-trivial obstructions to solving the cohomological equation: Marmi-Moussa-Yoccoz have indeed shown that asking for more regular conjugacy forces GIETs to live in positive codimension submanifolds of the $\mathcal{C}^1$-conjugacy class; the codimension is an exact reflection of the aforementioned obstruction. 
In this a sense, GIETs are closer to essentially non-linear rigid dynamical systems, such as unimodal maps and circle map with breaks or critical points, for which the conjugacy is typically no more regular than $\mathcal{C}^1$  and actually $\mathcal{C}^{1+\alpha}$ for some  $0<\alpha<1$ in general).
}

%An analogous rigidity result holds more in general for GIETs with any $d\geq 2$ intervals, under a further technical assumption (namely that the \emph{affine shadow} of $T$ projects on the second positive Lyapunov exponent, see Proposition \ref{}). As we explain below, this assumption is automatically satisfied for $T$ in $\mathcal{I}_d$ for $d=4,5$.  In the next section we also state a partial result towards the rigidity conjecture for any $d>2$. 
%The result from which Theorem B is based (Theorem C stated below) holds on the other hand for GIETs of any number $d\geq 2$ of intervals and shows that the general case of the rigidity conjecture 

%\smallskip
\subsubsection{The boundary assumption}
We  remark that the \emph{boundary condition}  (i.e.~the assumption that $\mathcal{B}(T)$ vanishes) is an essential assumption: two GIET that are topologically conjugate but have different boundaries cannot be differentiably conjugated, simply because the boundary is $\mathcal{C}^1$-conjugacy invariant. We note, for the reader who is familiar with the one-dimensional dynamics literature, that the assumption that $\mathcal{B}(T)$ is zero, in the special case where $T$ is a circle maps with breaks, reduces to the classical assumption that the \emph{non-linearity} $\eta_T$ (see \S~\ref{nonlinearity}) has integral zero and that the special pair $(T_1,T_2)$, where $T_1,T_2$ are the two branches of $T$, corresponds to a diffeomorphism without break points\footnote{The boundary $\mathcal{B}(T)=0$ is indeed a vector in $\mathcal{R}^\kappa$ (see Definition~\ref{GIET:boundary} and \S~\ref{sec:boundarycomb}), where $\kappa:=d-2g$ and $g$ is the genus of any (minimal) surface flow which has $T$ as Poincar{\'e} section, see \S~\ref{gietandfoliations}. Asking that the sum of the entries of   $\mathcal{B}(T)$ is zero is equivalent to asking that   $\int_0^1\eta_T(x)\mathrm{d} x=0$ (see Lemma~\ref{boundaryaffinereduction}); when $T$ is a circle diffeomorphisms with break points (i.e.~when the combinatorics of the GIET is of \emph{rotational type}, the value of the entries of $\mathcal{B}(T)$ are related to the values of the break points, and therefore  asking that $\mathcal{B}(T)$ is the \emph{zero-vector} means asking that there are no breaks, see Remark~\ref{rk:Bbreaks}.}.   The case where  $\mathcal{B}(T)$ does not vanish is equally interesting, and some comments are made in a subsequent paragraph. 

Geometrically, when $T$ is the Poincar{\'e} map of a minimal foliation on a surface, the boundary $\mathcal{B}(T)$ encodes the holonomy around the saddles of the foliation (see \S \ref{foliations}). Thus, the assumption that $\mathcal{B}(T)$ is zero is equivalent to the {asking that the corresponding foliation has trivial holomony around singularities (a condition that is automatic when the singularities are level sets of Morse functions). It is using this remark that Theorem A can be deduced from Theorem B (see \S~\ref{foliations})}.

\textcolor{black}{
\subsubsection{The $\mathcal{C}^0$-conjugacy class in parameter space}
The main result of this article does { not yet give a description of the $\mathcal{C}^0$-conjugacy class of IETs in parameter space. It shows on the other hand that the $\mathcal{C}^0$-conjugacy class of almost every IET agrees with the $\mathcal{C}^1$-conjugacy class.} Marmi, Moussa and Yoccoz conjectured that, for almost every IET, the $\mathcal{C}^1$-conjugacy class is a codimension $(d-1) + (g-1)$ submanifold of class $\mathcal{C}^1$ (see \cite{MMY3}, Problem 1). {As already mentioned earlier, a step towards this conjecture} has been taken in \cite{Selim:loc}, which shows that it is {locally} true for {(hyperbolic) periodic type IETs (see Definition~\ref{def:periodictype})}. The main result of the present article, combined with a  complete proof of {the} conjecture,  would therefore automatically yield also a complete description of the $\mathcal{C}^0$-conjugacy class of almost every {genus two} IET in parameter space.
}

\subsection{A dynamical renormalization dichotomy and the strategy of the proof.}\label{sec:renormalizationintro}
 The proof of the rigidity results (Theorem B and its geometric reformulation in Theorem A)  are based upon renormalization methods. We prove in particular some results on the dynamics of renormalization and its consequences, which we now state, which are valid for infinitely renormalizable GIETs with any number $d\geq 2$ of intervals and we believe are of independent importance. 
%the   main result on the dynamics of renormalization from which they are derived, which  can also be seen as a tool to prove \emph{a priori bounds} for GIETs.
 
 \smallskip 
Let $\mathcal{X}^r_d$ denote the space of all GIETs of class $\mathcal{C}^r$ on $d\geq 2$ intervals  with an irreducible combinatorics (see \S~\ref{combinatorics} and \S~\ref{parameters} for definitions). 
We consider, as renormalization operator $\mathcal{R}: \mathcal{X}^r_d \to \mathcal{X}^r_d$, an acceleration of  Rauzy-Veech induction (which is in turn given by suitable, linearly growing iterates of Zorich acceleration of Rauzy-Veech induction).  The general statement that we prove on the dynamics of this renormalization, which is valid  for any $d\geq 2$ (and hence, correspondingly, for Poincar{\'e} sections of flows on surfaces of any genus) is, informally, the following \emph{dynamical dichotomy} (we refer to Theorem \ref{shadowing} for a precise statement):

\begin{thmc}[A priori bounds or affine shadowing dichotomy]
Let $T$ be  a  GIET in $\mathcal{X}^r_d$, $d\geq 2$,  whose rotation number satisfies a full-measure\footnote{{The measure on (combinatorial) rotation numbers is induced here by the Lebesgue measure on standard IETs, see Definition~\ref{def:fullmeasure} and \S~\ref{sec:fullmeasure} for details.}} Diophantine-type condition which we call $(RDC)$. Then there exists a bounded set $K$ such that one of the two possibility holds.

\begin{enumerate}

\item The iterated renormalizations $(\mathcal{R}^n(T))_{n \in \mathbb{N}}$ are recurrent to the bounded set $K$. 

\item The iterated renormalizations $(\mathcal{R}^n(T))_{n \in \mathbb{N}}$ go to infinity at an exponentially rate, and the orbit $(\mathcal{R}^n(T))$ is well-approximated by that of an affine interval exchange map.

\end{enumerate}
\end{thmc}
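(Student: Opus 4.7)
The plan is to analyse the action of $\mathcal{R}$ on $T \in \mathcal{X}^r_d$ by separating its \emph{linear} (piecewise-affine) and \emph{nonlinear} (distortion) components. To each GIET $T$ one associates the underlying standard IET to which $T$ is semi-conjugate by the Poincar\'e-Yoccoz theorem, together with the log-derivative $\log DT$ (whose derivative is the nonlinearity $\eta_T$). By the chain rule, the log-derivative of $\mathcal{R}^n(T)$, restricted to each continuity interval of the renormalized IET, is, up to the affine normalisation built into $\mathcal{R}$, a Birkhoff sum of $\log DT$ along the Rauzy-Veech tower of step $n$ over the base standard IET. Consequently, the induced action of $\mathcal{R}$ on the piecewise-constant part of this log-derivative is governed by the Rauzy-Veech (equivalently Zorich) cocycle acting on log-slope vectors, while the residual oscillation of $\log D\mathcal{R}^n(T)$ on each renormalized interval is controlled by a distortion quantity built from $\eta_T$.

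First, I would invoke the effective Oseledets behaviour provided by $(RDC)$ to split the relevant Birkhoff sums into their projections onto the stable, central, and unstable Oseledets subspaces of the Rauzy-Veech cocycle. The stable contribution decays exponentially, the central part grows at most sub-exponentially, and the unstable part, when non-zero, grows exponentially at a rate given by a positive Lyapunov exponent. The effectivity provided by $(RDC)$ gives uniform bounds along the orbit on the norms of the Oseledets projectors (equivalently, on the angles between the subspaces), so that the decomposition is usable \emph{quantitatively} at every step $n$. The natural \emph{affine shadow} of $\mathcal{R}^n(T)$ is then the affine IET whose log-slope vector is the piecewise-constant part of this decomposed Birkhoff sum.

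I would next distinguish two mutually exclusive cases according to whether this shadow stays bounded. If the Birkhoff sums of $\log DT$ have trivial, or at most sub-exponentially growing, projection onto the unstable Oseledets subspace, a bounded-distortion argument shows that the log-derivative of $\mathcal{R}^n(T)$ stays uniformly bounded in a suitable norm; the renormalization orbit then returns to, and in fact eventually remains inside, a compact set $K$ of GIETs with uniformly controlled nonlinearity, giving case (1). If instead the unstable projection is non-trivial, the linear part of $\mathcal{R}^n(T)$ diverges at a precise exponential rate, and one compares $\mathcal{R}^n(T)$ to the orbit of its affine shadow: the difference is controlled by the stable and central parts of the cocycle together with the distortion term, all of which are of strictly lower exponential order than the dominant unstable contribution, yielding the shadowing statement of case (2).

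The main obstacle will be controlling the nonlinear correction in the divergent case: one must guarantee that the non-piecewise-constant part of $\log D\mathcal{R}^n(T)$, which could in principle be amplified by the expanding cocycle acting on the linear side, remains of strictly lower exponential order than the affine shadow itself. This is precisely where the effectivity built into $(RDC)$ becomes indispensable: it furnishes explicit Lyapunov spectral gaps and uniform orbitwise control which, combined with a quantitative distortion estimate exploiting the $\mathcal{C}^3$-regularity of $T$ (in the spirit of Denjoy-Schwartz, but adapted to the tower structure of Rauzy-Veech induction and to the higher-dimensional combinatorics of GIETs), allow the nonlinear error to be absorbed into the shadowing term. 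Once this quantitative comparison is in place, the dichotomy follows, and the set $K$ can be taken to be a sublevel set of a norm on the nonlinearity that is adapted to $(RDC)$.
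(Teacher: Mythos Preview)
Your broad architecture is correct and matches the paper's: track the shape log-slope vector $\omega_n$, use distortion to show that $\omega_{n+1}$ differs from $Z_n\omega_n$ by an error of size $O(\|Z_n\|)$ with constant the total non-linearity $|N|(T)$, and then decompose via the Oseledets splitting of the Zorich cocycle. However, two points deserve correction.

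\textbf{You misidentify the main obstacle.} The ``non-piecewise-constant part of $\log D\mathcal{R}^n(T)$'' is not the difficulty: the classical distortion bound (which needs only $\mathcal{C}^2$, not $\mathcal{C}^3$ or any Schwarzian argument) shows immediately that the oscillation of $\log D\mathcal{R}^n(T)$ on each continuity interval is bounded by $|N|(T)$, uniformly in $n$. This gives the linear-approximation estimate $\|\omega_{n_2}-Q(n_1,n_2)\omega_{n_1}\|\le |N|(T)\,\|Q(n_1,n_2)\|$ directly. The hard part is not bounding this error pointwise but ensuring that the accumulated errors, when projected and summed as in the explicit shadow construction $v=\mathrm{P}_u(\omega_0)+\sum_{k\ge1}\widetilde Q(0,k)^{-1}\mathrm{P}_u(\widetilde\omega_k-\widetilde Z_{k-1}\widetilde\omega_{k-1})$, converge \emph{uniformly} along the recurrence subsequence $(k_m)$. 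This is precisely what the forward and backward series conditions $(F)$ and $(B)$ in the $(RDC)$ are engineered to give; ``effective Oseledets'' alone is not enough.

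\textbf{There is a genuine gap concerning the central component.} You write that the central part ``grows at most sub-exponentially'' and leave it at that. But in Case~(1) you need $\|\omega_{n_{k_m}}\|$ to be \emph{bounded}, and sub-exponential growth of the central projection does not give this. The paper closes this gap by invoking the boundary operator $\mathcal{B}$ of Marmi--Moussa--Yoccoz: one has $\mathcal{B}(T^{(n)})=\mathcal{B}(T)$ for all $n$ (renormalization invariance), the boundary map $B$ vanishes on $\Gamma_s^{(n)}$ and is exponentially small on $\Gamma_u^{(n)}$, and $B$ restricted to $\Gamma_c^{(n)}$ is an isomorphism onto $\mathbb{R}^\kappa$ with norm controlled by the angle $\angle(\Gamma_c^{(n)},\Gamma_s^{(n)}\oplus\Gamma_u^{(n)})$. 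Combining this with the angle condition $(A)$ of the $(RDC)$ and the bound on the profile part of $\log D\varphi^n$ yields a genuine bound on $\|\omega^c_{n_{k_m}}\|$ when $v=0$, and the estimate $\|\omega^c_n\|=o(\|\omega^u_n\|^\epsilon)$ when $v\neq 0$. Without this ingredient your dichotomy does not close in any stratum with $\kappa>1$.
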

\noindent  The notion of \emph{full measure} is defined \S~\ref{sec:fullmeasure} (see in particular Definition~\ref{def:fullmeasure}). We comment below (in \S~\ref{sec:DCintro}) on the nature of the Diophantine-type condition. Let us say here though that this full measure condition includes in particular as a (measure zero) special case all \emph{periodic-type} combinatorics (also known as \emph{Fibonacci-type} combinatorics in the one-dimensional literature, see \S~\ref{sec:periodictype} for definitions). A proof of Theorem C in this special case is much easier and is included both for didactical purposes and for the reader not interested in the technical  subtlety of Rauzy-Veech induction (see section \S~\ref{sec:periodiccase}); the periodic-type case (see Definition~\ref{def:periodictype})  also yields a stronger conclusion, namely the approximation in $(2)$ is up to a bounded error (see Proposition~\ref{asymptotic}). 

In the  first case, Case $(1)$, which we call the \emph{recurrent} case, one can show that \emph{a priori bounds} on iterates of renormalization hold (see Proposition~\ref{apriori}).  The heart of the work in  Case $(2)$ is to construct a vector $v=(v_1,\dots, v_d)\in \mathbb{R}^d$, that we call the \emph{affine shadow}. This vector is such that $e^{v_i}$, for $1\leq i\leq d$, are the slopes of an affine IET whose orbit under renormalization  gives the \emph{leading divergent behaviour} of the orbit of $T$ (see Theorem~\ref{shadowing} for a precise statement).   Thus, the quantities $v_i$, $1\leq i\leq d$ play the role of \emph{geometrical scaling invariants} associated\footnote{We remark though that the shadow $v\in \mathbb{R}^d$ is not uniquely defined, but its \emph{unstable component} (which leaves in a space of dimension $g$) is: 
two shadows $v_1,v_2$ of the same GIET $T$ differ by an element of the central stable space $E_{cs}(T_0)$ of the Oseledets filtration of the standard IET $T_0$ semi-conjugated to $T$, see \S~\ref{sec:Oseledetsgeneric}.} to $T$.   \textcolor{black}{Theorem C is an instance of study of an infinitely renormalizable dynamics, whose orbit diverges in parameter space. Even in this context, describing the \emph{way} in which divergence occurr proves to be helpful to control the dynamical behaviour of the system.   
An interesting occurrence of this phenomenon in one-dimensional dynamics, similar at least in spirit, has been recently analysed for certain Cherry flows, in the work of Martens and Palmisano \cite{MartensPalmisano}. }
%A different example of this philosophy in parabolic dynamics can be found in the work \cite{FF:hi} on higher dimensional nilflows.}

\subsubsection{Wandering intervals and a priori bounds}
Another key step of the proof is to show that, if one can prove that the affine IET that shadows $T$ given by $(2)$ has wandering intervals by showing that the dynamical partitions associated to the AIET are \emph{exponentially distorted} (a geometric notion that we define in \S~\ref{sec:wandering}, see Definition~\ref{def:expdistorted}), then also the GIET $T$ has wandering intervals (see Proposition~\ref{reduction}). Thus, the problem of existence of wandering intervals for GIETs is reduced by our work to a question\footnote{We remark though that it is not sufficient for us to simply show that the affine shadow has wandering intervals, but we need to show that this happens in a \emph{special way}, namely one needs to show the Birkhoff sums estimates proved by Marmi-Moussa and Yoccoz in \cite{MMY2}, as stated in Proposition~\ref{AIETwi}, or, equivalently, that dynamical partitions are \emph{exponentially distorted} in the sense of Definition~\ref{def:expdistorted}.  It is possible that this is indeed the only way in which a wandering interval can appear in an affine interval exchange transformation, but this may be difficult to prove.}  concerning \emph{affine} interval exchange transformations, {or more precisely Birkhoff sums of piecewise constant functions over standard IETs.}

Since Marmi, Moussa and Yoccoz have shown that a large class of AIETs have exponentially distorted towers and hence wandering intervals (see \S~\ref{sec:AIETwi} and in particular Proposition~\ref{AIETwi}), it follows that all GIETs which are shadowed by AIETs in this class (which includes typical AIETs for any $d\geq 2$, see  Proposition~\ref{AIETwi}) have wandering intervals.  
When the number of exchanged intervals is $d=4$ or $d=5$ (i.e.~when the GIET is a Poincar{\'e} section of a minimal flow on a genus two surface), the result by  Marmi, Moussa and Yoccoz \cite{MMY2} include in particular all AIETs with divergent shadow\footnote{More precisely, the condition $v\in E_2(T)\backslash E_3(T)$ in Proposition~\ref{AIETwi} is automatically satisfied, see the proof of Theorem~\ref{sec:proofGIETrigidity}.}. Thus, in this case, assuming that $T$ is minimal (an assumption which rules out the presence of wandering intervals) forces $T$ to be recurrent, i.e.~Case $1$ of the dynamical dichotomy given by Theorem C to hold. Thus, we can deduce in this case a result on \emph{a priori bounds}:

\begin{thmd}[A priori bounds in genus two]
If $T$ is  a \emph{minimal} GIET in $\mathcal{X}^r_d$ with $d=4$ or $d=5$ whose rotation number satisfies the full-measure  condition $(RDC)$, then the acceleration  $\mathcal{R}$ of the Rauzy-Veech renormalization satisfies \emph{a priori bounds}, namely there exists a constant $K>0$ such that the iterates $\mathcal{R}^m(T)$ of $T$ under renormalization satisfy  
$$K^{-1} \leq ||\D\mathcal{R}^m(T)||_{\infty} \leq K, \qquad \textrm{for\ all}\  m \in \mathbb{N},$$ where $\Vert \cdot \Vert_{\infty}$ denotes the sup norm on $I=[0,1]$.  
%$$  K^{-1} \leq ||\D\mathcal{R}^m(T)|| := ||\mathrm{D}T^{(n_{k_m})}|| = || \mathrm{D} T_{n_{k_m}} ||\leq K_2(T) , \qquad \text{for\ all}\ m \in \mathbb{N} . $$
\end{thmd}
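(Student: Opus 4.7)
The plan is to apply the dichotomy of Theorem C to $T$ and to exclude the affine shadowing alternative by combining it with the wandering interval results of Marmi-Moussa-Yoccoz specialised to genus two. Since $T$ satisfies $(RDC)$, Theorem~\ref{shadowing} applies and gives that the iterates $(\mathcal{R}^n(T))_{n\in\N}$ either \textbf{(1)} recur to a bounded set in $\mathcal{X}^r_d$, or \textbf{(2)} diverge to infinity, with the orbit $(\mathcal{R}^n(T))$ well-approximated (up to lower order terms) by the renormalization orbit of an affine IET whose log-slope vector $v\in \R^d$ is the affine shadow.

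The core of the argument is to rule out Case (2) using minimality. Suppose for contradiction that $T$ admits a nontrivial affine shadow $v$, and let $T_0$ denote the standard IET semi-conjugated to $T$. Since $d\in\{4,5\}$ corresponds to genus $g=2$, the strictly positive part of the Lyapunov spectrum of the Kontsevich-Zorich cocycle is two-dimensional, so the Oseledets filtration reduces to $E_1(T_0)\supsetneq E_2(T_0)\supsetneq E_3(T_0)$, with $E_3(T_0)$ equal to the central-stable subspace; any divergent shadow $v$ therefore necessarily satisfies $v\in E_2(T_0)\setminus E_3(T_0)$. This is precisely the hypothesis of Proposition~\ref{AIETwi}, whose conclusion is that the dynamical partitions of the affine IET with log-slope vector $v$ are exponentially distorted in the sense of Definition~\ref{def:expdistorted}. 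Proposition~\ref{reduction} then transfers this exponential distortion from the affine shadow to the GIET $T$ itself, producing a wandering interval for $T$. This contradicts the minimality assumption, so Case (2) cannot occur.

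It follows that $(\mathcal{R}^n(T))_{n\in\N}$ recurs to a bounded set, and Proposition~\ref{apriori} then converts this recurrence into the desired uniform estimate $K^{-1}\leq \norm{\D\mathcal{R}^m(T)}_\infty \leq K$ valid for every $m\in\N$.

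The main obstacle lies not in the final syllogism above, but in the prerequisites being invoked: the dichotomy of Theorem C requires a quantitative control of the Rauzy-Veech acceleration along $(RDC)$ orbits together with an effective Oseledets-genericity statement to produce the shadow, and the reduction lemma Proposition~\ref{reduction} linking exponential distortion of the affine shadow to the existence of a wandering interval in $T$ is itself nontrivial. What is genuinely special to genus two is that the positivity hypothesis $v\in E_2(T_0)\setminus E_3(T_0)$ of Proposition~\ref{AIETwi} is automatic for every divergent shadow; this is exactly the feature that fails in higher genus and which prevents the present argument from producing a priori bounds for $d\geq 6$.
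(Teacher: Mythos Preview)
Your approach matches the paper's exactly: apply the dichotomy of Theorem~\ref{shadowing}, rule out Case~2 in genus two via Proposition~\ref{AIETwi} and Proposition~\ref{reduction} (which together contradict minimality), and conclude Case~1 plus Proposition~\ref{apriori}.

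There is, however, one genuine gap in your argument. You write that because the positive Lyapunov spectrum is two-dimensional, ``any divergent shadow $v$ therefore necessarily satisfies $v\in E_2(T_0)\setminus E_3(T_0)$.'' The word \emph{therefore} is doing too much work. What the shadowing theorem gives you is $v\in E_u$, the unstable space; in genus two this only yields $v\in E_1(T_0)\setminus E_3(T_0)$. Nothing in the dimensional count alone prevents $v$ from having a nonzero component along the top exponent $\theta_1$. The paper supplies the missing step explicitly (in the proof of Theorem~\ref{maintheorem}): if $v$ projected onto the leading Oseledets direction, then for some large $n$ all entries of $v^{(n)}=Z^{(n)}v$ would have the same sign and be large, hence by the shadowing estimate the same would hold for $\omega_n$, forcing all slopes $\rho_n=\exp(\omega_n)$ of $\mathcal{R}^n(T)$ to be simultaneously $>1$ (or all $<1$), which is impossible for a GIET of $[0,1]$. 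Only after this argument can you conclude $v\in E_2(T_0)\setminus E_3(T_0)$ and invoke Proposition~\ref{AIETwi}.
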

{\noindent We refer to \S~\ref{sec:apriori} (in particular Propostion~\ref{apriori}) for a more precise formulation. }

\noindent Generalizing the aforementioned result by Marmi, Moussa and Yoccoz \cite{MMY2} (in particular Proposition~\ref{AIETwi}) to cover all divergent shadows\footnote{More precisely, one needs to show that Proposition~\ref{AIETwi} holds for any shadow which has a projection on a positive Lyapunov exponent, which is not necessarily the second as in the case when one assumes that $v\in E_2(T)\backslash E_3(T)$.} for a full measure set of AIETs with any $d\geq 2$ will remove the restriction that $d=4,5$ from the statement of Theorem D.   Notice  on the other hand that \emph{no  boundary condition}  appears in Theorem D, nor in Theorem C.  
The assumption that the boundary $\mathcal{B}(T) $ is zero is indeed only required when we proceed to prove a conjugacy regularity result.

\subsubsection{Boundary obstructions and convergence of renormalization}
The next conceptual step of our proof is to show that, when one is in Case $(1)$, namely the recurrent case of the dynamical dichotomy Theorem C (for example because one has ruled out case $(2)$ by showing that it would imply the presence of wandering intervals and hence non-minimality), one can prove results on exponential convergence of renormalization. More precisely, we show the following result, which holds for any $d\geq 2$:

\begin{thme}[Exponential convergence of renormalization]
Let  $T$ be a GIET in $\mathcal{X}^r_d$  whose rotation number satisfies the full-measure  condition $(RDC)$. Assume that $T$ satisfy the conclusion $(1)$ of Theorem C and that the boundary $\mathcal{B}(T)$ is zero. Then the orbit $(\mathcal{R}^m(T))_{m\in\mathbb{N}}$ of $T$ under renormalization converges exponentially fast, in the $\mathcal{C}^1$ distance, to the subspace $\mathcal{I}_d$ of (standard) IETs.
\end{thme}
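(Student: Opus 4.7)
The plan is to reduce $\mathcal{C}^1$-convergence of $\mathcal{R}^m(T)$ to $\mathcal{I}_d$ to exponential decay of $\Vert \log D\mathcal{R}^m(T)\Vert_\infty$, since a standard IET is characterized by $DT \equiv 1$ and since the a priori bounds from Case $(1)$ of Theorem C (i.e., the conclusion of Theorem D) give uniform control on the derivatives and the distortion of the branches, making the $\mathcal{C}^1$-distance to $\mathcal{I}_d$ essentially controlled by the sup norm of $\log D \mathcal{R}^m(T)$. The problem then becomes an ergodic-theoretic statement about the slopes of iterated renormalizations.

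By the very definition of the accelerated Rauzy-Veech induction, on each continuity interval of $\mathcal{R}^m(T)$ one can represent, via the affine rescaling $\psi_m$ mapping the induced interval $I^{(m)}$ onto $[0,1]$, the logarithmic derivative as a Birkhoff-type sum of $\log DT$ along a return orbit of $T$ of length $r^{(m)}_i$:
\begin{equation*}
\log D\mathcal{R}^m(T)(y) \;=\; \sum_{k=0}^{r^{(m)}_i - 1} \log DT\bigl(T^k(\psi_m^{-1}(y))\bigr).
\end{equation*}
I would then decompose this sum using the Oseledets filtration of the accelerated Rauzy-Veech cocycle acting on the space of piecewise constant observables (which is the linearization of $\mathcal{R}$ along the IET subspace $\mathcal{I}_d$). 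The effective Oseledets estimates provided by the $(RDC)$ condition --- which is the very reason one accelerates Rauzy-Veech in the definition of $\mathcal{R}$, as signalled in the abstract --- yield uniform exponential contraction on the stable subspace, with a definite rate given by the second Lyapunov exponent of the cocycle.

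The crucial algebraic step is to identify the central/unstable projection of these Birkhoff sums with the boundary invariant $\mathcal{B}(T)$. This identification should follow from the Marmi-Moussa-Yoccoz boundary-operator formalism already invoked in the definition of $\mathcal{B}$, combined with a quantitative Gottschalk-Hedlund-type argument that exchanges the obstruction-theoretic content of $\mathcal{B}$ for a statement about the projection of $(\log DT)$-Birkhoff sums on the non-contracting part of the Oseledets splitting: this is the exact analogue, in the GIET setting, of the mean-zero hypothesis in the classical Denjoy-Koksma inequality for circle diffeomorphisms. Under the hypothesis $\mathcal{B}(T) = 0$, the non-contracting component of the Birkhoff sum therefore vanishes, leaving only an effectively contracting part; together with the distortion control coming from the a priori bounds (which absorbs coboundary-type error terms), this yields the desired exponential decay of $\Vert \log D\mathcal{R}^m(T)\Vert_\infty$.

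The main obstacle is precisely the identification step: translating the cohomological/obstruction-theoretic content of $\mathcal{B}(T)=0$ into an effective cocycle statement about Birkhoff sums of $\log DT$. This is delicate because one does not have at disposal a globally $\mathcal{R}$-invariant measure on $\mathcal{X}_d^r$ --- only the orbitwise recurrence to the bounded set $K$ furnished by Case $(1)$ of Theorem C --- so the standard ergodic Gottschalk-Hedlund machinery has to be replaced by a quantitative, orbitwise argument leveraging both the a priori bounds and the effective Oseledets regime granted by $(RDC)$. Once this identification is in place, the reduction in the first paragraph combines with Step~2 and the vanishing of the central/unstable component to yield exponential $\mathcal{C}^1$-convergence of $(\mathcal{R}^m(T))_{m\in\mathbb{N}}$ to $\mathcal{I}_d$.
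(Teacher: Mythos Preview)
Your plan has a genuine gap at the point where you ``decompose this sum using the Oseledets filtration of the accelerated Rauzy-Veech cocycle acting on the space of piecewise constant observables''. The function $f=\log DT$ is \emph{not} piecewise constant, so the Zorich cocycle does not act on it and no Oseledets decomposition is directly available. The paper bridges this by replacing the special Birkhoff sum $f^{(m)}=\log D\mathcal{R}^m(T)$ with the shape log-slope vector $\vect{\omega}_m$ (a piecewise constant approximant), but the resulting error is bounded only by the total non-linearity $|N|(\mathcal{R}^m T)$ (see the linear-approximation Lemma~\ref{lemma3}). The a priori bounds from Case~(1) give that this quantity is \emph{bounded}, not that it decays; if it does not decay, the ``coboundary-type error terms'' you hope to absorb by distortion swamp the Oseledets contraction, and the argument stalls.

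The actual proof in Section~\ref{sec:convergence} therefore devotes most of its work to forcing $|N|(\mathcal{Z}^n T)\to 0$ exponentially \emph{before} any cocycle argument can be run. This requires three substantial intermediate steps that your outline bypasses: (i) exponential decay of the mesh of the dynamical partitions (a geometric argument using good return times plus a priori bounds, \S\ref{sec:sizePcontrol}); (ii) convergence to Moebius IETs via a Schwarzian-derivative estimate (\S\ref{sec:Moebius}), which is where the $\mathcal{C}^3$ regularity is used; and (iii) convergence to AIETs via a combinatorial cancellation lemma for the non-linearity (\S\ref{AIETs}). Only once $|N|(\mathcal{Z}^n T)$ is known to decay exponentially does the paper run the argument you describe (\S\ref{IETsconvergence}), and even there the role of $\mathcal{B}(T)=0$ is narrower than you suggest: it controls only the \emph{central} component of $\vect{\omega}_n$ (via the fact that $B$ is injective on $\Gamma_c^{(n)}$, Lemma~\ref{lemma7}), while the \emph{unstable} component is killed not by the boundary but by the tension between Oseledets expansion and the boundedness of $(\vect{\omega}_{n_{k_m}})_m$ coming from Case~(1) (Lemma~\ref{convunstable}).
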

\noindent The precise formulation of the theorem and the definition of $\mathcal{C}^1$ distance are given in \S~\ref{sec:convergence} (see in particular Theorem~\ref{convergencethm} and \S~\ref{sec:distances}). 
In this case, we can then conclude, using classical arguments, that $T$ is $\mathcal{C}^1$-conjugated to a standard IET with the same rotation number (as shown in \S~\ref{sec:conjugacy}). 

We consider the proof of Theorem E  to  be a streamlined  presentation and generalization to GIETs of the now classical theory of Herman \cite{Herman} for circle diffeomorphisms. Some of these steps are well known in the literature on circle diffeomorphisms with singularities or are folklore, other require some variations of the arguments which are specially required to deal with the increased complexity of GIETs.  

We first show that, under the assumptions of Theorem E, the dynamical partitions associated to the GIET (whose definition is given in \S~\ref{dynamicalpartitions}) converge exponentially fast to the trivial partition into points (i.e.~their \emph{mesh}, or the size of the largest interval, decay exponentially).  {This can be seen also as a generalization of the arguments by Cunha and Smania in \cite{CS:ren} for a measure zero class of circle diffeos with break points  (those which correspond to \emph{bounded-type}, rotational GIETs)  to almost every rotation number and, more in general, to almost every GIET  which satisfies a priori bounds thanks to the recurrence given by the conclusion of Case $(1)$ of Theorem C.}

Exponential decay of the mesh can be used, as in the classical theory of circle diffeomorphisms and its extensions to diffeos with singularities, to show that iterates of renormalization converge to the space of Moebius IETs (GIETs whose branches are Moebius functions, see~\ref{def:MIET}). These first two steps do not require the assumption that  $\mathcal{B}(T)$ is zero. 

The boundary assumption becomes essential to proceed further. Indeed, requiring that $\mathcal{B}(T) $ is zero  restricts us to a positive codimension, renormalization invariant subset of the total space of GIETs which contains standard IETs. We call this the \textit{linear regime}, in contrast to the  \textit{non-linear regime} (see \S~\ref{sec:convergence} for the precise definitions).  
 In the linear  regime we show indeed that one is attracted to the space of \emph{affine} IETs first,  and actually, in a second step, to the space of standard IETs $\mathcal{I}_d$. 
%and to the best-knowledge of the authors, very little is known in that context.

The distinction between \emph{linear} (boundary zero) and \emph{non-linear} (boundary non-zero) regime  is a generalisation of the difference between standard circle diffeormorphisms and \textit{circle maps with breaks}, whose renormalization theory extends in a non-trivial way that of circle diffeomorphisms. We believe that the study of GIETs and renormalization in the \textit{non-linear regime} is also very interesting and, to the best of the authors knowledge, very little is known in this regime. The renormalization dynamics has in this case  a natural attractor which is the set of \textit{Moebius} IETs,   but the dynamics of the renormalization operator in that case is much more intricate to analyse.

\subsubsection{The {\color{black}{Diophantine-type}} condition}\label{sec:DCintro} Finally, we comment on the  Diophantine-type condition  appearing in Theorem C (which is also the full measure  condition implicitely underlying Theorem A and Theorem B).  The full measure condition, that we call \emph{Regular Diophantine Condition}, or $(RDC)$ for short, is formulated in terms of the Zorich (also known as Zorich-Kontsevich) cocyle over the induction. At each step of renormalization one can associate  a matrix $Z(n) \in \mathrm{SL}(d, \mathbb{Z})$. These matrices can be considered as a multi-dimensional generalisations of the coefficients $a_n$ appearing in the continued fraction expansion of a rotation number. The Diophantine-type condition has two aspects (as many Diophantine-type conditions introduced for IETs and GIETs, see e.g.~\cite{MMY, MarmiYoccoz}): 

\begin{enumerate}
\item A \emph{growth} condition, which straightforwardly generalises arithmetic, Diophantine-type conditions in the genus $1$; one asks that the matrices $Z(n)$ do not grow too fast (subexponentially with $n$ in our case).
\smallskip
\item A \emph{Oseledets} aspect, which is specific to the higher genus case: we demand that the product of the matrices $Z(n) \cdots Z(2) Z(1)$ is generic with respect to Oseledets theorem in a \emph{quantitative} way. 
\item A \emph{quantitative recurrence} aspect, where certain series depending on whole history of the  Zorich-Kontsevich cocycle are required to be uniformely bounded along a subsequence of renormalization iterates.
\end{enumerate} 
Our condition { is in part} reminiscent of the  Roth type condition and restricted Roth type conditions introduced by Marmi-Moussa-Yoccoz (see \cite{MMY} and \cite{MarmiYoccoz} respectively) and fairly similar in spirit (Roth type conditions also have a growth condition, usually denoted condition $(a)$, as well as further conditions, like condition (b) and (c) in the standard Roth type condition, which can be inferred from Oseledets genericity), but it is significantly more restrictive. 

First of all, we require  a \emph{quantitative} version of the conclusion of Oseledets theorem, in which the convergence is made \emph{effective} (see \S~\ref{sec:effectiveOseledets}). For technical reasons, we work with the natural extension (by choosing an arbitrary past for the rotation number) and require the existence of an effective Oseledets generic extension. When the (extended) rotation number is generic with respect to this effective version of Oseledets,  one can show that certain series, which depend on the whole matrices of the cocycle (explicitely given by  the \emph{forward} series \emph{backward} series $(F)$ and $(B) $ in the Definition~\ref{def:RDC} of the $(RDC)$ condition), are \emph{finite}. The above mentioned \emph{recurrence} amounts to the request that infinitely often, along a linearly growing subsequnce of times of the Zorich acceleration, these series are \emph{uniformely bounded}.  Conditions of similar (albeit simpler) nature on standard IETs were used by the second author in her work \cite{Ul:abs} on absence of mixing for special flows over IETs and appear as well in recent results in her joint work with K.~Fraczek \cite{FU:gro} on deviations of Birkhoff averages for locally Hamiltonian flows. 

Examples of arithmetic conditions on classical rotation numbers which do not depend only on the asymptotic behaviour of the continued fraction entries (as \emph{Diophantine} or \emph{Roth-type} conditions) but instead  depend on the \emph{whole} record of the continued fraction entries are for example the \emph{Brjuno}-condition (see e.g.~\cite{Yo:CIME}) or the \emph{Perez-Marco} condition \cite{PM:sur}.  
%{\color====== add ref!\footnote{\color{red} One possibility is Fayad Survey on Yoccoz contribution on la Gazette, it defines Brjuno and at leasts mentions Perez-Marco with a reference, so it could be an easy solution!}} 
In the theory of circle diffeos, conditions which require recurrence to a set of rotation numbers with this type of control on the whole history seem to appear in global rigidity results, see for example the Condition $(H)$ defined by Yoccoz (see \cite{Yo:CIME}).  
%{\color{red} Add ref to CIME lecture notes, only place where apparently it is published according to Marmi} 
%as well as the optimal condition for the conjugacy problem in finite smoothness described  by Teplinsky and Khanin in \cite{KT:Her}. 

While our condition is full measure (in the sense of Definition~\ref{def:fullmeasure}), it is {likely} not optimal. It would be interesting, but { probably} very difficult, to describe the optimal Diophantine-type condition for a GIET to satisfy the dynamical dichotomy in Theorem C. 

\subsection{Organization  of the paper and reading guide.} 
\noindent In the \emph{background} Section~\ref{background} we give {basic definitions, in particular defining GIETs (as well as IETs, affine IETs and Moebius IETs), Rauzy-Veech induction for GIETs, infinitely renormalizable GIETs, irrationality and rotation numbers. We also  summarize a number of classical tools and results which are used in the rest of the paper. These include both tools from the classical theory of circle diffeos and one dimensional dynamics (such as distorsion, distorsion bounds, non-linearity and Schwarzian derivative) as well as renormalization tools for IETs and GIETs related to Rauzy-Veech induction, such as Zorich acceleration, invariant measures for the dynamics on parameter spaces, dynamical partitions and Rohlin towers produced by Rauzy-Veech induction, special Birkhoff sums and decomposition of special Birkhoff sums. This section does not contain any new result. The reader familiar with one or both these backgrounds can skip this section or read it only quickly as a notational reference.

\smallskip
\noindent Section $2$ contains the precise formulation and the proof of the \emph{dynamical dichotomy} stated informally in this introduction as Theorem C. In \S~\ref{sec:periodiccase} we  first state and prove a (stronger) dynamical dichotomy in the special case of \emph{bounded type} rotation numbers (or Fibonacci combinatorics), defined in \S~\ref{sec:periodictype}. This proof can be skipped by the reader interested in the full measure result. We decided to present it first, even though it lenghten the paper, since it can be accessible to the reader that is not familiar with Rauzy-Veech induction  and already present all the key difficulties and ideas of the general proof. The general case requires the definition of full measure set of GIETs and rotation numbers and the definition of the Regular Diophantine Condition $(RDC)$, which are given in \S~\ref{sec:generalDC}
In \S~\ref{sec:general} we can then give the precise formulation of Theorem C in the general case, which is Theorem~\ref{shadowing}. The rest of the section is devoted to the proof. An outline of the main steps of the proof are given in \S~\ref{sec:outline}.
\smallskip

\noindent The main result of  Section $3$ is Theorem E on \emph{exponential convergence of renormalization} in the recurrent case. The proof takes the whole section and is split in several steps, such as a priori bounds (\S~\ref{sec:apriori}) exponential decay of the dynamical partitions mesh in \S~\ref{sec:sizePcontrol} and convergence first to Moebius IETs in \S~\ref{convergenceMoebius}, then to AIETs (see \S~\ref{convergenceaffine}) and finally to IETs in \S~\ref{IETsconvergence}. 

\smallskip
\noindent In Section $4$, we prove the \emph{rigidity result for GIETs}, namely Theorem B of this introduction. On one hand we prove that, when one has exponential convergence of renormalization and the $(RDC)$ Diophantine-type condition, one can deduce that the conjugacy is $\mathcal{C}^1$. This is done in \S~\ref{sec:conjugacy}. On the other hand, in \S~\ref{sec:wandering}, we {deduce}  the existence of wandering intervals for a GIET from  \emph{exponential distorsion} of the dynamical partitions of the affine shadow, see Proposition~\ref{reduction}, stated in in \S~\ref{sec:reductionstatement}  and proved in  \S~\ref{sec:reductionproof}. 
Combined with the results on wandering intervals proved by Marmi-Moussa and Yoccoz (recalled in \S~\ref{AIETwi}), this allows us to finish the proof of the rigidity result for GIETs as well as Theorem D on a priori bounds in genus two (in \S~\ref{pf:thmD}).

\smallskip
\noindent In Section $5$ we prove Theorem A on foliations on surfaces of genus two. We first define foliations, their regularity and their holomomies. We then deduce Theorem A for foliations in genus two from Theorem B on GIETs with $d=4,5$.

\smallskip
\noindent
In the Appendix, we include for convenience of the reader the proof of the (extension of) some classical results, such as the distorsion bounds for GIETs, the comparision between some of the distances used in Section~\ref{sec:convergence}, as well as some results from \cite{Selim:loc}, in particular on Lipschitz regularity of the renormalization operator, which are used in Section~\ref{sec:convergence}.

\section{Background material}\label{background}
\subsection{Interval exchange transformations}\label{giet} 
The piecewise differentiable maps which arise as Poincar{\'e} maps of a smooth, orientable foliation on a transversal interval are know as \emph{generalized interval exchange transformations}\footnote{The name \emph{generalized} interval exchange maps is used since they generalize \emph{interval exchange transformations} (see Definition~\ref{def:IET} below), which appear as Poincar{\'e} sections of \emph{measured foliations} on transverse intervals, in suitably chosen coordinates.}. 

\subsubsection{Generalized interval exchange transformations}\label{sec:giets}
Let us start by recalling the  definition of generalized interval exchange transformations, or, for short, GIETs.
\begin{definition}[{\bf GIETs}]
\label{def1}
Let $d\geq 2$ be an integer and $r$ a positive real number. A $\mathcal{C}^r$-generalized interval exchange transformation (GIET) of $d$ intervals, or for short a $d$-GIET of class $r$, is a map $T$ from the interval $[0,1]$ to itself such that:

\begin{itemize}

\item[(i)] there are two partitions (up to finitely many points) of $[0,1] = \bigcup_{i=1}^d{I^t_i} =  \bigcup_{i=1}^d{I^b_i} $  of $[0,1]$ into  $d$ open disjoint subintervals, called the \emph{top} and \emph{bottom} partition; the subintervals are denoted respectively $I_i^t$, for $1\leq i \leq d$, and $I_i^b$, for $1\leq i \leq d$;% in their natural order from left to right inside $[0,1]$; 
\smallskip

\item[(ii)]  for each $1\leq i\leq d$, $T$ restricted to $I_i^t$ is an orientation preserving diffeomorphism onto $I_{i}^b$ of class $\mathcal{C}^r$;
\smallskip

\item[(iii)] $T$ extends to the closure of $I_i^t$ to a $\mathcal{C}^r$-diffeomorphism onto the closure of $I_{i}^b= T(I_i^t)$.

\end{itemize}

\end{definition}
We will call the restriction $T_i:=T|I^t_i$ of $T$ onto $I^t_i$, for $1\leq i\leq d$, a \emph{branch} of $T$. We think of $j\in \{1,\dots, d\}$ as the \emph{label} of the intervals $I_j^t$ and $I_j^b=T(I_j^t)$ and denote by $\mathcal{A}:=\{ 1,\dots, d\}$ be the alphabet consisting of labels.  Notice that $T$ is by construction invertible and that the inverse $T^{-1}$ is also a $\mathcal{C}^r$-GIET, for which the top and bottom partition are reversed.

\subsubsection{Standard, affine and Moebius IETs}\label{affine}
 Special cases of generalized interval exchange transformations include \textit{standard} interval exchange transformations (IETs),   \textit{affine} interval exchange transformations (AIETs) and Moebius interval exchange transformations (MIETs):

\begin{definition}[{\bf IETs}]\label{def:IET}
A GIET $T$ is an (\emph{standard}) \emph{interval exchange transformation} or a $IET$ if $|I^t_i|=|I^b_t|$ for every $1\leq i\leq d$ and the branches $T_i$ of the map $T$, for every $1\leq i\leq d$, are assumed to be \emph{translations}, i.e.~of the form $x\to x+\delta_i$ for some $\delta_i\in\mathbb{R}$.
\end{definition}

%The study of IETs and their ergodic properties is a very rich area of ergodic theory, which started insee for example \cite{}
\begin{definition}[{\bf AIETs}]\label{def:AIET}
A GIET $T$ is an \emph{affine interval exchange transformation} or an $AIET$ the branches $T_i$ of the map $T$, for $1\leq i\leq d$,  affine map, i.e.~of the form $x\mapsto a_i x + b_i$ for some $a_i,b_i \in \mathbb{R}$.
%  for which $T$ is an affine map restricted to the $I_i^t$s.
\end{definition}

\begin{definition}[{\bf MIETs}]\label{def:MIET}
A \emph{Moebius IET} $T$ is a generalized interval exchange transformation $T$  such that the branches $T_i$, for $1\leq i\leq d$,  are  restrictions of \emph{Moebius maps}, i.e.~maps of the form
$$ x \mapsto m(x):=\frac{ax +b}{cx +d}, \qquad \text{where}\quad ad - bc>0.$$
\end{definition}

\noindent  
% A \textit{Moebius interval exchange transformation} (MIET) is a GIET whose branches are restrictions of Moebius maps. We will denote by 

Interval exchange transformations appear naturally as Poincar{\'e} first return maps of  orientable foliations on a surface on transversal segments. The discontinuities arise indeed from points on the interval which hit a singularity of the foliation (or an endpoint of the transversal interval) and therefore do not return to the transversal, while the intervals $I_j^t$ are continuity intervals of the Poincar{\'e} map. The smoothness $r$ of the branches depends on the regularity of the foliation. When the foliation is a measured foliation, one can choose coordinates so that the Poincar{\'e} map is a standard IET, while affine IETs are Poincar{\'e} maps of \emph{dilation surfaces} (see for example the survey \cite{Selim:survey}).

\subsubsection{Combinatorial data}\label{combinatorics}
To encode the \emph{order} of the intervals (from left to right) at the top and bottom partition of a GIET, we adopt the convention (which became standard after its introduction in \cite{MMY}) of using \emph{two permutations}, $\pi_t$ and $\pi_b$ of $\{1,\dots, d\})$: $\pi_t$ (resp.~$\pi_b$) describes the order of the intervals in the top (resp.~bottom) partition, so that, in order from left to right, they are 
$$I^t_{\pi_t(1)}, I^t_{\pi_t(2)},\dots, I^t_{\pi_t(d)} \qquad \text{(resp.~} I^b_{\pi_b(1)}, I^b_{\pi_b(2)},\dots,I^b_{\pi_b(d)}).$$ 
We call the datum $\pi:=(\pi_t,\pi_b)$ of these pairs of permutation the \emph{combinatorial datum} of $T$, or simply the \emph{permutation} of $T$ (by abusing the terminology, even though it is a actually a \emph{pair} of permutations). The composition {$\pi_b^{-1}\circ \pi_t$}, also called \emph{monodromy}, is a permutation (in the classical sense) which encodes how $T$ rearranges the partition intervals. The choice of keeping track of a \emph{pair} of permutations (instead than only the monodromy, which was used classically for IETs (see \cite{Ve:gau} or \cite{Ra:ech}), 
%there exists a permutation  $\pi \in \mathfrak{S}_d$ of $\{1,\dots, d\}$, also called combinatorial datum,\footnote{In the more recent literature (starting from \cite{MMY}, see for example \cite{}) it is often preferred to label the intervals with an alphabet $\mathcal{A}$ of cardinality $d$ and work not with one but with a \emph{pair} of permutations $(\pi^t,\pi^b)$, each from $\mathcal{A}$ to $\{1,\dots, d\}$,  which encore respectively the order of the intervals of the top and bottom partition.
% which is by now became standard, which uses not a single permutation but a \emph{pair} of permutations to describe the combinatorial datum of a GIET, since this notation
 allows to keep track of labels of intervals and plays a crucial role  in the definition of irrationality of rotation numbers of GIETs (see Definition~\ref{def:irrational}).

%Yoccoz \cite{Yoccoz,Yoccoz1,Yoccoz2,MMY1} has introduced a formalism for the Rauzy-Veech induction which factors in an extra piece of combinatorial information: simple permutations in the discussion above are replaced by 'marked permutation'. For the sake of simplicity we spare ourselves this extra technicality; however it is the natural framework to define the notion of $\infty$-completeness. We give the following equivalent definition and refer to the above references for a comprehensive discussion.

\smallskip
\noindent We will assume that the combinatorial datum is \emph{irreducible}, i.e.~for every $1\leq k<d$ we have
$$
{ \pi_t \{ 1,\dots, k\} \neq  \pi_b \{ 1,\dots, k\}}
$$
(this guarantees in particular that the GIETs cannot be reduced to a GIET of a smaller number of exchanged intervals). We will denote by $\mathfrak{S}_d^0$ the set of irreducible combinatorial data $\pi=(\pi_t,\pi_b)$ with $d$ symbols.

\subsubsection{Singularities}\label{sing}
We denote by $u_i^t$, for $0\leq i\leq d$ the endpoints of the top partition intervals and, respectively by  $u_i^b$, $0\leq i\leq d$, the endpoints of the bottom partition, in their natural order, so that
\begin{align*}
& 0:= u^t_0 < u^{t}_1 <   \cdots <   u^t_{{d-1}}<  u^t_{{d}}:= 1;\\
& 0:= u^t_0 < u^{t_1 }<   \cdots <   u^t_{{d-1}}<  u^t_{{d}}:= 1.
\end{align*}
Then, with the chosen conventions,  we have
{
$$
I^t_{\pi_t(j)}= (u^t_{j-1}, u^t_{j}), \qquad I^b_{\pi_b(j)}= (u^t_{j-1}, u^t_{j}), \qquad \text{for}\ 1\leq j\leq d.
$$
}
We will denote by $|J|$ the length (with respect to the Lebesgue measure) of an interval $J\subset I$, so $|I^t_{\pi_t(j)}| = u_j^t- u_{j-1}^t$ and $|I^b_{\pi_b(j)}| = u_j^b - u_{j-1}^b$. %The vector $\lambda:= (\lambda_{1}, \dots ,\lamda_d)$ 
The points $u^t_1, \dots, u^t_{d-1}$ separating the top intervals are called the \emph{singularities} 
of $T$. The points $u^b_1, \dots, u^b_{d-1}$ are the singularities of $T^{-1}$. 
 
% encode the order of the intervals in the top and bottom partition (and therefore also how the intervals are permuted by $T$), or the \emph{combinatorics} of $T$, we choose the convention 
%  This notation, first introduced by \cite{MMY}, leads indeed to a considerable later simplification. We chose here to stick to the clasical notation (see e.g.~\cite{Kea, Veech, Zorich}) simply because the combinatorial datum will not play an important role in our proofs.} such that $T$ restricted to $I_i^t$ is an orientation preserving diffeomorphism onto $I_{\pi(i)}^b$ of class $\mathcal{C}^r$;

\subsubsection{Connections}\label{connections} 
A \emph{connection} is  a triple $(u^t_j, u^b_i, m)$ where $m$ is a positive integer such that $T^m (u^b_j)=v^t_i$.  Thus, a connection encodes a finite orbit whose starting point and end point belong to the set of endpoints  points $\{ u_0^t, u_1^t, \cdots, u_d^t \}$. We say that $T$ has \emph{no connections} if no such triple exists. This condition is also called \emph{infinite distinct orbit condition} or \emph{Keane condition} for standard IETs. Keane indeed proved that a standard IET with irreducible $\pi$ and no connections is minimal. 
When $T$ is the Poincar{\'e} map of a transveral a flow along the leaves of a foliation on $S$, connections correspond to \emph{saddle connections} on $S$, i.e.~trajectories of the flow which connect two singularities. Thus, if the flow along the leaves of the foliation has no saddle connections, any GIET obtained as Poincar{\'e} map has no connections.

\subsubsection{GIETs, surfaces and foliations}\label{gietandfoliations}
%{\color{red} to rewrite. I would like to be a bit more precise... especially on the way one suspends and associates endpoints and discontinuities to singularities}
A generalized interval exchange map $T$ can be \emph{suspended} \cite{Ma:int, Yoc:Clay} (see also Appendix~\ref{sec:boundarycomb}) to an orientable (singular) \textit{foliation} $\mathcal{F} = \mathcal{F}(T)$ on a closed oriented surface $S = S(T)$ such that the singular points of $\mathcal{F}$ are {(possibly degenerate) saddles (with an even number of prongs)}; we can make this construction so that all the discontinuity points of $T$ belong to singular leaves of the foliation  (see  Appendix~\ref{sec:boundarycomb}). Furthermore, if the permutation is \emph{irreducible}, both endpoints also belong to a singular leaf. If $T$ is minimal, the associated foliation is {\it minimal} in the sense that all regular leaves are dense (see Definition~\ref{def:minimal}  in \S~\ref{foliations}). 
We denote by $\Sing = \Sing(T) \subset S$ or $\Sing (S)\subset S$ the the set of saddle points of $\mathcal{F}$. If $g$ is the genus of $S$ and $\kappa = |\Sing(S)|$ we have the following equality 
$$ d = 2g + \kappa - 1.$$ 
Notice that both the genus $g$ and the number $\kappa$ of singularities can be recovered purely combinatorially from the knowledege of the combinatorial datum $\pi$, see \cite{MarmiYoccoz, Yoc:Clay} or \cite{Vi:IET} or Appendix~\ref{sec:boundarycomb} for details. Conversely, $T$ can be recovered from $\mathcal{F}$ by considering a first-return map on a suitably chosen transverse arc $J$  in $S$ joining two singularities in $\mathrm{Sing}(S)$, which we can identify with the interval $[0,1]$ {(see also Lemma~\ref{prop:folgiet})}. We develop this foliation point of view further in Section \ref{foliations}. Notice that choosing  $J$ with endpoints at singularities, or on singular leaves, guarantees that the number of exchanged intervals of $T $ is as small as possible and equal to $d$.  %In this case we say that $J$ is a \emph{standard} section. 
 
\smallskip
Let $\mathcal{F} $ be a foliation on $S$ which suspends $T$. We can associate to each discontinuity of $T$ a singularity as follows. 
Let us say that $\mathcal{F} $ is a \emph{standard suspension} if both endpoints\footnote{{Usually, in the IETs and translation surfaces literature, one often uses as suspensions the \emph{zippered rectangles} introduced by Veech \cite{Ve:gau} (see e.g.~\cite{Yoc:Clay} or \cite{Vi:IET}) or the polygonal suspensions by Masur \cite{Ma:int}. In zippered rectangles, one endpoint is always at a singularity, while the other is usually not and belong to a separatrix, i.e. a singular leave, that is either ingoing or outgoing. From the foliation point of view, all suspensions whose end points are on singular leaves are equivalent and one can simply slide the singularity and assume that it is at an endpoint. This is convenient since it makes it easier to identify singularities of the foliation with endpoints and discontinuities of the GIETs. Standard (translation surfaces) suspensions are for example explicitely defined in \cite{MarmiYoccoz}; { the construction is included in Appendix~\ref{sec:boundarycomb}}.}}
  suspension  of $I$ are singular points of $\mathcal{F}$. In this case, all the singularities of $T$ are obtained by pulling-back a singular leaf\footnote{If $\mathcal{F}$ is not standard, there can be singularities which are created by an endpoint of the section, namely which belong to the leave passing through an endpoint.}. Then  we can define a map $s$ from the set $\{ u_0^t , \dots , u_d^t \}$ of singularities  and endpoints of $T$ to the singularity set $\Sing(S)$ simply associating to the endpoints $u_0^t $ and $u_d^t$ the corresponding singularity (i.e.~the endpoint of the section in $S$) and to all other $u_i^t$, $1\leq i<d$, the singularity, that we will denote $s(u_i^t)$, that is reached when following  the  oriented leaf emanating from $u_i^t$.

\subsection{Parameter (sub)spaces} \label{parameters}
\noindent We define, for a given differentiability class $r \in \R_+$ and number of intervals $d\geq 2$ the space $\mathcal{X}^r $ 
 of generalized interval exchange transformations of class $\mathcal{C}^r$ with $d$ intervals, namely
$$ \mathcal{X}^r := \bigcup_{\pi \in \mathfrak{S}_d^0}{\mathcal{X}_{\pi}^r }, \qquad \text{where} \quad \mathcal{X}_{\pi}^r: = \{  T, \quad T\ \text{d-GIET of class}\ \mathcal{C}^r \ \text{with associated permation} \ \pi  \}.$$ 
\noindent When there is no ambiguity on the differentiability class $r$, we denote  $\mathcal{X}^r_{\pi}$ and $\mathcal{X}^r$ simply $ \mathcal{X}_{\pi}$ and $ \mathcal{X}$ respectively.

\smallskip
The space of \emph{(standard)} interval exchange transformations (respectively, \emph{affine} interval exchange transformations or \emph{Moebius} interval exchange transformations) 
 with combinatorics $\pi$ will be denoted by $\mathcal{I}_\pi$ (respectively, $\mathcal{A}_\pi$ or  $\mathcal{M}_\pi$) and are subspaces of $\mathcal{X}^r $  for every $r\geq 0$. Similarly, for any $d\geq 2$, let us set
$$
 \mathcal{I}_d := \bigcup_{\pi \in \mathfrak{S}_d}{\mathcal{I}_{\pi} }, \qquad   \mathcal{A}_d := \bigcup_{\pi \in \mathfrak{S}_d}{\mathcal{X}_{\pi}^r }, \qquad  \mathcal{M}_d :=\bigcup_{\pi \in \mathfrak{S}_d}{ \mathcal{M}_{\pi} }. 
$$
Clearly, for every $r>0$, we have the inclusions
$$
 \mathcal{I}_d \subset  \mathcal{A}_d \subset  \mathcal{M}_d   \subset \mathcal{X}^r_d.
$$
%$$ \mathcal{M}_d := \bigcup_{\pi \in \mathfrak{S}_d}{ \mathcal{M}_{\pi} }, \quad \text{where}\quad \mathcal{M}_{\pi} := \{ MIETs \ \text{whose associated permutation is} \ \pi \}  .$$ 

\subsubsection{Parameter spaces of IETs} \label{IETs}
If $T\in \mathcal{I}_d$ is a (standard) IET, $T$ is completely determined from the combinatorial datum $\pi$ and the lengths of the top intervals $I^t_j$, $1\leq j\leq d$.  Denote by $\lambda_1, \cdots, \lambda_d$ the lengths of its continuity intervals, so that $\lambda_j:= |I^t_j|= v^t_j-v^t_{j-1}$. Because the top intervals form a partition of $[0,1]$, the lengths must satisfy the following equation:
\begin{equation}\label{simplex} \lambda_1 + \cdots + \lambda_d = 1.\end{equation}
We will denote by $\Delta_{d-1}$ (for $d-1$ dimensional simplex) the set of vectors in $\mathbb{R}^d_+$ which satisfy \eqref{simplex}. We denote by $\lambda(T)$  and call \emph{lengths vector} the vector whose components are lengths of top intervals, namely 
$$\lambda(T): = (\lambda_1,\dots, \lambda_d)= (|I^t_1|, \dots, |I^t_d|)\in \Delta_{d-1}.
$$
Thus, the subspace  $\mathcal{I}_d$ of $d-$IETs is parametrized by $\Delta_{d-1}\times \mathfrak{S}_d$  and $\mathcal{I}_{d}$ is a (finite union of) submanifold(s) of $\mathbb{R}^{d}$ of dimension $d-1$. 

%The natural domain of definition of the map $\Ra$ is a full Lebesgue measure subset of the space $X:= \Delta_{d-1} \times \Ra(\pi)$ where $\Ra(\pi) $ is the Rauzy class\footnote{Let us recall that the Rauzy class of $\pi$ is the subset  of all permutations $\pi'$ of $d$ symbols which appear as permutations  of an IET $T'=(\underline{\lambda}', \pi')$ in the orbit under $\Ra$ of some IET $(\underline{\lambda}', \pi)$  with initial permutation $\pi$.} of the permutation $\pi$.

\subsubsection{Parameters of AIETs}\label{AIET}
%\noindent An \textit{affine interval exchange transformation} (AIET) is simply a generalized IET which is affine restricted to its intervals of continuity. 
Let $T$ be an AIET with combinatorial datum $\pi$. Let $\lambda\in \Delta_{d-1}$ be as before the vector of lengths of top intervals. 
 If we denote by $\rho_1, \cdots, \rho_d$ the derivatives of $T$ on intervals of respective lengths $\lambda_1, \cdots, \lambda_d$, we have that for each $1\leq j\leq d$, the length $|I_j^b|$ of the bottom interval $I_j^b=T(I_j^t)$  is $ \rho_j\lambda_j$. Therefore, since also the bottom intervals form a partition, the lengths must also satisfy
$$  \rho_1\lambda_1 + \cdots + \rho_d\lambda_d = 1.$$ 
This equation, together with \eqref{simplex} and the further restrictions that $\forall i, \lambda_i > 0$ identify $\mathcal{A}_{\pi}$ to a submanifold of $\mathbb{R}^{2d}$ of dimension $2d-2$. For any affine interval exchange transformation $T$, we denote by $\lambda(T)$ and $\rho(T)$ respectively
\begin{align*}
\lambda(T) &:= (\lambda_1,\dots, \lambda_d)= (|I^t_1|, \dots, |I^t_d|), \\  \rho(T) & := (\rho_1,\dots, \rho_d)= (D T_1(x_1), \dots, DT_d(x_d)),\qquad x_j\in I_j\, \quad \text{for\ all}\ 1\leq j\leq d,
\end{align*}
where $DT_j(x_j):=T_j'(x)$ is the value of the derivative of the branch $T_j$ of $T$ at any point $x_j$ in the interval $I_j$ and is independent on the choice of $x_j$ since in an affine IET $T'$ is locally constant on $I_j$. 
We call $\lambda(T)$ the \emph{length vector} and $\rho(T)$ the \emph{slope vector} of $T$.

%\subsubsection{Induced IETs} 
%Given a GIET $T$ and a subinterval $J\subset I$, consider the \emph{induced map} $T_J$ of $T$ to $J$
% A key remark that allows to define renormalization operators on the space  $\mathcal{X}^r $   is that, given a GIET $T$ in $\mathcal{X}^r $ and a subinterval $J\subset I$, we 
%We call a renormalization operator $\mathcal{R}$ on the space of $GIETs$ a map which sends a GIET $T$ to  another $GIET$ which we will call $\mathcal{R}(T)$ which is obtained by considering the induced map

\subsubsection{Shape-profile coordinates for GIETs} \label{coordinates}
 We introduce now a set of coordinates on  $ \mathcal{X}_{\pi}^r$  which allow us to endow  $ \mathcal{X}_{\pi}$  and consequently $ \mathcal{X}^r$ with the structure of a Banach manifold. These coordinates, that we will call \emph{shape-profile} coordinates, where first introduced and used by the first author in \cite{Selim:loc}
%Useful coordinates to study the space $\mathcal{X}^r $  were introduced in \cite{Selim} {add ref}
 and will play a central role also in the present paper.

 Let $T$ be a $\mathcal{C}^{r}$-GIET, with associated permutation $\pi$ and let $(I_i^t)_{1 \leq i\leq d}$ and $(I_i^b)_{1 \leq i\leq d}$ be the \emph{top} and \emph{bottom} partitions of $[0,1]$ associated to it. We make the two following observations.

\begin{enumerate}
\item  There is  a unique affine interval exchange transformation $A_T$ mapping $I_i^t$ to $I_i^b$. 

\item Furthermore, for all $1\leq i\leq d$, there is a unique element $\varphi_T^i$ of $\mathrm{Diff}^r([0,1])$  such that the restriction of $T$ to $I_i^t$ is equal to 
$$\varphi_T^i:= \mathcal{N}(T_i ):= a_i \circ  T_i \circ b_i $$ where $b_i$ is the unique orientation preserving affine map mapping $I_i^t$ onto $[0,1]$ and  $a_i$ is the unique orientation preserving affine map mapping $[0,1]$ onto $I_{\pi(i)}^b$. 
\end{enumerate}
Notice that, using these coordinates, if $\rho=\rho(A_T) = (\rho_1, \dots , \rho_d)$  is the slope vector of the shape  $A_T$, one has\footnote{This follows from the explicit expression of the GIET in terms of the profiles which is given by 
$$
T(x) = u^b_{{\pi_b(i)-1}}+ |I^b_i| \, \varphi^i_T (b_i(x)), \ \text{where}\ b_i(x)={(x -u^t_{\pi_t(i)-1} )}/{|I^t_i|}, \qquad \text{for\ all}\ x\in I^t_i=(u^t_{\pi_t(i)-1}, u^t_{\pi_t(i)}), 
\quad 1\leq i\leq d.
$$
noticing that $DA_T(x)=|I_i^b|/|I^t_i|$ for every $x\in I^t_i$ and therefore $\rho_i=|I_i^b|/|I^t_i|$.}
%%for any $1\leq i\leq d$.}
\begin{equation}\label{Dincoordinates}
DT(x)=DT_i(x) = \rho_i \, D \varphi^i_T (b_i(x)),   \qquad \text{for\ all}\ x\in I^t_i,\ 1\leq i\leq d, 
\end{equation}
(where $b_i$ is as above the affine map which maps $I^t_i$ to $[0,1]$).
%  (explicitelely \quad \ \text{where}\ b_i(x)={(x -u^t_{\pi_t(i)-1} )}/{|I^t_i|}$

\smallskip
\noindent The operation of associated to a GIET a shape and a profile can be inverted and therefore the map:
$$ T \longmapsto (A_T, \varphi_T),\qquad \varphi_T:=(\varphi_T^1, \cdots, \varphi_T^d) $$ gives an identification between $\mathcal{X}_{\pi}^r$ and $\mathcal{A}_{\pi} \times (\mathrm{Diff}^r([0,1]))^d$ where $\mathcal{A}_{\pi}$ the space of AIETs with permutation $\pi$.  The AIET which appears as projection  onto the first  coordinate, namely  $A_T$, will be called the \emph{shape} of $T$; the vector $\varphi_T:=(\varphi_T^1, \cdots, \varphi_T^d)$ will be called \emph{profile} of $T$.

We denote by $\mathcal{P}_d^r$ (or simply $\mathcal{P}_d$ or $\mathcal{P}$ when the regularity $r$ or the number of intervals $d$ are not relevant or clear from the context) the space $\big(\mathrm{Diff}^r([0,1])\big)^d$ and so we have a canonical identification 
$$ \mathcal{X}_{\pi}^r  = \mathcal{A}_{\pi} \times \mathcal{P}^r_d, \qquad  \mathcal{X}^r_d  = \mathcal{A}_d \times \mathcal{P}^r_d= \bigcup_{\pi\in \mathfrak{S}^0} \mathcal{A}_{\pi} \times \mathcal{P}_d^r .$$ Using this parametrisation, we can endow $\mathcal{X}_{d}^r$ with the structure of a Banach manifold directly inherited from that of  $\mathrm{Diff}^r([0,1])$. Again, where there is no possible ambiguity, we will drop the indexes $\pi$ and $r$ and simply write  $ \mathcal{X} = \mathcal{A} \times \mathcal{P}$.  

\smallskip
\noindent In the sequel we use the following notation for $f : [0,1] \longrightarrow \mathbb{R}$ of class $\mathcal{C}^r$, 
$$ ||f||_{\mathcal{C}^r} = \max_{0\leq i \leq d}{||f^{(i)}||_\infty} $$ where $f^{(i)}$ is the $i$-th derivative of $f$ and $|| \cdot ||_{\infty}$ denotes the sup norm. We extend this norm to $(\mathcal{C}^r([0,1], \mathbb{R}))^d$ simply by taking the sum of the norms on each coordinate. 

%\begin{remark}
%It is often convenient to work with $r = \infty$ or $\omega$, because in these cases the renormalization operator (to be defined below) becomes differentiable. The drawback is that $\chi$ loses its Banach structure for a Fréchet structure. In this article we will mostly be working with lower regularity classes.
%\end{remark}

\subsection{Renormalization of GIETs}\label{sec:renormalization} 
We introduce now the renormalization operator $\mathcal{V}$ on the space  $\mathcal{X}^r $ of GIET  defined (on the subspace of GIETs with no connections) by Rauzy-Veech induction. Rauzy-induction as a tool to study standard IETs and their ergodic properties appears for the first time in the seminal works by Rauzy \cite{Ra:ech} and Veech \cite{Ve:gau, Ve:inI} and has been a standard tool in the theory since then (see~e.g.~\cite{Zo:dev, AF:wea,  Ul:mix, Ul:abs, KKU,  Ch:dis}, $\dots$). Rauzy-Veech induction as a tool to study GIETs and the notion of rotation number of a GIET appear e.g.~in the works \cite{MMY3, MarmiYoccoz}, see also \cite{Yoc:Clay}. 
This section follows partly \cite{MMY}.

\subsubsection{Elementary step of Rauzy-Veech induction}\label{RV}
Let $T$ be a GIET on $d$ intervals (as in Definition \ref{def1}). Consider the partition endpoints $u^t_j$ and $u^b_j$, see \S~\ref{sing}. Let ${\lambda}_1 := \max\{u_{d-1}^t, u_{d-1}^b\}$.  Thus $[0,\lambda_1]$ is the interval $[0,1]\backslash I^t_{\pi_t(d)}$ if the last interval before the exchange $I^t_{\pi_t(d)}$ is shorter than the last interval after the exchange $I^b_{\pi^t(d)}$, while $[0,\lambda_1]=[0,1]\backslash I^b_{\pi_b(d)}$ otherwise, i.e.~if $|I^b_{\pi_b(d)}|< |I^t_{\pi_t(d)}|$. 

We define $T_1$ to be the first-return of $T$ on the interval $[0,\lambda_1]$. One can  verify that $T_1$ is well defined and is also a GIET on $d$ intervals provided $u_{d-1}^t \neq u_{d-1}^b$. Define $\mathcal{V}(T)$ to be $T_1$ \textit{normalised} to be a map  whose range is $[0,1]$: formally $\mathcal{V}(T):=\mathcal{N}(T_1)$ where $\mathcal{N}(T_1)$ is obtained conjugating $T_1$ by the unique affine map mapping $[0,l]$  to $[0,1]$, namely
\begin{equation}\label{renormalizationeq}
\mathcal{V}(T) := \mathcal{N}(T_1), \qquad \text{where}\quad  \mathcal{N}(T_1)(x):=\frac{1}{\lambda_1}\, T_1 \big( \lambda_1 x\big), \qquad \textrm{for\ all}\ x\in [0,1].
\end{equation}
 The operation consisting in passing from $T$ to $\mV(T)$ is called the \textit{elementary step of the Rauzy-Veech induction}.

 When one performs the elementary step of the induction, the associated permutation $\pi$ changes. Note that the new permutation only depends on the initial one and on whether $u_{d-1}^t > u_{d-1}^b$ or not. If $u_{d-1}^t < u_{d-1}^b$ we say that the \textit{bottom interval wins} and we say that the \textit{top interval wins} otherwise. Furthemore, we can record the label of the interval which wins: if the interval which wins is $I^t_j$ (i.e.~the top interval wins and $I^t_{\pi_t(d)}= I^t_j$ we say that $j$ is the \emph{winner}). Similarly we say that $j$ is the \emph{winner} also if $I^b_j$ wins, i.e.~the bottom interval wins and $I^b_{\pi_b(d)}= I^b_j$.

One can show that if $T$ has no connections, also $T_1$ has no connections and therefore it is possible to apply again an elementary step of the Rauzy-Veech induction to $T_1$. Thus, if a GIET $T$ has \textit{no connections}, Rauzy-Veech induction can be iterated infinitely many times.

%We say in this case that $T$ is infinitely renormalizable.
% to get an infinite sequence $ T, \mV(T), \mV^2(T), \cdots, \mV^\n(T), \cdots$$ 

\subsubsection{\it Paths on Rauzy diagrams and rotation numbers} \label{rotnumber}
We define now the notion of \emph{rotation number} associated to a GIET, see e.g.~\cite{Yoc:CF, Yoc:Clay}. The rotation number will be an infinite path on a combinatorial graph describing the moves of the renormalization algorithm (see Defiintion~\ref{def:rotnumber}).

We can form an oriented graph whose set of vertices is $\mathfrak{S}_d$ and there is an oriented edge from one permutation $\pi_1$ to $\pi_2$  if and only if there is a GIET with permutation $\pi_1$ whose image by the elementary step of the Rauzy-Veech induction is a GIET with permutation $\pi_2$. This oriented graph is called the \emph{Rauzy diagram}. It has a certain number of connected components (which were classified by Zorich in \cite{Zo:Rau}) and are classically  called \emph{Rauzy classes}. 

 If a GIET has \textit{no connections},  so then it is possible to iterate Rauzy-Veech induction infinitely many times and get an infinite sequence $ T, \mV(T), \mV^2(T), \cdots, \mV^{n}(T), \cdots$. For every $n\in\mathbb{N}$ let $\pi_n$ be the combinatorial datum of $\mV^{n}(T)$ and let $\gamma_n$ be   the arrow from $\pi_{n}$ to $\pi_{n+1}$ which corresponds to the elementary step to pass from $\mV^n(T)$ to $\mV^{n+1}(T)$. To the infinite orbit $\{ \mV^2(T),  n\in\mathbb{N}\}$ we can associate a path $\gamma(T):= \gamma_0\gamma_1\cdots \gamma_n\cdots $ in the associated Rauzy diagram  passing through the vertices
$ \pi_0, \pi_1, \cdots, \pi_n, \cdots $ obtained concatenating the arrows $\gamma_n$ describing the moves of the algorithm.

\begin{definition}[{\color{black}Combinatorial} rotation number]\label{def:rotnumber}
Given a GIET n $T$ with no connections, {\color{black}its \emph{combinatorial rotation number} (or simply \emph{rotation number})} is the datum of the Rauzy path $\gamma(T)=\gamma_1\gamma_2\cdots \gamma_n\cdots $  associated to the orbit $\{ \mV^n(T),  n\in\mathbb{N}\}$.
\end{definition}
\noindent {\color{black}The terminology \textit{rotation number}\footnote{{\color{black}We added the adjective \emph{combinatorial} since it gives a conjugacy invariant which describes the combinatorial structure of orbits (as in other examples from one-dimensional dynamics, like for example the \emph{kneading sequences} for unimodal maps) as well as to distinguish it from other possible generalizations of the notion of rotation number, such as \emph{rotation vectors} for higher dimensional tori or the \emph{Katok fundamental class} which generalizes the \emph{asymptotic cycle} role also played by the rotation number.}} (which was used in the works by Marmi-Moussa and Yoccoz and advertised in the lecture notes by Yoccoz, see~\cite{Yoc:CF, Yoc:Clay}) has been chosen because} for $d=2$, i.~e.~for GIETs with $d=2$ intervals, which correspond to circle homeomorphisms,  this piece of data is equivalent to the datum of the usual rotation number. 
Furthermore, if $T_1$ and $T_2$ are (semi)conjugate, then $\gamma(T_1)=\gamma(T_2)$, i.e.~the rotation number is an invariant of the (semi)conjugacy class. The converse (see Theorem~\ref{thm:PY} below) is also true for \emph{irrational} rotation numbers (to be defined below, see Definition~\ref{def:irrational}), a perhaps more important reason which further supports that it is a good analogue of the classical rotation number. %, in the sense that an analogue of Poincaré theorem holds for that class. 
%More importantly it is the combinatorial datum which allows to recognize semi-conjugacies, since one can prove the following Lemma.%This choice is further supported by the fact that the class of $\infty$-complete rotation numbers (to be defined below) is a good analogue of irrational rotation numbers, in the sense that an analogue of Poincaré theorem holds for that class. 
%\begin{lemma}[Rotation number as a complete semi-conjugacy invariant]
%If $T_1$ and $T_2$ are (semi)-conjugate, then $\gamma(T_1)=\gamma(T_2)$, i.e.~the rotation number is an invariant of the (semi-)conjugacy class. Conversely, if $\gamma(T_1)=\gamma(T_2)$, then there exists a semi-conjugacy between $T_1$ and $T_2$. 
%\end{lemma}

\subsubsection{Irrational combinatorial rotation numbers and semi-conjugacy with a standard IET}\label{sec:irrational}
If $T_0$ is a standard IET with no connection, its combinatorial rotation number $\gamma(T_0)$ has an additional property: all indexes $j$ in the alphabet $\mathcal{A}=\{1,\dots, d\} $ of \emph{labels} of intervals are \emph{winners} infinitely many times. This property characterizes paths on the Rauzy diagram which come from standard IETs (or GIETs which are semi-conjugated to standard IETs, see above). 
In particular, if a path $\gamma$ on a Rauzy-diagram has this property that every $j$ appears infinitely many times as a winner of an arrow of $\gamma$ (a path on the Rauzy-diagram with this property is called $\infty$-complete, see \cite{MMY, Yoc:Clay, Yoc:cours}), then there exists\footnote{The IET is not necessarily unique, but any two standard IETs $T_0$ and $T_1$ with $\gamma(T_0)=\gamma(T_1)$ are topologically conjugated, see \cite{Yoc:cours}.} a standard IET $T_0$ such that $\gamma(T_0)=\gamma$.  

Following \cite{MMY, MMY3}, we give the following definition. Let $T$ be a GIET with no connection and $\gamma(T)$ its rotation number.
%Yoccoz has introduced a formalism for the Rauzy-Veech induction which factors in an extra piece of combinatorial information: simple permutations in the discussion above are replaced by 'marked permutation'. For the sake of simplicity we spare ourselves this extra technicality; however it is the natural framework to define the notion of $\infty$-completeness. We give the following equivalent definition and refer to the above references for a comprehensive discussion.

\begin{definition}[irrational or ($\infty$-complete) rotation numbers]\label{def:irrational}
The (combinatorial) rotation number $\gamma(T)$ is said to be \emph{irrational} (or $\infty$-\emph{complete}) iff every $j\in \mathcal{A}=\{1,\dots, d\}$ is the winner of infinitely many arrows of $\gamma(T)$.
\end{definition}

\noindent The following result, which is proved in the notes \cite{Yoc:Clay} by Yoccoz, extends Poincar{\'e} theorem for circle diffeomorphisms and  further supports the terminology 'rotation number'. We will refer to it as Poincar{\'e} theorem for GIETs.

\begin{thm}[Poincar{\'e} theorem for GIETs, see \cite{Yoc:CF, Yoc:Clay}]\label{thm:PY}
Let $T$ be a GIET with $\infty$-complete rotation number and let $T_0$ be an IET with same rotation number. Then $T$ is semi-conjugate to $T_0$.
\end{thm}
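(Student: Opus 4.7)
The plan is to mimic the classical proof of Poincar\'e's theorem for circle homeomorphisms, constructing the semi-conjugacy $h$ by matching the dynamical partitions generated by iterating Rauzy--Veech induction on $T$ and $T_0$. The crucial observation is that since $\gamma(T)=\gamma(T_0)$, the combinatorial structure of these partitions is identical for the two systems, while the geometric widths of the atoms may differ dramatically.

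\smallskip
First, for each $n\in\mathbb{N}$, I would attach to $T$ the dynamical (Rokhlin tower) partition $\mathcal{P}_n(T)$ of $[0,1]$ produced by $n$ elementary Rauzy--Veech steps: each atom is of the form $T^k(I^{(n)}_j)$ for appropriate $j\in\mathcal{A}$ and $0\leq k<h_j^{(n)}$, and carries a natural combinatorial label (its ``tower floor name''). Since $T_0$ shares the same Rauzy path, the partition $\mathcal{P}_n(T_0)$ has atoms with identical labels arranged in the same left-to-right order on $[0,1]$, and $\mathcal{P}_{n+1}$ refines $\mathcal{P}_n$ for both systems. Writing $E_n(T)$ for the set of endpoints of $\mathcal{P}_n(T)$, $E(T):=\bigcup_n E_n(T)$, and similarly for $T_0$, I would define $h_0: E(T)\to E(T_0)$ by sending the $k$-th point of $E_n(T)$ (in natural order along $[0,1]$) to the $k$-th point of $E_n(T_0)$. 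Nesting makes $h_0$ independent of $n$, and the shared combinatorics forces $h_0\circ T = T_0\circ h_0$ on the cofinite subset of $E(T)$ where both sides are defined.

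\smallskip
Next, I would extend $h_0$ to a continuous weakly increasing surjection $h:[0,1]\to[0,1]$ satisfying $h\circ T = T_0\circ h$. Because $T_0$ is a standard IET whose rotation number is $\infty$-complete, Keane's theorem ensures that $T_0$ has no connections and is minimal; since all atoms of $\mathcal{P}_n(T_0)$ are translates of base intervals $I^{(n)}_j$ with $|I^{(n)}_j|\to 0$, the mesh of $\mathcal{P}_n(T_0)$ tends to zero, so $E(T_0)$ is dense in $[0,1]$. For each $x\in[0,1]$, denoting by $J_n(x)\in\mathcal{P}_n(T)$ the atom containing $x$, and by $J^0_n(x)\in\mathcal{P}_n(T_0)$ the combinatorially corresponding atom, the nested intersection $\bigcap_n \overline{J^0_n(x)}$ is then a single point, which I declare to be $h(x)$. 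Monotonicity is automatic, continuity follows from the density of $E(T_0)$, and the intertwining relation passes to the limit since $T$ and $T_0$ permute the atoms of $\mathcal{P}_n$ by the same combinatorial rule.

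\smallskip
The main technical point will be the behaviour of $h$ on the \emph{wandering set} of $T$, i.e.\ on points $x$ for which the intervals $J_n(x)$ do not shrink to a singleton: on every maximal such interval the construction is forced to make $h$ constant, which is precisely what produces a semi-conjugacy rather than a conjugacy and is the phenomenon discussed in the later sections on wandering intervals. The other mildly delicate verification is that $\infty$-completeness of $\gamma$ ensures that every forward and backward iterate of a discontinuity of $T$ is eventually realised as an endpoint of some $\mathcal{P}_n(T)$, which is what guarantees the equivariance relation for $h_0$ holds on a sufficiently rich subset of $E(T)$ to extend by continuity to all of $[0,1]$.
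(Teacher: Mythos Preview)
Your proposal is correct and follows the standard approach from Yoccoz's notes \cite{Yoc:CF, Yoc:Clay}, which is precisely where the paper sends the reader: the theorem is stated in the paper but not proved there, only cited. The construction via matching dynamical partitions of $T$ and $T_0$ through their common Rauzy path, using Keane's minimality to force the mesh of $\mathcal{P}_n(T_0)$ to zero and hence obtain a well-defined monotone surjection, is exactly the argument Yoccoz gives; your identification of the wandering set as the locus where $h$ collapses intervals is also the correct picture and connects properly with the later sections of the paper.
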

\noindent In the rest of the paper, since we are interested in GIETs which are semi-conjugated to a standard IET, we will always work GIETs with \emph{irrational} rotation number in the sense of Definition~\ref{def:irrational}.

\subsubsection{Periodic-type (or Fibonacci-type) combinatorics} \label{sec:periodictype}
We can now define also GIET of \emph{periodic type} (which are analogous to maps with \emph{Fibonacci type combinatorics} in the one-dimensional dynamics literature): 
\begin{definition}[Periodic type]\label{def:periodictype}
A GIET $T$ is called of \emph{periodic type} if it has no connections and its combinatorial rotation number $\gamma(T)$ is \emph{irrational} and \emph{periodic}, i.e.~there exists a $p>0$ such that $\gamma_{n+p}=\gamma_n$ for every $n\in\mathbb{N}$. The minimal $p$ with such property will be called the \emph{period} of $\gamma(T)$.
%Let $T$ be a GIET with no connections. Its \emph{rotation number} is the datum of the Rauzy path $\gamma(T)$ described above associated to the sequence $T, \mV(T), \mV^2(T), \cdots, \mV^\n(T), \cdots$.
\end{definition}
\vspace{2mm}

\subsubsection{Definition of the renormalisation operator}\label{sec:renormalizationop}
\noindent The elementary step of the Rauzy-Veech induction can be used to define an operator acting on an open subset of $\mathcal{X}^r$ defined the following way. Set
$$\mathcal{Y}^r = \{ T \in \mathcal{X}^r \ | \ u_{d-1}^t(T) = u_{d-1}^b(T)   \}.$$ Note that $\mathcal{Y}^r $ is a codimension $1$ smooth submanifold of $\mathcal{X}^r$. In other words,  $\mathcal{Y}^r $ is the subset of those GIETs for which the rightmost top and bottom intervals have same length. It is exactly the set for which the elementary step of the Rauzy-Veech induction is not defined.  Thus the elementary step of the Rauzy-Veech induction defines an operator $\mathcal{V}:\mathcal{X}^r \setminus \mathcal{Y}^r \mapsto \mathcal{X}^r$ given by $T   \longmapsto  \mathcal{V}(T)$.
%$$ \begin{array}{ccccc}
%\mathcal{V} & := & \mathcal{X}^r \setminus \mathcal{Y}^r  & \longrightarrow &  \mathcal{X}^r \\
% & & T  & \longmapsto & \mathcal{V}(T)
%\end{array}$$ where $\mathcal{V}(T)$ is the GIET defined in \ref{RV}. One easily checks that $\mathcal{V}$ is continuous with respect to the topology induced on $\mathcal{X}^r$ by its Banach manifold structure. 

\smallskip %\noindent  
A GIET $T$ is said to be \textit{infinitely renormalizable} iff %$T\in \cap_{n\in\mathbb{N}}\mathcal{V}^{-n}$ 
$\mathcal{V}^n(T)$ if well-defined for all $n \in \mathbb{N}$. Note that in particular a GIET with irrational (i.e.~$\infty$-complete, see Definition~\ref{def:irrational}) rotation number is infinitely renormalizable. However, not all infinitely renormalizable GIETs have irrational ($\infty$-complete) rotation number (actually in parameter space, the "generic case" is expected not to have irrational combinatorial rotation number).

%\smallskip
%We remark that if we restrict $\mathcal{V}$ to $\mathcal{I}\backslash \mathcal{Y}^r  \subset \mathcal{X}^r \setminus \mathcal{Y}^r $ %then $\mathcal{V}$ reduced to the much studied classical Rauzy-Veech induction for IETs.

\subsubsection{Accelerations}\label{sec:accelerations} 
We will consider as renormalization operators $\mathcal{R}$ operators that are obtained \emph{accelerating} $\mathcal{V}$, i.e.~such that $\mathcal{R}^k(T):=\mathcal{V}^{n_k}(T)$ where  is a suitably chosen sequence of iterates of $\mathcal{V}$ (which depends on $T$). For example, when $T$ is of \emph{periodic type} with period $p$ (see Definition~\ref{def:periodictype}), the natural renormalization operator to use is simply $\mathcal{R}:= \mathcal{V}^p$, so that $\mathcal{R}^k(T):=\mathcal{V}^{kp}(T)$ for every $k\in\mathbb{N}$. A classical acceleration of $\mathcal{V}$ is the \emph{Zorich acceleration}, which we will denote by $\mathcal{Z}$ and corresponds to grouping together all successive elementary steps of Rauzy-Veech induction which are equal of type top, or bottom, respectively: given $T$ with irrational $\gamma(T)$, one can show that top and bottom both win infinitely often; therefore, one can define the sequence $(n_k)_{k\in\mathbb{N}}$ such that $\mathcal{Z}^k(T):=\mathcal{V}^{n_k}(T)$ by setting $n_0:=0$ and, recursively, if $n_k$ is such the top (resp.~bottom) interval of $\mathcal{V}^{n_k}(T)$ wins,  setting $n_{k+1}$ to be the first $n>n_k$ such that bottom (resp.~top) interval  of $\mathcal{V}^{n}(T)$  wins. % (so in $\mathcal{V}^{n_{k+1}+1}(T)$ it is then the bottom (resp.~top) interval that wins). 

Accelerations can  also be obtained considering \emph{inducing} (ie.~first return maps): if $E\subset \mathcal{X}^r$ is a subset, we can obtain an acceleration of $\mathcal{V}$, denoted by $\mathcal{V}_E$, defined on the set of $T\in \mathcal{X}^r$ which visit $E$ infinitely often: if $(n_k)_{k\in\mathbb{N}}$ is the sequence of successive visits of the orbit $(\mathcal{V}^n(T))_{n\in\mathbb{N}}$ to $T$ (i.e.~we set $n_1$ to be the first $n\geq 0$ such that $\mathcal{V}^n(T) \in E$ and, given $n_k$, we set $n_{k+1} $ to be the smallest $n>n_k$ such that $\mathcal{V}^n(T) \in E$), we can defined $\mathcal{V}_E^k(T):= \mathcal{V}^{n_k}(T)$. 
% the acceleration $\mathcal{R}$ 

%To study the full measure set of GIET for which the theorems stated in the introduction hold, we will use a renormalization operator $\mathcal{R}$ which is an acceleration of the Zorich acceleration $\mathcal{Z}$ defined in order to guarantee a good control of hyperbolicity (this acceleration can be obtained by inducing the \emph{natural extension} of $\mathcal{Z}$, see \S~\ref{sec:natextension} and \S\ref{sec:goodR} for further details).

\subsubsection{Dynamical partitions} \label{dynamicalpartitions}
We introduce the notion of \textit{dynamical partitions}. % associated to a renormalization operator an infinitely renormalizable $T$ . 
Let $T$ be an infinitely renormalizable $T$ and let $\mathcal{R}$ be a renormalization operator obtained by accelerating Rauzy-Veech induction, as described above. Then the orbit $(\mathcal{R}^n(T))_{n\in\mathbb{N}}$ is well defined and, by definition, for every $n\in\mathbb{N}$ the GIET $\mathcal{R}^n(T)$ is obtained  
%Renormalization operators in this context are operators from the space $\mathcal{X}^r $ to itself which map 
%A key remark that allows to define renormalization operators on the space  $\mathcal{T}^r $   is that, given 
%a given GIET $T$ in $\mathcal{X}^r $ to a GIET another $GIET$ which we will call $\mathcal{V}(T)$ which is obtained by suitably choosing an  subinterval $I'\subset I$ and considering the \emph{induced map} of $$ . The interval $I'$ is chosen so that the induced map is well defined and is again a GIET  $T'$ of the same number of intervals. The image $\mathcal{R}(T)$ is then by definition the GIET acting on $[0,1]$ obtained by \emph{renormalizing} the induced map, conjugating by the affine transformation which maps $I'$ to $I$
%
%Let $T$ be an element of $\mathcal{X}$ and assume further that $T$ is infinitely renormalizable. 
%For any $n \geq 0$, $\mathcal{R}^nT$ is the
by rescaling %considering the rescaling of a
the first return map of $T$ on an interval of the form $[0,\lambda_n]$. We will denote by $I^{(n)}:=[0,\lambda_n]$ and by $T_n$ the Poincar{\'e} map of $T$ to $I^{(n)}$, so that $\mathcal{R}^n(T) (x)=  T_n(\lambda_n x)/\lambda_n$ or, explicitely, if $I^{t}_j(n)$ denote the continuity intervals for $\mathcal{R}^n(T)$,
\begin{equation}\label{eq:renormalizedmap}
 \mathcal{R}^n(T)(x)= \frac{T_n(\lambda_n x)}{\lambda_n}= \frac{T^{q^{(n)}_j} (\lambda_n x)}{\lambda_n}, \qquad \mathrm{for\ all}\ x\in I^{t}_j(n)= \frac{1}{\lambda_n} I^{(n)}_j.
\end{equation}
Notice that $\{ I^{(n)}, \ n\in\mathbb{N}\} $ are nested intervals with $0$ as a common left endpoint.
By construction $T_n$ is a $d-$GIET. We denote by $I^{(n)}_j$, for $j=1,\dots, d$ its continuity intervals, so that  the interval $I^{(n)}=[0,\lambda_n]$ is partitioned into $I^{(n)} = \cup_{j=1}^d{I^{(n)}_j} $ and for each $1\leq j\leq d$, $T_n$ restricted to $ I^{(n)}_j$ is equal to $T^{q^{(n)}_j}$  where $q_j^{(n)}$ is the first return time of $I^{(n)}_j$ to $I^{(n)}$ under $T$, i.e.~the minimum $q\geq 1$ such that $T^{q}(x)\in I^{(n)}$ for some (hence all) $x\in I^{(n)}_j$. 

%and $\mathcal{R}^nT$ rescaled down to $[0,x_n]$ is equal to  For $1 \leq j \leq d$, we introduce
\smallskip
\noindent Let us define 
$$ \mathcal{P}_n := \bigcup_{j=1}^d{\mathcal{P}^j_n}, \qquad \text{where}\quad \mathcal{P}^j_n := \{ I^{(n)}_j, T(I^{(n)}_j),  T^2(I^{(n)}_j), \cdots, T^{q^{(n-1)}_j}(I^{(n)}_j)  \}. $$
 One can verify that $\mathcal{P}_n$ is a partition of $[0,1]$ into subintervals  and that, for each $1\leq j\leq d$, the collection  $\mathcal{P}^j_n$ is a \emph{Rohlin tower} by intervals, i.e.~a collection of disjoint intervals which are mapped one into the next by the action of $T$. 

We will say that  $\{ \mathcal{P}_n, \ n\in\mathbb{N}\}$ is the sequence of \emph{dynamical partitions} \emph{associated to} the orbit  $(\mathcal{R}^n)_{n\in\mathbb{N}}$ of $T$ under the renormalization operator $\mathcal{R}$ and, when the renormalization operator is clear, that   $\mathcal{P}_n$  is the {dynamical partition} \emph{of level} $n$.
 We say that the number $q_j^{(n)}$ of intervals in a tower is the \emph{height} of the (Rohlin) tower $\mathcal{P}^j_n$.  
Thus, $\mathcal{P}_n$ also gives a representation of $[0,1]$ as a \emph{skyscraper}, i.e.~a collection of Rohlin towers, for $T$.
Notice that if $n>m$, then the partition  $\mathcal{P}_n$  is a refinement of $\mathcal{P}_m$.
%\begin{remark}\label{rk:branchesR}
%Notice that, for $1\leq j\leq d$, the $j^{th}$ branch of $\mathcal{R}^n(T)$, namely the restriction to the $j^{th}$ top interval $I^{t}_j(n)$ for $\mathcal{R}^n(T)$, and its derivative (recalling \eqref{renormalization}) are given by
%$$
%\mathcal{R}^n(T)(x)= \frac{T_n(\lambda_n x)}{a_n}= \frac{T^{q^{(n)}_j} (a_n x)}{a_n}, \qquad  D \mathcal{R}^n(T)(x)= D \left( T^{q^{(n)}_j} (a_n x )\right) ,\qquad \mathrm{for\ all}\ x\in I^{t}_j(n).
%$$
%\end{remark}

\subsection{One dimensional dynamics toolkit}
We recall here classical and crucial tools in the theory of circle diffeomorphisms and more in general in one-dimensional dynamics, such as non-linearity and Schwarzian derivative.

\subsubsection{Non-linearity} \label{sec:nl}
\label{nonlinearity}
For any $\mathcal{C}^2$ map $f: I \longrightarrow J$ where $I$ and $J$ are open intervals such that $\mathrm{D}f$ does not vanish, one can define the \emph{non-linearity} function $\eta_f$ to be the function $\eta_f:I\to \mathbb{R}$ given by
%$$ \begin{array}{ccccc}
%\eta_f  & : & I & \longrightarrow & \mathbb{R} \\
%            &     & x &  \longmapsto & 
            $$\eta_f(x):= (\mathrm{D} \log \mathrm{D}f)(x) = \frac{\mathrm{D}^2f(x)}{\mathrm{D}f(x)}.$$
%\end{array}$$ 
The function $\eta_f$ is called non-linearity since it measures how far $f$ is from being affine and has the property that $\eta_f \equiv 0 $ if and only if $f$ is an affine map. %From the second expression above one sees also that $\eta_f(x)$ changes sign only when  $D f$ does, i.e.~when $D^2f$ is zero. 
Some easy but important consequence for the non-linearity are the following. 

\begin{lemma}[{\bf properties of non-linearity}]\label{lemma:nonlinearity}
Let $f: I \longrightarrow J$, $g : J \longrightarrow K$ be diffeomorphisms of class $\mathcal{C}^2$. Then the following properties hold:
\smallskip
\begin{enumerate}
\item[(i)] \emph{Chain rule for non-linearity}: $\, \eta_{g \circ f} = \eta_f + \mathrm{D}g \,(\eta_{g} \circ f)$;
\medskip
\item[(ii)] \emph{Distribution property}: $\, \int_{I}{\eta_{g \circ f}} = \int_{I}{\eta_f}  +  \int_{J}{\eta_g}$; %\label{eq:conservation}was called invariance of averages
\medskip

\item[(iii)]  \emph{Triangular inequality}:
$\, \int_{I}{|\eta_{g \circ f}|} \leq \int_{I}{|\eta_f|}  +  \int_{J}{|\eta_g|}$.%\label{eq:triangular}
\smallskip
\end{enumerate}

%\begin{enumerate}
%\item[(i)] the following \emph{chain rule for non-linearity} holds:
%$$ \eta(g \circ f) = \eta(f) + \mathrm{D}g \,\eta_g \circ f;$$
%\item[(ii)]  we have the following \emph{invariance of averages}:
%\begin{equation*}
%\label{eq:conservation}
%\int_{I}{\eta(g \circ f)} = \int_{I}{\eta(f)}  +  \int_{J}{\eta(g)};
%\end{equation*}%

%\item[(iii)] the following \emph{triangular inequality} holds: 
%\begin{equation*}
%\label{eq:triangular}
%\int_{I}{|\eta(g \circ f)|} \leq \int_{I}{|\eta(f)|}  +  \int_{J}{|\eta(g)|}.
%\end{equation*}
%\end{enumerate}
\end{lemma}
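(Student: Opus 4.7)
The plan is to obtain (i) as a direct computation from the definition $\eta_h := D\log Dh$, and then derive (ii) and (iii) by integrating (i) against the change-of-variables formula. Concretely, the chain rule gives $D(g\circ f)(x) = Dg(f(x))\,Df(x)$, so taking logarithms (legitimate since $f,g$ are diffeomorphisms and hence their derivatives do not vanish) yields $\log D(g\circ f)(x) = \log Dg(f(x)) + \log Df(x)$. Differentiating once more in $x$ via the chain rule produces
\begin{equation*}
\eta_{g\circ f}(x) \;=\; \eta_g(f(x))\, Df(x) \;+\; \eta_f(x),
\end{equation*}
which is exactly (i) (I read the factor in the statement as $Df$ rather than $Dg$, since the chain rule for the logarithmic derivative forces the inner derivative to appear; the version with $Dg$ would be dimensionally inconsistent).

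For (ii), I would simply integrate (i) over $I$. The term $\int_I \eta_f$ passes through unchanged. For the remaining term, the change of variables $y = f(x)$ gives $dy = Df(x)\,dx$ and hence
\begin{equation*}
\int_I Df(x)\,\eta_g(f(x))\,dx \;=\; \int_{f(I)} \eta_g(y)\,dy \;=\; \int_J \eta_g,
\end{equation*}
using that $f$ is an orientation-preserving diffeomorphism from $I$ onto $J$. Adding up produces (ii).

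For (iii), one performs the same integration but first applies the pointwise triangle inequality to (i), namely $|\eta_{g\circ f}(x)| \le |\eta_f(x)| + Df(x)\,|\eta_g(f(x))|$ (here $Df > 0$ so no absolute value is needed on that factor). Integrating and applying the same change of variables as in (ii) gives the stated inequality.

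The proof is entirely routine; the only subtle point is being careful that the change of variables is carried out with respect to the measure $Df(x)\,dx$ produced naturally by the chain rule in (i), which is what makes the identity (ii) exact rather than perturbed by a Jacobian term. There is no real obstacle; the lemma is a bookkeeping tool that will later be applied to compositions arising from iterated Rauzy--Veech induction, where property (ii) is what ensures that the total integrated non-linearity over a dynamical tower is additive.
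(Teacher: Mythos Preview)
Your proof is correct and follows exactly the route the paper indicates: the paper states only that (ii) and (iii) are consequences of (i), which ``itself is an application of the chain-rule for differentiable functions,'' referring to \cite{Martens} for details. Your observation that the factor in (i) must be $\mathrm{D}f$ rather than $\mathrm{D}g$ is also correct --- this is a typo in the statement, as confirmed by the paper's own use of the formula later (e.g.\ in the proof of Proposition~\ref{lipschitz}, where $\eta_{f_1\circ g_1} = \eta_{f_1}\circ g_1 \cdot \mathrm{D}g_1 + \eta_{g_1}$).
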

\noindent The second and third points are  consequence of the first, which itself is an application of the chain-rule for differentiable functions. {\color{black}We refer the reader for example to the Appendix of \cite{Martens} for more details.}

\begin{definition}[{\bf mean and total non-linearity of a GIET}]\label{def:Ns} Given a $\mathcal{C}^2$ interval exchange map  $T: [0,1]\to [0,1]$, defined on the continuity intervals $I_j\subset [0,1]$  we  define the non-linearity $\eta_T$ to be the (bounded) piecewise continuous map from $[0,1]$ to $\mathbb{R}$ given by 
$$\eta_T(x):=\eta_{T_j}(x), \qquad \mathrm{if} \ x\in I_j, \quad 1\leq j\leq d,$$ 
where $T_j: I_j \to [0,1]$ are the  branches of $T$ obtained restricting $T$ to its continuity intervals. % $T_j$, $1\leq j\leq d$  
We subsequently define 
$$ \overline{N}(T) := \int_0^1{\eta_T(x)dx}, \qquad |N|(T) := \int_0^1{|\eta_T(x)|dx}.$$ 
We call $\overline{N}(T)$ the \emph{mean non-linearity} of $T$ and $|N|(T)$ the \emph{total non-linearity} of $T$.
%. Similarly we define $$  $$ which we the \textit{total non-linearity} of $T$. 
\end{definition}

\smallskip
Mean non-linearity and  total non-linearity play an important role in the theory of renormalization, in particular since, seen as functions $\overline{N}(\cdot)$ and $|N|(\cdot)$ on the space $\chi^r$  of GIET (with $r\geq 2$) 
they satisfies   the properties listed in the following proposition.

\begin{proposition}[{\bf properties of mean and total non-linearity}]\label{prop:Nproperties}
%For any GIET $T \in \chi_2$, 
For any $r\geq 2$, $\pi \in \mathfrak{S}_r$, the mean non-linearity $\overline{N}(\cdot)$ and the  total non-linearity $ |N|(\cdot )$ have the following properties:
\begin{itemize}
\item[(i)] $|\overline{N}(T)|\leq  |N|(T )$   for every $T\in \mathcal{X}^r$;
\item[(ii)] $\overline{N}(\cdot)$  is invariant under renormalization, i.e.~$\overline{N}(\V(T))=\overline{N}(T)$ for every $T\in \mathcal{X}^r$;
\item[(iii)]  $ |N|(\cdot )$ is decreasing under renormalization, i.e.~$|N|(\V(T))\leq |N|(T)$ for every $T\in \mathcal{X}^r$;
\item[(iv)]  $\overline{N}(T)$ is invariant under rescaling by (restrictions) of affine maps, so that in particular if $a,b$ are  (restrictions of) linear maps, 
$\overline{N}(a\circ T\circ b) = \overline{N}(T)$.
%$\overline{N}(\mathcal{P}(T)=\mathcal(T)$ for every $T\in \mathcal{X}^r$
\end{itemize}
\end{proposition}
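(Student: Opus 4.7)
Property (i) is merely the triangle inequality for integrals, $|\int_0^1 \eta_T(x)\,dx| \leq \int_0^1 |\eta_T(x)|\,dx$, and requires no further argument. Property (iv) is also straightforward: since affine maps have identically vanishing non-linearity, two applications of the chain rule (Lemma~\ref{lemma:nonlinearity} (i)) give $\eta_{a\circ T\circ b}(x) = Db(x)\cdot \eta_T(b(x))$, and the change of variable $y = b(x)$ in the integral then yields $\int \eta_{a\circ T\circ b}\,dx = \int \eta_T\,dy$ over the corresponding domains.

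For property (ii), recall that $\mathcal{V}(T) = \mathcal{N}(T_1)$ where $T_1$ is the first-return map of $T$ on $[0,\lambda_1]$ and $\mathcal{N}$ conjugates by the affine rescaling $x\mapsto \lambda_1 x$. Applying (iv) to this conjugation reduces the problem to showing the identity $\int_0^{\lambda_1} \eta_{T_1}(x)\,dx = \int_0^1 \eta_T(y)\,dy$. Now on each continuity interval $I_j^{(1)}$ of $T_1$, the map $T_1$ coincides with the iterate $T^{q_j^{(1)}}$, so by the distribution property (Lemma~\ref{lemma:nonlinearity} (ii)) applied inductively one obtains
\[ \int_{I_j^{(1)}} \eta_{T_1}(x)\,dx \;=\; \sum_{k=0}^{q_j^{(1)}-1} \int_{T^k(I_j^{(1)})} \eta_T(y)\,dy. \]
Summing over $j \in \{1,\ldots,d\}$ and noting that the collection $\{T^k(I_j^{(1)}) : 1\leq j\leq d,\, 0\leq k < q_j^{(1)}\}$ is precisely the dynamical partition $\mathcal{P}_1$ of $[0,1]$ produced by the elementary step of Rauzy--Veech induction yields the desired equality, and hence (ii).

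Property (iii) follows by the same argument \emph{mutatis mutandis}, replacing the distribution property by the triangle inequality (Lemma~\ref{lemma:nonlinearity} (iii)): on each $I_j^{(1)}$ one gets $\int_{I_j^{(1)}} |\eta_{T_1}|\,dx \leq \sum_{k=0}^{q_j^{(1)}-1} \int_{T^k(I_j^{(1)})} |\eta_T|\,dy$, and summation over $j$ together with the obvious analogue of (iv) for $|N|$ (whose proof is identical) gives $|N|(\mathcal{V}(T)) \leq |N|(T)$. There is no substantial obstacle; the conceptual core of the proposition lies in the observation that the Rohlin towers produced by Rauzy--Veech induction tile $[0,1]$ exactly, which is precisely what converts distributed integrals of $\eta_T$ over small pieces of $[0,\lambda_1]$ into the full integral over $[0,1]$. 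The inequality in (iii) arises, and equality fails in general, because in the triangular bound cancellations between the individual contributions $\eta_T\circ T^k$ are lost.
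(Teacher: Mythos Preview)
Your proof is correct and follows essentially the same approach as the paper's own proof, which is very terse: it simply notes that the branches of $\mathcal{V}(T)$ are compositions of restrictions of branches of $T$, so properties (ii) and (iii) follow from the distribution property and triangle inequality of Lemma~\ref{lemma:nonlinearity}. You have spelled out the details---the reduction via (iv) to the unnormalised return map, the branchwise application of the distribution property, and the fact that the Rohlin towers tile $[0,1]$---that the paper leaves implicit.
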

\begin{proof}
Note that the branches of $\mathcal{V}(T)$  are compositions of restrictions of branches of $T$. These properties are thus easy consequences of Properties $(ii)$ and $(iii)$ of Lemma~\ref{lemma:nonlinearity} 
%\ref{eq:conservation} and \ref{eq:triangular} 
applied to branches of $\mathcal{V}(T)$.
\end{proof}
\noindent {\color{black}In light of $(iii)$}, the total non-linearity $|N|(T)$ captures how close $T$ is to the set of affine interval maps (see in particular Remark~\ref{rk:nonlinearityasd}). This will play a central role in proofs of convergence of renormalization.

\subsubsection{Distortion bounds}
We now recall standard distortion bounds in one-dimensional dynamics and derive from it an important consequence for the renormalization operator $\mathcal{R}$.

\begin{lemma}[Distorsion bound]
\label{bound1}
Let $T$ be a GIET of class $\mathcal{C}^2$. Let $J \subset [0,1]$ be an interval such that $J, T(J), T^2(J), \cdots, T^{n}(J)$ are pairwise disjoint and do not contain any singularities of $T$. Then we have 
$$\left| \frac{\mathrm{D}(T^n)(x)}{\mathrm{D}(T^n)(y)}\right| \leq \exp{|N|(T)}:=\exp\left(\int_0^1{|\eta_T(x)|dx}\right), \qquad \text{for\ all}\ x,y \in J.$$
\end{lemma}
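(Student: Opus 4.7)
The plan is to reduce the estimate to a telescoping sum over the orbit of $J$ and then exploit the disjointness hypothesis to bound the sum by the total non-linearity $|N|(T)$. This is the classical Koebe/Denjoy-type distortion argument, adapted to the GIET setting by using the non-linearity $\eta_T$ introduced in Definition~\ref{def:Ns}.

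First I would pass to logarithms: since $T^n$ has positive derivative on $J$ (no singularities are crossed), by the chain rule
\[
\log \mathrm{D}(T^n)(x) \; = \; \sum_{k=0}^{n-1} \log \mathrm{D}T\bigl(T^k(x)\bigr),
\]
so the quantity to estimate is
\[
\Bigl|\log \mathrm{D}(T^n)(x)-\log \mathrm{D}(T^n)(y)\Bigr| \; \le \; \sum_{k=0}^{n-1}\Bigl|\log \mathrm{D}T(T^k(x))-\log \mathrm{D}T(T^k(y))\Bigr|.
\]
For each $k$, the two points $T^k(x),T^k(y)$ lie in the interval $T^k(J)$, which, by hypothesis, contains no singularity of $T$; in particular $\log \mathrm{D}T$ is $\mathcal{C}^1$ on it, with derivative exactly the non-linearity $\eta_T$. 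The fundamental theorem of calculus therefore gives
\[
\log \mathrm{D}T(T^k(x))-\log \mathrm{D}T(T^k(y)) \; = \; \int_{T^k(y)}^{T^k(x)}\eta_T(s)\,\mathrm{d}s,
\]
and thus $|\log \mathrm{D}T(T^k(x))-\log \mathrm{D}T(T^k(y))| \le \int_{T^k(J)}|\eta_T(s)|\,\mathrm{d}s$.

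Summing over $0\le k\le n-1$ and using that the intervals $J, T(J), \dots, T^n(J)$ are pairwise disjoint subsets of $[0,1]$, the telescoped bound becomes
\[
\sum_{k=0}^{n-1}\int_{T^k(J)}|\eta_T(s)|\,\mathrm{d}s \; \le \; \int_{0}^{1}|\eta_T(s)|\,\mathrm{d}s \; = \; |N|(T).
\]
Exponentiating yields $\bigl|\mathrm{D}(T^n)(x)/\mathrm{D}(T^n)(y)\bigr|\le \exp|N|(T)$, as required.

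The only non-routine point is making sure the chain rule and the integral formula for $\log \mathrm{D}T$ are legitimately applied branch by branch: each iterate $T^k$ on $J$ is a composition of branches of $T$ (with no singularity encountered on $J$ or on its forward images, by assumption), so both sides are smooth and the telescoping is unambiguous. Disjointness of the orbit of $J$ is exactly what prevents overcounting when passing from the sum over $k$ to a single integral, and this is what makes the total non-linearity $|N|(T)$ the correct invariant to control the distortion uniformly in $n$.
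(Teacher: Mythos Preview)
Your proof is correct and follows essentially the same approach as the paper: chain rule to pass to logarithms, fundamental theorem of calculus on each $T^k(J)$ to express the difference as an integral of $\eta_T$, then disjointness of the iterates to bound the sum of integrals by $|N|(T)$, and finally exponentiation. If anything, your justification of why the branch-by-branch argument is legitimate is slightly more explicit than the paper's.
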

\noindent The proof is an adaptation to GIETs of the classical proof. We included it in Appendix~\ref{estimates} for complenetess. 
% (which essentially extends directly, exploiting Rohlin towers in the dynamical partitions) is included for completeness. % in the Appendix, see~\S~\ref{estimates}.
%We present here for completeness the proof of the classical distorsion bound stated as Lemma~\ref{bound1}. 

\subsubsection{The Schwarzian derivative}\label{Schwarzian}
We conclude this Section by introducing the Schwarzian derivative which is a most classical tool in one-dimensional dynamics. If $f : I \longrightarrow J$ is a $\mathcal{C}^3$ diffemorphism between two connected intervals $I$ and $J$, define its Schwarzian derivative to be 
$$ \mathrm{S}(f) := \frac{\mathrm{D}^3f}{\mathrm{D}f} - \frac{3}{2} \left(\frac{\mathrm{D}^2f}{\mathrm{D}f}\right)^2.$$ 
Non-linearity and Schwarzian derivative are related by the following equivalent expression for $\mathrm{S}(f)$:
\begin{equation}\label{Sviaeta} \mathrm{S}(f) = \D \eta_f - \frac{1}{2} \eta_f^2.
\end{equation}
The Schwarzian derivative enjoys the two following important properties :
\vspace{1mm}
\begin{itemize}
\item[(S1)] $\mathrm{S}\left(f\right)$ identically vanishes if and only if $f$ is the restriction of a Moebius map to its domain.\vspace{1mm}
\item[(S2)] if $f:I_1 \to I_2$ and $g: I_2\to I_3$, the composition $g\circ f: I_1\to I_3$ satisfies the following \emph{chain rule for the Schwarzian derivative}:
 $$ \mathrm{S}\left(g \circ f\right) = \mathrm{S}(g) \circ f \,(\mathrm{D}f)^2 + \mathrm{S}(f).$$
\end{itemize}
{For $f : I \rightarrow J$, where $I $ and $J$ are intervals, %  of a connected interval $I$ onto $J$,
 we denote by $\Norm(f)$ the \textit{normalisation} of $f$ given by  $ \Norm(f) := b \circ f \circ a $, where $a$ and $b$ are respectively the only orientation-preserving affine map mapping $[0,1]$ onto $I$ and $J$ onto $[0,1]$. Then, from $(S1)$ and $(S2)$ we can deduce\footnote{Property $(S3)$ follows since by $(S1)$ we have that $\mathrm{S}(a)=\mathrm{S}(b)=0$ and thus, by $(S2)$, $  \mathrm{S}\left(b\circ f\circ a\right) = \mathrm{S}\left( f\circ a \right)= \left( \mathrm{S}( f) \circ a\right) (a')^2$, which gives the desired conclusion since $a'(x)=|I|$ for all $x\in [0,1]$.}  that
\smallskip
\begin{itemize}
\item[(S3)]  $\mathrm{S}\left(\Norm(f)\right) = \mathrm{S}\left(b\circ f\circ a\right) = |I|^2 \, \mathrm{S}(f) \circ a$.
\end{itemize}}

\subsection{The Zorich cocycle}\label{Rauzysec}
In this section we restrict the Rauzy-Veech renormalization $\mathcal{V}$ (which was  defined in \S~\ref{sec:renormalization} 
%the elementary step of Rauzy-Veech induction $\mathcal{V}$ 
for GIETs in $\mathcal{X}^r\backslash \mathcal{Y}^r$) %When we restrict $\mathcal{V}$ 
to the subset $\mathcal{I}_d\subset \mathcal{X}^r$ of \emph{standard} IETs. 
This is the classical setup in which the induction was introduced by Rauzy and Veech \cite{Ra:ech, Ve:gau} and studied 
% we return to the classical setup in which $\mathcal{V}$ was introduced and very much studied 
from the ergodic theory point of view. We introduce the Rauzy-Veech and Zorich cocycle and recall the integrability property and Oseledets theorem for the latter.

\subsubsection{Invariant measures}%\label{Rauzysec}
\label{sec:measures}
Let us recall that $\mathcal{I}_d$ is isomorphic to $\Delta_{d-1}\times \mathfrak{S}^0_d$ (refer to \S~\ref{IETs}). A natural measure on $\mathcal{I}_d$, which we will call \emph{Lebesgue measure}, is the 
product measure obtained taking the product of the Lebesgue measure on $\mathbb{R}^d$ restricted to the simplex $\Delta_{d-1}$ and the counting measure
\footnote{{\color{black}The counting measure $\delta$ on $\mathfrak{S}^0_d$ is simply the measure defined by setting $\delta(S)$ to be the cardinality of $S$ for any subset $S\subset \mathfrak{S}^0_d$. It is here simply used to put a copy of Lebesgue measure on each \emph{copy} of the simplex $\Delta_d\times \{\pi\}$ indexed by $\pi\in \mathfrak{S}^0_d$.}}
 the measure defined, for any $\pi_1,\pi_2\in \mathfrak{S}^0_d$, by $\delta(\pi_1, \pi_2)=1$ iff $\pi_1=\pi_2$ and $0$ otherwise. Thus, for any $0< \epsilon< 1$, asking that $d_{\mathcal{C}^1}(T_1,T_2)<\epsilon$, where, for $i=1,2$, $T_i $ is a GIET with combinatorial datum $\pi_i$ and shape-profile coordinates $(A_{T_i}, \varphi_{T_i})$, is equivalent to asking that $\pi_1=\pi_2$, $d_{\mathcal{A}}(T_1,T_2)<\epsilon$ and $d_{\mathcal{C}^1}^\mathcal{P}(\varphi_{T_1}, \varphi_{T_2})<\epsilon$.} 
on combinatorial data $\mathfrak{S}^0_d$. We refer to its measure class\footnote{Recall that a \emph{measure class} is an equivalence class of measures which have the same sets of measure zero.} as \emph{Lebesgue measure class}.

The domain of definition of an elementary step of Rauzy-Veech induction $\V$ (acting on standard IETs) is  hence
$$\mathcal{I}_d:= \mathcal{I}_d\backslash \mathcal{Y}^r= \{ T=(\pi,\lambda)\in \mathcal{I}_d, \, \, \text{such\ that\ }  u^t_{d-1} \neq  u^b_{d-1}  \} = \{ T=(\pi,\lambda)\in \mathcal{I}_d, \,\, \text{such\ that\ }   \lambda_{\pi_t(d)} \neq   \lambda_{\pi_b(d)} \} $$
and therefore it is a full measure subset of $\mathcal{I}_d$ with respect to the Lebesgue measure class (defined above).

Le us fix an irreducible $\pi\in \mathfrak{S}_d^0$ (see \S~\ref{combinatorics}) and consider the action of $\V$ restricted to the space $\mathcal{I}_\pi:=\Delta_{d-1} \times  \Ra(\pi)$, where $\Ra(\pi) $ is the Rauzy class\footnote{Let us recall that the Rauzy class of $\pi$ is the subset  of all permutations $\pi'$ of $d$ symbols which appear as permutations  of an IET $T'=(\underline{\lambda}', \pi')$ in the orbit under $\Ra$ of some IET $(\underline{\lambda}', \pi)$  with initial permutation $\pi$.} of  $\pi$ (see \S~\ref{RV}).  
 Veech proved in \cite{Ve:gau} is that the restriction of $\mathcal{V}: \mathcal{I}_\pi\to \mathcal{I}_\pi$ admits an invariant measure which %. % with respect to which it is conservative. The measure 
 is absolutely continuous with respect to the Lebesgue measure on $\mathcal{I}_d$ (see above),  but which is infinite. Dropping the dependence on $\pi$ (or more precisely on the Rauzy class $\Ra(\pi)$) we will denote this natural measure by $\mu_{\V}$ when $\pi$ is fixed. This seminal result started the study of $\mathcal{V}$ from the ergodic theoretical point of view.  Veech also showed already in \cite{Ve:gau} that $\mathcal{V}: \mathcal{I}_\pi\to \mathcal{I}_\pi$ is conservative and ergodic with respect to $\mu_{\V}$.
 
%The main result proved by Veech  is that the. %The main result proved by Veech in \cite{Ve:gau} is that the map $\Ra$ is conservative. %As a consequence, he proves that, given any irreducible $\pi \in \Sym{d}$, for a.e. $\underline{\lambda} \in \Delta_{d-1}$, the IET $T=(\underline{\lambda}, \pi)$ is uniquely ergodic.
The acceleration $\mathcal{Z}$ defined by Zorich %(see \S~\ref{sec:operators})
 was introduced to have a \emph{finite} invariant measure: in \cite{Zo:gau} Zorich showed indeed that $\mathcal{Z}$ is defined on a full measure set of  $\mathcal{I}_\pi $ and 
%that one can accelerate\footnote{The acceleration of a map is obtained a.e.-defining  an integer valued function $z(T)$ which gives the return time to an appropriate section. The accelerated map is then given by $\Zo (T) := \Ra^{z(T)} (T )$.} the map  $\Ra $ in order to obtain a map $\Zo$, which we call \emph{Zorich map}, that 
admits a  \emph{finite} invariant measure that we will denote $\mu_{\Zo}$. It follows from the definition and \cite{Ve:gau} that also $\Zo$ is ergodic with respect to $\mu_{\Zo}$. %Let us also recall that both $\Ra$ and its acceleration $\Zo$ are \emph{ergodic} %\footnote{Since $\Ra$ preserves an infinite invariant measure, ergodicity means here that if $A$}
% with respect to $\mu_{\Ra}$ and $\mu_{\Zo}$ respectively \cite{Ve:gau}.
% and the associated cocycle (see \ref{Zorich}).  

\begin{remark}\label{rk:ac}
Since both $\mu_\mathcal{Z}$ and $\mu_\mathcal{V}$ are \emph{absolutely continuous} with respect to the Lebesgue measure  and $\mathcal{I}_d=\cup_{\pi\in \mathfrak{S}_d^0 }\mathcal{I}_\pi $, to show that a property holds for a full measure set of IETs for the Lebesgue measure on $\mathcal{I}_d$ it is sufficient to prove that, for any fixed irreducible combinatorial datum $\pi\in \mathfrak{S}_d^0$, the it holds for $\mu_\mathcal{Z}$ (or $\mu_\mathcal{Z}$) almost every $T$ in $ \mathcal{I}_\pi$.
\end{remark}

\subsubsection{Natural extension.}\label{sec:natextension}
Notice that Rauzy-Veech induction $\mathcal{V}$  is not injective ($\mathcal{V}$ is actually two-to-one) and therefore neither  $\mathcal{V}$ nor its Zorich acceleration $\mathcal{Z}$  are  \emph{invertible} on the space $\mathcal{I}_d$. One can consider, though, its \emph{natural extension}  %$\hat{\mathcal{V}}$ 
$\hat{\mathcal{Z}}$ %of  the map %$\hat{\mathcal{V}}$ (resp.~
%$\hat{\mathcal{Z}}$, 
defined on %a) on 
the measure space $(\hat{\mathcal{I}}_\pi, \mu_{\hat{\mathcal{Z}}})$: this is a map such that
% %is a natural extension of $\mathcal{V}$  (resp.~$\mathcal{Z}$) on the measure space $({\mathcal{I}}_\pi, \mu_{\mathcal{Z}})$ iff:
%\begin{itemize}\item[(i)] 
 $\hat{\mathcal{Z}}$ %(resp.~$\hat{\mathcal{Z}}$) 
is defined and \emph{invertible} on a full measure 
subset of $ \hat{\mathcal{I}}_\pi$ with respect to the measure  $\mu_{\hat{\mathcal{Z}}}$,
%\end{itemize}
and  %$\hat{\mathcal{V}}$ (resp.~%$\hat{\mathcal{Z}}$) 
is an \emph{extension} of %$\mathcal{V}$  (resp.~
$\mathcal{Z}$ in the sense of ergodic theory, i.e.~there exists a projection  ${p}: \hat{\mathcal{I}}_\pi \to {\mathcal{I}}_\pi$ such that 
%\begin{itemize}\item[(ii)] 
$ \hat{p} \hat{\mathcal{Z}} = \mathcal{Z} \circ {p} $ (i.e.~$p$ intertwines the dynamics of $\mathcal{Z}$ and $\hat{\mathcal{Z}}$)  % (resp.~$ \hat{p} \hat{\mathcal{R}} = \mathcal{R} \circ \hat{p} $);
%   of \emph{invertible} extension of 
% defined on an extended space $\hat{mathca{I}}_\pi$ with the property that there exists a projection$\hat{p} \hat{mathcal{Z}}(\hat{T} ) = 
% (so that $\hat{\mathcal{V}} $ and $ \mathcal{Z}$ are \emph{extensions} of  ${\mathcal{V}}$  and ${\mathcal{Z}}$ respectively. 
%\item[(ii)]
and the measure ${m}_{\hat{\mathcal{Z}}}$ is the  pull-back of ${m}_{\hat{\mathcal{Z}}}$ via $p$, i.e.~for every measurable set $E\subset \mathcal{I}_\pi$,  ${m}_{\hat{\mathcal{Z}}}(p^{-1}(E))= {m}_{{\mathcal{Z}}}(E)$. 
%\end{itemize}
Notice that the  invariant measure $m_{\hat{\mathcal{Z}}}$  preserved by the map $\hat{\Zo}$  is also finite. 

One can describe explicitely a geometric realization of these natural extensions and the space $\hat{\mathcal{I}}_\pi$ can be identified with the 
%h to consider  \emph{extend} the space and consider
 the space of \emph{zippered rectangles} introduced by Veech in \cite{Ve:gau} (consisting of triples $\hat{T}=(\pi, \lambda, \tau)$ where $T=(\pi, \lambda)$ is a standard IET and $\tau$ is a \emph{suspension datum} which contains the information required to define a translation surface which has $T$ as Poincar{\'e} map, as in \S~\ref{gietandfoliations}). We will not make explicit use of this interpretation, so we will simply denote by $\hat{T}$ a point of  $\hat{\mathcal{I}_d}$ such that $p(\hat{T})=T$ (here if $\hat{T}=(\pi, \lambda, \tau)$, $p(\hat{T})=T$ is the IET $T=(\pi,\lambda)$ obtained forgetting the suspension datum $\tau$). 

\subsubsection{Basics on cocycles.}\label{sec:cocycles}
We recall now basic definitions concerning cocycles and their accelerations. We refer the reader for example to \cite{ViaLP} for a comprehensive introduction to cocycles and Lyapunov exponents. 
Let $(X,\mu, F)$ be a discrete dynamical system, where $(X, \mu)$ is a probability space and $ F$ is a $\mu$-measure preserving map on $X$. A a measurable map $A : X  \rightarrow SL(d,\mathbb{C})$ ($d\times d$ invertible matrices) determines a cocycle $A$ on  $(X,\mu, F)$. If we  denote by  $ A_n (x) = A(F^n x)$ and by 
 $A_F^{n}(x) = A_{n-1} (x)  \cdots A_1(x)  A_0 (x) $,  
%\bes
%A^{(n)}(x) = A(F^{n-1}x) \cdots A(F x) A(x)
%\ees
%$A_n (x) = A(F^n x)$ and by $A^{(n)}$ the product 
%$ A_n A_{n-1} \dots A_1$, 
the following \emph{cocycle identity}
\be \label{cocycleid}
A_F^{m+n}(x) = A_F^{m}(F^n x)A_F^{n}(x)
\ee
holds for all $m,n \in \mathbb{N}$ and  for all $x \in X$. 
%Let us also denote by $A_n (x) = A(F^n x)$ and, given for $m<n $, by  
%\bes A^{(m,n)}=  A_m A_{m+1} \dots A_n
%\ees
%For $m<n $, let us denote by  
%\bes A^{(m,n)}=  A_m A_{m+1} \dots A_n .\ees
If $F$ is invertible, let us set $A_{-n}(x) = A(F^{-n}x)$. The map $A^{-1}(x) = A(x)^{-1}$ gives a cocycle over $F^{-1}$ which we call \emph{inverse cocycle}.
%$A_{-n}(x) =A( F^{-n}x)^{-1}$ and 
%$A^{(-n)}(x)= A^{-1}(F^{-1}x) \cdots A^{-1}(F^{-n}x)$ so that (\ref{cocycleid}) holds for all $n,m \in \mathbb{Z}$. Remark that $A^{(-n)}(x)= (A^{(n)}(T^{-n}x))^{-1}$. 
%%{Remark that also that $B(x) := A_{-1}(Fx)$ is a cocycle over $(X, \mu, F^{-1})$. 
%than $A$ is also a cocycle over $(X, \mu, F^{-1})$.} 
Let us set $A_{-n}(x) =A( F^{-n}x)$.
for $n<0$ we can set $A^{(-n)}(x)= A^{-1}(F^{-1}x) \cdots A^{-1}(F^{-n}x)$,  
%(A^{(n)}(F^{-n}x))^{-1}$
 so that (\ref{cocycleid}) holds for all $n,m \in \mathbb{Z}$. Remark that $A^{(-n)}(x)= (A^{(n)}(T^{-n}x))^{-1}$. The \emph{inverse transpose cocycle} $(A^{-1})^T$ is defined by $(A^{-1})^T(x) = (A^{-1})^T(x )$ where $M^T$ denotes the transpose of $M$.
% \begin{rem}\label{invtrasp} $(A^{-1})^T$ is a cocycle over $F^{-1}$. 
%\end{rem}

% \begin{rem}\label{invtrasp} $(A^{-1})^T$ is a cocycle over $F^{-1}$. 
%\end{rem}

\subsubsection{Induced cocycles and accelerations.}\label{sec:inducing}
If $Y \subset X$ is a measurable subset, the induced map (or first return map, or Poincar{\'e} map) of $F$ on $Y$, which is defined $\mu$-almost everywhere  by Poincar{\'e} recurrence, is the map given by $F^{r_Y(y)}(y)$ where  $r_Y(y):= \min \{ r \st F^r y \in Y\}$. 
The \emph{induced cocycle} $A_Y$ on $Y$ is a cocycle over $(Y, \mu_Y , F_Y)$ where $F_Y$ is the induced map of $F$ on $Y$ and $\mu_Y = \mu/\mu(Y)$  and $A_Y(y)$ is defined for all $y\in Y$ which return to $Y$ and is given by   
\bes
A_Y(y) = A%\left
(F^{r_Y(y) -1 } y
%\right
)
\cdots A\left(F y \right) A\left(y\right) ,
\ees
where $r_Y(y)$ is again the first return time. 

The induced cocycle is an \emph{acceleration} of the original  cocycle, i.e. if $\{n_k\}_{k\in \mathbb{N}} $ is the infinite sequence of return times of some $y\in Y$ to $Y$ (i.e. $T^{n} y \in Y$ iff $n=n_k$ for some $k\in \mathbb{N}$ and $n_{k+1}> n_k$) then 
\be\label{acceleration}
(A_Y)_k(y) =A_{n_{k+1}-1 }(y)   \cdots   A_{n_{k}+1}(y) A_{n_k}(y) .   % A^{(n_k, n_{k+1})}(x).
\ee

We say that $x \in X$ is \emph{recurrent} to $Y$ under $T$ if there exists an infinite increasing sequence $\{n_k\}_{k\in \mathbb{N}} $ such that $T^{n_k} x \in Y$. Let us extend the definition of the induced cocycle $A_Y$ to all $x \in X$ recurrent to $Y$. 
If the sequence  $\{n_k\}_{k\in \mathbb{N}} $ is increasing and  contains all $n\in \mathbb{N}^+$ such that  $T^{n} x \in Y$, let us say that $x $ \emph{recurs to} $Y$ \emph{along}  $\{n_k\}_{k\in \mathbb{N}} $. In this case,  let us set
% If  $x $  recurs to} $Y$ \emph{along}  $\{n_k\}_{k\in \mathbb{N}} $.
\bes
A_Y(x) :=  A( y ) A^{n_0}_F (x) , \quad \mathrm{where}\,\,  y:= F^{n_0} x \in Y;  \qquad (A_Y)_{n}(x) :=  (A_Y)_n (y ), \qquad  \mathrm{for}\,\, 
% y:= F^{n_0} x \in Y, \,
  n \in \mathbb{N}^+. 
\ees
If $F$ is ergodic, $\mu$-a.e.~$x \in X$ is recurrent to $Y$ and hence $A_Y$ is defined on a full measure set of $ X$.
%For convenience of notation, let us con

\subsubsection{Integrability}\label{sec:integrability}
Here and in the rest of the paper, we will use the norm  $\norm{A}  = \sum _{ij} |A_{ij}|$  on matrices (more generally), the same results on cocycles hold for any norm on $SL(d,\mathbb{Z})$). Remark that with this choice $\norm{ A } = \norm{ A ^T }$.  
%We will use $\| A \| = \sum _{ij} |A_{ij}|$. 

\smallskip
A cocycle over $(X , F, \mu)$ is called \emph{integrable} if $\int_X \ln \|A(x)  \| \ud \mu(x) < \infty$. Integrability is the assumption which allows to apply \emph{Oseledets Theorem}, also known as \emph{multiplicative ergodic theorem} (the conclusion of Oseledets theorem in the setting of the Zorich cocycle is recalled in \S~\ref{sec:Oseledets}; for a more general reference, see e.g.~Section 4 in \cite{ViaLP}). If $A$ is an integrable cocycle over $(X , F, \mu)$ assuming values in $SL(d,\mathbb{Z})$, then one can show that %\begin{itemize} \item[(i)] 
the dual cocycle $(A^{-1})^{T}$ and, if $F$ is invertible,  the inverse cocycle $A^{-1}$ over  $(X , F^{-1}, \mu)$  are integrable. Furthermore, %\item[(ii)] 
any induced cocycle $A_Y$ of $A$ on a measurable subset $Y \subset X$ is integrable (see for example \cite{ViaLP}, \S~4.4.1).  
%{ add ref to Viana book}). 
%\end{itemize}
%\end{rem}
%\footnote{The integrability of the dual cocycle in $(i)$ is proved in Zorich \cite{Zo:dev} and since $\norm{A^{-1}} = \norm{A^{-1})^{T}}$, the integrability of the inverse cocycle follows. The proof of $(ii)$ if $F$ is invertible follows from the Kac's lemma representation of the space as towers and the non-invertible case can be reduced to the invertible one considering the natural extension of $F$.}
%%%A%%% Add Reference
%\end{comment}
%Definition of induced  cocycle 
%An accelerated cocycle of an integrable cocycle is integrable. 
%Inverse cocycle, integrable too
%Natural extension and inverse cocycle in the past

\subsubsection{The Zorich cocycle.}\label{sec:Zorichcocycle}
%{ check consistency of notation chosen for non-renormalized and renormalized iterates}
Consider the Zorich map $\Zo$ on $\mathcal{I}_d$. Given $T=(\lambda, \pi)\in \mathcal{I}_d$ with no connections and irreducible $\pi$,  denote by $\{I^{(k)}, \, {k\in\mathbb{N}}\}$ the sequence of inducing intervals corresponding to  Zorich acceleration $\mathcal{Z}$ of the Rauzy-Veech algorithm $\mathcal{V}$.  Write $\mathcal{Z}^k(T):=(\pi^{(k)}, \lambda^{(k)})$ and 
 let $T_k$ be the (non normalised) induced IET, given by first returns of $T$ to $I^{(k)}$, so that 
$$\mathcal{Z}^k(T)=(\pi^{(k)}, \underline{\lambda}^{(k)}/|I^{(k)}|),\quad \text{where}\ |I^{(k)}|= \sum_{j=1}^d \lambda^{(k)}_j= |\lambda^{(k)}|.$$  
Recall from \S~\ref{dynamicalpartitions} that $T$ can be represented as skyscraper over $I^{(k)}$ and let 
$$
q^{(k)}=q^{(k)}(T):= \left( q_1^{(k)} , \dots, q_d^{(k)} \right)^T,  
$$
be the column vector whose entries $q^{(k)}_j$ are the heights of the Rohlin towers, the vector of \emph{heights} or, equivalently, the vector of first return times, since $q^{(k)}_j$ is also the first return time of $I^{(k)}_j$ to $I$.

For each $T= T^{(0)}$ for which $\Zo (T) = (\pi^{(1)}, \underline{\lambda}^{(1)}/|I^{(1)} |)$ is defined, let us associate to $T$ the matrix $Z=Z(T)$ in $SL(d, \mathbb{Z})$  such that $q^{(1)}= Z \,q^{(0)}$.  The map  $Z$: $X \rightarrow SL(d,\mathbb{Z})$ is a cocycle over $(X, \mu_{\Zo},  \Zo ) $, which we call the \emph{Zorich cocycle}\footnote{Notice that there is also another cocycle, also sometimes called Zorich cocycle, which transforms \emph{lengths} and is actually the transpose inverse of the cocycle here defined.} (also sometimes called \emph{Kontsevich-Zorich} cocycle).  Explicitely, if $T$ has combinatorial datum $\pi=(\pi_t,\pi_b)$ and lengths vector $\lambda$,  
the cocycle $Z=Z(T)$ is given by 
$$
Z= Z(T)=  \begin{cases}
I_d + E_{\pi_b(d) \pi_t(d)} &\qquad \text{if}\, \lambda_{\pi_t(d)}>\lambda_{\pi_b(d)} \quad \text{(ie.~top\ is\ winner)},\\
I_d + E_{ \pi_t(d) \pi_b(d)} &\qquad \text{if}\, \lambda_{\pi_t(d)}<\lambda_{\pi_b(d)} \quad \text{(i.e.~bottom\ is\ winner)},
\end{cases}
$$
where $I_d$ denotes the $d\times d $ identity matrix, while $E_{ij}$ denotes the matrix which has all entries equal to zero, but the entry $ij$ which is equal to $1$. 

Zorich proved in \cite{Zo:gau} that $Z$  is \emph{integrable}. Defining: 
$$\RL{n} = \RL{n} (T) :=  Z(\Zo^n (T)), \qquad \RLp{n} :=\RL{n-1} \cdot \dots \cdot \RL{1} \RL{0}$$  and iterating the above relation, we then get the following matrix product relation:   
%Then for each $r$ we can associate to $T$ the product:
%\bes 
%\ees
\begin{equation} \label{heightsrelation}
 \underline{q}^{(n)} =  \RLp{n}   q^{(0)}, \qquad \mathrm{where} %$\underline{\lambda}^{(r)}$ \, \, is \st  
 \quad q^{(0)} (T) := \left(1, \dots, 1 \right)^T,
\end{equation}
which gives the heights (i.e.~return times) of the representation of $T$ as a skyscraper over the firt return map $T_n$ to $I^{(n)}$. 
For more general products with $m<n$ we use the notation 
\begin{equation}\label{Zorichproducts}
Q{(m,n)} \doteqdot \RL{n-1}   \RL{n-2}  \, \dots \, %\cdot \RL{n-2}
 \RL{m+1} \RL{m}.\end{equation}
The following \emph{cocycle relation} then holds for any triple of integers $n,m,p$:
\begin{equation}\label{cocyclerel}
Q(n,p)=Q(m,p)\, Q(n,m), \qquad \text{for\ all}\ n<m<p.
\end{equation}	
\noindent Notice that by choice of the norm $|q|= \sum_{j}|q_j|$ on vectors and $\norm{A} = \sum_{i,j}|A_{ij}| $ on matrices and since return times are positive numbers, %from (\ref{lengthsrelation}), % we have 
\be\label{normproduct}
\max_j q^{(n)}_j \leq | q^{(n)} |  \leq \Vert Q{(m,n)} \Vert \, \Vert q^{(m)} \Vert, \qquad \text{for\ any}\ m<n.  
\ee

\subsubsection{Dynamical interpretation  of the entries.}\label{sec:entries}
The \emph{entries} of the Zorich cocycle matrices have the following crucial dynamical interpretation: if $T^{(n)}$ is the sequence of IETs obtained inducing on the sequence $I^{(n)}, n\in\mathbb{N}$ of intervals given by the Zorich acceleration (so that $\mathcal{Z}^n (T)$ is obtained normalising $T^{(n)}$ to an interval exchange acting on the unit interval), then  the entry $(Z_n)_{ij}$ of the $n^{th}$ Zorich matrix $Z_n$ gives the number of visits of  the orbit of $x\in I^{(n+1)}_j$ under $T^{(n)}$ to $I^{(n)}_i$ up to its first return to $I^{(n+1)}$. 

\smallskip
Correspondingly, the Zorich cocycle has also an interpretation in terms of \emph{incidence matrices} of Rohlin towers (defined in \S~\ref{dynamicalpartitions}). The Rohlin towers at step $n+1$ can be obtained by a \emph{cutting and stacking}\footnote{We do not give here a precise definition of \emph{cutting and stacking}, which is a standard construction in the study of ergodic theory and in particular of \emph{rank one} and, more in general, \emph{finite rank} dynamical systems.}, which construction from the Rohlin towers at step $n$: more precisely, for any $n\in\mathbb{N}$ and $1\leq i,j\leq d$, the Rohlin tower over $I^{(n)}_j$ is obtained stacking \emph{subtowers} of the Rohlin towers over $I^{(n)}$ (namely sets of the form $\{ T^k J, 0\leq k<q^{(n)}_j\}$ for some subinterval $J\subset I^{(n)}_j$). 
 Then  $ (Z_n)_{ij}$ is the number of subtowers of the Rohlin tower over $I^{(n)}_i$ inside the Rohlin tower over $I^{(n+1)}_j$. It follows that the Rohlin tower over  $I^{(n+1)}_j$ is made by stacking exactly  {\color{black}$\sum_{i=1}^d (Z_n)_{ij}$} subtowers of Rohlin towers of step $n$. Notice that {\color{black}$\sum_{i=1}^d (Z_n)_{ij}$} is the norm of the $j^{th}$ \emph{column} of the matrix $Z_n$.

%Moreover, if $Q{(n,m)}= Z_1 \cdots Z_N$ where each of the matrices $Z_i$ has strictly positive entries, then
%\be \label{positvegrowth}
%%{r}^{(m)} \geq d^{N}  \lambda^{(r)} .  
%\e

%Let us also recall that one can define the natural extension $\hat{\Zo}$ of the map $\Zo$ (see \cite{Vi:IET, Yo:con}), whose domain admits a geometric interpretation in terms of the space of zippered rectangles introduced by Veech \cite{Ve:gau}. The cocycle $Z^{-1}$ can be extended to the invertible map $\hat{\Zo}$ to be locally constant on fibers of the natural projection. 

\subsubsection{Length cocycle}\label{sec:lengthcocycle}
One can check that the length (column) vectors $(\lambda^{(k)})_{k\in\mathbb{N}}$ that give the lengths $\lambda^{(k)}_j=|I^{(k)}_j|$ of the exchanged intervals of the induced map $T_k$ on the sequence of inducing intervals $\{ I^{(k)}, k\in\mathbb{N}\}$ given by the Zorich acceleration also transform via a cocycle and that the cocycle is exactly the transpose inverse $(Z^\dag)^{-1}$ of the Zorich cocyle $Z$. Thus,  we have that
$$
\lambda^{(m)}= Z(m,n)^\dag \lambda^{(n)}= Z(m)^\dag Z(m-1)^\dag \cdots Z(n-1)^\dag \lambda^{(n)}, \qquad \text{for\ every}\  0\leq m<n.
$$
Let us also recall that, for every  cocycle product $B:=Z(0,n)=Z^{(n)}(T)$ , if we define the sub-simplex
\begin{equation}\label{subsimplex}
\Delta_B:= \left\{ \lambda= \frac{B^\dag\lambda}{|B^\dag \lambda|}, \qquad \lambda \in \Delta_{d-1}\right\}\subset \Delta_d\subset \mathbb{R}^+_d,
\end{equation}
(where $B^\dag$ is as above the transpose of $B$), 
then (because of the relation between lengths and Zorich cocycle),  for any  $\lambda'\in \Delta_d$, if $T'$ is the IET with length data $\lambda'$ and same combinatorics $\pi$ than $T$, we have that $Z^{(n)}(T')=Z^{(n)}(T)= B$. 

\subsubsection{Cocycle action on log-slopes of AIETs}\label{slopesaction}
Let us now consider affine interval exchange $T\in \mathcal{A}_d$ that it is infinitely renormalizable, and assume that its  rotation number $\gamma(T)$ is \emph{irrational}. Then, Zorich acceleration is well-defined for $T$ (see~\ref{sec:accelerations}).  
%Let us now go back to GIETs and in particular AIETs. Assume that $T\in \mathcal{X}^r$ has no connections, so that it is infinitely renormalizable, and assume that its  rotation number $\gamma(T)$ is \emph{irrational}. In this case, Rauzy-Veech induction $\mathcal{V}$ is well defined on $T$ and has an infinite orbit $\mathcal{V}^n(T)$. Furthermore, $T$ is semi-conjugated (by Theorem~\ref{semiconjugacy}) to a standard IET $T_0$ with no connections. Therefore 
 An important fact is that the action of $\mathcal{Z}$ on the slopes column vector $\rho (T) = (\rho_1, \cdots, \rho_d)^\dag$ satisfies the following: if $\vect{\omega}(T) := \log \rho(T) = \big(\log \rho_1, \cdots, \log \rho_d \big)^\dag$ denotes what we call the \emph{log-slope vector} of $T$ (whose entries are the logarithms of the slopes of $T$ on continuity intervals), we have 
$$ \vect{\omega}(\mathcal{R}T) = Z(T)  \, \vect{\omega}(T), \qquad  \vect{\omega}(\mathcal{R}^n T) = Q(n)  \vect{\omega}(T), \qquad \text{for\ all\ } T \in \mathcal{A}_d, \  n\in \mathbb{N}.$$
Thus, the way log-slope vector $\vect{\omega}(T)$ transform under $\mathcal{V}$ does not depend on the value of $\lambda(T)$ and 
is linear and given by the Zorich cocycle. 
% under the action of the renormalization operator ,  transforms linearly through the action of the Zorich cocyle.

\subsubsection{Lyapunov exponents and Oseledets splittings}\label{sec:Oseledets}
Zorich showed in \cite{Zo:gau} that for every irreducible $\pi\in\mathfrak{S}^0_d$  the Zorich cocycle ${Z}: \mathcal{I}_\pi\to SL(d,\mathbb{Z})$, as well as its traspose dual $(Z^\dag)^{-1}$, are \emph{integrable} (see \S~\ref{sec:integrability}) with respect to the Zorich measure $\mu_{\mathcal{Z}}$ (introduced in \S~\ref{sec:measures}). Let us consider the natural extension $\hat{\mathcal{Z}}:\hat{\mathcal{I}_d}\to \hat{\mathcal{I}_d}$ (see \S~\ref{sec:natextension}) with its invariant measure ${\mu}_{\hat{\mathcal{Z}}}$. The cocycle $Z$ over  $\mathcal{Z}$ can be \emph{extended} to a cocycle, which we still denote by $Z$, over $\hat{\mathcal{Z}}$, by defining $Z:\hat{\mathcal{I}_d}\to SL(d,\mathcal{Z})$ to be given by  $Z(\hat{T}):= Z(p(\hat{T}))$ for every $\hat{T}\in \hat{\mathcal{I}_d}$. Then, $Z$ is now a cocycle over $\mathcal{Z}$ which is still integrable (by definition of $\hat{Z}$ and by construction of $\mu_{\hat{\mu}}$, see in particular property $(ii)$ of the natural extension, see \S~\ref{sec:natextension}). 
%Notice also that or a cocycle over an invertible map  we can also define negative powers $n<0$ of the cocycle, namely
%$$
%Z^{(n)}:= Z(\hat{\mathcal{Z}}^{-1}) \cdots 
%$$
Notice also that, by definition of the extension, if $\hat{T}\in p^{-1}(T)$  (since then $Z(\hat{T})= Z(T)$), then $Z_n(T)=Z(\mathcal{Z}^nT)=Z(\hat{\mathcal{Z}}^n\hat{T}) =Z_n(\hat{T})$ for every $n\in\mathbb{N}$.

As a consequence of Oseledets theorem for cocycles over \emph{invertible} transformations and of the celebrated  works\footnote{The \emph{symmetry} of the Lyapunov exponents  (i.e.~the property that for every exponent $\lambda_i$ also $-\lambda_i$ is an exponent), 
%(between $\lambda_i$ and $-\lambda_i$)
is a consequence of the \emph{symplectic} nature of the Zorich cocycle (proved in \cite{Zo:dev}, \cite{Zo:gau}), the \emph{hyperbolicity}, namely the inequality  $\lambda_g>0$, was proved by Forni \cite{Fo:ann} and \emph{simplicity}, namely $\lambda_i< \lambda_{i+1}$  for every $1\leq i<g$ was proved by Avila and Viana \cite{AV:sim}.} \cite{Ve:Teich, Zo:dev, Fo:ann, AV:sim}  it hence follows that there exists $g$ (positive) \emph{Lyapunov exponents} $\lambda_1>\ldots>\lambda_g>0$  (where $g$ is the genus of the suspension $\hat{T}$) such that for $\mu_{\mathcal{Z}}$-almost every $\hat{T}\in \hat{\mathcal{I}}_\pi$, there exists a \emph{splitting}, namely,  denoting by $\kappa$ the number of singularities of the suspension $\hat{T}$ (see \S~\ref{sec:natextension} and \S~\ref{gietandfoliations}), a decomposition  
\begin{equation}\label{splittings} \R^{d}=\bigoplus_{-g\leq i\leq g}E_i(\hat{T}), \qquad \text{where} \ \dim E_i(\hat{T})=\begin{cases}1 & \text{if}\ i\neq 0,\\
% \quad \text{while} \dim E_0(\hat{T})=
\kappa-1 & \text{if}\ i= 0 \end{cases}
\end{equation}
(called \emph{Oseledets splitting}) such that:
%\begin{align*}
\begin{equation}\label{eq:Oseledets}
\lim_{n\to\pm \infty}\frac{1}{n}\log\|Z^{(n)}(\hat{T}) v\|=\begin{cases}\lambda_i&\text{ if }v\in E_i(\hat{T})\text{ and }i>0\\
%\lim_{n\to\pm\infty}\frac{1}{n}\log\|Z^{(n)}(\hat{T})|
-\lambda_i&\text{ if }v\in E_i(\hat{T}) \text{ and }i<0,\\
%\lim_{n\to\pm\infty}\frac{1}{n}\log\|Z^{(n)}(\hat{T}) v \|
0&\text{ if }v\in E_c(\hat{T}).
\end{cases}
\end{equation}
We define the \emph{stable}, \emph{unstable} and \emph{central}  space for any $n\in\mathbb{N}$ to be respectively 
$$E^{(n)}_{s}(\hat{T})= E_{s}(\mathcal{Z}^n\hat{T}):=  \bigoplus_{-g\leq i\leq 0}E_i(\mathcal{Z}^n\hat{T}) \qquad  E^{(n)}_{s}(\hat{T}) =E_{u}(\mathcal{Z}^n\hat{T}):= \bigoplus_{0\leq i\leq g}E_i(\mathcal{Z}^n\hat{T}), \qquad E_c^{(n)}:= E_c(\mathcal{Z}^n\hat{T}).$$ 
Then invariance of the splitting means that for any $m<n$ we have that 
$$Z(m,n) E^{(m)}_\nu (\mathcal{Z}) = E^{(n)}_\nu (\mathcal{Z}),\qquad \text{where} \ \nu \ \text{in\ any\ index\ in}\ \{u,s,c\}.$$ 
{Furthermore, Oseledets theorem also guarantees a control of the \emph{angle} $\angle$ (see e.g.~Section 4 in \cite{ViaLP}) between stable, unstable and central spaces (where the angle $\angle(V,W)$ between two linear subspaces $V, W\subset {\mathbb{R}^d}$ is defined as the minimum angle $\angle (v,w)$ among all non-zero vectors $v\in V, w\in W$), namely for every $\epsilon>0$ there exists $c=c(\epsilon, \hat{T})>0$ such that
\begin{align}\label{Oseledetsangles}
\lim_{n\to \pm \infty}\frac{ \sin |\angle (E^{(n)}_{\nu_1}(\hat{T}), E^{(n)}_{\nu_2}(\hat{T}))|}{n}= 0 
%, &\qquad \text{for\ all}\ n\in\mathbb{N}\\
% &|\angle (E^{(n)}_\nu(\hat{T}), E^{(n)}_0(\hat{T}))|\geq c(\epsilon,\hat{T})\ e^{-\epsilon n}, \, \forall \nu \in \{s,u\}, & \qquad 
\qquad \text{for\ all\ \emph{distinct}\ pair\ of\ indexes}\  \nu_1,\nu_2\in \{u,s,c\}.%\label{Oseledetsangles0}.
%&|\angle (E^{(n)}_u(\hat{T}), E^{(n)}_s(\hat{T}))|\geq c(\epsilon,\hat{T})\ e^{-\epsilon n}, &\qquad \text{for\ all}\ n\in\mathbb{N}\\
% &|\angle (E^{(n)}_\nu(\hat{T}), E^{(n)}_0(\hat{T}))|\geq c(\epsilon,\hat{T})\ e^{-\epsilon n}, \, \forall \nu \in \{s,u\}, & \qquad %\text{for\ all}\ n\in\mathbb{N}.\label{Oseledetsangles0}.
 \end{align}}
We say that a IET is \emph{Oseledets generic} if it satisfies all the conclusions of Oseledets theorem listed above. 
An application of the Oseledets theorem (or simply ergodicity combined with integrability) also shows that if $T$ is Oseledets generic, then 
\begin{equation}\label{subexpgrowth}
\lim_{n\to\pm \infty}\frac{1}{n}\log\|Z_n(\hat{T})\|= 0.
\end{equation}
[Notice that here we consider the elementary matrix $Z_n(\hat{T})=Z(\hat{\mathcal{Z}}^n\hat{T})$ and not $Z^{(n)}(\hat{T})$ which is a \emph{product} of cocycle matrices.]
%An application of Oseledets theorem combined 

%\begin{remark}
%{ maybe not necessary... could be removed... or to be finished and stated correctly}
%For  the cocycle $\mathcal{Z}$, which is not invertible, Oseledets theorem gives only the existence, for $\mu_\mathcal{Z}$ (and hence Lebesgue almost every) $T$ of an Oseledets \emph{filtration} $\{0\}\subset F_{-g}\subset \cdots \subset F_0\subset F_g \mathcal{R}^d$ such that for every $v\in E_{i}\backslash E_{i-1}$, \eqref{Lyapunovexp} holds when considering only $n\geq 0$.
%Given $T\in\mathcal{I}_d$, if $\mathcal{T}$ is any \emph{lift}, i.e.~belongs to $p^{-1}(T)$, then $E_s^{(n)}$
%\end{remark}
%Let us recall the following properties of integrable cocyc%\footnote{The integrability of the dual cocycle stated in Remark \ref{inducedintegrable} $(i)$ is proved in Zorich \cite{Zo:dev} and since $\norm{A^{-1}} = \norm{(A^{-1})^{T}}$, the integrability of the inverse cocycle follows. The proof of  Remark \ref{inducedintegrable} $(ii)$  if $F$ is invertible follows from the Kac's lemma representation of the space as towers and the non-invertible case can be reduced to the invertible one by considering the natural extension of $F$.}. 
%\begin{rem}\label{inducedintegrable}

{
\subsection{Birkhoff sums and special Birkhoff sums}\label{sec:sums}
The Zorich cocycle plays a crucial role also in the study of Birkhoff sums of functions over an IET,  through the study of \emph{special Birkhoff sums}, which were introduced in the work by Marmi-Moussa-Yoccoz \cite{MMY} as fundamental blocks to decompose and study Birkhoff sums over an IET. We recall here  both the definition of Birkhoff sums and special Birkhoff sums, as well as the connection between special Birkhoff sums and the (extended) Zorich cocycle, while in the next subsection \S~\ref{sec:SBS} we recall how special Birkhoff sums can be used to decompose Birkhoff sums.

\subsubsection{Piecewise continuous functions and Birkhoff sums}\label{sec:obs}
Given a GIET $T$,  let us denote (using a notation inspired by \cite{MMY})
$\mathcal{C}(T):= \mathcal{C}\left(\sqcup_{j=1}^d I^t_j\right)$ (or for short $\mathcal{C}\left(\sqcup I^t_j\right)$,  
%$\mathcal{C}\left(\sqcup_{j=1}^d I^t_j\right)$ 
 the space of \emph{piecewise continuous} functions $f:[0,1]\to \mathbb{R}$ which are \emph{continuous} on   each continuity interval $I^t_j$, for $1\leq j\leq d$ of $T$ and extend to continuous functions on the closure of each $I^t_j$ (so that right and left limit as $x$ tend to the endpoints of each $I^t_\alpha$ exist).  Notice that as  subspace, it contains the space $\Gamma= \Gamma^{(0)}:=\Gamma\left(\sqcup_{j=1}^d I^t_j\right)$ of functions which are \emph{piecewise constant} on each interval $I^t_j$, $1\leq j\leq d$. 

Given $\mathcal{C}\left(\sqcup_{j=1}^d I^t_j\right)$, the \emph{Birkhoff sum} $S_n f(x)$ of $f$ over $T$ denotes the sum of $f$ along the orbit up of $x$ under $T$ to time $n$, i.e.
$
S_n f(x):= \sum_{j=1}^{n-1}f (T^j x).
$
We stress that in this paper we consider in general Birkhoff sums over a \emph{generalized} IET, while in \cite{MMY} and the consequent works one usually restricts the attention to Birkhoff sums over \emph{standard} IETs.

\subsubsection{Special Birkhoff sums}\label{sec:SBS}
Assume that $T$ is infinitely renormalizable and let $\{I^{(n)}, n\in\mathbb{N}\} $, where $I^{(n)}=[0,a_n]$, be a sequence of intervals obtained by the Zorich (or more in general by a further) acceleration of  Rauzy-Veech induction
 and let $\{ T_n, n\in\mathbb{N}\}$ be the corresponding induced GIETs, $T_n$ being the first return of $T$ on $[0,a_n]$. Let $q^{(n)}={(q^{(n)}_1,\dots, q^{(n)}_d)}^\dag$ be the corresponding vector of first return times, or equivalently, of \emph{heights} of the Rohlin towers in the representation of $T$ as a skyscraper over $T_n$ (see \S~\ref{dynamicalpartitions}).

The (sequence of) \emph{special Birkhoff sums} $f^{(n)}$, $n\in\mathbb{N}$, is  the sequence of functions 
$f^{(n)}: \mathcal{C}(T^{(n)}) = \mathcal{C} (\sqcup I^{(n)}_j)  \to \mathbb{R}$ 
obtained \emph{inducing} $f$ over $T_n$, namely given by
$$
f^{(n)}(x):= S_{q^{(n)}_j}(x) =\sum_{\ell=0}^{q^{(n)}_j-1} f \left(T_n^\ell( x)\right), \qquad \text{if}\ x\in I^{(n)}_j,\quad \text{for\ any}\ 1\leq j\leq d, \ n\in\mathbb{N}.
$$
One can think of $f^{(n)}$ as an \emph{induced function}, obtained inducing the function $f$ on the interval $I^{(n)}$ via the dynamics of the first return map.   
One can check that by construction, for each $n\in\mathbb{N}$, $f^{(n)}$ is continous on each $I^{(n)}_j$, $1\leq j\leq n$ and belongs to $\mathcal{C}(T^{(n)})$. If $x\in I^{(n)}_j$, one can think of $f^{(n)}(x)$ as the Birkhoff sum of $f$ \emph{along the Rohlin tower} of height $q^{(n)}_j$ over $I^{(n)}_j$.

\subsubsection{The extended Zorich cocycle}\label{sec:BS}
In the special case when $f\in \Gamma^{(0)}:= \Gamma(T)$, one then has that $f^{(n)}\in \Gamma^{(n)}:= \Gamma (T^{(n)})$, i.e.~$f^{(n)}$ is piecewise constant on each $I^{(n)}_j$. If we identify each piecewise constant function $f^{(n)}$ in $\Gamma^{(n)}$ with the column \emph{vector}, which by abusing the notation we will still denote by $f^{(n)}$, whose $j^{th}$ entry is the value on (any) point of $I^{(n)}_j$, i.e. 
$$
f^{(n)} = (f^{(n)}(x_1), \dots, f^{(n)}(x_d))^\dag, \qquad \text{where}\ x_j\in I^{(n)}_j, 1\leq j\leq d,
$$
then one can see that $f^{(n)}$ transforms according to the Zorich cocycle defined in \ref{sec:Zorichcocycle} (or the corresponding acceleration if $\{ I^{(n)}, n\in\mathbb{N}\}$ where obtained by an acceleration), i.e.
$$
f^{(n)}= Z^{(n)}f^{(0)}, \qquad f^{(n)}:= Z(n,m) f^{(m)}, \qquad \text{for \ every \ n>m}, n,m\in\mathbb{N}.
$$
where $Z^{(n)}$ and $Q(m,n)$ are the Zorich cocycle matrices defined in \S~\ref{sec:Zorichcocycle} (compare in particular the above relation with 
the height relation \eqref{heightsrelation}).

More in general, given a function $f^{(n)}\in \mathcal{C}(T^{(n)})$ for some $n\in\mathbb{N}$,  we can recover $f^{(n+1)}$ from $f^{(n)}$ and $T_n$ by writing %{\color{black} check indexes, if they should be transposed! I would say not}
\begin{equation}\label{SBSdecomp}
f^{(n+1)} (x) = \sum_{k=0}^{(Z_n)_{ij}-1} f^{(n)} \left(({T}_n)^k(x)\right), \qquad  \text{for\ any}\ x\in I^{(n+1)}_j,
\end{equation}
where $(Z_n)_{ij}$  is the $(i,j)$ entry of the $n^{th}$ Zorich matrix $Z_n$, which, as recalled in \S~\ref{dynamicalpartitions},  gives the number of visits of  the orbit of $x\in I^{(n+1)}_j$ under $T_n$ to $I^{(n)}_i$ up to its first return to $I^{(n+1)}$).  This relation, which can be proved simply recalling the definition of special Birkhoff sums and Birkhoff sums, can be understood  in terms of \emph{cutting and stacking} of Rohlin towers: the relation indeed mimics at the level of special Birkhoff sums the fact (recall in \S~\ref{sec:entries}) that the Rohlin tower over  $I^{(n+1)}_j$ is obtained by stacking $(Z_n)_{ij}$ subtowers of the Rohlin tower over  $I^{(n)}$ and hence, correspondingly, the special Birkhoff sum $f^{(n+1)} (x) $ is obtained as sum of $(Z_n)_{ij}$ values of the special Birkhoff sum $f^{(n)}$ at points of $I^{(n)}_i$. 

\subsubsection{Decomposition of Birkhoff sums}\label{decompBS}
Special Birkhoff sums can be used as follows as fudamental \emph{building blocks} to study Birkhoff sums, see e.g.~\cite{Zo:dev, MMY, Ul:mix, Ul:abs, MarmiYoccoz, MUY}.  
Given an infinitely renormalizable $T$ and  $f\in \mathcal{C}(T)$, let $\{ I^{(n)}, k\in\mathbb{N}\}$ be the sequence of inducing intervals given by Zorich induction. Consider first the special case in which $x_0\in I^{(n_0+1)}$ 
%\backslash I^{(n_0+1)}$
 for some $n_0\in\mathbb{N}$. Then, if follows from \eqref{SBSdecomp} that, if $1\leq j\leq d$ is such that $x_0\in I^{(n_0+1)}_j$, the Birkhoff sum $S_r f(x)$ for any $0\leq r\leq q^{(n_0+1)}_j$ can be decomposed as
$$
S_r f(x_0)=\sum_{\ell=0}^{b_{n_0}-1}  f^{(n_0)} \left(x^{(n_0)}_\ell \right) + S_{r_1}f(x_1), \qquad \text{with\ } x^{(n_0)}_\ell:= \left({T}_{n_0}\right)^{\ell}(x_0), \quad x_1:= \left({T}_{n_0}\right)^{b_{n_0}}(x_0)
$$
where $b_{n_0}=b_{n_0}(x)$ is such that $0\leq b_{n_0}\leq \sum_{i=1}^d (Z_{n_0})_{ij}$ and  $S_{r_1}f(x_1)$ is a \emph{reminder} which is not a special Birkhoff sum of lever $n_0$, i.e.~if $j_0$ is such that $x_1 \in I^{(n_0)}_{j_0}$ then $0\leq r_1<  q^{(n_0)}_{j_0}$. 
 Repeating this decomposition for $S_{r_1} f(x_1)$, we then obtain by recursion the following \emph{geometric decomposition} of the Birkhoff sum $S_r(x_0)$ into special Birkhoff sum:
\begin{equation}\label{geometric}
S_r f(x_0)=\sum_{n=0}^{n_0} \sum_{\ell=0}^{b_{n}-1}  f^{(n)} \left(x^{(n)}_\ell \right) , \qquad \text{where} \ 0\leq b_n\leq \Vert Z_n\Vert , \quad x^{(n)}_\ell \in I^{(n)},\ \text{for}\ 0\leq \ell \leq b_n -1.
\end{equation}
From here, we also get the estimate
$$
|S_r f(x_0)|\leq \sum_{n=0}^{n_0}  \Vert Z_n\Vert \, \Vert f^{(n)}\Vert, \qquad \text{if}\ x\in I^{(n_0)}_j, \ 0\leq r\leq q^{(n_0+1)}_j.
$$
%since for any $0\leq n\leq n_0$ we have that $ f^{(n)} \left(x^{(n)}_\ell \right)= f^{(n)}_i$ if $x^{(n)}_\ell\in I^{(n)}_i$ and there are at most $(Z_n)_{ij}$ such $\ell$, we get the estimate
%$$
%|S_r f(x_0)=\sum_{n=0}^{n_0} \sum_{\ell=0}^{b_{n}-1}  f^{(n)} \left(x^{(n)}_\ell \right) 
%$$
For the general case of a  Birkhoff sum $S_r f(x)$ for any $x\in [0,1]$ and $r\in\mathbb{N}$, we can define   
$n_0=n_0(x,r)$ to be the maximum $n_0\geq 1$ such that $I^{(n)}$ contains at least \emph{two} points of the orbit $\{T^i x, 0\leq i<r\}$. (This guarantees that $r$ is larger than the smallest height of a tower over $I^{(n_0)}$, but at the same time that it is smaller than a over $I^{(n_0+1)}$. Then, if $x_0 =T^{i_0}(x)$ is one of the points in $I^{(n_0)}$ we can split the Birkhoff sum $S_r f(x)$ into two sums of the previous form, one for $T$ and the other for $T^{-1}$ and therefore get
\begin{equation}\label{geometricestimate}
|S_r f(x)|\leq 2 \sum_{n=0}^{n_0}  \Vert Z_n\Vert \, \Vert f^{(n)}\Vert, \qquad \text{for any}\ x\in [0,1]\, \backslash \, I^{(n_0+1)}.
\end{equation}  
}
\subsection{Boundary operators}
\label{boundary}
We introduce here some operators on the space of GIETs, first defined in the work of Marmi, Moussa and Yoccoz \cite{MMY} and known as \emph{boundary operators}, for their correspondence with a boundary operator in cohomology (see \S~\ref{sec:cohomologyboundary}). 

\subsubsection{Boundary operator for observables}\label{sec:obs_boundary}
Let $T \in \mathcal{X}_d^r$ be a GIET and let $f$ be a function in the space $\mathcal{C} (T)$ introduced in the previous section \S~\ref{sec:obs}. By definition of the functional space, on each of the continuity intervals $I^t_{\pi^t(i)} = (u_{i-1}^t(T), u_{i}^t(T))$ of $T$, for $1\leq i\leq d$, $f$  has a \emph{right limit} at $u_{i-1}^t(T)$ and a \emph{left limit} at $u_{i}^t(T)$. We denote by $f^r(u_i)$ and $f^l(u_i)$ respectively the right and left limits at the discontinuity point $u_i$. Explicitely,\footnote{Indeed, since $u_i$ is the left endpoint of $I^t_{\pi_t(i+1)}$, we have to use $f_{\pi_t(i+1)}(x)$ to take the right limit, while to take the left limit we have to see 
$u_i$ as the right endpoint of $I^t_{\pi^t(i)}$ and consider $f_{\pi_t(i)}$.} if we denote by $f_i$  the $i^{th}$ branch of $f$ obtained restricting $f$ to $I^t_i$, we have:
$$
f^r(u_i):= \lim_{x\to u_i^+} f_{\pi_t(i+1)}(x), \qquad f^l(u_i):= \lim_{x\to u_i^-} f_{\pi_t(i)}(x).
$$
We also set by convention $f^l(u_0):=0$ and $f^r(u_d):=0$. 
%Now let $f : \cup_i{I^t_i(T) } \longrightarrow \R$ a function that is continuous on each of  We denote by $\mathcal{C}_0\big(\cup_i{I_i^t(T)} \big)$ the set of such functions.
Let now $S=S(T)$ be the suspended surface corresponding to $T$ (see \S~\ref{gietandfoliations}).  Let $d=2g+\kappa$ where $g$ is the genus of $S$ and $\kappa$ the cardinality of the set $\Sing (T)$ of singularities, which we will label by $\{ 1,\dots,\kappa\}$. Recall that each of the $u_i$ corresponds to (the label of) a singular point $s(u_i) \in \{1,\dots, \kappa\}$ (see~\ref{gietandfoliations}). 

\smallskip
\noindent For each  $f\in \mathcal{C} (\sqcup I^{(n)}_j)$ and for each $1\leq s\leq \kappa$, set 
$$ B_s(f) := \sum_{0\leq i\leq  d \ \text{s.t}\ s(u_i) = s}\left( {f^r(u_i) }-
% \sum_{1\leq i\leq  d\ \text{s.t}\  s(u_i) = s}
{f^l(u_i)}\right).$$ This defines by a map 
$$ \begin{array}{ccccc}
B  & := & \mathcal{C}_0\big(\sqcup_i{I_i^t(T)} \big) &  \longrightarrow & \R^{\kappa} \\
   &      & f  & \longmapsto & (B_s(f))_{1\leq s\leq \kappa}.
 \end{array}$$
A combinatorial definition of the correspondence between endpoints and singularities as well as  of the boundary operator following \cite{MarmiYoccoz} is given in the Appendix, see \S~\ref{sec:boundarycomb}.

 \subsubsection{Cohomological interpretation of the boundary operator}\label{sec:cohomologyboundary}
When one restricts the boundary operator to piecewise constant functions, one recovers a standard boundary map in homology. The intervals $(I_i^t(T))_{i \leq d}$ {\color{black}can be put into one-to-one correspondence to curves on  $S_g$ whose endpoints belong to the singularity set $\Sing(T)$ (each of them indeed embed onto a segments $S_g$,  whose endpoints can then be slided along the leaves of the foliation until they become  singularities  in $\Sing$, see Appendix~\ref{sec:boundarycomb})}. The (relative) holonomy classes of these curves   actually form a \emph{base} of the \emph{relative homology group} $\mathrm{H}_1(S_g, \Sing, \mathbb{Z})$ (see e.g.~Viana~\cite{Vi:IET} or \cite{Yoc:Clay}). A function that is constant on each of the $I_i^t(T)$s thus defines a class in $\mathrm{H}_1(S_g, \Sing, \mathbb{R}) = \mathrm{H}_1(S_g, \Sing, \mathbb{Z}) \otimes \R$. The boundary operator defined above is nothing but the standard boundary operator for relative homology 

$$ \partial : \mathrm{H}_{n+1}(S_g, \Sing, \mathbb{R}) \longrightarrow \mathrm{H}_{n}( S, \mathbb{R}) $$ restricted to the case $n = 0$.

\subsubsection{Renormalization invariance of the boundary operator for GIETs}
The following Proposition was proved in in \cite{MMY3}, see also \cite{Yoc:Clay}.
%\textcolor{black}{Find a precise proposition/check ref. Rephrase proposition in original MMY3 language!}
\begin{proposition}[properties of the boundary, see Proposition 3.2 in  \cite{MMY3}]
\label{propB}
{\color{black}The boundary $B: \mathcal{C}_0\big(\sqcup_i{I_i^t(T)} \big)  \to  \R^{\kappa}$ has the following properties:} 
\begin{itemize}
%\item[(i)] For any $\psi \in \mathrm{Diff}^1_+([0,1])$, $B(\psi^{-1} \circ T \circ \psi) = B(T)$.
\item[(i)] For any $\psi \in \mathcal{C}(T)$, $B(\psi)=B(\psi\circ T)$.
\item[(ii)] If $T$ is once renormalizable, for any $\psi_0\in \mathcal{C}(T)$, $B(\psi_0)= B(\psi_1)$ where $\psi_1\in \mathcal{C}(\mathcal{V}(T))$ is induced from $\psi_0$ obtained considering special Birkhoff sums.
%% $B(\mathcal{V}(T)) = B(T)$.
\end{itemize}
\end{proposition}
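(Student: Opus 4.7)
My plan is to treat the two properties separately, although (ii) ultimately reduces to (i) together with bookkeeping.

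For part (i), the approach is a direct combinatorial computation, exploiting that each branch $T_i : I_i^t \to I_i^b$ is an orientation-preserving homeomorphism. Denote by $L_i^t, R_i^t$ the left and right endpoints of $I_i^t$ (and similarly $L_i^b, R_i^b$ for $I_i^b$). Since $T_i$ is a homeomorphism between closures, passing to one-sided limits we immediately obtain
\[
(\psi \circ T)^r(L_i^t) = \psi^r(L_i^b), \qquad (\psi \circ T)^l(R_i^t) = \psi^l(R_i^b).
\]
In general $\psi \circ T$ may have extra discontinuities at interior points $x \in I_i^t$ with $T(x) = u_j^t$ for some $u_j^t$ in the interior of $I_i^b$; orientation preservation of $T_i$ then gives that the jump of $\psi \circ T$ at $x$ equals the jump of $\psi$ at $u_j^t$. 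Next one must check that the association of each discontinuity to a singularity, via the map $s$ of \S~\ref{gietandfoliations}, is compatible with these identifications. The geometric key fact is that $s(L_i^t) = s(L_i^b)$ and $s(R_i^t) = s(R_i^b)$, because on the suspension surface $T$ moves points along the vertical flow and the singular leaves through identified endpoints converge to the same saddle; similarly $s(x) = s(u_j^t)$ for an interior preimage. Grouping terms by singularity and using the boundary convention $f^l(u_0) = 0, f^r(u_d) = 0$, one verifies $B_s(\psi \circ T) = B_s(\psi)$ for each $s \in \mathrm{Sing}$.

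For part (ii), I would reduce to a single elementary step of Rauzy-Veech induction and then iterate; without loss of generality assume the top interval wins. By construction, on the interval $I^{(1)} = [0,\lambda_1]$ all but one of the continuity intervals of $T_1 = T|_{I^{(1)}}^{\mathrm{return}}$ coincide with continuity intervals of $T$ with return time $1$, while a single interval (corresponding to the loser of the step) has return time $2$. Accordingly, the induced special Birkhoff sum $\psi_1$ equals $\psi_0$ on most continuity intervals of $T_1$ and $\psi_0 + \psi_0 \circ T$ on the distinguished one. Using linearity of $B$, property (i) to replace $B(\psi_0 \circ T)$ by $B(\psi_0)$ on the contribution from the distinguished interval, and invariance under the affine normalisation $\mathcal{N}$ (which commutes with $B$ since it acts only by rescaling the transversal, not changing which discontinuity maps to which saddle), one obtains $B(\psi_1) = B(\psi_0)$. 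The general case of multiple renormalisation steps follows by a straightforward induction, since Rauzy-Veech induction and the operation of taking special Birkhoff sums commute with composition.

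The main obstacle is the bookkeeping in (i): matching, for each singularity $s$, the contributions of the ``new'' interior discontinuities of $\psi \circ T$ (those created when $T$ crosses a top endpoint) with the ``missing'' contributions at those top endpoints $u_j^t$ that now appear inside bottom intervals rather than as endpoints. This relies on the combinatorial description of the map $s$ from endpoints to singularities given in \S~\ref{sec:boundarycomb}, and on the fact that the suspension construction identifies each singular leaf with a well-defined saddle independently of which side of the transversal one approaches from. Once this correspondence is set up carefully, both (i) and (ii) become essentially formal. An alternative and perhaps more conceptual route is to observe that, via the cohomological interpretation of \S~\ref{sec:cohomologyboundary}, both $B(\psi)$ and $B(\psi \circ T)$ (respectively $B(\psi_0)$ and $B(\psi_1)$) compute the same boundary in $H_0(\mathrm{Sing};\R)$ of a relative $1$-chain determined by the pair (surface, transversal); since changing from $T$ to $T \circ \mathrm{Id}$ or to a sub-transversal does not alter the underlying relative homology class, the equality is automatic.
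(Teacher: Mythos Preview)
The paper does not supply its own proof of this proposition: it simply attributes the result to Proposition~3.2 of~\cite{MMY3} (and the notes~\cite{Yoc:Clay}) and then uses it as a black box to derive Lemma~\ref{lemma:Bproperties}. So there is no in-paper argument to compare against.

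Your outline is sound and captures the mechanism behind both statements. For~(i), the key geometric fact---that the one-sided limits of $\psi\circ T$ at its discontinuities are in bijection with those of $\psi$, with matching singularity labels under~$s$---is exactly what is needed, and your reason for $s(L_i^t)=s(L_i^b)$ (they lie on the same singular leaf of the suspension) is correct. One point to tighten: as you note, $\psi\circ T$ need not lie in $\mathcal C(T)$ in the strict sense, so you must either extend $B$ to piecewise continuous functions with finitely many discontinuities (assigning each to a singularity via the flow) or reformulate before doing the bookkeeping. For~(ii), the reduction to a single Rauzy--Veech step is standard, but ``use linearity and property~(i)'' glosses over the fact that the two boundary operators live on different function spaces, one attached to~$T$ and one to~$\mathcal V(T)$. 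What makes the comparison legitimate is that a Rauzy--Veech step does not change the suspension surface, so $\Sing$ and the map~$s$ are canonically identified for $T$ and $\mathcal V(T)$; once this is said explicitly, the endpoint computation goes through. Your closing remark that the cohomological interpretation of \S\ref{sec:cohomologyboundary} makes both identities transparent is the conceptually cleanest route and is close in spirit to how~\cite{MMY3} treats it.
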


{\color{black}
\subsubsection{Boundary of a GIET}\label{sec:boundaryGIET} 
We define now the \emph{boundary} of a GIET {(see also \cite{MMY2}).% for the related definition of $\mathcal{C}^1$-conjugacy invariant \emph{jets}).}
\begin{definition}[boundary of a GIET]\label{GIET:boundary} Given $T  \in \mathcal{X}$, we define the \emph{boundary} of $T$ to be $\mathcal{B}(T) := B(\log \D T) $, where $B$ is the boundary operator on $\mathcal{C}_0\big(\sqcup_i{I_i^t(T)} \big)$ defined by Marmi-Moussa-Yoccoz (see \S~\ref{sec:obs_boundary}).
\end{definition}
\noindent The following Remark gives an equivalent expression for the boundary in terms of the shape-profile coordinats (from \S~\ref{coordinates}) which will be useful in the sequel. Recall that give $T \in \mathcal{X}$ we denote by $(A_T, \varphi^1_T, \cdots, \varphi^d_T)$ its coordinates for the product structure $\mathcal{X} = \mathcal{A} \times \mathcal{P}$. We denote $\omega_i:=\omega_i(T)$ the logarithm of the slope of $A_T$ on the $i$-th interval. 
\begin{remark}\label{rk:Bexpression}  Given $T =(A_T, \varphi^1_T, \cdots, \varphi^d_T) \in \mathcal{X}$, 
let $\omega(x)$ and $\log \D \varphi_T(x)$ denote the piecewise continuous function in $\mathcal{C}_0\big(\sqcup_i{I_i^t(T)} \big)$  which are respectively equal to $\omega_i$ and to $\log \D \varphi^i_T$ on $I^t_i$. Then we claim that
$$\mathcal{B}(T)= B({\omega}) + B(\log \D \varphi).$$
This can be seen since the derivative $\D T $ of a GIET $T$ is related to the functions $\D\varphi_T$ and $\omega$   by
$$
\D T = e^\omega D\varphi_T, \qquad \text{or, \ equivalenty,} \quad \D T (x)= e^{\omega_j} \,\log \D \varphi^j_T(x), \ \  \text{for\ all\ } x\in I^t_j, \ 1\leq j\leq d.
$$
Thus, from the  definition of $B$ in terms of right and left limits of a function in $\mathcal{C}_0\big(\sqcup_i{I_i^t(T)} \big)$, we have that 
$ B(\log \D T) = B({\omega}) + B(\log \D \varphi)$. 
%Therefore, we could  have equivalently defined $\mathcal{B}(T) := B(\log \D T) $.
\end{remark}

\noindent Proposition~\ref{propB} (proved in \cite{MMY3} as Proposition 3.2) has the following implication for the boundary of a GIET:
\begin{lemma}[properties of GIETs boundary]\label{lemma:Bproperties}
\label{prop2}
Consider $\mathcal{T} \in \mathcal{X}^1$. 
\begin{itemize}
\item[(i)] For any $\psi \in \mathrm{Diff}^1([0,1])$, $\mathcal{B}(\psi^{-1} \circ T \circ \psi) = \mathcal{B}(T)$.
\item[(ii)] If $T$ is once renormalizable, $\mathcal{B}(\mathcal{V}(T)) = \mathcal{B}(T)$.
\end{itemize}
\end{lemma}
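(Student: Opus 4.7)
The plan is to derive both statements from the invariance properties of the observable-level boundary operator $B$ established in Proposition~\ref{propB}, combined with the chain rule for logarithmic derivatives. Since $\mathcal{B}(T) = B(\log \D T)$ by Definition~\ref{GIET:boundary}, everything reduces to understanding how $\log \D T$ transforms under one step of renormalization and under a $C^1$-conjugacy, and then feeding the result into $B$.

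For part (ii), I would first observe that on each continuity interval $I^{(1)}_j$ the first-return map $T_1$ of $T$ to the inducing interval $[0,\lambda_1]$ equals $T^{r_j}$, so the chain rule for derivatives gives
\[
\log \D T_1(x) \,=\, \sum_{k=0}^{r_j-1}\log \D T(T^k x) \,=\, (\log \D T)^{(1)}(x),
\]
identifying $\log \D T_1$ with the first special Birkhoff sum of $\log \D T$ over $T$. Since $\mathcal{V}(T)=h^{-1}\circ T_1\circ h$ for the affine rescaling $h(x)=\lambda_1 x$, we get $\log \D \mathcal{V}(T) = (\log \D T_1)\circ h$, and the affine change of coordinates $h$ preserves both the one-sided jumps and the assignment of singularities, so it does not affect $B$. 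Applying Proposition~\ref{propB}(ii) to the observable $\log \D T$ then yields $B(\log \D T)=B((\log \D T)^{(1)})$, whence $\mathcal{B}(\mathcal{V}(T)) = \mathcal{B}(T)$.

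For part (i), writing $\tilde T=\psi^{-1}\circ T\circ\psi$, the chain rule yields the decomposition
\[
\log \D \tilde T \,=\, (\log \D T)\circ\psi \,+\, \log \D\psi \,-\, (\log \D\psi)\circ \tilde T.
\]
Since $\psi\in\mathrm{Diff}^1([0,1])$, the function $\log \D\psi$ is continuous on $[0,1]$ and in particular belongs to $\mathcal{C}(\tilde T)$; applying Proposition~\ref{propB}(i) with $\tilde T$ and with the observable $\log \D\psi$ gives $B((\log \D\psi)\circ \tilde T)=B(\log \D\psi)$, so the last two terms cancel upon applying $B$. The only step not given directly by Proposition~\ref{propB} --- and hence the main (though modest) obstacle in the argument --- is to verify that $B((\log \D T)\circ\psi)=B(\log \D T)$, i.e.\ that a $C^1$ change of coordinates preserves both the one-sided jumps and the singularity assignments. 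The discontinuities of $(\log \D T)\circ\psi$ lie at $\tilde u_i=\psi^{-1}(u_i)$, and since $\psi$ is an orientation-preserving continuous bijection of $[0,1]$ fixing $0$ and $1$, the one-sided limits at $\tilde u_i$ coincide termwise with those of $\log \D T$ at $u_i$. The conjugacy $\psi$ moreover extends to an identification of the suspended surfaces compatible with the foliations (cf.~\S~\ref{gietandfoliations}), which forces $\tilde s(\tilde u_i)=s(u_i)$ for all $i$; the desired equality then follows termwise from the definition of $B$.
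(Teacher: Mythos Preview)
Your proof is correct and, for part~(ii), essentially identical to the paper's: both identify $\log \D T_1$ with the special Birkhoff sum $(\log \D T)^{(1)}$ via the chain rule and then invoke Proposition~\ref{propB}(ii).

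For part~(i) you take a somewhat more explicit route than the paper. The paper simply asserts that the one-sided values of the log-derivative at the endpoints are invariant under conjugation and declares the result immediate from the definition. You instead write out the chain-rule decomposition $\log \D\tilde T = (\log \D T)\circ\psi + \log \D\psi - (\log \D\psi)\circ\tilde T$, kill the coboundary term using Proposition~\ref{propB}(i), and then argue that $B_{\tilde T}((\log \D T)\circ\psi)=B_T(\log \D T)$ by matching one-sided limits and singularity labels. This is more transparent: the paper's assertion that left/right derivatives are ``invariant by conjugation'' is not literally true pointwise (the $\D\psi$ factors do not cancel at a single discontinuity), and it is precisely the coboundary cancellation you isolate that makes the combination $B_s$ invariant. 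One small simplification: your appeal to an identification of suspended surfaces to justify $\tilde s(\tilde u_i)=s(u_i)$ is more than needed, since the map $s$ is determined purely combinatorially by the permutation datum $\pi$ (see Appendix~\ref{sec:boundarycomb}), and $\psi$-conjugation preserves $\pi$.
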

\noindent Thus, the Lemma shows that the boundary of a GEIET is both a \emph{conjugacy invariant} (by property $(i)$) and a \emph{renormalization invariant} (by property $(ii)$).
\begin{proof}
Property $(i)$ follows immediately from the definition of boundary, since the values of the (left and right) derivatives at the endpoints are invariant by conjugation. 
Property $(ii)$ is simply a reformulation of property  $(ii)$ of the observable boundary $B$ (see 
Proposition~\ref{propB}, proved in \cite{MMY3} as Proposition 3.2) for the observable $\psi=\log DT$, after remarking that $\varphi_1=\log D (\mathcal{V}(T))$ (which follows  from the definition of the induced map and the chain rule, taking the log and comparing the result with the definition of special Birkhoff sums, see \S~\ref{sec:SBS}). %({\color{black} explain better maybe})
%using Remark~\ref{rk:Bexpression}, follows from the renormalization invariance of the boundary of an observable, namely property, %which gives that $B(\log \D \varphi)=B(\log \D \varphi_1) $,
% as well as invariance of averages. 
%{\color{black} Expand/add detail}
\end{proof}}

\section{Affine shadowing}
\label{sec:affineshadowing}
In this section we state and prove the dynamical  dichotomy for orbits of irrational GIETs under renormalization (stated below as Theorem~\ref{shadowing}), which is the main result at the heart of our rigidity results (both \textcolor{black}{Theorem~\ref{maintheorem} and Theorem~\ref{thm:rigidityfoliation})} and can be used to prove  \emph{a priori bounds} for minimal GIETs. The precise formulation requires the definition of a full measure Diophantine condition on the rotation number (which we will call \emph{regular} Diophantine condition, or RDC for short, see \S~\ref{sec:regularDC}) under which it will hold. We will later show (in Section~\ref{sec:fullmeasure} that this Diophantine condition is satisfied for a full measure set of rotation numbers (see Definition~\ref{def:fullmeasure} in \S~\ref{sec:fullmeasure} for the notion of full measure). 
%, by showing that a version of affine shadowing also holds for GIET with irrational rotation numbers under a Diophantine condition of full measure.
\smallskip

For expository purposes and to increase readability (in particular for the readers who are not familiar with the technicalities of Rauzy-Veech induction) we first treat separately (in \S~\ref{sec:periodiccase}) the case where the rotation number is assumed to be periodic (also sometimes known, in the one-dimensional dynamics literature, as \emph{Fibonacci combinatorics} case). In this case the Diophantine-type conditions simplify drastically and the proof is less technical (and yields a stronger result, see Proposition~\ref{asymptotic}), but all the ideas needed for the general case are already there. 

\smallskip
The general case is then treated in \S~\ref{sec:generalDC}. The statement of the affine shadowing dichotomy is given in Theorem~\ref{shadowing}. The required Diophantine condition (introduced in \S~\ref{sec:arithmetic}) is defined using an \emph{acceleration} of the Zorich induction $\mathcal{Z}$, which we denote by $\widetilde{Z}$ and call \emph{Oseledets regular} since it is obtained requesting that the estimates given by Oseledets theorem are effective (see \S~\ref{sec:effectiveOseledets} for details).

\subsection{Scaling invariants and mean (log-)slope vectors}\label{sec:averageslope}
{ 
Let us first define the \emph{average} slope vector (and the \emph{log-average} vector) associated to a generalized interval exchange map. These quantities will play a central role in our study of renormalization, in particular to define affine shadowing. Recall that in \S~\ref{coordinates} we introduced the \emph{shape}-\emph{profile} coordinates of a GIET, so that we can write $T=(A_T, \mathcal{P}(T))$ where $A_T$ is an AIET called the \emph{shape} of $T$ and $\mathcal{P}(T)=(\varphi_1,\dots, \varphi_d)$ is its \emph{profile} (refer to \S~\ref{coordinates}).
\begin{definition}[average vectors $\vect{\rho}(T)$ and $\vect{\omega}(T)$ for a GIET]\label{def:rhoomega}
Let $T$ be a GIET in $\mathcal{X}^r_d$, with $r\geq 1$. The \emph{shape-slope vector} $\vect{\rho}(T)$ associated to $T$ is by definition
$$
\vect{\rho}(T):= \vect{\rho}(A_T), \qquad \text{if}\ T= (A_T,\varphi_1, \dots, \varphi_d),
$$
i.e.~it is the slope vector of the \emph{affine} interval exchange $A_T$ which gives the shape of $T$. 

\smallskip
\noindent We also define the \emph{shape log-slope vector}, or, for short, the \emph{log-slope vector} $\vect{\omega}(T)$ to be
$$
\vect{\omega}(T)=\log \vect{\rho}(T):= (\log \rho_1, \dots, \log \rho_d), \qquad \text{if}\ \vect{\rho}(T)=(\rho_1,\dots, \rho_d). 
$$  
\end{definition}
\noindent Note that, since for every $1\leq j\leq d$ the \emph{slope} $\rho(A_T)_j$ of the $j^{th}$ branch $(A_T)_j$ is also given by the average value of $T'$ on $I_j$ (see \S~\ref{coordinates}), we can also define explicitly
\begin{equation}\label{rhoasaverage}
\vect{\rho}(T)= \left( \frac{1}{|I_1|}\int_{I_1} D T_1(x) \ud x, \dots, \frac{1}{|I_d|}\int_{I_d} D T_d(x) \ud x \right).
\end{equation}
For the reader familiar with one-dimensional dynamics literature, these quantities can be seen as the key \emph{scaling ratios} that we want to exploit to encode the dynamics of renormalization.

\begin{remark}\label{rk:logvectorRn}
If $\mathcal{R}$ is an acceleration of $\mathcal{V}$ corresponding to inducing on intervals $(I_n)_{n\in\mathbb{N}}$, using the notation in \S~\ref{dynamicalpartitions} and recalling the explicit form of the renormalization $\mathcal{R}^n(T)$ given by \eqref{eq:renormalizedmap}, which relates the branches of $\mathcal{R}^n(T)$ with iterates of the induced map (and since by the chain rule one can see that conjugation with an affine map does not change the derivative) we have that
\begin{align}\label{eq:logvectorRn}
\vect{\rho}(\mathcal{R}^nT) & =  \left( \frac{1}{|I_1^{t}{(n)}|}\int_{I_1^{t}{(n)}} D \mathcal{R}^n(T) (x) \ud x, \right.  \cdots &\left. \frac{1}{|I^{t}_d(n)|}\int_{I^{t}_d(n)} D \mathcal{R}^n(T) (x) \ud x  \right) \\
 &=  \left( \frac{1}{|I^{(n)}_1|}\int_{I^{(n)}_1} D \left(T^{q^{(n)}_1}_1\right) (x) \ud x,  \right.  \cdots & \left. \frac{1}{|I^{(n)}_d|}\int_{I^{(n)}_d} D\left( T^{q^{(n)}_j}\right) (x) \ud x\right)& 
\label{eq:logvectorTn}
\end{align}
\end{remark}
}

\subsection{Affine shadowing in the periodic type (or Fibonacci type) case}\label{sec:periodiccase}
In this section, we assume that $T$ is a infinitely renormalizable generalized interval exchange map of \emph{periodic type} (see Definition~\ref{def:periodictype}) and in addition that it has  hyperbolic Rauzy-Veech period matrix $A$, or, for short, that $T$ is of \emph{hyperbolic periodic type}.  Thus, there exists a $n_0>0$ such that the rotation number is periodic with period $k_0$, namely if $n=i k_0+r$ for some $i\in\mathbb{N}$ and $0\leq r<n_0$ then 
$\gamma(\V^{i k_0+r}(T))=\gamma(\V^r(T))$. (We remark that even though the rotation number is periodic, the orbit of $T$ is \emph{not} in general periodic).

In this case, we will use, as renormalization operator, the acceleration of Rauzy-Veech induction which corresponds to the period $k_0$ of the rotation number $T$, namely the operator which, by abusing the notation we will still denote by $\mathcal{R}$, given by
$$\mathcal{R}^{n}(T):=\V^{n k_0 }(T)=(\V^{ k_0 })^{n}(T) , \qquad \mathrm{for\ all}\ n\in \mathbb{N}.$$ 
Notice that this definition of $\mathcal{R}$ is used only in this section.\footnote{Note also that the operator $\mathcal{R}$ here defined also satisfies the requirements of the acceleration which defines $\mathcal{R}$ in the general case, since in the periodic type case all requests on the acceleration $\mathcal{R}$ are trivially satisfied for every iterate of the orbit of $T$ under $\V$ or any of its powers.}

\smallskip
{ The following proposition gives the dynamical dichotomy for (hyperbolic) periodic-type IETs, so in a setting which is a special case of  Theorem~\ref{shadowing}, but in this special case it actually yields a stronger conclusion (as remarked after the statement).
(as it can be seen comparing the conclusions in Case 2 of Proposition~\ref{asymptotic} here below and Theorem~\ref{shadowing} stated later on).} 
\smallskip 

Let us write $\mathbb{R}^d =E_s\oplus E_c\oplus E_u$ for the splitting of $\mathbb{R}^d$ into respectively the stable space $E_s$,  the central space $E_c$ and the unstable space $E_u$ for the action of $A$ on $\mathbb{R}^d$ (corresponding to eigevectors with norm respectively smaller, equal and greater than $1$).
%{Perhaps the decomposition of BS should be explained earlier using the periodic type as a baby case, so the proof here can refer to that}

\begin{proposition}[\bf Affine shadowing dichotomy for periodic type]
\label{asymptotic}
Let $T$ be of hyperbolic periodic type, with rotation number of period $k_0$. Denote by 
$\vect{\omega}_n: = \vect{\omega}(\mathcal{R}^{n}(T) ) \in \mathbb{R}^d $ 
the shape log-slope vector (as defined in \S~\ref{sec:averageslope})  of $\mathcal{R}^{n}(T)$, where here $\mathcal{R}:=\mathcal{V}^{k_0}$ is the acceleration corresponding to the period. Then, either 
\begin{enumerate}
\item[(1)] $(\vect{\omega}_n  )_{n\in\mathbb{N}}$  is bounded, or % (which is equivalent to the fact that $\mathcal{R}^{n}(T)$ is $\mathcal{C}^1$-bounded).
\item[(2)] there exists $\vect{v} \in E_u$ such that 
$$ \vect{\omega}_n = A^n  \vect{v} + O(1) $$ \textit{i.e.} the difference $\vect{\omega}_n - A^n  \vect{v}$ is bounded.
\end{enumerate}
\end{proposition}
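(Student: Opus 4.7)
My plan is to reduce the Proposition to the analysis of an inhomogeneous linear recursion
\[
\vect{\omega}_{n+1} = A\,\vect{\omega}_n + \vect{\varepsilon}_n \quad\text{with}\quad \sup_{n}\|\vect{\varepsilon}_n\|<\infty,
\]
and then to solve it using the hyperbolic decomposition of $A$. Write $S_n := \mathcal{R}^n(T)$; periodicity of the rotation number guarantees that all $S_n$ share the same combinatorial datum as $T$ and that the $k_0$-step Zorich cocycle of $S_n$ is always the same matrix $A$. In particular, were $T$ affine, the relation above would hold with $\vect{\varepsilon}_n\equiv 0$, so the task reduces to quantifying the deviation of the GIET dynamics from the AIET dynamics at each renormalization step.

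To derive the recursion, I use the shape--profile coordinates $S_n=(A_{S_n},\varphi_{S_n})$ from \S~\ref{coordinates}, in which \eqref{Dincoordinates} gives $DS_n(x)=e^{\omega_k(S_n)}D\varphi^k_{S_n}(b_k(x))$ on $I^t_k(S_n)$. By \eqref{eq:logvectorTn} and the mean value theorem, together with the distortion bound of Lemma~\ref{bound1} applied inside the Rohlin tower of height $r_j=q^{(k_0)}_j$ above $I^{(nk_0)}_j$, the $j$-th entry of $\vect{\omega}_{n+1}$ equals $\log D(S_n^{r_j})(\xi_j)$ at any point $\xi_j$ of the tower base, up to an error bounded by $|N|(S_n)\leq|N|(T)$, where the second inequality uses that total non-linearity does not increase under renormalization (Proposition~\ref{prop:Nproperties}(iii)). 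The chain rule then splits
\[
\log D(S_n^{r_j})(\xi_j)=\sum_{i=0}^{r_j-1}\omega_{k_i}(S_n)+\sum_{i=0}^{r_j-1}\log D\varphi^{k_i}_{S_n}\bigl(b_{k_i}(S_n^i\xi_j)\bigr),
\]
where $k_i$ denotes the index of the continuity interval of $S_n$ containing $S_n^i\xi_j$. The first sum is a special Birkhoff sum of the piecewise-constant log-slope function and is identified with $(A\vect{\omega}_n)_j$ through the Zorich cocycle correspondence of \S~\ref{sec:BS}. For the second, since each profile branch satisfies $\int_0^1 D\varphi^k_{S_n}=1$ the mean value theorem supplies $s_0$ with $D\varphi^k_{S_n}(s_0)=1$, whence distortion gives $|\log D\varphi^k_{S_n}(s)|\leq|N|(\varphi^k_{S_n})\leq|N|(T)$. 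Because $r_j$ is bounded by a constant $C_A$ depending only on $A$, the full profile contribution is uniformly bounded and I obtain the required recursion with $\|\vect{\varepsilon}_n\|\leq C_A|N|(T)$.

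Solving the recursion is then routine linear algebra: iterating yields $\vect{\omega}_n=A^n\vect{\omega}_0+\sum_{k=0}^{n-1}A^{n-1-k}\vect{\varepsilon}_k$, which I decompose along the hyperbolic splitting $\R^d=E_s\oplus E_u$. On the stable side, $\|A^m|_{E_s}\|\leq C\lambda_s^m$ with $\lambda_s<1$, so both $A^n\vect{\omega}_0^s$ and $\sum_k A^{n-1-k}\vect{\varepsilon}_k^s$ are uniformly bounded by convergent geometric series. On the unstable side I factor $A^n$ out and exploit that $A^{-1}|_{E_u}$ contracts:
\[
\vect{\omega}_n^u=A^n\Bigl(\vect{\omega}_0^u+\sum_{k=0}^{n-1}A^{-(k+1)}\vect{\varepsilon}_k^u\Bigr), \qquad \vect{w}^u:=\sum_{k\geq 0}A^{-(k+1)}\vect{\varepsilon}_k^u\in E_u,
\]
with the tail $\sum_{k\geq n}A^{n-1-k}\vect{\varepsilon}_k^u$ again $O(1)$ by a geometric estimate. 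Putting $\vect{v}:=\vect{\omega}_0^u+\vect{w}^u\in E_u$ gives $\vect{\omega}_n=A^n\vect{v}+O(1)$, which is precisely the claimed dichotomy: either $\vect{v}=0$, in which case $(\vect{\omega}_n)$ is bounded (case (1)), or $\vect{v}\neq 0$, in which case $\|A^n\vect{v}\|$ grows exponentially and dominates the $O(1)$ remainder (case (2)).

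The main difficulty is the uniform bound on the error $\vect{\varepsilon}_n$, which combines distortion control (Lemma~\ref{bound1}), the fact that each profile branch is normalised to have integral one (so some point has unit derivative), and, most crucially, the fact that total non-linearity $|N|$ is a Lyapunov function for renormalization (Proposition~\ref{prop:Nproperties}(iii)); without this monotonicity the nonlinear profile contributions would a priori compound across renormalization steps and destroy the shadowing.
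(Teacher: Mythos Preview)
Your proof is correct and follows essentially the same strategy as the paper: you establish the bounded-error linear recursion $\vect{\omega}_{n+1}=A\vect{\omega}_n+\vect{\varepsilon}_n$ (the paper's Lemma~\ref{lemma1}) via distortion bounds and the monotonicity of $|N|$, and then solve it along the hyperbolic splitting to produce the shadow $\vect{v}=\vect{\omega}_0^u+\sum_{k\ge 0}A^{-(k+1)}\vect{\varepsilon}_k^u$, which coincides exactly with the paper's formula \eqref{def:shadow}. The only cosmetic difference is that you derive the error bound through the shape--profile decomposition (bounding the profile contribution branch by branch), whereas the paper applies the distortion bound directly to $\log D\mathcal{R}^nT$ in one step; and you phrase the dichotomy as $\vect{v}=0$ versus $\vect{v}\neq 0$, while the paper phrases it as $P_u(\vect{\omega}_n)$ bounded versus unbounded---these are readily seen to be equivalent.
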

\noindent Note that %, if there is no central space (which is the case when $d=2g$ or the GIET arise as Poincar{\'e} section of a flow with only one singularity), 
cases $(1)$  
 and $(2)$ can be merged, since case $(1)$ can be reformulated in the form of case $(2)$ with the additional request that $\vect{v}=0$. 
% by claiming that in case $(1)$
The  proposition then states that the sequence $(\vect{\omega}_n)_{n\in\mathbb{N}}$ (of log-slopes vectors for the shape $
\mathcal{R}^n(T)$)  can be \emph{approximated} (up to a \emph{bounded} error) by the linear evolution of a vector $\vect{v}$ under the action of the period matrix $A$.
%{ swap order of comments, before general affine shadowing was coming first, so put here the comments about shadowing}
\noindent The vector $\vect{v}$ will be called the (\emph{affine}) \emph{shadow}. 

We remark that in this  special case of periodic type rotation number, the conclusion is stronger than the result we will prove for a full measure set of rotation numbers, i.e.~Theorem~\ref{shadowing}: here in Proposition~\ref{asymptotic}, the difference $\vect{\omega}_n - A^n \,\vect{v}$ in Case $2$ is \emph{bounded}, while in the general case we will be able to approximate the evolution of $(\vect{\omega}_n)_{n\in\mathbb{N}}$ by a linear evolution of a shadow vector only up to  a lower order (but in general not bounded) error, see the conclusion of Case $2$ in Theorem~\ref{shadowing}.

In the rest of this section we prove Proposition~\ref{asymptotic}. The reader who is interested in the general case and familiar with Rauzy-Veech induction can omit this section and move directly to \S~\ref{sec:arithmetic} (for the Diophantine condition on IETs) and \S~\ref{sec:generalDC}. The following outline of the proof though may be useful also to follow the strategy of the proof in the general case.

\smallskip

\noindent {\it Sketch of the Proof of Proposition~\ref{asymptotic}.} %The proof of Proposition~\ref{asymptotic} rests on  two lemmas (respectively Lemma~\ref{lemma1} and Lemma~\ref{lemma2} below). 
First we show, in Lemma~\ref{lemma1} (which is the most important one), that, thanks to  a basic (and classical) application distortion bounds (in the form given by Lemma~\ref{bound1}),  up to some uniformly bounded mistake, the shape log-slope  vector $\vect{\omega}_n$ transform linearly via the cocyle (which is this case is just given by applying repeatingly the period matrix).  We refer  to this result (i.e.~Lemma~\ref{lemma1}) as \emph{linear approximation}. The projection $\Pro _s(\vect{\omega}_n)$ of $\vect{\omega}_n$ onto the stable space $E_s$ is controlled through Lemma~\ref{lemma2}, which is valid for any $T$ with periodic rotation number and shows that the part in the stable space always remains bounded.

We then consider  iterates of renormalizations of $T$ and consider separately two cases: (1) if the slopes  are bounded, we are in Case 1; otherwise, (2) if the slopes are  not bounded, in virtue of the control of the stable part (given by Lemma~\ref{lemma2}), the component  in the unstable space is also unbounded. To prove that in this case we are in Case 2, namely we can build an \emph{affine shadow}, we  wait for a time when this compoment is large compared to the mistake that one makes when comparing the actual growth of the slopes with how it transforms linearly. If  one starts renormalizing from that moment, the slope change almost linearly up to a mistake that is more and more negligible as slopes in the unstable space grow exponentially fast. Thus,  adjusting using smaller and smaller corrections (see \eqref{def:shadow} and \eqref{eq:smallercorrections}) allows to find a vector shadowing the slopes. This is done rigorously through definition \eqref{def:shadow} of the \emph{shadow} and the proof of Proposition~\ref{asymptotic} presented below.

\smallskip

\begin{lemma}[{\bf Linear approximation for periodic type GIETs}]
\label{lemma1}
For any  periodic type $T$  with period matrix $A$ there exists a constant $K_T$ such that $\vect{\omega}_n=\vect{\omega}(\mathcal{R}^n T)$ satisfies
$$ ||\vect{\omega}_{n+1} - A\, \vect{\omega}_n  || \leq K_T, \qquad \textrm{for\ all}\ n \in \mathbb{N}  $$
\end{lemma}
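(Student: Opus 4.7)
The plan is to bound $(\vect{\omega}_{n+1} - A\,\vect{\omega}_n)_j$ componentwise by writing each of its two terms as a sum of the \emph{same} number of contributions, one per ``sub-tower'' of level $n$ stacked inside the Rohlin tower over $I_j^{(n+1)}$, and then estimating each per-summand difference via the uniform distortion bound of Lemma~\ref{bound1}. Denote by $\widetilde{T}^{(n)}$ the un-normalized first-return map of $T$ to $I^{(n)} = [0,\lambda_n]$. Since affine conjugation preserves derivatives, by Remark~\ref{rk:logvectorRn} (cf.~\eqref{eq:logvectorTn}) and the mean value theorem, for each $1 \leq j \leq d$ there is a point $y_j \in I_j^{(n+1)}$ with $(\vect{\omega}_{n+1})_j = \log DT^{q_j^{(n+1)}}(y_j)$, and for each $i$ a point $z_i \in I_i^{(n)}$ with $(\vect{\omega}_n)_i = \log DT^{q_i^{(n)}}(z_i)$.

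The next step is the chain-rule decomposition. Writing $q_j^{(n+1)} = \sum_i A_{ji}\, q_i^{(n)}$ (by iterating the heights relation \eqref{heightsrelation} over the period), the orbit $y_j, \widetilde{T}^{(n)}(y_j), (\widetilde{T}^{(n)})^2(y_j), \ldots$ under the return map has length $N_j := \sum_i A_{ji}$, and by the cocycle-entries description of \S\ref{sec:entries}, each intermediate point $w_k$ lies in some $I_{i_k}^{(n)}$, where the sequence $(i_k)_{k=0}^{N_j-1}$ contains each index $i$ with multiplicity $A_{ji}$. The chain rule then gives $(\vect{\omega}_{n+1})_j = \sum_{k=0}^{N_j - 1} \log DT^{q_{i_k}^{(n)}}(w_k)$, while by the linear action of the Zorich cocycle on log-slopes recalled in \S\ref{slopesaction} we have $(A\,\vect{\omega}_n)_j = \sum_i A_{ji}\,(\vect{\omega}_n)_i = \sum_{k=0}^{N_j - 1} \log DT^{q_{i_k}^{(n)}}(z_{i_k})$, i.e.\ a sum over the \emph{same} indices $i_k$ with the same multiplicities.

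Subtracting, the per-summand error $\log DT^{q_{i_k}^{(n)}}(w_k) - \log DT^{q_{i_k}^{(n)}}(z_{i_k})$ is controlled by Lemma~\ref{bound1} applied to the iterate $T^{q_{i_k}^{(n)}}$ on $I_{i_k}^{(n)}$: since the intervals $I_{i_k}^{(n)}, T(I_{i_k}^{(n)}), \ldots, T^{q_{i_k}^{(n)} - 1}(I_{i_k}^{(n)})$ are pairwise disjoint levels of a Rohlin tower and hence free of singularities, and since $w_k, z_{i_k} \in I_{i_k}^{(n)}$, the distortion is bounded by $|N|(T)$ \emph{independently of $n$ and $k$}. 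Summing over $k$ yields $|(\vect{\omega}_{n+1} - A\,\vect{\omega}_n)_j| \leq N_j\, |N|(T) \leq N\, |N|(T) =: K_T$, where $N := \max_j \sum_i A_{ji}$ is finite because the period matrix $A$ is fixed. The conceptual reason the bound does not deteriorate with $n$ is that a single step of the accelerated renormalization $\mathcal{R} = \mathcal{V}^{k_0}$ involves only a bounded amount of combinatorial data (namely $A$), so the chain-rule decomposition has at most $N$ pieces regardless of $n$, and each contributes only the uniform per-step distortion $|N|(T)$. The main obstacle I anticipate is purely bookkeeping: verifying that the multiplicities in the chain-rule expansion of $(\vect{\omega}_{n+1})_j$ match exactly those appearing in $(A\,\vect{\omega}_n)_j$, and consistently tracking the transpose conventions relating $Z$, $A$, the heights vector, and the log-slope vector. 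Once this combinatorial matching is in place, the estimate is essentially a single application of Lemma~\ref{bound1} combined with the chain rule.
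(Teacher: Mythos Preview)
Your proposal is correct and follows essentially the same approach as the paper: mean-value identification of $(\vect\omega_n)_j$ as a log-derivative, chain-rule decomposition into boundedly many pieces indexed by the period matrix $A$, and a uniform distortion bound on each piece. The only presentational difference is that the paper works with the renormalized map $\mathcal{R}^nT$ on $[0,1]$ (bounding the variation of $\log D\mathcal{R}^nT$ on a continuity interval by $|N|(\mathcal{R}^nT)\leq |N|(T)$ via Proposition~\ref{prop:Nproperties}(iii)), while you work directly with iterates $T^{q_i^{(n)}}$ of the original map and invoke Lemma~\ref{bound1} on the Rohlin tower over $I_i^{(n)}$; these are equivalent since affine conjugation preserves log-derivatives.
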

\begin{proof}
{Since $T$ is of periodic-type with period $k_0$ and $\mathcal{R}=\mathcal{V}^{k_0}$, for any $n\geq 0$ we have that for each $x\in [0,1]$,  
$$
\mathcal{R}^{n+1} T (x) =\frac{1}{\lambda} (\mathcal{R}^{n}T )^{k(x)-1}(\lambda x),
%\quad \Rightarrow \quad \D \mathcal{R}^{n+1} T (x) =\D (\mathcal{R}^{n}T )^{k(x)-1}(\lambda x),
$$
where $\lambda$ is a rescaling ratio (more precisely, one has $\lambda=|I^{((n+1)k_0)}|/|I^{(n k_0)}|$) and where 
%where $\mathcal{N} (\mathcal{R}^{n}T )$ is a rescaling of $\mathcal{R}^{n}T$ obtained conjugating $\mathcal{R}^{n}T $ by a linear map\footnote{{\color{black}More precisely, $\mathcal{N} (\mathcal{R}^{n}T ) = \lambda\mathcal{R}^{n}T( x/\lambda)$, where c$\lambda=|I^{((n+1)k_0)}|/|I^{(n k_0)}|$. Since both $\mathcal{R}^nT$ and $\mathcal{R}^{n+1}T$ are normalised to act on $[0,1]$, this ensures that $\mathcal{R}^{n+1} T$ can be seen as the induced map of $\mathcal{N}(\mathcal{R}^nT)$, which now acts on $[0,1/\lambda]$, on an interval of lenght $1$.\label{footnote}}}  and 
 $0\leq k(x)\leq K$ is uniformely bounded independently on $n$ (here $k(x)$ is  indeed  constant\footnote{More precisely, recalling the Rohlin towers point of view from \S~\ref{dynamicalpartitions} and the dynamical interpretation of the cocycle entries from \S~\ref{sec:entries}, for every $x$ belonging to the $j^{th}$ continuity interval of $\mathcal{R}^{n+1} T$,  we have that $$k(x)= Z^{(k_0)}_{j} :=\sum_{i=1}^d Z^{(k_0)}_{ij} =\sum_{i=1}^d Q(n k_0, (n+1) k_{0})_{ij}.$$} on each continuity interval for $\mathcal{R}^{n+1}T$). 
 %$\mathcal{R}^{n}(T)$ is periodic, there exists $p >0$ independent of $n$ such each branch of $\mathcal{R}^{n+1}(T)$ are equal to $(\mathcal{R}^{n})^k$ for a certain $k \leq p$. 
Thus, by chain rule, %since  conjugation by a linear map does not change the values of derivatives (by chain rule
we have that $\D(\mathcal{R}^{n}T) (\lambda x)/\lambda) = \D\mathcal{R}^{n}T (\lambda x)$ and, taking logarithms, 
%, the derivative of $\mathcal{R}^{n+1}T$ at a point $x$, by the chain rule, satisfies 
\begin{equation}\label{decompDlogperiodic} \log D\mathcal{R}^{n+1}T (x) =  \sum_{i=0}^{k(x)-1}{\log D\mathcal{R}^{n}T\left((\mathcal{R}^{n}T)^i(y)\right)}, \end{equation} 
where $y:=\lambda x$ is the point that corresponds to $x$ under the linear rescaling.
\smallskip
\noindent If we now consider $\vect{\rho}_n :=  \vect{\rho}(\mathcal{R}^{n}T)$, 
by definition of the  shape slope vector  $\vect{\rho}$ (see\S~\ref{sec:averageslope} and in particular \eqref{eq:logvectorRn} in Remark~\ref{rk:logvectorRn}), 
%=\rho(\mathcal{R}^{n}T)$, 
%Fo $n\in\mathbb{N}$, let us defined
%$$\vect{\omega}_{n} := \vect{\omega}(\mathcal{R}^{n}T), \qquad $$ 
%where $\vect{\omega}(T)$ and $\rho(T)$ denote respectively the shape log-slope anddefined in \S~\ref{sec:averageslope}. %Thus the entries of $\rho_n$ are the average values of the derivative of $\mathcal{R}^{n}(T)$ on each of its $d$ intervals (see ).
 for any $n\in\mathbb{R}$ and any $1\leq j\leq d$, there exists by mean value theorem a point $y_{n,j}\in [0,1]$ such that 
\begin{equation}\label{meanvalueptR}
(\rho_n)_j=  D\mathcal{R}^{n}T\left(y_{n,j}\right), \qquad (\omega_n)_j= \log(\rho_n)_j = \log D\mathcal{R}^{n}(T)\left(y_{n,j}\right).%= \log S^{(n)}\varphi \left(x^{(n)_j}\right). 
\end{equation}
%Recalling that the $j$^{th} entry $(\omega_{n})_{j}$ of the vector $\omega_n$, by definition  is achieved at some point of the continuity interval of $\mathcal{R}^{n}(T)$ which contains $x$. 
% For each $1\leq j\leq d$ choose now $x^{(n)}_j$ such that  $ \log D\mathcal{R}^{n+1}(T) (x_j) = \omega_{n+1}(j)$. 
Thus, if for each $0\leq \ell \leq k(x)-1$, we let $ j_\ell \in \{1,\dots, d\}$ be the index such that $(\mathcal{R}^{n}T)^\ell(y)$ belongs the $(j_\ell)^{th}$ continuity interval $I^t_{j_\ell}(n)$ of $\mathcal{R}^n T$,  % $j_i$-th interval of $\mathcal{R}^{n}(T)$,
% is the average of the $\log D \mathcal{R}^{n}T$ {\color{black} but is it the average? with respect to what?}
%of the average of the derivative of $\mathcal{R}^{n}(T)$) (which, by mean value, is achieved at some point of the continuity interval of $\mathcal{R}^{n}(T)$ which contains $x$).
 applying  the distorsion bound given by Lemma \ref{bound1} to $\mathcal{R}^{n}T$ and taking logarithms, we have that 
$$ \left| \log D\mathcal{R}^{n}(T)\left({(\mathcal{R}^{n}T)}^\ell(y)\right) - (\omega_{n})_{j_\ell}\right| \leq D_T $$ where $D_T = \int_0^1{|\eta_T| dt}$ and $(\omega_{n})_{j_\ell}$ is the $j_\ell^{th}$ entry of $\vect{\omega}_n$.   % we get that for each $i \leq k$ there is $j_i$ such that 

\smallskip
Applying now the formula \eqref{decompDlogperiodic} to  the point $x=y_{n+1,j}$, where $x=y_{n+1,j}$ is given by \eqref{meanvalueptR}, and denoting by $k_j=y_{n+1,j}$, since we showed at the beginning that $k_j\leq K$, we get
$$  (\omega_{n+1})_j = \sum_{\ell=0}^{k_j-1}{(\omega_{n})_{j_\ell}} + E(n,j) ,\qquad \text{where} \quad |E(n,j)|\leq k_j D_T \leq K D_T.$$  Note  now that $ \sum_{\ell=0}^{k_j-1}({\omega_{n})_{j_\ell}}$ is exactly the $j$-th entry of $A$  (because  by definition $\mathcal({R}^{n}T)^\ell(y)$ belongs the $j_\ell$-th interval of  continuity of $\mathcal{R}^{n}T$). The Lemma is thus proven with $K_T := K D_T$. }
\end{proof}

\begin{lemma}[{\bf Control of the stable part}]
\label{lemma2}
For any hyperbolic periodic type $T$ with periodic matrix $A$ there exists a constant $M_T > 0$ depending only upon the constant $K_T$ in Lemma~\ref{lemma1} above such that for all $n \in \mathbb{N}$
$$ ||\Pro _s(\vect{\omega}_n)|| \leq M_T,$$
where $\Pro _s:\mathbb{R}^d=E_u \oplus E_s  \longrightarrow E_s$ denotes the projection onto the stable space $E_s$ for the action of $A$ on $\mathcal{R}^d$. 
\end{lemma}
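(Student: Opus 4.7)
The plan is to apply Lemma~\ref{lemma1}, project the recursive estimate onto the stable space $E_s$, and then exploit the fact that $A|_{E_s}$ is a contraction (this is where hyperbolicity enters; the proposition assumes $A$ has no central direction, or, more weakly, that the splitting $\mathbb{R}^d = E_u \oplus E_s$ is $A$-invariant with $A|_{E_s}$ a contraction). Write $\vect{\omega}_{n+1} = A\,\vect{\omega}_n + \vect{w}_n$ with $\|\vect{w}_n\| \leq K_T$ for all $n \in \mathbb{N}$, where $K_T$ is the constant in Lemma~\ref{lemma1}.

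Since the splitting $\mathbb{R}^d = E_u \oplus E_s$ is $A$-invariant, the projection $\Pro _s$ commutes with $A$. Applying $\Pro _s$ to both sides gives $\Pro _s(\vect{\omega}_{n+1}) = A\,\Pro _s(\vect{\omega}_n) + \Pro _s(\vect{w}_n)$, and iterating this recursion yields
\begin{equation*}
\Pro _s(\vect{\omega}_n) \;=\; A^n\, \Pro _s(\vect{\omega}_0) \;+\; \sum_{k=0}^{n-1} A^{n-1-k}\, \Pro _s(\vect{w}_k).
\end{equation*}
Here every vector in the right-hand side lies in $E_s$, so it will suffice to control $A^j$ on $E_s$. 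After fixing a norm adapted to the splitting (for which $\Pro _s$ has operator norm at most some constant $C$), the hyperbolicity of $A$ provides $\lambda \in (0,1)$ and $C' > 0$ such that $\|A^j|_{E_s}\| \leq C'\lambda^j$ for all $j \geq 0$.

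Applying this inequality term by term, I would bound
\begin{equation*}
\|\Pro _s(\vect{\omega}_n)\| \;\leq\; C' \lambda^n \|\Pro _s(\vect{\omega}_0)\| \;+\; C\,C' K_T \sum_{k=0}^{n-1} \lambda^{n-1-k} \;\leq\; C'\|\Pro _s(\vect{\omega}_0)\| + \frac{C\,C' K_T}{1-\lambda},
\end{equation*}
which proves the lemma with $M_T := C'\|\Pro _s(\vect{\omega}_0)\| + C\,C' K_T/(1-\lambda)$. The main (and only) subtlety is the choice of an adapted norm ensuring that $A$ is a strict contraction on $E_s$; once this is set up, the rest is a standard geometric-series estimate. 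Note that, as the expression for $M_T$ makes explicit, the constant depends on $T$ through both $K_T$ and the initial projection $\Pro _s(\vect{\omega}_0)$, but all the dynamics enters only through $K_T$ (the contraction rate $\lambda$ and the projection norm $C$ being fixed data of the period matrix $A$ alone).
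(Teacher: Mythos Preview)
Your proof is correct and follows essentially the same approach as the paper: project the recursion $\vect{\omega}_{n+1} = A\vect{\omega}_n + \vect{w}_n$ onto $E_s$, use that $A$ contracts $E_s$, and bound by a geometric series. The paper phrases the last step as the recursive inequality $a_{n+1} \leq \gamma a_n + K_T$ and simply asserts that any such sequence is bounded, whereas you write out the iterated sum explicitly; these are the same argument.
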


\begin{proof}
By Lemma \ref{lemma1}, we have $||\vect{\omega}_{n+1} - A\, \vect{\omega}_n || \leq K_T$. Projecting onto $E_s$, since $P_s$ commutes with the action of $A$, one gets 
$$ || \Pro _s(\vect{\omega}_{n+1}) || \leq  || A \, \Pro _s (\vect{\omega}_n) || + K_T.$$ Since $A$ contracts  $E_s$ by a factor $\gamma< 1$, we get that the sequence $(a_n)_{n\in\mathbb{N}}$  given by $a_n: = ||\Pro _s (\vect{\omega}_n)||$ satisfies 
$$ a_{n+1} \leq \gamma a_n + K_T.$$ One can then check that any sequence with such this property is bounded. This concludes the proof. 
\end{proof}

\begin{proof}[Proof of Proposition \ref{asymptotic}]
Consider now the projection  $\Pro _s:\mathbb{R}^d=E_u \oplus E_s  \longrightarrow E_s$   onto the stable space $E_s$ for the action of $A$ on $\mathcal{R}^d$.
We distinguish on two cases:.  \vspace{3mm}

\noindent \textit{Case 1}: The sequence $\{ \Pro _u(\vect{\omega}_n)\}_{n\in\mathbb{N}}$ is bounded. 

\vspace{2mm} \noindent In this case, by Lemma \ref{lemma2}, $\Pro _s(\vect{\omega}_n)$  is bounded as well thus $\vect{\omega}_n$ is bounded. This shows that we are in Case 1 of Proposition \ref{asymptotic}.

\vspace{3mm}

\noindent \textit{Case 2}: The sequence $\{\Pro _u(\vect{\omega}_n)\}_{n\in\mathbb{N}} $ is unbounded.

\vspace{2mm} \noindent In that case let us show that we can define the shadow $\vect{v}\in \mathbb{R}^d$ to be:  
\begin{equation}\label{def:shadow} \vect{v} :=  \sum_{i=1}^{\infty}{A^{-i}(\Pro _u(\vect{\omega}_{i} - A\vect{\omega}_{i-1}))} + \Pro _u(\vect{\omega}_0). \end{equation}
We will show at the same time that the series converge and hence $\vect{v}$ is well defined. Let us formally compute $A^n\vect{v}$ which, splitting the series  and changing index in the finite sum (using $k= n-i$) to exploit the telescopic nature of the finite sum, gives:
\begin{align*} 
A^n\vect{v} &=   \sum_{k=0}^{n-1}{\Pro _u(A^k\vect{\omega}_{n-k} - A^{k+1}\vect{\omega}_{n-{k-1}})}+ \sum_{i=n}^{\infty}{A^{n-i}(\Pro _u(\vect{\omega}_{i} - A\vect{\omega}_{i-1}))} + A^n (\Pro _u(\vect{\omega}_0)),\\ & =  {\Pro _u(\vect{\omega}_{n}) - \Pro _u(A^{n}\vect{\omega}_{0})} +  \sum_{i=n}^{\infty}{A^{n-i}(\Pro _u(\vect{\omega}_{i} - A\vect{\omega}_{i-1}))}+ A^n (\Pro _u(\vect{\omega}_0)).
\end{align*}
Since $E^u$ is invariant by the action of $A$, $\Pro _u(A^{n}\vect{\omega}_{0})=A^n (\Pro _u(\vect{\omega}_0))$. Moreover, since by definition $\vect{v}\in E^u$, we get that
\begin{equation}\label{Aseries}\Pro _u(A^n \vect{v} - \omega_n) = A^n \vect{v} - \Pro _u(\vect{\omega}_n)  = \sum_{i=n}^{\infty}{A^{n-i}(\Pro _u(\vect{\omega}_{i} - A\vect{\omega}_{i-1}))}.\end{equation} Since the map $A$ is uniformly expanding on $E_u$ and, by Lemma \ref{lemma1}, $||\vect{\omega}_{n+1} - A\, \vect{\omega}_n || \leq K_T$,  there exists $c <1$ such that
\begin{equation}\label{eq:smallercorrections} ||A^{n-i}(\Pro _u(\vect{\omega}_{i} - A\vect{\omega}_{i-1})) || \leq K_T c^{i-n} ,\qquad \text{for \ all} \ i \geq n .\end{equation} Using this estimate in \eqref{Aseries} gives that
$$ || \Pro _u(A^n \vect{v} - \vect{\omega}_n) || \leq K_T \sum_{j=0}^{\infty}{c^j}.$$ Since $v\in E^u$ and $\Pro _s(\vect{\omega}_n)$ is bounded by Lemma \ref{lemma2}, we have that, for some $C_A>0$ (depending on the angle between $E^s$ and $E^u$) 
\begin{align*}
|| A^n \vect{v} - \vect{\omega}_n || & \leq C_A  \left( || \Pro _u(A^n \vect{v} - \vect{\omega}_n) || +  || \Pro _s(A^n \vect{v} - \vect{\omega}_n) ||\right) \\ & = C_A  \left( || \Pro _u(A^n \vect{v} - \vect{\omega}_n) || +  || 0-  \Pro _s(\vect{\omega}_n) ||\right) \leq C_A \left( K_T \sum_{j=0}^{\infty}{c^j} + M_A\right)<+\infty.
\end{align*}
This shows at the same time (taking $n=0$) that the series defining $\vect{v}$ converges and hence $\vect{v}$ is well defined and that the difference $A^n \vect{v} - \vect{\omega}_n $ is uniformely bounded, so property $(2)$ of the Proposition~\ref{asymptotic} holds. This concludes the proof.
\end{proof}

\subsection{The Diophantine-type condition for the general case}\label{sec:arithmetic}
We  now turn to the general case. This section is devoted to the definition of   %the renormalization (in \S~\ref{sec:regularacceleration}) and
 the Diophantine-type condition  under which we will prove the general case of the affine shadowing dichotomy in \S~\ref{sec:regularDC}. 
%which allow hyperbolicity to be made uniform; we call it the \emph{effective Oseledets acceleration}, see \S~\ref{sec:regularacceleration}.
The main difference with the periodic-type case, is that some return times that were bounded in the special case, are now only bounded on average; the Diophantine condition  for the general case (see Definition~\ref{def:RDC}) is devised to provide a not too sparse sequence where they are nevertheless uniformly bounded, exploiting the hyperbolicity of the (Zorich acceleration of the) Rauzy-Veech cocycle. The renormalization operator which will be used is an \emph{acceleration} of the Zorich renormalization $\mathcal{Z}$  corresponding to accelerating along this sequence. 

The section is organized as follows. We define first a notion of \emph{full measure} on irrational rotation numbers, see \S~\ref{sec:fullmeasure}.
We then introduce in  \S~\ref{sec:Oseledetsgeneric} a notion of Oseledets genericity (corresponding to having an Oseledets generic extension, see Definition~\ref{def:Oseledetsextension} and the comments thereafter). In  \S~\ref{sec:goodreturns} we define  sequences of \emph{good return times}; this is a technical condition (which~corresponds to occurences of two consecutive bounded \emph{positive} matrix in the cocycle, see Definition~\ref{def:goodreturns}) which we want to assume on the accelerating sequence since it will help to control the size of the dynamical partitions. Finally, in \S~\ref{sec:regularDC}, we define the Regular Diophantine Condition (see Definition~\ref{def:RDC}).
% and also Remark~\ref{rk:positiveweaker} for a weaker notion). 
 
% while when we say that the sequence is \emph{non-sparse}, or \emph{regular} (from which the name of \emph{regular} Diophantine Condition) we mean that the sequence grows linearly and the accelerated cocycle is still integrable (see also Se\Sction~\ref{sec:fullmeasure} for more details on how this subsequence is constructed).  

\subsubsection{Notion of full measure}\label{sec:fullmeasure}
Let $T$ be an infinitely renormalizable GIET with \emph{irrational} (i.e.~\emph{infinitely complete}, see Definition~\ref{def:irrational}) rotation number $\gamma(T)$. By Poincar{\'e}-Yoccoz Theorem~\ref{thm:PY}, there exists\footnote{The IET $T_0$ is not necessarily unique, but it is unique for a full measure set of rotation numbers.} a standard IET $T_0$  with the same rotation number $\gamma(T)=\gamma(T_0)$ such that $T$ is semi-conjugated to $T_0$.

Recall that the \emph{Lebsgue measure class} on $\mathcal{I}_d=\Delta_{d-1}\times \mathfrak{S}_d^0$ (refer to \S~\ref{IETs} for the notation) is the measure class of the restriction of the Lebesgue measure on 
$\Delta_{d-1}\subset \mathbb{R}^d_+$ and the counting measure on combinatorial data in $\mathfrak{S}_d^0$.

\begin{definition}[full measure for rotation numbers]\label{def:fullmeasure}
We say that a set $\mathcal{F}$ of rotation numbers has \emph{full measure} if it contains the set of rotation numbers of a full measure set of classical IETs, i.e.
$
\mathcal{F}\supset \{ \gamma(T),\  T\in \mathcal{I}'_d \}, %\qquad \ 
$
where $\mathcal{I}'_d\subset \mathcal{I}_d$ is full measure subset of the set $\mathcal{I}=\Delta_{d-1}\times \mathfrak{S}_d^0$ with respect to Lebesgue measure class on $\mathcal{I}'_d$.
\vspace{1mm}

\noindent We say that a property holds for \emph{almost every} \emph{irrational} GIET %and that the set  $\mathcal{G}$  of infinitely renormalizable, irrational GIETs for which it holds has \emph{full measure}
 if and only if the set of rotation numbers $\{ \gamma(T), T\in \mathcal{G}\}$ for which it holds has full measure in the sense above.
\end{definition}
\noindent Equivalently, we could have asked that, for each given irreducible combinatorial datum $\pi \in \mathfrak{S}_d^0$, $\mathcal{F}$ contains  the rotation number of almost every IET in $\mathcal{I}_\pi$ with respect to the Zorich $\mu_\mathcal{Z}$ or Masur-Veech measure  $\mu_\mathcal{V}$ (see Remark~\ref{rk:ac} in \S~\ref{sec:measures}). 
\smallskip

%We will now define the full measure (in the sense of Definition~\ref{def:fullmeasure}) Diophantine condition that we will use to prove affine shadowing.

\subsubsection{Oseledets generic extensions}\label{sec:Oseledetsgeneric}
Let $T\in \mathcal{I}_d$ be a (standard) IET for which Zorich acceleration $\mathcal{Z}$ is defined. 
% (recall that such IETs are a full measure subset of $\mathcal{T}_d$ with respect to the Zorich invariant measure $\mu_\mathcal{Z}$, see \S~\ref{sec:Zorich}). 
The following definition summarizes the Oseledets genericity-type properties that we will require in the Regular Diophantine Condition.
% (see Remark~\ref{rk:Oseledetsextension})

\begin{definition}[Oseledets generic extensions]\label{def:Oseledetsextension}
We say that an IET $T$ has an \emph{Oseledets generic extension} if there exists a sequence of \emph{invariant splittings}, i.e.~decompositions
\begin{equation}
\label{splittingsextension}
\mathbb{R}^d = \Gamma^{(n)}_s \oplus \Gamma^{(n)}_c\oplus \Gamma_u^{(n)}, \qquad n\in\mathbb{N} 
\end{equation}
into spaces $\Gamma^{(n)}_a$ with $a \in\{s,c,u\}$ which are invariant under the dynamics, i.e.~such that
$$
Q(m,n)\, \Gamma^{(m)}_a =\Gamma^{(n)}_a , \qquad \forall a \in\{s,c,u\},\quad \forall m<n,
$$
of dimension %{\color{black} do we need this hyperbolicity condition, don't we? but for one or for both? actually it's equivalent...}
\begin{equation}
\tag{H}  \dim \Gamma^{(0)}_s = \dim \Gamma^{(n)}_s = g, \qquad \dim \Gamma^{(0)}_u = \dim \Gamma^{(n)}_u = g, \qquad \forall \ n\in\mathbb{N},
\end{equation}
such that, for some $\theta>0 $
%_s, \theta_u >0$ 
and $C=C(T)>0$,  
%for every $v\in\mathbb{R}^d$, if $Z^{(n)}=Z^{(n)}(T)=Q(0,n)$ is the $n^{th}$ iterate of the Zorich cocycle at $T$,
\begin{align*}
\tag{O-s}& \|Z^{(n)}\, v\|=\|Q(0,n)\, v\| \leq C\, {e^{-\theta n}} &  \text{for\ all}\ n\in\mathbb{N}, \ \text{for\ all}\ v\in \Gamma^{(0)}_s,  \label{O-s} \\
\tag{O-u} & \|(Z^{(n)})^{-1}\, v\|=\|Q(0,n)^{-1} v\| \leq C \, {e^{-\theta n}} & \text{for\ all}\ n\in\mathbb{N}, \ \text{for\ all}\ v\in \Gamma^{(n)}_u, \label{O-u} \\ 
\tag{O-c} & \lim_{n\to+\infty}\frac{1}{n}\log\|Z^{(n)}v\| = 0, &  \text{for\ all}\ v\in \Gamma^{(0)}_c,\label{O-c}
\end{align*}
and furthermore, for any $\epsilon>0$ there exists $c=c(\epsilon,T)$ such that
\begin{align}\label{O-a}
\tag{O-a} | \sin \angle (\Gamma^{(n)}_{a_1}(\hat{T}), \Gamma^{(n)}_{a_2}(\hat{T}))| \geq c(\epsilon) e^{-\epsilon n},& \qquad \text{for\ all}\ a_1\neq a_2, \quad a_1,a_2\in \{s,c,u\},
 \end{align}
where the angle $\angle(V,W)$ between two linear subspaces $V, W\subset {\mathbb{R}^d}$ was  defined in \S~\ref{sec:Oseledets}.
%as the minimum angle $\angle (v,w)$ among all non-zero vectors $v\in V, w\in W$), namely for every $\epsilon>0$ there exists $c=c(\epsilon, \hat{T})>0$ such that
\end{definition}
\noindent[The choice of labels \ref{O-u}, \ref{O-s}, \ref{O-c} is for \emph{Osedelets Stable},  \emph{Osedelets Unstable} and \emph{Osedelets Central} conditions, while \ref{O-a} stands for \emph{Oseledets angles} condition.]

\smallskip
The reader has certainly noticed the similarity with the conclusion of Oseledets theorem (as recalled in \S~\ref{sec:Oseledets}): we will indeed show that $T$ has a \emph{Oseledets generic extension} if the conclusion of Oseledets theorem holds for an \emph{extension} $\hat{T}$ of $T$. The spaces $\Gamma^{(n)}_s$, $\Gamma^{(n)}_c$, $\Gamma^{(n)}_u$, $n\in\mathbb{N}$,  will then be respectively the \emph{stable}, \emph{unstable} and \emph{central}  space for (the $n^{th}$ iterate $\hat{\mathcal{Z}}^n(\hat{T})$ of) the (extended) Zorich cocycle.
By exploiting Oseledets theorem for the natural extension of $\mathcal{Z}$ we will prove in \S~\ref{sec:fullmeasure} that $\mu_\mathcal{Z}$-almost every $T$ has an Oseledets generic extension.

Remark that while the sequence $\{ \Gamma_s^{(n)}, \ n\in \mathbb{N}\}$ is uniquely defined by $T$, the sequence  $\{ \Gamma_u^{(n)},  \ n\in \mathbb{N}\}$ is not; on the other hand, we only require the \emph{existence} of a such sequence.  %\footnote{\textcolor{black}{(QUESTION: can we see that, a posteriori, the Diophantine Condition given by Definition~\ref{def:RDC} can be shown to be independent on this choice?}}. 
The choice of an (invariant) sequence of unstable spaces satisfying \ref{O-u} and \ref{O-a}is equivalent to the choice of an extension $\hat{T}$ of $T$.

%if we denote by $\angle (\Gamma^{(m)}_u, \Gamma^{(m)}_s)$ the angle between stable and unstable spac$\Gamma^{(m)}_u:=\Gamma_u(\Zo^m(T))$ and $\Gamma^{(m)}_s:=\Gamma_s(\Zo^m T)$, 
%\begin{equation}
%\label{suexpgrowth}
%\lim_{m\to \pm \infty}\frac{ \log |\angle (\Gamma^{(m)}_u, \Gamma^{(m)}_s)|}{|m|}=\lim_{m\to \pm \infty}\frac{ \log |\angle %(\Gamma_u(\Zo^m(T))_u, \Gamma_s(\Zo^m (T)))}{|m|} = 0.%
%\end{equation}

\subsubsection{Good return times}\label{sec:goodreturns}
We introduce now a  property of a sequence of iterates of the induction that will play an important technical role in the proof of exponential convergence of renormalization.

\smallskip 
We say that a matrix $A\in SL(d, \mathbb{Z})$ is \emph{positive matrix} if its entries $A_{ij}$  are strictly positive for each $1\leq i,j\leq d$. We say that $A$ is a \emph{Zorich cocycle matrix} if it is a product of matrices of the Zorich cocycle, i.e.~there exists a $p>0$ and  $T\in\mathcal{I}_\pi$ such that $A=Q(0,p)(T)$.  We say in this case that $p$ is the \emph{Zorich length} of $A$.

\smallskip

 Good return times are those that correspond to a \emph{double} occurrence of a fixed positive matrix $A$:
\begin{definition}\label{def:goodreturns} Given a positive integer $p>0$, the sequence $(n_k)_{k\in\mathbb{N}}$ is z sequence of $p$-\emph{good returns} of the Zorich cocycle if there exists a positive Zorich cocycle matrix $A$ of length $p$ such that
$$
Q(n_k , n_{k}+2p) = A A, \qquad \text{for\ all}\ k\in\mathbb{N}
$$ 
and $n_{k+1}-n_k\geq 2p$ so that $Q(n_{k},n_{k+1}) = Q_k A A $ for some non-negative matrix $Q_k\in SL(d,\mathbb{Z})$. We say that $(n_k)_{k\in\mathbb{N}}$ is  a sequence of \emph{good returns} if they are $p$-\emph{good returns} for some positive integer $p$. We also say that $(n_k)_{k\in\mathbb{N}}$ is sequence of $A$-good return times if we want to specify the matrix $A$.  
%{\color{black} Do we need that we fully see $A$ between two such times? I thought it is useful for the proof of exponential convergence, but maybe it does not matter.. if we want it we can easily get it by twiking the proof, if not I will remove it.}

\end{definition}
From ergodicity of $\mathcal{Z}$, one can easily show that almost every IET admits a sequence of good returns. Recurrence of (fixed) positive matrices in the cocycle  are useful in the study of standard IETs to guarantee some \emph{balance} in the size of the floors (and heights) of the Rohlin towers in the dynamical partitions, a key property  
%and  has played a crucial role in 
exploited in almost all works on IETs starting from the seminal work of Veech \cite{Ve:gau}. % up to \cite{AGY} among others).{\color{black} references to Veech, Gauss measures, Avila Gouzel Yoccoz, Chaika?, some of my papers, e.g. Ratner? to add} %Balance (see Corollary~\ref{cor:balance} below) follows from the following simple linear algebra Lemma 
% is indeed often used in the IETs literature 
We will show in Section~\ref{sec:convergence} that it can also be used to get some estimates on the size of the dynamical partitions at recurrence times for the renormalization, when combined with a priori bounds (see in particular \S~\ref{sec:sizePcontrol}).

The notion of good return may look quite special, but actually a weaker notion (see Remark~\ref{rk:positiveweaker} just below) is sufficient (and essentially corresponds to returns to bounded sets\footnote{In the case of standard IETs, a (future) occurrence of a positive matrix with bounded norm corresponds indeed to returns to a compact set in simplex $\Delta_{d}$  of lengths vectors, while a time which follows an occurence of a positive matrix with bounded norm correspond to a compact set in the suspension datum space of parameters (see for example \cite{Ul:mix}). Therefore, at time $n_k+p$ the induction visits a compact set for the natural extension domain $\hat{\mathcal{I}_d}$. In \S~\ref{sec:sizePcontrol}, we show that the double occurrence of a positive matrix, \emph{together} with a priori distorsion bounds, can also be used to prove some geometric control on dynamical partitions.}).  
%the length Lemmma~\ref{lemma:balance}} . 
On the other hand, proving the stronger form in Definition~\ref{def:goodreturns} costs no additional effort from the technical point of view (see \S~\ref{sec:fullmeasure}) and simplifies the notation.
\begin{remark}\label{rk:positiveweaker}
The definition of good return can be weakened, by considering $(n_k)_{k\in\mathbb{N}}$ such that for each $k$ we can write
$Q(n_k, n_{k+1})=  Q_k A_k B_k$, where $Q_k$ is a non negative matrix, while $A_k$ and $B_k$ are two \emph{positive} cocycle matrices such that $||A_k||$ and $|| B_k||$ are uniformely bounded in $k$. 
\end{remark}

\subsubsection{Notation for Zorich accelerations}\label{sec:acceleration}
%We show that it can be deduced by Oseledets genericity Condition $O$
Let us introduce the following terminology and notation for accelerations of the Zorich cocycle. If $(n_k)_{k\in\mathbb{N}}$ is a sequence with $n_0:=0$ (which will be in our case a sequence of good returns on which Oseledets theorem can be made effective, see \S~\ref{sec:effectiveseqs}), let us denote by  $\widetilde{\mathcal{Z}}$ and $\widetilde{Z}$ respectively the acceleration of the Zorich map  $\mathcal{Z}$ and the associated acceleration of the Zorich cocycle (see \S~\ref{sec:accelerations} \S~\ref{sec:inducing})  given by:
$$
\widetilde{\mathcal{Z}}^k(T):=\mathcal{Z}^{n_k}(T), \ \forall \ k\in \mathbb{N}, \qquad \widetilde{Z} = \widetilde{Z}(T)= Q(0, n_{1}). 
$$
Then $\widetilde{{Z}}$ is a cocycle over $\widetilde{\mathcal{Z}}$. We will say that $\widetilde{\mathcal{Z}}$ and  $\widetilde{{Z}}$ are \emph{accelerations along the sequence} $(n_k)_{k\in\mathbb{N}}$. We will also denote by $\widetilde{Z}_k$ and $\widetilde{Q}(k,k')$ for $k'>k$ 
\begin{equation}\label{Zorichaccelerations}
\widetilde{Q}(k,k'):= Q(n_k,n_{k'}), \qquad  \widetilde{Z}_k=\widetilde{Z}_k(T):= \widetilde{Q}(k,k+1)= Q(n_k, n_{k+1}). 
\end{equation}
If $T$ has an Oseledets regular extension (in the sense of Definition~\ref{def:Oseledetsextension}), let $(\Gamma^{(n)}_{n\in\mathbb{N}})$ for $x\in \{s,c,u\} $ the the sequences of stable, central and unstable spaces provided by Definition~\ref{def:Oseledetsextension} and denote by 
\begin{equation}\label{projections}
\Pro _{x}^{(n)}: \mathbb{R}^d\to  \Gamma_x^{(n)}, \qquad \text{for} \ x\in \{s,c,u\}, \ n\in\mathbb{N}.
\end{equation}
the standard orthogonal projection (in $\mathbb{R}^d$) to the subspace $\Gamma_x^{(n)}$. Then, for the acceleration along the sequence $(n_k)_{k\in\mathbb{N}}$ we adopt the notation:
\begin{equation}\label{accelerationssu}
\widetilde{\Gamma}_x^{(k)}:= \Gamma_s^{(n_k)}, \qquad \widetilde\Pro _{x}^{(k)}:= \Pro _{s}^{(n_k)},\qquad \textrm{for}\ x\in \{s,c,u\}, \ n\in\mathbb{N},
\end{equation}
and refer to $\widetilde{\Gamma}_x^{(k)}$, $x\in\{s,c,u\} $ as the stable, central and unstable spaces respectively for the acceleration; the operators $\widetilde\Pro _{x}^{(k)}: \mathbb{R}^d \to \widetilde{\Gamma}_x^{(k)}$, for  $x\in\{s,c,u\} $, are the corresponding projections. 
\subsubsection{The Regular Diophantine Condition}\label{sec:regularDC}
We can now formulate the Diophantine condition, that we call \emph{Regular Diophantine Condition} (or $RDC$). The central Diophantine-type condition is expressed in terms of convergence of two series (see the forward and backward conditions $(F)$ and $(B) $ in Definition~\ref{def:RDC} below), describing the forward and backward growth of an acceleration of the cocycle along good returns.  
  It will be crucial for us to consider times when not only these sequences converge (we will show that these series always converge along a sequence of effective Oseledets acceleration times, see Definition~\ref{def:effectiveOsedeletsseq}), but they 
are \emph{uniformely} bounded. %We will show that such sequence exist for a full measure set of IETs by hyperbolicity and ergodicity of $\mathcal{R}$. 
The accelerating sequence  is required to be not too sparse, namely has \emph{linear} growth (see $(ii)$) and that the matrices of the further acceleration grow subexponentially (see \ref{conditionS}). 

\begin{definition}[Regular Diophantine condition, or RDC]\label{def:RDC} We say that a (standard) IET $T$ and its rotation number $\gamma(T)$  satisfy the \emph{Regular Diophantine Condition}, or for short the $(RDC)$,  if:
\begin{itemize}
\item[(i)] $T$ has an Oseledets generic extension (in the sense of Definition~\ref{def:Oseledetsextension}),

  \item[(ii)] there exists a sequence $(n_k)_{k\in\mathbb{N}}$  of \emph{good return times} (see Definition~\ref{def:goodreturns}) growing at linear rate, i.e.~such that $\lim_{k\to \infty}\frac{n_k}{k}<+\infty$,  
\end{itemize}
and, if $\widetilde{Z}_k$ and $\widetilde{Q}(k,k')$, for $k<k'$,  denote the matrices of the Zorich cocycle acceleration along $(n_k)_{k\in\mathbb{N}}$ as defined in \eqref{Zorichaccelerations} and $\widetilde{\Gamma}_x^{(k)}$ and $\widetilde{\Pro }_{x}{(k)}$, for $x\in \{s,c,s\}$ denote the spaces and projections defined in \eqref{accelerationssu}, we also have that:
\begin{itemize}
\item[(iii)]
% (resp.~$\widetilde{\Gamma}_u^{(k)}$ and $\widetilde{\Pro }_{u}{(k)}$) the stable (resp.~unstable) space and projection onto it at step $k$,  and
there exist  constants $K^\pm =K^\pm(T) >0$, $\delta>0$ and an increasing subsequence $(k_m)_{m\in \mathbb{N}}$ growing  at a linear rate, i.e.~such that $\limsup_{m\to\infty} \frac{k_m}{m} < +\infty$,  such that the following conditions hold:
\begin{align*}
 \tag{Condition {[B]}}&  \sum_{k=1}^{k_m}{ ||\widetilde{Q}(k,k_m)_{| \widetilde{\Gamma}_s^{(k)}}|| \,||\widetilde{\Pro }_{s}^{(k)}|| \,||{\color{black}\widetilde{Z}_{k-1}}||} \leq K^- , & \text{for \ all}\ m \in \mathbb{N}; \label{conditionB} \\
\tag{Condition {[F]}}&  \sum_{k=k_m +1}^{\infty}{||\widetilde{Q}(k_m,k)^{-1}_{| \widetilde{\Gamma}_u^{(k)}}|| \,||\widetilde{\Pro }_{u}^{(k)}|| \,||{\color{black}\widetilde{Z}_{k-1}}|| } \leq K^+ , & \text{for \ all}\ m \in \mathbb{N};\label{conditionF} \\ 
\tag{Condition {[S]}}&   \lim_{k \rightarrow +\infty}{\frac{\log ||\widetilde{Q}(k_m,k_{m+1})||}{m}} = 0 ;& \label{conditionS}\\
\tag{Condition {[A]}}& \angle(\widetilde{\Gamma}^{(k_m)}_{x_1},   \widetilde{\Gamma}^{(k_m)}_{x_2}) > \delta \ \text{for \ all } x_1 \neq x_2 \in \{ s, c ,u\}  & \text{for \ all}\ m \in \mathbb{N}; \label{conditionA}
\end{align*}
%and the norms of the accelerated cocycle along the sequence $(k_m)_{m\in\mathbb{N}}$ grow \emph{subexponentially}, i.e.
%$$
% \tag{ Condition \ S} \qquad  \lim_{k \rightarrow +\infty}{\frac{\log ||\widetilde{Q}(k_m,k_{m+1}||}{m}} = 0 . \label{conditionS}
%$$
\end{itemize}
\end{definition}
\noindent [Here the letter {[S]} is chosen to remind of \emph{Subexponential}, {[A]} for \emph{Angle} condition, while {[B]} and {[F]}   stay respectively for  \emph{Backward} and \emph{Forward}  respectively since they impose a certain control of growth on the forward or respectively backward iterates of the (accelerated) cocycle.]
\medskip 
%sec:regularacceleration

We will prove in Section~\ref{sec:fullmeasure} the following Theorem that shows that the $(RDC)$ condition is satisfied by the rotation numbers of a full measure set of (standard) IETs: % (see Theorem~\ref{prop:fullmeasure}): %We will see indeed that Conditions $(S), (F)$ and $(B) $ it is indeed sufficient to assume that $T $ is Birkhoff and Oseledets generic (see proof of  Theorem~\ref{thm:fullmeasure}). 
%Let us recall that $T$ is called \emph{Oseledets generic} if the conclusion of Oseledets theorem (see Theorem~\ref{thm:Oseledets}) holds for $T$ and that it is called \emph{Birkhoff generic}
%the assumption that the orbit $(\Zo^{n}(T))_{n\in\mathbb{N}}$ of $T$ under the Rauzy-Veech induction is  Birkhoff  generic.

\begin{thm}[full measure of the $(RDC)$]\label{thm:fullmeasure}
The set  of (standard) IETs in $ \mathcal{I}_d$ which satisfy the $(RDC)$ condition in Definition~\ref{def:RDC} has full measure with respect to the Lebesgue measure on $\mathcal{I}_d$.
%, i.e.~$\mu_{\mathcal{V}}(\mathcal{G})=1$.
\end{thm}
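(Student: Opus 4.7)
The plan is to decompose the $(RDC)$ into its four constituent requirements (i)--(iii) and verify each on a full-measure set, so that their intersection also has full measure. By Remark~\ref{rk:ac} it suffices, for each fixed irreducible combinatorial datum $\pi \in \mathfrak{S}_d^0$, to work with the Zorich measure $\mu_{\mathcal{Z}}$ on $\mathcal{I}_\pi$ and its natural extension $\mu_{\hat{\mathcal{Z}}}$ on $\hat{\mathcal{I}}_\pi$. Condition (i) will be obtained directly from Oseledets' theorem applied to the extended cocycle $Z$ over $(\hat{\mathcal{I}}_\pi, \hat{\mathcal{Z}}, \mu_{\hat{\mathcal{Z}}})$: the integrability of $Z$ (and of $Z^{-1}$) proved by Zorich~\cite{Zo:gau} and the symmetry/simplicity theorems of Veech, Forni and Avila--Viana (recalled in \S~\ref{sec:Oseledets}) yield, for $\mu_{\hat{\mathcal{Z}}}$-a.e.~$\hat{T}$, an invariant splitting $\mathbb{R}^d = E_s \oplus E_c \oplus E_u$ of the correct dimensions satisfying the exponential stable/unstable estimates \ref{O-s},\ref{O-u} and the subexponential central estimate \ref{O-c}, as well as the subexponential angle bound \ref{O-a}. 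Projecting via $p:\hat{\mathcal{I}}_\pi \to \mathcal{I}_\pi$ then gives that $\mu_{\mathcal{Z}}$-a.e.~$T$ has an Oseledets generic extension.

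For (ii), I would use ergodicity of $\mathcal{Z}$ combined with a positivity argument. In each Rauzy class one can exhibit a positive matrix $A$ of some length $p$ arising as a product of Zorich cocycle matrices along a finite path in the Rauzy diagram (a classical fact going back to Veech~\cite{Ve:gau}); the set of IETs $T$ such that $Q(0,2p)(T) = A A$ is a non-empty open subset $E \subset \mathcal{I}_\pi$ of positive $\mu_{\mathcal{Z}}$-measure. By Birkhoff's ergodic theorem applied to $\mathbf{1}_E$, for $\mu_{\mathcal{Z}}$-a.e.~$T$ the visits $\{n : \mathcal{Z}^n T \in E\}$ have positive density, and after thinning to enforce the spacing $n_{k+1}-n_k \geq 2p$ one extracts the required linearly growing sequence of $A$-good returns $(n_k)$.

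The main difficulty lies in (iii), and in particular in conditions \ref{conditionB} and \ref{conditionF}. Denote by $\widetilde{\mathcal{Z}}$ and $\widetilde{Z}$ the acceleration along $(n_k)$ from (ii). For a.e.~$T$ one first verifies that the quantities
\[
a_k := \|\widetilde{Z}_{k-1}\|, \qquad b_k^{(s)} := \|\widetilde{\Pro}_s^{(k)}\|, \qquad b_k^{(u)} := \|\widetilde{\Pro}_u^{(k)}\|
\]
grow at most subexponentially in $k$: for $\widetilde{Z}_{k-1}$ this follows from \eqref{subexpgrowth} applied to the acceleration (combined with the linearly growing $n_k$ and integrability of $Z$), whereas the projections $\widetilde{\Pro}_x^{(k)}$ are controlled by $1/\sin\angle(\widetilde{\Gamma}_x^{(k)}, \bigoplus_{y\neq x}\widetilde{\Gamma}_y^{(k)})$, which is subexponential thanks to \ref{O-a}. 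The restrictions $\widetilde{Q}(k,k')_{|\widetilde{\Gamma}_s^{(k)}}$ and $\widetilde{Q}(k_m,k)^{-1}_{|\widetilde{\Gamma}_u^{(k)}}$ decay \emph{exponentially} in $k'-k$ respectively $k-k_m$ by \ref{O-s} and \ref{O-u}. Hence, at $\mu_{\hat{\mathcal{Z}}}$-a.e.~point, the two (infinite) series
\[
\Sigma^-_m := \sum_{k=1}^{k_m}\|\widetilde{Q}(k,k_m)_{|\widetilde{\Gamma}_s^{(k)}}\|\,b_k^{(s)}\,a_k, \qquad \Sigma^+_m := \sum_{k=k_m+1}^{\infty}\|\widetilde{Q}(k_m,k)^{-1}_{|\widetilde{\Gamma}_u^{(k)}}\|\,b_k^{(u)}\,a_k
\]
are \emph{finite} for every $m$, and can be realized as values along an orbit of a non-negative measurable function $F$ on $\hat{\mathcal{I}}_\pi$ (for $\Sigma^+$) and of its adjoint on the inverse cocycle (for $\Sigma^-$). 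The conclusion is then extracted by a Poincaré-recurrence/Birkhoff-type argument: the set $\{F \leq K^+\}$ has $\mu_{\hat{\mathcal{Z}}}$-measure tending to $1$ as $K^+ \to \infty$, so we can fix $K^\pm$ such that the set $G \subset \hat{\mathcal{I}}_\pi$ on which both $\Sigma^\pm \leq K^\pm$ hold (and on which the angle condition \ref{conditionA} also holds for some $\delta>0$, cf.~\ref{O-a}) has measure arbitrarily close to $1$. By ergodicity of $\widetilde{\mathcal{Z}}$ (inherited from $\mathcal{Z}$), the return times of $\hat{T}$ to $G$ have positive density in $\mathbb{N}$ almost surely, which yields a linearly growing subsequence $(k_m)$ along which \ref{conditionB},\ref{conditionF},\ref{conditionA} hold simultaneously. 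Finally \ref{conditionS} is automatic on this subsequence, since $\|\widetilde{Q}(k_m,k_{m+1})\| \leq \prod_{j=k_m}^{k_{m+1}-1}\|\widetilde{Z}_j\|$ and the $a_j$'s grow subexponentially while $k_{m+1}-k_m = O(1/\mu_{\hat{\mathcal{Z}}}(G))$ on average. The main obstacle is the delicate simultaneous recurrence required for \ref{conditionB} and \ref{conditionF}: one must coordinate backward and forward estimates at the \emph{same} instants $k_m$, which is why we pass to the natural extension, where both the stable past and the unstable future are encoded at a single point $\hat{T}$.
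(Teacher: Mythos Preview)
Your proposal is correct and rests on the same ingredients as the paper's proof (Oseledets for the extended cocycle over the natural extension, integrability of accelerated/induced cocycles, ergodicity, and recurrence to positive-measure ``good'' sets). The organization, however, differs in one meaningful way that is worth recording.

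In the paper, the sequence $(n_k)$ is \emph{not} taken to be an arbitrary sequence of $A$-good returns as in your step~(ii). Instead, one first builds, via Lusin/Egoroff, a positive-measure set $\hat{G}\subset\hat{\mathcal{I}}_\pi$ on which the Oseledets constants are \emph{uniform} (what the paper calls an \emph{effective Oseledets} set), and then chooses $(n_k)$ as the visits to $\hat{G}\cap\hat{G}_A$. With this choice, at \emph{every} $n_k$ one has the uniform hyperbolic estimates $\|Q(n_k,n)_{|\Gamma_s^{(n_k)}}\|\le C_1 e^{-\theta(n-n_k)}$ and $\|Q(n,n_k)^{-1}_{|\Gamma_u^{(n_k)}}\|\le C_1 e^{-\theta(n_k-n)}$, so that conditions~[B] and~[F] are verified by a direct term-by-term estimate once one also controls (again by Egoroff) the subexponential growth of $\|\widetilde{Z}_k\|$ and the angles along a further subsequence $(k_m)$. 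Your alternative is to keep $(n_k)$ arbitrary good returns, package the two series into measurable functions $F^{\pm}$ on $\hat{\mathcal{I}}_\pi$, prove them a.e.\ finite, and extract $(k_m)$ as returns to sublevel sets $\{F^{\pm}\le K^{\pm}\}$. This is equally valid and arguably more conceptual; the paper's route is more explicit and yields constructive constants. Your justification of~[S] via submultiplicativity is a bit terse: the cleanest argument (the one the paper uses) is to note that $m\mapsto \widetilde{Q}(k_m,k_{m+1})$ is itself an induced cocycle, hence integrable, so that~\eqref{subexpgrowth} applies directly; your sketch can also be completed by observing that $(k_{m+1}-k_m)/m\to 0$ from the linear growth of $k_m$.
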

\noindent Section~\ref{sec:fullmeasure} is fully devoted to presenting the proof of the Theorem~\ref{thm:fullmeasure}. 
Recalling the Definition~\ref{def:fullmeasure} of full measure set of rotation numbers, we immediately have:
\begin{cor}[full measure $(RDC)$ rotation numbers]\label{fullmeasure_RDC_rotnumbers}
The set of rotation numbers which satisfy $(RDC)$ has full measure.
\end{cor}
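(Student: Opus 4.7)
The plan is to derive Corollary~\ref{fullmeasure_RDC_rotnumbers} directly from Theorem~\ref{thm:fullmeasure} by unwrapping the Definition~\ref{def:fullmeasure} of full measure for rotation numbers. This is essentially a bookkeeping argument, since Definition~\ref{def:fullmeasure} is precisely designed to transport a full measure statement on $\mathcal{I}_d$ (in the Lebesgue measure class) to a notion of full measure on the space of irrational rotation numbers, via the map $T \mapsto \gamma(T)$.

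First I would denote by $\mathcal{F} \subset \{\gamma(T) : T \in \mathcal{I}_d \text{ infinitely renormalizable, irrational}\}$ the set of combinatorial rotation numbers satisfying the $(RDC)$ condition. By Theorem~\ref{thm:fullmeasure} (whose proof is deferred to Section on full measure), there is a subset $\mathcal{I}'_d \subset \mathcal{I}_d$ of full Lebesgue measure all of whose elements satisfy $(RDC)$. The point to check is that $(RDC)$ is really a property of the rotation number $\gamma(T)$, and not of the lengths data $\lambda(T)$. This is transparent from Definition~\ref{def:RDC}: conditions (i)-(iii) are phrased entirely in terms of the Zorich cocycle matrices $Z_n(T) = Z(\mathcal{Z}^n T)$, the subspaces $\Gamma^{(n)}_{s,c,u}$ (which depend on the cocycle, hence on the rotation number), the sequence $(n_k)$ of good returns (a combinatorial property of the Rauzy path), and the acceleration sequence $(k_m)$. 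Since two IETs $T_0, T_1$ with the same rotation number $\gamma(T_0) = \gamma(T_1)$ determine the same sequence of Rauzy-Veech arrows and therefore the same cocycle matrices $Z_n$, they satisfy $(RDC)$ simultaneously.

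It then follows that $\mathcal{F} \supset \{\gamma(T) : T \in \mathcal{I}'_d\}$. By Definition~\ref{def:fullmeasure}, this inclusion exhibits $\mathcal{F}$ as a full measure set of rotation numbers, concluding the proof.

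The only substantive point to verify (beyond trusting Theorem~\ref{thm:fullmeasure}) is the invariance of $(RDC)$ under choice of lengths data $\lambda$ within a fixed rotation number class, which is a direct consequence of the construction of the Zorich cocycle recalled in \S\ref{sec:Zorichcocycle} (where the cocycle matrix $Z(T)$ depends only on which of $\lambda_{\pi_t(d)}, \lambda_{\pi_b(d)}$ is larger, information that is encoded in the Rauzy arrow, and hence in $\gamma(T)$). Thus the main, and only, work is in Theorem~\ref{thm:fullmeasure} itself, which will be proved separately.
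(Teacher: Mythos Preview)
Your proposal is correct and follows exactly the paper's approach: the paper states the corollary as an immediate consequence of Theorem~\ref{thm:fullmeasure} and Definition~\ref{def:fullmeasure}, and you have simply spelled out the (trivial) details. Your additional verification that $(RDC)$ depends only on $\gamma(T)$ is sound, though note that Definition~\ref{def:RDC} already explicitly defines the condition simultaneously for ``a (standard) IET $T$ \emph{and} its rotation number $\gamma(T)$'', so the paper has built this invariance into the definition itself.
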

%the rotation number $\gamma(T_0)$ of (any\footnote{Recall that all standard IETs conjugated to $T$ (if there is more than one) have the same rotation number.}) standard IET $T_0$ semi-conjugated to it (given by Theorem~\ref{thm:Yoccoz}) satisfies $(RDC)$.

\subsection{Affine Shadowing under the Regular Diophantine Condition}\label{sec:generalDC}
We will now state and prove the general case of the affine shadowing. The  Regular Diophantine Condition  for (irrational) GIETs, which is the condition we will need to prove affine shadowing, is defined through the standard IET conjugated to it:%we can define the full measure 
\begin{definition}[$(RDC)$ for GIETs]\label{def:RDCforGIET} We say that an infinitely renormalizable generalized IET $T$ with irrational rotation number satisfy the Regular Diophantine Condition $(RDC)$ iff its rotation number $\gamma(T)$ satisfies the $(RDC)$ given by Definition~\ref{def:RDC}.
\end{definition}
\noindent This condition is satisfied by a full measure set of GIETs with irrational rotation number (in the sense of Definition~\ref{def:fullmeasure}) by Corollary~\ref{fullmeasure_RDC_rotnumbers}.

\smallskip
The main result is Theorem~\ref{shadowing} below,  formulated as a dichotomy, which shows that, if the evolution of a GIET under renormalization \emph{escapes} (i.e~does not stay bounded in the $\mathcal{C}^1$ sense, see the remarks after the statement), then the evolution of its shape slope vector   can be \emph{shadowed} by the orbit under renormalization of (the slope vector of) an  AIET (hence the name \emph{affine shadowing}). %The result will be: either the orbit  $\mathcal{R}^{n_{k_m}}(T)$ is $\mathcal{C}^1$-bounded
The dichotomy is expressed in terms of the evolution of the \emph{shape log-slope vector} $\vect{\omega}(T)$ which we recall is defined to be the log-slope vector $\vect{\omega}(A_T)$ of the \emph{shape} $A_T$ of $T$, see \S~\ref{sec:averageslope}.

\subsection{The affine shadowing dichotomy}
We can now state the main result. We will consider as renormalization $\mathcal{R}$ the acceleration of $\mathcal{Z}$ corresponding to the sequence $(n_{k_m})_{m\in\mathbb{N}}$ given by the $(RDC)$ in Definition~\ref{def:RDC} and separate two cases according to whether the shape log-slope vectors along the orbit of renormalization are bounded or diverge. Consider therefore
$$
\mathcal{R}^m(T):=  \widetilde{\mathcal{Z}}^{k_m}(T) = \mathcal{Z}^{n_{k_m}}(T), \qquad \omega(\mathcal{R}^mT)=\widetilde{\omega}^{(k_m)}={\omega}^{(n_{k_m})}, \qquad \text{for\ all}\ m \in\mathbb{N}.
$$
\begin{thm}[\bf Affine shadowing lemma]
\label{shadowing}
Let $T$ be a GIET which satisfies the Regular Diophantine Condition $(RDC)$. Let $(n_k)_{k\in\mathbb{N}}$ be the accelerating sequence of 
s given by the $(RDC)$ (see $(ii)$ in Definition~\ref{def:RDC}). Then we have the following dichotomy. 
Either we are in:\smallskip

%Under the Diophantine-type conditions $(S), (B)$ and $(F)$, we have that
\begin{itemize}
\item[{\bf Case 1}]{\bf (recurrence):} The sequence $\{\vect{\omega}_{n}\}_{n\in \mathbb{N}}$ is bounded along renormalization times, i.e.~there exists a $V>0$ such that
$$\Vert \omega(\mathcal{R}^mT)\Vert= || \vect{\omega}_{n_{k_m}} ||\leq V, \qquad \textrm{for\ all}\ m\in\mathbb{N},
$$
\end{itemize}
\smallskip
or, alternatively, we have:
\smallskip
\begin{itemize}
\item[\bf Case 2]{\bf (affine shadowing):} There exists $\vect{v} \in E_u$ such that $ \vect{\omega}_n = Q(n,0) \vect{v} + o(\vect{\omega}_n) $ for every $n\in\mathbb{N}$, i.e.
$$
\lim_{n\to \infty}\frac{\| \vect{\omega}_n - \vect{v}^{(n)}\| }{\| \vect{v}^{(n)}\|}=0, \qquad\textrm{where}\quad  \vect{v}^{(n)}:= Q(n,0) \vect{v} .
$$
\end{itemize}
\end{thm}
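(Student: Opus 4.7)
The plan is to generalize the two-step strategy of the periodic-type case (Proposition~\ref{asymptotic}) to the full Diophantine setting. The role of the period matrix $A$ is now played by the accelerated cocycle matrices $\widetilde{Z}_k = \widetilde{Q}(k,k+1)$, but since these are no longer uniformly bounded, the linear approximation error will acquire a factor $\|\widetilde{Z}_k\|$, and the \emph{uniform} control of the projections (which was automatic in the periodic hyperbolic case from hyperbolicity of $A$) must be obtained through the summability \ref{conditionB} and \ref{conditionF} of the $(RDC)$, evaluated along the further subsequence $(k_m)_{m \in \mathbb{N}}$.

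First, I would prove a one-step linear approximation, which is essentially the argument of Lemma~\ref{lemma1} but keeping track of the size of $\widetilde{Z}_k$. Using the chain rule, the formula \eqref{eq:logvectorRn}, and applying the distortion bound of Lemma~\ref{bound1} to each of the $(\widetilde{Z}_k)_{ij}$ orbit-points entering the $j$-th entry of $\widetilde{\omega}^{(k+1)}$, together with the monotonicity of total non-linearity $|N|(\mathcal{R}^k T) \leq |N|(T)$ from Proposition~\ref{prop:Nproperties}(iii), one obtains
$$ \|\widetilde{\omega}^{(k+1)} - \widetilde{Z}_k\, \widetilde{\omega}^{(k)}\|_\infty \,\leq\, |N|(T)\, \|\widetilde{Z}_k\|. $$
Iterating telescopically yields, for every $m$,
$$ \widetilde{\omega}^{(k_m)} = \widetilde{Q}(0,k_m)\,\widetilde{\omega}^{(0)} + \sum_{k=1}^{k_m}\widetilde{Q}(k,k_m)\, \varepsilon_{k-1}, \qquad \|\varepsilon_{k-1}\|\leq |N|(T)\, \|\widetilde{Z}_{k-1}\|. $$

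Next, I would project this identity onto the stable, central, and unstable spaces $\widetilde{\Gamma}_x^{(k_m)}$, using their invariance and the resulting intertwining $\widetilde{\Pro}_x^{(k_m)}\widetilde{Q}(k,k_m) = \widetilde{Q}(k,k_m)|_{\widetilde{\Gamma}_x^{(k)}}\, \widetilde{\Pro}_x^{(k)}$. The Oseledets bound \ref{O-s} controls the initial term and \ref{conditionB} bounds the error sum, giving that $\|\widetilde{\Pro}_s^{(k_m)}\widetilde{\omega}^{(k_m)}\|$ is uniformly bounded in $m$ (this plays the role of Lemma~\ref{lemma2} in the periodic case). An analogous argument, using \ref{O-c} and the subexponential growth of $\|\widetilde{Q}(k_m,k_{m+1})\|$ granted by \ref{conditionS}, shows that $\|\widetilde{\Pro}_c^{(k_m)}\widetilde{\omega}^{(k_m)}\|$ grows subexponentially in $m$. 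If in addition $\|\widetilde{\Pro}_u^{(k_m)}\widetilde{\omega}^{(k_m)}\|$ remains bounded along $(k_m)$, then the uniform angle lower bound \ref{conditionA} gives $\|\widetilde{\omega}^{(k_m)}\| \lesssim \max_x \|\widetilde{\Pro}_x^{(k_m)}\widetilde{\omega}^{(k_m)}\|$ uniformly, placing us in Case~1. Otherwise, I define the affine shadow
$$ \vect{v} := \widetilde{\Pro}_u^{(0)}\widetilde{\omega}^{(0)} + \sum_{k=1}^{\infty} \widetilde{Q}(0,k)^{-1}\widetilde{\Pro}_u^{(k)}\, \varepsilon_{k-1} \in \Gamma_u^{(0)} = E_u, $$
convergence being granted by \ref{conditionF} (the contribution is pulled back from time $k_m$ to time $0$ through $\widetilde{Q}(0,k_m)^{-1}|_{\widetilde{\Gamma}_u^{(k_m)}}$, whose norm is bounded by \ref{O-u}). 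A direct telescopic computation, analogous to \eqref{Aseries}, then shows that $\widetilde{\Pro}_u^{(k_m)}(\widetilde{\omega}^{(k_m)} - \widetilde{Q}(0,k_m)v)$ equals a tail of the defining series, hence goes to zero.

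The main obstacle I anticipate is upgrading the approximation from the subsequence $\{n_{k_m}\}$ to \emph{every} $n\in\mathbb{N}$, as required by Case~2. For this, one interpolates: for $n$ between $n_{k_m}$ and $n_{k_{m+1}}$, the one-step approximation gives $\omega_n = Q(0,n)\,\omega_0 + (\text{accumulated error up to } n)$, the accumulated error being controlled by \ref{conditionS} together with the linear-growth assumption~$(ii)$ on $(n_k)_k$. Since $v\in E_u$ has positive top Lyapunov exponent $\lambda_g>0$, the leading term $\|Q(0,n)v\|$ grows exponentially, dominating the subexponential error and yielding $\omega_n - Q(0,n)v = o(\omega_n)$. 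The delicate point is that the uniform angle bound \ref{conditionA} holds only at times $k_m$, so the intermediate-time decompositions into $(\Gamma_s,\Gamma_c,\Gamma_u)$ may degenerate; this is compensated by the quantitative subexponential angle estimate \ref{O-a} of Definition~\ref{def:Oseledetsextension}, whose decay rate is dominated by the exponential growth of the unstable component.
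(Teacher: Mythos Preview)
Your overall strategy matches the paper's: linear approximation with error $\lesssim |N|(T)\|\widetilde Z_{k-1}\|$, definition of the shadow as a forward series, control of the stable part via condition~\ref{conditionB}, and interpolation from the subsequence $(n_{k_m})$ to all times using \ref{conditionS} and \ref{O-a}. All of this is essentially correct and is what the paper does.

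There is, however, a genuine gap in your treatment of the \emph{central} component. You write that ``an analogous argument, using \ref{O-c} and \ref{conditionS}, shows that $\|\widetilde\Pro_c^{(k_m)}\widetilde\omega^{(k_m)}\|$ grows subexponentially in $m$'', and then conclude Case~1 when the unstable part is bounded. But subexponential growth is not boundedness: on the central space the cocycle neither contracts nor expands, so the telescopic sum $\sum_{k=1}^{k_m}\widetilde Q(k,k_m)|_{\widetilde\Gamma_c^{(k)}}\widetilde\Pro_c^{(k)}\varepsilon_{k-1}$ has no reason to stay bounded---there is no analog of \ref{conditionB} or \ref{conditionF} for the central direction in the $(RDC)$. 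If $\vect v=0$ and the central part drifts to infinity subexponentially, you are in neither Case~1 nor Case~2, and the dichotomy fails.

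The paper closes this gap with an additional, and rather different, mechanism: the \emph{boundary operator}~$\mathcal B$. One uses that $\mathcal B(T^{(n)})=\mathcal B(T)$ is a renormalization invariant (Lemma~\ref{lemma:Bproperties}), that $B_n$ vanishes on $\Gamma_s^{(n)}$ and is exponentially small on $\Gamma_u^{(n)}$ (Lemmas~\ref{lemma5}--\ref{lemma6}), and that $B_n$ restricted to $\Gamma_c^{(n)}$ is injective with a controlled inverse (Lemma~\ref{lemma7}). Writing $\mathcal B(T)=B(\log D\varphi^n)+B(\vect\omega_n^u)+B(\vect\omega_n^c)+B(\vect\omega_n^s)$ and using the profile a~priori bound (Lemma~\ref{bound2}) to control $B(\log D\varphi^n)$, one solves for $\|\vect\omega_n^c\|$: bounded when $\vect v=0$, and $o(\|\vect\omega_n^u\|^\epsilon)$ when $\vect v\neq 0$ (Proposition~\ref{central}). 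This boundary argument is not optional---it is precisely what rules out a slowly diverging central drift and makes the dichotomy exhaustive. You should incorporate it.
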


\noindent {\bf Case 1} is called \emph{recurrent} case since asking that the sequence $(|| \vect{\omega}_{n} ||)_{n\in\mathbb{N}}$ 
 is bounded along the subsequence $(n_{k_m})_{m\in\mathbb{N}}$ turns out to be equivalent to the fact that the iterates $\{ \mathcal{R}^m (T), \, m\in\mathbb{N}\}$ stay at  $\mathcal{C}^1$-bounded distance from $\mathcal{I}_d\subset \mathcal{X}_d$ together with their inverses; furthemore, in this case, one can show that the orbit $\{ \mathcal{Z}^{n}(T), \ n\in \mathbb{N}\}$ of $T$ under Zorich renormalization is \emph{recurrent} (along the subsequence $(n_{k_m})_{m\in\mathbb{N}}$) to a subset  which is \emph{bounded} in the space $\chi^r_d$ of GIETs (in view of the conditions imposed on the recurrence sequence $(n_{k_m})_{m\in\mathbb{N}}$, which are in particular  \emph{good return times} in the sense of Definition~\ref{def:goodreturns}, as requested by the $(RDC)$ in Definition~\ref{def:RDC}).%, and thefore stay at bounded distance from a compact set of $\mathcal{I}_d$). 
 
 \smallskip
\noindent {\bf Case 2}, on the other hand, shows that the orbit $\{ \vect{\omega}_n, \ {n\in\mathbb{N}}\}$ can be approximated, up to a lower order term, by the orbit $\{ Q(n,0) \,\vect{v}, \ {n\in\mathbb{N}}\}$ of the log-slope vector $\vect{v}$ under the Rauzy-Veech cocycle. If this case, if $\overline{T}$ is an AIET $T_0$  with the same rotation number $\gamma(\overline{T})=\gamma(T)$ and  log-slope vector $\vect{v}$,\footnote{One can show that such an AIET always exists. Indeed it is shown in \cite{MMY2} that the cone $Af\! f(\gamma,\vect{v})$ of AIET with rotation number $\gamma$ and log-slope vector $\vect{v}$  is not empty as long as $\vect{v}$ is orthogonal to the
%\footnote{In general, given an infinitely complete rotation number $\gamma$, there exists a simplex of length vectors of IETs with rotation number $\gamma$, which is reduced to a \emph{line} (or a point, if we assume that the IETs are normalised so that the total length of the domain is one). The simplex is strictly bigger than a line exactly when the IET is non uniquely ergodic. In our case, the assumption that the rotation number is good (in particular Condition {\color{black}add}) automatically imply (see \cite{Veech}) that the IETs with a good rotation number are uniquely ergodic, so that the length vector is uniquely determined up to rescaling).}
  length vector $\underline{\lambda}$ of the standard IET with rotation number $\gamma$ (which is unique since one can show that IETs whose rotation number satisfies the $(RDC)$ are uniquely ergodic). Since by assumption $\vect{v}$ shadows $\vect{\omega}_n$ (see the statement of Case 2) and the growth of $\{\vect{\omega}_n\}_{n\in\mathbb{N}}$ is slower  than the growth of the norms of  $\{ Q(n,0)\}_{n\in\mathbb{N}}$, % (as shown in the proof of Proposition~\ref{rigidity}), 
	it follows that $\vect{v}$ does not project to the leading Oseledets eigenspace, thus $Af\! f(\gamma,\vect{v})$ is not empty and any $T_0\in Af\! f(\gamma,\vect{v})$ is an affine shadow.}
%We call this vector the \emph{affine shadow}, since it 
%hence shows that 
the shape log-slope vectors of the orbit $\{ \mathcal{Z}^n(T)\}_{n\in\mathbb{N}}$ of the GIET $T$ under renormalization can be \emph{shadowed} (up to lower order terms, i.e.~can be approximated in the first order) by the shape log-slope vectors of the orbit $\{ \mathcal{Z}^n(\overline{T})\}_{n\in\mathbb{N}}$ of the affine IET $\overline{T}$. 
For this reason we call the vector $\vect{v}$ the \emph{affine shadow} of $T$.  

\medskip
The rest of this section is devoted to the proof of Theorem~\ref{shadowing}. 

\subsubsection{The shadowing lemma, general case}\label{sec:general}

We now turn to the proof of Theorem \ref{shadowing} in the general case, assuming the Diophantine-type condition $(RDC)$ namely that the rotation number of $T \in \mathcal{X}^3$ is a good rotation number (see Definition \ref{def:RDC}). With the definition of good rotation number comes a sequence $(n_k)_{k \in \mathbb{N}}$ and a subsequence $(n_{k_m})_{m \in \mathbb{N}}$ we will be working with throughout the proof of Theorem \ref{shadowing}.  As in the previous section, we first give an outline of the proof (see also the sketch of the proof of Proposition \ref{asymptotic} in the previous section about the periodic-type special case).

\medskip
\noindent {\it Outline}: We first prove (in \S~\ref{sec:error}) a \emph{linear approximation} result (Lemma~\ref{lemma3}, which is a generalization of Lemma~\ref{lemma1}  in the proof of Proposition \ref{asymptotic}) that shows that, thanks to the classical distortion bounds given by Lemma~\ref{bound1}, the error between $\vect{\omega}_n$ and the linear evolution of the  log-slope vector transform under the cocyle  is comparable to the norm of the cocycle matrices.  In \S~\ref{sec:error} we then define the candidate vector $\vect{v}$ to be the \emph{shadow} and show that it is well defined (see Lemma~\ref{convergence_shadow}).

There is a natural candidate for the shadow (what we call the shadow is the vector $\vect{v}$ in the statement if Theorem \ref{shadowing}). At each step of renormalization, when trying to approximate $\vect{\omega}_{n_{k+1}}$ by $\widetilde{Z}(k) \vect{\omega}_{n_k}$, an error is made in both the stable and unstable direction. Philosophically speaking we can ignore the error in the stable direction as it will be eaten away by further steps of renormalization. In the unstable direction, we get an error whose size is controlled by $||\widetilde{Z}(k)||$. Provided $||(\widetilde{Z}(k)||$ is not too big we can add a very small correction $v_k$ at the start which is going to be magnified by the renormalization (to reach a size of the order $\alpha^k ||v_k||$, where $\alpha = \exp(\lambda) > 1$ and $\lambda$ is the smallest positive Lyapunoff exponent). 

The heart of the proof is given by Proposition~\ref{unstable} in \S~\ref{sec:unstable}, which shows that the basic dichotomy we are trying to prove holds for the \emph{unstable} part. 
  We refer  to this result (i.e.~Lemma~\ref{lemma1}) as \emph{linear approximation}. Lemma~\ref{lemma1}  The projection $\Pro _s(\vect{\omega}_n)$ of $\vect{\omega}_n$ onto the stable space $E_s$ is controlled through Lemma~\ref{lemma2}, which is valid for any $T$ with periodic rotation number and shows that the part in the stable space always remains bounded.

We then consider  iterates of renormalizations of $T$ and consider separately two cases:  (1) if the log-slopes  are bounded, we are in Case 1; otherwise, (2) if the log-slopes are  not bounded, in virtue of the control of the stable part (given by Lemma~\ref{lemma2}), the component  in the unstable space is also unbounded. To prove that in this case we are in Case 2, namely we can build an \emph{affine shadow}, we  wait for a time when this compoment is large compared to the mistake that one makes when comparing the actual growth of the slopes with how it transforms linearly. If  one starts renormalizing from that moment, the slope change almost linearly up to a mistake that is more and more negligible as slopes in the unstable space grow exponentially fast. Thus,  adjusting using smaller and smaller corrections (see \eqref{def:shadow} and \eqref{eq:smallercorrections}) allows to find a vector shadowing the slopes. This is done rigorously through definition \eqref{def:shadow} of the \emph{shadow} and the proof of Proposition~\ref{asymptotic} presented below.

\smallskip
%
%\subsection{The shadowing lemma}
%
%
%In this Section we prove the following Theorem 
%
%\begin{thm}
%\label{shadowing}and define 
%
%Under the Diophantine-type conditions $(L), (O)$ and $(R)$, we have that
%
%\begin{enumerate}
%\item $|| \vect{\omega}_{n_m} ||$  is bounded (which is equivalent to the fact that $\mathcal{R}^{n_m}(T)$ is $\mathcal{C}^1$-bounded).
%
%\item there exists $\vect{v} \in E_u$ such that 
%
%$$ \vect{\omega}_n = Q(n,0) \,\vect{v} + o(\vect{\omega}_n) $$ \textit{i.e.} the difference $\vect{\omega}_n - Q(n,0) \,\vect{v}$ is negligible before  $Q(n,0) \,\vect{v}$.
%
%\end{enumerate}
%
%\end{thm}
%
%For the remainder of this section, $T$ is an infinitely renormalizable GIET with rotation number satisfying conditions  $(L), (O)$ and $(R)$.

\subsubsection{Linear approximation error estimate}\label{sec:error}
We start with the following lemma which is valid for any infinitely renormalizable $T$ and is a direct generalisation of Lemma \ref{lemma1} to the non-periodic case. In this proof, though, we use the notation and decomposition for special Birkhoff sums introduced in \S~\ref{sec:SBS} and \S~\ref{decompBS}.

\smallskip
%%%\noindent Recall that $|N|(T)$ denotes the total non-linearity of $T$ (see Definition~\ref{def:Ns}).
{\begin{lemma}[linear approximation in the general case]\label{lemma3}
For any infinitely renormalizable GIET $T$, %there exists $K_T>0$ such that, 
for any $n_2 > n_1$ we have  
$$ || \vect{\omega}_{n_{2}} - Q(n_1,n_2) \vect{\omega}_{n_1} || \leq K(n_1) \Vert Q(n_1,n_2)\Vert \leq  K_T\, ||Q(n_1,n_2)||, \qquad \textrm{where} \quad K(n_1):=|N|\left(\mathcal{V}^{(n_1)}(T)\right) $$
and $K_T:=|N|(T)$, where $|N|(T)$ denotes the total non-linearity of $T$ (see Definition~\ref{def:Ns}).
\end{lemma}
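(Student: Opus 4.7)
The plan is to generalize the proof of Lemma~\ref{lemma1} (the periodic-type case) by comparing $\mathcal{R}^{n_2}T$ to an iterate of the induced map $\mathcal{R}^{n_1}T$ instead of a single period map. The key ingredients are the chain rule for the logarithmic derivative, the classical distortion estimate $|\log Df(x) - \log Df(y)| \leq \int |\eta_f|$, and the cocycle interpretation from \S~\ref{sec:entries}: the number of visits of the Rohlin tower built over $I_j^t(n_2)$ to the $i$-th level-$n_1$ interval $I_i^t(n_1)$ is exactly the cocycle entry $(Q(n_1,n_2))_{ij}$.

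Fix $1\leq j\leq d$. First I would apply the mean value theorem exactly as in \eqref{meanvalueptR} to pick points $y_{n_2,j}\in I_j^t(n_2)$ and $y_{n_1,i}\in I_i^t(n_1)$ such that $(\omega_{n_2})_j=\log D\mathcal{R}^{n_2}T(y_{n_2,j})$ and $(\omega_{n_1})_i=\log D\mathcal{R}^{n_1}T(y_{n_1,i})$. Writing $\mathcal{R}^{n_2}T$ on $I_j^t(n_2)$ as a composition of $K_j:=\sum_i (Q(n_1,n_2))_{ij}$ branches of $\mathcal{R}^{n_1}T$ (as done for the single step in the proof of Lemma~\ref{lemma1}) and applying the chain rule, one obtains
$$\log D\mathcal{R}^{n_2}T(y_{n_2,j}) \;=\; \sum_{\ell=0}^{K_j-1} \log D\mathcal{R}^{n_1}T\bigl((\mathcal{R}^{n_1}T)^\ell(z)\bigr)$$
for the appropriately rescaled starting point $z$. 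Denoting by $j_\ell\in\{1,\dots,d\}$ the index such that $(\mathcal{R}^{n_1}T)^\ell(z)\in I_{j_\ell}^t(n_1)$, the distortion estimate yields the pointwise bound $|\log D\mathcal{R}^{n_1}T((\mathcal{R}^{n_1}T)^\ell(z))-(\omega_{n_1})_{j_\ell}|\leq \int_{I_{j_\ell}^t(n_1)} |\eta_{\mathcal{R}^{n_1}T}|$. Since the index $j_\ell$ takes each value $i\in\{1,\dots,d\}$ exactly $(Q(n_1,n_2))_{ij}$ times, regrouping the sum by destination interval reproduces the linear term $(Q(n_1,n_2)\vect{\omega}_{n_1})_j$, and the triangle inequality gives
$$\bigl|(\omega_{n_2})_j - (Q(n_1,n_2)\vect{\omega}_{n_1})_j\bigr| \;\leq\; \sum_{i=1}^d (Q(n_1,n_2))_{ij} \int_{I_i^t(n_1)} |\eta_{\mathcal{R}^{n_1}T}|.$$

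Summing over $j$, interchanging the order of summation and bounding $\sum_j (Q(n_1,n_2))_{ij}\leq \|Q(n_1,n_2)\|$ (for the $\ell^1$ matrix norm used in \S~\ref{sec:Zorichcocycle}), we get
$$\|\vect{\omega}_{n_2}-Q(n_1,n_2)\vect{\omega}_{n_1}\|\;\leq\;\|Q(n_1,n_2)\|\,\sum_{i=1}^d \int_{I_i^t(n_1)} |\eta_{\mathcal{R}^{n_1}T}|\;=\;K(n_1)\,\|Q(n_1,n_2)\|,$$
since the last sum is precisely $|N|(\mathcal{R}^{n_1}T)=K(n_1)$. The second inequality $K(n_1)\leq K_T$ is then an immediate consequence of property (iii) in Proposition~\ref{prop:Nproperties}, which states that $|N|$ is non-increasing under the renormalization operator. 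The only point that requires some care is the matching of cocycle indices with the combinatorics of the cutting-and-stacking construction of the level-$n_2$ towers from the level-$n_1$ towers; this is the same bookkeeping that underlies the cocycle identity \eqref{heightsrelation}, so I do not anticipate any substantive obstacle beyond what is already present in the proof of Lemma~\ref{lemma1}.
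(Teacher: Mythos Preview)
Your proposal is correct and follows essentially the same strategy as the paper: write $(\omega_{n_2})_j$ via the chain rule as a sum of values of $\log D(\mathcal{V}^{n_1}T)$ along the Rohlin tower, compare each term to $(\omega_{n_1})_{j_\ell}$ via the distortion bound, and identify the linear part as $(Q(n_1,n_2)\omega_{n_1})_j$ using the visit-count interpretation of the cocycle entries. The paper phrases the distortion step through the auxiliary Lemma~\ref{lemma:SBSomega} (special Birkhoff sums of $\log DT$) and bounds each component by $Q(n_1,n_2)_j\cdot K(n_1)\leq \|Q(n_1,n_2)\|K(n_1)$ directly, whereas you keep the finer per-interval contribution $\int_{I_i^t(n_1)}|\eta_{\mathcal{V}^{n_1}T}|$ and sum over $j$ at the end; both routes land on the same inequality.
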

\noindent As a special case of the above formula, setting $n_1=0$ and $n_2=n_k$ and recalling that $\widetilde{\vect{\omega}}_k= \vect{\omega}_{n_k}$ and $\widetilde{Z}_k:=Q(n_k,n_{k+1})$, we then have:
\begin{cor}
\label{corlemma3} For all $k$, we have $ || \widetilde{\vect{\omega}}_{{k+1}} - \widetilde{Z}_k \widetilde{\vect{\omega}}_{k} || \leq K_T ||\widetilde{Z}_k||.$
\end{cor}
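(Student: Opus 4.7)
The plan is to deduce Corollary \ref{corlemma3} as an immediate specialization of Lemma \ref{lemma3} together with a renaming via the accelerated-cocycle notation introduced in \eqref{Zorichaccelerations}. Concretely, I would apply Lemma \ref{lemma3} to the pair of indices $n_1 := n_k$ and $n_2 := n_{k+1}$, two consecutive elements of the good-returns subsequence $(n_k)_{k\in\mathbb{N}}$ provided by the $(RDC)$. Substituting directly into the statement of Lemma \ref{lemma3} yields
$$\|\vect{\omega}_{n_{k+1}} - Q(n_k, n_{k+1})\, \vect{\omega}_{n_k}\| \,\leq\, K_T\, \|Q(n_k, n_{k+1})\|,$$
and it then suffices to invoke the notational conventions $\widetilde{\vect{\omega}}_k := \vect{\omega}_{n_k}$ and $\widetilde{Z}_k := Q(n_k, n_{k+1})$ from \eqref{Zorichaccelerations} to read this as the announced inequality.

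The one small point worth recording is the passage from the sharper local constant $K(n_k) = |N|\bigl(\mathcal{V}^{n_k}(T)\bigr)$, which is what Lemma \ref{lemma3} actually produces, to the uniform constant $K_T = |N|(T)$ in the final bound. This replacement is justified by property (iii) of Proposition \ref{prop:Nproperties}, according to which the total non-linearity $|N|(\cdot)$ is monotone non-increasing along the Rauzy--Veech renormalization orbit; iterating this property $n_k$ times gives $|N|(\mathcal{V}^{n_k}(T)) \leq |N|(T) = K_T$, hence $K(n_k) \leq K_T$ for every $k\in\mathbb{N}$, which is exactly the uniformity we need.

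No genuine obstacle is anticipated here: the entire content of the corollary, including the uniformity of the constant in $k$, is already contained in Lemma \ref{lemma3} and in the monotonicity statement of Proposition \ref{prop:Nproperties}. The role of the corollary is purely to package the one-step version of the linear approximation estimate in the notation of the further acceleration $\widetilde{\mathcal{Z}}$ along the sequence $(n_k)_{k\in\mathbb{N}}$, so that in the subsequent analysis (construction of the affine shadow $\vect{v}$, control of its unstable component, etc.) one may reason directly with the matrices $\widetilde{Z}_k$ rather than with the raw Zorich products $Q(n_k,n_{k+1})$.
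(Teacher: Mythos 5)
Your proof is correct and is essentially the paper's own argument: the corollary is obtained as an immediate specialization of Lemma~\ref{lemma3}, with the uniform constant $K_T$ already supplied by the monotonicity $|N|(\mathcal{V}^{n}(T))\leq |N|(T)$ built into that lemma's statement. Note that your choice of indices $n_1=n_k$, $n_2=n_{k+1}$ is the right one; the paper's parenthetical remark ``setting $n_1=0$ and $n_2=n_k$'' appears to be a typo, so your substitution is in fact the corrected version of what the authors intended.
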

Before proving the Lemma, we state and prove an intermediate step, which  will be used also later and connects the  shape log-average vectors with values of special Birkhoff sums of $f:=\log DT$.
\begin{lemma}[shape log-averages and special Birkhoff sums]\label{lemma:SBSomega}
For any irrational $T$, for every $n\in\mathbb{N}$ and $1\leq j\leq d$, there exists a point $x^{(n)}$
$$
x^{(n)}_j\in I^{(n)}_j,\quad \text{such that}\quad  (\omega_n)_j:= f^{(n)}_j (x^{(n)}_j),
$$
where $f^{(n)}_j$ is the $j^{th}$ branch $f^{(n)}_j$ of the special Birkhoff sum $f^{(n)}$ of the function $f:=\log DT$ and where $(\omega_n)_j$ is the $j^{th}$ entry of the shape log-slope vector $\vect{\omega}_n = \vect{\omega}(\mathcal{V}^n(T))$. Moreover, for any $0\leq m\leq n$,
$$
\Vert f^{(n)}_j - (\omega_n)_j\Vert_\infty:= \sup_{x\in I^{(n)}_j} \left| f^{(n)}_j (x)- (\omega_n)_j\right|\leq  |N| (\mathcal{Z}^m(T))\leq |N|(T).
$$ 
\end{lemma}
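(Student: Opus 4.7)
The plan is to deduce both claims from the integral representation of the shape slope $\rho(\mathcal{R}^n T)_j$ combined with (a) the intermediate value theorem for the first assertion and (b) the classical distortion bound of Lemma~\ref{bound1}, applied not to $T$ itself but to the induced map $T_m$ on $I^{(m)}$, for the uniform estimate.

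For the first assertion, by formula~\eqref{rhoasaverage} applied to $\mathcal{R}^n T$ and the expression~\eqref{eq:logvectorTn} of $D\mathcal{R}^n T$ in terms of $D T^{q^{(n)}_j}$, the slope $\rho(\mathcal{R}^n T)_j$ equals the average of the continuous function $D T^{q^{(n)}_j}$ over the continuity interval $I^{(n)}_j$; continuity here follows from the fact that the iterates $T^\ell(I^{(n)}_j)$, $0\leq\ell<q^{(n)}_j$, form a Rohlin tower and hence are pairwise disjoint intervals avoiding every singularity of $T$. The intermediate value theorem then produces a point $x^{(n)}_j\in I^{(n)}_j$ at which $D T^{q^{(n)}_j}$ attains this average; taking logarithms and using the chain-rule identity $\log D T^{q^{(n)}_j}=\sum_{\ell=0}^{q^{(n)}_j-1}\log DT\circ T^\ell = f^{(n)}_j$ (which is precisely the definition of the special Birkhoff sum of $f=\log DT$) gives $(\omega_n)_j = f^{(n)}_j(x^{(n)}_j)$.

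For the uniform bound, the crucial observation is that $T^{q^{(n)}_j}|_{I^{(n)}_j}$ also coincides with an iterate of the induced map $T_m$. Namely, letting $q^{(n,m)}_j$ denote the first return time of $I^{(n)}_j$ to $I^{(n)}$ under $T_m$, we have $T^{q^{(n)}_j}|_{I^{(n)}_j}=T_m^{q^{(n,m)}_j}|_{I^{(n)}_j}$, and the $T_m$-iterates $T_m^i(I^{(n)}_j)$, $0\leq i<q^{(n,m)}_j$, are pairwise disjoint subintervals of $I^{(m)}$ (being the intersections with $I^{(m)}$ of the disjoint $T$-iterates of $I^{(n)}_j$), each contained in a single continuity interval of $T_m$. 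Lemma~\ref{bound1} applied to the GIET $T_m$ with $J:=I^{(n)}_j$ therefore yields
\[
\sup_{x,y\in I^{(n)}_j}\bigl|\log D T^{q^{(n)}_j}(x)-\log D T^{q^{(n)}_j}(y)\bigr| \leq |N|(T_m),
\]
and specialising $y=x^{(n)}_j$, together with the first assertion, gives $\|f^{(n)}_j-(\omega_n)_j\|_\infty\leq |N|(T_m)$.

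It remains to match the right-hand side with the stated bound. The equality $|N|(T_m)=|N|(\mathcal{Z}^m(T))$ follows from a direct change of variables: conjugating by an orientation-preserving affine rescaling $b$ of the domain transforms $\eta_\phi$ into $Db\cdot(\eta_\phi\circ b)$, an operation which preserves the $L^1$-norm. The final inequality $|N|(\mathcal{Z}^m(T))\leq |N|(T)$ is Proposition~\ref{prop:Nproperties}(iii) applied iteratively along the chain of elementary Rauzy--Veech steps composing $\mathcal{Z}^m$. No step in the argument presents a genuine obstacle; the only point really worth noticing is that Lemma~\ref{bound1} must be invoked on the \emph{induced} map (applying it to $T$ would only recover the cruder bound $|N|(T)$ instead of the desired $|N|(\mathcal{Z}^m(T))$), after which everything reduces to standard computations with the non-linearity.
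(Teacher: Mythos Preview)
Your proof is correct and follows essentially the same approach as the paper's: both obtain $x^{(n)}_j$ from the mean-value/intermediate-value theorem applied to the integral representation of $\rho(\mathcal{R}^nT)_j$, and both deduce the oscillation bound by applying the distortion Lemma~\ref{bound1} to the induced map at level $m$ (the paper phrases this as ``apply the estimate already proved to $\overline{T}:=\mathcal{Z}^m T$'', which amounts to exactly your application of Lemma~\ref{bound1} to $T_m$). Your explicit justification of $|N|(T_m)=|N|(\mathcal{Z}^m(T))$ via the affine change of variables is a detail the paper leaves implicit.
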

\begin{proof} For any $n\in\mathbb{R}$ and any $1\leq j\leq d$, 
 using the chain rule and recalling the definition of special Birkhoff sums (see \S~\ref{sec:SBS}) we have that
\begin{equation}\label{SBS_omega}
 \log \big( D T^{q^{(n)}_j}\big)(x) =S_{q^{(n)}_j}f(x) =f^{(n)} \big(x), \qquad \text{for\ all}\ x\in I^{(n)}_j. 
\end{equation} 
Thus, recalling that   $\vect{\omega}_{n}=\log \vect{\rho}_n$ (see Definition~\ref{def:rhoomega}), by mean value theorem and by Remark~\ref{rk:logvectorRn} (see in particular equation~\eqref{eq:logvectorTn}), 
for any $n\in\mathbb{R}$ and $1\leq j\leq d$,   there exists a point 
\begin{equation}\label{SBS_omegapt}
x^{(n)}_j\in I^{(n)}_j,\quad \text{such that}\quad  (\rho_n)_j:=  D( T^{q^{(n)}_j}  )(x^{(n)}_j), \qquad (\vect{\omega}_n)_j:= \log(\rho_n)_j=f^{(n)} \big(x^{(n)}_j\big) .
\end{equation}
%Thus,  recalling that   $\vect{\omega}_{n}=\log \rho_n$ (see Definition~\ref{def:rhoomega}) and $f:=\log D T$, % in %\S~\ref{sec:averageslope}) %
% using the chain rule and recalling the definition of special Birkhoff sums (see \S~\ref{sec:SBS}) we get that
%\begin{equation}\label{SBS_omega}
%(\vect{\omega}_n)_j:= \log(\rho_n)_j = \log \big( D T^{q^{(n)}_j}\big)\big(x^{(n)}_j\big) =S_{q^{(n)}_j}f\big(x^{(n)}_j\big) =f^{(n)} \big(x^{(n)}_j\big) . %
%\end{equation}
Thus, %since with a similar argument we also have that  $f^{(n)} (x)=  \log \big( D T^{q^{(n)}_j}\big)(x)$ for all other points $x\in I^{(n)}_j$, we can 
 the classical distorsion bounds (Lemma~\ref{bound1}),  taking logarithms, shows that  
% Recall that the definition of special Birkhoff sums $f^{(n)}$ of a function $f \in \mathcal{C}(T)$ were given in \S~\ref{sec:specialBS}.
%an application of , is that
 special Birkhoff sums of each continuity interval have bounded fluctuations, namely 
%Let $T$ be an irrational GIET. Given $f\in \mathcal{C}(T)$, 
 %for any $n\in\mathbb{N}$ %and any $1\leq j\leq d$,   
$$
\left|f^{(n)} (x) -({\omega}_n)_j \right| =\left|f^{(n)}(x)-f^{(n)} \big(x^{(n)}_j\big) \right| \leq |N|(T), \qquad \textrm{for\ all}\  n\in\mathbb{N}, \quad 1\leq j\leq d, \ x \in I^{(n)}_j .
$$
This proves the estimate by $|N|(T)$. 
\smallskip

Fix now any $0\leq m<n$; to prove the estimate by $|N|(\mathcal{Z}^m(T))$, one can apply the estimate that we already proved to the GIET $\overline{T}:= \mathcal{Z}^{m} T$ and the function $g:=\log D (\mathcal{Z}^{m} T)$. Notice that for any $n\geq m$, since by the cocycle property of Birkhoff sums $f^{(n)}=f^{(n-m)}\circ f^{(m)}$  and $f^{(m)}=\log D (T_m)$, 
$f^{(n)}$ is a rescaled version (obtained by conjugating by a linear map) of $g^{(n-m)}$. 
%Recalling also that by Property $(iv)$ of Proposition~\ref{prop:Nproperties}, $|N|(\mathcal{Z}^m(T)) = |N| $ this allows to prove the finer estimate.$|N|(\mathcal{Z}^m(T))$

\smallskip
Finally,  Property $(ii)$ in Proposition~\ref{prop:Nproperties} gives that $|N|(\mathcal{V}^{n}(T))\leq |N|(T)$. This concludes the proof. 
\end{proof}
We can now prove the linear approximation stated as Lemma~\ref{lemma3}.
\begin{proof}[Proof of Lemma~\ref{lemma3}] Fix any $n_2\in\mathbb{N}$ and $1\leq j\leq d$.  As in the proof of the previous Lemma, let $\overline{x}_j$ be a point in $I^{(n_2)}_j$ such that $({\omega}_{n_2})_j=f^{(n_2)}_j( \overline{x}_j)$ where $ f:=\log DT$ and 
%the $j^{th}$ entry $({\rho}_{n_2})_j$ of $\vect{\rho}_{n_2}$ is given by
%\begin{equation}
%({\rho}_{n_2})_j= D\big(T^{q^{(n_2)}_j}\big)(\overline{x}_j),\qquad ({\omega}_{n_2})_j= f^{(n_2)}_j( \overline{x}_j), \qquad \text{where}\quad f:=\log DT
%\quad \Rightarrow \quad (\vect{\omega}_n)_j= \log D\big(T^{q^{(n)}_j}\big)\big(\overline{x}^{(n)}_j\big),
%\end{equation} 
%which exist by mean value and recalling the characterization of $\vect{\rho}_n$ given by \eqref{eq:logvectorTn} of Remark~\ref{rk:logvectorRn}. Thus, by definition of $\vect{\omega}_n$ and chain rule,
%\begin{equation}\label{decompn2}
%({\omega}_{n_2})_j= \log D\big(T^{q^{(n_2)}_j}\big)(\overline{x}_j) = S_{q^{(n_2)}_j} f ( \overline{x}_j) = f^{(n_2)}_j( \overline{x}_j), \qquad \text{where}\quad f:=\log DT
%\end{equation}
and $f^{(n_2)}_j$ is the $j^{th}$ branch of the special Birkhoff sum $f^{(n_2)}$ (see \S~\ref{sec:SBS}). 
%$$\vect{\omega}_{n} := \vect{\omega}(\mathcal{R}^{n}T), \qquad \rho_n :=   \rho(\mathcal{R}^{n}T), $$ 
%where $\vect{\omega}(T)$ and $\rho(T)$ denote respectively the average log-slope and average slope vector defined in \S~\ref{sec:averageslope}. %Thus the entries of $\rho_n$ are the average values of the derivative of $\mathcal{R}^{n}(T)$ on each of its $d$ intervals (see ).
 %$\mathcal{R}^{n}(T)$ is periodic, there exists $p >0$ independent of $n$ such each branch of $\mathcal{R}^{n+1}(T)$ are equal to $(\mathcal{R}^{n})^k$ for a certain $k \leq p$.
Thus, using one step of the decomposition of (special) Birkhoff sums introduced in \S~\ref{decompBS}, if for each $0\leq \ell < Q(n_1,n_2)_{ij}$, we let $ j_\ell\in \{1,\dots, d\}$ be the index such that $ {{T}_{n_1}}^{\ell} ( \overline{x}_j)$ belongs the interval $I^{(n)}_{j_\ell}$, 
\begin{equation}\label{decompn2} 
({\omega}_{n_2})_j = f^{{(n_2)}} ( \overline{x}_j) %=\sum_{\ell=0}^{Q(n_1,n_2)_{j}-1}{f^{{(n_1)}}}( {{T}_{n_1}}^{\ell} ( \overline{x}_j)\big)
= \sum_{\ell=0}^{Q(n_1,n_2)_{j}-1} f_{j_\ell}^{{(n_1)}} \big({{T}_{n_1}}^{\ell} ( \overline{x}_j)\big), \qquad \text{where}\ Q(n_1,n_2)_{j}:= \sum_{1\leq i\leq j}Q(n_1,n_2)_{ij}.
\end{equation}
By the previous Lemma~\ref{lemma:SBSomega}, 
%Now,  we claim  that,  again by mean value theorem together  by an application of the distorsion bound given by Lemma \ref{bound1},
\begin{equation}\label{deviationsSBS} \Vert f_{j_\ell}^{{(n_1)}} - ({\omega}_{n_1})_{j_\ell}\Vert_\infty \leq K(n_1)\leq K,
\end{equation} where $K(n_1)$ and $K_T$ are defined as in the statement of the Lemma.  

\smallskip
Note now that, by the dynamical interpretation of the cocycle entries (see \S~\ref{sec:entries}) and matrix multiplication,
\begin{equation}\label{matrixproduct}
\begin{split}
\sum_{\ell=0}^{Q(n_1,n_2)_{j}-1}{(\omega_{n})_{j_\ell}}&=
 \sum_{1\leq i\leq d} Card\{1\leq \ell < Q(n_1,n_2)_{j}, \quad j_\ell=i\}\,  ({\omega}_{n_1})_i=\\ &=
 \sum_{1\leq i\leq d}  Q(n_1,n_2)_{ij} ({\omega}_{n_1})_i =\left(Q(n_1,n_2)\, \vect{\omega}_{n_1}\right)_j.\end{split}\end{equation}
Thus, combining \eqref{decompn2} and \eqref{matrixproduct} and estimating the difference through \eqref{deviationsSBS}, 
$$ 
\left| (\vect{\omega}_{n_2})_j -\left(Q(n_1,n_2)\, \vect{\omega}_{n_1}\right)_j\right|\leq \sum_{\ell=0}^{Q(n_1,n_2)_{j}-1}  \left| {f^{{(n_1)}_i}}( {{T}_{n_1}}^{\ell} ( \overline{x}_j)\big) -({\omega}_{n})_{j_\ell} \right|\leq \Vert Q(n_1,n_2)\Vert K(n_1)\leq \Vert Q(n_1,n_2)\Vert K_T.
$$
Since this holds for every $1\leq j\leq d$, this completes the proof.
\end{proof}}

\subsubsection{Building the shadow}\label{sec:shadow}
We can now construct the affine shadow $\vect{v}$ and show that it is well defined. For each $k\in\mathbb{N}$, consider the \emph{error}  between $\vect{\omega}_{n_{k}}$ and the linear evolution of $\vect{\omega}_{n_{k-1}}$, namely $\vect{\omega}_{n_{k}} - \widetilde{Z}_{k-1}\vect{\omega}_{n_{k-1}}$. Let $\Pro _u: \mathbb{R}^d\to \Gamma_u^{(k)}$  be  the projection on the unstable space  at stage $k$ (for the Zorich induction). Set 
$$e_k:= \Pro _u(\widetilde{\vect{\omega}}_{{k}} - \widetilde{Z}_{k-1}\widetilde{\vect{\omega}}_{{k-1}} )= \Pro _u(\vect{\omega}_{n_{k}} - \widetilde{Z}_{k-1}\vect{\omega}_{n_{k-1}} )$$
and formally define 
\begin{equation}\label{def:generalshadow} \vect{v}: = \sum_{k=1}^{+\infty}{v_k} + \Pro_u(\vect{\omega}_{0}) , \qquad  \text{where}\ v_k: = Q(0,n_k)^{-1}e_k = \widetilde{Q}(0,k)^{-1}e_k.
\end{equation}
 We just need to check that this series converges.

\begin{lemma}\label{convergence_shadow}
The series in \eqref{def:generalshadow} converges and hence $\vect{v}$ is well defined.
\end{lemma}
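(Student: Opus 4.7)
The plan is to bound the norm of each term $v_k$ in the series via the linear approximation estimate provided by Corollary~\ref{corlemma3}, and then to invoke Condition \ref{conditionF} from the $(RDC)$ to sum these bounds. Since the tail of the series is what needs control, I will split off a finite initial segment and reduce the convergence of the tail to the summability statement in Condition \ref{conditionF}.

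Concretely, fix $k \geq 1$. By definition, $e_k = \widetilde{\Pro}_u^{(k)}\bigl(\widetilde{\vect{\omega}}_{k} - \widetilde{Z}_{k-1}\widetilde{\vect{\omega}}_{k-1}\bigr)$ lies in the unstable space $\widetilde{\Gamma}_u^{(k)}$, so applying the projection bound and Corollary~\ref{corlemma3} gives
\[
\|e_k\| \leq \|\widetilde{\Pro}_u^{(k)}\| \cdot \|\widetilde{\vect{\omega}}_{k} - \widetilde{Z}_{k-1}\widetilde{\vect{\omega}}_{k-1}\| \leq K_T\, \|\widetilde{\Pro}_u^{(k)}\|\, \|\widetilde{Z}_{k-1}\|.
\]
Since $e_k \in \widetilde{\Gamma}_u^{(k)}$, the operator $\widetilde{Q}(0,k)^{-1}$ acts on $e_k$ through its restriction to the unstable space, and hence
\[
\|v_k\| = \|\widetilde{Q}(0,k)^{-1} e_k\| \leq \bigl\|\widetilde{Q}(0,k)^{-1}\bigr|_{\widetilde{\Gamma}_u^{(k)}}\bigr\| \cdot \|e_k\|.
\]

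Now I will use the cocycle relation \eqref{cocyclerel} together with the invariance of the unstable filtration. Pick any $m_0 \in \mathbb{N}$ provided by the $(RDC)$ and write, for $k > k_{m_0}$,
\[
\widetilde{Q}(0,k)^{-1}\bigr|_{\widetilde{\Gamma}_u^{(k)}} \; = \; \widetilde{Q}(0,k_{m_0})^{-1}\bigr|_{\widetilde{\Gamma}_u^{(k_{m_0})}} \circ \widetilde{Q}(k_{m_0},k)^{-1}\bigr|_{\widetilde{\Gamma}_u^{(k)}},
\]
which yields the sub-multiplicative bound
\[
\|v_k\| \leq K_T\, C_{m_0} \cdot \bigl\|\widetilde{Q}(k_{m_0},k)^{-1}\bigr|_{\widetilde{\Gamma}_u^{(k)}}\bigr\| \cdot \|\widetilde{\Pro}_u^{(k)}\| \cdot \|\widetilde{Z}_{k-1}\|,
\]
where $C_{m_0} := \bigl\|\widetilde{Q}(0,k_{m_0})^{-1}\bigr|_{\widetilde{\Gamma}_u^{(k_{m_0})}}\bigr\|$ is a finite constant depending only on $T$ and on the fixed $m_0$.

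Summing and splitting the series at $k_{m_0}$, the head $\sum_{k=1}^{k_{m_0}} \|v_k\|$ is a finite sum, and the tail is bounded by
\[
\sum_{k=k_{m_0}+1}^{\infty} \|v_k\| \; \leq \; K_T\, C_{m_0} \cdot \sum_{k=k_{m_0}+1}^{\infty} \bigl\|\widetilde{Q}(k_{m_0},k)^{-1}\bigr|_{\widetilde{\Gamma}_u^{(k)}}\bigr\|\, \|\widetilde{\Pro}_u^{(k)}\|\, \|\widetilde{Z}_{k-1}\| \; \leq \; K_T\, C_{m_0}\, K^{+},
\]
where the last inequality is precisely Condition \ref{conditionF} applied at $m_0$. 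This gives absolute convergence of $\sum_k v_k$ in $\mathbb{R}^d$, and so $\vect{v}$ is well-defined.

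The only subtle point is ensuring that the decomposition of $\widetilde{Q}(0,k)^{-1}$ on the unstable direction is indeed sub-multiplicative as written; this uses the invariance $\widetilde{Q}(k_{m_0},k)\, \widetilde{\Gamma}_u^{(k_{m_0})} = \widetilde{\Gamma}_u^{(k)}$ guaranteed by the Oseledets generic extension from Definition~\ref{def:Oseledetsextension}$(i)$. Beyond this, the proof is a routine application of $(RDC)$; the main work has already been done in setting up Condition \ref{conditionF} so that it exactly accommodates the estimate coming from Corollary~\ref{corlemma3}.
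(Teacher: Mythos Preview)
Your proof is correct, but it takes a somewhat different route from the paper's own argument. The paper bounds $\|v_k\|$ directly via the Oseledets generic extension: it uses the exponential contraction \ref{O-u} to get $\|\widetilde{Q}(0,k)^{-1}|_{\widetilde{\Gamma}_u^{(k)}}\| \leq C e^{-\lambda n_k}$, then combines this with the subexponential growth of $\|\widetilde{\Pro}_u^{(k)}\|$ (from the angle condition \ref{O-a}) and of $\|\widetilde{Z}_{k-1}\|$ (from Condition~\ref{conditionS}), and sums a geometric series. You instead factor $\widetilde{Q}(0,k)^{-1}$ through a fixed special time $k_{m_0}$ and invoke Condition~\ref{conditionF} at $m_0$ as a black box, which already packages exactly the tail sum you need.

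Your approach is arguably more economical: Condition~\ref{conditionF} is part of the standing hypothesis and is tailor-made for this estimate, so appealing to it directly avoids re-deriving the convergence from the underlying Oseledets data. The paper's approach, on the other hand, makes explicit that only the Oseledets genericity (Condition~$(i)$) and Condition~\ref{conditionS} are needed here---Condition~\ref{conditionF} is not required at this stage---which is a mildly sharper observation about which parts of the $(RDC)$ are doing the work. Both arguments are short and the difference is largely cosmetic.
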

 \begin{proof}
Recall that by one of the assumptions in the $(RDC)$ (see Definition~\ref{def:RDC}), $T$ is Oseledets generic; thus there exists $C=C(T)>0$ such that 
$$Q(0,n_k)^{-1}_{|\Gamma_u(T^{(n_k)})}\leq Ce^{-\lambda n}, \qquad \textrm{for\ every}\ n\in \mathbb{N},$$ where 
%$\alpha = \exp(\lambda)$ and 
we can take $\lambda>0$ to be $\lambda:=\theta_g/2$ where $\theta_g>0$ is the smallest positive Lyapunov exponent of the Zorich cocyle. Since $e_k\in \Gamma_u^{(n_k)}$, by  Lemma \ref{lemma3},  we can bound $||e_k||$ by $K_T ||\widetilde{\Pro }^k_u|| ||\widetilde{Z}_{k-1}||$, so that we can estimate
$$
||v|| - ||\Pro_u(\vect{\omega}_{0})|| \leq   \sum_{k=1}^{+\infty}||{v_k}|| = \sum_{k=1}^{+\infty}||{Q(0,n_k)^{-1}_{|\Gamma_u(T^{(n_k)})} e_k}|| \leq  C K_T \sum_{k=1}^{+\infty}e^{-\lambda n}  ||\widetilde{\Pro }^{(k)}_u|| \, ||\widetilde{Z}_{k-1}||,
$$
% $||e_k||$ is less than $K_T ||\widetilde{\Pro }^k_u|| ||\widetilde{Z}_k||$. 
The assumption that $T$ is Oseledets generic also implies that $||\widetilde{\mathrm{P}}^{(k)}_u||$, which is comparable with $\angle (\Gamma_u^{(n_k)}, \Gamma_s^{(n_k)})$, grows subexponentially fast, i.e.~for every $\epsilon>0$ there exists $C_1=C_1(T,\epsilon)$ such that $||\widetilde{\Pro}^{(k)}_u||\leq C_1 e^{\epsilon n}$ for every $n\in\mathbb{N}$. Finally  Condition $(S)$ of the $(RDC)$ (see Definition~\ref{def:RDC}) shows that also $||\widetilde{Z}_{k-1}||\leq C_2 e^{\epsilon (k-1)}$ for some $C_2=C_2(T, \epsilon)>0$ and every $k\in\mathbb{N}$. Choosing $\epsilon<\lambda/2$, guarantees that the series converges and thus that $\vect{v}$ is well defined.
%plus the geometric contraction rate of  $Q(n_k,0)^{-1}_{|\Gamma_u(T^{(n_k)})}$ guarantees that $||v_k||$ decreases at a geometric rate. Thus 
\end{proof}
%{\color{black} Notation consistency: do we want to use $\vect{v}$ (or in my former papers $\underline{v}$) for vectors, or just use $v$ (as Yoccoz usually does)? %Whichever, we should be consistent, if we do I think we should also use $\vect{v}_k$ and $\vect{\lambda}$ etc...}

\subsubsection{Control of the unstable part.}\label{sec:unstable}
The following Proposition is at the heart of the desired dichotomy.

\begin{proposition}[unstable part shadowing]\label{unstable}
We have the following dichotomy. Either $\vect{v} \neq 0$ and in this case
\begin{itemize}
\item[(1)]   for any $\epsilon > 0$,  $\Pro _u(\vect{\omega}_n) = Q(0,n) \vect{v} + o(||\Pro _u(\vect{\omega}_n)||^{\epsilon}) = Q(n,0) \vect{v} + o(||Q(0,n)\vect{v}||^{\epsilon}) $
\end{itemize}
or, otherwise, $\vect{v} = 0$ and in this case
\begin{itemize}
\item[(2)]  there exists $K(T) > 0$ such that for all $m \in \mathbb{N}$, $|\Pro _u(\vect{\omega}_{n_{k_m}}) | \leq K(T)$.
\end{itemize}

\end{proposition}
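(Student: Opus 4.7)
The vector $\vect{v}$ was built precisely so that its orbit under the cocycle shadows the unstable component of $\vect{\omega}_n$: I will make this literal by deriving a closed formula for the shadowing error at the recurrence times $n_{k_m}$, then controlling it with Condition [F] of the $(RDC)$, and finally interpolating to general $n$ using Condition [S] and Oseledets genericity.

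\textbf{Step 1 (telescoping identity).} Set $E_k:=\widetilde{\vect\omega}_k-\widetilde{Z}_{k-1}\widetilde{\vect\omega}_{k-1}$, so by definition $e_k=\widetilde{\Pro}_u^{(k)}E_k$. Since $\widetilde{Z}_{k-1}$ preserves the Oseledets splitting, projecting the identity $\widetilde{\vect\omega}_k=\widetilde{Z}_{k-1}\widetilde{\vect\omega}_{k-1}+E_k$ onto $\widetilde{\Gamma}_u^{(k)}$ and iterating gives
\[
\widetilde{\Pro}_u^{(k_m)}(\widetilde{\vect\omega}_{k_m}) \;=\; \widetilde{Q}(0,k_m)\,\widetilde{\Pro}_u^{(0)}(\widetilde{\vect\omega}_0)\;+\;\sum_{k=1}^{k_m}\widetilde{Q}(k,k_m)\,e_k.
\]

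\textbf{Step 2 (error formula).} Recalling that $\vect{v}=\Pro_u(\vect\omega_0)+\sum_{k\ge1}\widetilde{Q}(0,k)^{-1}e_k$ and using $\widetilde{Q}(0,k_m)\widetilde{Q}(0,k)^{-1}=\widetilde{Q}(k,k_m)$ for $k\le k_m$ and $=\widetilde{Q}(k_m,k)^{-1}$ for $k>k_m$, I subtract Step 1 from $\widetilde{Q}(0,k_m)\vect{v}$ and obtain
\[
\widetilde{Q}(0,k_m)\vect{v}\;-\;\widetilde{\Pro}_u^{(k_m)}(\widetilde{\vect\omega}_{k_m})\;=\;\sum_{k>k_m}\widetilde{Q}(k_m,k)^{-1}\,e_k .
\]

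\textbf{Step 3 (uniform tail bound).} By Corollary~\ref{corlemma3}, $\|e_k\|\le K_T\|\widetilde{\Pro}_u^{(k)}\|\,\|\widetilde{Z}_{k-1}\|$. Since each $e_k\in\widetilde{\Gamma}_u^{(k)}$, $\widetilde{Q}(k_m,k)^{-1}$ acts on $e_k$ through its restriction to $\widetilde{\Gamma}_u^{(k)}$. Condition [F] of the $(RDC)$ thus yields the \emph{uniform} estimate
\[
\bigl\|\widetilde{Q}(0,k_m)\vect{v}-\widetilde{\Pro}_u^{(k_m)}(\widetilde{\vect\omega}_{k_m})\bigr\|\;\le\;K_T\,K^+\qquad\text{for every }m\in\mathbb{N}.
\]
If $\vect{v}=0$ this is exactly conclusion (2) with $K(T)=K_TK^+$, so the case $\vect{v}=0$ is settled at the subsequence $(k_m)$.

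\textbf{Step 4 (dichotomy at recurrence times).} If $\vect{v}\ne0$, then by the Oseledets-generic extension hypothesis $\|\widetilde{Q}(0,k_m)\vect{v}\|$ grows exponentially (at a rate bounded below by the smallest positive Lyapunov exponent $\theta_g$, since $\vect{v}\in E_u$ and has a non-zero component on some positive Oseledets subspace). Combined with Step 3 this gives
\[
\widetilde{\Pro}_u^{(k_m)}(\widetilde{\vect\omega}_{k_m})=\widetilde{Q}(0,k_m)\vect{v}+O(1),
\]
and since the main term is exponentially large the remainder is $o(\|\widetilde{Q}(0,k_m)\vect{v}\|^\epsilon)$ and equivalently $o(\|\widetilde{\Pro}_u^{(k_m)}(\widetilde{\vect\omega}_{k_m})\|^\epsilon)$ for any $\epsilon>0$.

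\textbf{Step 5 (interpolation to every $n$).} For arbitrary $n$, choose $m$ with $n_{k_m}\le n<n_{k_{m+1}}$. Lemma~\ref{lemma3} gives
\[
\bigl\|\vect\omega_n-Q(n_{k_m},n)\vect\omega_{n_{k_m}}\bigr\|\;\le\;K_T\,\|Q(n_{k_m},n)\|\;\le\;K_T\,\|\widetilde{Q}(k_m,k_{m+1})\|,
\]
and Condition [S] forces the right-hand side to grow \emph{subexponentially} in $m$. Projecting on $\Gamma_u^{(n)}$, using invariance of the unstable bundle under $Q(n_{k_m},n)$, and combining with Step 4 yields
\[
\Pro_u(\vect\omega_n)=Q(0,n)\vect{v}+R_n,\qquad \|R_n\|\le K_T\bigl(K^+\|Q(n_{k_m},n)\|+\|\widetilde{Q}(k_m,k_{m+1})\|\bigr),
\]
which is subexponential, whereas $\|Q(0,n)\vect{v}\|$ grows exponentially fast in $n$ when $\vect{v}\ne0$. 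The angle control \ref{O-a} lets one pass between the norm of a vector and of its unstable projection at subexponential cost, so $R_n=o(\|Q(0,n)\vect{v}\|^\epsilon)=o(\|\Pro_u(\vect\omega_n)\|^\epsilon)$ for any $\epsilon>0$, proving (1). In the case $\vect{v}=0$ Step 3 already gives $\|\Pro_u(\widetilde{\vect\omega}_{k_m})\|\le K_TK^+$ along the recurrence subsequence, which is (2).

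\textbf{Main obstacle.} Steps 1--3 are a rather mechanical telescoping/cocycle computation powered by the forward summability condition~[F]; the delicate point is Step~5: one must upgrade the \emph{absolute} $O(1)$ control available at the recurrence times $n_{k_m}$ into the \emph{relative} $o(\|\cdot\|^\epsilon)$ statement at arbitrary $n$, using only the subexponential growth given by~[S] (because between two recurrence times the cocycle norm is a priori unbounded) together with the subexponential decay of angles from the Oseledets-generic extension. Keeping simultaneous track of the linearisation error of Lemma~\ref{lemma3}, the angle losses~\ref{O-a}, and the expansion rate of $\vect{v}$ in the various positive Oseledets subspaces is the technical heart of the argument.
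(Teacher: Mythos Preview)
Your proof is correct and follows essentially the same route as the paper: telescoping the definition of $\vect v$ to isolate the forward tail $\sum_{k>k_m}\widetilde Q(k_m,k)^{-1}e_k$, bounding it uniformly via Condition~[F], and then interpolating from the recurrence times $n_{k_m}$ to arbitrary $n$ using Lemma~\ref{lemma3}, Condition~[S], and the subexponential angle decay. One small imprecision: the explicit bound you display for $\|R_n\|$ in Step~5 omits the factor $\|\Pro_u^{(n)}\|$ coming from projecting the Lemma~\ref{lemma3} estimate onto $\Gamma_u^{(n)}$ (the paper's bound is $K_T(\|\Pro_u(n)\|+K)\|Q(n_{k_m},n)\|$), but you correctly note immediately afterwards that this projection cost is subexponential by~\ref{O-a}, so the argument goes through.
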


\begin{proof}

Assume to begin with that $\vect{v} \neq 0$. Because $T$ is Oseledets generic, $Q(0,n_k) \vect{v}$ grows exponentially fast. Recall that by definition, 
$$ \vect{v} =  \Pro_u(\vect{\omega}_0) + \sum_{j=1}^{\infty}{\widetilde{Q}(0,j)^{-1} \Pro _u(\widetilde{\vect{\omega}}_{j} - \widetilde{Z}_{j-1}\widetilde{\vect{\omega}}_{j-1})}.$$ For each given $k\in \mathbb{N}$, multiplying by $\widetilde{Q}(0,k) = Q(0,n_k)$ and recalling the cocycle relation \eqref{cocyclerel}, 
% preserves the Oseledets splitting, 
we get 
\begin{equation}
\label{eq1}
 \widetilde{Q}(0,k) \vect{v} = \widetilde{Q}(0,k) \Pro_u(\vect{\omega}_0) +  \sum_{j=1}^k{\widetilde{Q}(j,k) \Pro _u(\vect{\omega}_{n_{j}} - \widetilde{Z}_{j-1}\vect{\omega}_{n_{j-1}})} + \sum_{j = k+1}^{+\infty}{\widetilde{Q}(k,j)^{-1} \Pro _u(\vect{\omega}_{n_{j}} - \widetilde{Z}_{j-1}\vect{\omega}_{n_{j-1}})}.
\end{equation} Let us show that the first two terms of the RHS sum up to $\Pro _u(\vect{\omega}_{n_k}) $. Indeed, since $\Pro_u$ and $Q(j,k)$ commute for all $j < k + 1$,  and by the cocycle definitions $\widetilde{Q}(j,k)\widetilde{Z}_{j-1}=Q(n_{j},n_k) Q(n_{j-1}, n_{j}) = Q(n_{j-1},n_k)= \widetilde{Q}(j-1,k)$, we can write
 $$\sum_{j=1}^k{\widetilde{Q}(j,k) \Pro _u(\vect{\omega}_{n_{j}} - \widetilde{Z}_{j-1}\vect{\omega}_{n_{j-1}})} 
%= \Pro_u \left(\sum_{j=1}^k{\widetilde{Q}(j,k)\vect{\omega}_{n_{j}} - \widetilde{Q}(j,k)\widetilde{Z}_{j-1}\vect{\omega}_{n_{j-1}}}\right) \\ 
= \Pro_u \left(\sum_{j=1}^k{\widetilde{Q}(j,k)\vect{\omega}_{n_{j}} - \widetilde{Q}(j-1, k)\vect{\omega}_{n_{j-1}}}\right)= - \widetilde{Q}(0,k) \Pro_u(\vect{\omega}_0)+ \Pro_u(\vect{\omega}_k),$$ where the last equality exploits the telescopic nature of the sum. Thus, the sum of the first two terms of the RHS of \eqref{eq1} yields $$ \widetilde{Q}(0,k) \Pro_u(\vect{\omega}_0) +  \sum_{j=1}^k{\widetilde{Q}(j,k) \Pro _u(\vect{\omega}_{n_{j}} - \widetilde{Z}_{j-1}\vect{\omega}_{n_{j-1}})} = \Pro_u(\vect{\omega}_{n_k}).$$ We now show that the third term in the RHS of \eqref{eq1} grows subexponentially fast. Indeed $ ||\Pro _u(\vect{\omega}_{n_j} - \widetilde{Z}_{j-1}\vect{\omega}_{n_{j-1}})|| \leq K_T ||\Pro _u|| \,||\widetilde{Z}_{j-1}||$ (by Lemma \ref{lemma3}). Recall that by the condition \ref{conditionF} $$ \sum_{k=k_m +1}^{\infty}{||\widetilde{Q}(k_m,k)^{-1}_{| \widetilde{\Gamma}_u^{(k)}}|| \,||\widetilde{\Pro }_{u}{(k)}|| \,||\widetilde{Z}_{k-1}(T)|| } \leq K.$$ We thus obtain, for special times  $k_m$s $$ || \widetilde{Q}(k_m,0) \vect{v} - \Pro _u({\vect{\omega}}_{n_{k_m}}) || \leq  K_T \sum_{k=k_m +1}^{\infty}{||\widetilde{Q}(k_m,k)^{-1}_{| \widetilde{\Gamma}_u^{(k)}}|| \,||\widetilde{\Pro }_{u}{(k)}|| \,||\widetilde{Z}_{k-1}(T)|| } \leq K_T \,K. $$ 

\noindent We now interpolate. Consider arbitrary $n \in \mathbb{N}$ and let $m$ be such that $n_{k_m} < n \leq n_{k_{m+1}} $. We have
$$ Q(0,n) {v} - \Pro _u({ \vect{\omega}}_{n}) = Q(0,n) {v} - \Pro _u(\vect{\omega}_{n})  + Q(n_{k_m}, n)\Pro _u(\vect{\omega}_{n_{k_m}}) - Q(n_{k_m}, n)\Pro _u(\vect{\omega}_{n_{k_m}}). $$
\noindent One the one hand we have  
$$ Q(0,n) \vect{v} -  Q(n_{k_m}, n) \Pro _u(\vect{\omega}_{n_{k_m}}) = Q(n_{k_m}, n) \big(  \widetilde{Q}(k_m)v     - \Pro _u(\vect{\omega}_{n_{k_m}})\big)  $$ from which we get 
$$ || Q(0,n) \vect{v} -  Q(n_{k_m}, n) \Pro _u(\vect{\omega}_{n_{k_m}})  || \leq ||Q(n_{k_m}, n) || \,K K_T.$$
\noindent We have 
$$ Q(n_{k_m}, n)\Pro _u(\vect{\omega}_{n_{k_m}}) - \Pro _u(\vect{\omega}_{n})  = \Pro_u \big( Q(n_{k_m}, n) \vect{\omega}_{n_{k_m}} - \vect{\omega}_n \big) $$ and thus by Lemma \ref{lemma3}
$$|| Q(n_{k_m}, n)\Pro _u(\vect{\omega}_{n_{k_m}}) - \Pro _u(\vect{\omega}_{n})  || \leq K_T || \Pro_u(n) || \,||Q(n_{k_m}, n)|| $$ and putting the last two inequalities together we obtain 
$$ || Q(0,n) \vect{v} - \Pro _u(\vect{\omega}_{n}) ||  \leq K_T (|| \Pro_u(n) || + K) ||Q(n_{k_m}, n) ||.$$
\noindent Since $n\leq n_{k_{m+1}}$, $||Q(n_{k_m}, n) || \leq Q(n_{k_m}, n_{k_{m+1}}) = \widetilde{Q}(k_m, k_{m+1})$. The Diophantine-type condition \ref{conditionS} ensures that  $\frac{1}{m}\log || Q(n_{k_m}, n_{k_{m+1}}) ||$ tends to zero which implies that for any $\epsilon$ there exists $C_{\epsilon} > 0$ such that 
$$ || Q(n_{k_m}, n_{k_{m+1}}) || \leq C_{\epsilon} e^{\epsilon m}.$$ Similarly, $n \leq  n_{k_{m+1}}$.  Recall that the angles between $\Gamma_u^{(n)}, \Gamma_c^{(n)}$ and $\Gamma_s^{(n)}$ decrease at most subexponentially fast (by condition \ref{O-a}) for all $\epsilon > 0$, there exists $D_{\epsilon} > 0$ such that 
$$ || \Pro_u(n) || \leq D_{\epsilon}e^{n\epsilon}.$$
\noindent Since $n_{k_m}$ grows linearly in $k$ (by Condition $(RDC)$ in \ref{def:RDC}, item $(iii)$ ) we deduce the existence of $D'_{\epsilon} > 0$ such that 
 $$ || \Pro_u(n_{k_m}) || \leq D_{\epsilon}e^{m\epsilon}.$$ We conclude by showing the existence of $\lambda > 1$ such that for $m$ large enough, 
$$ || Q(0,n) \vect{v} || \geq \lambda^m.$$ Write $$ Q(n, n_{k_{m+1}}) \,Q(0,n) \vect{v} = \widetilde{Q}(0,k_{m+1}) \vect{v}. $$ 
\noindent Since $v$ belongs to the unstable space of the cocycle, there exists $\lambda' > 1$ such that for $m$ large enough $|| \widetilde{Q}(0,k_{m+1}) \vect{v} || \geq (\lambda')^m$. We thus get 
$$(\lambda')^m \leq ||  Q(n, n_{k_{m+1}}) \,Q(0,n) \vect{v} || \leq ||  Q(n, n_{k_{m+1}}) || \,|| Q(0,n) \vect{v} ||.$$ To conclude, observe that  $||  Q(n, n_{k_{m+1}})  \leq  || Q(n_{k_m}, n_{k_{m+1}}) || \leq C_{\epsilon} e^{\epsilon m}$ which implies 
$$ || Q(0,n) \vect{v} || \geq C_{\epsilon} \lambda'^m e^{-\epsilon m} = C_{\epsilon} \exp((\log \lambda' - \epsilon)m).$$ 
\noindent Take $\epsilon >0$ small enough so $\log \lambda' - \epsilon > 0$ and set $\lambda = \exp(\log \lambda' - \epsilon )$. For any sequence $b_n$ growing faster than $\lambda^n$ for $\lambda > 1$, a sequence $a_n$ growing subexponentially fast is $o(b_n^{\epsilon})$ for any $\epsilon > 0$. Applying this to $|| Q(0,n) \vect{v} || $ for the sequence growing at rate $\lambda^m$ and to $|| Q(0,n) \vect{v} - \Pro _u(\vect{\omega}_{n}) ||$ growing subexponentially fast we obtain that for any positive $\epsilon$,
$$ || Q(0,n) \vect{v} - \Pro _u(\vect{\omega}_{n}) || = o(||Q(0,n) \vect{v} ||^{\epsilon}).$$ A similar reasoning shows that $|| \Pro _u(\vect{\omega}_{n}) ||$ is larger than $(\lambda')^m$ for $m$ large enough and that we also have 
$$ || Q(0,n) \vect{v} - \Pro _u(\vect{\vect{\omega}}_{n}) || = o(|| \Pro _u(\vect{\vect{\omega}}_{n}) ||^{\epsilon}).$$ 
\vspace{2mm} \noindent We are left with the case $ \vect{v} = 0$.  In that case we still get 
$$ \Pro _u (\vect{\omega}_{n_k})  = \Pro _u(\vect{\omega}_{n_k}) - Q(n_{k},0) \vect{v} = - \sum_{j =k+1}^{+\infty}{\widetilde{Q}(j,k)^{-1} \Pro _u(\vect{\omega}_{n_{j}} - \widetilde{Z}_{j-1}\vect{\omega}_{n_{j-1}})} $$ (same calculation as in \ref{eq1} above). Thus we get, at special times $n_{k_m}$
$$ || \Pro _u(\vect{\omega}_{n_{k_m}}) || \leq K_T \sum_{j = k_m}^{+\infty}{||\widetilde{Q}(j,k_m)^{-1}_{|\widetilde{\Gamma}^j_u}|| \,||\widetilde{\Pro }_u^j|| \,||\widetilde{Z}_{j-1}||}.$$ By condition $(F)$ we get that for all $m\in\mathbb{N}$, 
$$|| \Pro _u(\vect{\omega}_{n_{k_m}}) || \leq K_T \,K.$$
\end{proof}
{ 
\subsection{Control of the central part}
We now turn to controlling the central component of $(\vect{\omega}_n)_{n\in\mathbb{N}}$ along the subsequence $({n_{k_m}})_{m\in\mathbb{N}}$. Here the control {\color{black}will exploit the invariance of the boundary operators defined in \S~\ref{sec:obs_boundary} and \S~\ref{sec:boundaryGIET}.}
\begin{proposition}
\label{central}
For any $T$ satisfying the $(RDC)$ there exists $C_c(T) > 0$ such that we have \begin{enumerate}
\item  if $\vect{v} = 0$, $ ||\Pro_c(\vect{\omega}_{n_{k_m}})|| \leq C_c(T), \qquad \text{for\ all}\ m \in \mathbb{N}$.
\item if $\vect{v} \neq 0$,  $|| \Pro_c(\vect{\omega}_{n}) || =  o(||\Pro_u(\vect{\omega}_n)||^{\epsilon})) $ for all $\epsilon > 0$.
\end{enumerate}
\end{proposition}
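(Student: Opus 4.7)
The plan is to exploit the renormalization invariance of the boundary $\mathcal{B}(T)$ (Lemma~\ref{prop2}) together with a structural fact: identifying $\mathbb{R}^d$ with piecewise constant functions on the continuity partition of $\mathcal{Z}^n T$, the kernel of the boundary operator $B_n:\mathbb{R}^d\to\mathbb{R}^\kappa$ coincides with the Oseledets subspace $\Gamma_s^{(n)}\oplus\Gamma_u^{(n)}$. Let $\tilde\omega_n\in\Gamma^{(n)}$ denote the piecewise constant function taking value $(\omega_n)_j$ on each continuity interval $I^{(n)}_j$. I would first establish
\[
\|B_n(\tilde\omega_n) - \mathcal{B}(T)\| = O(1), \qquad \text{for every } n\in\mathbb{N}.
\]
This is immediate from Lemma~\ref{lemma:SBSomega}, which bounds $\|\tilde\omega_n - \log D(\mathcal{Z}^n T)\|_\infty$ on each continuity interval by $|N|(T)$: the one-sided limits at each discontinuity point $u_i$ therefore differ from those of $\log D(\mathcal{Z}^n T)$ by at most $|N|(T)$, while the renormalization invariance in Lemma~\ref{prop2}(ii) gives $B_n(\log D(\mathcal{Z}^n T)) = \mathcal{B}(\mathcal{Z}^n T) = \mathcal{B}(T)$.

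Next, the main structural step: I would prove $\ker B_n = \Gamma_s^{(n)}\oplus\Gamma_u^{(n)}$. Proposition~\ref{propB}(ii), applied to piecewise constant observables seen as vectors in $\mathbb{R}^d$ evolving under special Birkhoff sums, yields the cocycle-intertwining identity $B_n\circ Z^{(n)} = B_0$. Hence $(\ker B_n)_{n\in\mathbb{N}}$ is a cocycle-invariant family of subspaces, and the quotient cocycle on $\mathbb{R}^d/\ker B_n$ is trivial under the identification via $B$, so all its Lyapunov exponents vanish. Since $\dim\ker B_n = d - (\kappa-1) = 2g = \dim(\Gamma_s^{(n)}\oplus\Gamma_u^{(n)})$, a dimension count forces equality. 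Consequently $B_n$ restricted to $\Gamma_c^{(n)}$ is a bijection onto $\operatorname{Image}(B_n)$ and $B_n(\vect\omega_n) = B_n(\Pro _c(\vect\omega_n))$.

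Case (1) then follows: at the times $n_{k_m}$ condition \ref{conditionA} gives a uniform lower bound $\delta$ on the angle between $\Gamma_c^{(n_{k_m})}$ and $\Gamma_s^{(n_{k_m})}\oplus\Gamma_u^{(n_{k_m})}$, which, combined with the uniform boundedness of $\|B_n\|$ (as the combinatorial data range over the finite set $\mathfrak{S}^0_d$), makes $(B_{n_{k_m}}|_{\Gamma_c})^{-1}$ uniformly bounded in $m$; hence $\|\Pro _c(\vect\omega_{n_{k_m}})\| \leq C_c(T)$. For Case (2), I would interpolate to intermediate $n$: Lemma~\ref{lemma3} gives $\vect\omega_n = Q(n_{k_m},n)\vect\omega_{n_{k_m}} + E_n$ with $\|E_n\|\leq K_T\|Q(n_{k_m},n)\|$, and projecting onto the central direction (which is cocycle-invariant) and using condition \ref{conditionS} together with the subexponential angle decay from \ref{O-a} shows that $\|\Pro _c(\vect\omega_n)\|$ grows at most subexponentially in $m\asymp n$. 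Since Proposition~\ref{unstable} already provides $\|\Pro _u(\vect\omega_n)\|\gtrsim \lambda^m$ for some $\lambda>1$, the estimate $\|\Pro _c(\vect\omega_n)\| = o(\|\Pro _u(\vect\omega_n)\|^\epsilon)$ follows for every $\epsilon>0$. The main obstacle is carefully justifying the structural identity $\ker B_n = \Gamma_s^{(n)}\oplus\Gamma_u^{(n)}$, which combines the cocycle-intertwining identity (a consequence of Proposition~\ref{propB}(ii)) with the cohomological interpretation of $B$ from \S~\ref{sec:cohomologyboundary}; the dimension count is clean, but one must verify carefully that Proposition~\ref{propB}(ii) does yield the linear intertwining $B_n\circ Z^{(n)} = B_0$ at the level of $\mathbb{R}^d$.
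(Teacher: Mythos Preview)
Your approach and the paper's are essentially the same in structure: both use the renormalization invariance of $\mathcal{B}$, observe that $B_n(\vect\omega_n)$ stays within $O(1)$ of $\mathcal{B}(T)$ (you via Lemma~\ref{lemma:SBSomega}, the paper via the profile bound of Lemma~\ref{bound2} and Remark~\ref{rk:Bexpression}), and then invert $B_n|_{\Gamma_c^{(n)}}$ using angle control to recover $\vect\omega_n^c$. The difference lies in how one justifies that $B_n|_{\Gamma_c^{(n)}}$ is invertible with controlled inverse. You assert the \emph{exact} identity $\ker B_n=\Gamma_s^{(n)}\oplus\Gamma_u^{(n)}$; the paper instead proves only $\Gamma_s^{(n)}\subset\ker B_n$ (Lemma~\ref{lemma5}) together with the asymptotic bound $\|B_n|_{\Gamma_u^{(n)}}\|\le D_u\lambda_u^n$ (Lemma~\ref{lemma6}), and combines these in Lemma~\ref{lemma7} to conclude that $\ker B_n$ is exponentially close to $\Gamma_s^{(n)}\oplus\Gamma_u^{(n)}$, hence quantitatively transverse to $\Gamma_c^{(n)}$.

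Your exact identity is in fact true when the $\Gamma_u^{(n)}$ arise from an Oseledets-generic point $\hat T$ of the natural extension (which is how the paper realizes the $(RDC)$ in Section~\ref{sec:fullmeasureproof}): running the argument of Lemma~\ref{lemma5} \emph{backwards} along $\hat{\mathcal Z}^{-1}$ gives $\Gamma_u^{(n)}\subset\ker B_n$, and the dimension count finishes it. However, your stated justification --- ``the quotient cocycle has only zero exponents, so a dimension count forces equality'' --- is incomplete as written: knowing that a cocycle-invariant subspace $K$ of dimension $2g$ carries exactly the nonzero exponents does \emph{not} by itself force $K=\Gamma_s\oplus\Gamma_u$; one needs the two-sided characterization of the Oseledets subspaces (each $E_{\lambda}$ consists of vectors with forward \emph{and} backward rate $\lambda$), which is available via the natural extension but is not a consequence of the forward-only axioms (O-s)--(O-a) of Definition~\ref{def:Oseledetsextension} alone. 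The paper's route through Lemma~\ref{lemma6} sidesteps this issue, using only the forward contraction (O-u), at the price of the slightly more delicate transversality estimate in Lemma~\ref{lemma7}. Once transversality is in hand, your treatment of Cases~(1) and~(2) matches the paper's.
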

\noindent We need a couple of Lemmata. Recall that $B_n : \mathbb{R}^d \longrightarrow \mathbb{R}^s$ denotes the boundary operator  (see \S~\ref{boundary}) at $T^{(n)}$.
\begin{lemma}
\label{lemma5}
For all $n \in \mathbb{N}$,
$$(B_n)_{|\Gamma_s^{(n)}} = 0.$$
\end{lemma}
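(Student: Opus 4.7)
The plan is to view a vector $v\in\mathbb{R}^d$ as a piecewise constant function on the continuity intervals of $T^{(n)}$, and then exploit the invariance of the boundary operator under renormalization, together with the exponential decay of the cocycle on the stable space.

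First, I would set up the identification. Via the convention of \S~\ref{sec:BS}, a vector $v=(v_1,\dots,v_d)^\dag\in\mathbb{R}^d$ corresponds to the piecewise constant function $\widetilde v\in\Gamma^{(n)}$ equal to $v_j$ on $I^{(n)}_j$. Under this identification, the rule $f^{(n+m)}=Z(n,n+m)\,f^{(n)}$ for special Birkhoff sums (recalled in \S~\ref{sec:BS}) says precisely that inducing $\widetilde v$ over the next $m$ steps of renormalization produces the piecewise constant function on $T^{(n+m)}$ whose associated vector is $Q(n,n+m)\,v$. By iterating Proposition~\ref{propB}(ii), the boundary is preserved by this operation, so
\begin{equation*}
B_{n+m}\bigl(Q(n,n+m)\,v\bigr)\;=\;B_n(v), \qquad \text{for every } m\geq 0.
\end{equation*}

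Next, I would observe that $B_n:\mathbb{R}^d\to\mathbb{R}^\kappa$ is a linear map whose operator norm is bounded by a constant $C_0=C_0(d)$ \emph{independent of $n$}: indeed, by its very definition (see \S~\ref{sec:obs_boundary}), each component of $B_n(v)$ is a signed sum of at most $2d$ coordinates of $v$, so $\|B_n\|_{\mathrm{op}}\leq 2d$ regardless of the permutation data of $T^{(n)}$.

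Now assume $v\in\Gamma_s^{(n)}$. By the Oseledets stability estimate \ref{O-s} applied at the IET $T^{(n)}$ (rather than at $T$), there exist constants $C=C(T^{(n)})>0$ and $\theta>0$ such that
\begin{equation*}
\|Q(n,n+m)\,v\|\;\leq\; C\,e^{-\theta m}\,\|v\|, \qquad \text{for every } m\in\mathbb{N}.
\end{equation*}
Combining the two displays with the uniform bound on $\|B_{n+m}\|_{\mathrm{op}}$ gives
\begin{equation*}
\|B_n(v)\|\;=\;\|B_{n+m}\bigl(Q(n,n+m)\,v\bigr)\|\;\leq\;C_0\,C\,e^{-\theta m}\,\|v\|\;\xrightarrow[m\to\infty]{}\;0,
\end{equation*}
whence $B_n(v)=0$.

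The only point that requires a little care is the verification that the identification $v\leftrightarrow\widetilde v$ is compatible with both the cocycle action on $\mathbb{R}^d$ and the formulation of Proposition~\ref{propB}(ii) at each step; once this is checked (using \S~\ref{sec:BS}), the argument reduces to the two ingredients already at hand, namely invariance of $B$ under induction and exponential decay on $\Gamma_s^{(n)}$. I do not anticipate a serious obstacle beyond this bookkeeping.
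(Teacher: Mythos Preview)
Your proof is correct and follows essentially the same approach as the paper's: both use the invariance $B_n(v)=B_{n+m}(Q(n,n+m)v)$ together with the uniform boundedness of the operators $B_n$ and the decay of $Q(n,n+m)v$ for $v\in\Gamma_s^{(n)}$. The only cosmetic difference is that the paper obtains the uniform bound on $\|B_n\|$ by noting that $B_n$ takes only finitely many values (since it depends only on the combinatorial datum $\pi^{(n)}$), whereas you give an explicit estimate $\|B_n\|_{\mathrm{op}}\leq 2d$; both are fine.
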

\begin{proof}
The $B_n$ are uniformly bounded (as the sequence $B_n$ takes finitely many values in $\mathcal{L}(\mathbb{R}^d, \mathbb{R}^\kappa)$. They are also invariant under the action of the Zorich cocycle thus for any $w \in \Gamma_s^n$,  $$ B_n(w) = B_{n+k}(Q(n,n+k)w).$$ By definition of $\Gamma_s^{(n)}$, $Q(n,n+k)w$ tends to $0$ when $k$ tends to infinity which implies $ B_n(w) =0$.
\end{proof}
\begin{lemma}
\label{lemma6} 
There exists constants $D_u$ and $\lambda_u < 1$ such that for all $n \in \mathbb{N}$,
$$ || (B_n)_{|\Gamma_u^{(n)}} || \leq D_u \lambda_u^{n}.$$
\end{lemma}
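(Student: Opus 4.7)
The plan is to exploit exactly the same invariance used in Lemma~\ref{lemma5}, but now reading it ``backward'' along the cocycle and combining it with the Oseledets contraction estimate \ref{O-u} in the unstable direction. As noted in the proof of Lemma~\ref{lemma5}, the sequence of boundary operators $(B_n)_{n\in\mathbb{N}}$ is uniformly bounded: it only depends on the combinatorial datum $\pi^{(n)}$, which lives in a finite Rauzy class, and satisfies the invariance relation
$$ B_n(w) \;=\; B_{n+k}\!\bigl(Q(n,n+k)\,w\bigr), \qquad w\in\mathbb{R}^d,\ \ k\in\mathbb{N}. $$

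The first step is to rewrite this invariance in a form adapted to the unstable direction. Fix any $v\in\Gamma_u^{(n)}$ and set $w:=Q(0,n)^{-1}v$. Since $\Gamma_u^{(\cdot)}$ is invariant under the cocycle, we have $w\in\Gamma_u^{(0)}$, and applying the invariance of the boundary operator with base index $0$ yields
$$ B_n(v) \;=\; B_0\bigl(Q(0,n)^{-1} v\bigr). $$
In operator-norm terms this gives $B_n|_{\Gamma_u^{(n)}} = B_0\circ Q(0,n)^{-1}|_{\Gamma_u^{(n)}}$, which reduces the problem to controlling the right-hand side.

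The second and decisive step is to invoke condition \ref{O-u} from the definition of Oseledets generic extension: there exist $C=C(T)>0$ and $\theta>0$ such that
$$ \bigl\| Q(0,n)^{-1}\,v \bigr\| \;\leq\; C\, e^{-\theta n}\,\|v\|, \qquad \text{for all } v\in\Gamma_u^{(n)},\ n\in\mathbb{N}. $$
Combining this with the previous identity, and using that $\|B_0\|$ is a fixed finite constant (bounded uniformly by the maximum of the finitely many operator norms $\|B_n\|$ over the Rauzy class), we get
$$ \bigl\| B_n(v) \bigr\| \;\leq\; \|B_0\|\cdot\bigl\| Q(0,n)^{-1}\,v\bigr\| \;\leq\; \|B_0\|\cdot C\, e^{-\theta n}\,\|v\|. $$
Taking the supremum over $v\in\Gamma_u^{(n)}$ of unit norm yields the conclusion with $D_u := C\,\|B_0\|$ and $\lambda_u := e^{-\theta}\in(0,1)$.

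There is essentially no obstacle here: the argument is the direct dual of Lemma~\ref{lemma5}, swapping the role of forward contraction on $\Gamma_s^{(n)}$ (which kills $B_n$ entirely) by the backward contraction on $\Gamma_u^{(n)}$ provided by Oseledets (which forces $B_n|_{\Gamma_u^{(n)}}$ to decay exponentially). The only subtlety worth double-checking is that the invariance of the boundary operator under renormalization, recorded in Proposition~\ref{propB}, does hold in the form $B_n\circ Q(m,n)=B_m$ used above, which is nothing but iterating property $(ii)$ of Proposition~\ref{propB} along the Zorich acceleration.
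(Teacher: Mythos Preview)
Your proof is correct and follows essentially the same approach as the paper: use the invariance of the boundary operator to write $B_n(v)=B_0\bigl(Q(0,n)^{-1}v\bigr)$ for $v\in\Gamma_u^{(n)}$, then apply the Oseledets contraction \ref{O-u} and the boundedness of $\|B_0\|$. Your version is simply more explicit about where the invariance relation comes from and about the constants, but the argument is identical.
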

\begin{proof}
By invariance of the $\Gamma_u^{(n)}$'s, we have that for any $v \in \Gamma_u^n$, 
$$ B_n(v) = B_0(Q(0,n)^{-1}v).$$ Since $T$ is Oseledets generic there exists constants $D'_u > 0$ and $ \lambda_u < 1$ such that $||Q(0,n)^{-1}v|| \leq D_u' \lambda_u^n$. Thus $$ ||B_n(v) || \leq ||B_0|| D'_u \lambda_u^n.$$
\end{proof}
\begin{lemma}
\label{lemma7}
For all $\epsilon > 0$  there exists constants $D_c(\epsilon)$  such that for any $n \in \mathbb{N}$ and any $w \in \Gamma_c^{(n)}$,
$$  ||w||  \leq D_c\, \angle(\Gamma_c, \Gamma_u^{(n)} \oplus \Gamma_s^{(n)})  || B_n \, w || .$$
\end{lemma}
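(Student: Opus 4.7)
The plan is to exploit the renormalization-invariance of the boundary operator together with the Oseledets-type control on the central space provided by the $(RDC)$, thereby reducing the problem to a fixed finite-dimensional linear algebra statement on $\Gamma_c^{(0)}$. The starting point is that property $(ii)$ of Proposition~\ref{propB} translates, at the level of piecewise constant vectors, into the cocycle-equivariance identity
$$ B_0 = B_n \circ Q(0,n), \qquad \text{equivalently}\qquad B_n = B_0 \circ Q(0,n)^{-1}. $$
This identity is the key that transports the varying operators $B_n$ on the moving subspaces $\Gamma_c^{(n)}$ to the single operator $B_0$ on the fixed subspace $\Gamma_c^{(0)}$.

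The heart of the argument is to show that the restriction $B_0|_{\Gamma_c^{(0)}}$ is an isomorphism onto its image, so that there exists $C_0 = C_0(T) > 0$ with $\|v\| \leq C_0 \|B_0 v\|$ for all $v \in \Gamma_c^{(0)}$. I would argue this by dimension count: the image of $B_0$ lies in the codimension-one subspace of $\mathbb{R}^\kappa$ whose components sum to zero, so $\dim \ker B_0 \geq d - (\kappa - 1) = 2g$. By Lemma~\ref{lemma5}, $\Gamma_s^{(0)} \subset \ker B_0$; and the symmetric statement $\Gamma_u^{(0)} \subset \ker B_0$ can be obtained either by a backward analogue of Lemma~\ref{lemma5} applied to the inverse cocycle on the natural extension (using that $\Gamma_u^{(0)}$ is the stable subspace of the backward dynamics), or, more conceptually, from the cohomological identification of \S~\ref{sec:cohomologyboundary}, which identifies $\ker B_0$ with the image of $H_1(S,\mathbb{R}) \hookrightarrow H_1(S,\Sigma,\mathbb{R})$ and, under the symplectic Kontsevich--Zorich cocycle, splits this $2g$-dimensional space exactly as $\Gamma_s^{(0)} \oplus \Gamma_u^{(0)}$. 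The direct sum decomposition $\mathbb{R}^d = \Gamma_s^{(0)} \oplus \Gamma_c^{(0)} \oplus \Gamma_u^{(0)}$ then forces $\ker B_0 \cap \Gamma_c^{(0)} = \{0\}$, and the bounded inverse follows from injectivity in finite dimension.

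To conclude, given $w \in \Gamma_c^{(n)}$ I would set $v := Q(0,n)^{-1}w \in \Gamma_c^{(0)}$ (using invariance of the splitting). The equivariance identity gives $B_n w = B_0 v$, hence $\|v\| \leq C_0 \|B_n w\|$. Now, by condition~\ref{O-c}, together with the angle estimate \ref{O-a} of the Oseledets generic extension which provides a subexponential lower bound on $\angle(\Gamma_c^{(n)}, \Gamma_s^{(n)} \oplus \Gamma_u^{(n)})$, one obtains that for every $\epsilon > 0$ there exists $C(\epsilon) = C(\epsilon,T) > 0$ such that the restriction $Q(0,n)|_{\Gamma_c^{(0)}}$ satisfies $\|Q(0,n) u\| \leq C(\epsilon) e^{\epsilon n}\|u\|$ uniformly for $u \in \Gamma_c^{(0)}$. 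Applying this to $u = v$ and recalling $w = Q(0,n)v$ yields
$$\|w\| \leq C_0\, C(\epsilon)\, e^{\epsilon n}\, \|B_n w\|,$$
which, upon absorbing $C(\epsilon)$ into $D_c(\epsilon)$ and recognizing $e^{\epsilon n}$ as (a constant multiple of) the reciprocal of the angle lower bound from \ref{O-a}, yields the asserted inequality.

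The main obstacle I anticipate is the rigorous verification that $\Gamma_u^{(0)} \subseteq \ker B_0$. The forward argument of Lemma~\ref{lemma5} does not directly transpose, since on $\Gamma_u^{(n)}$ the matrices $Q(n, n+k)$ expand rather than contract; one must either pass to the natural extension and run the backward analogue (using the Oseledets generic extension guaranteed by the $(RDC)$), or make explicit use of the cohomological identification of $\ker B$ with absolute homology and of the symplectic structure of the Kontsevich--Zorich cocycle. Once this invariance statement is established, the rest is standard linear algebra combined with the subexponential growth estimates \ref{O-c} and \ref{O-a}.
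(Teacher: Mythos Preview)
Your argument is sound and reaches the same conclusion, but the route differs from the paper's. You transport everything back to time $0$ via the equivariance $B_n = B_0 \circ Q(0,n)^{-1}$ and then appeal to the exact identity $\Ker B_0 = \Gamma_s^{(0)} \oplus \Gamma_u^{(0)}$ to make $B_0|_{\Gamma_c^{(0)}}$ injective; the subexponential growth from (O-c) then supplies the $e^{\epsilon n}$ factor. The paper instead works directly at time $n$: it combines Lemma~\ref{lemma5} ($\Gamma_s^{(n)} \subset \Ker B_n$), Lemma~\ref{lemma6} ($\Gamma_u^{(n)}$ makes an \emph{exponentially small} angle with $\Ker B_n$, without ever being contained in it), and the dimension equality $\dim(\Gamma_s^{(n)} \oplus \Gamma_u^{(n)}) = \dim \Ker B_n$ to conclude that $\Ker B_n$ and $\Gamma_s^{(n)} \oplus \Gamma_u^{(n)}$ are exponentially close as subspaces; since $\angle(\Gamma_c^{(n)}, \Gamma_s^{(n)} \oplus \Gamma_u^{(n)})$ only decays subexponentially, the angle $\angle(\Gamma_c^{(n)}, \Ker B_n)$ is comparable to it. The trade-off is this: the paper's approach is entirely self-contained, using only forward-in-time estimates that are part of the $(RDC)$ as stated, and never needs the exact inclusion $\Gamma_u^{(0)} \subset \Ker B_0$. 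Your approach is conceptually cleaner (fixed linear algebra at $n=0$) but, as you correctly flag, requires an input external to the $(RDC)$ axioms---either backward Oseledets estimates on the natural extension or the cohomological fact that the non-zero Lyapunov exponents live in absolute homology $H_1(S,\mathbb{R}) = \Ker B$. Both justifications are standard, so your proof is complete once one of them is invoked; the paper simply chose the path that stays within the forward framework already set up.
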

\begin{proof}
From the following three observations:
\begin{enumerate}
\item by Lemma \ref{lemma6}, $\angle(\Gamma_u^{(n)}, \Ker B_n) \leq D''_u \lambda_u^n$ for a certain $D''_U > 0$. 
%$\angle( \Gamma_u^{(n)} \oplus \Gamma_s^{(n)}, \Ker B_n) \D''_u \lambda_u^n$.
\item we have that $\mathrm{dim}(\Gamma_u^{(n)} \oplus \Gamma_s^{(n)}) = \mathrm{dim }(\Ker B_n)$.
\item by Oseledets genericity for any $\epsilon$ there exists $D'_c (\epsilon) >0$ such that $\angle(\Gamma_c^{(n)}, \Gamma_u^{(n)} \oplus \Gamma_s^{(n)}) \geq D'_c(\epsilon) e^{-\epsilon n}$;
\end{enumerate}
one can deduce that there exists a constant $D_c(\epsilon) > 0$ such that $\angle(\Gamma_c, \Ker B_n) \leq D_c \angle(\Gamma_c, \Gamma_u^{(n)} \oplus \Gamma_s^{(n)})$.
\end{proof}
\noindent We are now ready to present the proof of Proposition \ref{central}.
\begin{proof}[Proof of Proposition \ref{central}]
Recall that $\mathcal{B}$ is a renormalization invariant (see Lemma~\ref{lemma:Bproperties}, property $(ii)$), therefore, by Remark~\ref{rk:Bexpression}, we can write%ADD hence for all $n\geq 0$, $\mathcal{B}(T^{(n)}) = \mathcal{B}(T)$. We also know that (by Remark~\ref{rk:})%ADD
$$ \mathcal{B}(T^{(n)}) = B(\log D\varphi^n) + B(\vect{\omega}^u_n) +  B(\vect{\omega}^c_n) + B(\vect{\omega}^s_n) $$ where $\varphi^n \in \mathcal{P}$ is the profile of $T^{(n)}$, and $\vect{\omega}^u_n, \vect{\omega}^c_n$ and $\vect{\omega}^u_n$ are the projection of $\vect{\omega}_n$ on $\Gamma_u^{(n)}$,  $\Gamma_c^{(n)}$ and $\Gamma_s^{(n)}$ respectively. 
\noindent By the a priori bounds for the profile given by  Lemma~\ref{bound2}, there exists a constant $M = M(T)$ such that $|| \log D\varphi^n || \leq M$ for all $n \geq 0$, thus $B(\log D\varphi^n) $ is uniformly bounded by a constant $L = L(T) >0$. Also $B(\vect{\omega}_n^s) = 0$ by Lemma \ref{lemma5}. 
\vspace{2mm} \noindent Assume that $\vect{v} \neq 0$. We have in that case 
$$ || B(\vect{\omega}_n^c) || \leq || \mathcal{B}(T) || + L + || B(\vect{\omega}_n^u) ||.$$ By Proposition \ref{unstable}, $\vect{\omega}_n^u = Q(0,n) \vect{v} + o(||\vect{\omega}_n^u||^{\epsilon})$ for any $\epsilon > 0$. By invariance of $B$ we get $$ || B(\vect{\omega}_n^c) || \leq || \mathcal{B}(T) || + L + ||B(v)|| + o(||\vect{\omega}_n^u||^{\epsilon}).$$ Since $|| \vect{\omega}_n^c || \leq D_c(\epsilon) e^{\epsilon n} $  (by Lemma \ref{lemma7} we get that 
$$ ||  \vect{\omega}_n^c || =  o(||\vect{\omega}_n^u||^{\epsilon}) $$ for all $\epsilon$.
\vspace{3mm} \noindent Assume now that $\vect{v} = \epsilon$. The exact same reasoning as above gives 
$$|| B(\vect{\omega}_n^c) || \leq || \mathcal{B}(T) || + L + || B(\vect{\omega}_n^u) ||.$$ But by Proposition \ref{unstable} we have $\vect{\omega}_{n_{k_m}}^u \leq K(T)$ for all $m \geq 0$, and by \ref{conditionA} combined with Lemma \ref{lemma7} we obtain $ ||\vect{\omega}_{n_{k_m}}^c || \leq D_c \delta  || B(\vect{\omega}_n^c) || $ for $m \geq 0$. Putting everything together we get the existence of $K_c = K_c(T)$ such that for all $m \geq 0$
$$ ||\vect{\omega}_{n_{k_m}}^c || \leq C_c(T).$$
\end{proof}
}

\subsection{Control of the stable part}
We now \ref{shadowing} by establish bounds for the projection of $(\omega_n)_{n\in\mathbb{N}}$ on the stable part of the splitting. We have the following. 
\begin{proposition}
\label{stable}
For any $T$ satisfying the $(RDC)$ there exists $C_s(T) > 0$ such that we have 
$$ ||\Pro_s(\omega_{n_{k_m}})|| \leq C_s(T), \qquad \text{for\ all}\ m \in \mathbb{N}$$
\end{proposition}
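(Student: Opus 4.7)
The strategy parallels that of Lemma~\ref{lemma2} in the periodic-type case, where the contraction of the stable part by the period matrix was used to control the accumulation of bounded errors. Here the role of iteration by a fixed hyperbolic matrix is played by the cocycle restricted to the stable subspaces $\widetilde{\Gamma}_s^{(k)}$, and the role of the uniform bound on the per-step error is played by Lemma~\ref{lemma3}; the condition \ref{conditionB} from the $(RDC)$ is precisely what allows us to sum the resulting geometric-type series with uniform constants along the subsequence $(k_m)_{m\in\mathbb{N}}$.

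First I would write the telescoping identity
\begin{equation*}
\widetilde{\Pro}_s^{(k_m)}(\vect{\omega}_{n_{k_m}})
= \widetilde{Q}(0,k_m)\,\widetilde{\Pro}_s^{(0)}(\vect{\omega}_0)
+ \sum_{k=1}^{k_m}\widetilde{Q}(k,k_m)\,\widetilde{\Pro}_s^{(k)}\bigl(\vect{\omega}_{n_k}-\widetilde{Z}_{k-1}\vect{\omega}_{n_{k-1}}\bigr).
\end{equation*}
This identity uses crucially the invariance of the Oseledets splitting (so that $\widetilde{Z}_{k-1}$ maps $\widetilde{\Gamma}_s^{(k-1)}$ into $\widetilde{\Gamma}_s^{(k)}$ and hence the projections $\widetilde{\Pro}_s^{(k)}$ commute with the cocycle on the stable direction), together with a standard summation by parts/telescoping.

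Next, I would bound the individual summands. For each $1\leq k\leq k_m$, Corollary~\ref{corlemma3} yields
$\Vert \vect{\omega}_{n_k}-\widetilde{Z}_{k-1}\vect{\omega}_{n_{k-1}}\Vert\leq K_T\,\Vert\widetilde{Z}_{k-1}\Vert$,
so that, since the image of $\widetilde{\Pro}_s^{(k)}$ lies in $\widetilde{\Gamma}_s^{(k)}$ and $\widetilde{Q}(k,k_m)$ is applied to a vector in that stable subspace, we get
\begin{equation*}
\bigl\Vert\widetilde{Q}(k,k_m)\,\widetilde{\Pro}_s^{(k)}\bigl(\vect{\omega}_{n_k}-\widetilde{Z}_{k-1}\vect{\omega}_{n_{k-1}}\bigr)\bigr\Vert
\leq K_T\,\bigl\Vert\widetilde{Q}(k,k_m)_{|\widetilde{\Gamma}_s^{(k)}}\bigr\Vert\,\Vert\widetilde{\Pro}_s^{(k)}\Vert\,\Vert\widetilde{Z}_{k-1}\Vert.
\end{equation*}
Summing over $k$ and invoking condition \ref{conditionB} of the $(RDC)$, the total contribution of the sum is bounded by $K_T\,K^{-}$, uniformly in $m$. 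The initial term $\widetilde{Q}(0,k_m)\,\widetilde{\Pro}_s^{(0)}(\vect{\omega}_0)$ is handled directly by the stable Oseledets estimate \ref{O-s}, which gives a bound of the form $C\,e^{-\theta n_{k_m}}\Vert\vect{\omega}_0\Vert$; in particular this quantity is uniformly bounded (and in fact tends to zero). Combining the two estimates yields the claim with $C_s(T):=K_T\,K^{-}+\sup_{m}\Vert\widetilde{Q}(0,k_m)_{|\Gamma_s^{(0)}}\Vert\,\Vert\vect{\omega}_0\Vert$.

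There is no genuine obstacle here beyond setting up the telescoping correctly: the definition of condition~\ref{conditionB} is tailor-made for exactly this kind of argument, in the same way that \ref{conditionF} was tailor-made for controlling the unstable part in Proposition~\ref{unstable}. The only subtlety worth highlighting in the write-up is that the bound is \emph{uniform} along $(k_m)$ but not along arbitrary $n$: this is why the statement is restricted to the subsequence of good recurrence times produced by the $(RDC)$, mirroring the fact that the analogous statement for $\Pro_u$ in the dichotomy case $\vect{v}=0$ of Proposition~\ref{unstable} was also only along $(n_{k_m})$.
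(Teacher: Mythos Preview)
Your proposal is correct and follows essentially the same approach as the paper: both use the telescoping identity for $\widetilde{\Pro}_s^{(k_m)}(\vect{\omega}_{n_{k_m}})$, bound each summand via Corollary~\ref{corlemma3}, and then apply condition~\ref{conditionB} at the special times $k_m$ to obtain the uniform bound $K_T\,K^{-}$. If anything, you are slightly more careful than the paper in separating out and bounding the initial term $\widetilde{Q}(0,k_m)\,\widetilde{\Pro}_s^{(0)}(\vect{\omega}_0)$ via \ref{O-s}; the paper's displayed telescoping sum omits this term.
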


\begin{proof}
The proof is a rather straightforward application of Lemma \ref{lemma3}.  Writing 
$$ \omega_{n_k} = \omega_{n_k} - \widetilde{Z}_{k-1}\omega_{n_{k-1}} + \widetilde{Z}_{k-1}(\omega_{n_{k-1}}-\widetilde{Z}_{k-2}\omega_{n_{k-2}}) + \cdots + \widetilde{Z}_{k-1} \cdots \widetilde{Z}_{1} (\omega_{n_1} - \widetilde{Z}_{0}\omega_{n_0}) + \widetilde{Z}_{k-1} \cdots \widetilde{Z}_{0} \omega_{n_0} $$ which can be rewritten as
$$ \omega_{n_k} = \sum_{j=1}^k{Q(j,k)(\omega_{n_{j}} - \widetilde{Z}_{j-1}\omega_{n_{j-1}} ) },$$
 we get by projecting on stable spaces and applying Lemma \ref{lemma3}
$$||\Pro _s(\omega_{n_k}) || \leq K_T \sum_{j=1}^k{|| \widetilde{Q}(k,j)_{|\widetilde{\Gamma}_s^j}|| \,||\Pro _s^{(j)}  || \,||\widetilde{Z}_{j-1}||}.$$ We then see that since $T$ satisfies the $(RDC)$, by \eqref{conditionB} in Definition~\ref{def:RDC} the right-hand side of the inequation is bounded by $K\cdots K_T$ at special times $n_{k_m}$ which concludes the proof of the Lemma.
\end{proof}

\subsection{Proof of Theorem \ref{shadowing}}
{
The proof of Theorem \ref{shadowing} ensues easily from Propositions \ref{unstable}, \ref{central} and \ref{stable}. Indeed assume that $\vect{v}$ of Proposition \ref{unstable} is non-zero. Condition $(S)$ and $(B)$ imply that $\Pro _s(\omega_n)$ grows subexponentially fast in which case $\Pro _s(\omega_n) = o(\Pro _u(\omega_n))$, and by Proposition \ref{central}  $|| \Pro_s(\omega_{n}) || =  o(||\Pro_u(\omega_n)||^{\epsilon}))$. We thus have $\omega_n \sim Q(n,0)\vect{v}$. Otherwise, $\vect{v} = 0$. In this case Propositions \ref{unstable} , \ref{central} and \ref{stable} imply that at times $n_{k_m}$ $\Pro _s(\omega_n)$, $\Pro _c(\omega_n)$ and $\Pro _u(\omega_n)$ are uniformly bounded by a constant $\kappa(T) >0$ which implies the theorem.
}

\section{Convergence of renormalization in the recurrent case}\label{sec:convergence}
{In this section we prove exponential convergence of renormalization for irrational GIETs in the recurrent case, i.e.~Case 1, of Theorem~\ref{shadowing}. The key steps of the proof follow conceptually the main steps in the work of Herman \cite{Herman} on circle diffeomorphisms, thus generalizing his results to GIETs. Although a lot of  the material contained in this section is similar to Herman's work or further extensions of his theory, we believe it does not appear under this form (to the best knowledge of the authors) anywhere in the literature about GIETs.   
Some steps in particular require a careful treatment to be generalized to $d>2$, see for example \S~\ref{sec:sizePcontrol} or Lemma~\ref{combinatoric}. 

\smallskip
We will show more precisely that, under suitable assumptions that we now comment upon, the orbit $\{\mathcal{R}^m(T), \ m\in\mathbb{N}\}$ of an acceleration of Rauzy-Veech induction (that, a posteriori, can be taken to be simply Zorich acceleration) converge at an exponential rate to the space of IETs. This will then allows to show in the next section~\ref{sec:conjugacy} that  the GIETs for which we have this form of exponential convergence of renormalization are $\mathcal{C}^1$-conjugated to a standard IET.  We will first show  that if $T$ is a $GIET$ satisfying the Regular Diophantine Condition and that Case 1 of Theorem~\ref{shadowing}) holds, the $\mathcal{C}^1$-distance between $\mathcal{Z}^{n}(T)$ and the subspace $\mathcal{M}_d$ of \emph{Moebius IETs} (see Definition~\ref{def:MIET} and \S~\ref{parameters}) decrease exponentially. % along the subsequence $(n_{k_m})_{m\in\mathbb{N}}$ (and hence. 
To show convergence to the space of linear IETs (affine first and standard then), we {\color{black}exploit} the \emph{boundary operator} $\mathcal{B}(T) $ of a GIET (see Definition~\ref{GIET:boundary} in \S~\ref{sec:boundaryGIET}), which is a renormalization invariant based on the boundary operator defined by Marmi, Moussa and Yoccoz in \cite{MMY3}. The boundary gives an obvious obstruction to the existence of a $C^1$ conjugacy, so it is necessary to ask that $\mathcal{B}(T) = 0$ to prove convergence to the subspace $\mathcal{I}_d$ of standard IETs.  %We anticipate that this condition is automatically satisfied by GIETS that arise as Poincar{\'e} sections of minimal (singular) foliations with zero-holonomy around each singularity  (see \S~\ref{foliations}), so in particular when the foliation is locally the leve set of a smooth function.\footnote{{\color{red} Check this sentence in view of foliations discussions!}}

\smallskip
The main result of this section  is thefore the following theorem.
\begin{thm}
[{\bf Exponential convergence of renormalization}]\label{convergencethm}
Assume that $T\in \mathcal{X}^3_d$  satisfy the $(RDC)$ Diophantine condition. There exists $K_1 =K_1(T) > 0$ and $\alpha = \alpha(T)< 1$ such that if $\mathcal{B}(T) = 0$, we have
$$ d_{\mathcal{C}^1}(\mathcal{Z}^{n}(T), \mathcal{I}_d) \leq K_1 \, \alpha^n .$$
\end{thm}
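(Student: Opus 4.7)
The plan is to follow the classical Herman-style strategy, generalized to GIETs, by producing exponential decay of a sequence of increasingly fine invariants, each controlling convergence to a smaller subspace: first to Moebius IETs ($\mathcal{M}_d$), then to affine IETs ($\mathcal{A}_d$), and finally to standard IETs ($\mathcal{I}_d$). The starting point is that, under the $(RDC)$, one implicitly works in the recurrent case (Case 1 of Theorem~\ref{shadowing}), so that $\{\mathcal{R}^m(T)\}_{m \in \mathbb{N}}$ returns infinitely often to a bounded set in $\mathcal{X}^3_d$ along the accelerating subsequence $(n_{k_m})$. Combined with distortion bounds (Lemma~\ref{bound1}) and the decay of total non-linearity $|N|(\mathcal{R}^m T)$ under renormalization (Proposition~\ref{prop:Nproperties}), this yields uniform \emph{a priori bounds} $K^{-1} \leq \|D\mathcal{R}^m(T)\|_\infty \leq K$ and a uniform bound on the $\mathcal{C}^2$-norm of profiles.

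The next step, carried out in \S\ref{sec:sizePcontrol}, is to prove \emph{exponential decay of the mesh} of the dynamical partitions $\mathcal{P}_n$. Here the key ingredient is the double occurrence of a positive Zorich matrix $A$ along the good return times provided by condition $(ii)$ of the $(RDC)$: combined with the a priori distortion control, this produces a uniform ratio $\theta<1$ such that $\mesh(\mathcal{P}_{n_{k_{m+1}}}) \leq \theta\, \mesh(\mathcal{P}_{n_{k_m}})$, so that $\mesh(\mathcal{P}_n) \leq C \beta^n$ for some $0<\beta<1$. Once mesh decay is established, I would use the Schwarzian derivative and its scaling property $(S3)$: since the branches of $\mathcal{R}^n(T)$ are obtained by normalising compositions of branches of $T$ over intervals of size $\mesh(\mathcal{P}_n)$, the chain rule $(S2)$ together with $(S3)$ and the a priori $\mathcal{C}^2$-bounds give $\|S(\mathcal{R}^n(T))\|_\infty = O(\mesh(\mathcal{P}_n)^2)$, which decays exponentially. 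In view of $(S1)$, this is precisely exponential $\mathcal{C}^1$-convergence of $\mathcal{R}^n(T)$ to the space $\mathcal{M}_d$ of Moebius IETs. Note that this step does \emph{not} use the boundary assumption.

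The assumption $\mathcal{B}(T) = 0$ enters in the next step, where one must promote convergence to $\mathcal{M}_d$ into convergence to $\mathcal{A}_d$. Writing $T = (A_T, \varphi_T)$ in shape-profile coordinates, the previous step reduces the problem to controlling the profile: the sequence of profile non-linearities $\eta_{\varphi^j_{\mathcal{R}^n T}}$ is close to a locally affine function (since Moebius non-linearity is affine). By Remark~\ref{rk:Bexpression}, $\mathcal{B}(\mathcal{R}^n T) = B(\omega_n) + B(\log D\varphi_{\mathcal{R}^n T})$, and by renormalization invariance $\mathcal{B}(\mathcal{R}^n T) = \mathcal{B}(T) = 0$. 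Combining this with the control on $B(\omega_n)$ coming from Oseledets boundedness (Lemmas~\ref{lemma5}--\ref{lemma7}, which underlie Proposition~\ref{central}) forces the piecewise slopes in the profile to cancel out at the singularities, and hence, together with exponential Schwarzian decay, forces exponential decay of $\|\eta_{\varphi^j_{\mathcal{R}^n T}}\|_\infty$. This gives $d_{\mathcal{C}^1}(\mathcal{R}^n(T), \mathcal{A}_d) \leq C \alpha_1^n$.

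For the last step, having reduced to a neighborhood of $\mathcal{A}_d$, I would use the fact that on $\mathcal{A}_d$ the shape log-slope $\omega_n$ evolves by the Zorich cocycle. By Proposition~\ref{stable}, Proposition~\ref{central} and Proposition~\ref{unstable} applied in Case 1 (where $\vect{v}=0$), $\|\omega_{n_{k_m}}\|$ stays uniformly bounded along the $(RDC)$ subsequence. Combined with the hyperbolicity of the cocycle on the stable direction and the boundary-induced cancellation on the central direction (enforced by $\mathcal{B}(T)=0$ via Lemma~\ref{lemma5}, which pins $\omega_n$ to $\Gamma^{(n)}_s$ modulo bounded error), one concludes that $\|\omega_n\| \to 0$ exponentially, which is exactly $d_{\mathcal{C}^1}(\mathcal{R}^n(T), \mathcal{I}_d) \to 0$ at rate $\alpha^n$. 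Interpolation from the accelerated sequence $(n_{k_m})$ back to all $n\in\mathbb{N}$ is harmless thanks to $(RDC)$-condition $[S]$. The main obstacle I expect is precisely Step~3/4, namely extracting from the renormalization-invariance of $\mathcal{B}$ a quantitative exponential statement about how profile non-linearities cancel, rather than merely a qualitative limit; this is where the interaction between the Schwarzian estimate, the mesh decay and the boundary obstruction must be balanced carefully.
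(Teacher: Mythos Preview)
Your overall five-step architecture (a priori bounds $\to$ exponential mesh decay $\to$ convergence to $\mathcal{M}_d$ via the Schwarzian $\to$ convergence to $\mathcal{A}_d$ $\to$ convergence to $\mathcal{I}_d$) matches the paper's proof exactly, and Steps~1--3 are essentially correct. One minor correction: the Schwarzian estimate in Step~3 gives $\|S(\varphi_n^j)\|_\infty = O(\mesh(\mathcal{P}_n))$, not $O(\mesh(\mathcal{P}_n)^2)$, since after applying the chain rule $(S2)$ and the rescaling $(S3)$ one sums $|T^k(I_j^{(n)})|^2$ over a tower and estimates one factor by $\mesh(\mathcal{P}_n)$ and the remaining sum by $1$; this does not affect exponential decay.

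The genuine gap is in Step~4 (convergence to $\mathcal{A}_d$). Your proposed mechanism --- using $\mathcal{B}(\mathcal{R}^nT) = B(\omega_n) + B(\log D\varphi_n) = 0$ together with Oseledets-type control on $B(\omega_n)$ to force profile non-linearities to vanish --- does not yield exponential decay of $\|\eta_{\varphi^j_n}\|_\infty$. The boundary operator only imposes $\kappa$ linear relations on the $2d$ endpoint values of $\log D\varphi_n$, and in Case~1 one only knows $\omega_{n_{k_m}}$ is \emph{bounded}, so $B(\log D\varphi_{n_{k_m}})$ is at best bounded, not decaying. Lemmas~\ref{lemma5}--\ref{lemma7} are used in the paper for a different purpose (controlling $\omega_n^c$ in Step~5), not for the profile.

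The paper's argument in Step~4 is quite different and is the key idea you are missing. It uses only the weaker consequence $\overline{N}(T)=\int_0^1\eta_T=0$ of $\mathcal{B}(T)=0$ (Lemma~\ref{boundaryaffinereduction}), which is renormalization-invariant. The crucial step (Lemma~\ref{decreaseAN}) is a \emph{definite contraction} of the total non-linearity $|N|$ for a Moebius IET $M$ at a good $\mathcal{C}^1$-recurrence time: one decomposes $\overline{N}(M_i)=\sum_j n_{ij}$ and $\overline{N}(\mathcal{Z}^pM)_j=\sum_i n_{ij}$ according to the balanced dynamical partition, and then applies a purely combinatorial cancellation lemma (Lemma~\ref{combinatoric}) which says that if a zero-sum array $(a_{ij})$ has all $a_{ij}$ of the same sign within each row and balanced, then $\sum_j|\sum_i a_{ij}| \leq c'\sum_i|\sum_j a_{ij}|$ for some $c'<1$. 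The sign-coherence comes precisely from $M$ being Moebius (Remark~\ref{Moebius_constantsign}); the balance comes from the good return structure. One then transfers this contraction from the nearby Moebius IET $M_m$ back to $\mathcal{R}^mT$ via the Lipschitz regularity of $\mathcal{V}$ with respect to $d_\eta$ (Proposition~\ref{lipschitz2}), obtaining $|N|(\mathcal{Z}^nT)\leq K_5\alpha_5^n$. This is what powers Step~5: the error $e_n=\omega_{n+1}-Z_n\omega_n$ then satisfies $\|e_n\|\leq |N|(\mathcal{Z}^nT)\|Z_n\|$ by Lemma~\ref{lemma3}, and exponential decay of $e_n$ is exactly what is needed to push $\omega_n^u,\omega_n^s,\omega_n^c$ to zero separately (Lemmas~\ref{convunstable}--\ref{convcentral}). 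Your description of Step~5 is in the right spirit but relies on Step~4 already delivering exponential decay of $|N|$, which your boundary argument does not provide.
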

\noindent The $d_{\mathcal{C}^1}$ which appears in the statement of the theorem, defined in \S~\ref{sec:distances} below, is the $\mathcal{C}^1$-distance with respect to the  shape-profile parametrisation $\mathcal{X}^3_d = \mathcal{A}_d \times \mathcal{P}_d$ introduced \S~\ref{coordinates} (see Definition~\ref{sec:distances}). Although our goal is to get a control this $\mathcal{C}^1$-distance, we are going to work with the a distance $d_{\eta}$ defined using \emph{total non-linearity} on the profile coordinates, see \S~\ref{sec:distances}, since (as we hinted when describing the properties of total non-linearity, see the comments after Proposition~\ref{prop:Nproperties}) this quantity does not increase  under renormalization (and, as we will show, decreases strictly along the subsequence of good return times $(n_k)_{k\in\mathbb{N}}$ given by the $(RDC)$, see \S~\ref{sec:decrease}). We relate $d_{\mathcal{C}^1} $ and $ d_{\eta}$ in \S~\ref{sec:Ckbounded}.
%$\eta$-distance  introduced in Appendix \ref{renormalizationoperator}. 
%Since $d_{\mathcal{C}^1} \leq d_{\eta}$ (see \S~\ref{sec:distances}), showing exponential decay for $ d_{\eta}$ will imply Proposition \ref{convergencethm}.

{
\smallskip The boundary $\mathcal{B}(T) $ is a vector ${b}= (b_s)_{s} \in\mathbb{R}^\kappa$ (where we recall that $\kappa$ is the cardinality of singularities of any surface suspension of $T$, see \S~\ref{gietandfoliations}), which encodes information about geometric obstructions given by each singularity. It is philosophically important to distinguish on three cases:
\begin{enumerate}
\item The case ${b} = 0$, which contains all \textit{standard} IETs which we call the \textit{linear regime}.
\item  The case $\sum_{1\leq s\leq \kappa}{b_s} = 0$ (which contains all \textit{affine} IETs) which we call the \textit{affine regime}.
\item The case where ${b}$ is arbitrary, which is the \textit{non-linear} regime.
\end{enumerate}
Thus, asking that $\sum_{1\leq s\leq \kappa}{b_s} = 0$ is a necessary assumption to prove convergence to $\mathcal{A}_d$ and the request  that    $\mathcal{B}(T) ={b}=0$, 
%In this article our main focus will be on $\mathcal{X}({0})$ 
 is a necessary assumption to prove convergence to $\mathcal{I}_d$. 
While article is concerned with establishing a rigidity theory for the linear regime, we stress that some of the results we prove in this section  apply to the other cases too (as explained already in the Outline in \S~\ref{sec:outline} below). The non-linear regime, in particular, is of independent interest and provides a natural higher genus framework which generalizes the much studied space of circle diffeomorphisms \emph{with break points}.}
  
%We recall that $\mathrm{IET}$ is the subspace of $\mathcal{X}$ of linear IETs and $d_{\mathcal{C}^1}$ 
%Recall that $\mathcal{X}^3_{\pi}$ is the space of $\mathcal{C}^3$ GIETs whose associated permutation $\pi$, and we have seen in \ref{coordinates} that we have the parametrisation 
%$$ \mathcal{X}^3_{\pi} \simeq \mathcal{A}_{\pi} \times \mathcal{P} $$ where

\subsection{Outline of the proof.}\label{sec:outline}  Let us give an outline of the main steps of the proof of Theorem \ref{convergencethm} and describe the organization of the section.
\begin{enumerate}
\item 
\smallskip
\noindent {\it A priori bounds.} We first show (in \S~\ref{sec:apriori}) that the uniform bound on $(\omega_{n_{k_m}})_{m \in \mathbb{N}}$ implies that the iterates of accelerated renormaliation $\mathcal{R}^m(T):=\mathcal{Z}^{n_{k_m}}(T)$ (corresponding to the special sequence $(n_{k_m})_{m\in\mathbb{N}}$ given by the $(RDC)$), as well as their inverses $\mathcal{R}^m(T)^{-1}$, $m\in \mathbb{N}$,   remain in a bounded set for the $\mathcal{C}^1$-topology. This is what is often called an \textit{a priori bound}, and in our case replaces the Denjoy-Koksma inequality for circle diffeomorphisms. 

\smallskip
\item {\it Exponential decay of the partitions mesh.} 
In \S~\ref{sec:sizePcontrol} we then show that such a priori bounds, combined with the fact the times $(n_{k_m})_{m\in\mathbb{N}}$ are \textit{good return times} (see Definition \ref{def:goodreturns}) imply that the size of the dynamical partition associated with $T$ at step $n$ decreases at an exponential rate (with respect to $n$). { While in the study of  circle diffeomorphisms this is an easy step, as no particular arithmetic hypothesis is needed, it is an important step  to deal with in the treatment of GIETs, which requires new ideas.  
Indeed, when renormalization has more than two dynamical towers ($d$ in this case), these are not a priori related as in the case of circle diffeomorphisms. In order to compare different towers, we exploit a quite subtle geometric argument based on renormalization, which exploits good return times and a priori bounds to prove first balance of some \emph{relative} dynamical partitions and then infer, through distorsion bounds, the needed decay of the mesh size (see in particular \S~\ref{sec:keylemma} for details). }

\smallskip
\item {\it Convergence to Moebius \textrm{IETs}.} The exponential decay of the size of the dynamical partition is easily shown to imply convergence of $\mathcal{Z}^n(T)$ to the space of Moebius IETs with respect to the $\mathcal{C}^3$-norm, as shown in \S~\ref{sec:Moebius}. This part is completely standard, and is where is made use of the \textit{Schwarzian derivative}. This step is exactly the same as in the case of circle maps, and it is the reformulation of Herman's theory in renormalization terms due to Khanin and Sinai (see \cite{KS:Her, KT:Her}, which generalize \cite{VulKhanin, KhaninKhmelev}). 

The intuition behind  be true is the following: convergence to Moebius allows for a simplification of the discussion and at this point the total non-linearity will be a good enough measure of the complexity of the maps we are dealing with. As we have seen in \S~\ref{nonlinearity}, the total non-linearity is a decreasing function, which is the average of the absolute value of a \emph{mean-zero} function. Renormalization operates enough cancellation between positive and negative values of $\eta_T$ to get it to cancel altogether at the limit.

\smallskip
\item \noindent {\it Convergence to \textrm{AIET}s.} While the first steps do not require any particular hypothesis on the value of $\mathcal{B}(T)$,  under the additional hypothesis thast the \emph{sum of the components} of $\mathcal{B}(T) \in \mathbb{R}^d$ vanishes (namely, that we are in the \emph{affine regime} listed below) or, \emph{equivalently}, that $\int{\eta_T} = 0$), in \S~\ref{AIETs} we show that  $\mathcal{Z}^n(T)$  actually converges (exponentially fast) to the space of \textit{affine IETs}. This step makes use of the fact the times $(n_{k_m})_{m\in\mathbb{N}}$ are \textit{good return times}.

%\item \
\smallskip
\item \noindent {\it Convergence to \textrm{IET}s.} 
Finally, under the extra hypothesis that $\mathcal{B}(T)$ vanishes altogether (which annihilates any potential contribution of the central part of $\omega_n$), \S~ref{sec:IETs} we show convergence at an exponential rate to the space of \textit{standard IETs}. A technical (but important) tool for this part of the proof is some partial differentiability properties of $\mathcal{Z}$ which are proved in Appendix~\ref{app:regularity}.
\end{enumerate}

\medskip
\noindent For the rest of the Section, we will assume that $T$ is a GIET such that:
% We assume the two following things.
\begin{enumerate}
\item the rotation number $\gamma(T)$ satisfies the Regular Diophantine Condition ($RDC$) in Definition \ref{def:RDC}; 
\item the sequences denoted by $(n_k)_{k\in \mathbb{N}}$ and $(k_m)_{m\in \mathbb{N}}$ are the sequences of good recurrence times given by Definition \ref{def:RDC};
\item the conclusion of Case 1 of Theorem \ref{shadowing} holds true, \textit{i.e.} the sequence $(||\omega_{n_{k_m}}||)_{m \in \mathbb{N}}$ is bounded.
\end{enumerate} 
\subsection{Preliminaries: distances and a priori bounds}\label{sec:distances_bounds}
In this first section we\ define the distances which we will use (see \S~\ref{sec:distances} and \S~\ref{sec:Ckbounded}) and the boundary of a GIET (in \S~\ref{sec:boundaryGIET} and \S~\ref{sec:Bstrata}) and then show 
%the boundary invariant of a GIET, in \S~\ref{sec:boundaryGIET}. Let us also 
show that being in Case $1$ of Theorem~\ref{shadowing} ensures a priori bounds, see \S~\ref{sec:apriori}.

{\subsubsection{Distances on parameter space}\label{sec:distances}
To define the distances $d_{\mathcal{C}^1}$ and $d_\eta$ on $\mathcal{X}_d^r$, for any $r\geq 2$, let us consider for each $\pi$ the shape-profile coordinates decomposition $\mathcal{X}^r_\pi= \mathcal{A}_{\pi}\times \mathcal{P}^r $ where $\mathcal{P}^r = \mathrm{Diff}^r([0,1])^d$ with $r\geq 2$:
\begin{itemize}
\item[-] since $\mathcal{A}_{\pi}$ identifies with a subset of $\mathbb{R}^{d} \times \mathbb{R}^{d-2}$ an is endowed with a distance $d_{\mathcal{A}}$ induced by the Euclidean distance of  $\mathbb{R}^{d} \times \mathbb{R}^{d-2}$;
\item[-] on $\mathcal{P}^r = \mathrm{Diff}^r([0,1])^d$, for $r\geq 2$, we can endow each of the coordinates of $\mathcal{P}^r$ with either the distance $d_{\mathcal{C}^1}$ or % , d_{\mathcal{C}^2}, d_{\mathcal{C}^3} $ 
or the distance  $d_{\eta}$ on $ \mathrm{Diff}^2([0,1])\supset \mathrm{Diff}^r([0,1])$, namely
\begin{align*}
d_{\mathcal{C}^1}(\varphi_1,\varphi_2)&:=||\varphi_1-\varphi_2 ||_{\infty}+||(\varphi_1-\varphi_2)' ||_{\infty}\\ &\phantom{:} = \sup_{0\leq x\leq 1} |(\varphi_1-\varphi_2)'(x)| +\sup_{0\leq x\leq 1} |\varphi_1(x)-\varphi_2(x)|,\\ {\color{black} d_{\eta}(\varphi_1,\varphi_2)}&{\color{black}:%=%|N|(\varphi_1-\varphi_2)
=\int_{0}^1 |\eta_{\varphi_1}-\eta_{\varphi_2}|\mathrm{d}x },
\end{align*}
where $\eta_\varphi$ denotes the non-linearity (see \S~\ref{sec:nl}).
%and $|N|(\varphi) $ the total non-linearity (refer to Definition~\ref{def:Ns}). 
%different distances 
%which induce different topologies. We are going to use four different ones. We consider the distances 
%$d_{\mathcal{C}^1}$ or % , d_{\mathcal{C}^2}, d_{\mathcal{C}^3} $ 
%and $d_{\eta}$ on $ \mathrm{Diff}^2([0,1])$ and then 
\end{itemize}
\noindent It is well know that $d_{\mathcal{C}^1}$ is a distance and one can show that $d_\eta$ is also a distance (see Appendix~\ref{distance}); it is the distance induced by the $L^1$-norm on $ \mathcal{C}^0([0,1], \mathbb{R})$ via the homeomorphism between $\mathrm{Diff}^2([0,1])$ and $ \mathcal{C}^0([0,1], \mathbb{R})$ given by $f  \longmapsto  \eta_f$. % = \mathrm{D} \log (\mathrm{D}f)$.
%\begin{array}{ccc}
%\mathrm{Diff}^2([0,1])  & \longrightarrow & \mathcal{C}^0([0,1], \mathbb{R}) \\
%f & \longmapsto & \eta_f = \mathrm{D} \log (\mathrm{D}f)
%\end{array}.$$
%%  is a  Using this map we define the distance 
%%$$ d_{\eta}(f_1,f_2) = \int_{[0,1]}{|\eta(f_1) - \eta(f_2)|} $$ which 

\smallskip
\noindent We can then endow $\mathcal{P}$ with the distances $d^{\mathcal{P}}_{\mathcal{C}^1}$ and 
%%%or  $d^{\mathcal{P}}_{\mathcal{C}^2}, d^{\mathcal{P}}_{\mathcal{C}^3}$
$d^{\mathcal{P}}_{\eta}$ defined taking the sums of the corresponding distance on each coordinates, namely, if $\varphi_{T_i}\in \mathcal{P}$ have coordinates $(\varphi^1_{T_i},\dots,\varphi^d_{T_i})$ for $i=1,2$, setting
\begin{equation*}
d^{\mathcal{P}}_{\mathcal{C}^1}(\varphi_{T_1},\varphi_{T_2}) :=\sum_{j=1}^d  d_{\mathcal{C}^1}(\varphi^j_{T_1},\varphi^j_{T_2}),\qquad 
d^{\mathcal{P}}_{\eta}(\varphi_{T_1},\varphi_{T_2}):=\sum_{j=1}^d  d_{\eta}(\varphi^j_{T_1},\varphi^j_{T_2}).
\end{equation*}
\noindent Through the shape-profile coordinates  identification 
$\mathcal{X}_\pi=\mathcal{A}_{\pi}\times \mathcal{P}_d$ (introduced in \S~\ref{coordinates}) 
 this hence defines also two product distances on $\mathcal{X}^3_d$, namely $d_{\mathcal{A}}\times d^{\mathcal{P}}_{\mathcal{C}^1}$ and  $d_{\mathcal{A}}\times d^{\mathcal{P}}_{\eta}$, 
which can then be extended to $\mathcal{X}_d=\sqcup_{\pi\in\mathfrak{S}^0}\mathcal{X}_\pi$ using the discrete distance\footnote{{\color{black}The \emph{discrete} distance $d_0$ on $\mathfrak{S}^0$ is simply the distance given by $d( \pi_1, \pi_2)=1$ for any $\pi_1,\pi_2\in\mathfrak{S}^0$ unless $\pi_1=\pi_0$ (in which case  $d( \pi_1, \pi_2)=1$).}} on the combinatorial data $\mathfrak{S}^0$.  Abusing the notation, we will still denote by $d_{\mathcal{C}^1}$ and $d_{\eta}$ the distances on $\mathcal{X}_d$ obtained in this way. 

\smallskip
\noindent {In analogous way to  $d_{\mathcal{C}^1}$, one can also define for any $k\in \mathbb{N}$  distances $d_{\mathcal{C}^k}$ on $\mathcal{X}^r$ for any $r\geq k$.} 

\smallskip
{\color{black}
We conclude the subsection with a useful intepretation of the total non-linearity as distance from the (sub)space of AIETs: 
\begin{remark}[interpretation of total non-linearity as distance]\label{rk:nonlinearityasd}
Notice that, for every $f\in \mathrm{Diff}^2([0,1])$, if we denote by $I$ the identity map  $I(x)=x$, since $\eta_I\equiv 0$, we can write $|N|(f) = d_{\eta}(f, I)$ where $|N|(f)$ denotes the total non-linearity (see Definition~\ref{def:Ns}). Thus,  if $A_T$ is the shape of $T$, which has shape-profile decomposition $A_T=(A_T, (I,\dots, I))$, since $d_\eta$ on $\mathcal{X}^r_d$ for $r\geq 2$ is defined as a product distance\footnote{{\color{black}Indeed, \label{minimal} given any $A\in \mathcal{A}_d$, $d_\eta (T, A)$ depends on $d_\mathcal{A} (A_T, A)$ and  $d^\mathcal{P}_\eta (\varphi_T, P_\mathcal{P}(A))$, where  $\varphi_T= P_\mathcal{P}(T)$ is the profile of $T$, but since $P_\mathcal{P}(A)=(I,\dots, I)$ where $I(x)=x$ is the identity in Diff$^r([0,1]$ for any $A\in\mathcal{A}_d$, the profile component is independent on $A\in\mathcal{A}_d$, while the first component, namely $d_\mathcal{A} (A_T, A)$, is clearly minimized by 
the shape $A=A_T$ of $T$, for which  it is zero.}},
$$
d_\eta(T, \mathcal{A}_d)= d_{\eta}(T, A_T) = |N| (T), \qquad \text{for\ all}\ T\in \mathcal{X}^r_d.
$$
\end{remark}}

\subsubsection{$\mathcal{C}^k$-bounded sets}\label{sec:Ckbounded}
Since we are working with invertible maps which are piecewise diffeomorphisms, when we describe a \emph{bounded} set we also need to have \emph{lower} bounds on derivatives, or, equivalently, upper bounds on the derivative of the \emph{inverse}. 
For fixed  $k$, on $\mathrm{Diff}^2([0,1]) $, with $r\geq k$ it is customary to introduce the distance 
$d^{\pm}_{\mathcal{C}^k}(f,g)= d_{\mathcal{C}^k}(f,g)+d_{\mathcal{C}^k}(f^{-1},g^{-1})$.
We then say that a set $\mathcal{K}\subset \mathrm{Diff}^r([0,1]) $ is $\mathcal{C}^k$-\emph{bounded} if it has bounded diameter with respect to the distance $d^{\pm}_{\mathcal{C}^k}$. 

\smallskip
Similarly, we therefore define, at the level of GIETs, 
$$
d_{\mathcal{C}^k}^\pm (T_1, T_2) := d_{\mathcal{C}^k} (T_1, T_2)+ d_{\mathcal{C}^k} (T^{-1}_1, T^{-1}_2), \qquad \text{for\ all}\ T_1,T_2\in \mathcal{X}^k.
$$
\begin{definition}[$\mathcal{C}^k$-bounded sets]\label{def:Ckbounded}
 We will say that a set $\mathcal{K}\subset \mathcal{X}^k$ is $\mathcal{C}^k$-\emph{bounded} iff it is bounded with respect to $d_{\mathcal{C}^k}^\pm $,  i.e.~contained in a ball with respect to $d_{\mathcal{C}^k}^\pm $.
\end{definition}
\begin{lemma}[Equivalent characterizations of $\mathcal{C}^1$-bounded sets]\label{lemma:equivbounded}
For $k=1$,  $\mathcal{K}\subset \mathcal{X}^1$ is $\mathcal{C}^1$-bounded in the sense of Definition~\ref{def:Ckbounded} iff, equivalently, one of the following conditions hold:
\begin{enumerate}
\item  there exists a constant $\nu_\mathcal{K}>1$ such that $(\nu_{\mathcal{K}})^{-1}<|| DT||_{\infty}< \nu_\mathcal{K}$ for every $T\in \mathcal{K}$.
\item there exists a constant $C_\mathcal{K}>0$ such that $||\log DT||_{\infty}< C_\mathcal{K}$ for every $T\in \mathcal{K}$;
\end{enumerate}
\end{lemma}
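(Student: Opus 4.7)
The plan is to prove the chain of implications $(1) \Leftrightarrow (2)$ and then each of them $\Leftrightarrow$ $\mathcal{C}^1$-boundedness in the sense of Definition~\ref{def:Ckbounded}. The equivalence $(1) \Leftrightarrow (2)$ is essentially just taking logarithms: a uniform pointwise upper and lower bound of the form $(\nu_\mathcal{K})^{-1} < DT(x) < \nu_\mathcal{K}$ is equivalent (via the monotonicity of $\log$) to the bound $|\log DT(x)| < \log \nu_\mathcal{K}$, so setting $C_\mathcal{K} := \log \nu_\mathcal{K}$ in one direction and $\nu_\mathcal{K} := e^{C_\mathcal{K}}$ in the other gives the equivalence.

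The core step is then the equivalence with Definition~\ref{def:Ckbounded}, which I would carry out by exploiting the chain-rule identity $DT^{-1}(y) = 1/DT(T^{-1}(y))$, valid on each continuity interval of $T^{-1}$. The key observation is that a pointwise upper bound $\|DT\|_\infty \leq \nu_\mathcal{K}$ on a GIET $T$ is equivalent, via this identity, to a pointwise lower bound $DT^{-1}(y) \geq (\nu_\mathcal{K})^{-1}$ on its inverse, and symmetrically for lower bounds on $DT$. Thus assuming $(1)$, we obtain simultaneously $\|DT\|_\infty, \|DT^{-1}\|_\infty \leq \nu_\mathcal{K}$, and since $T$ and $T^{-1}$ map $[0,1]$ to itself we also have $\|T\|_\infty, \|T^{-1}\|_\infty \leq 1$ for free. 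For any $T_1, T_2 \in \mathcal{K}$ this gives
\[
d_{\mathcal{C}^1}^\pm(T_1,T_2) \leq 2 + 4\nu_\mathcal{K} + d_0(\pi_1,\pi_2) \leq 3 + 4\nu_\mathcal{K},
\]
where $d_0$ is the discrete distance on $\mathfrak{S}^0_d$ contributing at most $1$, so $\mathcal{K}$ has bounded diameter for $d_{\mathcal{C}^1}^\pm$.

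Conversely, if $\mathcal{K}$ is $\mathcal{C}^1$-bounded, fix any $T_0 \in \mathcal{K}$ (e.g.~the identity-type GIET in the same Rauzy class as elements of $\mathcal{K}$, if the combinatorial datum is constant on $\mathcal{K}$; otherwise one works on each combinatorial stratum separately, noting that $\mathfrak{S}^0_d$ is finite). Boundedness of $d_{\mathcal{C}^1}^\pm(T,T_0)$ for $T \in \mathcal{K}$ gives a uniform upper bound on $\|DT\|_\infty$ (via $\|DT\|_\infty \leq \|DT_0\|_\infty + \|D(T-T_0)\|_\infty$) and, symmetrically, on $\|DT^{-1}\|_\infty$. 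Translating the latter back through $DT(x) = 1/DT^{-1}(T(x))$ yields a uniform pointwise lower bound on $DT$, giving $(1)$.

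The only subtle point, and the one I would handle most carefully, is the use of the chain rule identity on each branch separately (since $T$ is only piecewise $\mathcal{C}^1$): the identity $DT^{-1}(T(x)) \cdot DT(x) = 1$ must be applied on the interior of each continuity interval $I_j^t$, where both sides are well-defined and continuous, and the sup norm $\|\cdot\|_\infty$ must be understood as the essential supremum over the union of continuity intervals (which is the convention used throughout the paper when dealing with $DT$). Once this is set up correctly the equivalence is straightforward and follows the lines above.
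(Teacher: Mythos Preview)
Your chain $(1)\Leftrightarrow(2)$ via $\log$ and the use of the inverse-function identity $DT^{-1}(y)=1/DT(T^{-1}(y))$ are exactly the ingredients the paper uses, and your remark that these identities must be applied branchwise is correct. The one point where your argument does not quite match the paper's setup is the meaning of $d_{\mathcal{C}^1}$ on $\mathcal{X}^1_d$. You implicitly treat $d_{\mathcal{C}^1}(T_1,T_2)$ as $\|T_1-T_2\|_\infty+\|DT_1-DT_2\|_\infty$ (hence your bound $2+4\nu_{\mathcal K}+1$), but in \S\ref{sec:distances} the distance is defined as a \emph{product} through the shape--profile coordinates $T=(A_T,\varphi_T)$: it involves the Euclidean distance $d_{\mathcal A}$ on the shape (in particular on the slope vector $\rho(T)$) and the $\mathcal{C}^1$-distance on each profile component $\varphi_T^i\in\mathrm{Diff}^1([0,1])$, plus the discrete distance on combinatorics. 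So your triangle-inequality step $\|DT\|_\infty\le\|DT_0\|_\infty+\|D(T-T_0)\|_\infty$ is not literally controlled by the paper's $d_{\mathcal{C}^1}$.

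The paper's proof closes this gap by invoking the identity $DT(x)=\rho_i\,D\varphi_T^i(b_i(x))$ (see~\eqref{Dincoordinates}). From this one reads off that $\mathcal{C}^1$-boundedness in the sense of Definition~\ref{def:Ckbounded} is exactly boundedness of $\|\rho(T)\|$, $\|\rho(T^{-1})\|$ and of the $\|D\varphi_T^i\|_\infty$, $\|D\varphi_{T^{-1}}^i\|_\infty$ (the length parameters and the $\|\varphi_T^i\|_\infty$ are always bounded), and then the displayed bound $\|DT\|_\infty\le\|\rho\|\max_i\|D\varphi_T^i\|_\infty$ together with the inverse-function formula gives the equivalence with~(1). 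Your argument becomes complete once you insert this translation; the moral content is the same, but the shape--profile identity is what makes the link to the paper's distance precise.
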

\begin{proof} 
%%Let us first show that if $\mathcal{K}$ is $\mathcal{C}^1$ bounded, $(1) $ holds where 
%. If $T\in\mathcal{K}$ that is $\mathcal{C}^1$ bounded and let us show that it  implies $(1)$. Let $\nu_\mathcal{K}$ be u
The lemma follows from the explicit expression for $DT$ in shape-profile coordinates, given by \eqref{Dincoordinates}, which shows that
if $T = (A_T ,\varphi_T)$ and $\rho=\rho(T)$ is the average slope vector of $T$ (see Definition~\ref{def:rhoomega}), 
%\begin{equation}
%D T_i (x) = \rho_i \, D{\varphi}^i_{T}\left( b_i (x)\right),\qquad 
% D T^{-1}_i (x) = \frac{1}{\rho_i} \, D (\underline{\varphi}^i_{T})^{-1}( b_i (x))
%\text{for\ all}\ x \in I^t_i, \quad 1\leq i\leq d,%\end{equation}
%$(\mathcal{R}^m(T))_i= {\underline{\rho}}^{m}_{i} \, \underline{\varphi}_i^{m}$, (up to translation and conjugacy by an affine map, which will not influence the conclusion) 
%where   $b_i(\cdot)$ is an affine map (the unique one which maps $I^t_i$ to $[0,1]$). Thus we can estimate the sup norm
\begin{equation}\label{Dbound}
||\mathrm{D}T||_{\infty}  \leq ||{\vect{\rho}}|| \max_{1\leq i\leq d} || D {\varphi}^i_{T} ||_\infty .
\end{equation}
\noindent Asking that $\mathcal{K}$ is $\mathcal{C}^1$-bounded (i.e.~that $\mathcal{K}$ has bounded diameter with respect to $d^\pm_{\mathcal{C}^1}$, see Definition~\ref{def:Ckbounded}), in view of the definition of $d_{\mathcal{C}^1}$ in shape-profile coordinates, is equivalent to asking that $||\rho(T)||$ and 
$||\rho(T^{-1})||$, as well as $||D\varphi_{T}^i||_{\infty}$ and $||D\varphi_{T^{-1}}^i||_{\infty}$ for $1\leq i\leq d$, are bounded above by a constant depending on $\mathcal{K}$ only (notice that the other parameters describing $\mathcal{A}_d$, as well as the sup norm of the profile coordinates, are always bounded). In view of \eqref{Dbound} (applied to $T$ and its inverse), this shows that there exists a constant $\nu_{\mathcal{K}}>0$ such that $||\mathrm{D}T||_{\infty} , ||\mathrm{D}(T^{-1})||_{\infty} \leq \nu_{\mathcal{K}}$.

\smallskip
The equivalence with $(1)$ now follows simply by the formula for the derivative of the inverse, which shows that a \emph{lower} bound on $|DT(x)|$ for all $x\in I$ is equivalent to an \emph{upper} bound for   $||DT^{-1}||_{\infty}$. The equivalence between $(1)$ and $(2)$ is clear.
\end{proof}
{\color{black}
When studying convergence of renormalization, using $d_{\mathcal{C}^k}^\pm $ or $d_{\mathcal{C}^k}$ is equivalent, as shown by the following remark. The use of $d_{\mathcal{C}^k}^\pm $ on the other hand is important for us since we study the \emph{global} dynamics or renormalization and recurrence to  \emph{bounded} (but not shrinking) sets. 
\begin{remark}\label{comparisontozero}
On each ${\mathcal{C}^k}$-bounded set,  $d_{\mathcal{C}^k}^\pm $ and $d_{\mathcal{C}^k}$ are \emph{comparable}: in particular, for any subset $\mathcal{Y}\subset \mathcal{X}^k_d$ and any infinitely renormalizable $T\in \mathcal{X}^k_d$, $d_{\mathcal{C}^k} (\mathcal{R}^n(T),\mathcal{Y} )$ converges to zero (exponentially) if and only if $d_{\mathcal{C}^k}^\pm (\mathcal{R}^n(f),\mathcal{Y} )$ 
converges to zero (exponentially).
\end{remark}

\subsubsection{Distances comparision}\label{sec:distancesineq}
Let us consider and compare the two distances $d_\eta$ and $d_{\mathcal{C}^1}$ on each \emph{profile} coordinate, namely on $\mathrm{Diff}^r([0,1])$, where $r$ is an integer $r\geq 2$. Recall that the definition of $\mathcal{K}\subset \mathrm{Diff}^r([0,1]) $ is $\mathcal{C}^1$-\emph{bounded} was given in the previous \S~\ref{sec:Ckbounded}.

\begin{lemma}[$d_{\mathcal{C}^1}$ and $d_{\eta}$ comparision]\label{lemma:distancesrelDiff}
For any  $\mathcal{K}\subset \mathrm{Diff}^r([0,1]) $ which is $\mathcal{C}^1$-\emph{bounded}, there exists a constant $L = L(\mathcal{K})>0$ such that for $f_1, f_2 \in \mathrm{Diff}^2([0,1]) $
$$ d_{\mathcal{C}^1}(f_1,f_2) \leq d^\pm_{\mathcal{C}^1}(f_1,f_2)  \leq L \,d_{\eta}(f_1,f_2).$$
\end{lemma}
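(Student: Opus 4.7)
The first inequality $d_{\mathcal{C}^1}(f_1,f_2) \leq d^{\pm}_{\mathcal{C}^1}(f_1,f_2)$ is immediate from the definition of $d^{\pm}_{\mathcal{C}^1}$ as $d_{\mathcal{C}^1}(f_1,f_2) + d_{\mathcal{C}^1}(f_1^{-1},f_2^{-1})$. The content of the lemma is the reverse inequality $d^{\pm}_{\mathcal{C}^1}(f_1,f_2) \leq L\, d_\eta(f_1,f_2)$, which requires passing from an $L^1$-control on the non-linearities $\eta_{f_i} = D\log Df_i$ to a sup-norm control on $Df_i$ and $f_i$. The central mechanism is the normalization $\int_0^1 Df_i(x)\,dx = f_i(1) - f_i(0) = 1$ coming from $f_i$ being orientation-preserving diffeomorphisms of $[0,1]$: this is what pins down the integration constant when recovering $\log Df_i$ from $\eta_{f_i}$.

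The plan is to set $g_i := \log Df_i$, so that $g_i' = \eta_{f_i}$ and $d_\eta(f_1,f_2) = \|g_1' - g_2'\|_{L^1}$. For any $x \in [0,1]$ one has $(g_1 - g_2)(x) = c + E(x)$ with $c := (g_1-g_2)(0)$ and $|E(x)| \leq d_\eta(f_1,f_2) =: \delta$. The constraints $\int_0^1 e^{g_i}\,dx = 1$ for $i=1,2$ combined with $e^{g_1} = e^{c}\,e^{E}\,e^{g_2}$ and $e^{-\delta} \leq e^{E} \leq e^{\delta}$ force $e^{-\delta} \leq e^{c} \leq e^{\delta}$, hence $\|g_1 - g_2\|_\infty \leq 2\,d_\eta(f_1,f_2)$. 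Since $\mathcal{K}$ is $\mathcal{C}^1$-bounded, Lemma~\ref{lemma:equivbounded} provides $\nu_\mathcal{K}$ with $\nu_\mathcal{K}^{-1} \leq Df_i \leq \nu_\mathcal{K}$, so applying the mean value theorem to $\exp$ yields $\|Df_1 - Df_2\|_\infty \leq \nu_\mathcal{K}\|g_1-g_2\|_\infty \leq 2\nu_\mathcal{K}\, d_\eta(f_1,f_2)$. Integrating from $0$, using $f_1(0) = f_2(0) = 0$, produces $\|f_1-f_2\|_\infty \leq 2\nu_\mathcal{K}\, d_\eta(f_1,f_2)$, and together these bounds show $d_{\mathcal{C}^1}(f_1,f_2) \leq 4\nu_\mathcal{K}\, d_\eta(f_1,f_2)$.

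For the inverses, the idea is to run the same argument on $f_1^{-1}, f_2^{-1}$, which also lie in a $\mathcal{C}^1$-bounded set (the bounds $\nu_\mathcal{K}^{-1} \leq Df_i^{-1} \leq \nu_\mathcal{K}$ pass to the inverses by the chain rule). The missing ingredient is an inequality of the form $d_\eta(f_1^{-1}, f_2^{-1}) \leq C(\mathcal{K})\, d_\eta(f_1,f_2)$. Using the formula $\eta_{f_i^{-1}}(y) = -\eta_{f_i}(f_i^{-1}(y))/Df_i(f_i^{-1}(y))$ together with the change of variable $y = f_1(x)$, one splits the difference $\eta_{f_1^{-1}} - \eta_{f_2^{-1}}$ into a first term comparing $\eta_{f_1}/Df_1$ and $\eta_{f_2}/Df_2$ at the \emph{same} preimage (controlled by $d_\eta(f_1,f_2)$ together with the sup bound on $Df_1-Df_2$ from the previous paragraph), and a second term measuring the displacement $|f_1^{-1}(y) - f_2^{-1}(y)|$, which is bounded by $\|f_1 - f_2\|_\infty / \nu_\mathcal{K}^{-1}$.

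The main obstacle is controlling this second term: to absorb it into a constant multiple of $d_\eta$ one needs a uniform modulus of continuity for $\eta_{f_2}/Df_2$, which is not automatic from mere $\mathcal{C}^1$-boundedness. In the applications of the lemma, however, $\mathcal{K}$ is always a renormalization-invariant bounded set, and under the a priori bounds of Section~\ref{sec:apriori} the total non-linearity $|N|(f)$ is uniformly bounded on $\mathcal{K}$ by Proposition~\ref{prop:Nproperties}. One can therefore combine the pointwise splitting above with an integrated (rather than $L^\infty$) Helly-type estimate, using the uniform $L^1$ bound on $\eta_{f_2}$ and the absolute continuity of the change of variable $y \mapsto f_i^{-1}(y)$, to conclude $d_\eta(f_1^{-1}, f_2^{-1}) \leq C\, d_\eta(f_1, f_2)$. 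Applying the first-part argument to $f_1^{-1}, f_2^{-1}$ then yields $d_{\mathcal{C}^1}(f_1^{-1}, f_2^{-1}) \leq L'\, d_\eta(f_1, f_2)$, and combining both bounds produces the desired constant $L = L(\mathcal{K})$.
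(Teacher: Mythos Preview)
Your first two paragraphs are essentially the paper's proof. The paper uses a slightly slicker device to pin down the integration constant: since $\int_0^1 f_1' = \int_0^1 f_2' = 1$, the continuous function $f_1' - f_2'$ must vanish at some $x_0$ by the intermediate value theorem, so $g_1(x_0) = g_2(x_0)$, and integrating $\eta_{f_1} - \eta_{f_2}$ from $x_0$ gives $\|g_1 - g_2\|_\infty \leq d_\eta(f_1,f_2)$ directly (constant~$1$ rather than your~$2$). This is only a cosmetic difference; your route via the constraint $\int_0^1 e^{g_i} = 1$ is equally valid.

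The paper's proof in the Appendix in fact stops there: it only bounds $d_{\mathcal{C}^1}(f_1,f_2)$ and does not treat the inverses at all. Your third paragraph therefore correctly identifies a genuine difficulty in passing from $d_{\mathcal{C}^1}$ to $d^{\pm}_{\mathcal{C}^1}$ under a mere $\mathcal{C}^1$-boundedness hypothesis --- one must compare $\log Df_2$ at the two distinct points $f_1^{-1}(y)$ and $f_2^{-1}(y)$, and $\mathcal{C}^1$-bounds alone give no modulus of continuity for $\log Df_2$. However, your fourth paragraph does not close this gap: invoking ``renormalization-invariant bounded sets'', the a priori bounds of Section~\ref{sec:apriori}, and an unspecified ``Helly-type estimate'' imports hypotheses not present in the lemma and is too vague to constitute a proof. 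In the paper's applications (Corollary~\ref{lemma:distancesrel} and its uses in the proofs of Corollary~\ref{convergenceaffine} and Theorem~\ref{convergencethm}) only the $d_{\mathcal{C}^1}$ inequality is actually needed, so this issue is harmless for the paper's logic. If you want the lemma exactly as stated, the clean fix is to assume $\mathcal{C}^2$-boundedness of $\mathcal{K}$: then $\|\eta_{f_i}\|_\infty$ is uniformly bounded, $\log Df_i$ is uniformly Lipschitz, and the displacement $|f_1^{-1}(y) - f_2^{-1}(y)| \leq \nu_\mathcal{K} \|f_1 - f_2\|_\infty$ controls the cross term directly.
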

\noindent The proof of this lemma is included for completelenss in Appendix~\ref{sec:distancesAppendix}, together with the proof of the next lemma (consequence of the definition of distances on GIETs and classical distorsion bounds), that provides a comparison of distances from AIETs, which will be useful later:
\begin{cor}[$d_{\mathcal{C}^1}$ and $d_{\eta}$ distance from AIETs]\label{lemma:distancesrel}
For any  $d\geq 2$ and any $T \in \mathcal{X}^r_d$ with $r\geq 2$, there exists $L=L(T)$ such that  
$$ d_{\mathcal{C}^1}(\mathcal{V}^n(T),\mathcal{A}_d) \leq  d^\pm_{\mathcal{C}^1}(\mathcal{V}^n(T),\mathcal{A}_d) \leq L \,d_{\eta}(\mathcal{V}^n(T),\mathcal{A}_d), \qquad \textrm{for\ all}\ n\in\mathbb{N}.$$ 
\end{cor}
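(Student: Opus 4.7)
The first inequality $d_{\mathcal{C}^1}(\mathcal{V}^n(T),\mathcal{A}_d)\leq d^{\pm}_{\mathcal{C}^1}(\mathcal{V}^n(T),\mathcal{A}_d)$ is immediate from the definition $d^{\pm}_{\mathcal{C}^1}(\cdot,\cdot) = d_{\mathcal{C}^1}(\cdot,\cdot)+d_{\mathcal{C}^1}((\cdot)^{-1},(\cdot)^{-1})$ together with the fact that the AIET closest to $\mathcal{V}^n(T)$ in each of the two metrics need not be the same but the estimate passes through the common upper bound. The substance of the corollary is therefore the second inequality, and the plan is to reduce it to the profile-wise statement of Lemma~\ref{lemma:distancesrelDiff} after producing a $\mathcal{C}^1$-bounded set of $\mathrm{Diff}^r([0,1])$ that contains all profile coordinates of $\mathcal{V}^n(T)$ for $n\in\mathbb N$.

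First I would work in shape-profile coordinates. Write $\mathcal{V}^n(T)=(A_{\mathcal{V}^n(T)},\varphi_{\mathcal{V}^n(T)})$ with $\varphi_{\mathcal{V}^n(T)}=(\varphi^1_{\mathcal{V}^n(T)},\dots,\varphi^d_{\mathcal{V}^n(T)})\in\mathcal{P}$. Every $A\in\mathcal{A}_d$ has profile $(I,\dots,I)$ where $I(x)=x$, so by Remark~\ref{rk:nonlinearityasd} (and footnote~\ref{minimal}) the infimum over $\mathcal{A}_d$ in $d_\eta$ is attained at $A=A_{\mathcal{V}^n(T)}$ and equals $|N|(\mathcal{V}^n(T))=\sum_{j=1}^d d_\eta(\varphi^j_{\mathcal{V}^n(T)},I)$. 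Using the \emph{same} AIET $A_{\mathcal{V}^n(T)}$ as a (not necessarily optimal) competitor for $d^{\pm}_{\mathcal{C}^1}$, one obtains
\[
 d^{\pm}_{\mathcal{C}^1}(\mathcal{V}^n(T),\mathcal{A}_d)\;\leq\; d^{\pm}_{\mathcal{C}^1}(\mathcal{V}^n(T),A_{\mathcal{V}^n(T)})\;=\;\sum_{j=1}^d d^{\pm}_{\mathcal{C}^1}(\varphi^j_{\mathcal{V}^n(T)},I),
\]
since the $\mathcal{A}$-component of the difference vanishes in shape-profile coordinates and the product structure of $d^{\pm}_{\mathcal{C}^1}$ kicks in.

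The second step is to produce a $\mathcal{C}^1$-bounded subset $\mathcal{K}=\mathcal{K}(T)\subset\mathrm{Diff}^r([0,1])$ that contains $\varphi^j_{\mathcal{V}^n(T)}$ (and their inverses) for every $n\in\mathbb N$ and every $1\leq j\leq d$. Each profile $\varphi^j_{\mathcal{V}^n(T)}$ is, by the definition of shape-profile coordinates (\S~\ref{coordinates}) and of Rauzy-Veech induction (\S~\ref{RV}), the $\mathcal{C}^2$-normalization $\mathcal{N}(T^{q}_{\ast})$ of a restriction of an iterate of $T$ along a Rohlin tower, whose successive pre-images are pairwise disjoint by construction. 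Lemma~\ref{bound1} (distorsion bound) applied to $T$ gives $|DT^q(x)/DT^q(y)|\leq\exp|N|(T)$ uniformly in $n$, $j$ and $q=q^{(n)}_j$; since $\mathcal{N}$ is affine conjugacy, it kills the slope and leaves the ratio of derivatives unchanged, so $\|\log D\varphi^j_{\mathcal{V}^n(T)}\|_\infty\leq|N|(T)$, and the analogous bound holds for the inverses. By the characterization in Lemma~\ref{lemma:equivbounded}, the set $\mathcal{K}=\mathcal{K}(T)$ of all such profiles is $\mathcal{C}^1$-bounded.

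Finally, applying Lemma~\ref{lemma:distancesrelDiff} to this set $\mathcal{K}$ yields a constant $L=L(T)>0$ with
\[
 d^{\pm}_{\mathcal{C}^1}(\varphi^j_{\mathcal{V}^n(T)},I)\;\leq\;L\,d_\eta(\varphi^j_{\mathcal{V}^n(T)},I),
\]
for every $n\in\mathbb N$ and every $1\leq j\leq d$, since both $\varphi^j_{\mathcal{V}^n(T)}$ and $I$ belong to $\mathcal{K}$. Summing over $j$ and combining with the inequality established in the second paragraph gives
\[
 d^{\pm}_{\mathcal{C}^1}(\mathcal{V}^n(T),\mathcal{A}_d)\;\leq\;L\sum_{j=1}^d d_\eta(\varphi^j_{\mathcal{V}^n(T)},I)\;=\;L\,|N|(\mathcal{V}^n(T))\;=\;L\,d_\eta(\mathcal{V}^n(T),\mathcal{A}_d),
\]
which is the desired bound. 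The only real step is the uniform distorsion estimate that produces the $\mathcal{C}^1$-bounded set $\mathcal{K}(T)$; everything else is an unpacking of the shape-profile coordinates together with Lemma~\ref{lemma:distancesrelDiff}.
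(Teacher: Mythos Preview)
Your proof is correct and follows essentially the same approach as the paper: reduce to shape--profile coordinates, use the shape $A_{\mathcal{V}^n(T)}$ as the competitor in $\mathcal{A}_d$, invoke the distortion bound to show all profile coordinates along the renormalization orbit lie in a fixed $\mathcal{C}^1$-bounded set $\mathcal{K}(T)\subset\mathrm{Diff}^r([0,1])$, and then apply Lemma~\ref{lemma:distancesrelDiff} coordinatewise. The only cosmetic difference is that the paper packages the distortion step by citing Lemma~\ref{bound2} (the a priori bound on profiles), whereas you re-derive that bound directly from Lemma~\ref{bound1}; your reference to Lemma~\ref{lemma:equivbounded} is slightly off since that lemma is stated for $\mathcal{X}^1$ rather than $\mathrm{Diff}^r([0,1])$, but the analogous characterization for diffeomorphisms is immediate and the argument goes through unchanged.
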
}

{\color{black}
\subsubsection{Boundary stratification and regimes}\label{sec:Bstrata}
 Consider the \emph{boundary} $\mathcal{B}(T)$ of a GIET $T$ (see Definition~\ref{boundary} of a GIET} in \S~\ref{sec:boundaryGIET}),
%In \S~\ref{sec:boundaryGIET}, we definde the \emph{boundary} of a GIET .
% $T$ (see also \cite{MMY2}) to be
which is given by $\mathcal{B}(T) := B(\log \D T) $ (where $B$ is the Marmi-Moussa-Yoccoz boundary operator for observable $\mathcal{C}_0\big(\sqcup_i{I_i^t(T)} \big)$, see \S~\ref{sec:obs_boundary}).} 
%Recall (from Remark~\ref{rk:boundary} that using the shape-profile coordinates $(A_T, \varphi^1_T, \cdots, \varphi^d_T)$ of $T$, we also have 
%$\mathcal{B}(T)= B({\omega}) + B(\log \D \varphi)$ where 
% its coordinates for the product structure $\mathcal{X} = \mathcal{A} \times \mathcal{P}$. We denote
% $\omega_i:=\omega_i(T)$ denotes the logarithm of the slope of $A_T$ on the $i$-th interval. 
% Given $T =(A_T, \varphi^1_T, \cdots, \varphi^d_T) \in \mathcal{X}$, 
%let $\omega(x)$ and $\log \D \varphi_T(x)$ denote the piecewise continuous function in $\mathcal{C}_0\big(\sqcup_i{I_i^t(T)} \big)$  which are respectively equal to $\omega_i$ and to $\log \D \varphi^i_T$ on $I^t_i$. Then we claim that
%$$$$
%\smallskip
%\noindent 
For each value  ${b} \in \R^\kappa$, we can define the following subspaces 
$$ \mathcal{X}({b}) := \big\{  T \in \mathcal{X} \ | \ \mathcal{B}(T) = {b} \big\}.$$ 
\noindent In view of property $(ii)$ in Lemma~\ref{prop2}, since $ \mathcal{X}({b})$ is invariant under the action of $\mathcal{V}$ (and consequently under that of $\mathcal{Z}$), these subspaces are invariant under renormalization. Moreover, $\mathcal{I}_d$ (resp.~$\mathcal{A}_d$) is a subspace of $\mathcal{X}({b})$ for ${b}$ in the linear-regime  ${b}=0$ (resp.~in the affine regime $\sum_{1\leq s\leq \kappa} b_s=0$). 
 As remarked in the introduction of this section \S~\ref{sec:convergence}, in order for a GIET $T$ to converge  to IETs (resp.~AIETs) under renormalization, $T$ needs therefore to already belong to $\mathcal{X}({0})$ (resp.~$\mathcal{X}({b})$ with ${b}$ in the affine regime).}

\begin{lemma}[Affine regime and vanishing of non-linearity]\label{boundaryaffinereduction}
The \emph{affine} regime  corresponds to the assumption that the \emph{mean non-linearity} vanishes, i.e.~
$$\sum_{1\leq s\leq \kappa}b_s=0, \quad \text{where}\ (b_s)_{s=1}^\kappa=\mathcal{B}(T)\qquad \Leftrightarrow \qquad \overline{N}(T)=\int_0^1 \eta_T(x)\mathrm{d}x=0.$$
\end{lemma}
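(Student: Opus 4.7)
The plan is to compute $\sum_{s=1}^\kappa b_s$ directly from the definitions and recognise it (up to sign) as $-\overline{N}(T)$ via the fundamental theorem of calculus applied branchwise to $f := \log \D T$.

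First I would unravel the sum. By the very definition of $\mathcal{B}(T) = B(\log \D T)$ from \S\ref{sec:obs_boundary}, the components $b_s = B_s(f)$ are obtained by grouping the jumps $f^r(u_i) - f^l(u_i)$ according to which singularity $s(u_i)$ of the suspension they correspond to. Since the map $s : \{u_0,\dots,u_d\} \to \{1,\dots,\kappa\}$ partitions $\{u_0,\dots,u_d\}$, summing $b_s$ over $s$ simply re-groups every endpoint exactly once:
\[
\sum_{s=1}^{\kappa} b_s \;=\; \sum_{i=0}^{d} \bigl(f^r(u_i) - f^l(u_i)\bigr).
\]
Recalling the convention $f^l(u_0) := 0$ and $f^r(u_d) := 0$ together with the identifications $f^r(u_i) = \lim_{x \to u_i^+} f_{\pi_t(i+1)}(x)$ and $f^l(u_i) = \lim_{x \to u_i^-} f_{\pi_t(i)}(x)$, the right-hand side telescopes over the $d$ continuity intervals of $T$: if we denote by $f_j^+$ and $f_j^-$ the right limit of $f_j$ at the left endpoint of $I^t_j$ and the left limit at the right endpoint respectively, a re-indexing gives
\[
\sum_{i=0}^{d} \bigl(f^r(u_i) - f^l(u_i)\bigr) \;=\; \sum_{j=1}^{d} \bigl(f_j^{+} - f_j^{-}\bigr).
\]

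Second, on each continuity interval $I^t_j$ the branch $T_j$ is a $\mathcal{C}^2$ diffeomorphism, so $f_j = \log \D T_j$ extends continuously to the closure of $I^t_j$ and is differentiable in the interior with $\D f_j = \eta_{T_j}$. Applying the fundamental theorem of calculus on each $I^t_j$ yields
\[
f_j^{-} - f_j^{+} \;=\; \int_{I^t_j} \eta_{T_j}(x)\,\ud x \;=\; \int_{I^t_j} \eta_T(x)\,\ud x.
\]
Summing over $j=1,\dots,d$ and using that the $I^t_j$ form a partition of $[0,1]$ up to finitely many points, I would obtain
\[
\sum_{j=1}^{d} \bigl(f_j^{+} - f_j^{-}\bigr) \;=\; -\int_0^1 \eta_T(x)\,\ud x \;=\; -\overline{N}(T).
\]
Combining the two displays above gives $\sum_{s=1}^{\kappa} b_s = -\overline{N}(T)$, from which the stated equivalence is immediate.

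There is no real obstacle here; the only subtlety is bookkeeping, namely making sure that when summing $b_s$ over $s$ every endpoint $u_i$ (including the two boundary points $u_0$ and $u_d$, for which the half-limits are set to $0$ by convention) is accounted for exactly once, so that the telescoping between right limits at left endpoints and left limits at right endpoints of adjacent continuity intervals is clean.
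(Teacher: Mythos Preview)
Your proof is correct and follows essentially the same approach as the paper: unravel $\sum_s b_s$ as a sum over all endpoints $u_0,\dots,u_d$ of the jumps of $f=\log\D T$, then recognise this (via the fundamental theorem of calculus applied branchwise to $\eta_T=\D f$) as $\pm\overline{N}(T)$. Your bookkeeping of the re-indexing and the boundary conventions is in fact more explicit than the paper's, and you correctly obtain the identity $\sum_s b_s = -\overline{N}(T)$, from which the stated equivalence is immediate.
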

\begin{proof} On one hand, by definition of non-linearity (see \S~\ref{sec:nl} and in particular Definition~\ref{def:Ns}), on  each continuity interval $I^t_j=(u^t_j, u^t_{j+1})$ for $1\leq j\leq d$,  we have that $\eta_T(x)=D \log D T_j(x)$ so 
$$\overline{N}(T)=\sum_{j=1}^d\int_{I^t_j}\eta_T(x)\ \mathrm{d}x= \sum_{j=0}^{d-1} \left( \lim_{x\to (u^t_{j+1})^-} D T_j(x) -\lim_{x\to (u^t_{j})^+} D T_j(x)\right).%.= \sum_{j=1}^d \lim_{x\to (u^t_{j+1})^-} D T_j(x)  - \sum_{j=1}^d -\lim_{x\to (u^t_{j})^+} D T_j(x)\right).
$$
One can then check that this is the same than $\sum_{s=1}^\kappa b_s$ simply by recalling the definition of $\mathcal{B}(T) =B(\log DT)$ and boundary of an observable (see \S~\ref{sec:obs_boundary}) and remarking that summing over all possible values of $s(u_i)\in \{1,\dots, \kappa\}$ gives a rearrangement of the above sum over singularities $u^t_j$. \end{proof}

\begin{remark}\label{rk:Bbreaks}If $T$ is a circle diffeomorphisms with \emph{breaks} (i.e.~a piecewise differentiable homemorphism, with $d-1$ \emph{breaks} (i.~e.~$d\!-\! 1$ points of discontinuity of the derivative), $T$ can be seen as a $d$-GIET in $\mathcal{X}_d^1$ (with a \emph{rotational} combinatorics). In this case   
%In the special case in which $T$ is a \emph{rotational} $d$-GIET (i.e.~it is obtained as induced map of a circle homeomorphism) 
 $\kappa=d\!-\! $ (since $g=1$ and $d=2g+\kappa-1$) and the values $e^{b_s}$, where $b_s$ are  the entries of $\mathcal{B}(T)$ for $1\leq s\leq d-1$, encode the \emph{breaks}, 
  which are well-known $\mathcal{C}^1$-invariants in the theory of circle diffeos with break points. In this case, the assumption that $\mathcal{B}(T)$ is zero, i.e.~that each entry $b_s$ is zero, is equivalent to asking that $T$ is indeed induced from a circle diffeomorphism (i.e.~there are no breaks).  
\end{remark}
\smallskip
\subsubsection{A priori bounds.}\label{sec:apriori}
Let us first show that in the recurrent case (Case $2$ of Theorem~\ref{shadowing}), we have \textit{a priori bounds} which  hold along the orbit $\{\mathcal{R}^m(T)\}_{m\in\mathbb{M}}$ where $\mathcal{R}$ is the acceleration of $\mathcal{V}$ along the subsequence $\{n_{k_m}\}_{m\in\mathbb{N}}$ given by the $(RDC)$. 
%Recall that since $T\in \mathcal{X}^3_d$, for any $n\in\mathbb{N}$, also $T^{(n)}$ is a $\mathcal{C}^{3}$-GIET and its derivative  $\mathrm{D}T^{(n)}$ is well-defined everywhere but at $d\!-\! 1$ points; on the $d$ intervals where it is well-defined it is if class $\mathcal{C}^{2}$.

\smallskip
\noindent {\it Notation:} To lighten the notation, we denote by $||f ||_{\infty}$ the sup-norm on the domain where $f$ is defined, so if $f:I\to \mathbb{R}$, $||f ||_{\infty}:=||f||_{L^\infty(I)}=\sup_{x\in I}|f(x)|$.
\begin{proposition}[{a priori bounds}]\label{apriori}
The iterates $\{ \D\mathcal{R}^m(T), \ m\in \mathbb{N} \}$ belong to a $\mathcal{C}^1$-bounded set (in the sense of Definition~\ref{def:Ckbounded}), i.e.~there exists a constant $K_2 = K_2(T) > 0$ such that
$$  K_2(T)^{-1} \leq ||\D\mathcal{R}^m(T)||_\infty := ||\mathrm{D}T^{(n_{k_m})}||_\infty = || \mathrm{D} T_{n_{k_m}} ||\leq K_2(T) , \qquad \text{for\ all}\ m \in \mathbb{N} .$$
\end{proposition}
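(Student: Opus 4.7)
The plan is to combine the hypothesis of Case~1 of Theorem~\ref{shadowing} (which gives a uniform bound on the log-slope vectors of the shapes along $(n_{k_m})_{m\in\mathbb{N}}$) with the classical distortion bounds of Lemma~\ref{bound1} applied to the profile coordinates of $\mathcal{R}^m(T)$. The main decomposition of the derivative is the one already recorded in \eqref{Dincoordinates}: if $T_n := \mathcal{R}^m(T) = (A_{T_n}, \varphi^1_{T_n}, \dots, \varphi^d_{T_n})$ in shape-profile coordinates, then for $x \in I^t_i(n)$ one has $\D T_n (x) = \rho_i\, \D \varphi^i_{T_n}(b_i(x))$, where $\rho_i = e^{(\omega_{n_{k_m}})_i}$ is the $i$-th component of the shape slope vector and $b_i$ is an affine map onto $[0,1]$. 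Controlling $||\D T_n||_\infty$ and $||\D T_n^{-1}||_\infty$ thus reduces to controlling $||\rho(T_n)||$, $||\rho(T_n)^{-1}||$ and $||\D\varphi^i_{T_n}||_\infty$, $||(\D\varphi^i_{T_n})^{-1}||_\infty$.

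\textbf{Step 1 (slopes).} By the standing assumption that we are in Case 1 of Theorem~\ref{shadowing}, there is a constant $V=V(T)$ such that $\|\omega_{n_{k_m}}\| \leq V$ for all $m\in\mathbb{N}$. Since $\rho_i = \exp((\omega_{n_{k_m}})_i)$, this immediately gives $e^{-V} \leq \rho_i \leq e^V$ for every $i=1,\dots,d$ and every $m$.

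\textbf{Step 2 (profile).} Here I will show that each profile diffeomorphism $\varphi^i_{T_n}$ has derivative bounded above and below, uniformly in $m$. Using the chain rule for non-linearity (Lemma~\ref{lemma:nonlinearity}(i)), the fact that affine maps have zero non-linearity, and a change of variables, a short computation gives
\begin{equation*}
\int_0^1 |\eta_{\varphi^i_{T_n}}(x)|\,\ud x \,=\, \int_{I^t_i(n)} |\eta_{T_n}(y)|\,\ud y \,\leq\, |N|(T_n).
\end{equation*}
By the monotonicity of total non-linearity under renormalization (Proposition~\ref{prop:Nproperties}(iii)), this is in turn bounded by $|N|(T)$, a finite constant depending only on $T$. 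Since $\varphi^i_{T_n}$ is an orientation-preserving diffeomorphism of $[0,1]$, integrating its derivative yields $\int_0^1 \D\varphi^i_{T_n}(x)\,\ud x = 1$, so by the mean value theorem there is $y_i\in [0,1]$ with $\D\varphi^i_{T_n}(y_i) = 1$. Applying the distortion bound of Lemma~\ref{bound1} (with trivial orbit length, i.e.~$n=0$, to $\varphi^i_{T_n}$ seen as a $\mathcal{C}^2$ diffeomorphism of $[0,1]$) then gives
\begin{equation*}
e^{-|N|(T)} \,\leq\, \D\varphi^i_{T_n}(x) \,\leq\, e^{|N|(T)}, \qquad \textrm{for every }x\in [0,1],\ 1\leq i\leq d.
\end{equation*}

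\textbf{Step 3 (conclusion).} Combining Steps 1 and 2 via the shape-profile expression for $\D T_n$ gives
\begin{equation*}
e^{-V-|N|(T)} \,\leq\, |\D\mathcal{R}^m(T)(x)| \,\leq\, e^{V+|N|(T)}, \qquad\textrm{for all } x\in [0,1],\ m\in\mathbb{N},
\end{equation*}
so setting $K_2(T) := e^{V+|N|(T)}$ yields the desired a priori bound. In view of Lemma~\ref{lemma:equivbounded}, this is equivalent to saying that the family $\{\mathcal{R}^m(T):m\in\mathbb{N}\}$ is $\mathcal{C}^1$-bounded.

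\textbf{Remark on difficulty.} Given all the preparatory material (the shape-profile decomposition, the monotonicity and distortion properties of non-linearity, and the uniform bound on $\omega_{n_{k_m}}$ provided by Case 1 of the dichotomy), the proof is essentially a direct verification: there is no serious obstacle. The only subtle conceptual point is that one must separate the two sources of contribution to $\D T_n$, namely the shape (affine) part and the profile (non-linear) part, and use a different tool for each — Case 1 of Theorem~\ref{shadowing} for the former, renormalization invariance of total non-linearity combined with distortion bounds for the latter.
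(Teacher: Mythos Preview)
Your proof is correct and follows essentially the same approach as the paper: both split the derivative via the shape-profile decomposition~\eqref{Dincoordinates}, bound the shape slopes using the Case~1 hypothesis, and bound the profile derivatives via the total non-linearity (the paper isolates this as Lemma~\ref{bound2}). One cosmetic remark: your appeal to ``Lemma~\ref{bound1} with $n=0$'' is not literally what that lemma says (for $n=0$ it is vacuous); what you are actually using is the elementary estimate $|\log \D\varphi(x)-\log \D\varphi(y_i)|=\big|\int_{y_i}^{x}\eta_\varphi\big|\le \int_0^1|\eta_\varphi|\le |N|(T)$, which is precisely the content of the \emph{proof} of Lemma~\ref{bound1} rather than its statement.
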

\noindent The Proposition can be easily proved using the shape-profile decomposion $\mathcal{X}^2_d= \mathcal{A}_d\times \mathcal{P}^2$ (see \S~\ref{coordinates}). We will first show (in Lemma~\ref{bound2} here below) that the \emph{profile} coordinates always satisfy a priori bounds along the orbit of renormalization, simply as a consequence of the classical distorsion bounds (given by Lemma~\ref{bound1}). The assumption of being in the recurrent case $2$ of Theorem~\ref{shadowing}) provides the required bounds for  the \textit{shape} coordinates.

\smallskip
%Let us first state another consequence of the distorsion bounds, which will be used in the proof to control the \emph{profile} coordinates. 

\noindent Let $\Pro _{\mathcal{P}}: \mathcal{X}^2_d\to \mathcal{P}_d^2=\big( \mathrm{Diff}^2([0,1]) \big)^d$ be the projection on the \emph{profile} coordinates $\mathcal{P}^2_d$ (see \S~\ref{coordinates}).% in the shape-profile decomposion $\mathcal{X}^2_d= \mathcal{A}_d\times \mathcal{P}^2$ (see \S~\ref{coordinates}).
%{\color{red} Lemma was cut and pasted from Appendix, so it has old notation to update... in particular probabily here we want to use Zorich Z and check notation for profile projection....}
\begin{lemma}[bounded distorsion for the profile]
\label{bound2}
% Let $\mathcal{K}$ be a bounded set of $\mathcal{X}^2_{\pi}$ with respect to the $\mathcal{C}^2$-topology.  
For any GIET $T\in \mathcal{X}^2_{\pi} $ there exists a constant  $M=M(T)$ (which depends only on the $\mathcal{C}^2$-norm of $T$ and $T^{-1}$ and hence is uniform on $\mathcal{C}^2$-bounded sets) 
%%% such that $$M(\mathcal{K})>0$ 
 such that for any $T$ GIET renormalizable under Rauzy-Veech induction $n$ times
 % and belonging to $\mathcal{K}$ 
 we have
$$ || \Pro_{\mathcal{P}}\big( \mathcal{V}^{n}(T) \big ) - (\mathrm{Id})^d ||_{\mathcal{C}^1} \leq  M, \qquad || \Pro_{\mathcal{P}}\big( \mathcal{V}^{n}(T)^{-1} \big ) - (\mathrm{Id})^d ||_{\mathcal{C}^1} \leq  M , %|| \Pro_{\mathcal{P}}(T) ||_{\mathcal{C}^2} 
 $$  where $(\mathrm{Id})^d  = (\mathrm{Id}, \cdots, \mathrm{Id}) \in \mathcal{P} = \big( \mathrm{Diff}^2([0,1]) \big)^d.$
\end{lemma}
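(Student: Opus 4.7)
The plan is to deduce the statement from the classical distortion bound (Lemma~\ref{bound1}) together with the monotonicity of the total non-linearity under renormalization (Property $(iii)$ of Proposition~\ref{prop:Nproperties}). Throughout, fix $T\in\mathcal{X}^2_\pi$ renormalizable $n$ times and set, for $1\leq j\leq d$, $I^{(n)}_j$ and $q^{(n)}_j$ as in \S~\ref{dynamicalpartitions}, so that the $j$-th branch of $\mathcal{V}^{n}(T)$ is (after affine normalization) the restriction of $T^{q^{(n)}_j}$ to $I^{(n)}_j$.

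First I would unpack the definition of the profile coordinates from \S~\ref{coordinates}: the $j$-th profile $\varphi^j_{\mathcal{V}^{n}(T)}\in \mathrm{Diff}^2([0,1])$ is, up to pre- and post-composition by the unique orientation-preserving affine maps $b_j: [0,1]\to I^{(n)}_j$ and $a_j: T^{q^{(n)}_j}(I^{(n)}_j)\to[0,1]$, exactly $T^{q^{(n)}_j}$. By the chain rule,
$$D\varphi^j_{\mathcal{V}^{n}(T)}(x)\ =\ \frac{|I^{(n)}_j|}{|T^{q^{(n)}_j}(I^{(n)}_j)|}\,DT^{q^{(n)}_j}\bigl(b_j(x)\bigr),\qquad x\in[0,1].$$
Since $\varphi^j_{\mathcal{V}^{n}(T)}$ is a diffeomorphism of $[0,1]$ onto itself, the mean value theorem provides a point $x^\ast\in[0,1]$ with $D\varphi^j_{\mathcal{V}^{n}(T)}(x^\ast)=1$; therefore, for every $x\in[0,1]$,
$$D\varphi^j_{\mathcal{V}^{n}(T)}(x)\ =\ \frac{DT^{q^{(n)}_j}\bigl(b_j(x)\bigr)}{DT^{q^{(n)}_j}\bigl(b_j(x^\ast)\bigr)}.$$

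The next step is the crucial application of distortion: the intervals $I^{(n)}_j, T(I^{(n)}_j),\dots,T^{q^{(n)}_j-1}(I^{(n)}_j)$ are pairwise disjoint and avoid all discontinuities of $T$, since they form the $j$-th Rohlin tower of level $n$ (see \S~\ref{dynamicalpartitions}). Thus Lemma~\ref{bound1} applies to $T^{q^{(n)}_j}$ on $I^{(n)}_j$ and yields
$$\exp\bigl(-|N|(T)\bigr)\ \leq\ D\varphi^j_{\mathcal{V}^{n}(T)}(x)\ \leq\ \exp\bigl(|N|(T)\bigr),\qquad x\in[0,1].$$
Since both $x$ and $\varphi^j_{\mathcal{V}^{n}(T)}(x)$ lie in $[0,1]$ we also trivially have $|\varphi^j_{\mathcal{V}^{n}(T)}(x)-x|\leq 1$. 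Setting $M':=e^{|N|(T)}-1$, summing over $j=1,\dots,d$ gives
$$\Vert \Pro_\mathcal{P}(\mathcal{V}^{n}(T))-(\mathrm{Id})^d\Vert_{\mathcal{C}^1}\ \leq\ d\,(1+M'),$$
which is independent of $n$ and depends on $T$ only through $|N|(T)$ (itself controlled by the $\mathcal{C}^2$-norm of $T$ and the $\mathcal{C}^1$-norm of $1/DT$).

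For the inverse statement I would argue in one of two equivalent ways. Either one observes that $\mathcal{V}^{n}(T)^{-1}$ has, as its branches (after suitable relabelling by $\pi$), the inverses of the branches of $\mathcal{V}^{n}(T)$; hence its profile coordinates are $(\varphi^j_{\mathcal{V}^{n}(T)})^{-1}$, and the identity $D(\varphi^{-1})(y)=1/D\varphi(\varphi^{-1}(y))$ transports the two-sided bound on $D\varphi^j_{\mathcal{V}^{n}(T)}$ to the same two-sided bound for its inverse. Alternatively, one may repeat the entire argument for $T^{-1}\in\mathcal{X}^2$ (whose iterates on the corresponding Rohlin towers for $T^{-1}$ still form disjoint families avoiding discontinuities), replacing $|N|(T)$ by $|N|(T^{-1})$, which is again finite by the $\mathcal{C}^2$ hypothesis. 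Taking $M:=d(1+\max(e^{|N|(T)},e^{|N|(T^{-1})})-1))$ then yields both bounds simultaneously.

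The only conceptual point requiring care is the uniformity in $n$: this is ensured by the fact that the distortion constant $\exp(|N|(T))$ in Lemma~\ref{bound1} depends on $T$ only and not on how many times one iterates within a single Rohlin tower. No further renormalization invariance of $|N|$ is needed here (although Property $(iii)$ of Proposition~\ref{prop:Nproperties} can be used to phrase the constant in terms of $|N|(\mathcal{V}^{n}(T))$, giving a constant that is in fact non-increasing with $n$). I expect no serious obstacle; the main care is simply bookkeeping of the affine normalizations $a_j, b_j$ and the verification that the tower property holds, which is built into the very definition of Rauzy-Veech induction.
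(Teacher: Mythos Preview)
Your proof is correct and follows essentially the same approach as the paper: both use the mean value theorem to find a point where the profile derivative equals $1$, then apply the distortion bound (Lemma~\ref{bound1}) to the Rohlin tower iterates to obtain the two-sided bound $e^{-|N|(T)}\leq D\varphi^j_n\leq e^{|N|(T)}$. Your treatment of the inverse (via $(\varphi^j)^{-1}$ or via $T^{-1}$) is slightly more explicit than the paper's, which simply invokes the inverse-function derivative formula to get the $\pm1$ bound in one stroke, but the content is the same.
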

\begin{proof}
Let $\varphi^j_n$ be a coordinate of $\Pro _{\mathcal{P}}\big( \mathcal{V}^{n}(T) \big )$. By definition of profile, $\varphi^j_n$ is obtained by composing the restrictions of $T$ to pairwise disjoint intervals and then rescaling. More precisely,  if we denote  by  $q_j:= q^{(n)}_j$ the height of the Rohlin tower $\mathcal{P}^j_n$ and  by $f_j^k$, for  $0\leq k< q_j$,   the restriction of $T$ to the floor $T^k(I_j^{(n)})$ of $\mathcal{P}^j_n$, then $\varphi^j_n=\mathcal{N}(T^{(n)}_j)$ where 
 $T^{(n)}_j = f_j^{q_j-1} \circ f_j^{q_j-2} \circ \cdots \circ f_2 \circ f_1$ and $\mathcal{N}(\cdot )$ is the normalisation operator which produces a diffeo of $[0,1]$. 

Since $\varphi^j_n$ is a diffeomorphism of $[0,1]$,  by chain rule and mean value, choosing $y\in [0,1]$ such that $D\varphi^j_n(y)=1$,
$$\sup_{x\in [0,1]}  \mathrm{D} \varphi^j_n(x) = \sup_{x\in [0,1]} \frac{\mathrm{D} \varphi^j_n(x)}{ \mathrm{D} \varphi^j_n(y)} = \sup_{x,y\in[0,1]} \frac{\mathrm{D}  \mathcal{N}(f^{q_j-1} \circ \cdot \circ f^0_k))(x)}{\mathrm{D}  \mathcal{N}(f^{q_j-1} \circ \cdot \circ f^0_k)(y)}
= \sup_{x,y\in I_j^{(n)}} \frac{\mathrm{D}  (f^{q_j-1} \circ \cdot \circ f^0_k)(x)}{\mathrm{D}  (f^{q_j-1} \circ \cdot \circ f^0_k)(y)},$$
so that we can now apply the distorsion bound given by Lemma \ref{bound1}. Since we can reverse the role of $x$ and $y$, recalling the formula for the derivative of the inverse function, we can then  deduce from Lemma \ref{bound1} that, for all $x \in [0,1]$, 
$$\max\left\{  \mathrm{D} \varphi^j_n(x), \left(\mathrm{D} \varphi^j_n(x)\right)^{-1}\right\} \leq \exp\left(\int_0^1{|\eta_T(x)|\mathrm{d}x}\right)= \exp\left(\int_0^1{\left|{D^2T(x)}/{DT(x)}\right|\mathrm{d}x}\right),$$
where the last equality uses simply the definition $\eta_T= \frac{D^2T}{DT}$ and shows that the RHS depends on the $\mathcal{C}^2$ norm of $T$ and $T^{-1}$ only. 
This, recalling the definition of total non-linearity (see Definition~\ref{nonlinearity}), shows that 
$$\max_{1\leq j\leq 1} \sup_{x\in [0,1]}|\log  D\varphi^j_n(x) |\leq |N|(T).$$
% Since by assumption $T$ belongs to the set $\mathcal{K}$ which is bounded with respect to the $\mathcal{C}^2$-topology,  $DT$ is  bounded by a uniform constant, only depending on $\mathcal{K}$.   
Since the exponential is Lipschitz on bounded sets of $\mathbb{R}$, there exists a constant $L>0$ (which depends on the $\mathcal{C}^2$-norm of $T$ and $T^{-1}$ only) such that
$$ 
\sup_{x\in [0,1]}|(D\varphi^j_n(x))^{\pm 1}-1| \leq L \sup_{0<x<1} |\log {\D \varphi^j_n(x)}| \leq L |N|(T), \qquad \text{for\ all}\ 1\leq j\leq d.$$ 
Since these  inequality holds for all the components of the profile, this proves the lemma. 
%Using  the formula for the derivative of the inverse function gives the expected estimate for $||\D \mathcal{R}^m(T)||$ and $||\D \mathcal{R}^m(T)||$. Finally, the final remark is that, for any $m\in\mathbb{N}$, $|| \D T^{(m)}||= || \D T_m ||$ since $T^{(m)}$ is by definition obtained conjugating $T_m$ by a linear map (see \S~\ref{eq:renormalizedmap}).
\end{proof}

\begin{proof}[Proof of Proposition~\ref{apriori}]
Let us consider $\mathcal{R}^m(T):=\widetilde{\mathcal{Z}}^{{k_m}}( T)=T^{(n_{k_m})} $. %For brevity, let us write $\widetilde{T}^{k_m}:=T^{(n_{k_m})} $. 
%$\mathcal{R}^{m}(T):= \widetilde{Z}^{k_m}(T)=T^{(n_{k_m})}$; 
 Denote by  $\vect{{\underline{{\omega}}}}^{m}:=\vect{\omega}_{n_{k_m}}$  the shape log-slope vector of $\mathcal{R}^m(T)$ and by $\vect{\underline{\rho}}^{m}:= \vect{\rho}_{n_{k_m}}$ be the slope vector, so  $\vect{\underline{\rho}}^{m} = \exp(\vect{\underline{\omega}}^{m})$. 

By the chain rule, since the induced map $T^{(n_{k_m})}$ is related to $\mathcal{R}^{m}T$ through conjugation by a linear map, see \eqref{renormalizationeq}, and by the explict expression for $D\mathcal{R}^{m}T$ in shape-profile coordinates  (see in particular \eqref{Dincoordinates}), we have that, denoting by $(\underline{\varphi}^1_{m}, \dots, \underline{\varphi}^d_{m})$ the profile coordinate of $\mathcal{R}^m(T)$,
%Recalling that Since the $i$-th branch $(\mathcal{R}^m(T))_i$ of $\mathcal{R}^m(T)$ is obtained from by conjugation 
%definition of renormalization (see in particular \eqref{Dincoordinates} amd \eqref{renormalizationeq})
%Remark first of all that, if the profile ${\mathcal{P}}(\mathcal{R}^m(T) ) = (\mathrm{Diff}^{3}([0,1]))^d$ has coordinates that we will denote  the derivative of 
%recalling the definition of profile and renormalization (see in particular \eqref{Dincoordinates} amd \eqref{renormalizationeq}) is given  by 
%$$D(\mathcal{R}^m(T))_i (x) = {\underline{\rho}}^{m}_{i} \, D\underline{\varphi}_i^{m}( b_i^m (x)),$$
%$(\mathcal{R}^m(T))_i= {\underline{\rho}}^{m}_{i} \, \underline{\varphi}_i^{m}$, (up to translation and conjugacy by an affine map, which will not influence the conclusion) 
%where $b_i^m(\cdot)$ is an affine map\footnote{Here $b_i^m$ is the unique affine map which maps the $i^{th}$ continuity interval $I^{(n_{k_m})}_i$  of $\mathcal{R}^m(T)$ %of the induced map $T_{n_{k_m}}$ 
%to $[0,1]$.} and  $\underline{{\rho}}^{m}_i$ is the $i^{th}$ component of the slope vector $\underline{\vect{\rho}}^{m} = \exp(\underline{\vect{\omega}}^{m})=(e^{\underline{\omega}^m_1}, \dots ,e^{\underline{\omega}^m_d})$. 
\begin{equation}\label{Dbound2}
||\mathrm{D}T^{(n_{k_m})}||_{L^\infty \left(I^{(n_{k_m})}\right)}  =
||\mathrm{D}\mathcal{R}^{m}T||_{L^\infty(0,1)} \leq \max_{1\leq i\leq d} |\underline{\vect{\rho}}^m_{i}| ||\underline{\varphi}_i^{m} ||_\infty .
\end{equation}
\noindent We remark now that: 
\begin{enumerate}
\item by Lemma~\ref{bound2}, all coordinates of ${\mathcal{P}}\left(\mathcal{R}^m(T)\right)$, and therefore $\max_{1\leq i\leq d} || \underline{\varphi}_m^i||_{\infty}$, are uniformly bounded;
\item  by the assumption that we have made on $T$, the sequence 
$||\vect{\omega}_m||=|| \omega_{n_{k_m}}||$ is bounded, 
so also $\max_{1\leq i\leq d} |\underline{\vect{\rho}}^m_i|=\max_{1\leq i\leq d} |e^{\underline{\vect{\omega}}^m_i}|$ is uniformly bounded. 
\end{enumerate}
Using these two facts to estimate \eqref{Dbound2} we get the desired upper bounds. For the lower bounds, it suffices to estimate similarly the inverse $(\mathrm{D}\mathcal{R}^m T)^{-1}$ 
%T^{(n_{k_m})})
(which has slope vector $e^{-\underline{\vect{\omega}}^m}$, which is also bounded) and recall the formula for the inverse function (see also the proof of Lemma~\ref{lemma:equivbounded}). 
\end{proof}
%\noindent The apriori bounds given by Proposition~\ref{apriori} will be used in the following \S~\ref{sec:balance} to get an estimate on the decay of the size of the dynamical partitions. % associated to iterated renormalizations of $T$.

{
\subsection{Exponential decay of the  dynamical partitions mesh.}\label{sec:sizePcontrol}
We will now prove exponential estimates on the decay of the size of the sequence of dynamical partitions $\{ \mathcal{P}_n, n\in \mathcal{N}\}$ along the sequence $(k_m)_m$ given by the $(RDC)$. Since the sequence $(k_m)_m$ grows linearly, we can then deduce a posteriori that the mesh decay exponentially (see Corollary~\ref{expmeshdecay}). 
 %We utilise in a crucial way the fact that $(n_k)_k$ are good return times (see Definition~\ref{def:goodreturns}), as well the apriori bounds given by  Proposition~\ref{apriori}.
%%%the matrices of the cocycle for the positive acceleration $\mathcal{P}$ are uniformly bounded. 

\smallskip
Throughout this section, $\{ \mathcal{P}_n, n\in \mathcal{N}\}$ denotes the sequence of dynamical partitions (as defined in \S~\ref{dynamicalpartitions}) associated to the orbit $T^{(n)}:=\mathcal{Z}^nT$, $n\in\mathbb{N}$ of $T$ under the Zorich acceleration $\mathcal{Z}$. Let us measure their \emph{size} by  $\mesh (\mathcal{P}_n)$, given  by definition $\mesh( \mathcal{P}_n) := \sup_{I \in \mathcal{P}_n}{| I |}$. Then 
\begin{proposition}[Partition  mesh decay]\label{partition}
There exists $0 < \alpha_1(T) = \alpha_1 <1$ such that for all $m \in \mathbb{N}$
$$\mesh(\mathcal{P}_{n_{k_m}}) \leq  \alpha_1^m .$$
\end{proposition}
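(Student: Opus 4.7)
The plan is to reduce the mesh to a multiple of $1/\min_j q^{(n_{k_m})}_j$ using the classical distortion bound, and then to establish exponential growth of $\min_j q^{(n_{k_m})}_j$ using the double positive matrix structure of the good return times. First, I would apply the distortion bound (Lemma~\ref{bound1}) to each tower $\{T^k I^{(n_{k_m})}_j : 0\le k<q^{(n_{k_m})}_j\}$, whose floors are pairwise disjoint and avoid all singularities of $T$ (by the definition of first return). This forces the floors within a single tower to have lengths comparable up to a factor $e^{|N|(T)}$; combined with the fact that they fit inside $[0,1]$, the largest floor has length at most $e^{|N|(T)}/q^{(n_{k_m})}_j$, so that
$$\mesh(\mathcal{P}_{n_{k_m}})\le \frac{e^{|N|(T)}}{\min_{1\le j\le d} q^{(n_{k_m})}_j}.$$

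Next, I would exploit the height recurrence $q^{(n+1)}=Z_n\, q^{(n)}$ from \eqref{heightsrelation}, combined with the good return factorization $Q(n_k, n_{k+1})=Q_k\cdot A^2$ of Definition~\ref{def:goodreturns}, to show that $\min_j q^{(n_{k_m})}_j\ge d^{2m}$. This reduces to two elementary linear-algebraic facts: since $A$ has integer entries $\ge 1$, the product $A^2$ satisfies $(A^2)_{ji}\ge d$, and hence for any non-negative vector $v$ one has $\min_j (A^2 v)_j\ge d\sum_i v_i\ge d^2\min_i v_i$; and since every $Q_k\in SL(d,\mathbb{Z})$ is non-negative with each row of sum $\ge 1$ (by non-degeneracy), $\min(Q_k w)\ge \min w$ for any non-negative $w$. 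Iterating along good returns starting from $q^{(0)}=(1,\dots,1)^T$ gives $\min_j q^{(n_k)}_j\ge d^{2k}$, and the subsequence condition $k_m\ge m$ yields the claimed bound. Combining with the first step, $\mesh(\mathcal{P}_{n_{k_m}})\le e^{|N|(T)}/d^{2m}$, which after absorbing the multiplicative constant into a slightly larger base can be rewritten as $\le \alpha_1^m$ for some $\alpha_1\in(1/d^2, 1)$.

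The main subtlety, as emphasized in the introduction to this section, lies in comparing different towers: in the GIET setting with $d\ge 3$ the tower heights $q^{(n)}_j$ are not \emph{a priori} related (unlike for circle diffeomorphisms, where $d=2$ and the two towers balance automatically), so some of them could in principle stay bounded while others grow. The double positive matrix $A^2$ in the good return structure is exactly what forces uniform mixing between all $d$ towers often enough to drive the exponential growth of the minimum height. The a priori bounds of Proposition~\ref{apriori}, while not strictly needed for the combinatorial argument above, provide the underlying geometric picture (balance of widths of continuity intervals of $\mathcal{R}^m(T)$ and uniformly bounded distortion of iterates of the induced map $T^{(n_{k_m})}$) that will be essential in the subsequent steps of this section, where the renormalization orbit is shown to converge to Moebius, affine, and finally standard IETs.
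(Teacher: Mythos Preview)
Your Step~1 contains a genuine error: the distortion bound (Lemma~\ref{bound1}) does \emph{not} imply that the floors within a single Rohlin tower have comparable lengths. Lemma~\ref{bound1} controls the ratio $|\mathrm{D}T^n(x)/\mathrm{D}T^n(y)|$ for $x,y$ in the base interval and a \emph{fixed} iterate $n$; it says nothing about comparing $|T^{k_1}(I^{(n_{k_m})}_j)|$ to $|T^{k_2}(I^{(n_{k_m})}_j)|$ for $k_1\neq k_2$. As a sanity check, take $T$ affine: then $|N|(T)=0$, and your claim would force every floor to have exactly the same length, whereas in fact the floor lengths vary geometrically along the tower. More decisively, your Step~2 is purely combinatorial and valid for \emph{any} infinitely renormalizable GIET with this rotation number, including those falling under Case~2 of Theorem~\ref{shadowing}; if Step~1 were correct, you would deduce exponential mesh decay in Case~2 as well, contradicting the existence of wandering intervals established in \S\ref{sec:wandering}. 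So something distinguishing Case~1 must enter, and the a~priori bounds you set aside as ``not strictly needed'' are precisely that ingredient.

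The paper's argument is accordingly quite different. One proves a Key Lemma (Lemma~\ref{lemma:key}): at each good $\mathcal{C}^1$-recurrence time $n_m$ the mesh drops by a definite factor after $p$ further Zorich steps, $\mesh(\mathcal{P}_{n_m+p})\le\alpha_1\,\mesh(\mathcal{P}_{n_m})$, and then iterates. The proof uses the \emph{second} copy of $A$ (between $n_m+p$ and $n_m+2p$), together with the persistence of the a~priori bound $K^{-1}\le\|DT_n\|\le K$ for $n_m\le n\le n_m+2p$, to make the continuity intervals of $T_{n_m+p}$ balanced (Lemma~\ref{lemma:balancecont}); the \emph{first} copy of $A$ then guarantees that the \emph{relative} partition $\mathcal{P}(n_m,n_m+p)$ of $I^{(n_m)}$ is balanced, since its towers have height at most $\|A\|$ and the induced map has controlled derivative. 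Finally, one locates the floor $F_0$ of $\mathcal{P}_{n_m}$ containing the interval realizing $\mesh(\mathcal{P}_{n_m+p})$, notes that by positivity of $A$ this floor contains at least two elements of $\mathcal{P}_{n_m+p}$, and uses the distortion bound to transport the balance from the base $I^{(n_m)}$ up to $F_0$. This last step is where Lemma~\ref{bound1} enters legitimately: to compare two sub-intervals of the \emph{same} floor via their preimages in the base, not to compare different floors of a tower.
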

\noindent To prove this Proposition we will crucially exploit \emph{both} that $(n_{k_m})_{m\in\mathbb{N}}$ are \emph{good return times}, \emph{and} that, at the same time, there  are \emph{a priori bounds}. More precisely, we will show that if the double occurrence of a positive matrix occurr at a time where also the shape log-slope vector is \emph{bounded}, then this produces enough geometric control on ratios of floors in the dynamical partitions of the generalized IET to in particular produce a controllable decay of the mesh (see Lemma~\ref{lemma:key}). We encode here in the definition of \emph{good bounded distorsion sequence}
%$\mathcal{C}^1$-\emph{recurrence time}
 the simultaneous presence of good return time with a priori bounds.

\begin{definition}[good $\mathcal{C}^1$-\emph{recurrence sequence}]\label{def:C1recurrence}
Let us say that the sequence $(n_m)_{m\in\mathbb{N}}$ is a \emph{good} $\mathcal{C}^1$-\emph{recurrence sequence} if $(n_m)_{i\in\mathbb{N}}$ is a sequence of $p$-good returns (in the sense of Definition~\ref{def:goodreturns}) for some $p>0$ and  $K>0$ such that
$$
\frac{1}{K}\leq \Vert D T_{n_m} \Vert \leq K, \qquad \forall \ m\in \mathbb{N}.
$$
We call each time $n_m$ in a  good $\mathcal{C}^1$-recurrence sequence $(n_m)_{m\in \mathbb{N}}$ a \emph{good} $\mathcal{C}^1$-\emph{recurrence time}.
\end{definition}
\noindent 
} A  good $\mathcal{C}^1$-recurrence describes iterates of renormalization that are are \emph{recurrent} to certain ${\mathcal{C}^1}$-bounded sets in the space $\mathcal{X}_d$ (in the sense of Definition~\ref{def:Ckbounded}, see Lemma~\ref{lemma:equivbounded}) and at the same time are good returns,\footnote{We remark that recurring to a ${\mathcal{C}^1}$-bounded sets (in the sense of Definition~\ref{def:Ckbounded}  only controls the profile coordinates as well as the log-slope vector of the shape, but does not control the lengths coordinates, which are always bounded. The request that the $\mathcal{C}^1$-recurrence is also a sequence of good returns plays an essential role in controlling ratios of dynamical partition elements, see Lemma~\ref{lemma:key}.} from which the choice of the name.
%We will show that, for a constant $M'>0$, we also have that $\vect{\omega}_{n_i+p} \Vert \leq M'$ for all $i\in\mathbb{N}$. Thus, a sequence of good recurrence is in particular a sequence along which the iterates $\mathcal{V}^{n_i+p}(T)$ visit  a compact set in $\mathcal{X}_d$. 
\smallskip

\noindent The crucial step in proving exponential decay of the size of dynamical partitions is the following.
\begin{lemma}[Key lemma for mesh decay]\label{lemma:key}
For every  good $\mathcal{C}^1$-recurrence sequence $\{n_m\}_{m\in\mathbb{N}}$ (which is in particular a $p$-good recurrence sequence for some $p>0$) there exists a constant $0<{\alpha}_1<1$ such that
%, if $(n_m)_{m\in\mathbb{N}}$ is a  good $\mathcal{C}^1$-recurrence time (hence in particular $(n_m)_{m\in\mathbb{N}}$ are $p$-good recurrence times), then
$$
\mesh (\mathcal{P}_{n_m+p})\leq {\alpha}_1 \, \mesh (\mathcal{P}_{n_m}).
$$
\end{lemma}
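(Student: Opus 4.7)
The plan is to pass to the renormalized picture at time $n_m$. By the $\mathcal{C}^1$-recurrence assumption (Definition~\ref{def:C1recurrence}) combined with the shape-profile a priori bounds of Lemma~\ref{bound2}, there are uniform constants $K$ and $|N|(T)$ such that $\max(\|DT^{(n_m)}\|_\infty, \|(DT^{(n_m)})^{-1}\|_\infty) \leq K$, and by Lemma~\ref{bound1} any iterate of $T^{(n_m)}$ has distortion controlled by $e^{|N|(T)}$. After the affine rescaling sending $I^{(n_m)}$ to $[0,1]$, the partition $\mathcal{P}_{n_m}$ corresponds to the top partition $\{I_i^{(n_m),\mathrm{ren}}\}_{i=1}^d$ of $T^{(n_m)}$, while $\mathcal{P}_{n_m+p}$ corresponds to the refinement into Rohlin towers obtained after $p$ further steps of Rauzy--Veech for $T^{(n_m)}$.

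The first observation is a counting argument based on the positivity of $A := Q(n_m, n_m+p)$: since each entry $A_{ij} \geq 1$, by the dynamical interpretation of the cocycle entries (\S~\ref{sec:entries}) each base interval $I_i^{(n_m),\mathrm{ren}}$ is partitioned into exactly $\sum_j A_{ij} \geq d \geq 2$ pieces, each of the form $(T^{(n_m)})^{k'}(I_j^{(n_m+p),\mathrm{ren}})$ with $0 \leq k' < \sum_i A_{ij} \leq d\|A\|_\infty$.

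The heart of the argument is to show that all these pieces inside $I_i^{(n_m),\mathrm{ren}}$ are comparable in size, up to a constant independent of $m$, by combining two ingredients that each use a different occurrence of $A$. First, the \emph{second} occurrence $A = Q(n_m+p, n_m+2p)$ enforces balance of the base lengths at level $n_m+p$: from the length cocycle formula of \S~\ref{sec:lengthcocycle} one has $\lambda_j^{(n_m+p)} = \sum_i A_{ij}\, \lambda_i^{(n_m+2p)}$, and $A_{ij} \geq 1$ then yields $|I^{(n_m+2p)}| \leq \lambda_j^{(n_m+p)} \leq \|A\|_\infty |I^{(n_m+2p)}|$, hence $\lambda_j^{(n_m+p)}/\lambda_{j'}^{(n_m+p)} \leq \|A\|_\infty$ uniformly in $m$. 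Second, since $k'$ is bounded by $P := d\|A\|_\infty$ and $\|DT^{(n_m)}\|_\infty^{\pm 1} \leq K$, the chain rule gives $(DT^{(n_m)})^{k'} \in [K^{-P}, K^{P}]$; together with bounded distortion along the disjoint orbit segment $\{(T^{(n_m)})^s(I_j^{(n_m+p),\mathrm{ren}})\}_{s<k'}$ (Lemma~\ref{bound1}), this gives $|(T^{(n_m)})^{k'}(I_j^{(n_m+p),\mathrm{ren}})| \asymp \lambda_j^{(n_m+p),\mathrm{ren}}$ up to a constant depending only on $K, P, |N|(T)$. Combining both yields a uniform comparability constant $C_1$ for any two pieces of $\mathcal{P}_{n_m+p}^{\mathrm{ren}}$ inside $I_i^{(n_m),\mathrm{ren}}$.

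Finally, for a general floor $F = T^\ell(I_i^{(n_m)}) \in \mathcal{P}_{n_m}$, the iterates $\{T^s(I_i^{(n_m)})\}_{0 \leq s < \ell}$ are pairwise disjoint so Lemma~\ref{bound1} applied to $T^\ell$ transfers the comparability with loss factor $e^{|N|(T)}$, giving a constant $C_2 := C_1 e^{|N|(T)}$ for the $\geq 2$ pieces of $\mathcal{P}_{n_m+p}$ inside $F$. An elementary arithmetic estimate (any $N \geq 2$ positive numbers summing to $|F|$ and having pairwise ratios at most $C_2$ are each bounded above by $C_2|F|/(C_2+N-1)$) then yields $|J| \leq \frac{C_2}{C_2+1}|F|$ for any piece $J \in \mathcal{P}_{n_m+p}$ contained in $F$, whence $\mesh(\mathcal{P}_{n_m+p}) \leq \alpha_1\, \mesh(\mathcal{P}_{n_m})$ with $\alpha_1 := C_2/(C_2+1) < 1$. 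The main obstacle is the comparability step: the balance via the \emph{second} occurrence of $A$ is essential and specific to the higher-genus situation, since for circle diffeomorphisms the two towers are automatically related through the top/bottom combinatorics, whereas for $d \geq 3$ the $d$ towers are a priori independent and only the double positivity allows one to simultaneously control the counting and the length balance needed to make distortion close the argument.
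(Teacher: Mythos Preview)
Your overall strategy matches the paper's: establish comparability of the pieces of $\mathcal{P}_{n_m+p}$ inside each base interval $I_i^{(n_m)}$ via the double positivity of $A$, transport this to an arbitrary floor $F\in\mathcal{P}_{n_m}$ by the distortion bound of Lemma~\ref{bound1}, and conclude by an elementary arithmetic estimate. Your final step (comparable pieces summing to $|F|$ forces each to be at most $\tfrac{C_2}{C_2+1}|F|$) is a clean variant of the paper's Step~4, where instead one isolates a second piece $F_2\subset F$ and bounds $|F_1|\le |F|-|F_2|$.

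There is however one genuine technical gap. For the balance of the base lengths $\lambda_j^{(n_m+p)}$ you invoke the linear relation $\lambda_j^{(n_m+p)}=\sum_i A_{ij}\lambda_i^{(n_m+2p)}$ from \S\ref{sec:lengthcocycle}. That identity is stated and valid only for \emph{standard} IETs: \S\ref{sec:lengthcocycle} sits inside \S\ref{Rauzysec}, which explicitly restricts to $\mathcal{I}_d$. For a GIET the floors of the relative partition $\mathcal{P}(n_m+p,n_m+2p)$ inside $I_j^{(n_m+p)}$ have lengths $|(T_{n_m+p})^k(I_i^{(n_m+2p)})|$, not $\lambda_i^{(n_m+2p)}$, and the sum is not linear in the $\lambda_i^{(n_m+2p)}$. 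Without this step your comparability constant $C_1$ depends on the unknown ratio $\lambda_j^{(n_m+p)}/\lambda_{j'}^{(n_m+p)}$, which is not a priori bounded for a GIET.

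The paper handles exactly this point via Lemma~\ref{lemma:balancecont}. One first checks (the paper's Step~0) that the a priori bound $K^{-1}\le\|DT_{n_m}\|\le K$ persists to time $n_m+p$: each branch of $T_{n_m+p}$ is a composition of at most $\|A\|$ branches of $T_{n_m}$, so $\|DT_{n_m+p}\|^{\pm1}\le K^{\|A\|}=:K_1$. Then the positivity of $Q(n_m+p,n_m+2p)=A$ together with this derivative bound at time $n_m+p$ yields $\nu$-balance of $\lambda^{(n_m+p)}$ by Lemma~\ref{lemma:balancecont}. With this correction (replace your linear formula by Lemma~\ref{lemma:balancecont} applied after the persistence of bounds), your argument goes through and is essentially identical to the paper's.
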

\noindent The proof of this key Lemma will take all of \S~\ref{sec:keylemma}. 
Let us first show that the Lemma allows to finish by induction the proof of exponential decay of the partitions mesh.

\begin{proof}[Proof of Proposition~\ref{partition} (from Lemma~\ref{lemma:key})]
Notice that, if we are in Case $1$ of the conclusion of Theorem~\ref{shadowing}, the sequence $(n_{k_m})_{m\in\mathbb{N}}$ is a  good $\mathcal{C}^1$-recurrence sequence, since the sequence $(n_k)_{k\in\mathbb{N}}$ (and therefore any of its subsequences) is, by the $(RDC)$ (recall Definition~\ref{def:RDC}) a sequence of good-returns and  $\{ \Vert DT_{n_{k_m}} \Vert , m\in\mathbb{N}\}$ are controlled above and below by the a-priori bounds in Proposition~\ref{apriori}. Without loss of generality, we can also assume (disregarding some good times if needed) that $n_{k_{m+1}}- n_{k_m}\geq p$ (notice that this new subsequence still grows linearly). Thus, %simply observing that $\mesh (\mathcal{P}^n)$ is non-increasing in $n$ and 
iterating the key Lemma~\ref{lemma:key}, we get that, for any $m\geq 1$,
$$
\mesh (\mathcal{P}^{n_{k_m}})\leq  \mesh (\mathcal{P}^{n_{k_{m-1}+p}})\leq  {\alpha}_1 \mesh (\mathcal{P}^{n_{k_{m-1}}})\leq\cdots  \leq ({\alpha}_1)^{m} \mesh (\mathcal{P}^{n_{k_0}})\leq  ({\alpha}_1)^{m},
$$
where the last inequality holds trivially since $\mesh({\mathcal{P}})\leq 1$ for any partition of $[0,1]$.
%This concludes the proof. 
\end{proof}

\noindent Let us also deduce that the \emph{whole} sequence $\left(\mesh(\mathcal{P}^n)\right)_{n\in\mathbb{N}}$ decay exponentially. We record separately the following elementary servation since it will be used again in this section.
\begin{remark}\label{rk:linearexp} If a decreasing (i.e.~non increasing) sequence $(a_k)_{k\in\mathbb{N}}$ \emph{decays exponentially along a subsequence  with linear growth}, then the \emph{whole sequence} $(a_k)_{k\in\mathbb{N}}$ \emph{decays exponentially}. To see this,  assume that there exist a subsequence $(k_m)_{m\in\mathbb{N}}$ such that $k_m/m$ has a finite limit and $0<\theta_0<1$ and $K>0$ such that $a_{k_m} \leq K (\theta_0)^m$ for every $m\in\mathbb{N}$. Then, if $\ell>0$ is such that $m\geq k_{m+1}/\ell$ for all $m\in\mathbb{N}$, setting $\theta_1:=({\theta}_0)^{1/\ell}$, we still have $0<\theta_1<1$ and, for each $k\in\mathbb{N}$, choosing $m$ such that  $k_m\leq k< k_{m+1}$ and using that $(a_k)_{k\in\mathbb{N}}$ is not increasing, we see that 
 $a_k \leq a_{k_m}\leq  K ({\theta}_0)^{m}\leq  K ({\theta}_0)^{k_{m+1}/\ell} = K \theta_1^{k_{m+1}}\leq K \theta_1^k$. 
\end{remark}
\noindent Since $\mesh(\mathcal{P}^n)$ is descreasing in $n$ and both $(k_m)_{m\in\mathbb{N}}$ and $(n_k)_{k\in\mathbb{N}}$ grow  linearly (see Property $(ii)$ and $(iii)$ in Definition~\ref{def:RDC}), Proposition~\ref{partition}  combined with the above Remark gives the following stronger conclusion.

\begin{cor}[exponential decay of the mesh]\label{expmeshdecay}
There exists $0 < \alpha_2(T) = \alpha_2 <1$ such that  $\mesh(\mathcal{P}_{n}) \leq  \alpha_2^n$ for all $n \in \mathbb{N}$.
\end{cor}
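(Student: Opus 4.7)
The plan is to deduce the corollary as an almost immediate consequence of Proposition~\ref{partition} together with the elementary observation recorded in Remark~\ref{rk:linearexp}. The only substantive task is to verify the two hypotheses of that remark: that $n \mapsto \mesh(\mathcal{P}_n)$ is non-increasing, and that the subsequence $(n_{k_m})_{m\in\mathbb{N}}$ along which exponential decay has been established grows linearly in $m$.

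First I would note the monotonicity. By construction of the dynamical partitions (see the last sentence of \S~\ref{dynamicalpartitions}), $\mathcal{P}_{n+1}$ is a refinement of $\mathcal{P}_n$ for every $n\in\mathbb{N}$: every atom of $\mathcal{P}_{n+1}$ is contained in an atom of $\mathcal{P}_n$. It follows that $\mesh(\mathcal{P}_{n+1}) \leq \mesh(\mathcal{P}_n)$, so $(\mesh(\mathcal{P}_n))_{n\in\mathbb{N}}$ is a decreasing (non-increasing) sequence. Next I would check the linear growth of $(n_{k_m})_{m\in\mathbb{N}}$: condition $(ii)$ of the $(RDC)$ (Definition~\ref{def:RDC}) gives $\limsup_k n_k/k < +\infty$ and condition $(iii)$ gives $\limsup_m k_m/m < +\infty$, so writing $n_{k_m}/m = (n_{k_m}/k_m)(k_m/m)$ yields $\limsup_{m\to\infty} n_{k_m}/m < +\infty$, which is precisely the linear growth hypothesis.

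With both hypotheses in place, applying Remark~\ref{rk:linearexp} to the sequence $a_n := \mesh(\mathcal{P}_n)$, the subsequence $(n_{k_m})_{m\in\mathbb{N}}$, and the bound $a_{n_{k_m}} \leq \alpha_1^m$ supplied by Proposition~\ref{partition}, produces constants $K=K(T) > 0$ and $\theta_1 = \theta_1(T) \in (0,1)$ such that $\mesh(\mathcal{P}_n) \leq K \theta_1^n$ for every $n\in\mathbb{N}$. To remove the constant $K$ and arrive at the cleaner form $\mesh(\mathcal{P}_n) \leq \alpha_2^n$ stated in the corollary, it suffices to fix any $\alpha_2 \in (\theta_1, 1)$: then $K\theta_1^n \leq \alpha_2^n$ as soon as $n \geq n_0 := \lceil \log K / \log(\alpha_2/\theta_1) \rceil$, while for $n < n_0$ the trivial bound $\mesh(\mathcal{P}_n) \leq 1$ combined with choosing $\alpha_2$ sufficiently close to $1$ (independently of the finitely many initial values) ensures $\mesh(\mathcal{P}_n) \leq \alpha_2^n$ in this finite range as well. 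There is no serious obstacle here; the entire content of the corollary is a routine bookkeeping exercise on top of Proposition~\ref{partition}, whose proof contains all the actual dynamical input.
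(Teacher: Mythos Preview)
Your proof is correct and follows essentially the same approach as the paper: verify that $\mesh(\mathcal{P}_n)$ is non-increasing (by refinement of partitions), that $(n_{k_m})_m$ grows linearly (from conditions $(ii)$ and $(iii)$ of the $(RDC)$), and then apply Remark~\ref{rk:linearexp} to the bound from Proposition~\ref{partition}. Your additional paragraph adjusting $\alpha_2$ to absorb the multiplicative constant is a reasonable extra bit of care that the paper leaves implicit.
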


\noindent We will now focus on proving the key Lemma. We first isolate and prove, in the next \S~\ref{sec:balance}, an intermediate technical step in the proof. The proof of the key Lemma is then given in the following \S~\ref{sec:keylemma}.

\subsubsection{Balance of continuity intervals via  good $\mathcal{C}^1$-recurrence sequence}\label{sec:balance}
%\subsubsection{Proof of key lemma}\label{sec:proofkeylemma}
Let $\lambda^{(n)}$ be the lenght vector, whose entries  $\lambda^{(n)}_j=|I^{(n)}_j|$, $1\leq j\leq d$ be the lenghts of the continuity intervals of the induced map $T_n$ on $I^{(n)}$. We say that $\lambda^{(n)}$ is $\nu$-\emph{balanced} for some constant $\nu>1$, iff 
$$
\frac{1}{\nu} \leq \max_{1\leq i,j\leq d} \frac{\lambda^{(n)}_j}{\lambda^{(n)}_i} \leq \nu.
$$
It is well known, in the study of standard IET, that the occurrence of a positive matrix produces balanced lenghts vectors  (a fact which has been exploited since the seminal work by Veech \cite{Ve:gau}). It turns out that the same is also true for GIETs, as long as one has a priori bounds. More precisely, we will prove in this section the following:
\begin{lemma}[Balance of continuity intervals]\label{lemma:balancecont}
Given a positive Zorich matrix $A\in SL(d,\mathbb{Z})$ of lenght $p$ (see \S~\ref{sec:goodreturns} for terminology) and $K>0$, there exists a constant $\nu=\nu(A,K)$ which depends only on $K$ and the \emph{norm} $\| A\|$ such that,  if $n$ is such that $Q(n,n+p)=A$ and $K^{-1}\leq \Vert \D T_n\Vert\leq K$, then the lenghts vector $\lambda^{(n)}$ at time $n$ is $\nu$-balanced.
\end{lemma}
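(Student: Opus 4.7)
The plan is to use the dynamical interpretation of the entries of the positive matrix $A = Q(n, n+p)$ (recalled in \S~\ref{sec:entries}) combined with the pointwise $\mathcal{C}^1$-bounds on the derivative of the induced map $T_n$ in order to express each continuity length $\lambda^{(n)}_i$ as a sum of floor-lengths of Rohlin towers over $I^{(n+p)}$, and then to compare these sums across different indices $i$.

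First I will recall that, since $A = Q(n, n+p)$, the orbit of any point $x \in I^{(n+p)}_j$ under the induced map $T_n$ visits each $I^{(n)}_i$ exactly $A_{ij}$ times before its first return to $I^{(n+p)}$. Equivalently, the Rohlin tower over $I^{(n+p)}_j$ (for $T_n$) has height $h_j := \sum_{i=1}^d A_{ij}$, its floors are the intervals $T_n^k(I^{(n+p)}_j)$ for $0 \leq k < h_j$, and exactly $A_{ij}$ of these floors lie inside $I^{(n)}_i$. The positivity of $A$ yields the crucial uniform lower bound $A_{ij} \geq 1$, while the choice of matrix norm $\|A\| = \sum_{i,j} A_{ij}$ gives both $h_j \leq \|A\|$ and $A_{ij} \leq \|A\|$ for all $i,j$.

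Next, I would use the a priori bounds $K^{-1} \leq |DT_n(y)| \leq K$ for all $y$ (which is the pointwise interpretation of $\mathcal{C}^1$-boundedness, cf.~Lemma~\ref{lemma:equivbounded}) together with the chain rule to bound, for every $0 \leq k \leq h_j$ and every $x$ in the relevant floor, the derivative $|D(T_n^k)(x)|$ between $K^{-\|A\|}$ and $K^{\|A\|}$. No singularity is encountered before the first return, so $T_n^k$ is a smooth diffeomorphism on each floor and this chain-rule estimate is legitimate. Integrating over $I^{(n+p)}_j$ yields
$$K^{-\|A\|}\, |I^{(n+p)}_j| \;\leq\; |T_n^k(I^{(n+p)}_j)| \;\leq\; K^{\|A\|}\, |I^{(n+p)}_j|,$$
so every floor of the tower over $I^{(n+p)}_j$ has length comparable to $|I^{(n+p)}_j|$ up to the multiplicative factor $K^{\|A\|}$.

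Finally, since $I^{(n)}_i$ is the disjoint union over $j$ of the $A_{ij}$ floors of the tower over $I^{(n+p)}_j$ sitting inside $I^{(n)}_i$, summing the above estimate gives
$$K^{-\|A\|} \sum_{j=1}^d A_{ij}\, |I^{(n+p)}_j| \;\leq\; \lambda^{(n)}_i \;\leq\; K^{\|A\|} \sum_{j=1}^d A_{ij}\, |I^{(n+p)}_j|.$$
The bounds $1 \leq A_{ij} \leq \|A\|$ then imply that $\sum_j A_{ij} |I^{(n+p)}_j|$ is squeezed between $|I^{(n+p)}|$ and $\|A\|\cdot |I^{(n+p)}|$ uniformly in $i$, and taking ratios of $\lambda^{(n)}_i$ and $\lambda^{(n)}_{i'}$ produces the claim with $\nu = K^{2\|A\|}\|A\|$, a constant that depends only on $K$ and $\|A\|$ as required. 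No serious obstacle is expected: the only mildly delicate point is to confirm that the chain-rule bound on $|D(T_n^k)|$ holds on entire floors (guaranteed by the definition of Rohlin towers, which avoid singularities) and to rely on the correct pointwise form of the a priori $\mathcal{C}^1$-bounds extracted from Lemma~\ref{lemma:equivbounded}.
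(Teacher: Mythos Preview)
Your proof is correct and follows essentially the same strategy as the paper: both arguments combine the positivity of $A$ (guaranteeing that each continuity interval $I^{(n)}_i$ contains floors from the Rohlin towers over $I^{(n+p)}$) with the a priori derivative bounds on $T_n$ (controlling the lengths of those floors via the chain rule). The only organisational difference is that the paper singles out the largest floor $F_0$ of $\mathcal{P}_p$, propagates its size to every floor of the \emph{same} tower using the derivative bounds, and then uses positivity to place one such floor inside each $I^{(n)}_i$; the upper bound comes trivially from $\lambda^{(n)}_i \leq |I^{(n)}|$. You instead sum systematically over all towers and use both inequalities $1 \leq A_{ij} \leq \|A\|$, which yields the same conclusion with a slightly worse constant ($K^{2\|A\|}\|A\|$ versus $K^{\|A\|}\|A\|$) but a more symmetric argument. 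Either route is perfectly fine.
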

%{\color{red} check consistency of notation: which renormalization time we are using here, }
\begin{proof}
Without loss of generality, by replacing $T$ with $T^{(n)}$, we can assume that $n=0$. Consider the dynamical partition $\mathcal{P}_p$. By construction, since $Q(0,p)=A$ this partition consists of $\|A\|$ intervals. Let $F_0$ be the largest, so that $|F_0|\geq 1/\|A\|$. Let $j_0$ be the index of the Rohlin tower $\mathcal{P}^{j_0}_p$ which contains $F_0$ as a floor. Since every  floor $F$ in $\mathcal{P}^{j_0}_p$ can be written in the form $F=T^{\pm i}F_0$ for some $0\leq i<\|A\|$, by mean value  we have that $|F_0|\leq \| \D T^{\mp i}\| |F|$ and thus, from 
 the distorsion bound on $\D T_n$ and the analogous one for $\D T_n^{-1}$ (which holds by the formula for the derivative of the inverse function) we get that 
$$
|F|\geq |F_0| \frac{|F|}{|F_0|}\geq \frac{|F_0| }{K^{\|A\|}}\geq \frac{1}{\| A\| K^{\|A\|}} , \qquad \text{for\ all}\ F\in \mathcal{P}^{j_0}_p.
$$
(where the last inequality is by the choice of $|F_0|$). 

Now, since $A$ is positive, each continuity interval $\lambda^{(0)}_j$, for any $1\leq j\leq d$ contains  at least one floor of the tower $\mathcal{P}^{j_0}_p$.  The previous lower bound hence shows that $\min_{1\leq j\leq 1}\lambda^{(0)}_j\geq {(\| A\| K^{\|A\|})}^{-1}$. Since $\max_{1\leq j\leq 1}\lambda^{(0)}_j\leq 1$, this proves that   $\lambda^{(0)}$ is $\nu$-balanced for $\nu:=\|A\|  K^{\|A\|} $ and concludes the proof.
\end{proof}
%\begin{remark}
%If now the time $n$ is the time in the middle of a double occurrence of $A$ in a good return time, i.e.~it is such that $Q(n-p,n)=A$, classical arguments also give that also the height vectors $h^{(n)}$ is $nu$-balanced, since it satisfies  $h^{(n)}=A \, h^{(n-p)}$. 
%\end{remark}
{
\subsubsection{Decay of the mesh at  good $\mathcal{C}^1$-recurrence times.}\label{sec:keylemma}
The goal of this section is now to prove the key Lemma~\ref{lemma:key}. The main idea behind the proof, which is split in several intermediate steps, is that a  good $\mathcal{C}^1$-recurrence times produces \emph{relative balance} of the dynamical partitions, i.e.~if one studies the dynamical partitions of the induced map $T_{n_m}$ corresponding to a  good $\mathcal{C}^1$-recurrence time $n_m$ (that we call \emph{relative} partitions), one can show that after $p$ (Zorich) renormalization  steps (hence at the time $n_m+p$ in the 'middle' of the double occurrence of the positive matrix $A$) one can have a good control on the ratios of all partition elements (i.e.~we show that this relative partition is \emph{balanced}, see Step $2$ of the proof for the precise statement). Moreover, this partition is made by a subset of the intervals of the dynamical partition $\mathcal{P}_{n_m}$, sufficiently well spaced to be able to infer (via the classical distorsion bound) the desired decay of the mesh.
\begin{proof}[Proof of the key Lemma~\ref{lemma:key}]
Let $n_k$ be a given  good  time; let $A$ be the positive matrix whose double occurrence gives the corresponding good return time and let $p$ be its Rauzy-Veech length. In the proof we will work with four different renormalization times, namely $n_k, n_k+p$ and $n_k+{2p}$ (which are, informally, the times just before, in the middle and just after the double occurrence of $A$) an the initial time $n=0$. For brevity of notation, let us use the notation
$$
\ell_0:= n_k, \qquad \ell_1:= n_{k+p}, \quad   \ell_2:= n_{k+2p}
$$
%(where $n^{\pm}$ is meant to recall that the corresponding times are respectively before and after the occurrence of $A$ at time $\ell_1$). 
\noindent Thus, $\ell_0$ is the time just \emph{before} the occurrence of $AA$, $\ell_1$ is the time in the middle, and $\ell_2$ just \emph{after}. We split the proof in several steps.

\smallskip
\noindent \emph{Step 0: Persistence of a priori bounds up to time $\ell_2$.} By assumption, since $\ell_0:=n_k$ is a  $\mathcal{C}^1$- recurrence time, $||\D T_{\ell_0}||\leq K$. We claim that since $\ell_2-\ell_0=2p$, for some $K_1>0$, we actually have a that
\begin{equation}\label{boundedderivatives}
\frac{1}{K_1}\leq ||\D T_{n}||\leq K_1, \qquad \text{for\ all}\ \ell_0\leq n\leq \ell_2.
\end{equation} 
To see this, remark first that (by definition of $A$-good return times and the cocycle relations, see~\ref{sec:Zorichcocycle}), we have that $q^{(\ell_2)}=A^2 \, q^{(\ell_0)}$ and therefore, for all $\ell_0\leq n\leq \ell_2$, writing $\D T_{n}(x)$ (for any $x$ in the domain $I^{(n)}$ of $T_n$) by the chain rule as a product of at most $\|A\|^2$ terms of the form $\D T_{\ell_0}(x_i)$ (or equivalently, considering the logarithm and decomposing the special Birkhoff sums $f^{(n)}(x)$ as sum of special Birkhoff sums  $f^{(\ell_0)}(x_i)$, see \S~\ref{sec:SBS}), we have that
$$||\D T_{n}||\leq  ||\D T_{\ell_0}||^{||A||^2}\leq K_1:= K^{||A||^2}.$$ 
To prove the lower bound, we can use $\D (T_n^{-1})$. By decomposing it in the same way as a product of at most $\|A\|^2$ terms of the form $\D T_{\ell_0}^{-1}(x_i)$ and exploiting twice the formula for the derivative of the inverse function as well as the lower bound  $\| \D T_{\ell_0}\|\geq K^{-1}$,
$$
\frac{1}{||\D T_{n}||}= \| \D (T_n^{-1})\|\leq  ||\D (T_{\ell_0}^{-1})||^{||A||^2} = \left(\frac{1}{||\D T_{\ell_0}||}\right)^{||A||^2}\leq  K^{||A||^2}=K_1
$$
which is the desired lower bound $||\D T_{n}||\geq K_1^{-1}$.
\medskip

\noindent \emph{Step 1: Balance of continuity intervals at time $\ell_1$.} Consider now the induced map $T_{\ell_1}$ at time $\ell_1$. 
Since $\ell_1$ is the time \emph{before} an occurrence of $A$, i.e.~$Q(\ell_1,\ell_2)=A$, and, by  Step 0, we have the derivatives bounds \eqref{boundedderivatives} for $n=\ell_1$, Lemma~\ref{lemma:balancecont} gives that the lenghts $\lambda^{(\ell_1)}_j=|I^{(\ell_1)}_j|$ of the continuity intervals  of  $T_{\ell_1}$ are $\nu_A$-balanced for a constant $\nu>0$ (depending on the matrix $A$ but independent on the choice of the  $\mathcal{C}^1$- recurrence time), i.e.
$$
\frac{1}{\nu_A} \leq \max_{1\leq i,j\leq d} \frac{\lambda^{(\ell_1)}_j}{\lambda^{(\ell_1)}_i} \leq \nu_A.
$$

\medskip
\noindent \emph{Step 2: Balance of the relative towers of step $\ell_0$ over $\ell_1$.} The induced map $T_{\ell_1}$ on $I^{(\ell_1)}$, since $\ell_1> \ell_0$, can also be seen as first return map of  $T_{\ell_0}$ on the subinterval $I^{(\ell_1)}\subset I^{(\ell_0)}$. Consider hence the corresponding dynamical partition of $I^{(\ell_0)}$ into subintervals which are floors of Rohlin towers (see \S~\ref{dynamicalpartitions}) for the map $T_{\ell_0}$ seen as acting on a skyscraper with base $I^{(\ell_1)}$ (see Figure~\ref{relativeP}). We will call this dynamical partition (and resp.~its Rohlin towers) the \emph{relative} dynamical partitions (resp.~the \emph{relative} towers) of step $\ell_0$ with respect to step $\ell_1$ and denote it $\mathcal{P}(\ell_0,\ell_1)$ (resp.~$\mathcal{P}^j(\ell_0,\ell_1)$, $1\leq j\leq d$).  We claim that this relative dynamical partition is \emph{balanced}, in the sense that there exists a constant $\nu_1>0$ (which depends on $A$ and $K$ but not on the initial choice of  $\mathcal{C}^1$- recurrence time $\ell_0$) such that all floors $F_1,F_2$ of towers $\mathcal{P}(\ell_0,\ell_1)$  have comparable lenghts, i.e.
$$
\frac{1}{\nu_1} \leq  \frac{|F_1|}{|F_2|} \leq \nu_1, \qquad \mathrm{for\ all}\ F_i=(T_{\ell_0})^{k_i} I^{(\ell_1)}_j,\ 1\leq j\leq d, \ 0\leq k_1,k_2< q_j(\ell_0,\ell_1),$$
where $q_j^{(\ell_0,\ell_1)}$ is the height of the relative Rohlin tower $\mathcal{P}^j(\ell_0,\ell_1)$. Since $\ell_1$ is just \emph{after} an occurrence of $A$ and therefore $q^{(\ell_1)} = A\, q^{(\ell_0)}$, the heights of each of these relative towers $\mathcal{P}^j(\ell_0,\ell_1)$ is at most $\|A\|$. Since $K_1^{-1}\leq ||\D T_{\ell_1}||\leq K_1$ by Step 0, it thefore follows that, for every floor $F$ of the relative tower $\mathcal{P}^j(\ell_0,\ell_1)$, 
$$
\frac{1}{K_1^{\|A\|}}\leq \frac{|F|}{|I^{(n_0)}_j|}\leq K_1^{\|A\|}.
$$
 Therefore, since the base intervals $|I^{(\ell_1)}_j|$ are $\nu_A$-balanced by Step 1, this shows that floors are $\nu_1$-balanced for $\nu_1:= K_1^{\|A\|} \nu_A$.

\begin{figure}
\centering
\begin{minipage}{.5\textwidth}
  \centering
			\def\svgwidth{ 0.8\columnwidth}
  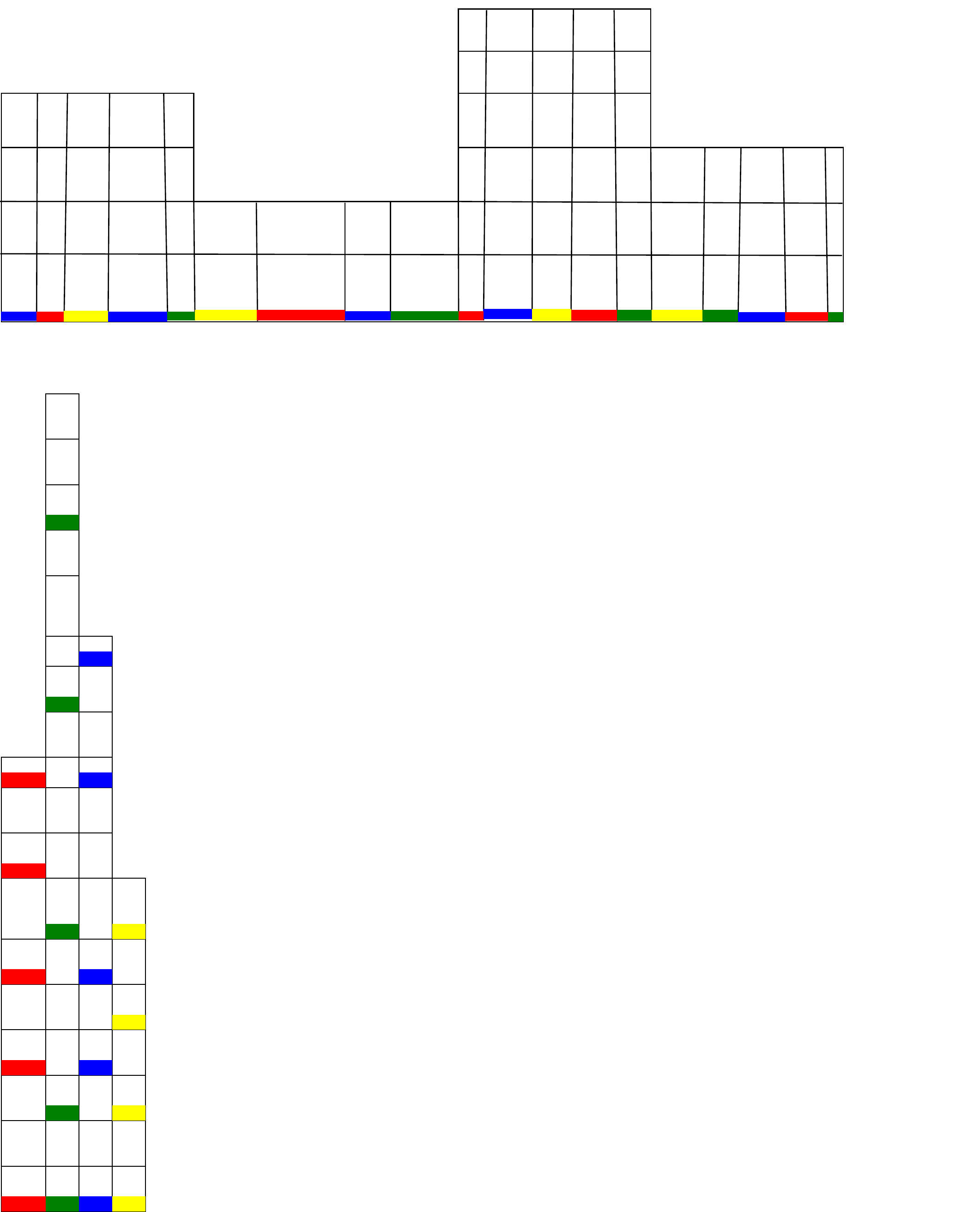
	 \caption{ The partitions $\mathcal{P}_{\ell_0}$ and $\mathcal{P}_{\ell_1}$. \label{relativeP}}
\end{minipage}%
\begin{minipage}{.5\textwidth}
  \centering
			\def\svgwidth{ 0.8\columnwidth}
			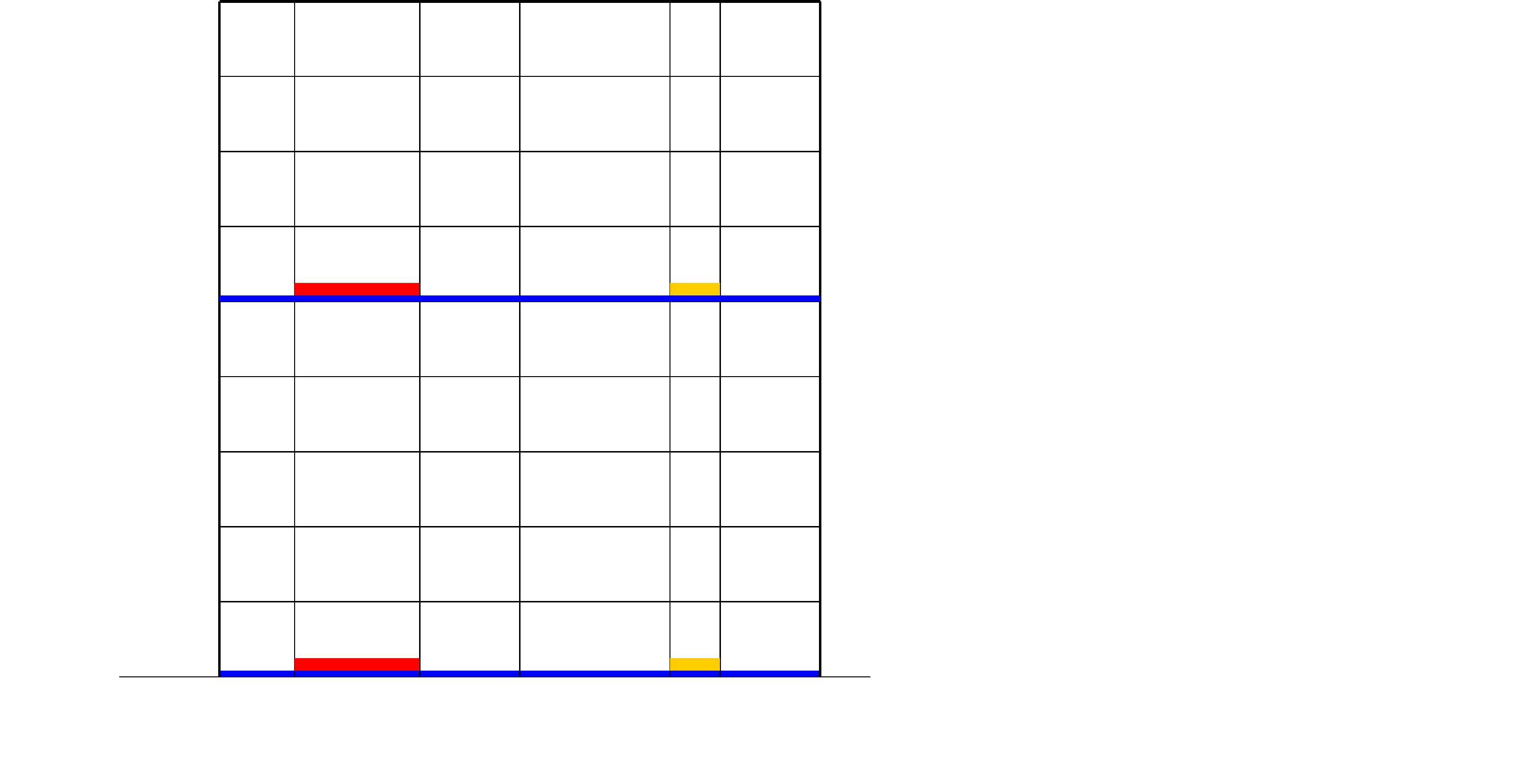
 \caption{ The tower subintervals used in Step 4 of the proof of the key Lemma~\ref{lemma:key}.\label{towernames}}
\end{minipage}
\end{figure}

\medskip
\noindent \emph{Step 3: Decay of mesh in the bases of step $\ell_0$ relative partition.} Let us now think of the same relative partition $\mathcal{P}(\ell_0,\ell_1)$ not as Rohlin towers, but as a partition of $I^{(\ell_0)}$ (as shown in the lower row of Figure~\ref{relativeP}). Notice that each of the continuity intervals $I^{(\ell_0)}_j$, $1\leq j\leq d$, is partition into a union of at most $\|A\|$ elements of $I^{(\ell_0)}$. Therefore,  
\begin{equation}\label{lowerboundfloor}
\frac{\min_{I\in \mathcal{P}(\ell_0,\ell_1)}|I|}{|I^{(\ell_0)}_j|}\geq \frac{\min_{I\in \mathcal{P}(\ell_0,\ell_1)}|I|}{\|A\| \max_{I\in \mathcal{P}(\ell_0,\ell_1)}|I|}\geq \frac{1}{\nu_1\, \|A\|},
\end{equation}
where the last inequality follows from the balance of the partition $\mathcal{P}(\ell_0,\ell_1)$ proved in Step 1. 

%%%Furthermore, since $A$ is a positive matrix, each continuity interval $I^{(\ell_0)}_j$ contains at least two elements of $%%\mathcal{P}(\ell_0,\ell_1)$ (actually at least $d$, since all labels should be winners in at least one of the steps which produce $A$). This hence gives that, for every $I$ in $\mathcal{P}(\ell_0,\ell_1)$ and every $1\leq j\leq d$, 
%%%$$
%%%\frac{|I|}{|I^{(\ell_0)}_j|}\leq \frac{|I^{(\ell_0)}_j|-\min_{I\in \mathcal{P}(\ell_0,\ell_1)}|I|}{|I^{(\ell_0)}_j|}= 1-\frac{\min_{I\in \mathcal{P}(\ell_0,\ell_1)}|I|}{|I^{(\ell_0)}_j|}\leq  1-\frac{1}{\nu_1\, \|A\|},
%%%$$
%%%where the last inequality is given by \eqref{lowerboundfloor}.

\medskip
\noindent \emph{Step 4: Propagating the decay of mesh in the base through distorsion bounds.} 
In this final step, we infer the decay of the mesh by an argument very similar to Step 3, only not at the base, but in the floor of the Rohlin tower of $\mathcal{P}(\ell_0)$ which contains the interval of $\mathcal{P}(\ell_1)$ which relized the mesh. The classical distorsion lemma will allow us to \emph{propagate} and repeat the estimates of Step 3 to other floors of  $\mathcal{P}(\ell_0)$.
%from the balance of the relative partition  $\mathcal{P}(\ell_0,\ell_1)$ (or more precisely from Step 3). 

Notice first of all that the elements of the relative  partition  $\mathcal{P}(\ell_0,\ell_1)$ are a \emph{subset} of the elements of the partition  $\mathcal{P}_{\ell_1}$, consisting exactly of all elements of $\mathcal{P}_{\ell_1}$ which are contained in the interval $I^{(\ell_0)}$, as illustrated in Figure~\ref{relativeP} (this is because $T_{\ell_0}$ is by definition the first return of $T$ to $I^{(\ell_0)}$). 

 Let $F_1$ be an interval of $\mathcal{P}_{\ell_1}$ such that $\mesh (\mathcal{P}_{\ell_1})=|F_1|$. Then, since the partition  $\mathcal{P}_{\ell_1}$ is a refinement of $\mathcal{P}_{\ell_0}$,  $F_1$ belongs to a floor of one of the Rohlin towers $\mathcal{P}^j_{\ell_0}$ of $\mathcal{P}_{\ell_0}$, say the $j_0^{th}$ one. Let us call $F_0$ this floor. 
Then we can write (referring the reader to the schematic representation in Figure~\ref{towernames} for a picture)
$$
 F_0:=T^{k_0} I^{(\ell_0)}_{j_0}, \quad \text{and}\quad F_1= T^{k_0} (I_1), \quad \text{for\ some}\ I_1\subset I^{(\ell_0)}_{j_0}.
$$
By construction $I_1$ is an element of  $\mathcal{P}_{\ell_1}$ and contained in $ I^{(\ell_0)}_{j_0}$ and hence, by the initial remark of this step, it is also an element of  $\mathcal{P}(\ell_0,\ell_1)$. As in Step 3, by positivity of the matrix $A$, there is at least one (and actually at least $d-1$) other element of $\mathcal{P}_{\ell_1}$, that we will call $I_2$, such that $I_2\subset  I^{(\ell_0)}_{j_0}$. Let $F_2=T^{k_0} (I_2)$, so that $F_2\subset F_0$, i.e.~it belongs to the same floor than contains $F_1$. Since $F_1$ and $F_2$ are by construction different, $F_1\subset F_0\backslash F_2$ and therefore $|F_1|\leq |F_0|-|F_2|$ (refer again to Figure~\ref{towernames}).  
Thus, recalling the choice of $F_1$ and using that $|F_0|\leq \mesh (\mathcal{P}_{\ell_0})$ (simply since $F$ is an element of $\mathcal{P}_{\ell_0}$), we get
%; being by construction different from $F_0$, we also have that $F_0\subset F\backslash F_1$, so that $|F_0|\leq |F\backslash F_1|$. Since $F\backslash F_1= T^{k_0}(I^{(n_0)}_{j_0})\baclslash I_1)$,
\begin{equation}\label{meshratio}
\frac{\mesh ({\mathcal{P}_{\ell_1}})}{\mesh ({\mathcal{P}_{\ell_0}})} = \frac{|F_1|}{\mesh ({\mathcal{P}_{\ell_0}})}\leq \frac{|F_0|-|F_2|}{|F_0|} = 1-\frac{|F_2|}{|F_0|}.  
\end{equation}
We therefore now want to estimate the ratio $|F_2|/|F|$ which appears in the last estimate. Remark first that, by mean value theorem and then the classical distorsion bounds (namely Lemma~\ref{bound1}), for some $x,y\in I^{(\ell_0)}_{j_0}$ 
%%$$
%%\frac{\frac{|F\backslash F_1|}{|I^{(\ell_0)}_j\backslash I_1|}}{\frac{|F|}{|I^{(n_0)}_{j_0}|}}  =\frac{\frac{|F\backslash F_1|}{|I^{(\ell_0)}_j\backslash I_1|}}{\frac{|F|}{|I^{(n_0)}_{j_0}|}} = \frac{\frac{|T^{k_0}(I^{(\ell_0)}_j\backslash I_1)|}{|I^{(\ell_0)}_j\backslash I_1|}}{\frac{|T^{k_0}(I^{(n_0)}_{j_0})|}{|I^{(n_0)}_{j_0}|}} \leq \frac{|\D T^{k_0}(u)|}{|\D T^{k_0}(v)|}\leq exp{|N|(T)}.$$
$$
\frac{|I_2|}{|I^{(\ell_0)}_{j_0}|} \frac{|F_0|}{{|F_2|}} =\frac{|F_0|/|I^{(\ell_0)}_{j_0}|}{|F_2|/|I_2|} =\frac{|T^{k_0}I^{(\ell_0)}_{j_0}||/|I^{(\ell_0)}_{j_0}|}{|T^{k_0}I_2|/|I_2|}= \frac{|\D T^{k_0}(x)|}{|\D T^{k_0}(y)|}\leq \exp{|N|(T)}.
$$
Using this estimate and then Step $3$ (in particular \eqref{lowerboundfloor}) to estimate   $I_2/|I^{(\ell_0)}_{j_0}| $ from below), we get
$$
\frac{|F_2|}{|F_0|}\geq \frac{1}{\exp{|N|(T)}}\frac{|I_2|}{|I^{(\ell_0)}_{j_0}|}\geq \frac{1}{\exp{|N|(T) \nu_1 \|A\|}}.
$$
Thus, using this estimate in \eqref{meshratio} and setting ${\alpha}_1:=\exp{(|N|(T) \nu_1 \|A\|)}^{-1}$, we get that
$\mesh ({\mathcal{P}_{\ell_1}})\leq (1-{\alpha}_1)\mesh({\mathcal{P}_{\ell_0}})$. Recalling that $\ell_1=n_{m+p}$ and $\ell_0=n_m$, this proves the key lemma. 
\end{proof}}

\subsection{Convergence to Moebius maps}\label{sec:Moebius}
 Moebius interval exchange transformations were defined in \S~\ref{affine} (see Definition~\ref{def:MIET}); we recall that $\mathcal{M}_d$ denotes the space of Moebius IETs (see \S~\ref{parameters}). In this section we show that the decay of the mesh of the dynamical partition given by Proposition~\ref{partition} implies fast convergence of $\mathcal{Z}^{n}(T)$ to the subspace $\mathcal{M}_d$ of Moebius IETs. These are by now classical arguments, well known in the study of circle diffeomorphisms and circle diffeomorphisms with break points.
\vspace{2mm}

\subsubsection{Estimates of the distance from MIETs.}
Let $T \in \mathcal{X}^3$ be infinitely renormalizable.  Recall that $\{ \mathcal{P}_n, \ n\in\mathbb{N}\}$ denotes the sequence of dynamical partitions (as defined in \S~\ref{dynamicalpartitions}) associated to Zorich renormalization orbit $\{\mathcal{Z}^nT, \ n\in\mathbb{N}\}$. 
\begin{proposition}[{distance to Moebius via the mesh}]
\label{controlMoebius}
There exists a constant $L(T) > 0$ such that we have 
$$ \mathrm{d}_{\mathcal{C}^3}(\mathcal{Z}^n T, \mathcal{M}) \leq L(T)\, \mesh ( \mathcal{P}_n ), \qquad \text{for\ all}\ n \in \mathbb{N}.$$
\end{proposition}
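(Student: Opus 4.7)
The strategy is to work in the shape-profile coordinates introduced in \S\ref{coordinates}, and to control the Schwarzian derivative of each profile of $\mathcal{Z}^n T$ using the chain-rule $(S2)$ and the normalisation identity $(S3)$. Let us write $\mathcal{Z}^n T = (A_n,\varphi^1_n,\dots,\varphi^d_n)$ and observe that replacing each profile $\varphi^j_n\in \mathrm{Diff}^3([0,1])$ by a Moebius diffeomorphism $m^j_n$ of $[0,1]$ onto itself yields again a GIET with the same shape $A_n$, hence whose branches (being affine conjugates of Moebius maps) belong to $\mathcal{M}$. In view of the product structure of $d_{\mathcal{C}^3}$ in shape-profile coordinates, it is therefore sufficient to prove that each profile $\varphi^j_n$ lies at $\mathcal{C}^3$-distance $O(\mesh(\mathcal{P}_n))$ from a Moebius diffeomorphism of $[0,1]$.

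The first step is a Schwarzian estimate. Since $\varphi^j_n = \mathcal{N}(T^{q_j^{(n)}}|_{I_j^{(n)}})$, by $(S3)$ we have
$$\|S(\varphi^j_n)\|_\infty \leq |I^{(n)}_j|^2\,\|S(T^{q_j^{(n)}})\|_{L^\infty(I^{(n)}_j)}.$$
Applying the chain rule $(S2)$ iteratively to the composition $T^{q_j^{(n)}}=T\circ\cdots\circ T$ on $I^{(n)}_j$, one obtains
$$S\bigl(T^{q_j^{(n)}}\bigr)(x)=\sum_{k=0}^{q_j^{(n)}-1} S(T)(T^kx)\,\bigl(\D T^k(x)\bigr)^2.$$
Since $T \in \mathcal{X}^3$ is fixed, $\|S(T)\|_\infty$ is a constant $C(T)<\infty$; moreover the a~priori bounds (Proposition~\ref{apriori}) combined with the classical distortion bound (Lemma~\ref{bound1}) give $\D T^k(x)\leq C'(T)\,|T^k(I^{(n)}_j)|/|I^{(n)}_j|$ for every $x\in I^{(n)}_j$ and every $0\leq k<q_j^{(n)}$. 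Putting these estimates together, bounding one factor $|T^k(I^{(n)}_j)|$ by $\mesh(\mathcal{P}_n)$ and using that $\sum_{k}|T^k(I^{(n)}_j)|\leq 1$ (the floors of the $j$-th Rohlin tower are disjoint subsets of $[0,1]$), we obtain
$$\|S(\varphi^j_n)\|_\infty\leq C(T)\,C'(T)^2\,\mesh(\mathcal{P}_n),\qquad 1\leq j\leq d.$$

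The second step consists in upgrading this Schwarzian bound to a $\mathcal{C}^3$-distance bound, branch by branch. This is a standard one-dimensional argument: given a diffeomorphism $f\in \mathrm{Diff}^3([0,1])$, there is a unique Moebius diffeomorphism $m$ of $[0,1]$ such that $f(0)=m(0)$, $f(1)=m(1)$ and $\D f(0)=\D m(0)$; since $S(m)\equiv 0$, the function $h=f-m$ satisfies a linear third-order ODE whose coefficients depend on $\D m, \D f$ and $\eta_f$, with source term a polynomial expression in $S(f)$ and $\eta_f$. Using the a~priori bounds on $\varphi^j_n$ provided by Lemma~\ref{bound2} (which give uniform $\mathcal{C}^1$-control on the profile and, together with distortion, uniform bounds on $\eta_{\varphi^j_n}$), one obtains an estimate of the form $\|\varphi^j_n - m^j_n\|_{\mathcal{C}^3}\leq C''(T)\,\|S(\varphi^j_n)\|_\infty$. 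Summing over $j$ and invoking the Schwarzian estimate from the first step, the conclusion follows.

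The technical heart of the argument, and the step one should write out most carefully, is the passage from a bound on the Schwarzian to a bound on the $\mathcal{C}^3$-distance to Moebius: all other ingredients (distortion bounds, a~priori bounds, the chain and normalisation rules for the Schwarzian) have already been gathered in \S\ref{sec:distances_bounds} and \S\ref{Schwarzian}, and it is merely a matter of combining them. The analogous step in the theory of circle diffeomorphisms is classical (see for example \cite{KS:Her, KT:Her}); in the GIET setting no new difficulty appears once the profile is isolated, but one must ensure that the auxiliary Moebius map $m^j_n$ is genuinely a diffeomorphism of $[0,1]$ onto itself, which is automatic provided $\mesh(\mathcal{P}_n)$ is small enough (so that $\D m^j_n>0$ by continuity from $\D\varphi^j_n>0$).
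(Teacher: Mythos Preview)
Your proposal is correct and follows essentially the same approach as the paper: bound the Schwarzian of each profile via the chain rule $(S2)$ and the normalisation identity $(S3)$, use distortion to replace $(\D T^k)^2$ by $(|T^k(I^{(n)}_j)|/|I^{(n)}_j|)^2$, then split one factor as $\mesh(\mathcal{P}_n)$ and sum the other using disjointness of tower floors, and finally upgrade the Schwarzian bound to a $\mathcal{C}^3$-distance from Moebius. The paper applies $(S2)$ to the decomposition into \emph{normalised} pieces and then $(S3)$ to each piece, whereas you apply $(S3)$ to the whole composition first and then $(S2)$ to the unnormalised composition; the two computations are algebraically equivalent.

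Two small remarks. First, your invocation of Proposition~\ref{apriori} is unnecessary: the bound $\D T^k(x)\leq C'(T)\,|T^k(I^{(n)}_j)|/|I^{(n)}_j|$ follows directly from the mean-value theorem together with Lemma~\ref{bound1}, and this is exactly how the paper argues (the constant is $C'(T)=\exp(|N|(T))$). Second, the passage from a Schwarzian bound to a $\mathcal{C}^3$-distance, which you sketch via a third-order ODE, is isolated in the paper as a separate statement (Proposition~\ref{lemma:distanceMoebius}, proved in Appendix~\ref{app:C3S} via Lemma~\ref{lemma:C3andS}); there the comparison Moebius map is chosen by matching $\int_0^1\eta$ rather than $\D f(0)$, and the argument proceeds by applying a Gronwall-type lemma (Lemma~\ref{lemma8}) to $\eta_f-\eta_{m_a}$. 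This requires the profiles to lie in a $\mathcal{C}^2$-bounded set, which---as you indicate---follows from Lemma~\ref{bound2} together with the chain rule for $\eta$ and the distortion bound (giving $\|\eta_{\varphi^j_n}\|_\infty\leq \exp(|N|(T))\,\|\eta_T\|_\infty$). Your caveat about $\mesh(\mathcal{P}_n)$ needing to be small for $m^j_n$ to be a genuine diffeomorphism is unnecessary with either choice of comparison Moebius map.
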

%{\color{red} do we need to be in a compact here?}
\noindent The proof of this statement, which we give below, uses the Schwarzian derivative (whose definition was recalled in \S~\ref{Schwarzian}). It is a modern reformulation of the miraculous cancellations that Herman brought to light in his celebrated thesis \cite{Herman}. 

\smallskip
Consider the \emph{shape-profile} coordinates $\mathcal{A}\times \mathcal{P}$ introduced in \S~\ref{coordinates}. We first state and prove a Lemma which relates the $\mathcal{C}^3$ distance from $\mathcal{M}$  to the Schwarzian derivatives of the profile coordinates. Recall that $d_\mathcal{C}^3$ is defined analogously to $d_\mathcal{C}^1$ replacing $||\cdot ||_{\mathcal{C}_1}$ with   $||\cdot ||_{\mathcal{C}_3}$ (see \S~\ref{sec:distances}).  
%  $\varphi_n^j \in \mathrm{Diff}^3([0,1])$, for $1\leq j\leq d$, denote the $j$-th coordinate of the \emph{profile} of $\mathcal{Z}^n T}$. 
\begin{proposition}[{distance to Moebius via the Schwarzian}]\label{lemma:distanceMoebius}
For any $\mathcal{K}\subset \mathcal{X}^3$ which is $\mathcal{C}^3$-bounded, there exists a constant $C=C(\mathcal{K})$ such that for any  $T \in \mathcal{K}$
%For any $T \in \mathcal{X}^3$, if $(\varphi^1_T, \cdots \varphi^d_T)$ are the profile coordinates of $T$, 
%The statement is : on a C^3-compact set K of Diff^([0,1]), there is a constant L(K) > 0 such that for 
$$
d_{\mathcal{C}^3}(T, \mathcal{M}) \leq C(\mathcal{K})\, \sum_{i=1}^d|| \mathrm{S}(\varphi_T^i) ||_{\mathcal{C}^0},
$$
where, for each $i$, $S(\varphi_T^i)$ is the Schwarzian derivative of the coordinate $\varphi_T^i$ of the profile of $T$.
%$\mathcal{S}(T)(x)=\mathcal{S}(T_j)(x)$ for every $x\in I_j$ is the Schwarzian derivative of each branch of $T$.
\end{proposition}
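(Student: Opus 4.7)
The plan is to construct, given $T = (A_T, \varphi_T^1, \ldots, \varphi_T^d) \in \mathcal K$, an approximating Moebius IET $\widetilde T := (A_T, \tilde\varphi^1, \ldots, \tilde\varphi^d)$ by replacing each profile coordinate $\varphi_T^i$ by a Moebius diffeomorphism $\tilde\varphi^i$ of $[0,1]$. Such a $\widetilde T$ automatically lies in $\mathcal M_d$, since $A_T$ is an AIET (each branch being affine is Moebius) and each branch of $\widetilde T$ has the form $a_i \circ \tilde\varphi^i \circ b_i$ with $a_i, b_i$ the affine rescalings of \S\ref{coordinates}, which is a composition of Moebius maps and therefore Moebius. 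From the product definition of $d_{\mathcal C^3}$ on $\mathcal A_d \times \mathcal P^3$ and uniform chain-rule bounds on the $\mathcal C^3$-bounded set $\mathcal K$ I would then obtain
$$
d_{\mathcal C^3}(T, \mathcal M) \;\leq\; d_{\mathcal C^3}(T, \widetilde T) \;\leq\; C(\mathcal K)\sum_{i=1}^d \|\varphi_T^i - \tilde\varphi^i\|_{\mathcal C^3},
$$
reducing the proposition to producing, for each fixed $i$ with $\varphi := \varphi_T^i$, a Moebius diffeomorphism $\tilde\varphi$ of $[0,1]$ satisfying $\|\varphi - \tilde\varphi\|_{\mathcal C^3} \leq C'(\mathcal K)\, \|\mathrm S(\varphi)\|_{\mathcal C^0}$.

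To build $\tilde\varphi$ I would use the classical substitution $u := (\varphi')^{-1/2}$, which linearises the Schwarzian: a direct chain-rule computation yields $u'' + \tfrac12 \mathrm S(\varphi)\, u = 0$. Under this change of variable Moebius diffeomorphisms $[0,1]\to [0,1]$ preserving the endpoints correspond exactly to positive affine functions $\tilde u$ subject to the normalisation $\tilde u(0)\tilde u(1) = 1$ (equivalent to $\int_0^1 \tilde u^{-2}\,\mathrm d x = 1$, since for any affine $\tilde u$ one has $\int_0^1 \tilde u^{-2} = 1/(\tilde u(0)\tilde u(1))$). As a first approximation I would take $\tilde u(x) := u(0) + u'(0)\, x$, the affine function matching $u$ and $u'$ at $0$, and set $\tilde\varphi_0(x) := \int_0^x \tilde u(t)^{-2}\,\mathrm d t$, a Moebius map from $[0,1]$ onto $[0, \tilde\varphi_0(1)]$. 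The difference $v := u - \tilde u$ satisfies the initial-value problem
$$
v'' + \tfrac12 \mathrm S(\varphi)\, v = -\tfrac12 \mathrm S(\varphi)\, \tilde u, \qquad v(0) = v'(0) = 0,
$$
which by a standard Gronwall argument on the $\mathcal C^3$-bounded set $\mathcal K$ (where $\mathrm S(\varphi)$ and $\tilde u$ are uniformly bounded) gives $\|v\|_{\mathcal C^2} \leq C_1(\mathcal K)\|\mathrm S(\varphi)\|_{\mathcal C^0}$, with no operator invertibility issue to worry about.

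To finish, this estimate must be transferred back to $\varphi$. From $\varphi' = u^{-2}$ and $\tilde\varphi_0' = \tilde u^{-2}$, combined with uniform two-sided bounds on $u$ and $\tilde u$ over $\mathcal K$ (which follow from $\mathcal C^3$-boundedness of $\mathcal K$), one differentiates twice and tracks each contribution to obtain $\|\varphi' - \tilde\varphi_0'\|_{\mathcal C^2} \leq C_2(\mathcal K)\|v\|_{\mathcal C^2}$, and integrating once more produces $\|\varphi - \tilde\varphi_0\|_{\mathcal C^3} \leq C_3(\mathcal K)\|\mathrm S(\varphi)\|_{\mathcal C^0}$. The Moebius map $\tilde\varphi_0$ has range $[0, \tilde\varphi_0(1)]$ rather than $[0,1]$, so the final step is to set $\tilde\varphi := \tilde\varphi_0/\tilde\varphi_0(1)$, which is still Moebius (rescaling a Moebius is Moebius) and maps $[0,1]$ to $[0,1]$; the identities $\int_0^1 u^{-2} = \varphi(1)-\varphi(0) = 1$ and $\int_0^1 \tilde u^{-2} = \tilde\varphi_0(1)$ then give
$$
|\tilde\varphi_0(1) - 1| \;=\; \Bigl|\int_0^1 (\tilde u^{-2} - u^{-2})\,\mathrm d x\Bigr| \;\leq\; C_4(\mathcal K)\|v\|_{\mathcal C^0} \;\leq\; C_5(\mathcal K)\|\mathrm S(\varphi)\|_{\mathcal C^0},
$$
so the rescaling introduces only an additional $\mathcal C^3$-error of the same order.

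The main technical step, rather than a genuine obstacle, will be the bookkeeping in the third paragraph: transferring the ODE estimate $\|v\|_{\mathcal C^2} = O(\|\mathrm S(\varphi)\|_{\mathcal C^0})$ through the nonlinear relation $\varphi' = u^{-2}$ to reach an estimate on $\varphi - \tilde\varphi_0$ in $\mathcal C^3$. This requires two derivatives of the relation and careful use of the uniform lower bounds $u, \tilde u \geq m(\mathcal K) > 0$, which in turn are guaranteed by the $\mathcal C^1$-boundedness consequence of the $\mathcal C^3$-boundedness of $\mathcal K$ (cf.\ Lemma~\ref{lemma:equivbounded}).
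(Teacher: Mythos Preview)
Your approach is correct and genuinely different from the paper's. Both arguments reduce to a single profile coordinate $\varphi \in \mathrm{Diff}^3([0,1])$, but diverge from there. The paper works with the nonlinearity $\eta_\varphi$: it chooses as approximating Moebius map the unique $m_a \in \mathcal{M}([0,1])$ with $\int_0^1 \eta_{m_a} = \int_0^1 \eta_\varphi =: a$, then uses the identity $\mathrm{S}(\varphi) = \D\eta_\varphi - \tfrac12\eta_\varphi^2$ (together with $\mathrm{S}(m_a)=0$) to obtain, for $g := \eta_\varphi - \eta_{m_a}$, the first-order differential inequality $|\D g - \tfrac12(\eta_\varphi+\eta_{m_a})\,g| \leq \|\mathrm{S}(\varphi)\|_{\mathcal{C}^0}$. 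Since $\int_0^1 g = 0$, $g$ has a zero, and an elementary integrating-factor estimate yields $\|g\|_\infty \leq e^M \|\mathrm{S}(\varphi)\|_{\mathcal{C}^0}$; the $\mathcal{C}^1$, $\mathcal{C}^2$, $\mathcal{C}^3$ bounds on $\varphi - m_a$ are then built up by hand from this and the resulting control on $\D g$. Your route via $u = (\varphi')^{-1/2}$, linearising the Schwarzian into the second-order ODE $u'' + \tfrac12 \mathrm{S}(\varphi)\, u = 0$, is the classical device from Teichm\"uller theory and equally valid; it makes the underlying linear structure more transparent, whereas the paper's argument stays in the nonlinearity language used throughout and has the minor advantage that the approximant $m_a$ is canonically well-defined for every $\varphi \in \mathrm{Diff}^2([0,1])$.

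That last point is the one gap in your write-up: your tangent line $\tilde u(x) = u(0) + u'(0)x = u(0)\bigl(1 - \tfrac12\eta_\varphi(0)\,x\bigr)$ vanishes on $[0,1]$ whenever $\eta_\varphi(0) \geq 2$, which a general $\mathcal{C}^3$-bounded $\mathcal K$ does not exclude; then $\tilde\varphi_0 = \int_0^x \tilde u^{-2}$ is ill-defined and the transfer step fails. This is easily patched. One option is to take instead the secant $\tilde u$ matching $u$ at both endpoints, which is automatically positive; the IVP becomes a two-point BVP with $v(0)=v(1)=0$, still estimable by the maximum principle. Alternatively, simply observe that when $\|\mathrm{S}(\varphi)\|_{\mathcal{C}^0}$ is bounded below by a fixed $\epsilon_0>0$ the desired inequality is trivial on a bounded set, so one may assume $\|\mathrm{S}(\varphi)\|_{\mathcal{C}^0}$ small; your Gronwall bound on $v$ then forces $\tilde u = u - v \geq \min u - C(\mathcal K)\|\mathrm{S}(\varphi)\|_{\mathcal{C}^0}$ to stay uniformly positive, and your argument goes through as written.
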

\noindent The proof of this Proposition is given in the Appendix~\ref{app:C3S}.  We  now prove Proposition~\ref{controlMoebius}.
\begin{proof}[Proof of Proposition~\ref{controlMoebius}]
For any $n\in\mathbb{N}$, let  $\varphi_n^j \in \mathrm{Diff}^3([0,1])$, for $1\leq j\leq d$, denote the $j$-th coordinate of the \emph{profile} of $\mathcal{Z}^n T$ (in the \emph{shape}-\emph{profile} coordinates of \S~\ref{coordinates}). By definition of profile, 
$\varphi_n^j = \Norm (T^{(n)}_j)$, where $T^{(n)}_j$ is the $j^{th}$ branch of $T^{(n)}=\mathcal{Z}^n T$ and $\mathcal{N}(\cdot)$ denotes the renormalization operator which, to any diffeomorphism $f : I \rightarrow J$ of a connected interval $I$ onto $J$, associates  $ \Norm(f) := b \circ f \circ a $, where $a$ and $b$ are respectively the only orientation-preserving affine map mapping $[0,1]$ onto $I$ and $J$ onto $[0,1]$.

For brevity, let us denote by  $q_j:= q^{(n)}_j$ the height of the Rohlin tower $\mathcal{P}^j_n$. Let   $f_j^k$, for  $0\leq k< q_j$,  denote the restriction of $T$ to the floor $T^k(I_j^{(n)})$ of the Rohlin tower $\mathcal{P}^j_n$. Then, since by definition of renormalization $T^{(n)}_j = f_j^{q_j-1} \circ f_j^{q_j-2} \circ \cdots \circ f_2 \circ f_1$, we can write 
$$  \varphi_n^j = \Norm( T^{(n)}_j ) = \Norm(f_j^{q_j-1}) \circ \Norm(f_j^{q_j-2}) \circ \cdots \circ \Norm (f_j^{1}) \circ \Norm(f_j^{0}).$$
\noindent 
%Recall also the chain rule for the Scwhartzian derivative: for any diffeomorphisms $f$ and $g$ of class $\mathcal{C}^3$, wherever $f \circ g$ makes sense we have 
%$$ \mathrm{S}(f \circ g) = \mathrm{S}(f) \circ g \,(\mathrm{D}g)^2 + \mathrm{S}(g).$$ 
Thus, by the chain rule for the Schwarzian derivative (see $(S1)$ in \S~\ref{Schwarzian}), if we introduce the following notation for partial products:
$$ \phi_j^k :=  \begin{cases} \Norm(f_j^{k}) \circ \cdots \circ \Norm(f_j^{1}) \circ \Norm(f_j^{0}), & \text{for}\ k=0,\dots, q_j-1, \\  0 & \text{for} \ k=-1, \end{cases}
$$
one can verify by induction  that
\begin{equation}\label{chainruleSch}\mathrm{S}(\varphi_n^j) = \sum_{k=0}^{q_j-1}{ \mathrm{S}(\Norm(f_j^{k})) \circ  \phi_j^{k-1} \,(\mathrm{D}(\phi_j^{k-1}))^2}.\end{equation}
We now use two observations. The first, which follows by Property $(S3)$ of the Schwarzian derivative (see \S~\ref{Schwarzian}) since the domain of $f_j^k$  is the interval $T^k(I_j^{(n)})$, is that the Schwarzian derivatives of each $f_j^k$ satisfies
\begin{equation}\label{Sch_rescaling} ||\mathrm{S}(\Norm(f_j^k))||_\infty
%_{c} 
=  |T^k(I_j^{(n)})|^2 ||\mathrm{S}(f_j^k)||_\infty,%_{L_\infty(T^k(I_j^{(n)}))}.
\end{equation}
where $|| \cdot ||_\infty$ denotes the sup norm on the domain where the function is defined (so the $L_\infty([0,1])$ norm for $\Norm(f_j^k)$ and the ${L_\infty(T^k(I_j^{(n)}))}$ norm  for $\mathrm{S}(f_j^k)$.

\smallskip
The second important observation is the claim, that follows from the classical distortion bounds (see Lemma~\ref{bound1}), that 
the derivatives of  $ \phi_j^k$ are uniformly bounded above and below, \textit{i.e} there exists $D_1(T) = D_1$ such that for all $j \leq d$ and $k \leq l_j^n$, 
$$ D_1^{-1} \leq \mathrm{D} \big(\Norm(f_j^{k}) \circ \cdots \circ \Norm(f_j^{1}) \circ \Norm(f_j^{0}) \big) \leq D_1.$$ 
The  proof is the same than the proof of the profile a priori bounds in Lemma~\ref{bound2}:   by chain rule and mean value, choosing $y\in [0,1]$ such that $D\phi_j^k(y)=1$,
$$\sup_{x\in [0,1]}  \mathrm{D} \big(\Norm(f_j^{k}) \circ \cdots \circ \Norm(f_j^{1}) \circ \Norm(f_j^{0}) \big) = \sup_{x\in [0,1]} \frac{{D} \phi_j^k (x)}{{D} \phi_j^k (y)} = \sup_{x,y\in I_j^{(n)}} \frac{D (f^k_j \circ \cdot \circ f^0_k)(x)}{D (f^k_j \circ \cdot \circ f^0_k)(y)},$$
so we can then conclude by applying the a priori bounds given by Lemma~\ref{lemma1}. 
\smallskip

We can now estimate \eqref{chainruleSch} using these two observations, together with the remark that  $||\mathrm{S}(f_j^k)||_{\infty} \leq || \mathrm{S}(T) ||_{\infty}$ since $f_j^k$ is a restriction of $T$, thus getting that 
$$ || \mathrm{S}(\varphi_n^j) ||_{\infty} \leq ||  \mathrm{S}(T) ||_{\infty} \,D_1^{\color{black}2} \,\sum_{k=0}^{q_j-1}{|T^k(I_j^{(n)})|^2 }.$$ Since for any $0\leq k<q_j$, we can estimate $ | T^k(I_j^{(n)})|^2 \leq |T^k(I_j^{(n)})| \,\mesh( \mathcal{P}_n) $, we obtain that 
$$ || \mathrm{S}(\varphi_n^j) ||_{\infty} \leq ||\mathrm{S}(T)||_{\infty} \,D_1^{\color{black}2} \, \mesh(\mathcal{P}_n) .$$ 
To conclude, it suffices now to apply Proposition~\ref{lemma:distanceMoebius}, which shows that the distance $ \mathrm{d}_{\mathcal{C}^3}(\mathcal{Z}^n T, \mathcal{M}) $ is controlled by the sum over $j=1,\dots, d$ of the Schwatzian derivatives above and hence gives the desired estimate in terms of $\mesh( \mathcal{P}_n)$. 
\end{proof}

\subsubsection{Exponential convergence to MIETs.}
Combining Proposition~\ref{controlMoebius} with the decay of the partitions mesh given by Proposition~\ref{partition}, 
% we immediately get :
%\noindent Informally speaking, iterated renormalizations of $T$ converge exponentially fast to Moebius IETs. \footnote{
since the sequence $(n_{k_m})_{m\in\mathbb{N}}$ grows linearly with $m$, we get (reasoning as in the proof of Proposition~\ref{partition} in \S~\ref{sec:sizePcontrol}) that iterated renormalizations of $T$ converge exponentially fast to Moebius IETs: 

\begin{cor}[Exponential convergence to Moebius IETs]
\label{convergenceMoebius}
Let $\alpha_2 = \alpha_2(T) $ be as in Corollary~\ref{expmeshdecay}. There exists $K_3(T) > 0$ such that for any $n\in \mathbb{N}$ 
$$\mathrm{d}_{\mathcal{C}^3}(\mathcal{Z}^{n}T, \mathcal{M} ) \leq K_3(T) \, \alpha_2^n.$$
\end{cor}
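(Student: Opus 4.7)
The proof of this corollary is a straightforward combination of the two preceding results, namely Proposition~\ref{controlMoebius} and Corollary~\ref{expmeshdecay}, so the plan is essentially a one-line chain of inequalities. The main content of the argument has already been done upstream: the delicate work on \emph{a priori} bounds and the Schwarzian derivative chain rule was packaged into Proposition~\ref{controlMoebius}, while the geometric decay of the mesh (whose proof hinges on the key Lemma~\ref{lemma:key} and the trick in Remark~\ref{rk:linearexp} to pass from a linearly growing subsequence of good $\mathcal{C}^1$-recurrence times to all integers) was packaged into Corollary~\ref{expmeshdecay}.

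The plan is therefore to simply apply Proposition~\ref{controlMoebius} at the iterate $\mathcal{Z}^n T$, which gives the estimate
\[
d_{\mathcal{C}^3}(\mathcal{Z}^n T, \mathcal{M}) \leq L(T)\,\mesh(\mathcal{P}_n),
\]
and then to bound the right-hand side using Corollary~\ref{expmeshdecay}, which produces a constant $\alpha_2 = \alpha_2(T) \in (0,1)$ such that $\mesh(\mathcal{P}_n) \leq \alpha_2^n$. Setting $K_3(T) := L(T)$ yields the claimed inequality
\[
d_{\mathcal{C}^3}(\mathcal{Z}^n T, \mathcal{M}) \leq K_3(T)\,\alpha_2^n.
\]

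There is no real obstacle here beyond checking that the hypotheses of Proposition~\ref{controlMoebius} are satisfied at every iterate; this is immediate since the a priori bounds in Proposition~\ref{apriori}, together with the bounded distortion of the profile coordinates from Lemma~\ref{bound2}, guarantee that the orbit $\{\mathcal{Z}^n T\}_{n\in\mathbb{N}}$ stays in a $\mathcal{C}^3$-bounded set (or, more precisely, the relevant quantities $\|\mathrm{S}(T)\|_\infty$ and the distortion constant $D_1$ appearing in the proof of Proposition~\ref{controlMoebius} are controlled uniformly in $n$, which is what is really needed). Thus the combination is legitimate, the resulting constant $K_3(T)$ depends only on $T$, and the corollary follows.
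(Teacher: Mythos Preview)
Your proposal is correct and matches the paper's approach exactly: the corollary is obtained by chaining Proposition~\ref{controlMoebius} with Corollary~\ref{expmeshdecay}. One minor point: Proposition~\ref{controlMoebius} is already stated for the fixed $T$ and \emph{all} $n\in\mathbb{N}$ with a single constant $L(T)$, so there is no need to verify its hypotheses ``at every iterate'' as your final paragraph suggests---a single application suffices.
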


{\color{black}
\subsubsection{Parameters of MIETs}\label{MIETparameters}%ADD
Let us gather here some basic  properties of Moebius diffeomorphisms and, as a consequence, of Moebius IETs which will be useful in the following sections.

\smallskip
\noindent Consider first the group $\mathcal{M}([0,1])$ of {orientation preserving} \emph{Moebius diffeomorphism} of $[0,1]$. If $m\in\mathcal{M}([0,1])$, then one can check that:
\begin{itemize}
\item[(m1)] {the sign of $D^2m$ is constant (i.e.~$m$ is either convex or concave)} and $\log Dm$ is monotone;
\item[(m2)] The mean non-linearity $\overline{N}(m)=\log \,\! \mathrm{D} \,\! m\, (1)- \log\,\! \mathrm{D}\,\! m\, (0)$;
\item[(m3)] Given $u\in \mathbb{R}$, there exists a {unique}\footnote{The (unique) Moebius diffeomorphism $m_u$ with $m_u(0)=0$, $m_u(1)=1$ and mean non-linearity $u$ is indeed explicitely given by $$m_u(x)=\frac{x e^{-\frac{u}{2}}}{1+x(e^{-\frac{u}{2}}-1)}.$$ It can be found for  example as a special case of the formula in Appendix~\ref{distance}.} $m_u\in \mathcal{M}([0,1])$ with mean non-linearity $\overline{N}(m_u)=u$.

\end{itemize}
These observations translate into the following properties Moebius IETs. Recall that if $M$ is a MIET, then each branch $M_i$ is a Moebius diffeomorphism of $I^t_i$ into $I^b_i$ (see Definition~\ref{def:MIET}). Since by definition of non-linearity { $\eta_{M_i}=D M_i^2/ D M_i$} (see (see \S~\ref{sec:nl}) and $D M_i$ is continuous and non-zero on each $I^t_i$, the following remark then follows immediately from  
 $(\mathrm{m}1)$.
\noindent 

\begin{remark}[Sign-coherence of non-linearity for MIETs]\label{Moebius_constantsign}
If $T$ is a Moebius IET, the \emph{sign} of the non-linearity $\eta_M$ is \emph{constant} on each of the continuity intervals $I^t_j$, $1\leq j\leq d$, of $T$.% (since  $D^2M_i>0$   for each branch $M_i$ of $M$).
\end{remark}  
\noindent We now deduce from $(\mathrm{m}2)$ and $(\mathrm{m}3)$ the finite-dimensionality of $\mathcal{M}_d$ and characterize subsets which are $d_{\mathcal{C}^k}$-bounded in the sense of Definition~\ref{def:Ckbounded} in terms of a priori bounds.

\begin{lemma}[Finite-dimensionality and bounded subsets of $\mathcal{M}_d$]\label{Mfindim}
The space  $\mathcal{M}_d$  of MIETs on $d$ intervals \emph{finite-dimensional} space of dimension $3d-2$. More precisely:
\begin{itemize}
\item[(M1)]  Given $M\in\mathcal{M}_d$, $M$ is completely determined by its \emph{shape} $A_M$ and the mean non-linearities of each branch, i.e.~by a vector $\underline{\eta}=(\eta_1,\dots, \eta_d)\in\mathbb{R}^d$ such that $\eta_i= \overline{N}({M}_i)$ for $1\leq i\leq d$, which fully determine the profile coordinate $\varphi_M$;\smallskip
\item[(M2)] The  vector $\underline{\eta}$  of mean non-linearities is fully determined by  the right and left limits of $\log\,\! D\,\! M$ at endpoints of the top partition; in particular $||\underline{\eta}||\leq {2}||\log\,\! D\,\! M||_\infty$;\smallskip
\item[(M3)]  If $\mathcal{K}\subset \mathcal{M}_d$ is such that MIETs in $\mathcal{K}$ satisfy an a priori bound, i.e.~there exists a $K>0$ such that $K^{-1}\leq || DM ||_{\infty} \leq K$ for each $M\in \mathcal{K}$, then $\mathcal{K}$ is $\mathcal{C}^k$-bounded for every $k\in \mathbb{N}$.
\end{itemize}
\end{lemma}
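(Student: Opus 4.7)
The plan is to establish the three properties by exploiting the fact that the group of orientation-preserving Moebius diffeomorphisms of $[0,1]$ is one-dimensional (by property (m3) of \S~\ref{MIETparameters}), so each profile coordinate of an MIET is parametrized by a single real number, its mean non-linearity. I would first argue (M1) as follows: given $M \in \mathcal{M}_d$, each branch $M_i : I^t_i \to I^b_i$ is a Moebius diffeomorphism, and the associated profile coordinate $\varphi^i_M = b_i \circ M_i \circ a_i$ from \S~\ref{coordinates} is again Moebius, since the Moebius group is closed under composition with affine maps. Now $\varphi^i_M$ is a Moebius diffeomorphism of $[0,1]$ onto itself, and by property (iv) of Proposition~\ref{prop:Nproperties} (invariance of the mean non-linearity under affine rescaling) $\overline{N}(\varphi^i_M) = \overline{N}(M_i) = \eta_i$. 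Hence by (m3), $\varphi^i_M = m_{\eta_i}$ is uniquely determined by the scalar $\eta_i$. The full profile $(\varphi^1_M, \dots, \varphi^d_M)$ is therefore recovered from $\underline{\eta}$, and together with $A_M$ reconstructs $M$. The dimension count follows: $\dim \mathcal{M}_d = \dim \mathcal{A}_d + d = (2d-2) + d = 3d-2$.

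The proof of (M2) is an immediate application of (m2) branch-by-branch. On the continuity interval $I^t_i$ with endpoints $u^-_i < u^+_i$, the integral expression
\[
\eta_i \;=\; \overline{N}(M_i) \;=\; \int_{I^t_i} \eta_{M_i}(x)\, \mathrm{d}x \;=\; \log DM_i(u^+_i) - \log DM_i(u^-_i)
\]
identifies $\eta_i$ with the difference of the right/left limits of $\log DM$ at the endpoints of $I^t_i$. Each such limit is bounded in absolute value by $\Vert \log DM \Vert_\infty$, so $|\eta_i| \leq 2 \Vert \log DM \Vert_\infty$, and taking the sup over $1 \leq i \leq d$ yields the claim.

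For (M3), the a priori bound gives $\Vert \log DM \Vert_\infty \leq \log K$ for all $M \in \mathcal{K}$, so (M2) forces $\Vert \underline{\eta} \Vert \leq 2 \log K$. The shape $A_M$ has slopes $\rho_i \in [K^{-1}, K]$ (since $\rho_i$ is the average of $DM_i$ on $I^t_i$) and lengths $\lambda_i \in \Delta_{d-1}$, so $A_M$ stays in a bounded region of the finite-dimensional space $\mathcal{A}_\pi$. The nontrivial point---which is the main obstacle and what distinguishes (M3) from a trivial $\mathcal{C}^1$ statement---is passing from first-derivative control to control of all higher derivatives. Here I would exploit the explicit formula of (m3) and the resulting real-analytic dependence of $m_u$ on $u$: for each $k \in \mathbb{N}$ and each compact interval $[-U,U]$, the derivatives $D^k m_u$ are uniformly bounded on $[0,1]$ for $u \in [-U,U]$ (equivalently, one may use the Schwarzian relation $D^3 m / Dm = \tfrac{3}{2}\eta_m^2$ together with a recursive formula to bootstrap $\mathcal{C}^k$ bounds from $\mathcal{C}^1$ bounds on $m_u$). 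Applying this with $U = 2 \log K$ yields uniform $\mathcal{C}^k$-bounds on each $\varphi^i_M = m_{\eta_i}$. Combining the bounded shape (in the finite-dimensional $\mathcal{A}_\pi$) with the $\mathcal{C}^k$-bounded profile via the product structure $\mathcal{X}^k_\pi = \mathcal{A}_\pi \times \mathcal{P}^k$ gives $\mathcal{C}^k$-boundedness of $\mathcal{K}$; the analogous bounds on $M^{-1}$ (required by $d^\pm_{\mathcal{C}^k}$) are symmetric since $M^{-1}$ is itself Moebius with $\Vert DM^{-1} \Vert_\infty \leq K$ by the lower bound assumption.
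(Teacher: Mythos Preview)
Your proof is correct and follows essentially the same approach as the paper: (M1) via the rescaling invariance of mean non-linearity together with (m3), (M2) via (m2) branch-by-branch, and (M3) via the bound on $\underline{\eta}$ from (M2) combined with finite-dimensionality. The only cosmetic difference is in (M3): the paper invokes the abstract equivalence of all $d_{\mathcal{C}^k}$ distances on the finite-dimensional space $\mathcal{M}_d$ in one line, whereas you spell out the smooth (real-analytic) dependence of $m_u$ on $u$ explicitly and handle the inverse map separately---this is the same mechanism made more concrete, and arguably more careful.
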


\begin{proof}
%Property $(M3)$ and the finite dimensionality follow from $(\mathrm{m}3)$: 
Since $\mathcal{A}_d$ is a finite dimensional space of dimension $2d-2$ (see~\S~\ref{AIET}), to show Property $(M3)$ and the finite-dimensionality, 
it is enough to show that the vector $\eta=(\eta_1,\dots, \eta_d)\in\mathbb{R}^d$, which gives $d$ additional parameters, determines the profile $\varphi_M=(\varphi_M^1, \dots, \varphi_M^d)$. For each $1\leq i\leq d$, since $\varphi_M^i$ is obtained from the branch $M_i$ by rescaling,  by definition of $\eta_i=\overline{N}(M_i)$ and the rescaling property $(vi)$ of non-linearity (see Lemma~\ref{lemma:nonlinearity}), we should have $\overline{N}(\varphi_M^i)=\eta_i$ and this determines $\varphi_M^i$ by $(\mathrm{m}3)$.
\smallskip 
 % $M\in\mathcal{M}_d$ is completely determined by its \emph{shape} $A_M$ and the mean non-linearities of each branch, i.e.~by  such that $u_i= \overline{N}({M}_i)$ for $1\leq i\leq d$ (which fully determine the profile coordinates). 
Property $(M2)$ now follows from $(\mathrm{m}2)$, that shows that $\underline{\eta}$ is fully determined by the values of $\log\,\! D\,\! M$. % at endpoints in view of   

\smallskip %Since $\mathcal{M}_d$ is finite dimensional, all norm
Finally, if we want to show that a subset of  $\mathcal{M}\in\mathcal{M}_d$  is $d_{\mathcal{C}^k}$-bounded (in the sense of Definition~\ref{def:Ckbounded}, i.e.~contained in a ball for the distance $d^\pm_{\mathcal{C}^k}$, by definition of the distance $d_{\mathcal{C}^k}$ as product (see \S~\ref{sec:distances}), the  shape parameters do not matter since they are always contained in a ball of fixed radius for $d_\mathcal{A}$. Thus, the only potentially unbounded parameters are the coordinates of $u$. Now, by $(M2)$, these are bounded if are controlled by
 $||\log DM ||_{\infty}$, so $\mathcal{K}$ is contained in a ball for $d^\pm_{\mathcal{C}^1}$
 property $(M3)$ therefore follows. Finally, since $\mathcal{M}_d$ is finite dimensional and the distances $d_{\mathcal{C}^k}$, $k\in\mathbb{N}$, are all induced by a norm, they are all equivalent. Thus,  $(M3)$ follows.
 \end{proof}
%\end{remark}

%\noindent Let us record the following observation for later use:
%\begin{remark}\label{Moebius_constantsign}
%If $T$ is a Moebius IET, then the \emph{sign} of $\eta_T$ is constant on each of the continuity intervals $I^t_j$, $1\leq j\leq d$, of $T$. {\color{black}add explanation?}
 % (since a Moebius map is convex and, as remarked after the definition of non-linearity, $\eta_T$ can only change sign where $D^2 T=0$).  
%\end{remark}
}
\subsection{Convergence to AIETs}\label{AIETs}
We now turn to improving the convergence to Moebius IETs to a convergence to AIETs, under the additional hypothesis that we are in the \emph{affine regime}, namely that $\sum_{s=1}^\kappa b_s=0$ where $(b_s)_s=\mathcal{B}(T)$. We recall that this assumption is equivalent to asking that the mean non-linearity $\overline{N}(T)= \int_0^1 \eta_T(x) dx$ vanishes (see Lemma~\ref{boundaryaffinereduction}). 

Our approach to convergence to AIETs is to study the total non-linearity $|N|(\mathcal{Z}^n(T))$ (see Definition~\ref{def:Ns}) and to show that it  converges to zero as $n $ grows. %, thus forcing $T$ to belong to the locus $\mathcal{A}_d$, which  is characterized by the condition that $|N|(\cdot)= 0$. 

\subsubsection{A combinatorial lemma}
We first need an easy (but crucial) combinatorial Lemma.

\begin{lemma}
\label{combinatoric} Given  any   $a_{ij} \in \mathbb{R}$, for $i,j \in \{1, \cdots, d \}$, denote by 
$$
A_i:=\sum_{j=1}^d {a_{ij}} \, \qquad A^j := \sum_{i=1}^d{a_{ij}}.
$$
For any given $0<c<1/d$, there exists $c'<1$ such that, if
\vspace{1mm}
\begin{enumerate}
\item $\sum_{i=1}^d{A_i} = 0$, or equivalenly, $ \sum_{i,j}{a_{ij}} = 0$; (\text{zero-average assumption})\vspace{1.3mm}
\item ${a_{ij}}/{A_i}>c$ for any $i,j\in {1,\dots, d}$ (\text{balance assumption}),

\vspace{1.3mm}
 \noindent (which implies in particular that for every $i$,  all $a_{ij}$, for $1\leq j\leq d$, have the \emph{same sign} of $A_i$); \vspace{1.3mm}
%$$|a_{ij}| > c |A_j|>0 , \qquad \text{for\ all}\ 1\leq j\leq d.$$
%either $a_{ij} > c A_j>0$ for all $1\leq j\leq d$, or $a_{ij} < c A_j< 0$ for all $1\leq j\leq d$.
%,% (which actually forces $c\leq \frac{1}{d}$).
\end{enumerate} 
Then we have that $$ \sum_{j=1}^d{|A^j|} \leq c' \sum_{i=1}^d{|A_i|}. $$
\end{lemma}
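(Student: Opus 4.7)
The plan is to exploit the balance assumption to split the rows according to the sign of $A_i$ and then compare column sums to row sums. First, I would observe that assumption $(2)$, together with $c>0$, forces every entry $a_{ij}$ in the $i$-th row to have the same sign as $A_i$, and in fact $|a_{ij}| \geq c |A_i|$. I then partition the row indices into $P = \{i : A_i > 0\}$ and $N = \{i : A_i < 0\}$ (zero rows can be discarded as they contribute trivially), and set $S := \sum_{i \in P} A_i$. The zero-average assumption $(1)$ gives $\sum_{i \in N} A_i = -S$, so $\sum_{i=1}^d |A_i| = 2S$.

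The key step is to bound each $|A^j|$. Writing $p_j := \sum_{i \in P} a_{ij}$ and $n_j := -\sum_{i \in N} a_{ij}$, both quantities are \emph{positive} by the sign observation, and the balance assumption gives the crucial lower bound
\begin{equation*}
p_j \;\geq\; c \sum_{i \in P} A_i \;=\; cS, \qquad n_j \;\geq\; c \sum_{i \in N}(-A_i) \;=\; cS.
\end{equation*}
Since $A^j = p_j - n_j$, we have $|A^j| = (p_j + n_j) - 2\min(p_j, n_j) \leq (p_j + n_j) - 2cS$.

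Summing this estimate over $j$ and noting that $\sum_j p_j = \sum_{i \in P} A_i = S$ and likewise $\sum_j n_j = S$, I obtain
\begin{equation*}
\sum_{j=1}^d |A^j| \;\leq\; \sum_{j=1}^d (p_j + n_j) - 2dcS \;=\; 2S - 2dcS \;=\; (1 - dc)\sum_{i=1}^d |A_i|.
\end{equation*}
Since $c < 1/d$, the constant $c' := 1 - dc$ satisfies $0 < c' < 1$, which is the desired conclusion. No step here is particularly delicate; the entire content of the lemma is the observation that the balance hypothesis $(2)$ forces the positive and negative contributions to each column sum to individually be at least $cS$, which automatically produces cancellation at the level of absolute values when compared to the row sums.
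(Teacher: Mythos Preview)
Your proof is correct. Both your argument and the paper's begin by partitioning the rows according to the sign of $A_i$, but from there the routes diverge. The paper aggregates the contributions into a $2\times 2$ system (defining $B_+,B_-$ and the four blocks $b_{\pm\pm}$), checks that this reduced system again satisfies the balance hypothesis, and then applies the $d=2$ case (proved separately) to obtain $c'=1-2c$. You instead bound each column sum directly via the identity $|A^j|=p_j+n_j-2\min(p_j,n_j)$ together with $\min(p_j,n_j)\geq cS$, and sum over $j$ to get $c'=1-dc$. Your argument is shorter and entirely self-contained; the paper's reduction, when it goes through, yields a constant that does not degrade with $d$.
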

\begin{proof}
We first prove the Lemma for $d=2$. By assumption $(2)$ and definition of $A_i$, 
%forces all $a_{ij}$ to have the samethat $A_1$
%We start with $A_1, A_2$ such that $A_1 + A_2 = 0$ and we write $A_{1} = a_{11} + a_{12}$  and  $A_{2} = a_{21} + a_{22}$ with $a_{11}, a_{12}$ (resp.$a_{21}, a_{22}$) of the same sign as $a_{1}$ (resp. $a_2$). The hypothesis $\frac{a_{ij}}{a_{i}} > c$ for all $i,j$ 
we can write:%, for $i=1,2$, %be expressed the following way 
\begin{align*}
a_{11} = c_1 \,A_1,  \ \ \qquad &  a_{12} = (1-c_1) \,A_1,&  \text{where} \ c\leq c_1 \leq 1-c,\\
a_{12} = c_2 \,A_2, \ \ \qquad &  a_{12} = (1-c_2) \,A_2,  &  \text{where} \ c\leq c_2 \leq 1-c.
%& a_{11} = c_1 \,a_1,  & a_{12} = (1-c_1) \,a_1 ;\\
%& a_{21} = c_2 \,a_2,  & a_{22} = (1-c_2) \,a_2 ;
\end{align*} 
%with both $c_1$ and $c_2$ comprised between $c$ and $(1-c)$. 
Thus, by definition of $A^j$ and since $A_2=-A_1$ by $(1)$, we now have
$$A^1 = c_1 A_1 + c_2 A_2 = (c_1 - c_2) A_1, \qquad A^2 = (1 -c_1) A_1 + (1 - c_2) A_2 = (c_1 - c_2)A_2.$$ 
Now, since $|c_1 -c_2| \leq 1 - 2c$, we thus get 
$$ |A^1| + |A^2| \leq (1-2c) |A_1| + |A_2|.$$

\noindent The general case can be reduced to the the case $d=2$ by grouping together positive $A_i$ and  negative $A_i$ as follows. Let us define 
$$B_+ := \sum_{A_i > 0}{A_i}, \qquad  B_- := \sum_{A_i < 0}{A_i}.$$
We can further write $B_+ = b_{++} + b_{+-}$ and  $B_- = b_{-+} + b_{--}$ where: 
\begin{align*}
b_{++} &:= \sum_{A_i > 0, A_j > 0 }{a_{ij}}, & b_{+-} := \sum_{A_i > 0, A_j < 0 }{a_{ij}}, \\
b_{-+} &:= \sum_{A_i < 0, A_j > 0 }{a_{ij}}, & b_{--} := \sum_{A_i < 0, A_j < 0 }{a_{ij}}
\end{align*}
We claim now that we can apply the case $d=2$ of the lemma $\left\{ b_{ij}, \ i,j\in \{ +,-\}\right\}$. Indeed, assumption $(1)$ holds since their sum is $B_+ + B_-=\sum_{i=1}^d{A_i}=0$, while  
$| b_{++} | \geq \sum_{A_i > 0}{c|A_i|} = c|B_+|, $
and similar estimates hold for the other coefficients, so also the balance assumption $(2)$ holds. Thus, denoting by $B^+ := b_{++} + b_{-+}$ and $B^- := b_{+-} + b_{--}$ the conclusion of the lemma for $d=2$ proved above together with the trivial remark that if $b_1,\dots , b_k$ have the \emph{same sign} then  $\sum_{j=1}^k|b_k|=|\sum_{j=1}^k b_k|$ gives
$$  \sum_{j=1}^d{|A^j|} = |B^+| + |B^-| \leq c' ( |B_+| + |B_-| )= \sum_{i=1}^d{|A_i|},$$ 
which is the result for $d>2$.
\end{proof}
%Assume without loss of generality that $A_1, \cdots , A_k>0$ and $A_{k+1}, \cdots, A_d<0$. As before, by assumption $(2)$, we can write
% for any $1\leq i\leq d$,
%$$
%a_{i1} = c_i \,A_i, \ \text{for}\ i=1,\cdots, k,  \qquad &  a_{i2} = (1-c_i) \,A_i,\   \text{for} \ i=k+1,\cdots, d,
%$$
%where all coefficients $c_i$ satisfy $c\leq c_i\leq 1-c$. Then, since $A_1+ \cdots +A_k= -(A_{k+1}+ \cdots + A_d)$ by $(1)$, for each %$j=1,\dots, d$%
%$$
%A^j= \sum_{i=1}^k c_i \, A_i + \sum_{i=k+1}^d (1-c_i)A_i  \geq c \sum_{i=1}^k A_i - (1-c)\sum_{i=k+1}^d (1-c_i)A_i
%\, A_i = \sum_{i=1}^k c_i \, A_i - \sum_{i=k+1}^d (1-c_i) \, A_i
%$$
%%{\color{red} TO FINISH, it is NOT obvious!!}
%\smallskip

\subsubsection{Non-linearity decrease at  $\mathcal{C}^1$- recurrence times}\label{sec:decrease}
 The following key Lemma, which is based on the linear algebra Lemma \ref{combinatoric} above, will be used to show that every  $\mathcal{C}^1$- recurrence time $n_{k_m}$ (which corresponds to a double occurrence of a positive matrix $A$ together with a priori bounds, see Definition~\ref{def:C1recurrence}),   the total non-linearity \emph{decreases} by a \emph{definite factor} after renormalizing.
%we show that for a Moebius IET whose matrix $Q(0,n)(T)$ of $\mathcal{R}$ is bounded and positive,\footnote{{\color{red} Replace $Q(0,n)$ with $Q(n)$ and check notation is defined earlier}} the total non-linearity decreases by a definite factor after renormalizing, factor depending on the bound on the size of  $Q(0,n)(T)$.

\begin{lemma}[Non-linearity decrease at  $\mathcal{C}^1$- recurrence times]
\label{decreaseAN}
Let $L>0$, let $p$ be positive integers and let $A$ be positive Zorich matrices of lengths respectively $p$.   
 There exists a constant $\alpha_3 =\alpha_3(L,p, || A||)<1$, such that the following holds.
Let $M$ be a Moebius IET such that:
\begin{enumerate}
\item  the total non-linearity vanishes, i.e.~$|N|(M)=\int_0^1{\eta_M}(x)\, \mathrm{d}x = 0$;\vspace{1.3mm}
\item the distorsion bound $L^{-1}\leq || \mathrm{D}M ||_{\infty} \leq L$ holds;\vspace{1.3mm}
\item  one has  $Q(0,2p)=A A$ where $Q(0,n)=Z_1(M)\cdots Z_{n-1}(M)$ is the Zorich cocycle associated to $M$.\vspace{1.3mm}
\end{enumerate}
Then $ |N|(\mathcal{Z}^{p}(M))  \leq \alpha_3 |N|(M)$.
\end{lemma}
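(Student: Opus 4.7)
The strategy is to set up a $d\times d$ array of real numbers to which the combinatorial Lemma~\ref{combinatoric} applies, in such a way that the sums $\sum_i|A_i|$ and $\sum_j|A^j|$ recover exactly the total non-linearities $|N|(M)$ and $|N|(\mathcal{Z}^p(M))$ respectively. The natural choice is
$$a_{ij} := \sum_{\substack{0\le k<q_j^{(p)}\\ M^k(I^{(p)}_j)\subset I_i}} \int_{M^k(I^{(p)}_j)} \eta_M(y)\,dy,$$
so that $a_{ij}$ is the integral of $\eta_M$ over the union of those floors of the $j$-th Rohlin tower $\mathcal{P}^j_p$ that are contained in the $i$-th continuity interval $I_i$ of $M$. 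Since Moebius maps are closed under composition, the induced map $M_p$ is itself a Moebius IET; hence both $\eta_M$ and $\eta_{M_p}$ have constant sign on each of their continuity intervals (Remark~\ref{Moebius_constantsign}). The distribution property of non-linearity (Lemma~\ref{lemma:nonlinearity}(ii)) then gives $A_i=\sum_j a_{ij}=\int_{I_i}\eta_M$ and $A^j=\sum_i a_{ij}=\int_{I^{(p)}_j}\eta_{M_p}$, while the sign-coherence yields $\sum_i|A_i|=|N|(M)$ and $\sum_j|A^j|=\int_{I^{(p)}}|\eta_{M_p}|$. A direct computation using the chain rule for non-linearity under affine rescaling (as in Remark~\ref{rk:logvectorRn}) identifies the latter with $|N|(\mathcal{Z}^p(M))$.

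The first hypothesis of Lemma~\ref{combinatoric} is the zero-average condition $\sum_i A_i=\int_0^1\eta_M=\overline{N}(M)=0$, which is precisely assumption (1) of the present Lemma (here interpreted as the affine-regime assumption, equivalent to $\overline{N}(M)=0$ via Lemma~\ref{boundaryaffinereduction}). The balance hypothesis $a_{ij}/A_i\geq c>0$ rests on three uniform estimates: (a) propagation of the a priori bound (2) from $M$ to the maps $M_n$ for $n\le 2p$, exactly as in Step 0 of the proof of the key Lemma~\ref{lemma:key}, yielding $\|DM_n\|_\infty\leq K_1=K_1(L,\|A\|)$; (b) Lemma~\ref{lemma:balancecont} applied at time $p$, which uses the positivity of $Q(p,2p)=A$ together with (a) to force $\nu$-balance of the lengths $(|I^{(p)}_j|)_{j}$; and (c) bounded distortion of the density $|\eta_M|$ on each $I_i$, which follows from the explicit form $\eta_m(x)=-2c/(cx+d)$ of the non-linearity of an orientation-preserving Moebius map combined with the a priori bound $L^{-1}\le\|DM\|_\infty\le L$. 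Combining (a)(b)(c) with the classical distortion bound of Lemma~\ref{bound1} for iterates of $M$ along Rohlin towers shows that the ratio $a_{ij}/A_i$ is, up to multiplicative constants depending only on $L,p,\|A\|$, of size $A_{ij}|I^{(p)}_j|/\sum_{j'}A_{ij'}|I^{(p)}_{j'}|$, which is bounded below since $A$ is a positive matrix with $\|A\|$ uniformly controlled and the $|I^{(p)}_j|$ are $\nu$-balanced.

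Both hypotheses of Lemma~\ref{combinatoric} being verified with a constant $c=c(L,p,\|A\|)>0$, the lemma produces $c'=c'(L,p,\|A\|)<1$ such that $\sum_j|A^j|\leq c'\sum_i|A_i|$, i.e.\ $|N|(\mathcal{Z}^p(M))\leq c'\,|N|(M)$, which is the conclusion with $\alpha_3:=c'$. The main obstacle in this plan is the balance step, specifically the control (c) of the density $|\eta_M|$: one must verify that the bounded distortion constants for $|\eta_M|$ on each $I_i$ depend only on the a priori bound $L$ and not on further features of $M$. This is where the Moebius assumption is used in an essential way, through the very explicit form of $\eta_m$ for a Moebius map $m$; in the more general non-linear regime, $|\eta_M|$ need not have bounded distortion and the cancellation mechanism captured by Lemma~\ref{combinatoric} fails, which is why convergence to AIETs (as opposed to MIETs) requires the extra Moebius step carried out in Corollary~\ref{convergenceMoebius}.
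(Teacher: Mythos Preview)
Your proposal is correct and follows essentially the same approach as the paper: define the array $a_{ij}=n_{ij}$ as integrals of $\eta_M$ over the floors of the $j$-th Rohlin tower lying in $I_i$, verify the zero-average and balance hypotheses of Lemma~\ref{combinatoric}, and conclude. Your treatment of the balance step is in fact more explicit than the paper's, in that you isolate the bounded-distortion of $|\eta_M|$ on each $I_i$ via the explicit formula $\eta_m(x)=-2c/(cx+d)$ together with the a~priori bound on $DM$, whereas the paper records this as a terse remark in its Step~3; both arguments also rely on the sign-coherence of $\eta_M$ and $\eta_{M_p}$ (Remark~\ref{Moebius_constantsign}) to pass from $|A_i|$, $|A^j|$ to the total non-linearities.
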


\begin{proof}
Consider the dynamical partitions $\mathcal{P}_n$, $n\in \mathbb{N}$, associated to $\mathcal{Z}^n M$, $n\in\mathbb{N}$ (see \S~\ref{dynamicalpartitions}). Let $\ell_1:=p$ and let  $M^1:= \mathcal{Z}^{\ell_1} (M)$. 
%We will consider the three times . 

\smallskip
\noindent {\it Step 1: Partition balance at time $\ell_1$.} We first claim that the assumptions $(2)$ and $(3)$ imply that, at time $\ell_1=p$,  all floors $F^k_i$ of the partition $\mathcal{P}_{\ell_1}$ are balanced, i.e.~there exists $\nu_1>1$ such that
$$
\nu_1^{-1} < \frac{F^{k_1}_{i_1}}{F^{k_2}_{i_2}}
<  \nu_1, \qquad \text{for\ all}\  1\leq k_i \leq q^{(\ell_1)}_i, \ 1\leq i_1,i_2 \leq d, \qquad \text{where}\ F^k_i:=M^{k}\left( I^{(\ell_1)}_i\right).
$$
This can be seen (as in the proof of mesh decay key Lemma~\ref{lemma:key} in \S~\ref{sec:keylemma}) in two steps, by considering the three times $\ell_0:=0, \ell_1:=p$ and $\ell_2:=2p$. First, since $Q(\ell_1, \ell_1+p)=Q(p,2p)=A>0$, from Lemma~\ref{lemma:key} we get the continuity intervals $I^{(\ell_1)}_j$ in the base $I^{(\ell_1)}$ are all $\nu$-balanced for some $\nu=\nu(A)$. Then, since the number of floors $q^{(\ell_1)}_j$ in each Rohlin tower $\mathcal{P}_{\ell_1}^j$ is bounded by $\| A\|$ (using here that $Q(0,\ell_1)=Q(0,p)=A$), by the  $\mathcal{C}^1$- recurrence assumption $(2)$, the balance  on the base can be \emph{transported} to show $\nu_1:= \nu K^{||A||}$ balance for all floors of the towers, as desired (we refer the reader to  the proof of Lemma~\ref{lemma:key} for more details).

\medskip  \noindent {\it Step 2: decompositions of non-linearities.} Let us now consider the two non-linearities that we want to compare, namely $N(M)$ and $N(M^1)$ for $M^1=\mathcal{Z}^p M$ and decompose them as follows. On one hand, for each continuity  interval $I_i:=I^t_i$ for $M$, since we can write $I_i=\cup_{j=1}^d ( I_i\cap \mathcal{P}^{\ell_1}_j)$, we can write the mean non linearity of the branch $M_i$ of $M$, that we wil denote by $N_i$, as
\begin{equation}\label{NbranchM}
N_i:= \int_{I_i}\eta_{M_i}(x)\, \mathrm{d}x = \sum_{j=1}^{d}n_{ij}, 
% \int_{I_i\cap \mathcal{P}^{\ell_1}_j}\eta_{M_i}(x)\, \mathrm{d}x = \sum_{j=1}^{d} \eta_{ij},
 \qquad \text{where}  \ n_{ij}:= \int_{I_i\cap \mathcal{P}^{\ell_1}_j}\eta_{M_i}(x)\, \mathrm{d}x.
\end{equation}
On the other hand, by definition of $M^1$ as induced map, on the continuity interval $I^1_j:=I^{(\ell_1)}_j$, since the branch $M^1_j$ is the composition of the restrictions of $M$ to the floors $F^k_j$ of the Rohlin tower $\mathcal{P}_{\ell_1}^j$ of height $q^1_j:=q^{(\ell_1)}_j$, exploiting the distribution property of non-linearity (i.e.~property $(ii)$ in Lemma~\ref{lemma:nonlinearity}) we can write 
\begin{equation}\label{NbranchM1}
 \int_{I^1_j}\eta_{M^1_j}(x)\, \mathrm{d}x = \sum_{k=0}^{q^1_j-1}\int_{F^k_j}\eta_{M}(x)\, \mathrm{d}x = \int_{\mathcal{P}^{\ell_1}_j}\eta_T(x)\, \mathrm{d}x = \sum_{i=1}^d\int_{\mathcal{P}^{\ell_1}_j\cap I_i}\eta_M(x)\, \mathrm{d}x= \sum_{i=1}^d n_{ij}.
%obtained restricting to  $I_i$ is the $i^{th}$ continuity interval of $M$
\end{equation}

\medskip  \noindent {\it Step 3: Linear algebra lemma assumptions.} We now want to apply the linear algebra Lemma~\ref{combinatoric}, to the quantities
$$
n_{ij}:= \int_{I_i\cap \mathcal{P}^{\ell_1}_j}\eta_{M_i}(x)\, \mathrm{d}x, \qquad  i,j\in \{1,\dots,d\}.
%\eta_i:=\int_{I^t_i}\eta_{M_i}\ \mathrm{d} x, \qquad    \eta^j:=   \eta_{ij}:= \i , \qquad i,j\in \{1,\dots,\}.
$$
In Step $2$ we have already shown both that $N_i=\sum_{j=1}^d n_{ij}$, by \eqref{NbranchM}, and that the non-linearity of $M^1_j$, that we will denote $N^j$, satisfies $N^j=\sum_{i=1}^d n_{ij}$, by \eqref{NbranchM1}.  
%$$
%\int_{\mathcal{P}^{\ell_1}_j}N_T(x)\, \mathrm{d}x= \sum_{i=1}^d\int_{\mathcal{P}^{\ell_1}_j}N_T(x)\, \mathrm{d}x= \sum_{i=1}^d n_{ij}.
%$$
Moreover, by the assumption $\overline{N}(M)=0$, we have that $\sum_{i=1}^d N_i= \sum_{i=1}^d \int_{I_i}\eta_{M_i}(x)\, \mathrm{d}x=0$, which shows that assumption $(1)$ is satisfied. We will now show that also assumption $(2)$ holds for  $a_{ij}:=n_{ij}$, $1\leq i,j\leq d$. We use here that $M$ is a Moebius IET.

%Thus, we have verified all assumptions of Lemma~\ref{combinatoric} for the real numbers $a_{ij}:=n_{ij}$, $1\leq i,j\leq d$,  but $(2)$, which we will now prove. 

\smallskip
\noindent %By definition of $M$ being a MIET, 
Remark first of all that since $M$ is a Moebius functions (see Remark~\ref{rk:Moebius})
%Remark first of all that all Moebius functions 
%have constant non-linearity $\eta_M(x)=n$ some some $m\in\mathbb{R}$, since $\mathfrk{M}= D\eta_M = 0$ (by property $(S1)$ and definition of the Schwarzian, see \S~\ref{sec:Schtwartzian}).
% Therefore, since $M$ is a Moebius IET, for each $i$, for each branch $M_i$ there exists  $n_i\in \mathbb{R}$ such that $\eta_{M_i}(x)=n_i$ . In particular, this shows that 
\begin{enumerate} 
\item for each $1\leq i\leq d$, $\eta_{M_i}(x)$  has \emph{constant sign} on $I_i$ and therefore $\eta_{ij}$, which are obtained integrating $\eta_{M_i}(x)$ on subintervals of $I_i$, have the same sign that $\eta_i$ for all $j=1,\dots, d$;
 % (see Remark~\ref{rk:MoebiusNl}). 
%Thus, for each $i$, $\eta_{ij}$, which are obtained integrating $\eta_{M_i}$ on subintervals of $I_i$, have the same sign that $\eta_i$ for all $j=1,\dots, d$.  
%Assume  that $\eta_i>0$ (the other case can be treated similarly). 
%Thus, we have that
%Thus, by the  $\mathcal{C}^1$- recurrence  assumption $(2)$ on $M$ together with Step $1$, we have that, for each floor $F$ of the Rohlin towers of $\mathcal{P}^{\ell_1}$, {\color{red} to do}
%$$
%\int_{F}\frac{D^2M_i}{DM_i}
%$$
\item $|| \log \mathrm{D}M ||_{\infty} \leq L$ (by the assumption $(2)$ of the Lemma), so $e^{1/L}\leq || \mathrm{D}M_i||_{\infty} \leq e^L$;
\item  $\eta_{M}$  is uniformly bounded above and below on each branch by $(1)$ above;
\item the floors of $\mathcal{P}_{n_1}$ are balanced by Step 1.
\end{enumerate} 
These  remarks can be used to conclude that   also assumptions $(2)$ of  Lemma~\ref{combinatoric} holds for $\{ \eta_{ij}, \ 1\leq i,j\leq d\}$. %{\color{black} ADD Explain better}

\medskip \noindent {\it Step 4: Conclusions.}  By Step 3, we can apply the linear algebra Lemma~\ref{combinatoric} and we get the existence of a constant $c'>0$ such that 
\begin{equation}\label{combinatoricconclusion} \sum_{i=1}^d{|\eta_{M_i}|} \leq c' \sum_{j=1}^d{|\int_{I_j}{\eta_{M_1}}|}.
\end{equation} 
Since both $M$ and $\mathcal{Z}^p(M)$ are MIETs, the signs of $\eta_M$ and $\eta_{M_1}$ are constant on each branches of $M$ and $M_1$ respectively (by the considerations in the previous Step), so that have 
$$|N^j| = |\int_{I^1_j}{\eta_{M^1}}=\int_{I^1_j}{|\eta_{M^1}|}, \qquad |N_i|=|\int_{I_i}{\eta_M}| = \int_{I_i}{|\eta_M|}.$$ 
\noindent With this observation, since  $N^{j}$ is the total non-linearity of the $j$-th branch of $M_1=\mathcal{Z}^p(M)$ (see Step 1, \eqref{NbranchM1}), the inequality \eqref{combinatoricconclusion} given by  Lemma~\ref{combinatoric} can be rewritten as
%The inequality $ \sum_{i=1}^d{|\eta_{M_i}|} \leq c' \sum_{j=1}^d{|\int_{I_j}{\eta_M}|}$ can this way be rewritten 
$$|N|(\mathcal{Z}^p(M))=  \sum_{j=1}^d \int_{I^1_j}{|\eta_{M^1}|} = \sum_{j=1}^d{|N^j|} \leq c' \sum_{i=1}^d |N_i|= c' \sum_{i=1}^d{|\int_{I_j}{|\eta_M}|}=c' |N|(T), $$ 
which proves the Proposition for $\alpha_3:= c'$.
\end{proof}

\subsubsection{Exponential decay of the total non-linearity}
We now have everything we need to show that the total non-linearity decreases exponentially along the orbit under renormalization and therefore conclude exponential convergence to the subspace $\mathcal{A}_d$ of AIETs:
\begin{proposition}[exponential decay of the total non-linearity]\label{nLdecay}
There exists constants $K_5 = K_5(T) > 0$ and $0 < \alpha_5 = \alpha_5(T) < 1$ such that for all $n \geq 0$
$$ |N|(\mathcal{Z}^{n}T) \leq K_5 \,\alpha_5^n.$$
\end{proposition}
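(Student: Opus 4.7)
The plan is to combine the exponential convergence to Moebius IETs (Corollary~\ref{convergenceMoebius}) with the one-step non-linearity contraction at good $\mathcal{C}^1$-recurrence times (Lemma~\ref{decreaseAN}), and then propagate the exponential decay from the sparse sequence $(n_{k_m})_{m\in \mathbb N}$ to every $n\in\mathbb N$ by monotonicity of $|N|$ together with Remark~\ref{rk:linearexp}.

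First I will set $\tau_m := \mathcal{Z}^{n_{k_m}}(T)$ and, using property $(ii)$ of Proposition~\ref{prop:Nproperties} together with the assumption that we are in the affine regime (so $\overline{N}(T)=0$ by Lemma~\ref{boundaryaffinereduction}), observe that $\overline{N}(\tau_m)=0$ for every $m$. By Corollary~\ref{convergenceMoebius} there is a Moebius IET $M_m$ with $d_{\mathcal{C}^3}(\tau_m, M_m)\leq K_3\alpha_2^{n_{k_m}}$, and since $\eta$ depends continuously on the second derivative, this yields
$$\bigl||N|(M_m)-|N|(\tau_m)\bigr|\le C\alpha_2^{n_{k_m}},\qquad |\overline{N}(M_m)|\le C\alpha_2^{n_{k_m}}.$$
I will then nudge $M_m$ by a small Moebius-preserving perturbation to produce $\widetilde M_m\in\mathcal{M}_d$ with \emph{exactly} $\overline{N}(\widetilde M_m)=0$ and $d_{\mathcal{C}^3}(M_m,\widetilde M_m)=O(\alpha_2^{n_{k_m}})$; the existence of such a perturbation uses the finite-dimensional parametrisation of $\mathcal{M}_d$ in Lemma~\ref{Mfindim} together with property $(\mathrm{m}3)$, which lets one adjust the mean non-linearities on each branch while keeping all other shape parameters unchanged.

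Second, since by construction $Q(n_{k_m}, n_{k_m}+2p)=AA$, and since by the a priori bounds of Proposition~\ref{apriori} the MIET $\widetilde M_m$ also satisfies a distortion bound $L^{-1}\le\|\D\widetilde M_m\|_\infty\le L$ uniform in $m$, Lemma~\ref{decreaseAN} applies and gives $|N|(\mathcal{Z}^p \widetilde M_m)\le \alpha_3 |N|(\widetilde M_m)$. Using the regularity/Lipschitz properties of the renormalization operator from the Appendix (\S\ref{app:regularity}) applied on the $\mathcal{C}^3$-bounded set provided by the a priori bounds, both $|N|(\mathcal{Z}^p\widetilde M_m)$ and $|N|(\widetilde M_m)$ stay $O(\alpha_2^{n_{k_m}})$-close to $|N|(\mathcal{Z}^p\tau_m)=|N|(\mathcal{Z}^{n_{k_m}+p}T)$ and $|N|(\tau_m)$ respectively, yielding
$$|N|(\mathcal{Z}^{n_{k_m}+p}T)\le \alpha_3\,|N|(\tau_m)+C'\alpha_2^{n_{k_m}}.$$
Since $n_{k_{m+1}}\ge n_{k_m}+p$ (achievable after passing to a further subsequence, which still grows linearly), the monotonicity property $(iii)$ in Proposition~\ref{prop:Nproperties} gives
$$|N|(\tau_{m+1})\le \alpha_3\,|N|(\tau_m)+C'\alpha_2^{n_{k_m}}.$$

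Finally, iterating this recursion produces
$$|N|(\tau_m)\le \alpha_3^{m}|N|(T)+C'\sum_{j=0}^{m-1}\alpha_3^{m-1-j}\alpha_2^{n_{k_j}},$$
and the linear growth $n_{k_j}\ge c j$ from the $(RDC)$ (items $(ii)$ and $(iii)$ of Definition~\ref{def:RDC}) makes the right-hand side geometric in $m$ with ratio $\alpha':=\max(\alpha_3,\alpha_2^{c})<1$. Hence $|N|(\tau_m)\le K(\alpha')^m$ for some $K=K(T)$. Because $|N|(\mathcal{Z}^nT)$ is non-increasing in $n$ (property $(iii)$ of Proposition~\ref{prop:Nproperties}) and decays exponentially along the linearly growing subsequence $(n_{k_m})_{m\in\mathbb N}$, Remark~\ref{rk:linearexp} delivers the desired exponential decay $|N|(\mathcal{Z}^nT)\le K_5\alpha_5^n$ for all $n\in\mathbb N$.

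I expect the main obstacle to be Step~2: producing a MIET $\widetilde M_m$ with exactly vanishing mean non-linearity and controlling how $|N|$ changes under a $\mathcal{Z}^p$-iterate of a perturbation. The key point is that on the $\mathcal{C}^3$-bounded set provided by the a priori bounds and Lemma~\ref{Mfindim}(M3), the map $S\mapsto |N|(\mathcal{Z}^p S)$ is Lipschitz in the $\mathcal{C}^2$ topology (using the Lipschitz regularity of $\mathcal{R}$ from the Appendix and the fact that $|N|$ itself is Lipschitz in $d_\eta$), so the two approximation errors — Moebius approximation and the affine-regime nudge — enter additively as $O(\alpha_2^{n_{k_m}})$, exactly as used in the contraction recursion above.
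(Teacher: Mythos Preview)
Your proof is correct and follows the same architecture as the paper's: approximate $\tau_m=\mathcal{R}^m T$ by a Moebius IET, apply Lemma~\ref{decreaseAN} for a one-step contraction, control the approximation error via the Lipschitz regularity of renormalization (Proposition~\ref{prop:LipR}) on the $\mathcal{C}^3$-bounded set furnished by the a priori bounds, iterate the resulting recursion $N_{m+1}\le\alpha_3 N_m + C\alpha^m$, and finish with monotonicity of $|N|$ and Remark~\ref{rk:linearexp}. The one place you are more explicit than the paper is the nudge $M_m\to\widetilde M_m$ forcing $\overline{N}(\widetilde M_m)=0$ exactly before invoking Lemma~\ref{decreaseAN}; the paper applies the lemma to $M_m$ directly without commenting on this hypothesis, so your extra step is a welcome bit of care rather than a different idea.
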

\noindent We prove this Proposition below. This will then be shown to also imply convergence to AIETs:
\begin{cor}[exponential convergence to AIETs]\label{convergenceaffine}
For $K_5>0$ and $\alpha_5<1$ as in Proposition~\ref{nLdecay}, 
%  $K_5 = K_5(T) > 0$ and $0 < \alpha_5 = \alpha_5(T) < 1$ 
%as in  such that for all $n \geq 0$
$$ \mathrm{d}_{\mathcal{C}^3}(\mathcal{Z}^{n}T, \mathcal{A}_d) \leq K_5 \,\alpha_5^n, \qquad \text{for \ all}\ n\in\mathbb{N}.$$
\end{cor}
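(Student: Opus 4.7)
The plan is to combine the exponential decay of total non-linearity from Proposition~\ref{nLdecay} with the exponential convergence to Moebius IETs from Corollary~\ref{convergenceMoebius}, exploiting the finite-dimensionality of $\mathcal{M}_d$ (Lemma~\ref{Mfindim}) to upgrade an $L^1$-type control to a $\mathcal{C}^3$ control. The necessary a priori bounds are already available from Proposition~\ref{apriori}.

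First I would pick, for each $n$, a Moebius IET $M_n\in\mathcal{M}_d$ which nearly realises the distance in Corollary~\ref{convergenceMoebius}, so that $d_{\mathcal{C}^3}(\mathcal{Z}^n T,M_n)\leq 2K_3\alpha_2^n$. Because $(\mathcal{Z}^n T)_{n}$ satisfies the a priori bound $\|DT^{(n)}\|_\infty \asymp 1$ given by Proposition~\ref{apriori} along the good-return subsequence (and hence, by interpolating using the bounded Rauzy--Veech products between such times together with the chain rule, in a slightly weaker but still uniform form for every $n$), the approximating $M_n$'s inherit an a priori bound $\|DM_n\|_\infty \asymp 1$. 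By Lemma~\ref{Mfindim}(M3) this forces the sequence $(M_n)_n$ to lie in a subset $\mathcal{K}\subset \mathcal{M}_d$ that is $\mathcal{C}^k$-bounded for every $k$, and in particular relatively compact in the finite-dimensional space $\mathcal{M}_d$.

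Next I would transport the non-linearity estimate from $\mathcal{Z}^n T$ to $M_n$. Since $d_{\mathcal{C}^3}(\mathcal{Z}^n T,M_n)\to 0$ exponentially and the map $T\mapsto |N|(T)=\int_0^1|\eta_T|$ is Lipschitz on $\mathcal{C}^2$-bounded sets (as $\eta$ depends smoothly on $T$ in $\mathcal{C}^2$), Proposition~\ref{nLdecay} gives
\[
|N|(M_n)\;\leq\; |N|(\mathcal{Z}^n T) + C_1\,d_{\mathcal{C}^3}(\mathcal{Z}^n T,M_n)\;\leq\; C_2\,\alpha^{n}
\]
for some $0<\alpha<1$ (one can take $\alpha=\max(\alpha_2,\alpha_5)$) and $C_2=C_2(T)$. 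Now the key use of finite-dimensionality: within $\mathcal{K}\subset\mathcal{M}_d$, Lemma~\ref{Mfindim}(M1) tells us that $M_n$ is determined by its shape $A_{M_n}\in\mathcal{A}_d$ and its vector of branch mean non-linearities $\underline{\eta}(M_n)\in\mathbb{R}^d$; and $\mathcal{A}_d=\{\underline{\eta}=0\}\cap\mathcal{M}_d$. The map $\mathcal{M}_d\to\mathbb{R}^d$, $M\mapsto\underline{\eta}(M)$, is $\mathcal{C}^\infty$, and its restriction to the compact set $\mathcal{K}$ admits a Lipschitz right-inverse in the $\mathcal{C}^3$-topology (parametrising the fibre of the shape map), so there exists $A_n\in\mathcal{A}_d$ sharing the shape of $M_n$ with
\[
d_{\mathcal{C}^3}(M_n,A_n)\;\leq\; C_3\,\|\underline{\eta}(M_n)\|.
\]
Finally, since each branch of a MIET has constant-sign non-linearity (Remark~\ref{Moebius_constantsign}), one has $\|\underline{\eta}(M_n)\|\leq |N|(M_n)$ (up to a multiplicative constant from the equivalence of norms on $\mathbb{R}^d$). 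Combining the three estimates and using the triangle inequality yields
\[
d_{\mathcal{C}^3}(\mathcal{Z}^n T,\mathcal{A}_d)\;\leq\; d_{\mathcal{C}^3}(\mathcal{Z}^n T,M_n)+d_{\mathcal{C}^3}(M_n,A_n)\;\leq\; 2K_3\alpha_2^n+C_3 C_2\,\alpha^n,
\]
which, after absorbing constants and replacing $\alpha_5$ by $\max(\alpha_2,\alpha_5)$ if necessary, is the bound claimed in the statement.

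The main obstacle I anticipate is the regularity book-keeping in the second paragraph: $|N|$ is naturally an $L^1$ quantity and Proposition~\ref{nLdecay} is really an estimate on $d_\eta$, not $d_{\mathcal{C}^3}$, so one must genuinely use that the a priori bounds together with Lemma~\ref{Mfindim} compactify the target into a finite-dimensional arena where all $\mathcal{C}^k$ norms are equivalent. Once this equivalence is in place, the remaining arguments are of Lipschitz/linear-algebra type, and the desired rate $\alpha_5$ (as in Proposition~\ref{nLdecay}, possibly worsened to $\max(\alpha_2,\alpha_5)$) is preserved.
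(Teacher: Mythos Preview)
Your overall strategy coincides with the paper's: approximate $\mathcal{Z}^n T$ by Moebius IETs $M_n$ using Corollary~\ref{convergenceMoebius}, feed in the decay of $|N|$ from Proposition~\ref{nLdecay}, and exploit the finite-dimensionality of $\mathcal{M}_d$ (Lemma~\ref{Mfindim}) to pass from a weak control to $d_{\mathcal{C}^3}$. The paper organises this slightly differently: it first uses Remark~\ref{rk:nonlinearityasd} and Corollary~\ref{lemma:distancesrel} to convert $|N|(\mathcal{Z}^n T)\to 0$ into $d_{\mathcal{C}^1}(\mathcal{Z}^n T,\mathcal{A}_d)\to 0$, then transfers this to $M_n$ by the triangle inequality and upgrades $d_{\mathcal{C}^1}$ to $d_{\mathcal{C}^3}$ via the equivalence of the $\mathcal{C}^k$-distances on the finite-dimensional space $\mathcal{M}_d\supset\mathcal{A}_d$.

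There is, however, a gap in your second paragraph. The a priori bound of Proposition~\ref{apriori} is only established along the subsequence $(n_{k_m})_m$, and your interpolation claim fails: between successive special times the cocycle products $Q(n_{k_m},n)$ are \emph{not} uniformly bounded (only subexponentially growing, by \ref{conditionS}), so $\|D\mathcal{Z}^n T\|_\infty$ need not stay uniformly bounded for all $n$, and Lemma~\ref{Mfindim}(M3) cannot be invoked as written. The fix is that full a priori bounds are unnecessary here: the total non-linearity $|N|(T)$ and the mean non-linearity vector $\underline{\eta}(M_n)$ depend only on the \emph{profile} coordinates (branch non-linearity is invariant under affine pre- and post-composition), and the profiles of $\mathcal{Z}^n T$ are uniformly $\mathcal{C}^1$-bounded at \emph{every} time $n$ by the distortion estimate of Lemma~\ref{bound2}, with no recourse to the shape. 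Hence the profiles of $M_n$ are also uniformly bounded, the Moebius parameters $\underline{\eta}(M_n)$ stay in a compact set, and the remainder of your argument goes through unchanged. This profile-only control is exactly what Corollary~\ref{lemma:distancesrel} packages, and is the route the paper takes.
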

\noindent In the proof of Proposition~\ref{nLdecay}, we use the distance  $\mathrm{d}_{\eta}$ which was defined in \S~\ref{sec:distances} and is used here as a technical tool for this step of the proof. We will also use the following proposition:
\begin{proposition}[see \cite{Selim:loc} and Appendix~\ref{sec:LipR}]\label{prop:LipR}
\label{lipschitz2}
Let $\mathcal{K} \subset \mathcal{X}^3$ be a ${\mathcal{C}^3}$-bounded set. Then there exists a constant $K = K(\mathcal{K})$ such that $\mathcal{V}$ is $K$-Lipschitz on $\mathcal{K}$ with respect to $d_{\eta}$.
\end{proposition}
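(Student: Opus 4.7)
\medskip
\noindent\textbf{Proof plan.} The plan is to work in the shape-profile coordinates $\mathcal{X}^3_{\pi} = \mathcal{A}_{\pi} \times \mathcal{P}^3$ and to show separately that both the shape map $T \mapsto A_{\mathcal{V}(T)}$ and each profile coordinate $T\mapsto \varphi^j_{\mathcal{V}(T)}$ are Lipschitz on $\mathcal{K}$. Since the distance $d_{\eta}$ on $\mathcal{X}$ is the sum of a distance on the finite-dimensional space $\mathcal{A}_\pi$ and the distances $d_\eta$ on the $d$ profile coordinates, this reduces the problem to finitely many one-dimensional estimates. Throughout, the assumption that $\mathcal{K}$ is $\mathcal{C}^3$-bounded will guarantee uniform upper and lower bounds on the derivatives $\D T$, on the total non-linearity $|N|(T)$, and hence (via the a priori bounds in the style of Lemma~\ref{bound2}) on the profiles of $\mathcal{V}(T)$.

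\medskip
\noindent The next step is to analyse how $\mathcal{V}$ acts on branches. Recall from \S~\ref{RV} that, after one elementary step of Rauzy--Veech, every branch of $\mathcal{V}(T)$ is, up to conjugation by the affine normalization $x \mapsto x/\lambda_1$, either a branch $T_i$ of $T$ or a composition $T_{i'}\circ T_{i}$ of two branches (the case depending on whether the top or the bottom interval wins). Consequently each profile coordinate $\varphi^j_{\mathcal{V}(T)}$ is the normalisation of either one $T_i$ or a two-term composition. Using the chain rule for non-linearity (Lemma~\ref{lemma:nonlinearity}(i)) together with the rescaling properties of $\eta$, one can then write, for each $j$,
\[
\eta_{\varphi^j_{\mathcal{V}(T)}}(x) \;=\; \alpha_j\,\eta_{T_{i(j)}}\!\big(a_j(x)\big) \;+\; \beta_j\, \D T_{i(j)}(a_j(x))\, \eta_{T_{i'(j)}}\!\big(T_{i(j)}(a_j(x))\big),
\]
where $a_j$ is an affine map, $\alpha_j, \beta_j$ are positive scalars depending only on the lengths $|I^t_i|,|I^b_i|$ (hence Lipschitz functions of $T$ in $\mathcal{A}_\pi$), and the second term is present only in the composition case.

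\medskip
\noindent The core estimate is then obtained by subtracting the two analogous expressions for $T_1, T_2 \in \mathcal{K}$ and integrating. After a change of variable by the affine map $a_j$, the contribution of the first term is bounded directly by $d_\eta(T_1,T_2)$. For the second term we use the telescopic identity
\[
\D T_1\,(\eta \circ T_1) - \D T_2\,(\eta'\circ T_2) \;=\; (\D T_1 - \D T_2)(\eta\circ T_1) \;+\; \D T_2\big(\eta \circ T_1 - \eta' \circ T_2\big),
\]
applied branch-by-branch (with $\eta, \eta'$ standing for the appropriate $\eta_{T_{i'(j)}}$). The first resulting piece is handled by the $L^1$-comparison $\|\D T_1 - \D T_2\|_{L^1} \leq C(\mathcal{K})\, d_\eta(T_1,T_2)$, which follows from the distortion bound (Lemma~\ref{bound1}) and the definition of $d_\eta$ together with the $\mathcal{C}^3$-bound on $\mathcal{K}$; since $\eta\circ T_1$ is uniformly bounded on $\mathcal{K}$, this yields a bound of the form $C d_\eta(T_1,T_2)$. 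The second piece again splits, via the Lipschitz continuity of $T\mapsto T_i$ and a further change of variable, into a term directly bounded by $d_\eta(T_1,T_2)$ and a remainder controlled by $\|T_1 - T_2\|_{\mathcal{C}^0}$, itself dominated by $d_\eta(T_1,T_2)$ on the bounded set $\mathcal{K}$ (by the comparison of Lemma~\ref{lemma:distancesrelDiff}).

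\medskip
\noindent The shape component is easier: the slopes and lengths defining $A_{\mathcal{V}(T)}$ are smooth functions of finitely many point-evaluations and integrals of $\D T$, which by the distortion argument above are again Lipschitz in $d_\eta$ on $\mathcal{K}$. Summing the finitely many contributions gives $d_\eta(\mathcal{V}(T_1), \mathcal{V}(T_2)) \leq K\, d_\eta(T_1,T_2)$ with a constant $K=K(\mathcal{K})$. I expect the main technical obstacle to be the precise control of the cross term $\D T_{i(j)}\cdot (\eta_{T_{i'(j)}}\circ T_{i(j)})$ in the composition case: ensuring that the change of variables, the distortion bounds, and the comparison between $\|\cdot\|_{\mathcal{C}^0}$ and $d_\eta$ combine cleanly, without accumulating dependencies on $T_1,T_2$ individually, requires careful bookkeeping and is the step that most essentially uses both the $\mathcal{C}^3$-boundedness of $\mathcal{K}$ and the standard distortion estimate.
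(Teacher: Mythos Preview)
Your proposal is correct and follows essentially the same route as the paper: split $\mathcal{V}=(\mathcal{V}_{\mathcal{A}},\mathcal{V}_{\mathcal{P}})$ in shape--profile coordinates, treat the finite-dimensional shape part by smoothness, and control the profile part via the chain rule for non-linearity together with a telescopic decomposition of $\eta_{f_1\circ g_1}-\eta_{f_2\circ g_2}$. The only difference is organizational: the paper isolates the profile estimate as a separate Lipschitz-of-composition lemma on $\mathrm{Diff}^3([0,1])$ (Proposition~\ref{lipschitz}) and then applies it, whereas you carry out the same three-term split and distortion bounds inline.
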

\noindent The Proposition was proved by the first author in \cite{Selim:loc}. We include a proof in Appendix~\ref{sec:LipR}.

%\noindent See Appendix \ref{parameter} for a definition of
\begin{proof}[Proof of Proposition~\ref{nLdecay}]
For simplicity of notation let us denote in this proof $N_m:=|N|({\mathcal{R}^m T})$ the total-non linearity of $\widetilde{R}^{m}T:=\mathcal{Z}^{n_{k_m}}(T)$. Since by Corollary~\ref{convergenceMoebius} $d_{\eta}(\mathcal{R}^m T, \mathcal{M})  \leq %d_{\mathcal{C}^3}(\mathcal{R}^m T, \mathcal{M}) \leq
 K_3 \alpha_2^k$, by definition of distance from a set, for every $m\in\mathbb{N}$ we can find an Moebius IET $M_m$ in $\mathcal{M}_d$ such that 
\begin{equation}\label{expdec} d_{\eta}(\mathcal{R}^{m}T, M_m) \leq (K_3 + 1) \alpha_2^m.\end{equation}
Thus, since  $|N|(\mathcal{Z}^{n}T)$ is descreasing in $n$ (see $(iii)$ of Lemma~\ref{prop:Nproperties}) and we can assume without loss of generality that $n_{k_{m+1}}\geq n_{k_m}+p$,  writing $|N|(\mathcal{Z}^p(\mathcal{R}^m T))$ as a $d_\eta$ distance 
%$|N|(T)=d_\eta(T,\mathcal{A})$ 
(see Remark~\ref{rk:nonlinearityasd}) and using the triangle inequality for $d_\eta$ %non-linearities  (see Proposition~\ref{lemma:Ns}),   
\begin{equation}\label{Nm1}
N_{m+1} = |N|(\mathcal{R}^{m+1} T)) \leq  |N|(\mathcal{Z}^p(\mathcal{R}^m T))\leq |N|(\mathcal{Z}^{p} M_m) + d_{\eta}(\mathcal{Z}^{p}\mathcal{R}^m T, \mathcal{Z}^{p}M_m).
\end{equation}
Since the times $(n_{k_m})_{n\in\mathbb{N}}$ (by the $(RDC)$ condition) are $p$-good return times, for every $m$ we know that $Q(n_{k_m} ,n_{k_m}+2p)=AA $ for a fixed positive matrix $A$ and 
 furthermore the a-priori distorsion bounds given by Proposition~\ref{apriori} holds (so that $n_{k_m}$ are $\mathcal{C}^1$ recurrence times in the sense of Definition~\ref{def:C1recurrence}). We can therefore apply the non-linearity decrease Lemma~\ref{decreaseAN} to the Moebius IET $M_n$, followed by once more the triangle inequality for $d_\eta$ (recalling Remark~\ref{rk:nonlinearityasd}) and by \eqref{expdec}  to get %{\color{black}explain better ADD }
%\noindent Recall that for all $n$, $|N|(\mathcal{Z}^{n+1}T) \leq |N|(\mathcal{Z}^{n}T)$. Since the times $n_{k_m}$ are good return times, there exists a uniform $p>0$ such that for all $m$, $Q(n_{k_m}, n_{k_m} + p)$ is positive and $|| Q(n_{k_m}, n_{k_m} + p) || \leq q$ for a certain  $q = q(T)$.  We have
$$ |N|(\mathcal{Z}^{p} M_m) \leq  \alpha_3 |N|( M_m) \leq  \alpha_3 \left( |N|(\mathcal{R}^m T) + d_{\eta}(\mathcal{R}^m T,M_m)\right)\leq \alpha_3 \left(N_m + (K_3 + 1) \alpha_2^m\right).
%$$N_{m+1} \leq  |N|(\mathcal{Z}^{p} M_m) + d_{\eta}(\mathcal{Z}^{p}\mathcal{R}^m T, \mathcal{Z}^{p}M_m)\leq  \alpha_3 |N|( M_m) + d_{\eta}(\mathcal{Z}^{p}\mathcal{R}^m T), \mathcal{Z}^{p}M_m).
$$ 
Combining this with \eqref{Nm1}, we get
\begin{equation}\label{Nm2}
N_{m+1}  \leq \alpha_3 ( N_m + (K_3 + 1) \alpha_2^m ) + d_{\eta}(\mathcal{Z}^{p}\mathcal{R}^m T, \mathcal{Z}^{p}M_m) .
\end{equation}
%$$
%N_{m+1} \leq \alpha_3  (|N|(\mathcal{R}^m T) + d_{\eta}(\mathcal{R}^m T,M_m)) + d_{\eta}(\mathcal{Z}^{p}\mathcal{R}^m T), %\mathcal{Z}^{p}M_m) \leq \alpha_3 ( N_m + (K_3 + 1) \alpha_2^m ) + d_{\eta}(\mathcal{Z}^{p}\mathcal{R}^m T), \mathcal{Z}^{p}M_m) .%
\noindent {\it Bounded set for Lipschitz control.} In order to estimate $d_{\eta}(\mathcal{Z}^{p}\mathcal{R}^m T, \mathcal{Z}^{p}M_m)$ in the next step by using the  Lipschitz property on bounded sets given by  Proposition~\ref{lipschitz}, let us now show that there exists a $\mathcal{C}^3$-bounded set $\mathcal{K}$ such that $\mathcal{R}^m T$, as well as the iterates  $\mathcal{V}^n(\mathcal{R}^m T)$ with $0\leq n\leq n_p$, where $n_p$ is such that  
$\mathcal{V}^{n_p}(\mathcal{R}^m T) = \mathcal{Z}^{p}(\mathcal{R}^m T)$, all belong to $\mathcal{K}$.  

\noindent We already know that the a priori bounds given by Proposition~\ref{apriori} hold for $\mathcal{R}^m T =\mathcal{Z}^{n_{k_m}}(T)$, namely $K_2^{-1} \leq ||\D\mathcal{R}^m(T)||_\infty \leq K_2(T)$. Since  $n_{k_m}$ is also a $p$-good time and therefore $Q(n_{k_m}, n_{k_m}+p)=A$, we get that  all the branches of $\mathcal{V}^n(\mathcal{R}^m T)$ for $0\leq n <n_p$ are obtained composing at most $||A||$ branches of $\mathcal{R}^m T$. This shows that a priori bounds also hold for all $\mathcal{V}^n(\mathcal{R}^m T)$ for $0\leq n <n_p$, when the constant $K_2$ is replaced by $K_2^{||A||}$. 

\noindent Since we have already proved that $(\mathcal{Z}^{n}T)_{n\in\mathbb{N}}$ converges exponentially fast to $\mathcal{M}$ with respect to the $\mathcal{C}^3$-distance (by Corollary~\ref{convergenceMoebius}) and Moebius IETs which satisfy a priori bounds, by $(M3)$ in Lemma~\ref{Mfindim}, belong to a $\mathcal{C}^3$-bounded set (in the sense of Definition~\ref{def:Ckbounded}), this implies that, for any $m$ which is large enough, %%% ADD
the GIETs $\{ \mathcal{V}^n(\mathcal{R}^m T), \ m\in\mathbb{N}, 0\leq n <n_p\}$ as well as the corresponding Moebius maps  $\{ \mathcal{V}^n(M_m), \ m\in\mathbb{N}, 0\leq n <n_p\}$ are contained in a set, that we will denote $\mathcal{K}$, which is $\mathcal{C}^3$-bounded subset of $\mathcal{X}^3$ (see Lemma~\ref{lemma:equivbounded}). 

\smallskip
\noindent {\it Lipschitz estimate and final arguments.}
By Proposition~\ref{lipschitz}, $\mathcal{Z}$ is Lipschitz on the bounded set $\mathcal{K}$ constructed in the previous step. 
 Let $K$ be the corresponding Lipschitz constant. Since by the previous step we can apply  the Lipschitz property $n_p\leq ||A||$ times to \eqref{Nm2} so that, recalling \eqref{expdec}, we  get 
$$ N_{m+1} \leq \alpha_3 N_m + \left(\alpha_3(K_3+1)+ K^{||A||}(K_3 + 1) \right) \alpha_2^m ,
$$ from which on can derive the existence of $K_4$ and $\alpha_4$ such that 
$N_m \leq K_4 \,\alpha_4^m$ for every $m\in\mathbb{N}$. %% ADD small point to correct/justify, how to get N_m instead than Moebius?

\smallskip
\noindent Since $|N|(\mathcal{Z}^{n}T)$ is decreasing (see Lemma~\ref{prop:Nproperties}, Property $(iii)$) and  
 $(n_{k_m})_{m\in\mathbb{N}}$ grows linearly (recall Properties $(ii)$ and $(iii)$ in the Definition~\ref{def:RDC} of the  $(RDC)$),  Remark~\ref{rk:linearexp} now allows to find constants $K_5>0,0< \alpha_5<1$ to concludes the proof of Proposition~\ref{nLdecay}.
\end{proof}

\noindent We can now prove Corollary~\ref{convergenceaffine}. 
\begin{proof}[Proof of Corollary~\ref{convergenceaffine}]
{\color{black}
%We have up to now proved that:
%\begin{itemize}
%\item[(1)] 
In view of Remark~\ref{rk:nonlinearityasd}, $ \mathrm{d}_{\eta}(\mathcal{Z}^{n}T, \mathcal{A}_d) = |N|(\mathcal{Z}^{n}T)$  goes to zero at an exponential rate by  Proposition~\ref{nLdecay}. 
% i.e.~for every $n$ we can find $A_n\in \mathcal{A}_d$ such that $\mathrm{d}_{\eta}(\mathcal{Z}^{n}T, A_n)\leq c \alpha^2$.
Therefore, since by Corollary~\ref{lemma:distancesrel} there exists a constant $L=L(T)>0$ such that $d_{\mathcal{C}^1}(\mathcal{Z}^{n}T, \mathcal{A}_d)  \leq L\,  \mathrm{d}_{\eta}(\mathcal{Z}^{n}T, \mathcal{A}_d)$,  we deduce  that also  $ d_{\mathcal{C}^1}(\mathcal{Z}^{n}T, \mathcal{A}_d)$  goes to zero at an exponential rate.
%and, for $n$ sufficiently large, $d_{\mathcal{C}^2}(\mathcal{Z}^{n}T, \mathcal{A}_d)$
%\item[(2)]

Furthemore, we have also shown that $(\mathcal{Z}^{n}T)_{n\in\mathbb{N}}$ converge exponentially to $\mathcal{M}_d$ with respect to $d_{\mathcal{C}^3}$ (see Corollary~\ref{convergenceMoebius}), thus for every $n$, we can find a MIET $M_n$ such $d_{\mathcal{C}^3}( \mathcal{Z}^{n}T, M_n)$ is exponentially small. 
%\item[(3)] the iterates $\{\mathcal{R}^{m}T, m\in \mathbb{N}\}$ belong to a ${\mathcal{C}^3}$-bounded subset  (as shown in the proof of Proposition~\ref{nLdecay}).
%\end{itemize}
%Since  $d_{\mathcal{C}^1}\leq L\,  d_{\eta}$  on $\mathcal{C}^2$-bounded subsets (see Lemma~\ref{lemma:distancesrel} and Appendix~\ref{sec:distancesAppendix}),  we therefore deduce from $(1)$ and $(2)$ that $ d_{\mathcal{C}^1}(\mathcal{R}^{m}T, \mathcal{A}_d) \leq |N|(\mathcal{R}^{m}T)$  goes to zero at an exponential rate.  
Since $\mathcal{C}^3$-convergence implies in particular $\mathcal{C}^1$-convergence, it follows that $d_{\mathcal{C}^1}( M_m,\mathcal{A}_d)$ is exponentially small. Since $\mathcal{M}_d$ is finite dimensional (see Lemma~\ref{Mfindim}), it follows that $d_{\mathcal{C}^1}$ and $d_{\mathcal{C}^3}$, restricted to $\mathcal{M}_d$ (which contains $\mathcal{A}_d$) are comparable. Thus, 
%Finally, since Moebius $$
%Since a Moebius IET is fully determined by its shape and by (the values at endpoints of) $DT$ (see $(M1)$ and $(M2)$ in ,
we conclude that also  $d_{\mathcal{C}^3}( M_m,\mathcal{A}_d)$ is exponentially small and finally 
%going to zero.  
%Thus, we conclude 
that  $d_{\mathcal{C}^3}( \mathcal{R}^{n}T, \mathcal{A}_d)$ decay exponentially.%, as claimed.%which is the desired conclusion. 
}\end{proof}

\subsection{Convergence to IETs}\label{IETsconvergence}
We now want to get convergence to the set of standard IETs (under the assumption $\mathcal{B}(T) = 0$. The idea behind this last step is the following : the logarithm of the slopes of an AIET transform under Rauzy-Veech induction under the action of Zorich-Kontsevich cocycle. Either this vector belongs to the stable space in which case the logarithm of the slopes converge exponentially fast to zero, which is equivalent to convergence to IETs; or it belongs to the unstable space in which case it grows exponentially fast and iterated renormalizations are unbounded in $\mathcal{C}^1$-norm which in our case is impossible. The only thing we need to show is that we can follow this argument when the GIET we are starting with is not an AIET but only exponentially asymptotic to the set of AIETs.

\vspace{3mm}

\noindent Recall that we have a special sequence $n_{k_m}$ (as $T$ satisfies the $(RDC)$ of Definition~\ref{def:RDC})    for which we know that: 

\begin{enumerate}

\item $ || \omega_{n_{k_m}} || \leq K $ for a uniform constant $K>0$;

\item  since $(n_{k_m})_{m\in\mathbb{N}}$ grows linearly in $m$, the difference $({n_{k_{m+1}}} - n_{k_m})/m$ tends to $0$;

\item {there exists a constant $C_1 >0$  and $\theta > 1$ such that for all $k$ and $i \geq$ we have for all $v \in \Gamma_u^{(n_k)}$
$$ || Q(n_k, n_k + i) \,v ||  \geq C_1 \theta^i ||v||.$$
}
\end{enumerate}
%We start by giving a slight refinement of Lemma \ref{lemma3}.
%\begin{lemma}
%\label{lemma4}
%For any $n \in \mathbb{N}$ we have %
%$$ ||\omega_{n+1} - Z(n)\omega_n|| \leq ||Z(n)||A{\overline{N}}(T^{(n)}) $$%
%\end{lemma}
%\begin{proof}
%The statement can be directly extracted from the proof of Lemma \ref{lemma2} noticing that the constant $D_T$ in the proof is equal to $\int_{0}^1{|\eta_T|} = A{\overline{N}}(T)$.
%\end{proof}
\noindent For any $n \geq 0$, recall that we write 
$$\omega_n =\omega_{n}^s + \omega_{n}^c + \omega_{n}^u \in \Gamma_s^{(n)}  \oplus \Gamma_c^{(n)}\oplus \Gamma_u^{(n)},$$
where $\omega_n^a\in \Gamma_s^{(n)} $ for $a\in\{s,c,u\}$ are the components of $\omega_n$  with respect to the the decomposition  of $\mathbb{R}^d$ of (the extension of) $T$ given by Definition~\ref{def:Oseledetsextension}.  

 Consider the errors $e_n := \omega_{n+1} - Z(n)\omega_n$ (similar to those used in \S~\ref{shadowing}, but here defined using the whole sequence of Zorich renormalization times, and not only the special times given by the $(RDC)$). Let us decompose also those according to the invariant splitting, writing, for each $n\in\mathbb{N}$,
$$e_n : = \omega_{n+1} - Z_n\omega_n= e_{n}^s +e_{n}^c + e_{n}^u \in \Gamma_s^{(n)} \in \oplus \Gamma_c^{(n)}\oplus \Gamma_u^{(n)}.$$ 
By  Proposition \ref{convergenceaffine}, we know that $|N|(\mathcal{Z}^{n}T) \leq K_5 \,\alpha_5^n$ for 
 $\alpha_5 = \alpha_5(T) < 1$. Thus, by Lemma \ref{lemma3}, we get that  $|| e_n || \leq K_5 \alpha_5^n $. Because the angle between  $\Gamma_s^{(n)}$,   $\Gamma_c^{(n)}$ and  $\Gamma_u^{(n)}$ decays at worst subexponentially fast,  
%(recall~\ref{} ADD),
 we can find $K_6$ and $\alpha_6 < 1$ such that 
\begin{equation}\label{componentsexpdecay}
\max\{ ||e_{n}^s||,  ||e_{n}^c||  , || e_{n}^u ||\} \leq K_6 \,\alpha_6^n.\
\end{equation}
\noindent Because the action of the cocycle preserves the decompositions $\mathbb{R}^d = \Gamma_s^{(n)} \oplus \Gamma_u^{(n)}$ we have 
\begin{equation}
\label{e2} 
 \omega_{n+1}^a = Z_n\omega_{n}^a + e_{n}^a,\qquad \text{for\ all}\ a\in \{s,c,u\}. 
%  \omega_{n+1}^c = Z_n\omega_{n}^c + e_{n}^c, \ \text{and} \ \omega_{n+1}^u = Z_n\omega_{n}^u + e_{n}^u.
\end{equation} We deal with the decay of $\omega_{n}^u$,  $\omega_{n}^c$,  and $\omega_{n}^s$ separately, that of $\omega_{n}^u$ being the most delicate.

{
\begin{lemma}[decay of the unstable component]
\label{convunstable}
There exists $K_7 = K_7(T) > 0$ and $\alpha_7 = \alpha_7(T) <1$ such that 
$$ ||\omega_{n}^u|| \leq K_7 \,\alpha_7^n.$$
\end{lemma}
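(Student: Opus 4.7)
The plan is to derive a telescoping formula for $\omega_{n_{k_m}}^u$ as an infinite sum over forward iterates of the recursion, then bound it using the exponential decay of the errors $e_j^u$ combined with the uniform exponential contraction of the cocycle inverse on the unstable space provided by preamble condition~(3). First I would iterate the recursion \eqref{e2} with $a = u$ forward from a good return time $n_{k_m}$ to $n_{k_l}$ with $l > m$ and isolate $\omega_{n_{k_m}}^u$ by applying $Q(n_{k_m}, n_{k_l})^{-1}$. Using the cocycle identity $Q(n_{k_m}, n_{k_l})^{-1} Q(j+1, n_{k_l}) = Q(n_{k_m}, j+1)^{-1}$, this yields
$$\omega_{n_{k_m}}^u = Q(n_{k_m}, n_{k_l})^{-1}\omega_{n_{k_l}}^u - \sum_{j=n_{k_m}}^{n_{k_l}-1}Q(n_{k_m}, j+1)^{-1}e_j^u.$$
Letting $l \to \infty$, the boundary term vanishes: $\|\omega_{n_{k_l}}^u\|$ is bounded by Case~1 of Theorem~\ref{shadowing}, while condition~(3) applied at $n_{k_m}$ gives $\|Q(n_{k_m}, n_{k_l})^{-1}|_{\Gamma_u^{(n_{k_l})}}\| \leq C_1^{-1}\theta^{-(n_{k_l}-n_{k_m})}$, which tends to $0$. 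This produces the clean identity
$$\omega_{n_{k_m}}^u = -\sum_{j=n_{k_m}}^{\infty} Q(n_{k_m}, j+1)^{-1}e_j^u.$$

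Next, I would estimate this sum at special times. Condition~(3) of the preamble gives the uniform bound $\|Q(n_{k_m}, j+1)^{-1}|_{\Gamma_u^{(j+1)}}\| \leq C_1^{-1}\theta^{-(j+1-n_{k_m})}$ for all $j\geq n_{k_m}$, and \eqref{componentsexpdecay} gives $\|e_j^u\| \leq K_6\alpha_6^j$. Substituting and summing a geometric series in $l = j-n_{k_m}$,
$$\|\omega_{n_{k_m}}^u\| \leq C_1^{-1}K_6 \alpha_6^{n_{k_m}}\sum_{l=0}^{\infty}\theta^{-(l+1)}\alpha_6^{l} \leq C_7\,\alpha_6^{n_{k_m}},$$
since $\alpha_6 < 1 < \theta$. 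This yields exponential decay along the subsequence $(n_{k_m})_{m\in\mathbb{N}}$.

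To promote this to all integers $n$, I would interpolate: for $n_{k_m} \leq n < n_{k_{m+1}}$, iterate \eqref{e2} forward from $n_{k_m}$ to $n$ to obtain $\omega_n^u = Q(n_{k_m}, n)\omega_{n_{k_m}}^u + \sum_{j=n_{k_m}}^{n-1} Q(j+1, n)e_j^u$, and then take norms. The cocycle factors $\|Q(n_{k_m},n)\|$ and $\|Q(j+1,n)\|$ are both dominated by $\|\widetilde{Q}(k_m, k_{m+1})\|$, which by condition \ref{conditionS} of the $(RDC)$ grows subexponentially in $m$, hence, because $n_{k_m}/m$ has a finite limit, also subexponentially in $n$. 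Similarly the gap $n-n_{k_m}$ satisfies $n-n_{k_m} = o(n)$, so $\alpha_6^{n_{k_m}}$ can be replaced by $\alpha_6^n$ at the cost of a subexponential correction. Choosing $\epsilon>0$ small enough that $\alpha_7 := \alpha_6 e^{2\epsilon} < 1$ then gives the desired bound $\|\omega_n^u\| \leq K_7\,\alpha_7^n$ for all $n$.

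The main subtlety lies in the interpolation step: one needs the subexponential cocycle growth controlled by \ref{conditionS} to be dominated by the exponential decay inherited from the subsequence. The linear growth of $n_{k_m}$ in $m$ given by item~(ii) of Definition~\ref{def:RDC} is precisely what makes the subexponential factor $e^{\epsilon n_{k_m}}$ comparable to $e^{\epsilon n}$ and allows the exponential rate to be preserved. No refinement of the Oseledets splitting within $\Gamma_u^{(n)}$ is needed, because condition~(3) supplies a uniform expansion rate valid at every good return time.
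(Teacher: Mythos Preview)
Your proposal is correct and follows essentially the same approach as the paper's proof: both derive the telescoping identity, take the forward limit to kill the boundary term using the uniform contraction of condition~(3) together with boundedness of $\omega$ at special times, obtain $\|\omega_{n_{k_m}}^u\| \leq C\,\alpha_6^{n_{k_m}}$ from the geometric series, and then interpolate using subexponential growth of $\|Q(n_{k_m},n)\|$ via condition~\ref{conditionS} and linear growth of $n_{k_m}$. The only cosmetic difference is in the interpolation step: the paper invokes the linear approximation Lemma~\ref{lemma3} together with the decay of $|N|(T^{(n_{k_m})})$ from Corollary~\ref{nLdecay}, whereas you iterate the recursion \eqref{e2} directly and use the already-established decay \eqref{componentsexpdecay} of $\|e_j^u\|$; these are equivalent since the errors $e_n$ are controlled by the total non-linearity in exactly the same way.
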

\begin{proof} One can first observe that, for all $n,k \geq 0$, by the definition of $e_n$ and by \eqref{e2}, we get the telescopic sum identity
$$ \omega_{n+j}^u = Q(n,n+j)\omega_{n}^u + \sum_{i=0}^{j-1}{Q(n+i+1,n+j)e_{n+i}^u},$$ which we re-write, solving for $\omega_{n}^u$ and using cocycle identities (see   \eqref{cocycleid}), as
\begin{equation}
\label{eq2} 
\omega_{n}^u = Q(n,n+j)^{-1}\omega_{n+j}^u - \sum_{i=0}^{j-1}{Q(n,n+i+1)^{-1}e_{n+i}^u}.
\end{equation}

\smallskip
\noindent {\it Step 1: control at special times.} We first show that the sequence $(\omega_{n}^u)_{n\in\mathbb{N}}$ is bounded along the subsequence $(n_{k_m})_{m\in\mathbb{N}}$ given by the $(RDC)$. 
 We recall that:
\begin{enumerate}
\item $ ||e_{n+i}^u|| \leq K_6 \,\alpha_6^{n+i} $, by \eqref{componentsexpdecay};
\item there exist constants $C_1 >0$ and $\theta > 1$ $||Q(n_{k_m},n_{k_m}+i)^{-1} \omega_{n_{k_m+i}}^u|| \geq C_1^{-1} \theta^{-i}$.
\end{enumerate}
\noindent We thus get from \eqref{eq2} at $n = n_{k_m}$ and $j=j(m,m'):= n_{k_{m'}} - n_{k_m}$
$$|| \omega_{n_{k_m}}^u || \leq C_1^{-1} \theta^{-j(m,m')} K+ \sum_{i=0}^{j(m,m')-1}{C_1^{-1} \theta^{-i} \alpha_6^{n_{k_m} + i} } $$  from which get 
$$ || \omega_{n_{k_m}}^u || \leq C_1^{-1} \theta^{-j(m,m')} K + C_1^{-1} \alpha_6^{n_{k_m}} \left(\sum_{i=0}^{j(m,m')-1}{\theta^{-i}\alpha_6^i}\right) \leq C_1^{-1} \theta^{-j(m,m')} K + C_1^{-1}C_2 \alpha_6^{n_{k_m}}, $$
where in the last inequality we used that, since  $\theta > 0$ and $\alpha_6 <1$, the series $\sum_{i=0}^\infty{\theta^{-i}\alpha_6^i}$ converges and denoted by $C_2 = C_2(T)$ its value.  %We now have 
%$$ || \omega_{n_{k_m}}^u || \leq C_1^{-1} \theta^{-j(m,m')} K + C_1^{-1}C_2 \alpha_6^{n_{k_m}}.$$ 
Since we can take $j(m,m') = n_{k_{m'}} - n_{k_m}$ arbitrarily large by letting $m'$ go to infinity,  we can infer the existence of $C_3 = C_3(T)$ such that 
$$ || \omega_{n_{k_m}}^u || \leq C_3 \alpha_6^{n_{k_m}}.$$ 

\noindent {\it Step 2: interpolation to all times.} We can now estimate all $n\in\mathbb{N}$ by interpolation as follows. Given $n\in\mathbb{N}$, Consider $m$ such that $  n_{k_m} \leq n \leq n_{k_{m+1}}$.  Then, by the linear approximation Lemma~\ref{lemma3},
$$ || \omega_n - Q(n_{k_m},n)  \omega_{n_{k_m}} || \leq |N|(T^{(n_{k_m})}) \, || Q(n_{k_m},n) || $$  and hence, by invariance of the splitting,  
\begin{equation}\label{toestimate}
 || \omega_n^u || \leq || Q(n_{k_m},n) ||\left(  |N|(T^{(n_{k_m})}) +|| \omega_{n_{k_m}}^u ||\right).
\end{equation} 
To estimate \eqref{toestimate} and conclude, we now use that: %%%ADD
\begin{enumerate}
\item  by Step 1, we  have that $ || \omega_{n_{k_m}}^u || \leq C_3 \alpha_6^{n_{k_m}}$;
\item by the $(RDC)$, $ || Q(n_{k_m},n) || \leq Q(n_{k_m},n_{k_{m+1}}) || = o(e^{\epsilon m})$ for all $\epsilon > 0$;
\item By Corollary~\ref{nLdecay} along the subsequence $(n_{k_m})_m$,  we know that $|N|(T^{(n_{k_m})}) \leq K_4 \alpha_4^{n_{k_m}}$.
\end{enumerate}
{\color{black}Since $n_{k_m}$ grows linearly with $m$, we have that $m\leq C n_{k_m}\leq C n$ for some $C>0$ and also that $( n_{k_{m+1}} - n_{k_m})/m$ tends to $0$. Thus, for $m$ sufficiently large, for $i=4$ and $i=5$, since $n\leq n_{k_{m+1}}$, we can estimate
 $\alpha_i^{n_{k_m}} \leq \alpha_i^{n_{k_{m+1}}} \alpha_i^{-\epsilon m} \leq \alpha_i^{n}(\alpha_i^{-1})^{\epsilon m} $.  
%for some $c>0$ we have that $n_{k_m}\geq c m$ for all $m$ and therefore $\alpha_i^{n_{k_m}}\leq \alpha_i^{cm} = (\alpha_i^c)^m$ for 
This final remarks together with $(1)-(3)$ above allow to deduce the claimed exponential decay of \eqref{toestimate}.
} \end{proof}

\smallskip
\noindent We now turn to showing that also $(\omega_{n}^s)_{n\in\mathbb{N}}$ decays at an exponential rate.
{
\begin{lemma}[decay of the stable component]
\label{convstable}
There exists $K_8 = K_8(T) > 0$ and $\alpha_8 = \alpha_8(T) <1$ such that 
$$ ||\omega_{n}^s || \leq K_8 \,\alpha_8^n.$$
\end{lemma}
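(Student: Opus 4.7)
The strategy is to mirror the proof of Lemma~\ref{convunstable}, but now iterating \emph{forward} (which is the natural direction for the stable splitting) and making essential use both of Proposition~\ref{nLdecay} (so that the errors $e_n^s$ are now exponentially small, not merely bounded) and of Condition~\ref{conditionB} of the $(RDC)$ (which supplies a summable bound on the weighted cocycle norms on the stable subspace). As a first step, I would iterate the invariance relation $\omega_{n+1}^s = Z_n\omega_n^s + e_n^s$ from time $0$ up to time $n_{k_m}$, working in the accelerated notation, to obtain the telescoping identity
$$ \omega_{n_{k_m}}^s \;=\; \widetilde{Q}(0,k_m)\,\omega_0^s \;+\; \sum_{\ell=1}^{k_m} \widetilde{Q}(\ell,k_m)\bigl(\omega_{n_\ell}^s-\widetilde{Z}_{\ell-1}\omega_{n_{\ell-1}}^s\bigr). $$

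The first term is directly controlled: by the Oseledets stability condition \ref{O-s}, $\|\widetilde{Q}(0,k_m)|_{\widetilde{\Gamma}_s^{(0)}}\|\leq Ce^{-\theta n_{k_m}}\|\omega_0^s\|$, which is exponentially small in $m$ since $n_{k_m}$ grows linearly. For each term in the sum, the linear approximation Lemma~\ref{lemma3} combined with Proposition~\ref{nLdecay} gives
$$ \bigl\|\omega_{n_\ell}-\widetilde{Z}_{\ell-1}\omega_{n_{\ell-1}}\bigr\|\;\leq\;|N|\bigl(T^{(n_{\ell-1})}\bigr)\,\|\widetilde{Z}_{\ell-1}\|\;\leq\;K_5\,\alpha_5^{n_{\ell-1}}\,\|\widetilde{Z}_{\ell-1}\|, $$
so after projecting the key bound on the $\ell$-th summand is $\|\widetilde{Q}(\ell,k_m)|_{\widetilde{\Gamma}_s^{(\ell)}}\|\,\|\widetilde{\Pro}_s^{(\ell)}\|\,\|\widetilde{Z}_{\ell-1}\|\cdot K_5\alpha_5^{n_{\ell-1}}$. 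The main idea is now to split the sum at some intermediate index $\ell_0=\lfloor k_m/2\rfloor$: for $\ell>\ell_0$ we factor out the exponentially small weight $\alpha_5^{n_{\ell_0}}$ and bound the remaining sum directly by Condition~\ref{conditionB}, yielding a contribution at most $K_5\,K^{-}\,\alpha_5^{n_{\ell_0}}$, which is exponentially small in $m$.

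The main obstacle, and the part requiring the most care, is the complementary range $\ell\leq\ell_0$, where the $\alpha_5^{n_{\ell-1}}$ weight is not yet small; here one has to extract exponential decay from $\|\widetilde{Q}(\ell,k_m)|_{\widetilde{\Gamma}_s^{(\ell)}}\|$. This can be done by writing $\widetilde{Q}(\ell,k_m)=\widetilde{Q}(\ell_0,k_m)\,\widetilde{Q}(\ell,\ell_0)$ and combining the Oseledets stability bound $\|\widetilde{Q}(\ell_0,k_m)|_{\widetilde{\Gamma}_s^{(\ell_0)}}\|\leq Ce^{-\theta(n_{k_m}-n_{\ell_0})}$ (which one obtains by composing with $\widetilde{Q}(0,\ell_0)$, invoking \ref{O-s} and condition \ref{conditionA} at the special times to control the compensating factor subexponentially) with Condition~\ref{conditionB} applied at time $\ell_0$ to sum the remaining factor $\|\widetilde{Q}(\ell,\ell_0)|_{\widetilde{\Gamma}_s^{(\ell)}}\|\,\|\widetilde{\Pro}_s^{(\ell)}\|\,\|\widetilde{Z}_{\ell-1}\|$ over $\ell\leq\ell_0$. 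Putting everything together yields exponential decay of $\|\omega_{n_{k_m}}^s\|$ along the subsequence $(n_{k_m})_{m\in\mathbb{N}}$.

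Finally, I would interpolate from the special times to all $n\in\mathbb{N}$ by exactly the same argument used at the end of the proof of Lemma~\ref{convunstable}: for $n_{k_m}\leq n<n_{k_{m+1}}$, write $\omega_n^s=\widetilde{Q}(k_m,n)\omega_{n_{k_m}}^s+(\omega_n^s-\widetilde{Q}(k_m,n)\omega_{n_{k_m}}^s)$, estimate the residual by Lemma~\ref{lemma3}, and absorb the subexponential factors $\|Q(n_{k_m},n)\|$ (from Condition~\ref{conditionS}) and $\|\Pro_s^{(n)}\|$ (from condition \ref{O-a}) into a slightly worse exponential rate $\alpha_8>\alpha_5,\alpha_7$. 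Since $n_{k_m}$ grows linearly in $m$, this completes the exponential estimate $\|\omega_n^s\|\leq K_8\alpha_8^n$.
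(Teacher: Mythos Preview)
Your approach is correct in spirit and rests on the same two ingredients as the paper's proof: the exponential smallness of the errors $e_n^s$ (from Proposition~\ref{nLdecay} and the angle control, already recorded in \eqref{componentsexpdecay}) and the contraction of the cocycle on the stable subspace. The organization, however, is more elaborate than needed. The paper does not first establish decay along the subsequence $(n_{k_m})$ and then interpolate; it writes the full telescoping identity $\omega_n^s = Q(0,n)\omega_0^s + \sum_{i=0}^{n-1} Q(i,n)e_i^s$ directly at an arbitrary Zorich time $n$, groups the sum into packets between consecutive special times $n_{k_j}$ and $n_{k_{j+1}}$, and factors $Q(n_{k_j}+i,n)$ through $n_{k_{j+1}}$. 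Each term is then bounded by a product of the form $e^{\epsilon j}\,\theta^{\,n-n_{k_{j+1}}}\,\alpha_6^{\,n_{k_j}+i}$, which is exponentially small in $n$ uniformly in $j$; this makes both your split at $\ell_0$ and the final interpolation step unnecessary. Notably, the paper does not invoke Condition~\ref{conditionB} here at all: it uses the uniform effective Oseledets bound $\|Q(n_{k_m},n_{k_m}+j)|_{\Gamma_s}\|\leq C_2\theta^j$ at special times directly.

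There is also a small gap in your argument: Condition~\ref{conditionB} is stated only at the special times $k_m$, so you cannot apply it at an arbitrary $\ell_0=\lfloor k_m/2\rfloor$. This is easily repaired by choosing $\ell_0=k_{m'}$ for some $m'$ with $k_{m'}$ near $k_m/2$ (such an $m'$ exists since $(k_m)_m$ grows linearly); at such a special time both Condition~\ref{conditionB} and the effective stable estimate $\|\widetilde{Q}(\ell_0,k_m)|_{\widetilde{\Gamma}_s^{(\ell_0)}}\|\leq Ce^{-\theta(n_{k_m}-n_{\ell_0})}$ are available, and your argument goes through.
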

\begin{proof}
Again, because $T$ is assumed to satisfies the $(RDC)$ we know that there exists a constant $C_2 = C_2(T) > 0$ and a constant $\theta = \theta(T) < 1$ such that for all $k \in \mathbb{N}$ and $m \in \mathbb{N}$ and all $v \in \Gamma_s(T^{(n_{k_m})})$
$$ ||Q(n_{k_m},n_{k_m} + j)v|| \leq C_2 \theta^j ||v||.$$
\noindent We also have  for all $n \geq 0$ 
$$ \omega_{n}^s = Q(0,n)\omega_{0}^s + \sum_{i=0}^{n-1}{Q(i,n)e_{i}^s}.$$  We now group terms by \emph{paquets} of terms between $n_{k_m}$ and $n_{k_{m+1}}.$ Let $m_n$ be such that    $n_{k_{m_n}} \leq n \leq n_{k_{m_n+1}}$. We thus have 
$$ \omega_{n}^s = Q(0,n)\omega_{0}^s + \sum_{j=0}^{m_n}{\sum_{i=0}^{n_{k_{j+1}} - n_{k_{j}}}{Q(n_{k_j} + i,n_{k_{j+1}}) Q(n_{k_{j+1}},n) e_{n_{k_j} + i}^s}}$$.  We recall that 
\begin{enumerate}
\item $|| Q(n_{k_j} + i,n_{k_{j+1}})  || \leq || Q(n_{k_j},n_{k_{j+1}}) || = o(e^{\epsilon\, j})$ for any $\epsilon>0$;
\item  $|le_{n_{k_j} + i}^s|| \leq K_6 \alpha_6^{n_{k_j} + i }$;
\item $ ||Q(n_{k_m},n_{k_m} + j)v|| \leq C_2 \theta^j ||v||.$
\end{enumerate}
Putting all this together we get 
$$ || Q(n_{k_j} + i,n_{k_{j+1}}) Q(n_{k_{j+1}},n) e_{n_{k_j} + i}^s||  \leq C(\epsilon) e^{\epsilon j} C_2 \theta^{n - n_{k_{j+1}}} K_6 \alpha_6^{n_{k_j} + i}$$ for some constant $C_{\epsilon}> 0$ depending on $\epsilon$. Since $n_{k_j}$ grows linearly with $j$ we get the existence of a constant $C_4 >0$ such that  
$$ || Q(n_{k_j} + i,n_{k_{j+1}}) Q(n_{k_{j+1}},n) e_{n_{k_j} + i}^s||  \leq C_4 \theta'^n$$ for some $\theta' < \max(\alpha_6, \theta)$. From this we obtain 
$$ || \omega_{n}^s || \leq \max(C_4 ||\omega_0|| ,C_2)  (\theta')^n $$ and we get the result for any constant $\alpha_8$ such that $\theta' <\alpha_8 < 1$.
\end{proof}
}
\noindent Finally, we  show  now how to control the central part. Here the boundary assumption is crucial.
{
\begin{lemma}[decay of the central component]
\label{convcentral}
There exists $K_9 = K_9(T) > 0$ and $\alpha_9 = \alpha_9(T) <1$ such that 
$$|| \vect{\omega}_{n}^c || \leq K_9 \,\alpha_9^n.$$
\end{lemma}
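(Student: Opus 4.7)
The strategy is to exploit the cohomological invariance of the boundary operator, now that we have the hypothesis $\mathcal{B}(T) = 0$, together with the exponential convergence to AIETs already established in Corollary \ref{convergenceaffine} and the exponential decay of the unstable component proved in Lemma \ref{convunstable}. The key conceptual point is that the boundary $B_n$ is essentially \emph{invertible} on the central space $\Gamma_c^{(n)}$ (this is the content of Lemma \ref{lemma7}), so controlling $\|B_n(\vect{\omega}_n^c)\|$ will control $\|\vect{\omega}_n^c\|$ itself, up to subexponential losses coming from Oseledets angles.

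First, I would write the renormalization invariance of the boundary: since $\mathcal{B}(T) = 0$, Lemma \ref{lemma:Bproperties}~(ii) gives $\mathcal{B}(T^{(n)}) = 0$ for every $n \in \mathbb{N}$. Using the shape-profile decomposition in Remark \ref{rk:Bexpression}, this reads
\begin{equation*}
B_n(\vect{\omega}_n) + B_n(\log \D \varphi^n) = 0.
\end{equation*}
Decomposing $\vect{\omega}_n = \vect{\omega}_n^s + \vect{\omega}_n^c + \vect{\omega}_n^u$ along the Oseledets splitting and applying Lemma \ref{lemma5} (which gives $B_n|_{\Gamma_s^{(n)}} = 0$), this rearranges to
\begin{equation*}
B_n(\vect{\omega}_n^c) = -B_n(\vect{\omega}_n^u) - B_n(\log \D \varphi^n).
\end{equation*}

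Next, I would show that each of the two terms on the right-hand side decays exponentially in $n$. For the unstable contribution, Lemma \ref{lemma6} provides $\|B_n|_{\Gamma_u^{(n)}}\| \leq D_u \lambda_u^n$, and Lemma \ref{convunstable} gives $\|\vect{\omega}_n^u\| \leq K_7 \alpha_7^n$; multiplying yields exponential decay of $\|B_n(\vect{\omega}_n^u)\|$. For the profile contribution, the exponential convergence $d_{\mathcal{C}^3}(\mathcal{Z}^n T, \mathcal{A}_d) \leq K_5 \alpha_5^n$ from Corollary \ref{convergenceaffine} implies that the profile coordinates $\varphi^{n,i}$ are exponentially close to the identity in $\mathcal{C}^3$, so each $\log \D \varphi^{n,i}$ is exponentially small in $\mathcal{C}^1$, in particular uniformly. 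Since the boundary operators $B_n$ take only finitely many values (as $n$ varies) in $\mathcal{L}(\mathcal{C}_0(\sqcup I_i^t), \R^\kappa)$ and are therefore uniformly bounded in operator norm, this gives exponential decay of $\|B_n(\log \D \varphi^n)\|$. Combining, one obtains $\|B_n(\vect{\omega}_n^c)\| \leq C \beta^n$ for some constants $C > 0$ and $\beta < 1$.

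Finally, Lemma \ref{lemma7} (together with the Oseledets angle condition \ref{O-a}, which guarantees that the relevant angles decay at worst subexponentially) allows us to invert $B_n$ on $\Gamma_c^{(n)}$ and transfer the exponential decay of $\|B_n(\vect{\omega}_n^c)\|$ to exponential decay of $\|\vect{\omega}_n^c\|$ itself; the subexponential loss incurred by the angle estimate is absorbed by slightly weakening the exponential rate, which yields constants $K_9 > 0$ and $\alpha_9 < 1$ as required. The main technical point to verify carefully is that the subexponential decay of angles does not deteriorate the overall rate, i.e.\ that for any $\epsilon > 0$ small enough the product $e^{\epsilon n}\beta^n$ is still strictly less than $1$ to some power; this is automatic once $\epsilon$ is chosen sufficiently smaller than $-\log \beta$. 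This is the step analogous to the interpolation arguments used in Lemmata \ref{convunstable} and \ref{convstable}, and it presents no genuine obstacle, the real content having been packaged into the boundary-operator identity and the previously established exponential estimates.
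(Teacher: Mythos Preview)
Your proposal is correct and follows essentially the same route as the paper: expand $\mathcal{B}(T^{(n)})=0$ via the shape-profile decomposition, isolate $B_n(\vect{\omega}_n^c)$, bound the remaining terms exponentially using Corollary~\ref{convergenceaffine} and Lemma~\ref{convunstable}, and then invert $B_n$ on $\Gamma_c^{(n)}$ via Lemma~\ref{lemma7} with only a subexponential loss from the Oseledets angle. Your treatment is in fact slightly tidier than the paper's, since you kill the stable contribution directly with Lemma~\ref{lemma5} (the paper instead invokes Lemma~\ref{convstable}, which is unnecessary here) and you use Lemma~\ref{lemma6} for the unstable term rather than merely the uniform boundedness of the $B_n$; both choices are harmless improvements.
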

\begin{proof}
Recall that $\mathcal{B}(T) = \mathcal{B}^{(n)} = 0$ for all $n \geq 0$. But we also have $\mathcal{B}^{(n)} = B(\vect{\omega}_n^u) + B(\vect{\omega}_n^c) + B(\vect{\omega}_n^s) + B(\log \varphi^n)$ where $\varphi^n \in \mathcal{P}$ is the profile of $T^{(n)}$. By Proposition \ref{convergenceaffine}, we have $|| B(\log \varphi^n) || \leq C_5 || \log \mathrm{D}(\varphi^n)|| \leq C_5 K_4 \alpha_4^n $ where $C_5 = || B ||$. By Propositions  \ref{convunstable} and \ref{convstable} we finally get that 
$$ || B(\vect{\omega}_n^c) || \leq C_5 K_4 \, \alpha_4^n + K_7\,  \alpha_7^n + K_8\,  \alpha_8^n.$$
\noindent By Lemma \ref{lemma7}, $ ||\vect{\omega}_n^c||  \leq D_c\, \angle(\Gamma_c, \Gamma_u^n \oplus \Gamma_s^n)  || B(\vect{\omega}_n^c) ||$, which leads to 
$$ || \vect{\omega}_n^c) || \leq D_c \, \angle(\Gamma_c, \Gamma_u^n \oplus \Gamma_s^n) C_5\, K_4 \alpha_4^n + K_7 \, \alpha_7^n + K_8\,  \alpha_8^n.$$ But since $T$ satisfies $(RDC)$, we know that $\angle(\Gamma_c, \Gamma_u^n \oplus \Gamma_s^n) $ decreases subexponentially fast, from which we get the conclusion of Proposition \ref{convcentral}.
\end{proof}
}
\noindent Combining the Lemmas which give individual control of the components of  $(\vect{\omega}_n)_{n\in \mathbb{N}}$, we can now prove exponential convergence to IETs:
\begin{proof}[Proof of Theorem~\ref{convergencethm}]
 The control on the unstable, stable and central components of $\vect{\omega}_n$ respectively given by Lemmas \ref{convunstable}, \ref{convstable} and \ref{convcentral},  imply that $||\vect{\omega}_n||$  converges to zero as $n$ grows at an exponential rate.  {\color{black}We already proved that $ \mathrm{d}_{\eta}(\mathcal{R}^{n}T, \mathcal{A}_d) $  goes to zero at an exponential rate (see Corollary~\ref{convergenceaffine}) and that therefore, by Corollary~\ref{lemma:distancesrel}, also $d_{\mathcal{C}^1}(\mathcal{R}^{n}T, \mathcal{A}_d) $ does. 
 Since if $A\in \mathcal{A}_d$ is such that $\omega(A)=0$, then $\rho(A)=1$ and therefore $A\in \mathcal{I}_d$, we conclude that $ \mathrm{d}_{\mathcal{C}^1}(\mathcal{R}^{n}T, \mathcal{I}_d) $   goes to zero at an exponential rate. }
%The statement of Theorem~\ref{convergencethm} now follows from the inequality $d_{\mathcal{C}^1}\leq L\,  d_{\eta}$ (see Corollary~\ref{lemma:distancesrel}.} and Appendix~\ref{sec:distancesAppendix}).  
\end{proof}

\section{Rigidity for GIETs}\label{sec:rigidity}
\noindent In this section, we prove our main rigidity result for GIETs.
%{\color{black} What is a GIET of genus $2$? maybe we should just write $d=4,5$, we can say that this corresponds to genus two somewherelse. Actually we should think which hypothesis we need on the permutation... irreducible for sure, if one excludes fake discontinuties (do we? do we have to?) then it's only in two Rauzy classes...}
\begin{thm}[GIETs rigidity in genus $2$]
\label{maintheorem}
Let $T \in \mathcal{X}_d^3$ be an irrational GIET with $d=4$ or $d=5$ continuity intervals and with zero boundary $\mathcal{B}(T) = 0$. For a full measure set of rotation numbers, % $\gamma(T)$ satisfies $(RDC)$ and such that $\mathcal{B}(T) = 0$.  
if $T$ is $\mathcal{C}^0$-conjugate to a standard IET $T_0$, then the conjugacy is in fact a diffeomorphism of class $\mathcal{C}^1$.
\end{thm}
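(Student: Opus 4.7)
The plan is to combine the dynamical dichotomy (Theorem~\ref{shadowing}) with the convergence of renormalization result (Theorem~\ref{convergencethm}), using the minimality of the model IET $T_0$ as the mechanism that rules out the divergent (affine shadowing) alternative. First I would specify the full measure set $\mathcal{F}$ of rotation numbers: take those $\gamma$ which satisfy the Regular Diophantine Condition (this is a full measure class by Theorem~\ref{thm:fullmeasure} and Corollary~\ref{fullmeasure_RDC_rotnumbers}) and, simultaneously, for which the affine IETs with rotation number $\gamma$ satisfy the wandering intervals result of Marmi--Moussa--Yoccoz (Proposition~\ref{AIETwi} in the forthcoming \S~\ref{AIETwi}). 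Since $T$ is topologically conjugate to the standard IET $T_0$, it is irrational, infinitely renormalizable, and its rotation number equals $\gamma(T_0)\in\mathcal{F}$, hence $T$ itself satisfies the $(RDC)$ in the sense of Definition~\ref{def:RDCforGIET}.

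Next I would apply the affine shadowing dichotomy of Theorem~\ref{shadowing} to $T$, obtaining either Case~1 (bounded shape log-slope vectors $\vect{\omega}_{n_{k_m}}$) or Case~2 (existence of a non-zero shadow $\vect{v}\in E_u$ with $\vect{\omega}_n = Q(n,0)\vect{v}+o(\|\vect{v}^{(n)}\|)$). The core step is to rule out Case~2: suppose it holds, and let $\overline{T}$ be any AIET with rotation number $\gamma(T)$ and log-slope vector $\vect{v}$ (existence follows from the discussion after Theorem~\ref{shadowing}, using unique ergodicity of $T_0$). When $d=4$ or $d=5$ the Oseledets filtration has only two positive exponents, so $\vect{v}$, being unstable, automatically projects non-trivially onto a positive Lyapunov direction and therefore the Marmi--Moussa--Yoccoz hypothesis $\vect{v}\in E_2\setminus E_3$ in Proposition~\ref{AIETwi} is satisfied. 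Hence $\overline{T}$ has exponentially distorted dynamical partitions (in the sense of Definition~\ref{def:expdistorted}). Now I would invoke Proposition~\ref{reduction} of \S~\ref{sec:wandering}, which upgrades exponentially distorted dynamical partitions of an \emph{affine} shadow to the existence of wandering intervals for the \emph{generalized} IET $T$. This contradicts the existence of a topological conjugacy with a minimal standard IET $T_0$, so Case~2 is impossible, and we are in Case~1.

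Being in Case~1, together with $\mathcal{B}(T)=0$ and the $(RDC)$, brings us precisely into the hypotheses of Theorem~\ref{convergencethm}, which yields the exponential convergence
\[
d_{\mathcal{C}^1}(\mathcal{Z}^n(T),\mathcal{I}_d)\leq K_1\,\alpha^n, \qquad \text{for some } \alpha<1.
\]
At this point one appeals to a classical scheme (à la Herman--Yoccoz, adapted to GIETs in \S~\ref{sec:conjugacy}): the exponential contraction of $\mathcal{R}^n(T)$ toward $\mathcal{I}_d$, combined with the a priori bounds of Proposition~\ref{apriori} and exponential decay of the mesh of dynamical partitions (Corollary~\ref{expmeshdecay}), implies that the topological conjugacy $h$ between $T$ and $T_0$, which is built as the limit of the piecewise affine maps sending the dynamical partition of $T$ to that of $T_0$, has derivatives that form a Cauchy sequence in the $\mathcal{C}^0$ topology. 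A telescoping/Koksma-type summation shows that $\log Dh$ is a well-defined continuous function, which gives $h\in\mathrm{Diff}^1([0,1])$.

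The main obstacle, in my view, is not the final analytic step, which is by now standard, but rather the geometric transfer carried out in Proposition~\ref{reduction}: establishing that exponential distortion of the dynamical partitions of the \emph{affine shadow} $\overline{T}$ propagates to the existence of wandering intervals for the \emph{nonlinear} map $T$. The subtlety is that although Theorem~\ref{shadowing}(Case~2) only provides a first-order asymptotic control $\vect{\omega}_n=Q(n,0)\vect{v}+o(\|\vect{v}^{(n)}\|)$, one must nevertheless produce enough quantitative comparison between Birkhoff sums of $\log DT$ and of the piecewise constant observable $\log D\overline{T}$ to locate a positive-measure set of points whose orbits shrink exponentially in the Lebesgue sense; this is where the sharp Birkhoff sum estimates of \cite{MMY2} and the distortion lemma (Lemma~\ref{bound1}) must be carefully combined. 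Once this reduction is in place, the proof of Theorem~\ref{maintheorem} is the clean three-line synthesis outlined above.
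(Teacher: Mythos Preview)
Your overall strategy matches the paper's proof closely: define the full measure set as the intersection of the $(RDC)$ rotation numbers with those for which Proposition~\ref{AIETwi} applies, run the dichotomy of Theorem~\ref{shadowing}, eliminate Case~2 via Proposition~\ref{reduction} and minimality, and in Case~1 apply Theorem~\ref{convergencethm} followed by Proposition~\ref{conjugacy}. There is, however, a genuine gap in your treatment of Case~2.

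You write that ``when $d=4$ or $d=5$ the Oseledets filtration has only two positive exponents, so $\vect{v}$, being unstable, automatically projects non-trivially onto a positive Lyapunov direction and therefore the Marmi--Moussa--Yoccoz hypothesis $\vect{v}\in E_2\setminus E_3$ in Proposition~\ref{AIETwi} is satisfied.'' This only establishes $\vect{v}\in E_1\setminus E_3$: the shadow is unstable, hence not in $E^{cs}=E_3$. But Proposition~\ref{AIETwi} requires $\vect{v}\in E_2\setminus E_3$, i.e.\ that $\vect{v}$ does \emph{not} project onto the top exponent $\theta_1$. Nothing in ``genus two'' or ``unstable'' gives this for free; a priori the shadow could very well lie in $E_1\setminus E_2$.

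The paper closes this gap with a short but essential argument: if $\vect{v}\in E_1\setminus E_2$, then (since the top Oseledets direction is spanned by a positive vector) some iterate $\vect{v}^{(n)}=Q(0,n)\vect{v}$ has all entries of the same sign and arbitrarily large; by the shadowing estimate the same holds for $\vect{\omega}_n$, hence $\vect{\rho}_n=\exp(\vect{\omega}_n)$ has all entries strictly greater than $1$ (or all strictly less than $1$). But $\vect{\rho}_n$ is the vector of mean slopes of the branches of $\mathcal{R}^n(T)$, and a GIET of $[0,1]$ cannot have every branch with mean derivative $>1$. This contradiction forces $\vect{v}\in E_2\setminus E_3$ and is the step you are missing.
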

Remark that the case of GIET with $d=4$ or $d=5$ and $\pi $ minimal correspond to genus two, i.e.~any \emph{minimal} flow on a (compact, orientable) surface with genus two has as a Poincar{\'e} section (for a suitable chosen transverse arc) which is given by such a IET. This result will hence imply our foliation rigidity result, which is essentially only a reformulation in geometric language of the $d=5$ case, see Section~\ref{foliations}. 

In this section will actually prove a more general result (see Proposition~\ref{reduction} below) which is valid for minimal IET with any $d\geq 2$ and yields a partial result also for IETs with any $d> 5$ (i.e.~a rigidity statement \emph{conditional} to an assumption on the position of the shadow in the Oseledets filtration). This technical condition is \emph{automatically} satisfied when $d=4,5$. 

\medskip
\noindent {\it Strategy outline}: Recall that the main result of Section~\ref{sec:affineshadowing}, namely Theorem \ref{shadowing}, tells us that if $T$ satisfies the $(RDC)$, two scenarios can occur : either the orbit of $T$ under renormalization is \emph{recurrent} to a certain bounded set, or the orbit of $T$ is somewhat shadowed, in the first order of approximation, by an affine IET. The proof then splits into two steps.

\smallskip
\noindent {\it Step 1.} The first step, in \S~\ref{sec:conjugacy}, is to show that in the recurrent case, applying the results of Section \ref{sec:convergence} about convergence of renormalization, $T$ is indeed $\mathcal{C}^1$-conjugate to $T_0$. This step is simply based by interpolation and Gottschalk-Hedlund theorem and is by now quite standard also for GIETs.

\smallskip
\noindent {\it Step 2.} In \ref{sec:wandering}, the second step is then to show that in the divergent case (where there is an \emph{affine shadow}), the map $T$ must have a wandering interval, and therefore was not $\mathcal{C}^0$-conjugate to $T_0$ in the first place. We first of all show that the conclusion of shadowing allows us to compare the Rohlin towers for $T$ to that of an AIET; we then exploit a result Marmi-Moussa-Yoccoz \cite{MMY2} giving the existence of wandering intervals for AIETs to conclude.

\smallskip
\noindent This two points together imply Theorem \ref{maintheorem}, as summarized in \S~\ref{sec:proofGIETrigidity}. The only place where the genus $2$ (i.e.~$d=4,5$) assumption is needed is in Step $2$, in the use of Marmi-Moussa-Yoccoz result \cite{MMY2} for AIETs.

\subsection{Regularity of the conjugacy}\label{sec:conjugacy}
In this section we show that convergence of renormalization in the $\mathcal{C}^1$-norm implies $\mathcal{C}^1$-conjugacy to the linear model. We prove the following 
\begin{proposition}[exponential convergence gives a.s.~$\mathcal{C}^1$-conjugacy]
\label{conjugacy}
Let $T$ be a $\mathcal{C}^1$-GIET of $d$ intervals satisfying $(RDC)$ and assume that $\{ \mathcal{Z}^n(T)\}_{n\in\mathbb{N}}$ converges exponentially fast to the set of $IET$s with respect to $\mathcal{C}^1$-distance, namely there exists $K_1>0$ and {\color{black} $0<\alpha_1<1$} such that
$$ d_{\mathcal{C}^1}(\mathcal{Z}^{n}(T), \mathcal{I}_d ) \leq K_1 \, \alpha_1^n.$$
 Then $T$ is $\mathcal{C}^1$-conjugate to an IET.
\end{proposition}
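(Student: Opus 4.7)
\smallskip
\noindent \emph{Plan of proof.} The plan is to reduce the statement to a standard application of the Gottschalk--Hedlund theorem for the cohomological equation associated to $f := \log \D T$ over $T$. Indeed, seeking a $\mathcal{C}^1$-conjugacy $h$ between $T$ and a standard IET $T_0$ amounts (after differentiating $h \circ T = T_0 \circ h$ and using that $\D T_0 \equiv 1$) to finding a continuous function $\psi$ (which will be $\log \D h$) solving
\begin{equation*}
\psi \circ T - \psi = - \log \D T.
\end{equation*}
Since $T$ has an irrational rotation number satisfying the $(RDC)$, the Poincar\'e--Yoccoz Theorem~\ref{thm:PY} already gives a topological semi-conjugacy of $T$ with a standard IET, and minimality of $T$ follows; hence Gottschalk--Hedlund reduces the construction of $\psi$ to proving that the Birkhoff sums $S_n f(x) = \log \D T^n(x)$ are uniformly bounded in $n \in \mathbb{N}$ and $x \in [0,1]$.

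\smallskip
\noindent The main step is therefore to exploit the exponential $\mathcal{C}^1$-convergence hypothesis to get a uniform bound on $\|S_n f\|_\infty$. First, I would translate the hypothesis into an estimate on \emph{special} Birkhoff sums: since $\mathcal{Z}^n(T)$ is obtained from the induced map $T_n$ by an affine rescaling, we have (up to the affine conjugation)
\begin{equation*}
\|f^{(n)}\|_\infty \;=\; \|\log \D T_n\|_\infty \;=\; \|\log \D (\mathcal{Z}^n T)\|_\infty,
\end{equation*}
and this last quantity is controlled by $d_{\mathcal{C}^1}(\mathcal{Z}^n(T), \mathcal{I}_d)$ (since $\log \D T_0 \equiv 0$ for any $T_0 \in \mathcal{I}_d$ and $\log$ is Lipschitz on $\mathcal{C}^1$-bounded sets, in particular on the set provided by the a priori bounds of Proposition~\ref{apriori}). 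Combining this with the hypothesis, there exist constants $K'>0$ and $0<\alpha'<1$ such that
\begin{equation*}
\|f^{(n)}\|_\infty \leq K' (\alpha')^n, \qquad \textrm{for\ all}\ n \in \mathbb{N}.
\end{equation*}

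\smallskip
\noindent Next, using the geometric decomposition of Birkhoff sums into special Birkhoff sums recalled in \S~\ref{decompBS}, in particular the estimate \eqref{geometricestimate}, for any $x \in [0,1]$ and $n \in \mathbb{N}$ one can find $n_0 = n_0(x,n)$ such that
\begin{equation*}
|S_n f(x)| \;\leq\; 2 \sum_{k=0}^{n_0} \|Z_k\|\, \|f^{(k)}\|_\infty.
\end{equation*}
By the $(RDC)$, the norms $\|Z_k\|$ grow subexponentially with $k$ (this is essentially \eqref{subexpgrowth}, or follows from Oseledets genericity of the extension), so $\|Z_k\| = O((\alpha')^{-\varepsilon k})$ for $\varepsilon>0$ arbitrarily small. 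Choosing $\varepsilon$ small enough that $(\alpha')^{1-\varepsilon}<1$ makes the geometric series converge, uniformly in $n_0$, which yields the desired uniform bound $\|S_n f\|_\infty \leq C$ for a constant $C = C(T)$.

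\smallskip
\noindent Finally, Gottschalk--Hedlund applied to the continuous function $-\log \D T$ over the minimal homeomorphism $T$ produces a continuous $\psi:[0,1]\to \mathbb{R}$ with $\psi \circ T - \psi = -\log \D T$. Define
\begin{equation*}
h(x) := \frac{\int_0^x e^{\psi(t)}\,\D t}{\int_0^1 e^{\psi(t)}\,\D t};
\end{equation*}
then $h$ is a $\mathcal{C}^1$-diffeomorphism of $[0,1]$ with $\D h = c\,e^{\psi}>0$. A direct computation using the chain rule shows that the conjugate $\widetilde T := h \circ T \circ h^{-1}$ satisfies $\D \widetilde T \equiv 1$, hence $\widetilde T$ is a standard IET; since $\widetilde T$ is topologically conjugate to $T$, it has the same rotation number as $T$, and one concludes by invoking the fact that $T$ is $\mathcal{C}^0$-conjugate to a standard IET (the target of the semi-conjugacy, which is unique under the $(RDC)$ because of unique ergodicity).

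\smallskip
\noindent The main obstacle I anticipate is purely bookkeeping: carefully verifying that the passage from the $\mathcal{C}^1$-distance to $\mathcal{I}_d$ (as given in the hypothesis, in shape-profile coordinates) to the sup-norm of $\log \D(\mathcal{Z}^n T)$ can be done uniformly, and that the subexponential bound on $\|Z_k\|$ provided by the $(RDC)$ is effective enough to compensate the exponential decay of $\|f^{(k)}\|_\infty$. No substantial new idea is required beyond the renormalization machinery already assembled.
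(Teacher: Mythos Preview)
Your approach is essentially identical to the paper's: both reduce to Gottschalk--Hedlund via uniform bounds on the Birkhoff sums of $\log \D T$, obtained by combining the exponential decay of special Birkhoff sums (your translation of the hypothesis, which is the paper's Lemma~\ref{SBSviaR}) with the geometric decomposition \eqref{geometricestimate} and the subexponential growth of $\|Z_k\|$ from the $(RDC)$.

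One technical point you gloss over, however: $T$ is \emph{not} a homeomorphism of $[0,1]$ --- it has jump discontinuities at the $u_i^t$ --- so Gottschalk--Hedlund does not apply to $T$ as stated. The paper's Lemma~\ref{Birkhoff} handles this by invoking the Marmi--Moussa--Yoccoz device of extending $T$ to a genuine minimal homeomorphism of a Cantor space (see \cite{MMY}, Corollary~3.6, or \cite{Yoc:Clay}); you should cite this to close the gap.
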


 First we show how a statement on the Birkhoff sums of $\log \mathrm{D}T$ implies Proposition \ref{conjugacy}. This is a classical result for diffeomorphisms of the circle and also, by now, for GIETs in view of the work by Marmi, Moussa and Yoccoz (see \cite{MMY, MMY3, Yoc:Clay}). %{\color{black}cite Roth and linearisation but also Yoccoz notes Clay}
\begin{lemma}
\label{Birkhoff}
Let $T$ be a GIET of class $\mathcal{C}^1$ with irrational rotation number. Assume that there exists $K > 0$ such that for all $x \in [0,1]$ and for all $n \in \mathbb{N}$ 
$$ \big| S_n {\log \mathrm{D}T (x)} \big| = \left| \sum_{i=0}^{n-1}{\log \mathrm{D}T( T^i(x)) } \right| \leq K.$$ Then there exists an IET $T_0$ such that $T$ is conjugate to $T_0$ via a $\mathcal{C}^1$ diffeomorphism of $[0,1]$.
\end{lemma}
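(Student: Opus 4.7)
The plan is to mimic the classical Gottschalk--Hedlund strategy, adapted to the GIET setting. The proof breaks into four conceptual steps.

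\textbf{Step 1: ruling out wandering intervals and promoting the semi-conjugacy to a conjugacy.} The uniform bound $|S_n \log \D T(x)| \leq K$ is equivalent to the pointwise distortion bound $e^{-K} \leq \D T^n(x) \leq e^K$, valid for every $x \in [0,1]$ and every $n \in \mathbb{Z}$ (using the inverse for $n < 0$). By Theorem \ref{thm:PY} (Poincar\'e--Yoccoz), since $T$ has irrational rotation number, there exists a standard IET $T_0$ with the same rotation number and a continuous surjective semi-conjugacy $h:[0,1]\to[0,1]$ with $h\circ T = T_0\circ h$. If $T$ possessed a wandering interval $J$, the intervals $\{T^n(J)\}_{n\in\mathbb{Z}}$ would be pairwise disjoint in $[0,1]$, so $\sum_{n \in \mathbb{Z}} |T^n(J)| \leq 1$. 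But the distortion bound gives $|T^n(J)| = \int_J \D T^n \geq e^{-K}|J|$ for every $n$, a contradiction. Hence no wandering interval exists, and a standard argument (see e.g.\ \cite{Yoc:Clay}) shows $h$ is in fact a homeomorphism, i.e.\ a topological conjugacy.

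\textbf{Step 2: constructing a continuous solution of the cohomological equation.} The heart of the proof is to produce a function $\phi \in \mathcal{C}(\sqcup_i I_i^t)$ (continuous on each continuity interval of $T$, with prescribed limits at the endpoints) satisfying
\begin{equation*}
\phi \circ T - \phi = \log \D T.
\end{equation*}
Fix $x_0 \in [0,1]$ and set $\phi(T^n x_0) := -S_n \log \D T(x_0)$ for $n \in \mathbb{Z}$. By hypothesis these values lie in $[-K,K]$, and by Step 1 the orbit $\mathcal{O} := \{T^n x_0\}_{n\in\mathbb{Z}}$ is dense in $[0,1]$. The task is to show that $\phi$ is uniformly continuous on $\mathcal{O}\cap I_i^t$ for every continuity interval, so that it extends to a continuous function on the closure of each $I_i^t$. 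This is the classical Gottschalk--Hedlund argument: if $T^n x_0$ and $T^m x_0$ are close and belong to the same continuity interval $I_i^t$, then (using the push-forward by $h$ to the minimal IET $T_0$, which is uniquely ergodic) one shows that the Birkhoff sum difference $S_m \log \D T(x_0) - S_n \log \D T(x_0)$, viewed as a Birkhoff sum along an orbit returning close to its starting point, goes to zero.  The bounded distortion from Step 1 and the minimality of $T$ (inherited from that of $T_0$) provide the needed modulus of continuity.

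\textbf{Step 3: building the conjugating diffeomorphism.} Given $\phi$ from Step 2, set
\begin{equation*}
 C := \int_0^1 e^{-\phi(t)}\, \ud t, \qquad h(x) := \frac{1}{C}\int_0^x e^{-\phi(t)}\, \ud t.
\end{equation*}
Since $\phi$ is bounded (by $K$), the integrand is bounded away from $0$ and $\infty$, so $h$ is a $\mathcal{C}^1$ diffeomorphism of $[0,1]$ with $\D h(x) = e^{-\phi(x)}/C > 0$. The cohomological equation $\phi\circ T - \phi = \log \D T$, after exponentiation, becomes $\D h(x) = \D h(T x) \cdot \D T(x)$, which is the derivative of the conjugacy relation $h = h\circ T + \text{const}$ on each continuity interval. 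Integrating and matching normalisations on each $I_i^t$, one checks that $T_1 := h \circ T \circ h^{-1}$ has derivative identically $1$, hence is a piecewise translation. Combined with the fact that $T_1$ has the same combinatorial datum as $T$ and sends $[0,1]$ into itself bijectively, $T_1$ is a standard IET, completing the proof.

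\textbf{Main obstacle.} Step 2 is where the real work lies: producing a continuous coboundary from a merely bounded cocycle in the GIET setting (as opposed to the circle diffeomorphism setting) requires handling the discontinuities of $\log \D T$ carefully and invoking unique ergodicity of the underlying IET $T_0$ to control returns of the orbit. Steps 1 and 3 are essentially formal consequences.
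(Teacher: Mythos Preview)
Your overall strategy coincides with the paper's: solve the cohomological equation $\varphi \circ T - \varphi = \log \D T$ by a Gottschalk--Hedlund argument, then integrate $e^{\pm\varphi}$ to produce the $\mathcal{C}^1$ conjugacy. The paper handles your Step~2 differently and more economically, however: rather than running Gottschalk--Hedlund by hand on $[0,1]$ (where $T$ is discontinuous and one must track what happens on each $I_i^t$ separately), it cites the construction of Marmi--Moussa--Yoccoz (\cite{MMY}, Corollary~3.6), which extends $T$ to a genuine minimal homeomorphism of a Cantor space; the classical Gottschalk--Hedlund theorem then applies as a black box and yields the continuous transfer function directly. Your route is viable but requires the extra care you flag in the ``Main obstacle'' paragraph, whereas the Cantor-extension trick absorbs all of that. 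Two small corrections to your write-up: first, the definition $\phi(T^n x_0) := -S_n \log \D T(x_0)$ actually yields $\phi\circ T - \phi = -\log \D T$, not $+\log \D T$; drop the minus sign (your Step~3 is consistent with the $+$ equation, so only the definition needs fixing). Second, the Gottschalk--Hedlund theorem requires only \emph{minimality}, not unique ergodicity; an IET with irrational ($\infty$-complete) rotation number is minimal by Keane's theorem but need not be uniquely ergodic, so replace the appeals to unique ergodicity in Step~2 and in the final paragraph by minimality alone.
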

\begin{proof} The proof follows from an application of Gottschalk-Hedlund theorem. The map $T$ is not a homeomorphism, but by following the arguments by Marmi-Moussa-Yoccoz (see for example \cite{MMY}, Corollary 3.6), one can extend $T$ to a homeomorphism of a Cantor space and therefore apply Gottschalk-Hedlund theorem, which gives that there exists a continuous function $\varphi$ which solves the cohomological equation  
$$ \varphi \circ T - \varphi = \log \mathrm{D}T.$$ 
We deduce from this cohomological equation that the measure $ m := e^{\varphi} \mathrm{Leb} $ is invariant under the action of $T$. Up to normalising $m$ so it has total mass $1$, we get that  $$\psi := x \mapsto \int_0^x{e^{\varphi(t)} dt} $$ conjugates $T$ to a GIET which preserve the Lebesgue measure, in other words a standard IET. The map $\psi$ being of class $\mathcal{C}^1$ with $\mathrm{D}\psi(x) = e^{\varphi(x)}$, the lemma is proven.
\end{proof}

{
Now that we have this Lemma, the proof is reduced to showing that, assuming convergence of renormalizations, Birkhoff sums are uniformly bounded, i.e.~the assumptions of Lemma~\ref{Birkhoff} hold. Let us first isolate in a Lemma the relation between convergence of renormalization and convergence of special Birkhoff sums of $f:=\log D T$.

\begin{lemma}[Special Birkhoff sums of $\log D T $ via renormaliation]\label{SBSviaR}{\color{black}
Let $T$ be an infinitely renormalizable GIET and let $f:=\log DT$. If $d_{\mathcal{C}^1} (\mathcal{Z}^k(T), \mathcal{I}_d)$ converges to zero exponentially, then there exists $K>0, \alpha<1$ such that $$\| f^{(k)}\|_{\infty}\leq K \alpha^k, $$
i.e.~the sup-norm $\| f^{(k)}\|_{\infty}$ of the special Birkhoff sums $f^{(k)}$ on their domain $I^{(k)}$ also converges to zero exponentially.}
%is controlled by
%$$\| f^{(k)}\|_{\infty}\leq C\,  d^\pm_{\mathcal{C}^1} (\mathcal{Z}^k(T), \mathcal{I}_d).$$}
\end{lemma}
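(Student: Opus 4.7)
The key point is that the sup-norm of $f^{(k)}=\log D T_k$ (when read on the domain $I^{(k)}$ of the induced map $T_k$) coincides, after the affine rescaling that produces the renormalized map $\mathcal{Z}^k(T)$, with the sup-norm of $\log D(\mathcal{Z}^k(T))$ on $[0,1]$. So the plan is to transfer the $\mathcal{C}^1$-proximity of $\mathcal{Z}^k(T)$ to $\mathcal{I}_d$ into a sup-norm bound on $\log D(\mathcal{Z}^k(T))$, and then re-interpret this bound as a bound on $f^{(k)}$.

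\textbf{Step 1 (Birkhoff sums equal derivatives of induced maps).} By the chain rule, for every $k\in\mathbb{N}$, every $1\leq j\leq d$ and every $x\in I^{(k)}_j$ one has
\begin{equation*}
f^{(k)}(x)\;=\;S_{q^{(k)}_j}f(x)\;=\;\log D\bigl(T^{q^{(k)}_j}\bigr)(x)\;=\;\log D T_k(x),
\end{equation*}
which is exactly the identity \eqref{SBS_omega} already recorded in the paper. On the other hand, by \eqref{eq:renormalizedmap}, $\mathcal{Z}^k(T)$ is obtained from $T_k$ by conjugation with the affine map $x\mapsto \lambda_k x$ (where $\lambda_k=|I^{(k)}|$); since conjugation by an affine map does not alter the derivative, we get
\begin{equation*}
D\bigl(\mathcal{Z}^k(T)\bigr)(\tilde x)\;=\;D T_k(x)\qquad\text{for }\tilde x=x/\lambda_k\in[0,1],\ x\in I^{(k)}.
\end{equation*}
Combining the two displays yields $\|f^{(k)}\|_\infty=\|\log D(\mathcal{Z}^k(T))\|_\infty$, where on the left the sup is taken over $I^{(k)}=\sqcup_j I^{(k)}_j$ and on the right over $[0,1]=\sqcup_j I^t_j(k)$.

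\textbf{Step 2 (From $\mathcal{C}^1$-closeness to $\mathcal{I}_d$ to closeness of $D(\mathcal{Z}^k T)$ to $1$).} By hypothesis there exist standard IETs $T_0^{(k)}\in\mathcal{I}_d$ with $d_{\mathcal{C}^1}(\mathcal{Z}^k(T),T_0^{(k)})\leq 2K_1\alpha_1^k$. In shape–profile coordinates (\S\ref{coordinates}), an IET has slope vector $\rho=(1,\dots,1)$ and profile $(\mathrm{Id},\dots,\mathrm{Id})$. Thus $D T_0^{(k)}\equiv 1$ on each continuity interval, and the product structure of $d_{\mathcal{C}^1}$, together with formula \eqref{Dincoordinates} expressing $DT$ as the product of the corresponding slope $\rho_i$ and the derivative $D\varphi^i_T$ of the profile branch, gives the existence of a constant $C>0$ (universal once $d$ is fixed) such that
\begin{equation*}
\bigl\|D(\mathcal{Z}^k(T))-1\bigr\|_\infty\;\leq\;C\,d_{\mathcal{C}^1}(\mathcal{Z}^k(T),T_0^{(k)})\;\leq\;2CK_1\,\alpha_1^k.
\end{equation*}

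\textbf{Step 3 (Taking logarithms).} For $k$ large enough the right-hand side is $\leq 1/2$, so on $\sqcup_j I^t_j(k)$ we have $D(\mathcal{Z}^k(T))\in[1/2,3/2]$, and the elementary estimate $|\log(1+u)|\leq 2|u|$ for $|u|\leq 1/2$ yields
\begin{equation*}
\bigl\|\log D(\mathcal{Z}^k(T))\bigr\|_\infty\;\leq\;2\bigl\|D(\mathcal{Z}^k(T))-1\bigr\|_\infty\;\leq\;4CK_1\,\alpha_1^k.
\end{equation*}
Together with Step~1 this gives $\|f^{(k)}\|_\infty\leq 4CK_1\,\alpha_1^k$ for all sufficiently large $k$, and by enlarging the constant to absorb the finitely many initial terms we obtain the desired bound $\|f^{(k)}\|_\infty\leq K\alpha^k$ with $\alpha:=\alpha_1$ and a suitable $K=K(T)>0$.

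The proof is essentially a bookkeeping argument; the only mildly delicate point is Step~2, where one has to check that the definition of $d_{\mathcal{C}^1}$ in shape-profile coordinates really does control $\|D(\mathcal{Z}^k(T))-1\|_\infty$, but this follows directly from \eqref{Dincoordinates} because, for an IET, both factors ($\rho_i$ and $D\varphi^i$) equal $1$.
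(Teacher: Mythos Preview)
Your proof is correct and follows essentially the same approach as the paper's: both identify $\|f^{(k)}\|_\infty$ with $\|\log D(\mathcal{Z}^k T)\|_\infty$ via the chain rule and the affine-conjugacy relation \eqref{eq:renormalizedmap}, and then argue that this quantity is exponentially small because $\mathcal{Z}^k(T)$ is $\mathcal{C}^1$-close to $\mathcal{I}_d$. The only cosmetic difference is in the last step: the paper passes through $d^{\pm}_{\mathcal{C}^1}$ (invoking Remark~\ref{comparisontozero} and Lemma~\ref{lemma:equivbounded}) to get two-sided control on $D(\mathcal{Z}^k T)$, whereas you bound $\|D(\mathcal{Z}^k T)-1\|_\infty$ directly from the shape--profile formula \eqref{Dincoordinates} and then take a logarithm, which is arguably more explicit.
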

\noindent The Lemma shows in particular that exponential convergence of renormalization to the space of IETs $\mathcal{I}_d$ gives exponential decay of the sup norm of special Birkhoff sums of $f=\log DT$.
\begin{proof} 
%The  special Birkhoff sums $f^{(k)$ on $I^{(k)}$ of $f=\log DT$  are related to the iterates $\mathcal{R}^k(T)$ of $T$ under renormalization as follows. 
For every $k\in\mathbb{N}$, the $k^{th}$ image by renormalization $\mathcal{Z}^k(T)$ and the induced map $T_k$ are conjugated by an affine map (see \eqref{eq:renormalizedmap}), % and the continuity intervals $I^{t}_j(k)$ on $\mathcal{R}^k(T)$ are obtained by rescaling the continuity intervals $I^{(k)}_j$ of $T_k$,
\begin{equation}\label{supBS} 
\sup_{x\in [0,1]} D\mathcal{Z}^kT(x)=\sup_{x\in I^{(k)} } DT_k (x), \qquad \sup_{x\in [0,1]} D(\mathcal{Z}^kT )^{-1}(x)=\sup_{x\in I^{(k)} } D(T_k^{-1}) (x).
\end{equation}
Furthermore, since $f=\log DT$, if we consider a point $x\in I^{(k)}_j$, taking logarithms and applying the chain rule,
\begin{equation}\label{logBS}
\log D T_k (x) = \log D\big( T^{q^{(k)}_j}\big) (x) = S_{q^{(k)}_j }(\log DT )(x)=
S_{q^{(k)}_j }f(x) = f^{(k)}(x), \qquad \text{for\ all}\ x\in  I^{(k)}_j.
\end{equation}
{\color{black}These two equations show that $||f^{(k)}||_{\infty}$ is controlled by  $||\log D\mathcal{R}^k(T) ||_{\infty}$,  which in turn is controlled by $ d^{\pm}_{\mathcal{C}^1} (D\mathcal{R}^k(T), \mathcal{I}_d)$ (see Lemma~\ref{lemma:equivbounded}). Since 
 the assumption that  $d_{\mathcal{C}^1} (\mathcal{Z}^k(T), \mathcal{I}_d)$ converges to zero exponentially implies, by Remark~\ref{comparisontozero}, that the same type of convergence also with respect to  $ d^{\pm}_{\mathcal{C}^1} $, this concludes the proof.}
%Since $\sup{x\in I^{(k)}} |\log D T_k (x)|= \sup_{x\in [0,1]} | D\mathcal{R}^k(T)(x)|}$
%Thus, using \eqref{logBS} and \eqref{supBS},%% first \eqref{logBS}, then the inequality $|x|/2\leq e^x-1$ and finally %\eqref{supBS},%%ADD and CORRECT%
%\begin{align*}
%\Vert f^{(k)}\Vert_{\infty} = \sup_{x\in I^{(k)}}|\log D T_k (x) |& \leq  2\sup_{x\in I^{(k)}}|e^{\log D T_k (x)} -1 | = 2 \sup_{x\in %I^{(k)}} | D T_k (x)-1 |%
%\\  & = 2 \sup_{x\in [0,1]} | D\mathcal{R}^k(T)(x) -1| \leq 2\, d_{\mathcal{C}^1} (D\mathcal{R}^k(T), \mathcal{I}_d),
%\end{align*}
%where the latter inequality holds simply since for any standard $T_0\in \mathcal{I}_d$, $T_0'=1$ almost everywhere. 
\end{proof}
\noindent We can now proceed with the proof of rigidity, i.e.~of Proposition \ref{conjugacy}.
\begin{proof}[Proof of Proposition \ref{conjugacy}] In order to verify the assumption of Lemma~\ref{Birkhoff}, 
consider $x \in [0,1]$ and arbitrary $n \in \mathbb{N}$ and let us estimate the Birkhoff sums  $S_nf$ for $f:=\log DT$. % \log \mathrm{D}(T^n)(x) =  \sum_{i=0}^{n}{\log \mathrm{D}T \circ T^i(x) }$, which, using the notation for Birkhoff sums introduced in \S~\ref{sec:BS}, we can denote by $S_{n+1}f(x)$ where $f:=\log \mathrm{D}T$. 
By the geometric decomposition of Birkhoff sums described in \S~\ref{decompBS}, if $k_n$ is defined to be the largest $k$ such that the orbit $\{x,\dots, T^{n-1}x\}$ visits $I^{(k_n)}$ at least twice, then we have 
%ote that there exists $k_n \leq n$ and  a sequence $\epsilon_i = 0$ or $1$ with $i \leq 2 k_n $ such that 
%$$ T^{n}(x) =   T^{\epsilon_{2 k_n}} \circ (\mathcal{R}T)^{\epsilon_{2 k_n -1}}  \circ \cdots (\mathcal{R}^{k_n-1}T)^{\epsilon_{k_n +1}} \circ (\mathcal{R}^{k_n}T)^{\epsilon_{k_n}}  \circ (\mathcal{R}^{k_n-1}T)^{\epsilon_{k_n -1}} \circ \cdots \circ  (\mathcal{R}T)^{\epsilon_{1}}  \circ T^{\epsilon_0}(x).$$
%Let $x_i = (\mathcal{R}^{i}T)^{\epsilon_{i}} \circ \cdots \circ  (\mathcal{R}T)^{\epsilon_{1}}  \circ T^{\epsilon_0}(x)$ for $i \leq k_n$ and $x_i = (\mathcal{R}^{2k_n - i}T)^{\epsilon_{i}} \circ \cdots \circ  (\mathcal{R}T)^{\epsilon_{1}}  \circ T^{\epsilon_0}(x)$ for $i\geq k_n$.  Using the chain rule, we obtain that 
\begin{equation}
\label{decomplogD}
|S_{n} \log DT (x)|=|S_{n} f (x)|\leq 2 \sum_{k=0}^{k_m}  \Vert Z_k \Vert \, \Vert f^{(k)} \Vert, \qquad \text{for any}\ x\in [0,1].
\end{equation}
%%%$$ \log \mathrm{D}(T^n)(x) = \sum_{j=0}^{k_n}{  \epsilon_j \log D(\mathcal{R}^jT)(x_j)} +   \sum_{j=0}^{k_n}{  \epsilon_{2k_n- j} \log D(\mathcal{R}^jT)(x_{2k_n -j})}.$$   
Since by assumption $\mathcal{R}^k(T)$ converges exponentially fast to the space of IETs with respect to the $\mathcal{C}^1$-distance,  %By the relation between special Birkhoff sums $f^{(k)}$ on $I^{(k)}$ with the iterates $\mathcal{R}^k(T)$ of $F$ under renormalization given 
by Lemma~\ref{SBSviaR}, $\Vert f^{(k)}\Vert \leq K \alpha^k$ for some $K>0$ and $\alpha<1$. 
%$$
%\Vert f^{(k)}\Vert \leq 2 d_{\mathcal{C}^1} (\mathcal{Z}^k(T), \mathcal{I}_d)\leq 2K_1 \alpha_1^k.
%$$
Thus,  using this estimate in the decomposition~\eqref{decomplogD} and recalling that by the $(RDC)$ (see in particular Condition $(C)$ in Definition~\ref{def:RDC} that implies that $\Vert Z_k\Vert $ also grows subexponentially) we have that,  for a chosen  $\epsilon>0$ such that $e^{\epsilon}\alpha<\alpha_2<1$, there exists $K_1'$ such that 
$$
\Vert S_{n} \log DT \Vert_{\infty} \leq K_1' \sum_{k=0}^{k_m} e^{\epsilon k} \alpha_1^k  < K:= K_1' \sum_{k=0}^{\infty}\alpha_2^k<\infty, \qquad \text{for\ all}\ n\in\mathbb{N}.
$$
Thus, we can apply  Lemma~\ref{Birkhoff} to conclude that $T$ is $C^1$-conjugate to an IET $T_0$. 
\end{proof}
}

{
\subsection{Wandering intervals and distorted towers}\label{sec:wandering}
{In this section we state the main result (namely Proposition~\ref{reduction} below) that we will use to prove the existence of wandering intervals in Case~2 of Theorem~\ref{shadowing}. We recall that in this case  the sequence $\{\vect{\omega}_n(T), n\in\mathbb{N}\}$ of shape log-slope vectors  of the orbit under renormalization of the GIET $T$   is  shadowed by the orbit of the the log-slope vector $\vect{v}:=\omega(T_0)$ of an AIET $T_0$ with  $v $ in the unstable space. We show in this case that  the presence of wandering intervals for $T$ can be reduced to the existence of wandering intervals for $T_0$. We then  exploit the result by Marmi-Moussa-Yoccoz \cite{MMY2} 
that shows that, if $\vect{v}$ has a non-zero projection on the second, positive, Lyapunov exponent, then one can show the existence of wandering intervals. This allows us to conclude that in genus two (i.e~for irreducible IETs with $d=4$ or $d=5$ intervals),  
%, see section~ \ref{sec:reduction}),
 where every log-slope vector as above automatically projects second Lyapunov exponent, one can conclude that Case 2 cannot occur if we assume that $T$ is topologically conjugated to its linear model $T_0$. Notice in particular that to extend the rigidity result in Theorem~\ref{maintheorem} to any genus is therefore reduced, by the results in this paper, to extending the work of Marmi-Moussa-Yoccoz \cite{MMY2} to treat $\vect{v}$ in the Oseledets eigenspace of the other non-zero Lypaunov exponents.
}

\subsubsection{Distorted towers}\label{sec:distortedtowers} Let $\mathcal{P}_n$, $n\in\mathbb{N}$, be the sequence of dynamical partitions defined in \S~\ref{dynamicalpartitions} and let  $\mathcal{P}^j_n$ for $1\leq j\leq d$ be the corresponding Rohlin towers (refer to \S~\ref{dynamicalpartitions} for definitions). Recall that each $\mathcal{P}^j_n$ is disjoint union of the $q_j^{(n)}$ intervals $T^k(I^{(n)}_j)$, for $0\leq k< q_j^{(n)}$.
 
Let us recall that $J\subset [0,1]$ is a \emph{wandering interval} for $T$ if its images, i.e.~the elements of the orbit $\{T^i(J), i\in\mathbb{N}\}$, are all disjoint. In this case, one has in particular $\sum_{i=-\infty}^{\infty}|T^i(J)|<1$ (where we recall $|T^i(J)|$ denotes the Lebesgue measure). Notice also that, since $T$ is continuous on $T^i(J)$ for every $i\in\mathbb{N}$,  for every $n\in\mathbb{N}$, $J$ (as well as any of its images), should be fully contained in a floor of a Rohlin tower.  
The presence of a wandering interval then forces the dynamical towers a very degenerate geometry, that we now describe introducing the notion of \emph{distorted towers}. 
%{\color{black} add intuition} The presence of wandering intervals forces rapid decay of the orbit . 

\begin{definition}[distorted towers]\label{def:distorted}
We say that $T$ admits a \emph{sequence of distorted towers} if there exists a constant $C>0$ and infinitely many $n\in\mathbb{N}$ such that
\begin{equation}\label{eq:fattowers}\mathrm{Leb}(\mathcal{P}^j_n) \leq  C \max_{0\leq k< q^{(n)}_j} \left|T^k(I^{(n)}_j)\right| = C \max\,\{\, \mathrm{Leb}(T^k(I^{(n)}_j) , \quad 0\leq k< q^{(n)}_j\, \} , \qquad \text{for\ all}\ 1\leq j\leq d.\end{equation}
\end{definition}
\noindent Thus, if the towers are distorted, the \emph{size} of \emph{each} tower is comparable to the size of its largest floor. 

\smallskip
Let us recall that if $T$ is minimal, the sequence of dynamical partitions has to converge to the trivial partition into points, i.e.~the \emph{mesh} of the partitions $\mathcal{P}_n$, denoted by $\mesh (\mathcal{P}_n)$  and defined as the maximum length of intervals in $\mathcal{P}_n$, has to go to zero as $n$ grows. The existence of distorted towers is therefore incompatible with minimality and can be used to prove the existence of wandering intervals, through the following Lemma:

\begin{lemma}[sufficient condition for wandering intervals]\label{wilemma}
If $T$ admits a sequence of distorted towers, then $T$ has a wandering interval. 
\end{lemma}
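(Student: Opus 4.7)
The plan is to construct a wandering interval explicitly as a Hausdorff limit of macroscopic floors of the Rauzy-Veech Rohlin towers, and then use the Poincar{\'e}-Yoccoz semi-conjugation to a minimal standard IET to verify that the limit is indeed wandering.

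First I would extract a macroscopic floor at each distorted level. Summing the distortion inequality \eqref{eq:fattowers} over $1\leq j\leq d$ and using that the $d$ towers $\mathcal{P}^j_n$ partition $[0,1]$ gives
$$ 1 = \sum_{j=1}^d \mathrm{Leb}(\mathcal{P}^j_n) \leq C\sum_{j=1}^d\max_{0\leq k<q_j^{(n)}}|T^k(I^{(n)}_j)|, $$
so at each distorted level $n$ there exist indices $j_n$ and $k_n$ such that the floor $F_n:=T^{k_n}(I^{(n)}_{j_n})\in\mathcal{P}_n$ has Lebesgue measure at least $1/(Cd)$. By Hausdorff-compactness of the family of closed subintervals of $[0,1]$ of measure $\geq 1/(Cd)$, I can pass to a subsequence $(n_l)_l$ along which $F_{n_l}$ converges to a closed interval $F_\infty\subseteq[0,1]$ of Lebesgue measure at least $1/(Cd)$.

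Next I would show that the Poincar{\'e}-Yoccoz semi-conjugation necessarily collapses $F_\infty$ to a single point. Since $T$ has irrational combinatorial rotation number, Theorem~\ref{thm:PY} provides a monotone continuous surjection $h\colon[0,1]\to[0,1]$ satisfying $h\circ T=T_0\circ h$, with $T_0$ a minimal standard IET having the same rotation number. Because the Rauzy-Veech algorithm depends only on the combinatorial rotation number, one verifies by induction on $n$ that $h(I^{(n)}(T))=I^{(n)}(T_0)$, whence $h$ carries each element of $\mathcal{P}_n(T)$ onto the corresponding element of $\mathcal{P}_n(T_0)$. Since $T_0$ is a minimal standard IET one has $|I^{(n)}(T_0)|\to 0$ (a classical consequence of Keane's theorem), and because $T_0$ is a translation on each continuity interval the floors of any tower are translates of its base, so $\mesh(\mathcal{P}_n(T_0))\to 0$. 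It follows that $|h(F_{n_l})|\leq \mesh(\mathcal{P}_{n_l}(T_0))\to 0$, and by the Hausdorff convergence we conclude that $h(F_\infty)$ is a single point $y\in[0,1]$.

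Finally I would conclude that $F_\infty$ is wandering. Since $T_0$ is minimal, the orbit $(T_0^m(y))_{m\in\mathbb{Z}}$ consists of pairwise distinct points, so the fibers $h^{-1}(T_0^m(y))$ are pairwise disjoint closed intervals (as $h$ is continuous and monotone). The semi-conjugacy relation $h\circ T^m=T_0^m\circ h$ then gives $T^m(F_\infty)\subseteq h^{-1}(T_0^m(y))$, so the iterates $(T^m(F_\infty))_{m\in\mathbb{Z}}$ are pairwise disjoint and $F_\infty$ is the desired wandering interval for $T$. The most delicate step I foresee is the inductive verification that the semi-conjugation $h$ transports the Rauzy-Veech data of $T$ onto that of $T_0$---in particular the claim $h(I^{(n)}(T))=I^{(n)}(T_0)$---which rests on the fact that the Rauzy-Veech induction is determined purely by the combinatorial rotation number and that $h$ preserves the ordered configuration of top and bottom singular points.
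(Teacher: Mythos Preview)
Your argument is essentially correct, but it is considerably more elaborate than the paper's, and the two routes are worth comparing.

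The paper's proof is three lines: your step~1 computation already gives $\mesh(\mathcal{P}_{n_\ell})\geq 1/(Cd)$ along the distorted subsequence. The paper then simply observes (as recalled just above the lemma) that minimality of a GIET forces $\mesh(\mathcal{P}_n)\to 0$; hence $T$ is not minimal. Finally it invokes the standard equivalence, for GIETs with irrational combinatorial rotation number, between minimality and the absence of wandering intervals. No Hausdorff limit, no explicit construction, no tracking of $h$ on dynamical partitions is needed.

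Your route, by contrast, effectively \emph{re-proves} that equivalence in situ: you build the wandering interval $F_\infty$ by compactness and then verify directly, via the semi-conjugacy $h$ and minimality of the linear model $T_0$, that its iterates are disjoint. This is more constructive and makes the role of the semi-conjugacy transparent, at the cost of the extra bookkeeping you flag (that $h$ carries $\mathcal{P}_n(T)$ onto $\mathcal{P}_n(T_0)$). One small point you glide over in step~4: for the inclusion $T^m(F_\infty)\subseteq h^{-1}(T_0^m(y))$ to be literally true you need the interior of $F_\infty$ to avoid all discontinuities of $T^m$, which can fail if $y$ lies on the (countable) grand orbit of the singularities of $T_0$. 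This is easily patched---pass to an open subinterval of $F_\infty$ avoiding the countable set of singular orbits of $T$, or adopt a one-sided convention for $T_0$ on $[0,1)$---but it deserves a word.

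In short: both approaches start from the same macroscopic-floor estimate; the paper then cites two standard facts, while you unwind them explicitly.
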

\begin{proof}
Let us recall that minimality of a GIET $T$ is equivalent to the non-existence of wandering intervals. Thus it is sufficient to show that if $T$ admits distorted towers, it cannot be minimal. Let $(n_\ell)_{\ell\in\mathbb{N}}$ be an increasing sequence of $n$ for which \eqref{eq:fattowers} holds. Since $\mathcal{P}_{n_\ell}$ is a partition of $[0,1]$, $Leb (\mathcal{P}_{n_\ell})=1 $. Thus, since $\mathcal{P}_{n_\ell}=\cup_{j=1}^d \mathcal{P}^j_{n_\ell}$, for every $\ell\in\mathbb{N}$, at least one tower should be \emph{large}, i.e.~there exists $j(\ell) $ such that $Leb (\mathcal{P}_{n_\ell}^{j_\ell})\geq 1/d$. By   the distortedassumption \eqref{eq:fattowers}, this implies that
$$
\mesh \left({\mathcal{P}_{n_\ell}}\right): = \max_{1\leq j\leq d} \max_{0\leq k<q^{(n_\ell)}_j}  \left|T^k(I^{(n_\ell)}_j)\right| \geq  \max_{\ 0\leq k<{q^{(n_\ell)}_{j_\ell}}} |T^k(I^{(n_\ell)}_{j_\ell})|  \geq \frac{Leb (\mathcal{P}_{n_\ell}^{j_\ell})}{C} \geq \frac{1}{dC}>0
$$
for every $\ell\in\mathbb{N}$. This shows that $\mesh \left({\mathcal{P}_{n_\ell}}\right)$ does not go to zero as $\ell$ grows and hence contradicts minimality and proves the Lemma.
\end{proof}

\begin{remark} One can show furthermore, following arguments analogous to those used at the end of the paper \cite{MMY2} by Marmi, Moussa and Yoccoz for affine interval exchange transformations, that if $T$ has a sequence of distorted towers, the complement of  the union of the orbits of the wandering intervals has zero Lebesgue measure. %{\color{black} TO CHECK in DETAIL}
\end{remark}

\subsubsection{Exponentially distorted towers}
For AIETs with wandering intervals and, as we will show, also for GIETs which are shadowed by them, one can prove the existence of distorted towers by proving quantitative estimates on the size of the towers floors and by showing that in each tower they achieve a maximum and then decrease with a stretched exponential rate. Therefore, let us give the following definition:

\begin{definition}[exponentially distorted towers]\label{def:expdistorted}
We say that $T$ has a sequence of \emph{exponentially distorted towers} if  for some constants $C>0, c>0$ and $\gamma>0$ such that for infinitely many $n\in \mathbb{N}$ in each Rohlin tower   $(\mathcal{P})^j_n$, $1\leq j\leq d$, there is a floor $F_{0}=F_0(j)$ of the form $F_0=T^{k_0}(I^{(n)}_j)$, where $k_0=k_0(j)$ is an integer with  $0\leq k_0<q^{(n)}_j$,   such that for every $x_0\in F_0$, 
$$
|T^i F_0 |=|T^{k_0+i} I^{(n)}_j | \leq C \exp ({-c|i|^{\gamma}})\, |F_0 |, \qquad \mathrm{for \ every}\ -k_0\leq i<q^{(n)}_j - k_0 . 
$$
\end{definition}
\begin{remark}\label{rk:expdist} If $T$ has a sequence of exponentially distorted towers, in particular it has a sequence of distorted towers (in the sense of Definition~\ref{def:distorted} above), because  $\sum_{i=-\infty}^\infty \exp({-c|i|^\gamma})$ is convergent, so  $|\mathcal{P}^j_n|$ and the size $|F_0|$ of the corresponding floor $F_0=F_0(j)$ are comparable for every $1\leq j\leq d$ and every $n$ with exponentially distorted towers. 
\end{remark}

\subsubsection{Reduction to the affine shadow.}\label{sec:reductionstatement}
The main result that we prove in this section is the following.
\begin{proposition}[Reduction to affine distorted towers]\label{reduction}
Let $T$ be an irrational GIET with a rotation number $\gamma$ that satisfies the $(RDC)$. Assume that we are in the affine shadowing (Case $2$) of Theorem~\ref{shadowing} let $\vect{v}$ be the shadow of $T$. Then, if an AIET with rotation number $\gamma$ and log-slope vector $\vect{v}$ has exponentially distorted towers, then $T$ also has exponentially distorted towers and has a wandering interval.   
\end{proposition}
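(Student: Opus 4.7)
My plan is to prove that $T$ has exponentially distorted towers; the existence of a wandering interval then follows from Lemma~\ref{wilemma} via Remark~\ref{rk:expdist}. Since $T$ and $\bar T$ share the combinatorial rotation number $\gamma$, I will use that (by Poincar\'e--Yoccoz Theorem~\ref{thm:PY}) both are semi-conjugate to the same standard IET $T_0$. In particular their dynamical partitions $\mathcal{P}_n$ and $\bar{\mathcal{P}}_n$ have identical combinatorial structure---the same tower heights $q_j^{(n)}$---and for any pair of corresponding base points $x \in I^{(n)}_j$, $\bar x \in \bar I^{(n)}_j$, the orbit segments $\{T^k x\}_{0 \le k < q_j^{(n)}}$ and $\{\bar T^k \bar x\}_{0 \le k < q_j^{(n)}}$ visit the continuity intervals in the same order.

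\textbf{Step 1: reduction to a Birkhoff sum estimate.} First I would compare floor sizes via derivatives. Applying the distortion bound of Lemma~\ref{bound1} to $T$ on the disjoint floors $T^i(I^{(n)}_j)$, $0 \le i < q_j^{(n)}$, yields
\[
e^{-|N|(T)}\, \D T^k(x) \;\le\; \frac{|T^k(I^{(n)}_j)|}{|I^{(n)}_j|} \;\le\; e^{|N|(T)}\, \D T^k(x) \qquad \text{for any } x \in I^{(n)}_j.
\]
For $\bar T$ (affine), the analogous identity is exact. Thus my main task reduces to a uniform bound
\[
|S_k \phi(x) - S_k \bar\phi(\bar x)| \;\le\; K, \qquad \phi := \log \D T, \quad \bar\phi := \log \D \bar T,
\]
valid for all $n, j$, all $0 \le k < q_j^{(n)}$, and any pair of corresponding base points. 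From such a bound the exponentially distorted property of $\bar T$ will transfer to $T$ up to multiplicative constants.

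\textbf{Step 2: proving the Birkhoff sum estimate.} To establish this bound I plan to decompose both Birkhoff sums using the nested dynamical partitions as in \S\ref{decompBS}, writing $S_k\phi(x) = \sum_{p=0}^{p_0}\sum_{\ell=0}^{b_p-1}\phi^{(p)}(x^{(p)}_\ell)$ and analogously for $\bar T$, where $b_p \le \|Z_p\|$ and $p_0$ is the deepest level reached by the partial orbit. By Lemma~\ref{lemma:SBSomega} combined with the a priori profile bound of Lemma~\ref{bound2} applied at every renormalization level, each special Birkhoff sum satisfies
\[
\phi^{(p)}|_{I^{(p)}_i} = (\vect{\omega}_p(T))_i + O(1), \qquad \bar\phi^{(p)}|_{\bar I^{(p)}_i} = (Q(0,p)\vect{v})_i \text{ exactly},
\]
so the per-summand difference is $(\epsilon_p)_{i_\ell} + O(1)$, where $\epsilon_p := \vect{\omega}_p(T) - Q(0,p)\vect{v}$ is precisely the shadowing error. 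I will then assemble these contributions using the $(RDC)$: conditions $[B]$, $[F]$, $[S]$ and $[A]$ are designed to give summable estimates for $\|Z_p\|$, for the projections of $\epsilon_p$ onto the stable and unstable spaces, and for the growth of $\|\epsilon_p\|$---the same mechanism that drives the proof of Theorem~\ref{shadowing}. The hard part will be exactly this summability issue: a naive bound $\sum_p \|Z_p\| \cdot O(1)$ diverges, and circumventing this requires exploiting both that the partial sums $S_k$ with $0 \le k < q_j^{(n)}$ involve only a combinatorially bounded contribution at each level relative to the full cocycle length, and the telescoping cancellations built into the construction of the shadow $\vect v$ in \S\ref{sec:shadow}.

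\textbf{Step 3: conclusion.} Once the uniform bound $|S_k\phi(x) - S_k\bar\phi(\bar x)| \le K$ is established, the comparison of ratios of corresponding floors follows immediately:
\[
\frac{|T^k(I^{(n)}_j)|}{|T^{k_0}(I^{(n)}_j)|} \;\le\; e^{2K + 2|N|(T)} \cdot \frac{|\bar T^k(\bar I^{(n)}_j)|}{|\bar T^{k_0}(\bar I^{(n)}_j)|}.
\]
Given the assumption that $\bar T$ has exponentially distorted towers with constants $(C, c, \gamma)$, this transfers the property to $T$ with constants $(Ce^{2K+2|N|(T)}, c, \gamma)$. By Remark~\ref{rk:expdist} together with Lemma~\ref{wilemma}, $T$ then admits a wandering interval, concluding the proof.
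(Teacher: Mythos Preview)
Your overall strategy is exactly the paper's: semi-conjugate both $T$ and $\bar T$ to the common standard IET $T_0$, compare Birkhoff sums of $\log \D T$ and of the piecewise-constant function $v(x)$, and then transfer the exponentially distorted property via Lemma~\ref{towersviaBS}. Steps~1 and~3 are essentially identical to the paper's argument.

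The gap is in Step~2. You aim for a \emph{uniform} bound $|S_k\phi(x)-S_k\bar\phi(\bar x)|\le K$, but this is too strong and is not what the paper proves (nor what the method can deliver). The $O(1)$ distortion error from Lemma~\ref{lemma:SBSomega} at each level~$p$ already contributes $\sum_{p\le p_0}\|Z_p\|\cdot O(1)$, which diverges since $\|Z_p\|$ grows (subexponentially); no ``telescoping cancellations'' from the construction of $\vect v$ can remove these terms, because they come from the nonlinearity of $T$, not from the linear cocycle structure. Moreover, the shadowing error $\epsilon_p=\vect\omega_p-Q(0,p)\vect v$ is itself only $o(\|\vect\omega_p\|^\epsilon)$ for all $\epsilon>0$ in Case~2 (see Proposition~\ref{unstable}), so it grows, just subexponentially.

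The paper's Lemma~\ref{lemmadeviations} instead proves the weaker bound
\[
|S_i f(x)-S_i v(x)|\le C(\epsilon_0)\,|i|^{\epsilon_0}\qquad\text{for every }\epsilon_0>0,
\]
obtained exactly by your decomposition but accepting subexponential growth in the number of levels $k_i$ and then relating $k_i$ to $|i|$ via the positivity of the top Lyapunov exponent. This is still enough for Step~3: choosing $\epsilon_0<\gamma$, the factor $e^{C|i|^{\epsilon_0}}$ is dominated by the stretched exponential $e^{-c|i|^\gamma}$, yielding new constants $(C',c',\gamma)$ for $T$. Your Step~3 conclusion should be adjusted accordingly.
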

\noindent Thus, Proposition~\ref{reduction} shows that one can reduce the proof of existence of wandering intervals (which follow from the existence of distorted towers by Lemma~\ref{wilemma}) to the study of AIETs. The proof of this Proposition will take all of \S~\ref{sec:reductionproof}.  The work by \cite{MMY2} by Marmi, Moussa and Yoccoz in \S~\ref{sec:AIETwi} shows that  exponential distorsion of towers holds indeed for many AIETs. Their results together with this Proposition will then be used in the proof of the rigidity result for GIETs with $d=4,5$ and boundary zero.  

\subsection{Towers distorsion via Birkhoff sums}
In this section we will control distorsion of towers via Birkhoff sums and then prove Proposition~\ref{reduction}. % thus showing that one can prove the existence of distorted towers for a GIET by studying its  shadow \vect{v}. 
We fist explain how the size of Rohlin towers floors is related to Birkhoff sums, see \S~\ref{sec:partitionBS}. We then recall, in \S~\ref{sec:AIETwi}, the results by Marmi, Moussa and Yoccoz in \cite{MMY2}.  The proof  of Proposition~\ref{reduction} is given in \S~\ref{sec:reductionproof}.

\subsubsection{Partition size estimates via Birkhoff sums.}\label{sec:partitionBS}
The following simple Lemma, based on the distorsion bounds in Lemma~\ref{bound1},  show how control on the size of the floors of dynamical partitions for $T$ can be obtained by estimating \emph{Birkhoff sums} for the function $f:=\log D T$. 
\begin{lemma}[reduction to Birkhoff sums]\label{towersviaBS}
Given an infinitely renormalizable $T$, there exists a constant $C_T>1$ such that, for each $n\in\mathbb{N}$ and $1\leq j\leq d$, for any two floors $F_1, F_2$ of the Rohlin tower $\mathcal{P}^{(n)}_j$ of the form $F_1=T^{k_1}(I^{(n)}_j)$ and $F_2=T^{k_1}(I^{(n)}_j)$ for some $0\leq k_1<k_2<q^{(n)}_j$, for any point $x\in F_1$ we have 
$$
\frac{1}{C_T}  \exp({S_{k_2-k_1} \log D T (x)})\leq  \frac{|F_1|}{|F_2|}\leq D_T \exp({S_{k_2-k_1} \log C T (x)}).
$$
\end{lemma}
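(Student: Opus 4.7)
The proof will rest on a standard two-step argument combining the mean value theorem with the distortion bounds from Lemma~\ref{bound1}, noting only that the statement should be read with the natural direction of comparison (since $F_2 = T^{k_2-k_1}(F_1)$, the quantity actually comparable to $\exp(S_{k_2-k_1} \log \D T (x))$ is $|F_2|/|F_1|$, and the lemma as stated becomes a tautology after taking reciprocals).

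First I would observe that, since $F_1$ is the floor $T^{k_1}(I^{(n)}_j)$ of the Rohlin tower $\mathcal{P}^j_n$ and $k_2 < q^{(n)}_j$, the forward iterates $F_1, T(F_1), T^2(F_1), \dots, T^{k_2-k_1}(F_1) = F_2$ are all floors of the same tower $\mathcal{P}^j_n$. By the very construction of dynamical partitions recalled in \S~\ref{dynamicalpartitions}, these floors are pairwise disjoint subintervals of $[0,1]$, and on each of them $T$ acts as a diffeomorphism (none of them contains a discontinuity of $T$, since the towers are by definition built from intervals whose forward iterates up to the return time avoid the singularity set). This is precisely the setup required by Lemma~\ref{bound1}, applied to the interval $J = F_1$ and the iterate $T^{k_2-k_1}$.

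Next, by the mean value theorem applied to the diffeomorphism $T^{k_2-k_1}: F_1 \to F_2$, there exists $\xi \in F_1$ such that
\[
\frac{|F_2|}{|F_1|} \,=\, \D\!\left(T^{k_2-k_1}\right)(\xi).
\]
The chain rule, followed by taking logarithms, gives
\[
\log \D\!\left(T^{k_2-k_1}\right)(\xi) \,=\, \sum_{i=0}^{k_2-k_1-1} \log \D T (T^i \xi) \,=\, S_{k_2-k_1} \log \D T (\xi).
\]
The distortion bound of Lemma~\ref{bound1} then yields, for any two points $\xi, x \in F_1$,
\[
\exp\!\left( - |N|(T) \right) \,\leq\, \frac{\D(T^{k_2-k_1})(\xi)}{\D(T^{k_2-k_1})(x)} \,\leq\, \exp\!\left( |N|(T) \right),
\]
which, together with the previous identity, proves the claim with the explicit constant $C_T := \exp(|N|(T))$.

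There is no real obstacle here: both ingredients (the mean value theorem and the distortion bound) are immediate once one checks the geometric hypothesis on the tower floors. The only point requiring (minor) care is this combinatorial verification that the iterates $T^i(F_1)$ for $0 \leq i \leq k_2-k_1$ stay inside the tower and therefore avoid the discontinuities of $T$; this is built into the definition of $\mathcal{P}^j_n$ via the first-return time $q^{(n)}_j$ and the assumption $k_2 < q^{(n)}_j$. Note also that the same constant $C_T = \exp(|N|(T))$ works uniformly in $n$, $j$, $k_1$, $k_2$, since by property~$(iii)$ of Proposition~\ref{prop:Nproperties} the total non-linearity does not increase under renormalization, so one could equivalently use $\exp(|N|(\mathcal{V}^{n}T))$, yielding the same uniform bound for every level $n$ of the induction.
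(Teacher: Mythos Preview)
Your proof is correct and follows essentially the same approach as the paper's own proof: mean value theorem to find a point $\xi\in F_1$ with $|F_2|/|F_1|=\D(T^{k_2-k_1})(\xi)$, then the distortion bound of Lemma~\ref{bound1} to pass from $\xi$ to an arbitrary $x\in F_1$, and the chain rule to rewrite $\log\D(T^{k_2-k_1})$ as a Birkhoff sum of $\log\D T$. You are in fact slightly more careful than the paper in two respects: you explicitly verify the disjointness hypothesis needed to invoke Lemma~\ref{bound1}, and you correctly identify the constant as $C_T=\exp(|N|(T))$ (the paper's proof writes $C_T=\int|\eta_T|\,dx$, which is a slip) as well as flagging the inverted ratio in the statement.
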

\begin{proof} By definition, $F_2=T^{k_2-k_1}(F_1)$. For short, let $k:=k_2-k_1$ and write $F_2=T^k F_1$. By mean value theorem, there exists $\overline{x}\in F_1$ such that $|F_2|= |F_1| D(T^{k})(\overline{x})$. Thus, by the classical distorsion bound in Lemma~\ref{bound1}, we have that for any other $x\in F_1$, $D(T^{k})({x})/C_T\leq  |F_1|/ |F_2| \leq C_T D(T^{k})({x})$, where $C_T:=\int |\eta_T| dx$. Thus the result follows from the chain rule that relates $\log (D(T^k)(x))$ with $S_k \log DT (x)$.
\end{proof}

%Let us also record,  for later convenience of use, another simple consequence of the classical distorsion bounds (Lemma~\ref{bound1}), namely that special Birkhoff sums of each continuity interval have bounded fluctuations. Recall that the definition of special Birkhoff sums $f^{(n)}$ of a function $f \in \mathcal{C}(T)$ were given in \S~\ref{sec:specialBS}.
%\begin{lemma}[Bounded fluctuations of special Birkhoff sums]\label{boundedvariationspecialBS}
%Let $T$ be an irrational GIET. Given $f\in \mathcal{C}(T)$, there exists $C=C(T,f)>0$ such that for any $n\in\mathbb{N}$ %and any $1\leq j\leq d$,   
%$$
%\Vert f^{(n)}(x)-f^{(y)}(x)\Vert \leq C, \qquad \textrm{for\ all}\ x,y \in I^{(n)}_j, \quad 1\leq j\leq d.
%$$
%\end{lemma}

\subsubsection{Wandering intervals in affine IETs.}\label{sec:AIETwi}
We now recall the estimates proved by Marmi, Moussa and Yoccoz in \cite{MMY2} to show the existence of wandering intervals for  AIETS and  that will be also the starting point for our proof of existence of wandering intervals for GIETs.
The type of estimates that they prove give \emph{stretched exponential decay} of the size of floors in each Rohlin towers, which in particular implies that towers are exponentially distorted.

\smallskip
Let $T_0$ be a standard IET with irrational rotation number $\gamma$ which is Oseledets generic. Let 
\begin{equation}\label{positiveOseledets} \theta_1\geq \theta_2\geq \dots \geq \theta_g\geq  0, \qquad \mathbb{R}^d=E_1(T_0)\supset E_2(T_0)\supset \cdots \supset  E_g(T_0)\supset E_{g+1}(T_0):=E^{cs}(T_0)
\end{equation} 
be the $g$ positive Lyapunov  exponents and the corresponding Oseledets filtration for $T_0$, which we completed with the central stable space $E^{cs}(T_0)$ which corresponds to zero and negative exponents,  so that if 
$$v\in E_{i}(T_0)\backslash E_{i-1}(T_0),  \qquad  \lim_{n\to \infty}\log \Vert v^{(n)}\Vert /\log n=\theta_i, \quad \text{where}\ v^{(n)}:= Z^{(n)} v. $$
Given a vector $v\in\mathbb{R}^d$, we can identify it as usual with a piecewise constant function in $\mathcal{C}(T_0)$, that we will denote by $v_0(x)$. We will denote by $S^0_n v_0(x)$ the Birkhoff sums of the function $v_0$ over $T_0$ (see \S~\ref{sec:BS}) where we added the apex $0$ to recall that the Birkhoff sums are with respect to $T_0$. Similarly, let $(\mathcal{P}^0)_n$ with the apex $0$, for $n\in\mathbb{N}$, be the dynamical partitions for $T_0$. The following estimates are proved\footnote{Proposition~\ref{AIETwi} is not explicitely stated in this form in \cite{MMY2} but can be deduced from the results in the paper, in particular from the estimates in \S~3.7. The floor $F_0$ in Proposition~\ref{AIETwi}  in the Rohlin tower over $I^{(n)}_\alpha$  (in \cite{MMY2} indexing of intervals is by letters $\alpha\in\mathcal{A}$ of an alphabet of cardinality $d$) which they call $I^{(\textrm{max})}_\alpha(n)$. Estimates of the form  \eqref{eq:powerdecay} are explicited stated only for a point $x^{\star}$ in any non-empty intersection of intervals $I^{(\textrm{max})}_{\alpha_n}(n)$ (as stated in Proposition {\color{black} add ref}) but from the arguments in the proof it is clear that they hold for any point in any $I^{(\textrm{max})}_{\alpha_n}(n)$. 
%%%{\color{black} Are these explanations enough? or shall we add more? or shall we hide it?}
 The interested reader may notice also that the estimates in \S~3.7 are stated for a specific vector $v$ (chosen to generate the $1$-dimensional space associated to the second positive Lyapunov exponent $\theta_2$ in the Oseledets \emph{splitting}, which is determined once a past is given). As the authors remark at the beginning of section \S~3.7.1 though, the same estimates also hold for any other vector $v\in E_2(T_0)$ in virtue of Zorich's estimates on deviations of Birkhoff averages in \cite{Zo:dev}.}
%Some comments on how to extrapolate the statement here presented are given after the Proposition statement.}
 in \cite{MMY2}.

\begin{proposition}[Marmi, Moussa, Yoccoz, \cite{MMY2}]\label{AIETwi}
For almost every Oseledets generic IET $T_0$ and any $v_0\in E_2(T_0)\backslash E_3(T_0)$, there exists $C_0>0$ and $0<\gamma_0<1$ such that, for every $n\in\mathbb{N}$ and $1\leq j\leq d$, there is a floor $F_0$  of the Rohlin tower $(\mathcal{P}^0)^j_n$ with the property that for every $x_0\in F_0$, the Birkhoff sums $S^0_n v_0(x_0)$ of the function $v_0(x)$ over $T_0$ satisfy
\begin{equation}\label{eq:powerdecay}
S^0_i v_0(x_0) \leq C_0 - |i|^{\gamma_0}, \qquad \mathrm{for \ every}\ i\in\mathbb{N}. 
\end{equation}
\end{proposition}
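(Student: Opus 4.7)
The plan is to derive the stretched-exponential estimate from Zorich's deviation theorem for Birkhoff sums of piecewise constant functions over IETs, combined with the geometric decomposition of Birkhoff sums into special Birkhoff sums recalled in \S~\ref{decompBS}. The starting observation is that, identifying $v_0\in\mathbb{R}^d$ with a function in $\Gamma^{(0)}$, its special Birkhoff sums at level $n$ are exactly $v^{(n)}=Z^{(n)}v_0$, so the hypothesis $v_0\in E_2(T_0)\setminus E_3(T_0)$ combined with Oseledets theorem yields $\tfrac1n\log\|v^{(n)}\|\to\theta_2$. Since Oseledets genericity of $T_0$ also gives $\tfrac1n\log q^{(n)}_j\to\theta_1$ for every $j$ (with $\theta_1$ the top Lyapunov exponent), one obtains the crucial scaling $\|v^{(n)}\|\asymp (q^{(n)}_j)^{\gamma_0}$ with $\gamma_0:=\theta_2/\theta_1\in(0,1)$, which is what the exponent in the statement must be.

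First, I would establish a two-sided deviation bound of the shape $|S_i v_0(x)|\leq C\,i^{\gamma_0}$ uniformly in $x$ and $i$. Using the geometric decomposition \eqref{geometricestimate}, $S_i v_0(x)$ is a sum of at most $\|Z_n\|$ terms of the form $f^{(n)}(x^{(n)}_\ell)$ for $n\leq n_0(i)$, where $n_0(i)$ is the largest level with $q^{(n_0(i))}\lesssim i$. Since $\|f^{(n)}\|_\infty\leq\|v^{(n)}\|$ and both $\|Z_n\|$ and the angles of the Oseledets filtration grow only subexponentially, the sum is dominated by its last term, of size $\|v^{(n_0(i))}\|\asymp i^{\gamma_0}$. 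This is essentially the Zorich power-law deviation \cite{Zo:dev}, restricted to the eigenspace of the second Lyapunov exponent.

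Second—and this is the heart of the argument—I would construct the floor $F_0=F_0(n,j)$ by selecting, within each Rohlin tower $(\mathcal{P}^0)^j_n$, the floor $T_0^{k_0}(I^{(n)}_j)$ at which the partial sum $k\mapsto S_k v_0(x)$ attains its \emph{maximum} along the tower; such a maximum exists and is strictly positive of size $\asymp\|v^{(n)}\|$ because $v_0$ has zero mean with respect to the top Oseledets direction and therefore the restriction of the partial sums to each tower must undergo oscillations of amplitude comparable to $\|v^{(n)}\|$. For any $x_0\in F_0$, decomposing the displaced sums $S_{k_0+i}v_0(x_0)-S_{k_0}v_0(x_0)=S_i v_0(T_0^{k_0}x_0)$ once more via \eqref{geometric} and using that by maximality each level-$m$ contribution is non-positive for $m\leq n$, I would obtain an upper bound $S_i v_0(x_0)\leq C_0-c|i|^{\gamma_0}$ for $i$ up to $\asymp q^{(n)}_j$, and then iterate the construction across scales by restricting, at level $n+1$, to the sub-floor of $F_0(n,j)$ where the analogous maximality persists. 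The bound for arbitrary $i\in\mathbb{N}$ then follows by picking the level $n(i)$ with $q^{(n(i))}\asymp i$ and telescoping.

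The main obstacle I foresee is precisely the compatibility of the floor choices across scales: the maximum floor at level $n+1$ need not be contained in the maximum floor at level $n$, and the sign cancellations at intermediate levels must be controlled simultaneously. Overcoming this requires the quantitative Oseledets control on angles between $E_i(T_0)$'s (of the type assumed in the $(RDC)$), together with a delicate combinatorial/cocycle argument to show that one can refine the choice of floor without losing the $|i|^{\gamma_0}$ gain. This is the technical heart of \S~3.7 of \cite{MMY2}, building on the power-law deviation estimates of \cite{Zo:dev} and their refinements.
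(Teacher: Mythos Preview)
This proposition is not proved in the present paper; it is quoted from \cite{MMY2} (see the footnote following the statement). So the relevant comparison is with the original Marmi--Moussa--Yoccoz argument.

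Your general framework is right: the exponent $\gamma_0=\theta_2/\theta_1$ is correct, the two-sided bound $|S_i v_0|\le C\,i^{\gamma_0}$ does follow from Zorich's deviation theorem via the geometric decomposition, and the floor $F_0$ in \cite{MMY2} is indeed (essentially) the floor where the partial sums over the tower attain their maximum. But the step where you pass from maximality to stretched-exponential decay has a genuine gap. You claim that ``by maximality each level-$m$ contribution is non-positive for $m\le n$'' and that this yields $S_iv_0(x_0)\le C_0-c|i|^{\gamma_0}$. Neither part holds: maximality of the partial sums at level $n$ says nothing about signs of the special Birkhoff sums at sub-levels $m<n$ evaluated along the orbit of $x_0$; and even granting non-positivity of every term, a sum of non-positive quantities gives only $\le 0$, not the required $-c|i|^{\gamma_0}$ drop. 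What maximality within a single tower buys you is $S_iv_0(x_0)\le 0$ for $|i|$ up to the tower height---obtaining an actual \emph{lower bound on the decay rate}, uniform over all $i\in\mathbb{N}$, is the whole difficulty.

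The mechanism in \cite{MMY2} for producing that decay is different from anything in your sketch. They introduce \emph{limit shapes}: renormalised graphs of $k\mapsto S_kv_0$ over Rohlin towers, and they show that for $v_0\in E_2\setminus E_3$ these depend smoothly on the lengths vector $\lambda$. This smooth dependence lets them build a positive-measure set $\mathcal{Y}_\delta$ in parameter space on which the shape of the Birkhoff-sum graph near its maximum is quantitatively pinned down at every scale---in particular, the partial sums drop by a definite fraction of the oscillation amplitude $\|v^{(n)}\|\asymp (q^{(n)})^{\gamma_0}$ as one leaves the maximal floor. Ergodic recurrence to $\mathcal{Y}_\delta$ under Rauzy--Veech renormalization then forces the maximal floors $I^{(\max)}_\alpha(n)$ at successive scales to be nested for a.e.\ IET, which is exactly what resolves the compatibility-across-scales problem you correctly flag. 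The hypothesis $v_0\in E_2\setminus E_3$ enters precisely in establishing the smooth $\lambda$-dependence of the limit shape and hence $\mu(\mathcal{Y}_\delta)>0$; this is why the result remains open for vectors projecting onto the other positive Lyapunov exponents.
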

Notice that the estimate \eqref{eq:powerdecay} implies in particular (by Lemma~\ref{towersviaBS}) that the dynamical towers of $T$ are exponentially distorted (in the sense of Definition~\ref{def:expdistorted}).
We remark that the assumption $v_0\in E_2(T_0)\backslash E_3(T_0)$ plays an important role in their result: while conjecturally, an analogous result should hold for any $v_0\in E_2(T_0)$, the proof in 
\cite{MMY2} uses this assumption crucially\footnote{In the proof, in order to control Birkhoff sums, the author introduce and exploit an object called \emph{limit shapes}, which is used to describe fluctuations of Birkhoff sums. Limit shapes of a full measure set of IETs are in turn controlled exploiting  returns to a set $\mathcal{Y}_\delta$ which gives quantitative control on the location of local maxima of Birkhoff sums at various scales.  The assumption that $v_0\in E_2(T_0)\backslash E_3(T_0)$ plays an important role in the proof that $\mathcal{Y}_\delta$ has positive measure, since it provides an  explicit, smooth dependence of the limit shape on $\lambda$. This dependence is not explicit in the case of other Lypaunov exponents other than $\theta_2$, which makes the generalization not straighforward.}. In the  case of $d=4,5$, though, the  assumption $v_0\in E_2(T_0)\backslash E_3(T_0)$ 
%of Proposition~\ref{AIETwi} for the IET $T_0$ semiconjugated to $T$
 is automatically satisfied since there are no other positive exponents (see the proof of Theorem~\ref{maintheorem} in \S~\ref{sec:proofGIETrigidity} for details).  In the special case of rotational GIETs, this also provides a generalization to almost every GIET of a result by Cunha and Smania for bounded type GIETs (see \cite{CS:ren}). 
%Together with Proposition~\ref{reduction}, this result will allow to show that when $T$ is a minimal GIET with $d=4,5$ whose orbit under renormalization diverges, then it has exponentially distorted towers and therefore wandering intervals.

\subsubsection{Proof of Proposition~\ref{reduction}}\label{sec:reductionproof}
Throughout this section we assume that  $T$ is an irrational GIET which satisfies the $(RDC)$ and denote by $(n_k)_{k\in\mathbb{N}}$ the sequence of renormalization times given by the $(RDC)$ (see Definition~\ref{def:RDC}). We assume furthermore that we are in case $2$ of Theorem~\ref{shadow}, so that one can define a shadow $\vect{v}$ for $T$. Recall that to the vector ${\bf v}$ we can associate a piecewise constant function $v(x)\in\mathcal{C}(T)$ given by $v(x)=v_j$ for $x\in I^{t}_j$. % (notice that the function here is piecewise constant over the intervals of the GIET $T$.
To simplify the indexing, in analogy with the notation $\widetilde{Q}(k,k'):=Q(n_k, n_{k'})$ % and $$
 already introduced in \S~\ref{sec:acceleration}, we will use the notation
$$\widetilde{w}_k=w_{n_k}, \quad 
\widetilde{v}^{(k)}:= v^{(n_k)}, \quad \widetilde{v}^{(k)} := v^{(n_k)}, \quad  \widetilde{v}^{(k)} :=f^{(n_k)},
$$
to denote respectively the vectors $w_n$ and $v^{(n)}$ and  the special Birkhoff sums of the functions $v(x)$ and $f(x)$ along the subsequence. 

\smallskip
As we saw in \S~\ref{sec:partitionBS}, we can estimate ratios of floors in a tower estimating Birkhoff sums. The key estimate 
%of Birkhoff sums which will allow us 
to reduce the existence of exponentially distorted towers for the GIET to the one for the shadow is given by the following Lemma. Recall that we denote by $S_n f$ the Birkhoff sums of a function $f$ over $T$, see \S~\ref{sec:BS}. %\ref{sec:specialBS}

\begin{lemma}[shadowing interpolation]\label{lemmadeviations}
Let $f:= \log D T$. Then for any $\epsilon_0>0$ there exists a constant $C=C(\epsilon_0)>0$ such that, for any $n\in\mathbb{N}$, any $1\leq j\leq d$ and any floor $F_0=T^{i_0}(I^{(n)}_j)$, 
$$
\left| S_i f (x) - S_i v (x) \right| \leq C  |i|^\epsilon, \qquad \text{for\ any}\ x\in F_0, \,\text{for\ any}\ i\in \mathbb{N}.
$$
\end{lemma}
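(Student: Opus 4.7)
The strategy is to decompose both Birkhoff sums $S_i f(x)$ and $S_i v(x)$ into their associated \emph{special} Birkhoff sums via the geometric decomposition \eqref{geometricestimate} of \S~\ref{decompBS}, so that the pointwise comparison reduces to a level-by-level comparison of $f^{(n)}$ with $v^{(n)}$ on each continuity interval $I^{(n)}_j$. First, recall that since $v \in \Gamma(T)$ is piecewise constant with values $v_j$ on $I^t_j$, its special Birkhoff sum $v^{(n)}$ is itself piecewise constant on each $I^{(n)}_j$, equal to the $j$-th entry of the vector $v^{(n)} = Q(0,n)\,v$ produced by the (extended) Zorich cocycle (see \S~\ref{sec:BS}). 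On the other hand, Lemma~\ref{lemma:SBSomega} applied to $f = \log D T$ ensures that $f^{(n)}$ fluctuates on $I^{(n)}_j$ within $|N|(T)$ of the $j$-th entry $(\omega_n)_j$ of the shape log-slope vector. Consequently
\[
\|f^{(n)} - v^{(n)}\|_{L^\infty(I^{(n)})} \;\leq\; |N|(T) \;+\; \|\omega_n - v^{(n)}\|_\infty.
\]

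The key input is the refinement of the conclusion of Case 2 of Theorem~\ref{shadowing} established in Propositions~\ref{unstable}, \ref{central} and \ref{stable}: the unstable, central and stable components of $\omega_n - Q(0,n)\,v$ are each $o(\|Q(0,n)\,v\|^\epsilon)$ for every $\epsilon > 0$. Combined with the subexponential decay \eqref{O-a} of the angles between $\Gamma^{(n)}_u$, $\Gamma^{(n)}_c$ and $\Gamma^{(n)}_s$ (guaranteed by the $(RDC)$), this yields
\[
\|\omega_n - v^{(n)}\|_\infty \;=\; o\!\left(\|v^{(n)}\|^\epsilon\right) \qquad \text{for every } \epsilon > 0.
\]
Since $\|v^{(n)}\|$ grows at most at the rate of the top Lyapunov exponent $\theta_1$, and since by the $(RDC)$ the matrix norms $\|Z_n\|$ grow subexponentially in $n$, we obtain
\[
\|Z_n\| \cdot \|f^{(n)} - v^{(n)}\|_\infty \;\leq\; C_\epsilon\, e^{\epsilon n} \qquad \text{for every } \epsilon > 0.
\]

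Next, we apply the geometric decomposition \eqref{geometricestimate}: for any $x \in F_0$ and any $i \in \mathbb{N}$, taking $n_0 = n_0(x,i)$ as in \S~\ref{decompBS}, the difference of Birkhoff sums satisfies
\[
\bigl| S_i f(x) - S_i v(x) \bigr| \;\leq\; 2 \sum_{n=0}^{n_0} \|Z_n\| \cdot \|f^{(n)} - v^{(n)}\|_\infty \;\leq\; C'_\epsilon\, e^{\epsilon\, n_0},
\]
where the last inequality absorbs the geometric series arising from summing $e^{\epsilon n}$. Finally, to convert $n_0$ into a power of $i$, we use that by construction $i$ is bounded below by the smallest height of a Rohlin tower at level $n_0$, and by Oseledets genericity these heights grow at least like $e^{(\theta_g - \epsilon) n_0}$, where $\theta_g > 0$ is the smallest positive Lyapunov exponent of the Zorich cocycle. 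Hence $n_0 \leq (\log i)/(\theta_g - \epsilon)$, so that $e^{\epsilon n_0} \leq i^{\epsilon/(\theta_g - \epsilon)}$, and choosing $\epsilon$ sufficiently small in terms of $\epsilon_0$ yields the claim.

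The main technical obstacle is the second step: propagating the $o(\|v^{(n)}\|^\epsilon)$ bound on each separate component of $\omega_n - v^{(n)}$ into a bound on the full norm, which requires carefully combining the estimates from Propositions~\ref{unstable}--\ref{stable} with the angle control \eqref{O-a}. Once this is in place the remaining arguments are essentially bookkeeping: summing a subexponentially growing series against a subexponentially decaying ratio, and converting the renormalization level $n_0$ into $\log i$ via the lower bound on tower heights.
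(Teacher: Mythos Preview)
Your approach is essentially the same as the paper's: geometric decomposition of Birkhoff sums into special Birkhoff sums, the pointwise comparison $\|f^{(n)} - v^{(n)}\|_\infty \leq |N|(T) + \|\omega_n - v^{(n)}\|$ via Lemma~\ref{lemma:SBSomega}, the shadowing bound $\|\omega_n - v^{(n)}\| = o(\|v^{(n)}\|^\epsilon)$, subexponential growth of the cocycle matrices, and finally converting the renormalization level into $\log|i|$ via a lower bound on tower heights. The only structural difference is that the paper decomposes along the \emph{accelerated} sequence $(n_k)_k$ of good return times (so the sum runs over $k$ up to $k_i$, with matrices $\widetilde{Z}_k$), whereas you decompose along all Zorich times; both work, and your more explicit appeal to Propositions~\ref{unstable}--\ref{stable} and the angle condition \eqref{O-a} to reassemble the component-wise bounds into a bound on $\|\omega_n - v^{(n)}\|$ is in fact cleaner than the paper's terse citation of Theorem~\ref{shadowing}.

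There is however one genuine gap in your final step. You write that ``by Oseledets genericity these heights grow at least like $e^{(\theta_g - \epsilon) n_0}$'', but Oseledets controls the growth of $\|Z^{(n)} v\|$, not of the \emph{minimum entry} $\min_j q^{(n)}_j$ of the height vector; without further input the minimum entry can lag arbitrarily far behind the norm. The paper's proof handles this by working along the accelerated sequence and using that the $(n_k)$ are $p$-good return times: since $Q(n_k, n_k + p) = A$ is a fixed \emph{positive} matrix, one has
\[
\min_{1\leq j\leq d} q^{(n_{k}+p)}_j \;\geq\; \max_{1\leq j\leq d} q^{(n_{k})}_j \;\geq\; \tfrac{1}{d}\,\|Q(0,n_k)\|,
\]
and the right-hand side grows like $e^{\theta_1' n_k}$ for any $\theta_1' < \theta_1$ (the \emph{top} exponent, not $\theta_g$). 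Combined with the linear growth of $(n_k)$ this gives $|i| \geq c\, e^{\theta_1' k_i}$. Your argument can be repaired in the same way, but the positivity at good returns is not optional here --- it is precisely what converts a norm estimate into an estimate on the smallest tower height.
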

\noindent Before giving the proof, we remark that, when  $S_n \vect{v}(x)$ is a special Birkhoff sum, i.e.~$x\in I^{(m)}_j$ for some $m$ and $n=q^{(m)}_j$, one has that $S_n v(x)= v^{(m)}_j$ (see~\ref{sec:SBS})
and furthemore, from the definition of $\vect{\omega}_m$, one can show that 
%the definition of   recalling that $f=\log D T$ and using the def$\widetilde{\omega}_{(m)}$ is the log-average slope,  
there exists an $\overline{x}$  such that $S_n f(\overline{x})=(\vect{\omega}_m )_j$. Thus, in this special case, the Lemma follows from the \emph{shadowing} given by Theorem~\ref{shadowing}, which gives that $\Vert v^{(m)}- w_m \Vert\leq \Vert v^{(m)} \Vert^\epsilon$. The general case will  be obtained by \emph{interpolation} (from which the name \emph{shadowing interpolation}), exploiting the geometric decomposition of Birkhoff sums into special Birkhoff sums described in \S~\ref{decompBS}.

\begin{proof}[Proof of Lemma~\ref{lemmadeviations}] 
Fix any $i\in\mathbb{Z}$. We will consider the case $i\geq 0$. The case $i<0$ can be treated analogously replacing Birkhoff sums for $T$ with Birkhoff sums for  $T^{-1}$. Let $k_i\in\mathbb{N}$ be the largest $k\in \mathbb{N}$ such that the orbit  segment $\{ x, \dots T^{i}(x)\}$ intersects $I^{(n_{k})}$ \emph{twice}. Then, by the geometric decomposition of Birkhoff sums in \S~\ref{decompBS}  (see \eqref{geometricestimate}), we can estimate, for any $x\in F_0$,
\begin{equation}\label{interpolationbound}
\left| S_i f (x) - S_i v (x) \right|\leq 2 \sum_{k=0}^{k_i} \Vert \widetilde{Z}_k \Vert \, \Vert \widetilde{f}^{(k)}(x)- \widetilde{v}^{(k)}(x) \Vert_{\infty}.
\end{equation}
Notice that here $v\in\mathcal{C}(T)$ is piecewise constant, the special Birkhoff sums %$\widetilde{S}^{(k)}v (x)=$ 
$\widetilde{v}^{(k)}(x)$ (which are piecewise constant on the continuity intervals of $I^{(n)}$) 
%(where as usual $\widetilde{v}^{(k)}(x)$ is the piecewise constant function on $I^{(n)}$ 
can be identified with the vector $\widetilde{v}^{(k)}=(\widetilde{v}^{(k)}_j)_{j}\in\mathbb{R}^d$. 
To estimate the  sup norm of the difference of special Birkhoff sums $\widetilde{f}^{(k)} (x)- \widetilde{v}^{(k)}$, when $x\in I^{(n)}_j$ we add and subtract the constant $(\widetilde{\omega}_k)_j$, i.e.~the $j^{th}$ entry of the vector $\widetilde{\omega}_k=\vect{\omega}_{n_k}$. 

For any $n\in\mathbb{R}$ and any $1\leq j\leq d$,  by mean value theorem and by Remark~\ref{rk:logvectorRn} (see in particular equation~\eqref{eq:logvectorTn}) there exists a point $x^{(n)}_j$ in $I^{(n)}_j$ such that 
$(\rho_n)_j=  D( T^{q^{(n)}_j}  )(x^{(n)}_j)$. 
Thus,  recalling that   $\vect{\omega}_{n}=\log \rho_n$ (see Definition~\ref{def:rhoomega}) and $f:=\log D T$, % in \S~\ref{sec:averageslope}) 
 using the chain rule and recalling the definition of special Birkhoff sums (see \S~\ref{sec:SBS}) we get that
$$
(\vect{\omega}_n)_j:= \log(\rho_n)_j = \log \big( D T^{q^{(n)}_j}\big)\big(x^{(n)}_j\big) =S_{q^{(n)}_j}f\big(x^{(n)}_j\big) =f^{(n)} \big(x^{(n)}_j\big) . 
$$
 Moreover, by 
% Recall that the definition of special Birkhoff sums $f^{(n)}$ of a function $f \in \mathcal{C}(T)$ were given in \S~\ref{sec:specialBS}.
another simple consequence of the classical distorsion bounds (Lemma~\ref{bound1}), is that special Birkhoff sums of each continuity interval have bounded fluctuations, namely 
%Let $T$ be an irrational GIET. Given $f\in \mathcal{C}(T)$, 
there exists $C_T>0$ such that for any $n\in\mathbb{N}$ %and any $1\leq j\leq d$,   
$$
\left|f^{(n)} (x) -({\omega}_n)_j \right| =\left|f^{(n)}(x)-f^{(n)} \big(x^{(n)}_j\big) \right| \leq C_T, \qquad \textrm{for\ all}\ x \in I^{(n)}_j, \quad 1\leq j\leq d.
$$
%Thus, if for each $0\leq i\leq k(x)-1$, we let $ j(i)\in \{1,\dots, d\}$ be the index such that $(\mathcal{R}^{n}T)^i(x)$ belongs the interval $I^{(n)}_{j(i)}$,  
Thus,  using this estimate for a time $n$ of the form $n_k$ and the property of the shadow given by the conclusion of Theorem~\ref{shadowing}, we get that, for any $\varepsilon>0$, for some $c>0$, 
\begin{eqnarray}\nonumber
\Vert \widetilde{f}^{(k)} (x)- \widetilde{v}^{(k)}\Vert_{\infty} & \leq& \sup_{1\leq j\leq d} \sup_{x\in I^{(n)}_j}| {f}^{(n_k)} (x)- ({\omega}_{n_k})_j| + \Vert \widetilde{\vect{\omega}_{n}}- \widetilde{v}^{(k)}\Vert \\ &\leq  & C_T+ \Vert {\vect{\omega}_{n_k}}- {v}^{(n_k)}\Vert \leq  C_T+ c  \Vert v^{(n_k)}\Vert^\epsilon  \leq  C_T' \Vert v^{(n_k)}\Vert^\epsilon \nonumber
\end{eqnarray}
for some $C_T'>0$. 
Inserting this estimate in \eqref{interpolationbound} and using that, by assumption of the $(RDC)$, there exists $C_\epsilon\geq 0$ such that  $\Vert \widetilde{Z}_k(T)\Vert \leq C_\epsilon e^{\epsilon k}$ for any $k$ and that, for any vector in $\mathbb{R}^n$ and hence in particular for $\vect{v}$ we have that $\Vert v^{(n)} \Vert\leq C_2 e^{\theta n} $ for every $n\in\mathbb{N}$  where  $\theta>0$ is any exponent $\theta>\theta_1$ (and actually, for $\vect{v}$, one can actually choose any $\theta>\theta_2$) and $n_k$ grow linearly (see Definition~\ref{def:RDC}), there exists $C'>0$ such that 
\begin{align*}
\sup_{x\in F_0}\left| S_i f (x) - S_i v (x) \right|& \leq 2C_T' \sum_{k=0}^{k_i} \Vert \widetilde{Z}_k \Vert \Vert v^{(n_k)}\Vert^\epsilon \leq C' \sum_{k=0}^{k_i} e^{k\epsilon} e^{\epsilon 2 \theta k } \\ & = C' e^{\epsilon k_i(1+2\theta)} \sum_{k=0}^{k_i} e^{-\epsilon(1+ 2\theta) (k_i-k) } \leq  C' e^{\epsilon k_i(1+2\theta)}    \sum_{j=0}^{\infty}e^{-\epsilon (1+2\theta)j}<C'' (e^{k_i})^{\epsilon_1}, 
\end{align*}
for some $C''>0$ independent on $k_i$ and $\epsilon_1:=\epsilon (1+2\theta)$. To conclude we will now show that $|i|\geq c e^{\theta_1' k_i}$ for some $c>0$ and any $\theta_1'<\theta_1$, so that the above estimate can be written in the desired form $C_0 |i|^{\epsilon_0}$ for some $\epsilon_0$ going to zero as $\epsilon$ goes to zero and $C_0$ depending on $\epsilon$ (and hence $\epsilon_0$).

Since by definition of $n_i$ the orbit  segment $\{ x, \dots T^{i}(x)\}$ (or, respectively, in the case  $i<0$, the orbit segment $\{ x, T^{-1}(x) ,\dots T^{-i}(x)\}$) intersects $I^{(n_{k_i})}$ at least twice, $|i|$ is greater than the height of one Rohlin tower over $I^{(n_{k_1})}$. Since the sequence $(n_k)_{k\in\mathbb{N}}$ is a sequence of $p$-positive times (see Definition~\ref{def:RDC} and Definition~\ref{def:goodreturns}), the matrices ${Q}(n_k,n_k+p)$ are  positive matrices,  it is now a standard argument %(see for example the proof of the Lemma in \S~3.4.2 of \cite{MMY2}) the  
to see that,  since each $Z_{n}$ increase subexponentially, for any $\theta_1'<\theta_1$,
\begin{align*}
|i| \geq \min_{1\leq j\leq d} q^{(n_{k_i})}_j\geq {\max_{1\leq j\leq d} q^{(n_{k_i}-p)}_j} &\geq\frac{\Vert {Q}({0,n_{k_{i}}-p)}\Vert}{d} \\ & \geq \frac{\Vert {Q}(0,n_{k_{i}})\Vert }{d {\Vert  {Q}(n_{k_i}-p,n_{k_i})\Vert }} \geq \frac{\Vert {Z}^{(n_{k_i})} \Vert }{d \Vert {Z}_{n_{k_i-p}}\Vert \Vert {Z}_{n_{k_i-p+1}}\Vert \cdots \Vert {Z}_{n_{k_i}-1}  \Vert }
\geq c_0 e^{ \theta_1'n_{k_i}}
\end{align*}
for some $c_0>0$. Since $\{n_k\}_{k\in\mathbb{N}}$ grow linearly, this also shows, as claimed, that $|i|\geq c e^{ \theta_1'{k_i} }$ and hence concludes the proof.
\end{proof}

We can now prove Proposition~\ref{reduction}.  We isolate first a remark which will be used also later.
\begin{remark}\label{BSpc} Birkhoff sums of \emph{piecewise-constant functions} in $\mathcal{C}(T)$ transform well under semi-conjugacy, in the following sense.
Assume that $T_1$ and $T_2$ are two semi-conjugated GIET, i.e.~there exists a surjective $h:[0,1]\to [0,1]$ such that $h\circ T_2=T_1\circ h$. Let $v$
 Given a vector $v$, if for $i=1,2$, $v_i(x)$ denotes  the piecewise constant functions $v_i(x)\in \mathcal{C}(T_i)$ associated to $v$ and $S_i v_i$ the Birkhoff sum of $v_i$ under $T_i$, we claim that we have that
$$
S^2_n v_2(x) = S^1_n v_1 (h(x))%=  S^1_n v_1 (x)
\qquad \text{for\ all}\ n\in \mathbb{N}
$$
and for all $x$ which belong to a continuity interval for $S^2_n v_2$. The equality follows indeed from conjugacy relation $h\circ T^k_2=T^k_1\circ h$ for $k\in\mathbb{N}$ and the observation that, since $v_2$ is piecewise constant on continuity intervals, $v_2(x)=v_1(h(x))$. %The second equality follows since then if $x$ is in a continuity interval for $v_2$, $h(x)$ is in a continuity interval for $v_1$ 
\end{remark}
\begin{proof}[Proof of Proposition~\ref{reduction}.]
Let us call $\overline{T}$ any AIET with the same rotation number of $T$ and log-slope vector given by the  shadow $v$. Since the GIET $T$ and the AIET $\overline{T}$ have the same irrational rotation number, they are both semi-conjugated  to a common standard IET $T_0$, via semi-conjugacies that we will call respectively $h$ and $\overline{h}$.  
In particular, since semi-conjugacies map (floors of) Rohlin towers to (floors of) Rohlin towers, the  floors of Rohlin towers $\mathcal{P}^j_n$ for $T$ can be put in one to one correspondence  with corresponding floors of the Rohlin towers for $\overline{T}$, that we will denote by $\overline{\mathcal{P}}^j_n$. 

We will now show that if $\overline{T} $ has exponentially distorted towers, also $T$ has them. 
Fix $n\in\mathbb{N}$ and $1\leq j\leq d$ and let $\overline{F}_0$ be the floor given by Definition~\ref{def:expdistorted} of exponentially distorted towers for $\overline{T}$ and ${F}_0$ %= T^{k_0}I^{(n)}_j$ 
the corresponding floor for $T$. We want to show that ${F}_0$ satisfies the estimates in Definition~\ref{def:expdistorted}. Consider therefore the floors $F_i:=T^i F_0$ which belong to the same Rohlin tower ${\mathcal{P}}^j_n$ and correspondigly the floors $\overline{F}_i:=\overline{T}^i \overline{F}_0$ in $\overline{\mathcal{P}}^j_n$.

\smallskip
Since $\overline{T}$ is an AIET with log-slopes vector $v$, $\log D\overline{T}(x)$ coincides with the piecewise constant function $\overline{v}(x)$ in $\mathcal{C}(\overline{T})$ given by $v$. It follows also that the Birkhoff sums $\overline{S}_i\overline{v}(x)$ of $\overline{v}$ under $\overline{T}$ are constant on $F_0$. Therefore, exploiting Remark~\ref{BSpc} twice (for the semi-conjugacies $h$ and $\overline{h}$), we have that  
$$
\overline{S}_i \overline{v}(\overline{x})= {S}_i {v}(x), \qquad \text{for\ every}\ x\in F_0, \ \text{and\ every}\ \overline{x}\in \overline{F}_0.
$$ 
% by Remark~\ref{BSpc} (special) Birkhoff sums of $\log D\overline{T}$ under $\overline{T}$ correspond via the semi-conjugacy to the special Birkhoff sums of the function $v_0\in \mathcal{C}(T_0)$ over $T_0$. Thus,  
%are given by the special Birkhoff sums of the function $v(x)\in \mathcal{C}(T_0)$ given as usual by $v(x)=v_j$ if $x\in I^{t}_j$ where $I^t_j$ are continuity intervals for $T_0$.  and $v(x)$ correspond to the function $\log D \overline{T}$ under this conjugacy,
Thus,  by Lemma~\ref{towersviaBS} we have that, for every $i$ such that $\overline{F}_i$ is a floor of the Rohlin tower $\overline{\mathcal{P}}^j_n$,
%$\overline{F}_i=\overline{T}^i(\overline{F}_0)$, 
$$
%%%\frac{|\overline{F}_i|}
\frac{|\overline{F}_i|}{|\overline{F}_0|} %=\frac{|\overline{F}_0|}{|\overline{T}^i\overline{F}_0|}
\geq \frac{1}{C_{\overline{T}}} \, \exp({S_{i}  v (x)}) , \qquad \text{for\ all}\ x\in F_0.%=D_{\overline{T}} \, \exp({S^0_{i} \log v_0 (h(x))}).
$$
Thus, together with another application of Lemma~\ref{towersviaBS}, this time to the floors of $\mathcal{P}^j_n$, we get
$$
\frac{|{F}_i|}{|F_0|} \frac{|\overline{F}_0|}{|\overline{F}_i|} \leq \frac{C_T}{C_{\overline{T}}}  \exp( S_i f (x)-{S_{i}  v (x)}),\qquad \text{where}\ f=\log DT. 
$$
%where $f=\log DT$ and $c:=C_{T}/ C_{\overline{T}}$. 
Thus, applying Lemma~\ref{lemmadeviations} and then the exponentially distorted estimates given by Definition~\ref{def:expdistorted} for $\overline{T}$, we get that
$$
\frac{|T^i{F}_0|}{|F_0|}=\frac{|{F}_i|}{|F_0|}\leq \frac{C_T}{C_{\overline{T}}}   \exp( C_0 |i|^\epsilon) \frac{|\overline{F}_i|}{|\overline{F}_0|} \leq  \frac{C_T \, C}{C_{\overline{T}}} \exp (-c |i|^\gamma +C_0 |i|^\epsilon )\leq C' \exp(-c' |i|^\gamma)
$$
for some $C', c'>0$, thus showing that also $T$ has exponentially distorted towers. It then follows by Remark~\ref{rk:expdist} and Lemma~\ref{wilemma} that $T$ has wandering intervals.
 \end{proof}

\subsection{Final arguments in the proof of Theorem~\ref{maintheorem}}\label{sec:proofGIETrigidity}
We have now all the ingredients to finalize the proof of Theorem~\ref{maintheorem} according to the Outline shown in the initial subsection of this section.

\begin{proof}[Proof of Theorem~\ref{maintheorem}]
Let $T$ be an irrational IET in $\mathcal{X}^3_4\cup \mathcal{X}^3_5$ and assume that its rotation number $\gamma(T)$ belong to the full measure set obtained intersecting the $(RDC)$ with the full measure condition on rotation numbers in Proposition~\ref{AIETwi}.
Assume furthermore $T$ is \emph{conjugated} to  an IET $T_0$ (which hence has rotation number $\gamma$). This implies in particular that $T$ is \emph{minimal}.
\smallskip

\noindent {\it Strategy.} Let $( n_k)_{k\in\mathbb{N}}$ be the sequence given by the $(RDC)$ and consider the shape log-slope vectors $\widetilde{\omega}_k:=\omega(\mathcal{Z}^{n_k}(T))$.   
By Theorem~\ref{shadowing}, either $(\widetilde{w}_{k})_{k\in\mathbb{N}}$ is bounded (i.e.~we are in Case 1), or we are in Case 2 and it shadows a vector $\vect{v}$ in the unstable space $E^u$ (for the Oseledets regular extension of $T_0$, see Definition~\ref{def:Oseledetsextension}). We will show now that by the results in this section minimality is incompatible with Case $2$, so we are forced to be in Case $1$ and in this case will conclude that the conjugacy is smooth.
 
\medskip
\noindent {\it  Case 2 implies the existence of wandering intervals.} Let us assume first that we are in Case $2$ and show that this contradicts minimality. Let $v$ be the shadow given by Theorem~\ref{shadowing} in Case $2$ and consider the Oseledets filtration for $T_0$ given in \eqref{positiveOseledets}. Since we are assuming that $d=4$ or $d=5$ and  that  $T_0$ is Oseledets generic (which is part of the $(RDU)$), we have that $g=2$ and there are exactly two simple positive exponents $\theta_1>\theta_2>0$. Thus $E_3(T_0)=E^{cs}(T_0)$.  
Since as part of the conclusion of Case $2$ of Theorem~\ref{shadowing} we also know that $v\in E^u$ (unstable space for the extension of $T_0$ used in the Definition~\ref{def:RDC} of the RDC), we know that $v\notin E^{cs}(T_0)=E_3(T_0)$, so $v\in E_1(T_0)\backslash E_3(T_0)$. 

\smallskip
We will now show that  $v\notin E_1(T_0)\backslash E_2(T_0)$, so that we can conclude that $v\in E_2(T_0)\backslash E_3(T_0)$. 
 Let us argue by contradiction: if  $v$ were in $ E_1(T)\backslash E_2(T_0)$, %and let $v_0(x)$ be the piecewise constant function in $\mathcal{C}(T_0)$ corresponding to  $v$, that we recall is defined by $v_0(x)=v_j$ if $x\in (I^{t}_0)_j$. 
it had a projection  on the largest Oseledets exponent (which has a positive Oseledets eigenvector). In this case  there would exist  a time $n\in\mathbb{N}$ for which the entries of %the special Birkhoff sums ${v_0^{(n)}}$, which are given by the entries of the vector
 $v^{(n)}=Z^{(n)}{v}$ were all positive (or all negative, in which case we can replace $\vect{v}$ with -$\vect{v}$ and reduce to the previous case) and as large as we like.  By the properties of the shadow (see the conclusion of Theorem~\ref{shadowing} in Case $2$), this would imply that the same is true for $\vect{\omega}_n$. This would in turn imply that $\vect{\rho}_n=\exp({\vect{\omega}_n})$ has all entries  (strictly) larger than $1$,  which is impossible since by definition the entries of $\rho_n$ are the shape log-slopes of $\mathcal{R}^n(T)$ (see \eqref{eq:renormalizedmap}) and $\mathcal{R}^n(T)$ cannot be either everywhere expanding.
Thus, we conclude that $v\notin E_1(T_0) \backslash E_2(T_0)$. % and therefore, with the previous conclusion, that  $v\in E_2(T_0)\backslash E_3(T_0)$. 

\smallskip
Since we now have that $ \textbf{v}\in E_2(T_0)\backslash E_3(T_0)$ and we are assuming that $\gamma$ belongs to the full measure set of rotation numbers under which Proposition~\ref{AIETwi} holds, the estimates \eqref{eq:powerdecay} of Proposition~\ref{AIETwi} hold for the Birkhoff sums $S_iv_0$. If we now transport these estimates through the conjugacy between $T$ and $T_0$, since the conjugacy maps $v_0(x)\in \mathcal{C}(T_0)$ to the function $v(x)\in \mathcal{C}(T)$ given as usual by $v(x)=v_j$ if $x\in I^t_j$, we also have that the Birkhoff sums $S_iv $ of $v(x)$ under $T$ satisfy the same estimates, i.e.~for every $n\in\mathbb{N}$ and $1\leq j\leq d$ there exists a floor $F_0$ of the Rohlin tower $\mathcal{P}^j_n$ such that 
$$
S^0_i v_0(x_0) \leq C - |i|^{\gamma_0}, \qquad \mathrm{for \ every}\ x\in F_0, \   i\in\mathbb{N}. 
$$
This, by Lemma~\ref{towersviaBS}, shows that the AIET with rotation number $\gamma$ and log-slope $v$ (namely the affine shadow of $T$) has exponentially distorted towers and wandering intervals. Proposition~\ref{reduction} therefore implies that also $T$ has wandering intervals, which contradicts minimality. We conclude that Case $2$ cannot happen when $T$ is minimal.

\medskip
\noindent {\it  Case 1 implies rigidity.} We now  assume to be in Case 1. Since by assumption we also have that $\mathcal{B}(T)=0$, by Theorem~\ref{convergencethm} there is exponential convergence of renormalization, namely $ d_{\mathcal{C}^1}(\mathcal{Z}^{n}(T), \mathrm{IET}) \leq K_1  \alpha_1^n$. Thus, we can apply Proposition~\ref{conjugacy} and conclude that $T$ is $\mathcal{C}^1$-conjugate to an IET. 

%\footnote{{\color{black} I think there was a small issue: as formulated, Proposition~\ref{conjugacy} gives the existence of one such $T_0$, not that any one is smoothly conjugated... I thereofre added here u.e. to have uniqueness, otherwise we should phrase the thm differently}} 
Notice now that  IETs with rotation number satisfying the $(RDC)$ are uniquely ergodic (since by construction the times $(n_{k})_k$ are good return times, so they correspond to returns to a compact subset in the space of IETs and this implies ergodicity by the seminal results by Veech, see \cite{Ve:gau} or \cite{Yoc:CF}) and therefore there is a unique IET  with rotation number $\gamma$. Thus, we conclude that the IET has to be $T_0$  and that the conjugacy between $T$ and $T_0$ is $\mathcal{C}^1$.
\end{proof}

\subsection{A priori bounds in genus two.}\label{pf:thmD}
We can now prove  Theorem D in the introduction, namely a priori bounds in genus two,  which follows from Theorem~\ref{shadowing} and Proposition~\ref{AIETwi} using the same reasoning than in the proof of  Theorem~\ref{maintheorem}.
\begin{proof}[Proof of Theorem D]
Consider the same full measure set of rotation numbers defined at the beginning of the proof of Theorem~\ref{maintheorem}. If Case $2$ of  Theorem~\ref{shadowing} holds, then, as in  the proof of Theorem~\ref{maintheorem},  we get a contradiction to minimality. Thus, Case $1$ of Theorem~\ref{maintheorem} holds. The a priori bounds then follow by Theorem~\ref{apriori}. 
\end{proof}

%\begin{remark} Notice that the same proof combined with Proposition~\ref{AIETwi} also gives a rigidity result for a class of IETs with $d>5$: for a full measure set of rotation numbers,  if $T$ is (topologically) conjugate to the IET $T_0$ and furthermore its shadow  $v$ (which exists since Part $1$ of the dynamical dichotomy is excluded by the existence of the conjugacy, as shown in the proof of Theorem~\ref{maintheorem})  belongs to  $E_2(T_0)\backslash E_1(T_0)$, then the conjugacy is indeed  $\mathcal{C}^1$. %This is a condition which in some sense one expects to be generic on the space of GIETs.
%Notice that combining Lemma~\ref{reduction} and Proposition~\ref{AIETwi}, one prove the existence of wandering intervals for a large class of GIETs $T$ of any number of intervals, namely: if $T$ be as in Proposition~\ref{reduction} and if the shadow $v$ of $T$ belongs to $E_2(T_0)\backslash E_1(T_0)$, where $T_0$ is the semiconjugated to $T$, then $T$ has wandering intervals. Therefore, since 
% $T$ is rigid (so any topological conjugacy is indeed $\mathcal{C}^1$. 
%\end{remark}

\section{Rigidity of foliations.}
\label{foliations}
In this section we translate our rigidity result on GIETs in the language of  foliations on surfaces of genus two. We first give some preliminary definitions on foliations (see \S~\ref{sec:foliationsprelim}). 
{We then deduce Theorem A from Theorem B in \S~\ref{sec:foliationsmain} (see also Proposition \ref{thm:rigidityfoliation}).}
% in \S~\ref{sec:foliationmain}.

\subsection{Preliminaries on singular foliations and holonomy}\label{sec:foliationsprelim}
This section we define regularity of  singular foliations and holonomy around a singular point. The section follows partly \cite{Arnoux} and \cite{Levitt1,Levitt2,Levitt3}.
\subsubsection{Foliations singularities and regularity}\label{sec:leavesregularity}
Throughout this section, $S_g$ is closed orientable smooth surface {and all foliations are \emph{orientable}.}  We consider foliations on $S_g$ with a finite number of singularities, and we further ask that those singularities are of \emph{saddle type}, as in Figure~\ref{saddle} below. Formally:
\begin{definition}[saddle-type singularity]\label{def:saddle}
A singular point $p$ of a foliation is of \emph{saddle-type} if, locally, in a neighbourhood of $p$, there are charts for which the  {\it topological} model of the foliation is given, equivalently, by either: 
\begin{itemize}
\item[(i)] the level sets in $\R^2$ of the function $(x,y) \longmapsto xy$ around $0$;
\item[(ii)] the integral curves of the vector field $y \, \partial x + x\, \partial y$.
\end{itemize}
\end{definition}
\noindent {More generally, one can allow \emph{degenerate} saddles,\footnote{{Degenerate saddles are also called  \emph{multi}-saddles, since they are saddles with more prongs. Since we are considering only orientable foliations, the number of prongs needs always be even.}} which are defined by level sets of a smooth function with a zero of order two or higher\footnote{{Condition $(i)$ describes the level sets of the foliation given by level sets of the function $(x,y)\mapsto \mathrm{Im} (z^2) :=\mathrm{Im} ((x+iy)^2)$. More generally, one can for example consider the foliation whose leaves are level sets for  $\mathrm{Im} (z^n)$ (or $\mathrm{Re} (z^n)$) for some $n\geq 2$.}}.} 
Let us denote by $\mathcal{F}$ the singular foliation on $S_g$ and by $Sing_\mathcal{F} \subset S_g$ be the finite set of (saddle-like) singular points of $\mathcal{F}$. We define now what it means for $\mathcal{F}$ to be of class $\mathcal{C}^r$. This definition is due to Levitt \cite{Levitt3}.

\begin{definition}[Foliation of class $\mathcal{C}^r$]\label{def:Crfol}
We say that the foliation $\mathcal{F}$ is of class $\mathcal{C}^r$ iff: 
\begin{enumerate}
\item[(r1)] the  leaves of $\mathcal{F}$ in $S_g \setminus Sing_\mathcal{F}$ are locally embedded $\mathcal{C}^r$-curves;
\item[(r2)] for any two smooth open transverse arcs $I$ and $J$ which are joined by leaves of $\mathcal{F}$, the holonomy map $I \longrightarrow J$ is a $\mathcal{C}^r$ diffeomorphism on its image and extends to the boundary of $I$ to a $\mathcal{C}^r$-diffeomorphism.
\end{enumerate}
\end{definition}
\noindent The  subtlety covered by this definition is the following: it can happen that a foliation of $S_g$, when restricted to $S_g \setminus Sing_\mathcal{F}$, {\color{black}is} as regular as desired in the standard sense, but when considering open transverse arc based at a singular point, holonomies from this arc have a critical point at the singularity, or be much less well-behaved altogether. The above definition excludes such cases. 

 {We remark that, as special case,  foliations defined by $\mathcal{C}^r$ vector fields with {non-degenerate critical points} (which equivalently implies   exactly that the leaves in a neighbourhood of each critical points are locally defined by $\mathcal{C}^r$ \emph{Morse} functions)} are of class $\mathcal{C}^{r}$  in the above sense. It is however not the case that every $\mathcal{C}^r$  (or even \emph{smooth}) vector field  gives rise to a $\mathcal{C}^r$ (or smooth) foliation, see Appendix~\ref{saddles}.

{Moreover,} it is also not always the case that the differentiable structure of a  $\mathcal{C}^r$-foliation {\emph{near}  a singularity $p$ (i.e.~in a \emph{punctured} neighbourhood of $p$)} is defined by a $\mathcal{C}^r$-Morse function. The obstruction for a foliation to be $\mathcal{C}^r$-smooth {in the sense of Definition~\ref{def:Crfol}, i.e.~for the $\mathcal{C}^r$-smooth structure to extend \emph{at} the singularity,} can be encoded through \emph{holonomies around singular points}, as we explain in the next subsection.}

\subsubsection{Holonomy around singular points}\label{sec:holonomies}  We describe in this paragraph how to construct an invariant of a saddle $p$ of a $\mathcal{C}^r$-foliation which essentially encodes the obstruction for $\mathcal{F}$ to be defined as the level sets of a regular function. {We give the definition of simple saddles ($4$-prongs), but  construction straightforwardly generalises to the case of saddles with an arbitrary \emph{even} number of prongs.}

\smallskip Consider a 4-pronged saddle $p$ of a $\mathcal{C}^r$-foliation as in Figure~\ref{saddle} below, together with smooth four transverse arcs, each of which intersecting one of the four separatrices {and whose endpoints pairwise belong to the same leave, as shown in Figure~\ref{saddle}.  
We call these arcs {$I_d$}, $I_r$, {$I_u$} and   $I_l$  (for  {down}, right,  { up} and left respectively) and, correspondingly, we  call $v_d, v_r, v_u$ and $v_l $ the point of intersections of these intervals with  the separatrices emanating from $p$. } {\color{black} We  identify each of these arcs with the interval  $(-\epsilon, \epsilon)$ and assume that $0$ is the point of intersection with the separatrix {(i.e.~$v_d, v_r, v_u$ and $v_l $ respectively.} }}
 { By definition of foliation, there exists a  function $f$ in a neighbourhood $\mathcal{U}$ of $p$ which contains $I_l \cup I_u \cup I_r\cup I_d$  such that: 
\begin{itemize}
\item[(i)] $f$ is a \emph{continuous} function on the whole $\mathcal{U}$ and is equal to $0$ on the separatrices;
\item[(ii)] $f$ is  $\mathcal{C}^r$ on $\mathcal{U}\, \backslash \{ $separatrices$\}$, i.e. on the complement in $\mathcal{U}$ of the separatrices containing $p$.
\end{itemize}
Let us call \emph{quadrants} the connected components of $\mathcal{U}\, \backslash \{ $separatrices$\}$.  
Starting from the lower right quadrant (which contains the right endpoint of $I_d$) and modifying $f$ in successive adjacent \emph{quadrants}, one can assume, without loss of generality, that 
\begin{itemize}
\item[(iii)] $f$ is a $\mathcal{C}^r$-function also on the separatrices through $v_r, v_u$ and $v_l$, so on all of $\mathcal{U} $, possibly with the exception of the separatrix through $v_d$. %ADD Red in figure
\end{itemize}
We stress that in general $f$ is \emph{not} $\mathcal{C}^r$ in the whole neighbourhood $\mathcal{U}$ and we \emph{cannot} modify $f$ any further (since it was already chosen on the initial quadrant).

\begin{figure}[!h]
\label{saddle}
	\begin{center}
		\def\svgwidth{ 0.4 \columnwidth}
			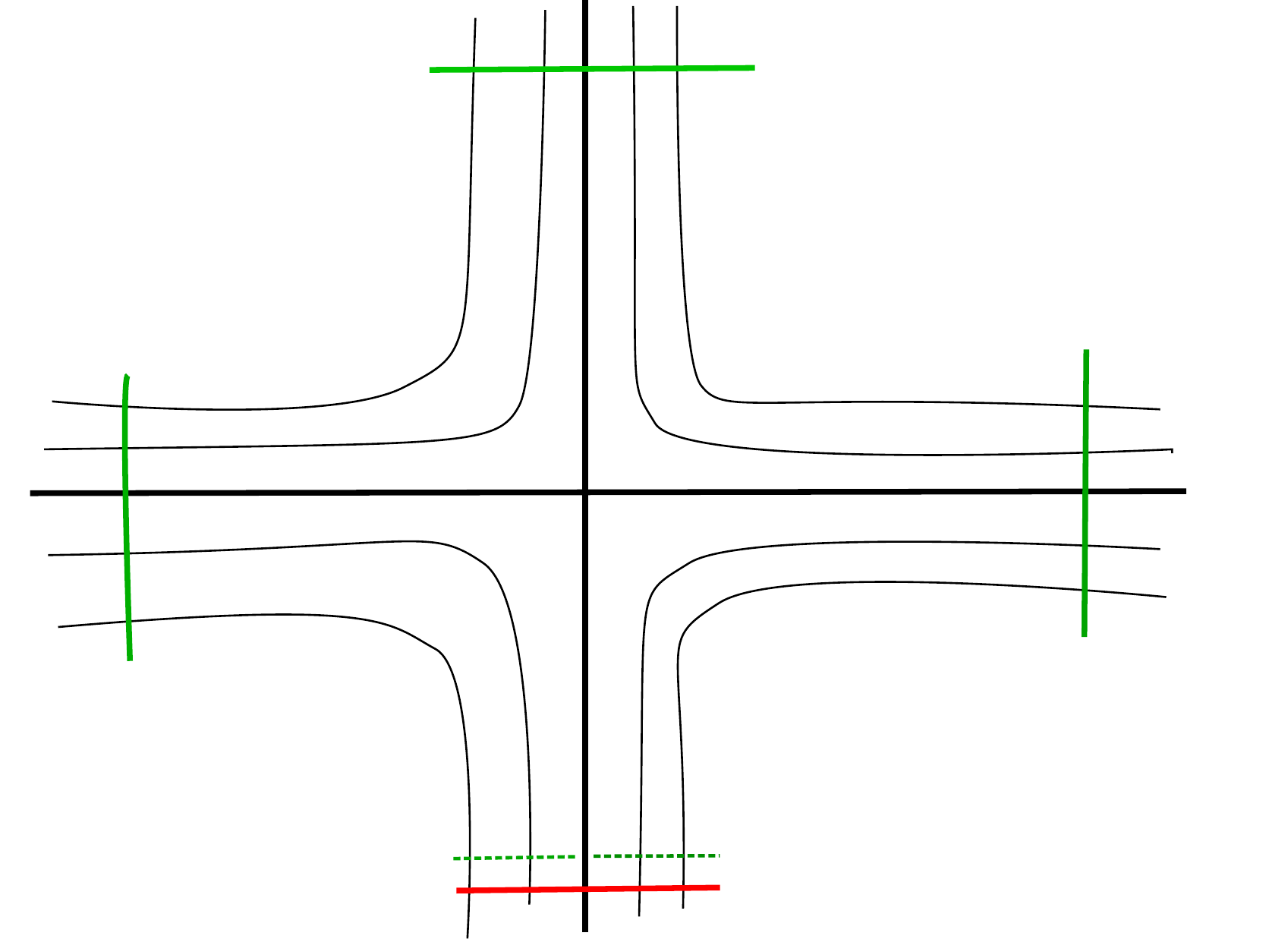
	\end{center}
	\caption{A saddle}\label{saddle}
\end{figure}
\noindent  Recalling that $I_d$ is identified with $(-\epsilon, \epsilon)$ and $0$ is the coordinate of the point $v_d$ intersecting the separatrix, by construction, $f$ \emph{restricted} to $I_d$ defines a continuous function $f : (-\epsilon, \epsilon)\to \mathbb{R}$  such that: 
\begin{itemize}
\item[(i)] $f$ is $\mathcal{C}^r$ on $(-\epsilon, \epsilon) \setminus \{0\}$;
\item[(ii)] $f$ extends to a $\mathcal{C}^r$-function to both $[-\epsilon, 0]$ and $[0, \epsilon]$.
\end{itemize}
\noindent We define the $\mathcal{C}^r$-\textit{honolomy} of $\mathcal{F}$ around this saddle as the $r$-jet %ADD
defined by the difference between the value of the extension of (the restriction to $I_d$ of) $f$    to  the \emph{right} interval $I_d^+$, identified with $[0,\epsilon]$, at $0$ and the value at $0$ of its extension to the \emph{left} interval $I_d^-$, identified  with $[-\epsilon, 0]$. This is a measure of the \emph{obstruction} for $f$ to extend to a $\mathcal{C}^r$-function to the whole neighbourhood $\mathcal{U}$ of the saddle.

\subsubsection{Minimality (or quasi-minimality) for a foliation}\label{sec:minimalityfol}
The notion of minimality of a foliation  (which is sometimes known as  \emph{quasi-minimality} in the literature) is the following.
\begin{definition}[Minimality of a foliation]\label{def:minimal}{We say that a (singular) foliation is \emph{quasi-minimal}, or simply \emph{minimal}, iff  every \emph{regular} leaf (i.e.~every leaf which is not a point or a separatrix) is \emph{dense}.\footnote{{Notice that if the foliation comes from a flow, this is not the usual notion of minimality: the orbits of fixed points, which correspond to singularities, are indeed not dense. Moreover, if there are \emph{saddle connections}, these are also not dense.  One can in addition ask that also the orbits of separatrices (i.e.~leaves which emanate from a singular points and therefore are not \emph{regular} leaves) are dense; in this case, though, these orbits are only dense in the past or in the future only.}}}
\end{definition}}
\subsubsection{{\color{black} Relation with  GIETs}}
Let us recall that any $\mathcal{C}^r$-generalized interval exchange transformation can be suspended {\color{black}to a $\mathcal{C}^r$ foliation on a surface $S_g$ of  a certain genus $g \geq 1$} (see \S~\ref{gietandfoliations} and Appendix~\ref{sec:boundarycomb} for details). %ADD Ref
This operation can, for a large class of foliations ({which includes all minimal foliations}), be inverted, see \cite{Arnoux} or \cite{Levitt1}. In particular, we have the following Lemma:
\begin{lemma}
\label{prop:folgiet}
Let $\mathcal{F}$ be a minimal $\mathcal{C}^r$-foliation on $S_g$. Then there exists a smooth arc $J$ in $S_g$, everywhere transverse to $\mathcal{F}$ and joining {two} saddles, such that, if we identify  $J $ with $[0,1]$, the first-return map of $\mathcal{F}$ on $J $, when identified with $[0,1]$ is a minimal $\mathcal{C}^r$-GIET.
\end{lemma}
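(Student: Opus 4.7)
The plan is to construct the transverse arc $J$ carefully so that (i) its two endpoints land on singular points of $\mathcal{F}$, (ii) the first-return map is defined off a finite set of points, and (iii) the combinatorics of returns produces exactly $d$ continuity intervals, where $d$ depends on $g$ and the number $\kappa$ of singularities via $d = 2g+\kappa-1$. First I would pick a saddle $p_0 \in \mathrm{Sing}_\mathcal{F}$ and a short smooth arc $J_0$ transverse to $\mathcal{F}$, with one endpoint at $p_0$ lying on one of the prongs of $p_0$. Since the foliation is minimal (Definition~\ref{def:minimal}), every regular leaf is dense, so in particular the leaf emanating from the other endpoint of $J_0$ eventually returns to meet $J_0$ transversally. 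I would then slide the second endpoint of $J_0$ along the leaf through it until it reaches a saddle point $p_1 \in \mathrm{Sing}_\mathcal{F}$; this is possible because separatrices are dense (by minimality, one can intercept a separatrix from some singularity arbitrarily close to the initial second endpoint) and the sliding is done along a leaf, so transversality with $\mathcal{F}$ is preserved. The resulting arc $J$ has both endpoints on singular leaves and is identified with $[0,1]$ via a $\mathcal{C}^r$ parametrisation.

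Next I would define the first-return map $T: J \to J$. Because $\mathcal{F}$ is minimal and $S_g$ is compact, Poincar\'e recurrence ensures $T$ is defined almost everywhere on $J$; the points where $T$ fails to be defined are exactly those whose forward $\mathcal{F}$-orbit hits a singularity before returning to $J$, which forms a finite set since there are finitely many separatrices. Removing these points partitions $J$ into finitely many open subintervals $I_1^t, \dots, I_d^t$ (top partition); the images $T(I_i^t)$ form the bottom partition. The number $d$ of intervals in this partition is determined by the topological type: it equals $2g+\kappa-1$ when $J$ is chosen so that both endpoints are singular, because additional breaks would only arise from endpoints sitting on non-singular leaves. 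This combinatorial count follows from a standard Euler characteristic / first-return argument (see e.g.\ \cite{Yoc:Clay,Vi:IET}); I would verify it directly by flowing each of the $2\kappa$ separatrices until its first hit on $J$.

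The heart of the proof is then to show that, restricted to each $I_i^t$, $T$ is a $\mathcal{C}^r$ diffeomorphism onto $I_i^b := T(I_i^t)$ and extends $\mathcal{C}^r$-smoothly to the closure. This is precisely where the regularity hypothesis~$(r2)$ in Definition~\ref{def:Crfol} is used: the restriction $T|_{I_i^t}$ is by construction a holonomy map of $\mathcal{F}$ from the open transverse arc $I_i^t$ to $I_i^b$, and $(r2)$ asserts that such holonomies are $\mathcal{C}^r$ diffeomorphisms with $\mathcal{C}^r$-extensions to the boundary. This is exactly conditions (i)--(iii) of Definition~\ref{def1}, so $T$ is a $\mathcal{C}^r$-GIET of $d$ intervals.

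Finally, minimality of $T$ as a GIET follows from minimality of $\mathcal{F}$: if $T$ had a periodic orbit, the corresponding leaf of $\mathcal{F}$ would be closed, violating density of regular leaves; and if $T$ had an invariant closed subset, projecting along leaves would produce an invariant closed set in $S_g$ saturated by leaves, contradicting again the density hypothesis. Equivalently, any bi-infinite $T$-orbit on $J$ is dense in $J$, which is precisely the trace on $J$ of the density of the corresponding leaf in $S_g$. The main technical point is the second step, namely checking that the chosen $J$ really produces $d = 2g+\kappa-1$ continuity intervals with no superfluous discontinuities; all other parts are essentially a translation between the foliation and GIET languages using Definition~\ref{def:Crfol}~$(r2)$.
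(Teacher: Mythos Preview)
The paper does not actually prove this lemma: immediately after the statement it writes ``We refer to \cite{Arnoux} or \cite{Levitt1,Levitt2,Levitt3} for a proof of this Lemma'' and moves on. So there is no in-paper argument to compare against; your proposal is a self-contained sketch of the standard construction that those references carry out.

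Your outline is essentially the right one and matches the classical argument: choose a transverse arc with endpoints on singular leaves, use minimality to get that the first-return map is everywhere defined off finitely many separatrix hits, invoke condition~$(r2)$ of Definition~\ref{def:Crfol} to get $\mathcal{C}^r$-regularity of each branch and its extension to the closure, and read minimality of $T$ off minimality of $\mathcal{F}$. Two small technical points deserve care if you want to make this rigorous. First, the ``slide the second endpoint along its leaf until it reaches a saddle'' step is a bit loose: a generic leaf never reaches a saddle, so what one actually does is shorten or perturb the arc so that its second endpoint lies on a \emph{separatrix} (which is dense by minimality), and then one may further isotope along that separatrix to place the endpoint at the saddle itself---this is exactly the ``standard suspension'' manoeuvre alluded to in \S\ref{gietandfoliations}. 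Second, the count $d=2g+\kappa-1$ requires that \emph{all} discontinuities of the return map come from separatrices hitting singularities and none from the arc's own endpoints; this is precisely what placing both endpoints at singularities buys you, and is the content of the remark in \S\ref{gietandfoliations} that ``choosing $J$ with endpoints at singularities, or on singular leaves, guarantees that the number of exchanged intervals of $T$ is as small as possible and equal to $d$.''
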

\noindent We refer to \cite{Arnoux} or \cite{Levitt1,Levitt2,Levitt3} for a proof of this Lemma.

\smallskip
\noindent An important observation, especially to the purpose of our reduction of rigidity of foliations (Theorem A) to rigidity of GIETs (Theorem B) is the following relation between holomomies around singularities as defined above and the boundary  $\mathcal{B}(T)$ of the GIET as defined in Definition~\ref{GIET:boundary} through the boundary operator defined by Marmi-Moussa-Yoccoz in \cite{MMY3}:

%\subsection{Boundary operators and holonomy.}
%{\color{red} Add some parts of the boundary operator in this section? I think here we should prove the Lemma that connects boundary and holonomy...}

\begin{lemma}[Boundary as holomomy]\label{Basholonomy}
If  $\mathcal{F}$ is minimal and $T$ is a GIET obtained as first-return map of $\mathcal{F}$ on  an arc $J$ identified with $ [0,1]$  as in Lemma~\ref{prop:folgiet}, then the { exponentials $(\exp (b_s))_{s=1}^d$ of the} entries of the boundary $\mathcal{B}(T)=(b_s)_{s=1}^d$ are exactly the holonomy of $\mathcal{F}$ around singularities of $\mathcal{F}$.
\end{lemma}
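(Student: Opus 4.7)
The plan is to identify both sides of the claimed equality — the $\mathcal{C}^1$-holonomy of $\mathcal{F}$ around each saddle in the sense of \S~\ref{sec:holonomies}, and the exponential $\exp(b_s)$ of the $s$-th component of $\mathcal{B}(T) = B(\log DT)$ — as the same product of derivatives of germs of foliation holonomies computed along a loop encircling the singularity. The argument is local around each saddle and does not use the minimality of $\mathcal{F}$ beyond what is needed to ensure that $T$ is a GIET (via Lemma~\ref{prop:folgiet}).

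First, I would recall (from \S~\ref{gietandfoliations} and Appendix~\ref{sec:boundarycomb}) that, under the suspension correspondence between $T$ and $\mathcal{F}$, each endpoint $u_i$, $0 \leq i \leq d$, of the top partition of $T$ is either an endpoint of the transversal arc $J$ (which lies on a singular leaf by the choice of $J$ in Lemma~\ref{prop:folgiet}) or is the first intersection with $J$ of a backward separatrix of a unique saddle $s(u_i) \in \Sing(\mathcal{F})$. In particular, for each saddle $s$ the set $\{u_i : s(u_i) = s\}$ is in natural bijection with the subset of backward separatrices of $s$ that hit $J$, ordered by $J$.

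Next, I would give a geometric interpretation of the one-sided derivatives $DT^r(u_i)$ and $DT^l(u_i)$. Since $T$ is the first-return of $\mathcal{F}$ to $J$, a point of $J$ just to the right (resp.~left) of $u_i$ flows along a leaf which passes through one of the two sectors of $s(u_i)$ adjacent to the separatrix through $u_i$ and then continues along a uniquely determined outgoing separatrix back to $J$. Hence $DT^r(u_i)$ (resp.~$DT^l(u_i)$) is precisely the derivative at $u_i$ of the germ of holonomy from $J$ to $J$ along leaves passing on the right (resp.~left) of the singular separatrix at $s(u_i)$. This identification relies only on condition $(r2)$ of Definition~\ref{def:Crfol}: holonomies of a $\mathcal{C}^r$-foliation between transversals joined by regular leaves are $\mathcal{C}^r$-diffeomorphisms extending smoothly to the closure.

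The heart of the argument is then a local combinatorial computation at each saddle. Cyclically ordering the prongs of $s$, the one-sided germs of $T$ at two consecutive incoming-separatrix base-points $u_{i_j}, u_{i_{j+1}}$ share a common outgoing separatrix, so that the right germ at $u_{i_j}$ and the left germ at $u_{i_{j+1}}$ differ precisely by the $\mathcal{C}^1$-holonomy of the one sector sandwiched between these two separatrices. Composing these sectorial holonomies in cyclic order around $s$ produces a germ of holonomy along a small loop encircling $s$ once, whose derivative at the relevant intersection with $J$ is, by definition of the $\mathcal{C}^1$-holonomy in \S~\ref{sec:holonomies}, the obstruction to smoothly extending a local first integral across the last separatrix. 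Taking logarithms converts this cyclic product into a telescoping sum
\[
\log\bigl(\text{holonomy of }\mathcal{F}\text{ around }s\bigr) \;=\; \sum_{s(u_i)=s}\bigl(\log DT^r(u_i) - \log DT^l(u_i)\bigr) \;=\; B_s(\log DT) \;=\; b_s,
\]
the last equalities being the defining formula of $B_s$ from \S~\ref{sec:obs_boundary}. Exponentiating gives the lemma.

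The main obstacle is the careful bookkeeping at each saddle: one must order the separatrices cyclically, match each left/right limit at a discontinuity with the correct adjacent sector, fix signs and orientations consistently, and verify that the telescoping recovers the \emph{complete} loop around $s$, paying particular attention to the endpoints of $J$ (where an endpoint can itself lie on an incoming separatrix of $s$ and must contribute via the conventions $f^l(u_0) = 0$ and $f^r(u_d) = 0$ adopted in \S~\ref{sec:obs_boundary}). Once this cyclic identification is carried out at each saddle, the lemma follows immediately.
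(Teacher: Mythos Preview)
Your proposal is correct and follows essentially the same approach as the paper: both identify the $\mathcal{C}^1$-holonomy around a saddle $s$ as a cyclic product of germs of holonomies, take logarithms, and match the resulting alternating sum with the defining formula for $B_s(\log DT)$. The only difference is presentational: the paper first isotopes $J$ so that it contains the four local transverse arcs $I_d,I_r,I_u,I_l$ as subarcs, so that the quadrant-by-quadrant holonomies are literally given by branches of $T$ or $T^{-1}$, and then transports a local first integral around using these; you instead work directly on the original $J$, observing that for two cyclically consecutive incoming separatrices the branches of $T$ on either side share the same outgoing separatrix and hence the same smooth ``continuation beyond the saddle'', so that this common factor cancels in the ratio $DT^r(u_{i_j})/DT^l(u_{i_{j+1}})$ and only the pair of sectorial holonomies survives. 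Both framings reduce to the same telescoping identity, and both (as you rightly flag) leave the detailed cyclic bookkeeping of sides, orientations and endpoint conventions to the reader.
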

%%%ADD {\color{black} To add a proof (in Appendix?)}
%{
%\noindent We include a sketch of a proof of this Lemma in the Appendix~\ref{app:holonomyB}.
%}
{ \begin{proof}%[Proof of Lemma~\ref{Basholonomy}] 
Let $J$ be a standard arc such that $T$ can be identified to the first-return of $\mathcal{F}$ on $J$. We can isotope $J$ to a transverse curve that contains the arcs $I_b, I_t, I_r $ and $I_l$ (represented in Figure \ref{saddle})   as subarcs  and such that the intersection of the separatrices with $J$, which we called $v_b,v_t,v_r$ and $v_l$,  are exactly the discontinuities of $T$ and $T^{-1}$ that are involved in the computation of the value of the boundary $\mathcal{B}(T)$ at the singularity $p$.  We can  chose the parametrization of $J$ by $[0,1]$ (for which the first return of $\mathcal{F}$ is $T$) so that $T$ is a GIET of class $\mathcal{C}^r$. This parametrization  induces parametrizations of the subarcs $I_b, I_t, I_r $ and $I_l$.

To compute the holonomy around the saddle $p$ using $T$, we can start from the parametrization of $I_d^+$ by $(0,\epsilon)$, extending it and \emph{transporting it around}, to define a function $f$ on $\mathcal{U}$ which is \emph{constant on leaves} of each \emph{quadrant}\footnote{{Recall that a  \emph{quadrant}, as defined in \S~\ref{sec:holonomies}}, is a connected component of the complement of the separatrices in the neighbourhood $\mathcal{U}$ of $p$.}  and whose values on $I_d^+$ are given by the parametrization.  The request that $f$ is constant on leaves means in particular that we want that $f(x)=f(T(x))$ for every $x\in J$. We can easily define such $f$ in the lower quadrant, containing $I_d^+$. %To extend $f$ to the adijacent quadrant containing $I_r$, 
%the subarc $I_r$ has to be \emph{reparametrised} using $T$, i.e.~by the change of variables $y=T(u_d+x)$.  
To extend $f$ to adjiacent quadrants (going around $p$ in counterclockwise direction), starting from the adjacent quadrant containing $I_r$, first $I_r$ and then each time the new subarc, should be reparametrized  
%starting from the quadrant containing $I_r$ which is adjiacent to the quadrant containing $I_b$, the subarc $I_r$ and later each other subarcs has to be \emph{reparametrised} 
using\footnote{{For example, to reparametrize $I_r$, one should use the change of variables $y=T(u_d+x)-u_r$. More generally, if $u$ is the previous discontinuity and $v$ the next, we either have $v=T(u)$ or $v=T^{-1}(u)$ and should use, respectively, a change of variable of the form $y=T(u+x)-v$ or $y=T^{-1}(u+x)-v$.}}
%shoud a change of variables of the either the form $y=T(u+x)$ or $y=T^{-1}(x)$,}
 either $T$ or $T^{-1}$,  depending on the parity of the step (since to reach the successive subarc we move along leaves in a quadrant either in the same, or with opposite orientation, according to the parity). 

The  (right or left) \emph{germ} of such a reparametrisation at each  intersections  of $J$ with each separatrix (i.e.~at a singularity $v\in  \{v_d, v_r, v_u, v_l\}$ of $T$ of those involved in the computation of the boundary value at $p$) is the (right or left) derivative $DT^\pm(v)$ of $T$ or of $T^{-1}$ at $v$. %, depending of the relative orientation of $\mathcal{F}$. 
 %One might try to use the parametrisation of $J$ by $[0,1]$ to compute the holonomy of $\mathcal{F}$ around a saddle (still, like in Figure \ref{saddle}).  
We thus realise the \emph{germ} of the holonomy around the saddle as a product of values of derivatives of $T$ or $T^{-1}$ at the singular points which correspond to separatrices at $p$; recalling the definition of the boundary operator $B$ acting on  observables (see \S~\ref{sec:obs_boundary}), one can see that the \emph{logarithm} of such a product is exactly the alternating sum of right/left derivatives of $D\log DT$ at singularities which gives the value $b_p$ of the boundary $B(\log D T)$ at the singularity $p$. Thus, the holonomy around $p$ is exactly $\exp(b_p)$.
\end{proof}}

\subsection{Conjugacies of foliations and rigidity}\label{sec:foliationsmain}
{ We now define \emph{linear} foliations and restate the existence of topological conjugacy to a linear model in terms of \emph{minimality}. 
%Let us first stress that, for singular foliations, the notion of \emph{minimality} should be modified to take into account the presence of singularities as follows: 
%\begin{definition}[Minimal (singular) folation] We say that a (singular) foliation $\mathcal{F}$ is \emph{minimal} iff
%\end{definition}
%\noindent This modified notion of minimality  is also called \emph{quasi-minimality} in the literature.}

\subsubsection{Linear foliations} {A special class of foliations are those are given by closed $1$-forms,  which we will call \emph{linear} (since their holonomies belong to the linear group). Examples of linear foliations include foliations whose leaves are  trajectories of linear flows on translation surfaces (see Remark~\ref{Calabi}).}
\begin{definition}[linear foliations]\label{def:linearfol}
A \emph{linear} foliation $\mathcal{L}$ is a foliation on $S_g$ defined by a smooth, closed $1$-form $\omega$ such that $\omega$ vanishes at only finitely many points which are (multi)saddles, described by level sets of smooth functions near a zero of finite multiplicity.
%and such that the foliation induced by $\omega$ { has (multi)saddles at the points where $\omega$ vanishes, given by the level sets of a smooth function.
\end{definition}
%{\color{red} Problem: do we really need saddle like here? what we do after (top conjugacy, measure class etc) is much more general than that... what should we ask on the zeros of $\eta$ to guarantee zero boundary? or shall we just not ask anything?}
\noindent  The local integration of $\omega$ defines {a (non atomic, smooth) transverse measure to the foliation $\mathcal{L}$ as well as an Euclidean structure of the space of leaves of the foliation. One can then show that} the first return map of a linear foliation on a transverse curve is a \textit{standard interval exchange transformation} with respect to the Euclidean structure induced by $\omega$ on this transverse curve. This shows in particular that the holonomies are \emph{linear}, from which the name \emph{linear} foliation. 
\begin{remark}\label{Calabi}
We remark as an aside that a result of Calabi 
%\footnote{{The work of Calabi in \cite{Ca:fol} addresses the question of when a closed Morse $1$-form $\omega$ on a closed manifold $M$ is harmonic with respect to some Riemannian metric.} 
% if and only if for every nonsingular point x ∈ M there exists a closed path ρ : [0; 1] → M through x such that ω(dγ/dt) > 0 for any t ∈ [0; 1]. 
%More precisely, Calabi showed in \cite{Ca:fol} that, given a closed, smooth $1$-form on $S_g$ such that the zeros are all saddles or multi saddles and there are no loops homologous to zero made by a sequence of leaves which are saddle connections (i.e.~join two singularities), then one can find a Riemann structure and an \emph{Abelian differential}, i.e.~a holomorphic one form, such that the closed $1$-form is the imaginary part of the Abelian differential.}}
 \cite{Ca:fol} (see also \cite{Zo:how}) shows that under a technical condition\footnote{{The assumption of Calabi~\cite{Ca:fol} is
 equivalent (as remarked in \cite{Zo:how}) to asking that any cycle obtained as a union of closed paths following in the positive direction a sequence of saddle connections is not homologous to zero. 
This is in particular the case when there are no saddle connections. In this special case the result was proved independently also by Katok in \cite{Ka:inv}.}}  
 linear foliations in the sense of Definition~\ref{def:linearfol} are actually given\footnote{{Calabi's theorem  in \cite{Ca:fol} gives a condition under which a given a closed $1$-form is \emph{harmonic}. In the language of foliations, this means that the linear foliation %An orientable measured foliation on a closed Riemann surface 
is the vertical foliation of a holomorphic differential in some complex structure.}} by linear flows on translation surfaces. %ADD 
\end{remark}
% orientable measured foliation on a closed Riemann surface is a horizontal foliation of a holomorphic diﬀerential in some complex structure if and only if any cycle obtained as a union of closed paths following in the positive direction a sequence of saddle connections is not homologous to zero}
% that  for every nonsingular point $p$ there exists a closed path $\gamma : [0; 1] \to M$ through $p$ such that $\omega (\gamma'(t)) > 0$ for any t ∈ [0; 1]. 
% under a technical condition (i.e.~when there are no loops of separatrices homologous to zero)} 

%The relation between linear foliations and translation structures is described by the Calabi theorem.
%We will say that a transverse curve is in \emph{standard form} if the endpoints are singularities. In this case, the induced IET has the minimal possible numbers of exchanged intervals $d$ (namely $d=2g+\kappa-1$ where $g$ is the genus and $\kappa$ is the number of singularities).

\subsubsection{Linearization of minimal foliations}  The following important result { classifies topological conjugacy classes of minimal foliations} (see also \cite{Ka:inv}):%ADD
\begin{proposition}[Topological conjugacy of minimal foliations]\label{topcfol}
A \emph{minimal} foliation $\mathcal{F}$ on $S_g$ is \emph{topologically} conjugate to a \emph{linear} one.  Furthermore, if the linear foliation is \emph{uniquely ergodic}, it is the \emph{unique} linear representative in the topological conjugacy class of $\mathcal{F}$.
\end{proposition}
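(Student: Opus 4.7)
The plan is to prove existence by reducing to a statement on the Poincar{\'e} section via Lemma~\ref{prop:folgiet} and then invoking the Poincar{\'e}--Yoccoz theorem for GIETs (Theorem~\ref{thm:PY}), and to deduce uniqueness from unique ergodicity at the level of standard IETs.

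\textbf{Existence.} First I would apply Lemma~\ref{prop:folgiet} to obtain a smooth transverse arc $J$ with endpoints at saddles, such that the first-return map $T:J\to J$ is a minimal $\mathcal{C}^r$-GIET. Minimality of $\mathcal{F}$ rules out saddle connections, and hence $T$ has no connections (in the sense of \S~\ref{connections}); consequently its combinatorial rotation number $\gamma(T)$ is irrational ($\infty$-complete) in the sense of Definition~\ref{def:irrational}. Theorem~\ref{thm:PY} then produces a standard IET $T_0$ with $\gamma(T_0)=\gamma(T)$ and a continuous surjection $h:J\to [0,1]$ semi-conjugating $T$ to $T_0$. Minimality of $\mathcal{F}$ forces $T$ to have no wandering intervals (otherwise the saturation of such an interval by leaves of $\mathcal{F}$ would be a non-dense open set, contradicting minimality), so $h$ is in fact a homeomorphism. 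Suspending $T_0$ by the standard construction (see \S~\ref{gietandfoliations} and Appendix~\ref{sec:boundarycomb}) one obtains a linear foliation $\mathcal{L}$ on a closed surface; since the suspension depends only on the combinatorial data $\pi$ (shared by $T$ and $T_0$), this surface is canonically identified with $S_g$. To promote $h$ to a topological conjugacy $H:S_g\to S_g$ between $\mathcal{F}$ and $\mathcal{L}$, one follows leaves: for a regular point $p\in S_g\setminus\mathrm{Sing}_\mathcal{F}$, flow along the leaf of $\mathcal{F}$ through $p$ until the first hit $x\in J$, and define $H(p)$ as the corresponding point on the leaf of $\mathcal{L}$ emanating from $h(x)$, with the corresponding transverse parameter; singular points are sent to the corresponding saddles.

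\textbf{Uniqueness.} Suppose $\mathcal{L}$ is uniquely ergodic and $\mathcal{L}'$ is another linear foliation topologically conjugate to $\mathcal{F}$ (hence to $\mathcal{L}$). Choosing compatible transversals, the first-return maps $T_0$ and $T_0'$ are standard IETs, topologically conjugate through some homeomorphism $h':[0,1]\to[0,1]$ with $\gamma(T_0)=\gamma(T_0')$. Unique ergodicity of $\mathcal{L}$ passes to $T_0$ (the unique $\mathcal{L}$-invariant transverse measure corresponds to the unique $T_0$-invariant probability measure), and hence, by topological conjugacy, to $T_0'$. Then $(h')_*\mathrm{Leb}$ is a $T_0'$-invariant Borel probability measure fixing the endpoints, so unique ergodicity of $T_0'$ forces $(h')_*\mathrm{Leb}=\mathrm{Leb}$, whence $h'=\mathrm{Id}$ and $T_0=T_0'$. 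The standard suspension then identifies $\mathcal{L}$ with $\mathcal{L}'$.

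\textbf{Expected main obstacle.} The delicate point is the extension of the Poincar{\'e}-section conjugacy $h$ to a global \emph{continuous} conjugacy of foliations at the saddles. Away from $\mathrm{Sing}_\mathcal{F}$ the extension is governed by continuous dependence of the flow along leaves on initial conditions, together with continuity of $h$; at each saddle, one exploits the fact that both $\mathcal{F}$ and $\mathcal{L}$ admit the same topological normal form (Definition~\ref{def:saddle}), so that $H$ is defined there as the obvious local homeomorphism matching prongs. Checking continuity across the separatrices reduces to a direct verification using this normal form; this is classical in the theory of suspensions of interval exchange transformations (see \cite{Arnoux,Levitt1,Levitt2}), but is the step requiring the most care.
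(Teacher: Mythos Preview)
Your existence argument takes a different route from the paper's, and it has a gap. You claim that ``minimality of $\mathcal{F}$ rules out saddle connections,'' but this is false: Definition~\ref{def:minimal} only requires \emph{regular} leaves to be dense, and the footnote there explicitly notes that saddle connections (which are separatrices, hence not regular leaves) may be present in a minimal foliation. If a saddle connection meets the chosen transversal, the first-return GIET $T$ has a connection; Rauzy--Veech induction then terminates in finitely many steps, $\gamma(T)$ is not defined as an infinite path, and Theorem~\ref{thm:PY} does not apply. Even when $T$ happens to have no connections, the implication ``no connections $\Rightarrow$ $\gamma(T)$ is $\infty$-complete'' needs justification for general GIETs (in \S~\ref{sec:irrational} it is stated only for \emph{standard} IETs).

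The paper bypasses all of this with the classical invariant-measure trick: pick any $T$-invariant Borel probability $\mu$ (existence via Krylov--Bogolyubov, as the paper notes), use minimality of $T$ to see that $\mu$ is non-atomic and fully supported, and conjugate $T$ by the homeomorphism $\varphi(x)=\int_0^x d\mu$; the resulting GIET preserves Lebesgue measure and is therefore a standard IET. No renormalization, no Keane condition, no Theorem~\ref{thm:PY}. Your uniqueness argument is essentially the paper's (both pull back Lebesgue through the conjugacy and invoke unique ergodicity), and your discussion of promoting the section-level conjugacy to a global foliation conjugacy is in fact more detailed than the paper's, which simply asserts the equivalence. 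The only genuine issue is the existence step, and it is fixed most cleanly by switching to the invariant-measure argument.
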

 \begin{proof}
The first part of the statement is equivalent to the fact that a first-return map of $\mathcal{F}$ is topologically conjugated to a standard IET.  By Lemma~\ref{prop:folgiet}, there exists a transverse arc $J$ to $\mathcal{F}$ { and a smooth identification of  $J$ with $[0,1]$ such that the first return map upon $J$ under this identification is a minimal GIET which we call $T$.}  
 Consider any invariant probability measure\footnote{{The existence of such a measure follows for example by Krylov–Bogolyubov theorem: even if $T$ is not continuous, it can indeed be extended to a homeomorphism of a Cantor space (see \cite{MMY}, Corollary 3.6 or the lecture notes \cite{Yoc:Clay}). A direct proof of the existence of an invariant probability measure can also be found in Katok's work \cite{Ka:inv}.}} $\mu$ for $T$. 
%seen as a map $[0,1] \longrightarrow [0,1]$ via any . 
By minimality of $T$,  $\mu$ has no atoms and  gives mass to any open subset of $[0,1]$. 
Thus, the map $\varphi : x \mapsto \int_0^x{d\mu}$ conjugate{s} $T$ to a GIET which preserves the Lebesgue measure (as $\varphi$ maps $\mu$ onto the Lebesgue measure), which is by definition a {(standard) IET}. 

Finally, if the IET $T$ is uniquely ergodic, but there were two linear foliations topologically conjugate to $\mathcal{F}$, one could find a different IET $T_1$ which is topologically conjugate to $T$. The pull-back of the Lebesgue measure via the conjugacy would then  produce an invariant measure for $T$ (in addition to the Lebesgue measure), contradicting unique ergodicity.
 \end{proof}

\subsubsection{Measure (class) on minimal foliations}\label{sec:foliationmeasure} A linear foliation, up to smooth isotopy fixing the set of singular points $Sing_\mathcal{F}$, is locally determined by the class defined by $\omega$ in $\mathrm{H}^1(S_g, Sing_\mathcal{F}, \mathbb{R})$. We can therefore endow the space of linear foliations with fixed singularities, up to isotopy, with the affine structure of $\mathrm{H}^1(S_g, Sing_\mathcal{F}, \mathbb{R}) =  \mathbb{R}^d$. 
{ In view of Proposition~\ref{topcfol}, topological classes of (singular) minimal foliations on $S_g$ are therefore parametrized by (relative) cohomology classes in $\mathrm{H}^1(S_g, Sing_\mathcal{F}, \mathbb{R})$.  The cohomology class associated to $\mathcal{F}$ is known in the literature as \emph{Katok fundamental class}.% Katok showed in \cite{Ka:inv} that it is local topological invariant 
\footnote{{Katok also showed in \cite{Ka:inv} that the fundamental class is a \emph{local} smooth (and topological) conjugacy invariant for foliations with only Morse saddles  with a non-atomic invariant measure which gives positive measure to open sets. 
%It was later shown by Levitt, though, that it is not a \emph{global} invariant.
}}%ADD} (see \cite{Katok}). 

\smallskip
The Lebesgue measure on $\mathbb{R}^d$ induces a \emph{measure class} (i.e.~a notion of measure zero sets) on  linear foliations. Notice that a full measure set of such foliations are \emph{uniquely ergodic} by a classical result of Masur \cite{Ma:int} and Veech \cite{Ve:gau}. Therefore,  through Proposition~\ref{topcfol}, we also have a measure class on (topological conjugacy classes of) minimal foliations. The notion of \emph{full measure} in Theorem A is defined with respect to this measure class. It is well know that this notion of full measure is related to the notion of \emph{almost every} (standard) IET, by following remark (see e.g.~\cite{Ul:abs}):

\begin{remark}\label{rk:fullmeasfol}
To show that a result holds for minimal foliations on surfaces of genus $g\geq 1$ under a \emph{full measure} condition in the sense above, it is sufficent to prove that it holds for Lebesgue-almost every (standard) IET (in the sense of \S~\ref{sec:measures}).
\end{remark}
}

\subsubsection{Rigidity of foliations in genus two.}
Our rigidity Theorem \ref{maintheorem} can be reformulated in the language of foliation the following way, by first extending the definition of the Diophantine-type condition to foliations:
{
\begin{definition}[(RDC) for linear foliations]\label{def:DCfol}
 A linear foliation is said to satisfy the \textit{Regular Diophantine Condition} $(RDC)$ if there \emph{exists} a normal transverse arc such that the IET which arise as Poincar{\'e} section  satisfies the $(RDC)$ (given by Definition~\ref{def:RDC}).
\end{definition}}
\noindent {It is likely that if the $(RDC)$ holds for one choice of section, then it actually holds for \emph{any} IETs which arise from any other choice of normal sections (similarly to what one can show for example for the Roth-type condition for IETs, see the Appendix of \cite{MMY}), but we do not dwell into this, since is not needed for our purposes.} 

\begin{proposition}
\label{thm:rigidityfoliation}
Let $\mathcal{F}$ be a $\mathcal{C}^3$, {orientable, minimal} foliation on a surface $S_2$ of genus two. Assume:
\begin{itemize}
\item[(i)] $\mathcal{F}$ is topologically conjugate to a linear foliation $\mathcal{L}$ satisfying the $(RDC)$;
\item[(ii)] the $\mathcal{C}^1$-holonomies of $\mathcal{F}$ all vanish;
\end{itemize}
\noindent then $\mathcal{F}$ is actually $\mathcal{C}^1$-conjugate to $\mathcal{L}$.
\end{proposition}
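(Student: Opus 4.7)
The plan is to deduce this statement as a direct corollary of Theorem B (Theorem~\ref{maintheorem}) by reducing the rigidity question at the level of foliations to the rigidity question for their Poincaré first-return maps. First, I would invoke Lemma~\ref{prop:folgiet} to find a smooth transverse arc $J \subset S_2$ joining two saddles, such that the first-return map of $\mathcal{F}$ on $J$, after identifying $J$ with $[0,1]$, gives a minimal $\mathcal{C}^3$-GIET $T$. Since $S_2$ has genus two, the relation $d = 2g + \kappa - 1$ forces $d \in \{4,5\}$ (according to whether $\mathcal{F}$ has two simple saddles or one degenerate saddle with six prongs, as recalled in the introduction). Using the same arc $J$ (or more precisely its image under the topological conjugacy to $\mathcal{L}$), one extracts the corresponding first-return map $T_0$ of $\mathcal{L}$, which is a \emph{standard} IET since $\mathcal{L}$ is linear; and the topological conjugacy between $\mathcal{F}$ and $\mathcal{L}$ restricts to a topological conjugacy between $T$ and $T_0$.

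Next I would translate the two hypotheses of the proposition into the two hypotheses of Theorem~\ref{maintheorem}. Hypothesis $(i)$ gives that the rotation number of $T_0$ (which equals that of $T$) satisfies the $(RDC)$ in the sense of Definition~\ref{def:RDC}, by the very definition of the $(RDC)$ for linear foliations (Definition~\ref{def:DCfol}). Hypothesis $(ii)$ is converted via Lemma~\ref{Basholonomy}: the $\mathcal{C}^1$-holonomy of $\mathcal{F}$ around each saddle $p$ is precisely the germ encoded by $\exp(b_p)$, where $b_p$ is the component of $\mathcal{B}(T)$ at $p$. Hence vanishing of all $\mathcal{C}^1$-holonomies is equivalent to $\mathcal{B}(T) = 0$. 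Theorem~\ref{maintheorem} then applies and yields that the conjugacy between $T$ and $T_0$ is a $\mathcal{C}^1$-diffeomorphism of $[0,1]$.

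The last (and most delicate) step is to promote this $\mathcal{C}^1$-conjugacy between the Poincaré maps to a $\mathcal{C}^1$-conjugacy between the foliations $\mathcal{F}$ and $\mathcal{L}$ themselves. The construction is standard: one propagates the conjugacy $h \colon J \to J$ along the leaves of $\mathcal{F}$ using the local holonomy diffeomorphisms to neighboring transversals, and checks that the resulting map is globally well-defined, bijective, and of class $\mathcal{C}^1$ on $S_2 \setminus \mathrm{Sing}(\mathcal{F})$. Well-definedness follows from $h$ conjugating $T$ to $T_0$; the $\mathcal{C}^1$ regularity on the complement of the singularities comes from the $\mathcal{C}^3$ regularity of $\mathcal{F}$ and $\mathcal{L}$ (condition (r2) in Definition~\ref{def:Crfol}) together with the $\mathcal{C}^1$ regularity of $h$.

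The main obstacle will be to verify that this extension is $\mathcal{C}^1$ \emph{across the singular points} of $\mathcal{F}$. This is precisely where hypothesis $(ii)$ (equivalently, $\mathcal{B}(T)=0$) is essential beyond its role in applying Theorem~\ref{maintheorem}: the vanishing of the $\mathcal{C}^1$-holonomies ensures that, in a chart around each saddle of $\mathcal{F}$, the foliation is defined as the level sets of a $\mathcal{C}^1$-Morse (or degenerate Morse) function, on the nose and not just quadrant by quadrant. The target foliation $\mathcal{L}$, being linear, trivially has vanishing holonomies in every regularity, so both foliations admit $\mathcal{C}^1$ defining functions around their saddles; matching these functions via $h$ (which is itself $\mathcal{C}^1$ on transversals through each saddle) produces a $\mathcal{C}^1$ local chart for the conjugating map, completing the proof.
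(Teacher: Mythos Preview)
Your approach is essentially the paper's, and the translation via Lemma~\ref{Basholonomy} of vanishing holonomies into $\mathcal{B}(T)=0$ is exactly what the authors do. One point deserves correction, though: you choose the arc $J$ for $\mathcal{F}$ first (via Lemma~\ref{prop:folgiet}) and then push it to $\mathcal{L}$, claiming that the resulting IET $T_0$ satisfies the $(RDC)$ ``by the very definition''. But Definition~\ref{def:DCfol} only asserts the \emph{existence} of some transverse arc on which the Poincar\'e map of $\mathcal{L}$ satisfies $(RDC)$; it does not guarantee this for the particular arc you produced. The paper explicitly notes that independence of $(RDC)$ from the choice of section is \emph{likely} true but not proved, and deliberately avoids needing it by choosing the arc in the opposite order: start from the arc $J$ guaranteed by Definition~\ref{def:DCfol} for $\mathcal{L}$, and take the Poincar\'e map of $\mathcal{F}$ on the same $J$. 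With that reversal your argument goes through.

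On the promotion of the $\mathcal{C}^1$-conjugacy from Poincar\'e maps to foliations: the paper treats this as an immediate equivalence (``the conclusion is equivalent to showing that the conjugacy is $\mathcal{C}^1$'') without further discussion of the extension across singularities. Your elaboration on this step is more careful than the paper's own treatment, but note that the regularity notion for conjugacies of foliations is in terms of transverse structure (as remarked in a footnote in the introduction), so the equivalence is essentially definitional rather than requiring a separate geometric argument.
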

{\begin{remark}\label{rk:H2} We remark that the statement of Proposition~\ref{thm:rigidityfoliation}  concerns not only the foliations in Theorem A, which have only (two) \emph{simple} saddles, but also foliations on $S_2$ with one degenerate saddle with $6$-prongs  (whose linear models are linear flows on translation surfaces in the  stratum $\mathcal{H}(2)$ of Abelian differentials  with a double zero), as long as the holomomy around the singularity vanishes (i.e.~$(ii)$ holds). This is the case when the leaves in a neighbourhood of a singularity are given by a level set of a $\mathcal{C}^1$ function with a zero of order two.
 \end{remark} }
\begin{proof}[Proof of Proposition~\ref{thm:rigidityfoliation}]
By definition of the $(RDC)$ for a (linear) foliation (Definition~\ref{def:DCfol}),  there exists an arc $J\subset S_2$ such that the Poincar{\'e} map of $\mathcal{L}$ to $J$, identified with $[0,1]$, is an IET $T_0$ which satisfies the $(RDC)$. The Poincar{\'e} map of the foliation $\mathcal{F}$ on $J$ is a GIET by Lemma~\ref{prop:folgiet}. By assumption $(i)$, $T$ and $T_0$ are topologically conjugate; the conclusion is equivalent to showing that the conjugacy is $\mathcal{C}^1$. Since the saddles of $\mathcal{F}$ are Morse, by  Proposition~\ref{prop:} the boundary $\mathcal{B}(T)=B(\log D T)=0$. Therefore we can apply Theorem B to conclude that the conjugacy is differentiable.
\end{proof}
\noindent The proof of Theorem A now also follows:
\begin{proof}[Proof of Theorem A]
By Theorem~\ref{thm:fullmeasure} and Remark~\ref{rk:fullmeasfol}, the $(RDC) $ is satisfied by a full measure set of minimal foliations (in the sense of \S~\ref{sec:foliationmeasure}) in genus two (and in any other genus). Moreover for a Morse saddle,  all holonomies vanish (see \S~\ref{sec:leavesregularity} and \S~\ref{sec:holonomies}). Thefore Theorem A follows from Proposition~\ref{thm:rigidityfoliation}.
\end{proof}}

\section{Full measure of the Regular Diophantine Condition}\label{sec:fullmeasureproof}
This section is fully devoted to the proof that the Regular Diophantine Condition has full measure, i.e.~Theorem~\ref{thm:fullmeasure}. 
The condition has three parts (see Definition~\ref{def:RDC}): the existence of an Oseledets regular extension, i.e.~condition $(i)$, follows simply from Oseledets theorem applied to the natural extension and is proved for a full measure set of IETs in \S~\ref{sec:fullmeasurei}, while the existence of good return times (condition $(ii)$ is easy to prove using ergodicity of Rauzy-Veech induction and is treated in \S~\ref{sec:existencegoodreturns}. The harder part to verify is Condition $(iii)$, in particular  the convergence of the series $(S)$ and $(B)$. 

As a key intermediate step towards the proof of convergence of the series in Condition $(iii)$, we define  in \S~\ref{sec:effectiveseqs} an acceleration of Zorich induction $\mathcal{Z}$ that we call \emph{effective Oseledets acceleration} and denote by $\widetilde{\mathcal{Z}}$. The accelerating times are given by a sequence $(n_k)_{k\in\mathbb{N}}$ where Oseledets theorem (for the natural extension) can be made \emph{effective}, i.e.~the hyperbolicity control (both in the future and in the past) can be made quantitative (see \S~\ref{sec:effectiveseqs} and Definition~\ref{def:effectiveOsedeletsseq}). We show that such sequences exist for almost every IET and, the same time, we can also guarantee that the accelerating sequence $(n_k)_{k\in\mathbb{N}}$ along which one has the effective Oseledets control is a sequence of good return times (see Proposition~\ref{prop:existenceeffective}). For the times $(n_k)_{k\in\mathbb{N}}$, one can prove that the series given by $(B)$ and $(S)$ both converge. The subsequence $(k_m)_{m\in\mathbb{N}}$ in the RDC (see Definition~\ref{def:RDC}, Condition $(iii)$) is then chosen to provide a further acceleration given by returns to a set which allows to ensure that the bounds on the series $(B)$ and $(S)$ are uniform, see \S~\ref{sec:prooffullmeasure}, which contains the proof of Theorem~\ref{thm:fullmeasure}, for details.

\subsection{Full measure of Conditions $(i)$ and $(ii)$}\label{sec:iandii}
Let us first verify that almost every IET has a Oseledets generic extension, i.e.~Condition $(i)$ of Definition~\ref{def:RDC} is satisfied (see \S~\ref{sec:fullmeasurei}) and has a sequence of good return times as required by Condition $(ii)$, see \S~\ref{sec:existencegoodreturns}.

\subsubsection{Full measure of generic Oseledets extensions}\label{sec:fullmeasurei}
Let us first prove that a.e.~IET has a generic Oseledets extension, in the sense of Definition~\ref{def:Oseledetsextension}. Full measure is here defined in the sense of the Lebesgue measure on $\mathcal{I}_d$  defined in \S~\ref{sec:measures}. The proof is simply an application of Oseledets theorem for the \emph{natural extension} of the Zorich accel

\begin{lemma}[Full measure of Condition $(i)$]\label{lemma:Oseledetsgeneric}
Lebesgue almost-every IET $T\in \mathcal{I}_d$ has a Oseledets generic extension.% in the sense of Definition~\ref{def:Oseledetsextension}.
\end{lemma}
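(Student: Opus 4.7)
The plan is to deduce the lemma directly from Oseledets multiplicative ergodic theorem applied to the natural extension $\hat{\mathcal{Z}}: \hat{\mathcal{I}}_\pi \to \hat{\mathcal{I}}_\pi$ of the Zorich map (see \S~\ref{sec:natextension}), together with the extended Zorich cocycle $Z: \hat{\mathcal{I}}_\pi \to SL(d,\mathbb{Z})$ recalled in \S~\ref{sec:Oseledetsgeneric}. Since $\hat{\mathcal{Z}}$ is invertible and $Z$ is integrable with respect to the finite invariant measure $\mu_{\hat{\mathcal{Z}}}$, the two-sided Oseledets theorem applies and yields, for $\mu_{\hat{\mathcal{Z}}}$-almost every $\hat{T}$, an invariant splitting $\mathbb{R}^d = \bigoplus_{-g \leq i \leq g} E_i(\hat{T})$ as in \eqref{splittings}, together with the exponential rates \eqref{eq:Oseledets} and the subexponential angle control \eqref{Oseledetsangles}. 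The symmetry of the exponents (there are exactly $g$ positive and $g$ negative exponents) is part of the setup recalled in \S~\ref{sec:Oseledets}.

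From the data at $\hat{T}$, I would define the stable, central and unstable spaces of the candidate Oseledets generic extension by
\[
\Gamma^{(n)}_s := \bigoplus_{-g \leq i < 0} E_i(\hat{\mathcal{Z}}^n \hat{T}), \qquad
\Gamma^{(n)}_c := E_0(\hat{\mathcal{Z}}^n \hat{T}), \qquad
\Gamma^{(n)}_u := \bigoplus_{0 < i \leq g} E_i(\hat{\mathcal{Z}}^n \hat{T}),
\]
for every $n \in \mathbb{N}$. Invariance of these subspaces under the cocycle, namely $Q(m,n)\Gamma^{(m)}_\bullet = \Gamma^{(n)}_\bullet$, is then just invariance of the Oseledets filtration under $\hat{\mathcal{Z}}$, and the dimension constraint $(H)$ follows from the count of Lyapunov exponents. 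Conditions \ref{O-s}, \ref{O-u}, \ref{O-c} are then obtained from \eqref{eq:Oseledets}: picking any positive $\theta < \theta_g$ (where $\theta_g$ is the smallest positive Lyapunov exponent), Oseledets gives, for each fixed $v$ in a basis of $\Gamma^{(0)}_s$, a constant such that $\|Z^{(n)} v\| \leq C(\hat{T}) e^{-\theta n}$ for every $n \geq 0$; finite-dimensionality of $\Gamma^{(0)}_s$ then upgrades this to the required uniform estimate on the entire stable space, and analogously for $\Gamma^{(n)}_u$ and $(Z^{(n)})^{-1}$. Subexponential growth on $\Gamma^{(0)}_c$ is again \eqref{eq:Oseledets}, while \ref{O-a} is a direct restatement of \eqref{Oseledetsangles} applied to the pairs of spaces $(\Gamma^{(n)}_s, \Gamma^{(n)}_c, \Gamma^{(n)}_u)$.

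To transfer this from $\hat{\mathcal{I}}_\pi$ to $\mathcal{I}_\pi$, I would use the projection $p: \hat{\mathcal{I}}_\pi \to \mathcal{I}_\pi$ and the fact that $\mu_{\mathcal{Z}} = p_\ast \mu_{\hat{\mathcal{Z}}}$ (see \S~\ref{sec:natextension}). Denoting by $\hat{\mathcal{G}} \subset \hat{\mathcal{I}}_\pi$ the full $\mu_{\hat{\mathcal{Z}}}$-measure set of Oseledets generic points, the set $p(\hat{\mathcal{G}})$ has full $\mu_{\mathcal{Z}}$-measure in $\mathcal{I}_\pi$ and every $T \in p(\hat{\mathcal{G}})$ admits at least one lift $\hat{T}$ for which the construction above produces an Oseledets generic extension in the sense of Definition~\ref{def:Oseledetsextension}. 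Since $\mu_{\mathcal{Z}}$ is absolutely continuous with respect to the Lebesgue measure on $\mathcal{I}_\pi$ (see \S~\ref{sec:measures} and Remark~\ref{rk:ac}), taking the union over the finitely many irreducible combinatorial data $\pi \in \mathfrak{S}^0_d$ yields a full Lebesgue measure set of IETs in $\mathcal{I}_d = \sqcup_\pi \mathcal{I}_\pi$ admitting an Oseledets generic extension.

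The only subtlety, and where I would need to be careful, is the uniformity of the constants $C$ and $c(\epsilon)$ in $n$: Oseledets gives asymptotic estimates, but \ref{O-s}--\ref{O-a} demand a bound valid for \emph{every} $n \geq 0$. This is resolved, as usual, by choosing the rate $\theta$ strictly less than $\theta_g$, which produces a finite constant depending only on $\hat{T}$ and on $\theta$ (and, for \ref{O-a}, on $\epsilon$), at the cost of a small sacrifice in the exponential rate. No further work is needed since the definition only requires the existence of some $\theta > 0$ and, for the angles, of some $c(\epsilon)$ for each $\epsilon > 0$.
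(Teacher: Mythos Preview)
Your proof is correct and follows essentially the same route as the paper's: apply the two-sided Oseledets theorem to the extended Zorich cocycle over the invertible natural extension $\hat{\mathcal{Z}}$, define $\Gamma^{(n)}_s,\Gamma^{(n)}_c,\Gamma^{(n)}_u$ from the resulting splitting, read off conditions (O-s), (O-u), (O-c), (O-a) from the Oseledets estimates (choosing $\theta<\theta_g$), and then descend to $\mathcal{I}_\pi$ via the projection $p$ and Remark~\ref{rk:ac}. The paper phrases the descent as a Fubini argument and spells out the (O-u) verification slightly more explicitly (deriving the contraction of $Q(0,n)^{-1}$ on $\Gamma^{(n)}_u$ from the expansion of $Q(0,n)$ on $\Gamma^{(0)}_u$), but these are cosmetic differences.
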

\begin{proof}
By Remark~\ref{rk:ac} it is enough to show that for every fixed irreducibe $\pi\in\mathfrak{S}^0_d$, Lebesgue-almost every $T\in \mathcal{I}_\pi$ has a Oseledets generic extension (as defined in Definition~\ref{def:Oseledetsextension}). Consider the natural extension $\hat{\Zo}:\hat{\mathcal{I}_\pi}\to \hat{\mathcal{I}}_\pi$ of the Zorich acceleration ${\Zo}:{\mathcal{I}_\pi}\to {\mathcal{I}}_\pi$ defined in  \S~\ref{sec:natextension} 
 and let $p:\hat{\mathcal{I}_\pi}\to {\mathcal{I}_\pi}$ be the natural projection and ${\mu}_{\hat{\Zo}}$  the invariant measure  preserved by $\hat{\Zo}$, which gives $\mu_{\Zo}$ as pull back by $p$.  Recall from \S~\ref{sec:Oseledets} that the (extented) Zorich cocycle $Z: \hat{\mathcal{I}_\pi}\to SL(d,\mathbb{Z})$ is a cocycle over $\hat{\Zo}$ (see \S~\ref{sec:natextension}), is integrable w.r.t.~$\mu_{\hat{\mathcal{Z}}}$ and has Lypaunov exponents which, by the symplectic nature of the cocycle and the results by Forni \cite{Fo:ann} and Avila-Viana \cite{AV:sim} are:
\begin{equation}\label{exponents}
-\theta_1\leq -\theta_2\leq \cdots \leq -\theta_g < 0 < \theta_g \leq \theta_1 \leq \cdots\leq \theta_1.
\end{equation}
where $d=2g+\kappa-1$. %({\color{black}check consistency of def of genus and notation for exponents}). 
%Let  ${\mu}_{\hat{\Zo}}$ be the corresponding measure. 

Since $\hat{\Zo}:\hat{\mathcal{I}_\pi}\to \hat{\mathcal{I}}_\pi$ is ergodic w.r.t.~${\mu}_{\hat{\Zo}}$ and $Z$ is integrable w.r.t.~${\mu}_{\hat{\Zo}}$, the conclusions of Oseledets theorem for invertible maps hold for ${\mu}_{\hat{\Zo}}$-almost every $\hat{T}\in $ and gives the existence of an invariant splitting as in \eqref{splittingsextension}. 
Since $\mu_{\hat{\mathcal{Z}}}$ is the pull-back by $p$ of the measure $\mu_{{\mathcal{Z}}}$, it follows by Fubini theorem that for $\mu_{{\mathcal{Z}}}$-almost every $T$ there exists an extension $\hat{T}$ such that $p(\hat{T})=T$. Let  $E^{(n)}_i(\hat{T})$ be the subspaces given by Oseledets for $\hat{T}$. 
 For $x\in\{s,c,u\}$, if $E^{(n)}_x (\hat{T})$   are respectively the stable, central and unstable spaces for $\hat{T}$ given by Oseledets, we set $\Gamma^{(n)}_x:=E^{(n)}_x(\hat{T})$. Invariance and property $(H)$ therefore follow (the latter since $\lambda_g>0$ so both $\Gamma^{(n)}_s$ and $\Gamma^{(n)}_xu$ have dimension $g$. 
 
Since $Z_n(T)=Z_n(\hat{T})$ for every $n\in\mathbb{N}$ (by definition of the cocycle  extension, see \S~\ref{sec:Oseledets}), the remaining properties in the Definition~\ref{def:Oseledetsextension} then from the conclusions of Oseledets theorem for $\hat{T}$: (\ref{O-c}) and  (\ref{O-a}) are immediate from \eqref{eq:Oseledets} and \eqref{Oseledetsangles}; both (\ref{O-s}) and  (\ref{O-u})  hold for any choice of $0<\theta<\theta_g$ (with a constant $C>0$ depending on $\hat{T}$ and the choice of $\theta$: (\ref{O-s})  follows directly from  \eqref{eq:Oseledets} for $i<0$, to verify  (\ref{O-u}) 
%hold for any choice of $\theta_u>\theta_1$  and
 notice first that  \eqref{eq:Oseledets} for $i>0$, since $\theta<\theta_g$,  implies that 
$$\lim_{n\to \infty} \frac{\log \left\|\left. Q(0,n)\right|_{\Gamma^{(0)}_u}\right\|}{n} > \theta>0,$$ where $ \left. Q(0,n)\right|_{\Gamma^{(0)}_u}$ denotes the restriction of $Q(0,n)$ to the invariant space $\Gamma^{(0)}_u$. Therefore, there exists $c>0$ such that
%, since, we know that for every $w\in \Gamma^{(0)}_u$  we have that $\| Q(0,n)\,  w\|\geq c(w)\,  e^{n\theta} \|w\|$ for some $c(w)>0$ and all $n\in\mathbb{N}$; therefore, if $c:=\sup \{c(w), \, w\in\mathbb{R}^d, \, \| v \|=1\}$, {\color{black} why is this constant well defined? the function is not continuous... I guess my doubts are about uniformity of the constants over the unstable space...} we have that
 %for all $n\in \mathbb{N}$ and all $v\in  \Gamma^{(0)}_u$ with $\| v\$
%{\color{black} Sorry but I still do not understand why this one does not require Oseledets at n... the stable is defined as those contracting in the past, no? there is no way to define Oseledets unstable space expanding in the future... so yes, if v is in the n'th unstable, it contracts in the past, but then the constant may change...}
$$
\| Q(0,n)\,  w\|\geq c\,  e^{n\theta} \| w\|, \qquad \text{for\ all}\ w\in \Gamma^{(0)}_u, \  \text{for\ all}\  n\in\mathbb{N}.
$$
Given any $v\in \Gamma^{(0)}_u$, consider $w:=  Q(0,n)^{-1}v$ which by invariance of the splitting belongs to $\Gamma^{(0)}_u$. Then, by the previous inequality applied to $w$, we get (\ref{O-u}).
%$$
% \| Q(0,n)^{-1}v \| \leq \frac{1}{c} \|w\| e^{-n\theta}, \qquad \text{for\ all}\ v\in \Gamma^{(n)}_u, \  \text{for\ all}\  n\in\mathbb{N},
%$$
%which shows that also \ref{O-u} holds and concludes the proof. 
\end{proof}
\subsubsection{Construction of good return times}\label{sec:existencegoodreturns}
Condition $(ii)$ in Definition~\ref{def:RDC} concerns the existence of \emph{good return times}. Let us show that sequences of good return times can be easily constructed exploiting visits to certain sets.  Recall from \S~\ref{sec:goodreturns} that we say that $A\in SL(d,\mathbb{Z})$ is a Zorich cocycle matrix of (Zorich) length $p$ if it can be obtained as product of $p$ matrices of the Zorich cocycle, i.e.~$A=Q(0,p)(T_A)$ for some $T_A\in \mathcal{I}_d$.
\begin{lemma}[Construction of good return times]\label{lemma:existencegoodreturns}
There exists a positive Zorich matrix $A\in SL(d,\mathbb{Z})$ and a set ${\hat{G}}_{A}\subset \hat{\mathcal{I}}_d$  with $\mu_{\hat{\Zo}}({\hat{G}}_{A})>0$ such that if $(n_k)_{k\in\mathbb{N}}$ is such that $\mathcal{Z}^{n_k}(\hat{T}) \in {\hat{G}}_A$ for all $k\in\mathbb{N}$, then $(n_k)_{k\in\mathbb{N}}$ is a sequence of good return times for $T=p(\hat{T})$. 
\end{lemma}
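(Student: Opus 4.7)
\medskip

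\noindent \textbf{Proof plan for Lemma~\ref{lemma:existencegoodreturns}.} The plan is to produce $\hat{G}_A$ as the pull-back under $p$ of a \emph{cylinder set} of the Zorich map determined by a fixed combinatorial word realizing the matrix product $AA$. First, I will invoke the classical fact that for every (irreducible) Rauzy class there exists a finite admissible path of the Rauzy diagram whose associated Zorich cocycle product is a \emph{positive} matrix in $SL(d,\mathbb{Z})$ (this is a standard consequence of the construction of Veech \cite{Ve:gau} and is usually proved by showing that after sufficiently many steps of Rauzy--Veech induction one can realize a path ending back at the starting permutation whose cocycle matrix is strictly positive; see e.g.\ \cite{Yoc:Clay} or \cite{Vi:IET}). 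Fix such a positive matrix $A$ and denote its Zorich length by $p$. Since the end-permutation of an $A$-path coincides with its start-permutation, the concatenation of two copies of this path is again an admissible Rauzy path, and the associated cocycle matrix is $AA$.

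Next I define
\[
G_A := \{\, T \in \mathcal{I}_\pi \st Q(0,2p)(T)=AA\,\}, \qquad \hat{G}_A := p^{-1}(G_A) \subset \hat{\mathcal{I}}_\pi.
\]
The set $G_A$ is exactly the \emph{Rauzy--Veech cylinder} consisting of IETs whose first $2p$ Zorich steps follow the chosen combinatorial word. By the standard description of Rauzy--Veech cylinders as images of $\Delta_{d-1}$ under the transpose of the inverse cocycle matrix (see \eqref{subsimplex} and \S~\ref{sec:lengthcocycle}), $G_A$ is an \emph{open} subset of $\mathcal{I}_\pi$ of positive Lebesgue measure, hence of positive $\mu_\mathcal{Z}$-measure (since $\mu_\mathcal{Z}$ is absolutely continuous with respect to Lebesgue, see Remark~\ref{rk:ac}). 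Because $\mu_{\hat{\mathcal{Z}}}$ is the pull-back of $\mu_\mathcal{Z}$ via $p$ (by the defining property of the natural extension, see \S~\ref{sec:natextension}), this gives $\mu_{\hat{\mathcal{Z}}}(\hat{G}_A)=\mu_\mathcal{Z}(G_A)>0$, as required.

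Finally, the verification that visits to $\hat{G}_A$ produce good return times is immediate from the cocycle identity \eqref{cocyclerel}: if $\mathcal{Z}^{n_k}(\hat{T})\in \hat{G}_A$, then $\mathcal{Z}^{n_k}(T)\in G_A$ where $T=p(\hat{T})$, hence
\[
Q(n_k,n_k+2p)(T) = Q(0,2p)(\mathcal{Z}^{n_k}(T)) = AA.
\]
The only additional requirement of Definition~\ref{def:goodreturns} is the spacing $n_{k+1}-n_k\geq 2p$, which can be arranged for free: given any sequence of visits to $\hat{G}_A$, one extracts a subsequence by the greedy rule $n_{k+1}:=\min\{n>n_k+2p-1 \st \mathcal{Z}^n(\hat{T})\in \hat{G}_A\}$, and by Poincar\'e recurrence (applied to $\hat{\mathcal{Z}}$, which preserves the finite measure $\mu_{\hat{\mathcal{Z}}}$) this selection is infinite for $\mu_{\hat{\mathcal{Z}}}$-a.e.\ $\hat{T}\in \hat{G}_A$. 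No genuine obstacle arises in the argument; the one point that needs care is the construction of the positive matrix $A$ in Step~1, which is a classical but non-trivial input from the ergodic theory of Rauzy--Veech induction.
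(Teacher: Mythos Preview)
Your proof is correct and follows essentially the same approach as the paper: define $G_A$ as the cylinder $\Delta_{A^2}\times\{\pi\}$ of IETs whose first $2p$ Zorich steps realize $AA$, and set $\hat{G}_A=p^{-1}(G_A)$. You are in fact slightly more thorough than the paper, which neither explicitly justifies $\mu_{\hat{\mathcal{Z}}}(\hat{G}_A)>0$ nor addresses the spacing condition $n_{k+1}-n_k\geq 2p$ from Definition~\ref{def:goodreturns}.
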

\begin{proof}
Let $A$ be a positive $d\times d$ matrix which can be obtained as product of $p$ matrices of the Zorich cocycle, i.e.~such that $A=Q(0,p)(T_A)$ for some $T_A\in \mathcal{I}_d$. Let $T_A$ have combinatorial datum $\pi$.  
Consider the set $G_A\subset\mathcal{I}_d $ given by 
$$
G_{A}:= \Delta_{A^2}\times \{\pi\}, \qquad \text{where}\ \Delta_{A^2}:= \left\{ \lambda = \frac{A^\dag A^\dag \lambda'}{\|A^\dag A^\dag  \lambda' \|}, \ \lambda'\in \Delta_{d-1} \right\}.
$$
\noindent Then, if we consider $T$ given by $\lambda$ and $\pi$ with $\lambda\in \Delta_{A^2}$, then 
$Z^{(2p)} (T) = A A $, i.e.~the Zorich cocycle matrices at $T$ start with a double occurence of $A$ (see \S~\ref{sec:lengthcocycle}).
It follows  if $\mathcal{Z}^{n_k}(T)\in G_A$, then $Q(n_k, n_k+2p)=AA$ (since $\mathcal{Z}$ acts as a shift on the sequence of matrices $(Z_n)_{n\in\mathbb{N}}$ associated to $T$, i.e.~the sequence of cocycle matrices associated to $\mathcal{Z}^{n_k}(T)$ is $(Z_{n_k+n})_{n\in\mathbb{N}}$). 

Let us now define ${\hat{G}}_{A}:= p^{-1}(G_{A})$ where $p$ is the projection $p:\hat{\mathcal{I}}_\pi\to \mathcal{I}_\pi$. Recall that if $\hat{T}$ is such that $p(\hat{T})= T$, then $Z^{(n)}(\hat{T}) = Z^{(n)}(T)$ for every $n\in\mathbb{N}$. Therefore, if 
$Z^{(n_k)}(\hat{T})\in {\hat{G}}_{A}$, we have that $Q(n_k, n_k+2p)= A A$. This shows that visits to ${\hat{G}}_{A}$ produce sequences of good returns as desired.
%%%\footnote{{\color{black} if we want also that $n_{k+1}-n_k\geq 2p$ (it's currently in the def of good returns) to guarantee that between $n_k$ and $n_{k+1}$ one sees in full the double occurence (of course one can always pass to a subsequence, but for acceleration and integrability it is better if the sequence comes from considering \emph{all} returns to a certain set... one can twikle the proof to get it... one can consider instead than $\Delta_{AA}$ the set $\Delta_{AAB}$ where $B$ is chosen that $AA$ can repeat itself before having fully occurred... Yoccoz has a trick for that in Avila-Gouezel-Yoccoz, but it's for a single positive matrix (he calls them simple) I can think about it.}}
\end{proof}
\subsection{Effective Oseledets}\label{sec:effectiveOseledets}
%We will first define the acceleration for a full measure set of standard IETs in $\mathcal{I}_d$ with respect to the  Zorich invariant measure $\mu_\mathcal{Z}$ (see \S~\ref{sec:Zorich}).  By the Definition~\ref{def:fullmeasure} of full measure, this will define the acceleration for a full measure subset of  infinitely renormalizable GIETs with irrational rotation numbers.
%Let $T\in \mathcal{I}_d$ be a IET which has a generic Oseledets extension, given, as explained in \S~\ref{sec:existenceOseledetsextension}, by an Oseledets regular $\hat{T}\in p^{-1}(T)$. Let $\Gamma_x^{(n)}$ for $x\in \{s,c,u\}$, $n\in\mathbb{Z}$ be the spaces given by the Oseledets splittings for $\hat{T}$.
We are going to consider sequences where the estimates given by Oseledets theorem can be quantified in an \emph{effective} with uniform  constants along the sequence. 

\subsubsection{Effective Oseledets return times}\label{sec:effectiveseqs}
 Let $\hat{T}\in \hat{\mathcal{I}_\pi}$ be Oseledets generic for the (extension of the) Zorich cocycle $Z$ over the Zorich natural extension $\hat{\mathcal{Z}}$.  Let $\Gamma_x^{(n)}$ for $x\in \{s,c,u\}$, $n\in\mathbb{Z}$ be the spaces given by the Oseledets splittings for $\hat{T}$. Recall that, for any pairs of non negative integers $m<n$,  $Q(m,n)$  denotes the matrices of the Zorich cocycle (see \S~\ref{sec:Zorichcocycle}) and that $Q(m,n)$ maps $\Gamma_x^{(m)}$ %(resp.~$\Gamma_u^{(m)}$)
 to  $\Gamma_x^{(n)}$  for any $x\in \{s,c,u\}$.
%(resp.~$\Gamma_u^{(n)}$).

\begin{definition}[Effective Oseledets sequence]\label{def:effectiveOsedeletsseq} Given $C_1>0$ and $\epsilon>0$, a sequence $(k_m)_{m\in\mathbb{N}}$ is a $(C_1,\epsilon)-$\emph{effective Oseledets sequence} for $\hat{T}$, if for some $\theta>0$ we have:
\begin{align}\label{estimateEO1}
\tag{EO1} ||Q(n_k,n)\vert_{\Gamma_s^{(n_k)}}||_{\infty} \leq C_1 e^{-\theta(n-n_k)} & \qquad \textrm{for\ every} \ n\geq n_k,  \\
\tag{EO2} ||Q(n,n_k)^{-1}\vert_{\Gamma_u^{(n_k)}}||_{\infty}  \leq C_1 e^{-\theta(n_k-n)} & \qquad \textrm{for\ every}\   n\leq n_k, \label{estimateEO2}
\end{align}
and furthermore, for some $c_2(\epsilon)>0$, the angle $\angle (\Gamma^{(n)}_x, \Gamma^{(n)}_y)$ for \emph{distinct} $x,y\in\{s,c,u\}$ between $\Gamma^{(n)}_x $ and $ \Gamma^{(n)}_y$ (defined as in \S~\ref{sec:Oseledets}) satisfies 
\begin{align}\label{expsmall_fromki}
% ||Z_n||\leq c_2 e^{\epsilon|n-n_k|}, \quad\textrm{and}\quad 
\tag{EO3} |\angle (\Gamma^{(n)}_x, \Gamma^{(n)}_y)|\geq c_2(\epsilon)\ e^{-\epsilon|n-n_k|}, \qquad \textrm{for\ all\ } n\in\mathbb{Z},
\end{align}
and the Zorich cocycle grows subexponentially along the subsequence, i.e.
\begin{equation*}
\tag{EO4} \lim_{k \rightarrow +\infty}{\frac{\log ||Q(n_k,n_{k+1})||}{k}} = 0  \label{S}
\end{equation*}
We say that $(n_k)_{k\in\mathbb{N}}$ is an \emph{effective Oseledets sequence} for $\hat{T}$ if it is $(C_1,\epsilon)-$\emph{effective Oseledets acceleration sequence} for some $\epsilon>0, C_1>0$. 
 \end{definition}

\subsubsection{Construction of effective Oseledets sequences}\label{sec:existenceeffective}
 Effective Oseledets sequences (see Definition~\ref{def:effectiveOsedeletsseq} above) can be obtained (using Oseledets and Lusin's theorems) as \emph{return times} to certain \emph{good sets} sequences for the natural extension $\hat{\mathcal{Z}}$ (see \S~\ref{sec:fullmeasure} for details). We stress that working with the natural extension is an essential technical tool to impose \emph{backward} conditions like \eqref{estimateEO2} and \eqref{expsmall_fromki} for $n\leq n_k$ (see \S~\ref{sec:fullmeasure} for details). Exploiting ergodicity of $\hat{\mathcal{Z}}$, let us show that:

\begin{proposition}[Effective Oseledets good returns]\label{prop:existenceeffective}
%For Lebesgue almost every IET  
For any irreducible $\pi\in\mathfrak{S}^0_d$, there exists $\epsilon_0>0$ such that for every $0<\epsilon<\epsilon_0$, there exists a constant $C>0$ such that 
 $\mu_{\hat{\Zo}}$-almost every $\hat{T}\in \hat{\mathcal{I}}_\pi$ %, there exists an Oseledets regular extension $\hat{T}\in \hat{\mathcal{I}}_\pi$ 
  %$\mu_{\hat{\Zo}}$-almost every $\hat{\mathcal{I}}_\pi$ is Oseledets generic and 
which	admits a $(C,\epsilon)$-effective Oseledets sequence $(n_k)_{k\in\mathbb{Z}}$ which is also a sequence of good returns for $T=p(\hat{T})$. Furthemore, the sequence is given by considering returns of the forward orbit $\{\mathcal{Z}^n\hat{T}, \, n\in\mathbb{N}\}$ to a set ${\hat{G}}\subset \hat{\mathcal{I}}_\pi$.
\end{proposition}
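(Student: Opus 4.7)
The plan is to construct a set $\hat{G}\subset \hat{\mathcal{I}}_\pi$ of positive $\mu_{\hat{\Zo}}$-measure such that any $\hat{T}$ whose forward $\hat{\mathcal{Z}}$-orbit visits $\hat{G}$ infinitely often admits return times to $\hat{G}$ which simultaneously form a sequence of good returns and an effective Oseledets sequence in the sense of Definition~\ref{def:effectiveOsedeletsseq}. By ergodicity of $\hat{\mathcal{Z}}$ with respect to $\mu_{\hat{\Zo}}$, this will hold for a.e.~$\hat{T}\in \hat{\mathcal{I}}_\pi$.

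First, I would use the conclusions of Oseledets theorem for the natural extension $\hat{\mathcal{Z}}$ (applied in the proof of Lemma~\ref{lemma:Oseledetsgeneric}) to define measurable functions $C_s, C_u :\hat{\mathcal{I}}_\pi \to (0,+\infty)$ and, for each fixed $\epsilon>0$, functions $c_\epsilon:\hat{\mathcal{I}}_\pi \to (0,+\infty)$ characterized by
\begin{align*}
\| Q(0,n)|_{\Gamma^{(0)}_s(\hat{T})}\| & \leq C_s(\hat{T}) e^{-\theta n},  & \text{for all } n\geq 0, \\
\| Q(-n,0)^{-1}|_{\Gamma^{(0)}_u(\hat{T})} \| &\leq C_u(\hat{T}) e^{-\theta n}, & \text{for all } n\geq 0, \\
\sin |\angle(\Gamma^{(n)}_x(\hat{T}), \Gamma^{(n)}_y(\hat{T}))| & \geq c_\epsilon(\hat{T}) e^{-\epsilon |n|}, & \text{for all } n\in\mathbb{Z}, \, x\neq y \in \{s,c,u\},
\end{align*}
where $\theta:=\theta_g/2$ is half of the smallest positive Lyapunov exponent (cf.~\eqref{exponents}). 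By Lusin's theorem (or equivalently Egorov) applied to $(C_s, C_u, c_\epsilon)$, for any $\delta>0$ there exists a constant $C=C(\delta,\epsilon)>0$ and a measurable set $\hat{G}_O=\hat{G}_O(\delta,\epsilon)\subset \hat{\mathcal{I}}_\pi$ with $\mu_{\hat{\Zo}}(\hat{G}_O)>1-\delta$ on which $\max(C_s, C_u)\leq C$ and $c_\epsilon \geq 1/C$. Now let $\hat{G}_A$ be the positive-measure set produced by Lemma~\ref{lemma:existencegoodreturns} and set
$$ \hat{G} := \hat{G}_O \cap \hat{G}_A, $$
choosing $\delta<\mu_{\hat{\Zo}}(\hat{G}_A)$ so that $\mu_{\hat{\Zo}}(\hat{G})>0$.

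Next, I would apply the Poincaré recurrence theorem (or Birkhoff's ergodic theorem, together with the ergodicity of $\hat{\mathcal{Z}}$ stated in \S~\ref{sec:measures}) to deduce that for $\mu_{\hat{\Zo}}$-almost every $\hat{T}$, the set of integers $n\geq 0$ with $\hat{\mathcal{Z}}^n \hat{T} \in \hat{G}$ is infinite; let $(n_k)_{k\in\mathbb{N}}$ be its enumeration. Since $\hat{\mathcal{Z}}^{n_k}\hat{T}\in \hat{G}_A$, Lemma~\ref{lemma:existencegoodreturns} guarantees that $(n_k)_{k\in\mathbb{N}}$ is a sequence of good returns for $T=p(\hat{T})$. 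The effective Oseledets bounds follow by applying the uniform estimates on $\hat{G}_O$ at the point $\hat{\mathcal{Z}}^{n_k}\hat{T}$: using invariance of the splitting under the cocycle, the forward bound translates into \eqref{estimateEO1} with uniform constant $C_1:=C$, the backward bound (which requires invertibility, hence the passage to the natural extension) gives \eqref{estimateEO2}, and the angle bound gives \eqref{expsmall_fromki} with $c_2(\epsilon):=1/C$.

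Finally, to establish the subexponential growth condition \eqref{S} I would invoke Birkhoff's ergodic theorem applied to the integrable observable $f(\hat{T}) := \log \|Z(\hat{T})\|$ (integrability was recalled in \S~\ref{sec:Zorichcocycle}), which yields a finite constant $L:= \int f\, d\mu_{\hat{\Zo}}$ such that $\sum_{j=0}^{n-1} f(\hat{\mathcal{Z}}^j\hat{T}) = Ln + o(n)$ a.e., and Kac's lemma applied to the first-return of $\hat{\mathcal{Z}}$ to $\hat{G}$, which gives $n_k/k \to 1/\mu_{\hat{\Zo}}(\hat{G})$ a.e. Submultiplicativity of the matrix norm produces
$$ \log \|Q(n_k, n_{k+1})\| \leq \sum_{j=n_k}^{n_{k+1}-1} f(\hat{\mathcal{Z}}^j \hat{T}) = L(n_{k+1}-n_k) + o(n_{k+1}) - o(n_k), $$
and dividing by $k$ the right-hand side tends to $0$, since both $(n_{k+1}-n_k)/k \to 0$ (as difference of two sequences with the same limit) and $o(n_k)/k = (o(n_k)/n_k)(n_k/k) \to 0$. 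This proves \eqref{S} and completes the construction. The only genuinely delicate point is ensuring that all three classes of control (forward, backward, and angle) can be realized \emph{simultaneously} on a set of positive measure; this is the main role of working on the natural extension and of applying Lusin's theorem jointly to the three measurable quantities $C_s$, $C_u$, $c_\epsilon$.
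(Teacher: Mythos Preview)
Your proposal is correct and follows essentially the same route as the paper: apply Lusin/Egorov to the measurable Oseledets constants to produce a set $\hat{G}_O$ of uniformly controlled points, intersect it with the set $\hat{G}_A$ from Lemma~\ref{lemma:existencegoodreturns}, and let $(n_k)$ be the return times to $\hat{G}:=\hat{G}_O\cap\hat{G}_A$ given by ergodicity. The verification of \eqref{estimateEO1}, \eqref{estimateEO2}, \eqref{expsmall_fromki} and the good-return property is identical to the paper's.

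The one point where you go further than the paper is the subexponential growth condition \eqref{S}. The paper's proof of this proposition does not verify (EO4) explicitly; it is effectively handled later (in the proof of Theorem~\ref{thm:fullmeasure}) by appealing to integrability of the \emph{induced} cocycle over $\hat{\mathcal{Z}}_{\hat{G}}$ and the Oseledets consequence \eqref{subexpgrowth}. Your argument via Birkhoff on $\log\|Z\|$ together with $n_k/k\to 1/\mu_{\hat{\Zo}}(\hat{G})$ is a valid and more elementary alternative: it avoids passing to the induced system, at the cost of the small extra bookkeeping that $(n_{k+1}-n_k)/k\to 0$ and $o(n_k)/k\to 0$. (Note that strictly speaking you only bound $\limsup$ from above; the matching lower bound $\liminf\geq 0$ is immediate since $\|Q(n_k,n_{k+1})\|\geq 1$.)
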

To prove Proposition~\ref{prop:existenceeffective}, we will construct a 
%exploit the two technical Lemmas just stated to define a
 \emph{good} set in $\hat{\mathcal{I}}_d$, denoted by  $\hat{G}$ for \emph{Good} (the \emph{hat} is to stress that it is a set in the domain $\hat{I}_d$ of the natural extension)  such that visits to $\hat{G}$ produce effective Oseledets sequences. In addition, intersecting with the set ${\hat{G}}_A$ given by Lemma~\ref{lemma:existencegoodreturns} in \S~\ref{sec:goodreturns}, we can get sequences of good returns (see Definition~\ref{def:goodreturns}) where the Oseledets growth is effective. These will provide  the accelerating sequences $(n_k)_{k\in\mathbb{N}}$ which appear in the RDC (see Definition~\ref{def:RDC}).

\begin{proof}[Proof of Proposition~\ref{prop:existenceeffective}] Fix $\pi\in\mathfrak{S}^0_d$ irreducible  and let $\mu_{\mathcal{Z}}$ be the invariant measure for the natural extension $\hat{\mathcal{Z}}:\hat{\mathcal{I}}_\pi \to \hat{\mathcal{I}}_\pi$ (recall \S~\ref{sec:natextension} and \S~\ref{sec:measures}).   
Let us first of all construct the  \emph{good set} ${\hat{G}}\subset \hat{\mathcal{I}_\pi}$ of $\mu_{\hat{\mathcal{Z}}}$-positive measure where the control given by Oseledets theorem holds uniformely.

\smallskip
\noindent \paragraph{{\it Construction of the good set ${\hat{G}}$.}}  Let $\hat{T}\in \hat{\mathcal{I}}_\pi$ be Oseledets generic. Recall that $Z^{(n)}:=Z^{(n)}(\hat{T})$,  for $n\in\mathbb{Z}$, and $Q(m,n)$, for $m<n$, denote its Zorich cocycle matrices, as defined as in \S~\ref{sec:Zorichcocycle} . 
%Let  $\hat{Z}: \hat{\mathcal{I}}_\pi\to \hat{\mathcal{I}}_\pi$ be the natural extension of the Zorich acceleration (see \S~\ref{Rauzysec}).
Since the cocyle ${Z}$ has the Lyapunov exponents in \eqref{exponents},  if we denote by $\Gamma_x^{(n)}:= E_x^{(n)}(\hat{T})$ for $x\in\{x,s,u\} $ respectively the stable, central and unstable space given by Oseledets (see \S~\ref{sec:Oseledets}), 
%, is \emph{hyperbolic}, 
for every $\epsilon>0$ there exists a constant $C_1(\epsilon, \hat{T})>0$ such that, for all $n \geq 0$, for all $v_s$ in $\Gamma_s^{(n)}$, 
\begin{equation}\label{controlfuture}||Q(0,n) v_s||_{\infty} \leq C_1(\epsilon, \hat{T}) e^{-(\theta_g -\epsilon) n} ||v_s||_{\infty},\end{equation}
where $-\theta_g<0$ is the largest negative exponent (smallest in absolute value) with respect to $\mu_{\hat{\mu}}$, see \eqref{exponents}. Moreover, by the symmetry of exponents recalled in \eqref{exponents} above,  $\theta_g>0$ is also the smallest positive exponent (see again \eqref{exponents}), so that for all $n>0$ and all  
$v_u$ in $\Gamma_u^{(n)}$,  we also have
\begin{equation}\label{controlpast}||(Q(-n,0))^{-1} v_u||_{\infty} \leq C_1(\epsilon, \hat{T}) e^{-(\theta_g -\epsilon )n} ||v_u||_{\infty}\end{equation}
(where we can assume without loss of generality that the constant $C_1=C_1(\epsilon, \hat{T})$ is the same than above).

%Choose $\va$ such that $\va<\theta_g/8$. 
Fix a positive $\epsilon$ such that $\epsilon<\theta_g/2$. Let now ${\hat{G}}_A$ be the set given by Lemma~\ref{lemma:existencegoodreturns} and consider its measure $0<\mu_{\hat{\Zo}}({\hat{G}}_A)<1$.
Since the constant $C_1(\epsilon,\hat{T})$ depends measurably on $\hat{T}$, by Lusin theorem, for some fixed $C_1=C_1(\epsilon,\mu_{\hat{\Zo}}({\hat{G}}_A))>0$ sufficiently large, we can find a set ${\hat{G}}_1={\hat{G}}_1(\epsilon, A)\subset \hat{\mathcal{I}}_\pi$ of measure $\hat{\mu}_{\mathcal{Z}}({\hat{G}}_1)> 1-\mu_{\hat{\Zo}}({\hat{G}}_A)/2$ such that, for every $\hat{T}'$ which belongs to ${\hat{G}}_1$, \eqref{controlfuture} and \eqref{controlpast} hold uniformely, i.e.~one has $C_1(\epsilon,\hat{T}')\leq C_1$. Thus, for every $n_k\in\mathbb{N}$ such that $\mathcal{Z}^{n_k}(T)\in {\hat{G}}_1$, we have (recalling that  $0<\epsilon<\theta_g/2$), 
\begin{align}\label{estimateQF}
||Q(n_k,n)\vert_{\Gamma^{(s)}(\mathcal{Z}^{n}(T)}||_{\infty} \leq C_1 e^{-(\theta_g -\epsilon)(n-n_k)}\leq C_1 e^{-(\theta_g/2)(n-n_k)} & \qquad \textrm{for\ every}\ n\geq n_k, \\ \label{estimateQB}
||Q(n,n_k)^{-1}\vert_{\Gamma^{(u)}(\mathcal{R}^{n}(T)}||_{\infty} \leq C_1 e^{-(\theta_g -\epsilon)(n_k-n)} \leq C_1 e^{-(\theta_g/2)(n_k-n)} & \qquad \textrm{for\ every}\  n\leq n_k.
\end{align}
{Moreover, Oseledets theorem %(or ergodicity and log integrability)
 applied respectively to the cocycle $Z$ over $\hat{\Zo}$ and to the inverse cocycle $Z^{-1}$ over $\hat{\Zo}^{-1}$ (see \S~\ref{sec:Oseledets}), %together with Lemma~\ref{lemma:zero} {\color{black}(add Lemma or remark about nth term going to zero, what I explained in the email!)}
gives that, for almost every $\hat{T}\in \hat{\mathcal{I}}_\pi$, 
%$$
%\lim_{n\to \pm \infty}\frac{ \log ||Z_n||}{|n|} = 0
%$$
%and, 
if we consider the $\angle (\Gamma^{(m)}_x, \Gamma^{(m)}_y)$ the angle between any two distinct pairs of spaces $\Gamma^{(m)}_x$ and $\Gamma^{(m)}_y$ with $x,y\in\{s,c,u\}$ (see \S~\ref{sec:Oseledets}), we have 
\begin{equation}
\label{suexpgrowth}
\lim_{m\to \pm \infty}\frac{ \log |\angle (\Gamma^{(m)}_x, \Gamma^{(m)}_y)|}{|m|}
%=\lim_{m\to \pm \infty}\frac{ \log |\angle (\Gamma_u(\Zo^m(T))_u, \Gamma_s(\Zo^m (T)))}{|m|} 
= 0, \qquad \text{for\ all}\ x,y\in\{s,c,u\},\ x\neq y.
\end{equation}
%Fix $\epsilon<\theta_g/8$. 
 By Egoroff theorem, this pointwise almost everywhere convergence can be upgraded to uniform convergence, i.e. there exists a set ${\hat{G}}_2={\hat{G}}_2(\epsilon,A)\subset \hat{\mathcal{I}}_\pi$, with ${\mu}_{\hat{\Zo}}({\hat{G}}_2)>1-\mu_{\hat{\Zo}({\hat{G}}_A)}/2$, such that,   for all $\epsilon>0$ exists a constant $c_2(\epsilon)>0$ such that, for all 
 $\hat{T}\in {\hat{G}}_2$
and for all $m\in\mathbb{Z}$  %for $i=2,3$, 
$$
% ||Z_n||\leq c_2 e^{\epsilon|n|}, \qquad 
|\angle (\Gamma^{(m)}_x, \Gamma^{(m)}_y)|\geq c_2 (\epsilon)\ e^{-\epsilon|m|}\qquad \text{for\ all}\ x,y\in\{s,c,u\},\ x\neq y.
$$
Notice that, for any $n_k\in\mathbb{Z}$, 
%(since the cocycle matrices for $\hat{\Zo}^{k_i}(\hat{T})$ are a shifted copy of the matrices for $\hat{T}$) this implies that
 $$\Gamma_x^{(n_k+m)} =E_x\big(\Zo^{m+n_k}(\hat{T})\big)\big) 
%=E_x\big(\Zo^{m}\big(\Zo^{n_k}(\hat{T})\big)\big)
=E_x^{(m)}\big(\Zo^{n_k}(\hat{T})\big) , \qquad \text{for\ all\ } x\in \{s,c,u\}, \ \text{for\ all\ }  m\in \mathbb{Z},$$
so that, if $\hat{\Zo}^{n_k}(\hat{T})\in {\hat{G}}_2$, (taking as index $m:=n-n_k$ and applying the above subexponential growth estimate \eqref{suexpgrowth} to $\hat{\Zo}^{n_k}(\hat{T})$), we have, for any distinct $x,y\in\{s,c,u\}$,
\begin{equation}\label{expsmall_fromki}
% ||Z_n||\leq c_2 e^{\epsilon|n-n_k|}, \quad\textrm{and}\quad 
|\angle (\Gamma^{(n)}_x, \Gamma^{(n)}_y)|\geq c_2(\epsilon)\ e^{-\epsilon|n-n_k|}, \qquad \textrm{for\ all\ } n\in\mathbb{Z}. 
\end{equation}
Notice in particular that, for $n=n_k$, this gives that
\begin{equation}\label{lowerboundA}
% ||Z_n||\leq c_2 e^{\epsilon|n-n_k|}, \quad\textrm{and}\quad 
|\angle (\Gamma^{(n_k)}_x, \Gamma^{(n_k)}_y)|\geq c_2(\epsilon), \qquad \textrm{for\ all\ distinct}\ x,y\in\{s,c,u\},\ \text{for\ all\ }  n\in\mathbb{Z}. 
\end{equation}}
\noindent Define the good set ${\hat{G}}={\hat{G}}(\epsilon)$ to be ${\hat{G}}:= {\hat{G}}_1(\epsilon)\cap{\hat{G}}_2(\epsilon)\cap {\hat{G}}_A$. Notice that since by construction $\mu_{\hat{\Zo}}({\hat{G}}_i)>1-\mu_{\hat{\Zo}}({\hat{G}}_A)$ for $1=1, 2$, and ${\hat{G}}$ has positive measure ${\mu}_{\hat{\Zo}}({\hat{G}})>0$.

\smallskip
\noindent \textit{Final arguments.} Set $\epsilon_0:=\theta_g/2$. For $0<\epsilon<\epsilon_0$, let ${\hat{G}}:={\hat{G}}(\epsilon)$ be the good set constructed above and define $\theta:=\theta_g/2$ and  $C:=C_1(\epsilon)$ to be the constant for which \eqref{estimateEO1} and \eqref{estimateEO2} hold.  
By ergodicity of $\hat{\mathcal{Z}}$, since ${\mu}_{\hat{\Zo}}({\hat{G}})>0$, it follows that ${\mu}_{\hat{\Zo}}$-almost every $\hat{T}\in \mathcal{I}_\pi$ will visit ${\hat{G}}$
 infinitely often. %Therefore, by Fubini and the properties of the invariant measures (see \S~\ref{sec:natextension}), %by Remark~\ref{rk:ac}, for Lebesgue 
%for $\mu_\mathcal{Z}$-almost every $T\in\mathcal{I}_d$ there exists $\hat{T}\in p^{-1}(T)$ which visits ${\hat{G}}_A$ infinitely often. %For such $T$ and such choice of $\hat{T}$, 
Set $(n_k)_{k\in\mathbb{N}}$ to be the successive visits of the forward orbit $\{\hat{\mathcal{Z}}^n(\hat{T}), n\in\mathbb{N}\}$ to ${\hat{G}}_A$, i.e.~we set by convention $n_0:=0$ and, given $n_k$ for $k\in\mathbb{N}$, we let $n_{k+1}$ to be the minimum $n>n_k$ such that $\hat{\mathcal{Z}}^n(\hat{T})\in {\hat{G}}$. Then,  $(n_k)_{k\in\mathbb{N}}$ is by construction a $(C_1,\epsilon)$-Oseledets effective sequence for $\hat{T}$ and, by Lemma~\ref{lemma:existencegoodreturns}, since ${\hat{G}}\subset {\hat{G}}_A$, $(n_k)_{k\in\mathbb{N}}$ is also a sequence of good returns for $T:=p(\hat{T})$. 
\end{proof}

\subsection{Control of the series in Condition $(iii)$ and proof of full measure.}\label{sec:prooffullmeasure}
We can now use the partial results proved so far to give the proof the the RDC has full measure. 

\begin{proof}[Proof of Theorem~\ref{thm:fullmeasure}]
Fix $\pi\in\mathfrak{S}^0_d$ irreducible. Let $\hat{G}$ be the good set given by Proposition~\ref{prop:existenceeffective} constructed in \ref{sec:existenceeffective}. By Proposition~\ref{prop:existenceeffective}, $\mu_{\hat{\mathcal{\Zo}}}$-almost every $\hat{T}$  the forward orbit of $\hat{T}$ under $\hat{\mathcal{Z}}$ visits $\hat{G}$ infinitely often along a sequence 
%under  (see the terminology introduced in \S~\ref{sec:inducing}). Let
 $(n_k)_{k\in\mathbb{N}}$  of return times, which
%, by Proposition~\ref{prop:existenceeffective},
 is  an effective Oseledets sequence (and also a sequence of $A$-good return times for some positive $A$ for $T=p(\hat{T})$). We now want to impose a better control on the frequency of visits.

\smallskip
\noindent \paragraph{\it Frequency of recurrence times to ${\hat{G}}$ and good set of IETs.} 
Since both $\hat{\Zo}$ and $\hat{\Zo}^{-1}$ are ergodic and ${\hat{G}}$ has positive measure i.e.~${\mu}_{\hat{\mathcal{Z}}}({\hat{G}})>0$, almost every $\hat{T}\in \hat{\mathcal{I}}_\pi$  is Birkhoff generic for the characteristic function $\chi_{\hat{G}}$ of the good set ${\hat{G}}$, so that its orbit visits infinitely often the set ${\hat{G}}$ and with the expected frequency both in the past and in the future,~i.e.
\begin{equation}\label{Birkhoffgood}
\lim_{n\to+\infty}\frac{1}{n}\sum_{k=0}^{n-1}\chi_{{\hat{G}}} \left(\hat{\Zo}^k(\hat{T})\right) =\lim_{n\to-\infty}\frac{1}{n}\sum_{k=0}^{n-1}\chi_{\hat{G}} \left(\hat{\Zo}^{-k}(\hat{T})\right) =\mu_{\Zo}({\hat{G}}).
\end{equation}
 Furthermore, it follows from Fubini theorem for the measure ${\mu}_{\hat{\Zo}}$ and the foliation into fibers $\{ p^{-1}(T), T\in {\mathcal{I}}_\pi\}$ of the natural projection map $p:\hat{\mathcal{I}}_\pi\to {\mathcal{I}}_\pi$, that for set $\mathcal{G}^0_\pi\subset \mathcal{I}_\pi$ of $T\in {\mathcal{I}}_\pi$  of  full measure w.r.t.~$\mu_{\mathcal{Z}}$ 
%$\mathcal{{\hat{G}}}_0$ (w.r.t. the measure $\mu_{\mathcal{Z}}$ and hence also the Masur-Veech measure $\mu_{\mathcal{V}}$)
 we have that $p^{-1}(T)$ contains at least one $\hat{T}$ for which \eqref{Birkhoffgood} holds. By Remark~\ref{rk:ac}, the set $\mathcal{G}_0:=\bigcup_{\pi\in\mathfrak{S}^0_d}\mathcal{G}^0_\pi\subset \mathcal{I}_d$ has full Lebesgue measure.

\smallskip
We will show that for any IET in the set $\mathcal{G}_0$ we can find a sequence $(n_k)_{k\in\mathbb{N}}$  which can be used verify the properties in the  Definition~\ref{def:RDC} of RDC for a IET. We will later need to refine further this set (keeping it still of full measure) to also guarantee the existence of a subsequence $(n_{k_m})_{m\in\mathbb{N}}$ on which $(iii)$ holds. 

% (and later we will defined the good set $\mathcal{{\hat{G}}}\subset\mathcal{{\hat{G}}}_0$ as a full measure subset of $\mathcal{{\hat{G}}}_0$).
%We will show that the set $\mathcal{{\hat{G}}}$ is the desired (full measure) set of \emph{good} IETs.
Given any $T\in\mathcal{G}_0$, pick a $\hat{T}\in p^{-1}(T)$ (which exists by definition of $\mathcal{G}_0$) and  consider the (infinite) sequence $(n_k)_{k\in\mathbb{N}}$ of successive visits  of the forward orbit of $\hat{T}$ under the iterations of $\hat{\Zo}$  to the set ${\hat{G}}$ (which again exists by definition of $\mathcal{{\hat{G}}}_0$, since \eqref{Birkhoffgood} holds): more precisely, we set $n_0:=0$ and we let $n_1$ be the first entrance time in ${\hat{G}}$, i.e.~the minimum $n\geq 0$ such that  $\hat{\Zo}^{n}(\hat{T})\in {\hat{G}}$  and for any $k>0$, let $n_{k+1}$ be the first return time of   $\hat{\Zo}^{n_k}(\hat{T})$ to $ {\hat{G}}$ under $\Zo$. 

\subsubsection{Induced cocycle}
Denote by $\widetilde{Z}_k$, $k\in\mathbb{Z}$, and $\widetilde{Q}(k,k')$ for $k'>k$ the matrices of the cocycle accelerations along  $(n_k)_{k\in\mathbb{N}}$, namely
\begin{equation}\label{accelerations}
\widetilde{Q}(k,k'):= Q(n_k,n_{k'}), \qquad  \widetilde{Z}_k=\widetilde{Z}_k(T):= \widetilde{Q}(k,k+1)= Q(n_k, n_{k+1}).
\end{equation}
%Let $\widetilde{\mathcal{Z}}:=\hat{\mathcal{Z}_$
  By definition of the recurrence sequence $(n_k)_{k\in\mathbb{N}}$, $$\hat{T}_0:=\hat{\Zo}^{n_0}(\hat{T})\in {\hat{G}}, \qquad \hat{\Zo}^{n_k}(\hat{T})=\hat{\Zo}_{\hat{G}}^k(\hat{T}_0)  \quad \text{for\ all}\ k>0,$$
so that the matrices $\widetilde{Z}_k, k\geq 1$ can be seen as the iterates of the cocycle  $\widetilde{Z}$ over the induced map $\hat{\mathcal{Z}}_{\hat{G}}$ of $\hat{\mathcal{Z}}$ over $\hat{G}$ (see \S~\ref{sec:inducing})  starting from $\hat{T}_0$. 
It follows from integrability of the Zorich cocycle $Z$ over $\mathcal{Z}$ and \S~\ref{sec:integrability}) that $\widetilde{Z}$ is still an integrable cocycle over  $\mathcal{Z}_{\mathcal{G}}$ and therefore admits Oseledets splittings (see \ref{sec:Oseledets}).  Moreover, if   $\Gamma^{(n)}_{x}$, for $x\in \{s,c,u\}$ are the stable, central and unstable spaces given by Oseledets theorem for $\hat{T}$ and $P^{(n)}_x$ the respective orthogonal projections defined in \eqref{projections}, 
the stable, central and unstable spaces $\widetilde{\Pro}^{(k)}_x$, $x\in\{s,c,u\}$, $k\in\mathbb{Z}$, for the accelerated cocycle $\widetilde{Z}$  and the corresponding orthogonal projections $\widetilde{\Pro}^{(k)}_x$ are given by
\begin{equation*}%\label{accelerationssu}
\widetilde{\Gamma}_x^{(k)}:= \Gamma_x^{(n_k)}, \qquad \widetilde{\Pro}_{x}^{(k)}: \mathbb{R}^d\to  \widetilde{\Gamma}_x^{(k)}, \qquad \text{for\ all}\ x\in\{s,c,u\}, \ k\in\mathbb{Z}.
%\widetilde\Pro _{s}^{(k)}:= \Pro _{s}(\mathcal{R}^{n_k}), \qquad \widetilde\Pro _{s}^{(k)}:= \Pro _{s}(\mathcal{R}^{n_k}(T)).
\end{equation*}
 We will show at the end that the sequence $(n_k)_{k\in\mathbb{N}}$ verifies condition $(i)$ and $(ii)$ in the Definition~\ref{def:RDC} of the RCD for $T$ and that, up to restricting to a smaller full measure set, we can extract a subsequence $(n_{k_m})_{m\in\mathbb{N}}$ along which also $(iii)$ holds.   

\smallskip
\noindent{\it Linearity of returns and uniform convergence times.}
From the Birkhoff genericity statement in \eqref{Birkhoffgood}, it follows that $(n_k)_{k\in\mathbb{N}}$ grows linearly. Indeed, since $n_k$ is by definition the time of the $n^{th}$ visit to ${\hat{G}}$, for all $\hat{T}\in\mathcal{{\hat{G}}}$, \eqref{Birkhoffgood} gives that $\lim_{k\to \pm \infty} k/n_k=\mu_{\mathcal{Z}({\hat{G}})}$. Moreover, by Egoroff theorem, there exists sets ${\hat{G}}_{B^\pm}\subset {\hat{G}}$ with $\mu_{\mathcal{Z}}({\hat{G}}_{B^\pm})>5/6$ on which this convergence (in the past and in the future respectively) is uniform, i.e.~there exists  a constant $c_{B}>0$ such that if $n_m$ is such that $\Zo^{n_m}(\hat{T})\in {\hat{G}}_B$, then for every $n_k>n_m$
$$0<c_B\leq \frac{\sum_{i=0}^{n_k-n_m}\chi_{\hat{G}} \left(\hat{\Zo}^{ i}(\Zo^{n_m}(\hat{T}))\right) }{n_k-n_m}\leq 1$$
and, similarly, for any  $n_k<n_m$,
$$0<c_B \leq \frac{\sum_{i=0}^{n_m-n_k}\chi_{\hat{G}} \left(\hat{\Zo}^{- i}(\Zo^{n_m}(\hat{T}))\right) }{n_m-n_k}\leq 1.$$
Thus, setting ${\hat{G}}_B:={\hat{G}}_{B^+}\cap {\hat{G}}_{B^-}$, $\mu({\hat{G}}_B)>4/6$ if $n_m$ is such that $\Zo^{n_m}(\hat{T})\in {\hat{G}}_B$, 
since $|m-k|$ is exactly the number of visits to ${\hat{G}}$ in the orbit segment considered, we have that%
%Since by definition of the sequence the visits to ${\hat{G}}$ in this piece of orbit are exactly $|m-k|$, this gives
%\begin{equation}\label{linearity0}
%\lim_{k\to\pm\infty}\frac{|m-k|}{|n_m-n_{k}|}= \lim_{m\to\infty} \frac{Card \{\, i:\, \Zo^i(\hat{T})\in B,\ 0\leq k < k_{m}\} }{k_m} = %\mu_{\mathcal{Z}_{\hat{G}}}({\hat{G}}')>0
%\end{equation}
\begin{equation}\label{uniform linearity}
c_B  \, |n_{m}-n_k| \leq  |m-k|  \leq |n_{m}-n_k|,  \qquad \text{for\ all} \ k\in\mathbb{Z}.
\end{equation}

\smallskip
\noindent {\it Uniform subexponential growth.} 
 Let us now estimate the growth of the accelerated matrices $\widetilde{Z}_k:= Q(n_k, n_{k+1})$, $k\in\mathbb{N}$ and show that it is subexponential. This fact will be used later in the proof of the convergence of the series $(B)$ and $(F)$. Remark first of all that, since $\hat{T}\in p^{-1}(T)$ and the (forward) iterates of the cocycle $R$ (or $Z$) depend only on $p(\hat{T})$, these matrices  are the same for $T$ and $\hat{T}$. % so that, in particular, if $\hat{Z}^{n_k}(\hat{T})\in {\hat{G}}$ and ${Q}(k)$, ${k\in \mathbb{N}}$ denote the matrices of the Zorich cocycle associated to  $T$,
%\widetilde{Q}(k,k'):= Q(n_k,n_{k'}), \qquad  , 
% which is given by $\widetilde{Z}(i)(T):= Q(k_i, k_{i+1})(T)$ where $(k_i)_{i\in\mathbb{Z}}$ is as above the sequence of successive visits. 
%  Remark that since the set ${\hat{G}}$ has positive measure (recall $\mu_{\Z}({\hat{G}}_i)>1/2$), it follows from the ergodicity of $\mathbb{R}$ that for a full measure set of $T$ (by Birkohff ergodic theorem applied to $\chi_{E_C}$), since $i-j-1$ are exactly the number of visits to $E_C$  in the piece of orbit $\{ \mathcal{R}^\ell(T),\ k_i\leq \ell < k_{i+1}\}$ (since $k_i$ is by definition the time of the $i^{th}$ visit and $k_j$ that of the $j^{th}$ one), we have that
%$$
%\lim_{i\to\infty}\frac{i-j-1}{k_i-k_j}= \lim_{i\to\infty} \frac{Card \{ \ell:\, \mathcal{R}^\ell(T)\in E_C,\ k_i\leq \ell < k_{i+1}\} }{k_i-k_j} = \nu_C(E)>0,$$
%so $c_1(i-j) \leq k_i-k_j\leq c_2 (i-j)$ for all $i,j\in\mathbb{N}$. 
Since, as already remarked at the beginning, $\widetilde{Z}$ is an integrable cocycle over the invertible map $\hat{\widetilde{Z}}_{\hat{G}}$,   
%and  (with respect to $\mu_{\hat{Z}}$) and  , which is  an induced cocycle, as we already remarked, is integrable. It therefore follows that   also its inverse cocycle $\widetilde{Z}^{-1}$ as a cocycle over $(\hat{\mathcal{Z}}_\hat{Z})^{-1}$ is again integrable  (see \S~\ref{sec:cocycles}). By Lemma~\ref{lemma:zero} 
%{\color{black}(add Lemma or remark about nth term going to zero, what I explained in the email!)}
it follows from Oseledets (see in particular \eqref{subexpgrowth} in \S~\ref{sec:Oseledets}) and ergodicity (which guarantees that $\mu_{\hat{\Zo}}$-almost every $\hat{T}$ will enter the full measure set of IETs in $\hat{G}$ which is Oseledets generic for $\hat{\Zo}_{\hat{G}}$), 
that, for almost every $\hat{T}\in \hat{\mathcal{I}}_\pi$, 
\begin{equation}\label{Sgrowth}
\lim_{\ell \to \pm \infty}\frac{ \log ||\widetilde{Z}_\ell(\hat{T}_1)||}{|\ell|} = 0.
\end{equation}
%This implies in particular that condition $(S)$ holds (when looking only forward iterates, namely the matrices $\widetilde{Z}(n)$ for~$n\geq 0$, that, as remarked before, are the same for $T$ and $\hat{T}$, and the limit $n\to\infty$).
  Furthermore,  once again by Egoroff theorem,   there exists a set ${\hat{G}}_S\subset {\hat{G}}\subset \hat{\mathcal{I}}_\pi$ with ${\mu}_{\hat{\Zo}}({\hat{G}}_S)>0$ (which actually can be made arbitrarily close to $\hat{\mu}_{\Zo}({\hat{G}})$) such that, for every $\epsilon>0$ there exists a constant $C_3(\epsilon)>0$ %and   chosen above so that $\epsilon<\theta_g/8$, 
such that, for all $\hat{T}_1\in {\hat{G}}_S$ 
$$
 ||\widetilde{Z}_\ell(\hat{T}_1 )||\leq C_3 (\epsilon) e^{\epsilon|\ell |}, \qquad \text{for\ all} \ \ell \in\mathbb{Z}.
$$
Notice that, if $k_m\in\mathbb{Z}$ is  such that $\Zo_{\hat{G}}^{k_m}(\hat{T})\in {\hat{G}}_S$, then (since the cocycle matrices for $\hat{T}^{(k_m)}:=\hat{\Zo}_{\hat{G}}^{k_m}(\hat{T})$ are a shifted copy of the matrices for $\hat{T}$, namely $\widetilde{Z}_\ell(\hat{T}^{(k_m)})= \widetilde{Z}_{k_m+\ell}(\hat{T})$ for all $\ell\in\mathbb{Z}$) this implies (choosing $\ell=k-k_m+1$ and applying the previous estimate to $\hat{T}^{(k_m)}$) that  we have
\begin{equation}\label{expsmall_fromki}
 ||\widetilde{Z}_{k-1}(\hat{T})||\leq C_4  (\epsilon) e^{\epsilon|k-k_m|}, \qquad \textrm{for\ all\ } k\in\mathbb{Z}. 
\end{equation}

\smallskip
\noindent{\it The subsequence $(k_m)_{m\in\mathbb{N}}$.} 
Let ${\hat{G}}_{SB}\subset {\hat{G}}$ be by ${\hat{G}}_{SB}:={\hat{G}}_B\cap {\hat{G}}_S$ where ${\hat{G}}_S$ and ${\hat{G}}_B$ where the sets for uniform Birkhoff convergence and subexponential growth defined in the previous paragraphs. Remark that $\mu_{\Zo}({\hat{G}}_{SB})>0$. 
Define finally the sequence $(k_m)_{m\in\mathbb{N}}$ to be the subsequence of indexes $k\in\mathbb{N}$ which corresponds to visits of the orbit of $\hat{T}_0$ under $\hat{\Zo}_{\hat{G}}$ to the subset ${\hat{G}}_{SB}$ defined above, i.e.~$k_0$ is the first entrance time of  $\hat{T}_0$ to ${\hat{G}}_{SB}$ while, for every $m>0$, $k_{m+1}$ is defined to be the smallest $k>k_{m}$ such that $\hat{\Zo}_{{\hat{G}}}^k(\hat{T})\in {\hat{G}}_{SB}$.  

\smallskip \noindent \textit{Linear growth in $\mathcal{G}_L$.}
Linear growth of the sequence  $(k_m)_{m\in\mathbb{N}}$ for a full measure set of $\hat{T}$ can then be deduced from ergodicity as follows. 
%Let us show that, for a full measure subset ${{\hat{G}}}'\subset {{\hat{G}}}$, if $\hat{T}\in {\hat{G}}'$, the sequence $(k_m)_{m\in\mathbb{N}}$ of returns to $\hat{G}'$ grows linearly, as required by Definition~\ref{def:RDC}. This follows from ergodicity: 
Since the set ${\hat{G}}$ has positive measure with respect to $\mu_{\Zo}$ %(recall $\mu_{\Zo}({\hat{G}})>1/2$) 
and hence for the induced invariant measure $\mu_{\Zo_{\hat{G}}}$ for the Poincar{\'e} map $\hat{\Zo}_{\hat{G}}$ and $\hat{\Zo}_{\hat{G}}$ (being the induced map of an ergodic map) is ergodic with respect to $\mu_{\Zo_{\hat{G}}}$, it follows that for all $\hat{T}$ in a subset 
%{\color{black}CAREFUL! confusion between {\hat{G}} good set and set of good iets recurrent to {\hat{G}}! to fix} 
${\hat{{G}}}_L\subset {\hat{{G}}}$ with $\mu_{{\Zo}_{\hat{{G}}}}({\hat{{G}}}_L)=\mu_{{\Zo}_{\hat{G}}}({\hat{{G}}})=1$, the orbit of $\hat{T}$ under $\Zo_{\hat{G}}$ vists ${\hat{G}}_{SB}$ with the expected frequency, namely, since, for any $m \in \mathbb{N}$,  
 $m$ is exactly the number of visits to ${\hat{G}}_{SB}$  in the piece of orbit $\{ \hat{\mathcal{Z}}_{\hat{G}}^k(T),\ 0\leq k< k_{m}\}$ (since  $k_0\geq 0$ is the time of first visit to ${\hat{G}}_{SB}$ and hence $k_{m-1}$ corresponds to the $m^{th}$ visit), we have that
\begin{equation}\label{linearity0}
\lim_{m\to\infty}\frac{m}{k_{m}}= \lim_{m\to\infty} \frac{Card \{\, k:\, \Zo_{\hat{G}}^k(\hat{T})\in {\hat{G}}_{SB},\ 0\leq k < k_{m}\} }{k_m} = \mu_{\mathcal{Z}_{\hat{G}}}({\hat{G}}_{SB})>0. 
\end{equation}
Thus, if $\hat{T}$ is such that the first return  $\hat{T}_0:=\hat{\Zo}^{n_0}(\hat{T})\in \hat{G}$ belongs to $\hat{G}_L$, the corresponding subsequence $(k_m)_{m\in\mathbb{N}}$  has linear growth.

%{\color{black} The paragraph below (and perhaps also above could be simplified and possibly partly merged.. I think there are some repetitions and I made it more complicated than necessary... }
\smallskip

\noindent{\it The set of IETs which satisfy the RDC.} 
We can now define the set $\mathcal{{{G}}}_\pi\subset \mathcal{{{G}}}^0_\pi$ of (standard) IETs in $\mathcal{I}_\pi$ which satisfy the RDC to be the set of IETs $T\in \mathcal{{{G}}}_\pi^0$ such that:
\begin{itemize}
\item[(a)] $T$ has an Oseledets generic extension $\hat{T}\in p^{-1}(T)$;
\item[(b)] the forward orbit  under $\Zo$ of $\hat{T}$ in part $(a)$ enters the set ${\hat{G}}_{L}$ defined above;
\item[(c)] the first visit  $\hat{T}_0 = \Zo^{n_0}({\hat{T}})$ to $\hat{G}$ (whose existence is guaranted by $(b)$) where~$n_0$ is the smallest $n\geq 0$ such that $ \Zo^{n_0}{\hat{T}}\in \hat{G}$, is Oseledets generic for the induced cocycle over the induced map $\hat{\mathcal{Z}}_{\hat{G}}$.  
\end{itemize}

\smallskip
\noindent \textit{Full measure of $\mathcal{G}_\pi$}. 
Let us show that $\mathcal{{\hat{G}}}_\pi$ has full measure with respect to $\mu_{\Zo_{\hat{G}}}$.  
Full measure of condition $(a)$ is given by Lemma~\ref{lemma:Oseledetsgeneric}, but since we want to verify also $(b)$ and $(c)$, let us consider the full measure set $\hat{I}_O\subset \hat{I}_\pi$ which are Oseledets generic, i.e.~the conclusion of Oseledets theorem holds for the cocycle $Z$ over $\hat{\mathcal{Z}}$. 
% more precisely, there is a subset of full measure $\hat{G}_a\subset \hat{G}'$ such that
%$\hat{T}\in \hat{G}_a$ is Oseledets generic.
 Since ${Z}$ is integrable, the induced cocycle ${Z}_{\hat{G}}=\widetilde{Z}_{\hat{G}}$ over the induced map $\hat{\mathcal{Z}}_{\hat{G}}$ (defined as in \S~\ref{sec:inducing}) is again integrable (see \S~\ref{sec:integrability}). Therefore, by Oseledets theorem, 
 $\mu_{\Zo_{\hat{G}}}$-almost every $\hat{T}$ in $\hat{G}$ is Oseledets generic (see \S~\ref{sec:Oseledets}). We denote by $\hat{G}_O$ the full measure subset   of  $\hat{G}$ which consiste of Oseledets generic IETs for the cocycle $\widetilde{Z}_{\hat{G}}$. Since, as shown before, also $\hat{G}_L\subset\hat{G}$ has full measure in $\hat{G}$ the intersection $\hat{I}_O\cap \hat{G}_O\cap \hat{G}_L$ has full measure in $\hat{G}$.  
Reasoning again   by ergodicity and Fubini theorem for the measure $\mu_{\hat{\mathcal{G}}}$, the set of IETs $T$ which have an ergodic extension $\hat{T}$ whose orbit under $\Zo$ enters the intersection $\hat{I}_O\cap \hat{G}_O\cap \hat{G}_L$ (and hence verify all three assumptions $(a)$, $(b)$ and $(c)$ 
% $\mathcal{{\hat{G}}}_\pi$ 
has full measure (with respect to the corresponding $\mu_{\Zo_{\hat{G}}})$ for every irreducible $\pi$.
Since this is true for every irreducible $\pi$, if we set $\mathcal{{{G}}}:=\bigcup_{\pi\in\mathfrak{S}^0_d} \mathcal{G}_\pi$, then $\mathcal{{{G}}}$ has Lebesgue full measure (see Remark~\ref{rk:ac}). 

\smallskip
\noindent \textit{Verifications of the RDC conditions.}
 Let us now verify that all $T\in \mathcal{{\hat{G}}} $ satisfy the RDC. Given any $T\in \mathcal{{\hat{G}}} $, by definition there exists $\hat{T}\in p^{-1}(T)$ which is recurrent to ${\hat{G}}$  along a subsequence sequences $(n_k)_{k\in\mathbb{N}}$ of iterates $\mathcal{Z}$, and recurrent to ${\hat{G}}'$ along a subsequence $(k_m)_{m\in\mathbb{N}}$  of iterates of the induced map $\mathcal{Z}_{\hat{G}}$.
 We claim that all conditions of Definition~\ref{def:RDC} hold for $T$ along the  sequences $(n_k)_{k\in\mathbb{N}}$ and $(k_m)_{m\in\mathbb{N}}$. 

\smallskip
\noindent \paragraph{{\it Conditions $(i)$ and $(ii)$.}} By definition of $\mathcal{G}$, the extension $\hat{T}$ is Oseledets generic; therefore Condition $(i)$ holds. Furthermore, by Proposition~\ref{prop:existenceeffective} and the construction of the set ${\hat{G}}$, the sequence $(n_k)_{k\in\mathbb{N}}$ is a sequence of good return times for $T=p(\hat{T})$. Furthermore, we showed earlier that, since $\hat{\Zo}^{n_0}(\hat{T})\in\hat{G}'$, $(n_k)_{k\in\mathbb{N}}$ has linear growth. Therefore, also Condition $(ii)$ is satisfied.

\smallskip
\noindent \paragraph{{\it Conditions $(S)$.}}
%\footnote{{\color{red} This part should be checked, I didn't write all the details so I could have got confused with which cocycles we are considering... and we know it's key!}}
{To check Condition $(S)$ in Condition $(iii)$,  consider the cocycle obtained accelerating $\widetilde{\Zo}$ along the sequence $(k_m)_{m\in\mathbb{N}}$.  This is by construction an induced cocycle (see~\S~\ref{sec:cocycles}), over the map obtained inducing $\widetilde{\Zo}$ to returns to ${\hat{G}}_L$. Since we assumed in the definition of $\hat{\mathcal{G}}$ (see condition $(c)$) that the first visit $T_1:={\mathcal{Z}}^{(n_1)}(\hat{T})$ to $\hat{G}$ of the chosen extension $\hat{T}$ of $T$  is Oseledets generic for the induced cocycle corresponding to returns to $\hat{G}$, 
Condition $(S)$ follows from an application of Oseledets theorem for the accelererated cocycle, in particular from  \eqref{subexpgrowth} for the cocycle whose $(k-1)^{th}$ matrix is $\widetilde{Q}(k_m,k_{m+1})$.}
%{\color{black} To do better: one needs to assume that we enter $\hat{G}$ in a Oseledets generic point. Furthermore, I didn't write down explicitely that Oseledets integrability implies that the norm of the matrices grow subexponentially... to add, perhaps also in the Oseledets background section, so I do not have to prove it.} 
%Condition $(S)$ holds for the matrices $Z_k(T)=Z_k(\hat{T})$, $k\in\mathbb{N}$ by \eqref{Sgrowth}, which holds since $\hat{T}\in \mathcal{G}\subset \mathcal{{\hat{G}}}_0$. Linearity of $(k_m)_{m\in\mathbb{N}}$ follows from \eqref{linearity0} applied to $\mathcal{Z}^{n_{0}}{\hat{T}}$. We are hence only left to check the convergence of the series $(B)$ and $(F)$.

\smallskip{
\noindent \paragraph{{\it Conditions $(A)$.}} From the definition of good return times (and since $(n_{k_m})_{m\in\mathbb{N}}$ is a subsequence of the sequence $(n_k)_{k\in\mathbb{N}}$ of good returns), we also get that, for any $n_k$, Condition $(A)$ on the angle holds: this reduces indeed simply the lower bound on angles given by \eqref{lowerboundA}, specialized to the subsequence $(n_{k_m})_{m\in\mathbb{N}}$ when  recalling the notation $\widetilde{\Gamma}_x^{(k)}={\Gamma}_x^{(n_k)}$.
}

\smallskip
\noindent \paragraph{{\it Conditions $(B)$ and $(F)$.}} Consider any fixed $k_m$ in the subsequence $(k_m)_{m\in\mathbb{N}}$. Let us finally show that Conditions $(B)$ and $(F)$ hold for this $k_m$. 
Recalling the definition of $\widetilde{Q}(k,k')$, we hence get from \eqref{estimateEO1} and \eqref{uniform linearity} that, for every $k\geq k_m$, 
\begin{equation}\label{estimateQtildeF}
||\widetilde{Q}(k_m,k)\vert_{\widetilde{\Gamma}_s^{(k)}}||_{\infty} %& 
=||Q(n_{k_m},n_k)\vert_{\Gamma^{(n_k)}(T)}||_{\infty} %& 
\leq C_1 e^{-(\theta_g +\epsilon)(n_k-n_{k_m})} %&
\leq C_1 e^{-(\theta_g +\epsilon)(k-k_m)},  %\quad \forall \ k\geq k_m, 
\end{equation}
and similarly, using \eqref{estimateEO2} this time, we also get that, {for every} $1 \leq k\leq k_m$,
\begin{equation}
\label{estimateQtildeB}
||\widetilde{Q}(k,k_m)^{-1}\vert_{\widetilde{\Gamma}_u^{(k)}}||_{\infty} %& =
%||Q(n_{k},n_{k_m})^{-1}\vert_{\Gamma^{(u)}(\mathcal{R}^{n_k}(T)}||_{\infty}
% & \leq C_1 e^{-(\theta_g +\epsilon)(n_{k_m}-n_{k})} &
\leq C_1 e^{-(\theta_g +\epsilon)(k_m-k)}.%,  \qquad \textrm{for\ every}\ 1 \leq k\leq k_m. 
%||Q(n,n_k)\vert_{\Gamma^{(u)}(\mathcal{R}^{n}(T)}||_{\infty} \leq C_1 e^{-(\theta_g +\epsilon)(n_k-n)} & \qquad \textrm{for\ every}\  n\leq n_k.
\end{equation}
 Recall now that $\widetilde{\Pro }^{(k)}_s$ and $\widetilde{\Pro }^{(k)}_u$ denote  respectively the projection operators to the stable and unstable spaces $\widetilde{\Gamma}_s^{(k)}=\Gamma^{(n_k)}(T), \widetilde{\Gamma}_u^{(k)}= \Gamma^{(n_k)}(T)$. Estimate the norms $||\widetilde{\Pro }^{(k)}_s||$ and $||\widetilde{\Pro }^{(k)}_u||$ of these projections through the angle $\angle(\widetilde{\Gamma}^{(k)}_s, \widetilde{\Gamma}^{(k)}_u)$ between $\widetilde{\Gamma}_s^{(k)}$ and $\widetilde{\Gamma}_u^{(k)}$ and using \eqref{expsmall_fromki} for a time $n_{k_m}$ which corresponds to a visit to ${\hat{G}}$ and again the linear growth of $(n_k)_{k\in\mathbb{N}}$, we get, for some universal $c>0$, that, for any $m\in\mathbb{N}$ and for any $k\geq 0$,
%one can use that if $v_s,v_u$ denote the projection of $v$ to $\widetilde{\Gamma}_s^{(k)}$ and $\widetilde{\Gamma}_u^{(k)}$ respectively,
%$$
%%v_u \leq \sin
%$$
\begin{equation}\label{projections}||\widetilde{\Pro }^{(k)}_s||, ||\widetilde{\Pro }^{(k)}_u||\leq \frac{c}{\angle(\widetilde{\Gamma}^{(k)}_s, \widetilde{\Gamma}^{(k)}_s))}= \frac{c}{\angle({\Gamma}^{(n_k)}_s, {\Gamma}^{(n_k)}_u))} \leq  \frac{c}{c_2(\epsilon) e^{-\epsilon |n_k-n_{k_m}|}}\leq \frac{c}{c_2(\epsilon)} e^{ \epsilon|k-k_m|/c_B}.
\end{equation}
We can now prove the convergence of the series $(B)$ and $(F)$. Fix now $\va>0$ such that $\va<\theta_g/ 2(1+c_B^{-1})$ and let $c_2:=c_2(\va)$ and $C_4:=C_4(\va)$.  Then, combining all the estimates proved so far, namely \eqref{estimateQtildeF}, \eqref{projections} and \eqref{expsmall_fromki}, which give, setting $C:=c C_1C_4/c_2$
% ||\widetilde{Z}_k(\hat{T})||\leq C_3 e^{\va|k-k_m|}
\begin{align*}
\sum_{k=k_m +1}^{\infty}||\widetilde{Q}(k,k_m)^{-1}_{| \widetilde{\Gamma}_u^{(k)}}||   \,||\widetilde{\Pro }_{u}^{(k)}|| & \,||\widetilde{Z}_{k-1}||  
\leq \sum_{k=k_m +1}^{\infty}\left(C_1 e^{-(\theta_g)(k-k_m)/2}\right)\left( c{e^{\va(k-k_m)/c_B}}/{C_2}\right) \left(C_4 e^{ \va(k-k_m)} \right)\\ 
& \qquad \leq \sum_{k=k_m +1}^{\infty} C e^{-(\theta_g /2-\va (c_B^{-1}+1) )(k-k_m)} \leq K^+:= \sum_{\ell=1}^{\infty}C e^{-(\theta_g/2 -\va (c_B^{-1}+1) )\ell} ,
\end{align*}
where $K<+\infty$ since $\theta_g/2 -\va (c_B^{-1}+1)>0$ by choice of $\va$. This proves Condition $(F)$. Similarly, for the series in Condition $(B)$, we get
\begin{equation*}
  \sum_{k=1}^{k_m}{ ||\widetilde{Q}(k_m,k)_{| \widetilde{\Gamma}_s^{(k)}}|| \,||\widetilde{\Pro }_{s}^{(k)}|| \,||\widetilde{Z}_{k-1}||} %&
	\leq  \sum_{k=1}^{k_m} %\frac{C_1 c C_3}{c_2}
	C e^{-(\theta_g/2 -\va (c_B^{-1}+1) )(k_m-k)}=
%\\ & 
\sum_{\ell=1}^{k_m} C e^{-(\theta_g/2 -\va (c_B^{-1}+1) )\ell} \leq K^-.\end{equation*}
This proves Condition $(B)$ and thus concludes the proof that any $T\in\mathcal{G}$ satisfy the RDC.
%from the ergodicity of $\mathbb{R}$ that for a full measure set of $T$ (by Birkohff ergodic theorem applied to $\chi_{G'}$), since $i-j-1$ are exactly the number of visits to $G'$  in the piece of orbit $\{ \mathcal{R}^\ell(T),\ k_i\leq \ell < k_{i+1}\}$ (since $k_i$ is by definition the time of the $i^{th}$ visit and $k_j$ that of the $j^{th}$ one), we have that
%$$
%\lim_{i\to\infty}\frac{i-j-1}{k_i-k_j}= \lim_{i\to\infty} \frac{Card \{ \ell:\, \mathcal{R}^\ell(T)\in E_C,\ k_i\leq \ell < k_{i+1}\} %}{k_i-k_j} = \nu_C(E)>0,$$%
%so $c_1(i-j) \leq k_i-k_j\leq c_2 (i-j)$ for all $i,j\in\mathbb{N}$. 
%Moreover, by applyng to $Z_C$ Lemma~\ref{lemma:expsmall}, for every $\delta>0$, for a full measure set of $T$, there exists a %subsequence $(k_{i_j})_{j\in\mathbb{N}}$ of the sequence $(k_{i})_{i\in\mathbb{N}}$, which corresponds to visits of the map $\mathcal{R}_C$ to a subset $E'\subset E_C$, such that, for every such $k_{i_j}$, for all $k \leq k_{i_j}$ %
%\begin{equation}
% || Q({k_{i_j}}, k ) ||\leq e^{\delta(k- k_{i_j})}.
%\end{equation}
%where the last inequality follows from the above comparision.
%Similarly, using the same reasoning for the inverse cocycle we can complete the sequence  to a sequence $(k_{i_{j}})_{j\in\mathbb{Z}}$ so that, for all $k\leq k_{i_j}$, 
%\begin{equation}
% || Q(k,{k_{i_j}} ) ||\leq e^{\delta( k_{i_j}-k)}.
%\end{equation} 
%Applying the estimates we obtained until now to the series we want to estimate, we get
%Setting $n_j:= k_{i_j}$ for all $j\in\mathbb{N}$, 
\end{proof}

\subsubsection{Acknowledgements}
We would like to thank Michael Bromberg, Charles Fougeron and Liviana Palmisano for many  discussions at the very initial learning stages of this project as well as Giovanni Forni, Pascal Hubert and Stefano Marmi for useful discussions. {\color{black}We also thank Liviana Palmisano and Marco Martens for sharing their first draft of a nice modern exposition of the theory of classical theory of circle diffeos and 
Giovanni Forni as well as Frank Trujillo for their feedback and comments on a preliminary draft of this paper.} 
 This collaboration has started with many informal discussions and scientific visits throught a span of several years; we are grateful during this period for the financial support by the {\it ERC Starting grant} {\it ChaParDyn} and the {\it Swiss National Science Foundation}  Grant $200021\_188617/1$ which made these scientific visits possible and for the hospitality of the {\it University of Bristol} and the {\it University of Z{\"urich}}.  C.U.~acknowledges as well the {\it Leverhulme Trust}, the {\it Wolfson foundation}  and {\it SwissMap}.  {\color{black}The
research leading to these results has received funding from the {\it European Research Council} under
the European Union Seventh Framework Programme (FP/$2007$-$2013$)/ERC Grant Agreement n.~$335989$}.

\appendix

\section{}\label{appendix}
\noindent We include here, for completeness and for convenience of the reader, some results which were used in the paper and are either variations of those present in the literature or folklore.

{
\subsection{Boundary and suspensions.}\label{sec:boundarycomb}
We include in this section an explict construction of a standard suspension as well as the combinatorial definition of the boundary operator purely in terms of the combinatorial datum, following \cite{MarmiYoccoz}.

\smallskip  
Let $T$ be a GIET  with combinatorial data $\pi=(\pi_t,\pi_b)$. Recall that $u_i^t$ (resp.~$u_i^b$) denote the endpoints of the top (resp.~bottom) partition (see \S~\ref{sing}). For $0\leq i\leq d$ 
consider $\lambda_j := |I^t_j|= |I^b_{j}|$ and $\tau_j:=\pi_b^{-1}(j)-\pi_t^{-1}(j)$ and 
define the complex numbers %%ADD check def tau
$$
U_i:=u_0^t+\sum_{j\leq i} \lambda_{\pi_t(j)} + \sqrt{-1} \, \tau_{\pi_t(j)}, \qquad V_i:=u_0^b+\sum_{j\leq i} \lambda_{\pi_b(j)} + \sqrt{-1} \, \tau_{\pi_b(j)}.
$$
One has $U_0= u_ 0 ^t=u_0^b =V_ 0$ and $U_d = u_d^t=u_c^b= V_d $. %ADD (see Figure~\ref{polygon}). 
Moreover, $\mathrm{Im} \, U_i>0$ and $\mathrm{Im}\, V_i>0$ for $1\leq i <d$. 
The $2d$ segments $L^t_{\pi_t(i)}:=[U_{i-1} , U_i ]$, $L^b_{\pi_b(i)}:=[V_{i-1} , V_i ]$ for $1 \leq  i \leq d$  form the boundary of a polygon. %ADD, as shown in see Figure~\ref{polygon}.  
Gluing the pairs of parallel top and bottom sides $L^t_{i}$ and $L^b_{i}$ of this polygon produces a translation surface
$M_T$, in which the vertices of the polygon define a set  of marked points.
 
\smallskip 
\noindent Consider now the $2d$-elelents set 
$\mathcal{V}:=\{ U_0=V_0, U_1, V_1, \dots, U_{d-1}, V_{d-1}, U_d=V_d\}$, which is in bijection with the vertices of the polygon. The identifications of elements of $\mathcal{V}$ induced by the glueings of parallel sides is encored by the following permutation $\sigma$:
\begin{align*}
\sigma(U_i):= V_j & \qquad \text{if}\ \pi_b(j+1)=\pi_t(i+1), & \text{for}\ 0\leq i<d;\\
\sigma(V_k):= U_\ell & \qquad \text{if}\ \pi_t(\ell)=\pi_b(k), & \text{for}\ 0< j\leq d.
\end{align*}
Thus, cycles of $\sigma$  are in bijection with the the singularities $\Sing(M_T)$ of $M_T$. This shows that $\pi$ determines $\kappa$, the number of singularities (which is exactly equal to the number of cycles of $\sigma$) and therfore, from the formula $d=2g+\kappa-1$, it determines also the genus $g$ of any suspension.

\smallskip
We give now the definition of the (observable) boundary operator $B:\Sing(M_T) \to \mathbb{R}^\kappa$, following \cite{MarmiYoccoz}. Given a function $f \in \mathcal{C}(T)$ (recall~\ref{sec:obs}),  $B(f)=(b_s)_{1\leq s\leq \kappa}$ is defined as follows. For each $1\leq s\leq \kappa$, if $C_s$ is the cycle of $\sigma$ corresponding to the singuarity labeled $s$, we have
$$ b_s := \sum_{0\leq i\leq  d , \, U_i\in C_s}\left ( {f^r(u_i^t) }- {f^l(u_i^t)}\right),$$ 
where $f^l(u_i)$ and $f^r(u_i)$ denote the left and right limits at the discontinuity point $u_i$ (see \S~\ref{sec:obs_boundary}) and, by convention, $f^l(U_0)=0$ and $f^r(U_d)=0$.}}
\subsection{Distorsion bounds for GIETs}\label{estimates}
We present here for completeness the proof of the classical distorsion bound stated as Lemma~\ref{bound1}. The proof for GIETs is the same than the classical proof for circle diffeomorphisms.
\begin{proof}[Proof of Lemma~\ref{bound1}]
Assume without loss of generality that $x<y$. 
 We have that, by chain rule, $$ \log \mathrm{D}T^{n}(x) = \sum_{i=0}^{n-1}{\log \mathrm{D}T (T^i(x) )},$$ and therefore we also have that 
$$ | \log \mathrm{D}T^{n}(x) - \log \mathrm{D}T^{n}(y) | \leq  \sum_{i=0}^{n-1}{|\log \mathrm{D}T (T^i(x) ) -  \log \mathrm{D}T (T^i(x) ) |}  \leq  \sum_{i=0}^{n-1}{|\int_{T^i(y)}^{T^i(x)}{\eta_{T}}|}.$$  Notice now that, since by assumption, $T^i(J)$ do not contain singularities of $T$ for any $0\leq i<n$, each $T^i(J)$ is again an interval and, since $T$ is an isometry, $T^i(x) < T^i(y)$. It hence follows from the assumptions that  the intervals $[T^i(y), T^i(x)]$ are pairwise disjoint and
$$ | \log \mathrm{D}T^{n}(x) - \log \mathrm{D}T^{n}(y) | \leq \int_0^1{|\eta_T|\mathrm{dLeb}}. $$ Exponentiating this bound we get the desired result.
\end{proof}

%\subsection{Holonomies around singularities and boundary operator}\label{app:holonomyB}
%{
%Let us now sketch a proof of the relation between holonomies around singularities (as defined in \S~\ref{sec:holonomies}) and the boundary operator $B(\log DT)$, where $T$ is the Poincar{\'e} map on a transversal. 

\subsection{Distances comparisons.% on  $\mathrm{Diff}^r_+([0,1])$
}\label{sec:distancesAppendix}
 In \S~\ref{sec:distances} we defined two distances, namely $d_\eta$ and $d_{\mathcal{C}^1}$ on $\mathrm{Diff}^3([0,1])$ and, by abusing the notation, also extended their definition to distances  $d_\eta$ and $d_{\mathcal{C}^1}$  on  the space $\mathcal{X}^r_d$ of GIETs with $r\geq 2$. Here we first show that $d_\eta$ is a distance and then 
 prove the comparisons given by Lemma~\ref{lemma:distancesrelDiff} and Corollary~\ref{lemma:distancesrel}. 
{\color{black}
\subsubsection{The semi-distance $d_\eta$ is a distance.}\label{distance}% on  $\mathrm{Diff}^r([0,1])$
Consider  $d_\eta$ on $\mathrm{Diff}^1([0,1])$ defined in \S~\ref{sec:distances}.  Symmetry and triangle inequality are obvious, so to see that it is a distance, we only have to check that if $d_{\eta}(\varphi_1,\varphi_2) = 0$, and therefore $\eta_{\varphi_1} (x)= \eta_{\varphi_2}(x)$ for every $0<x<1$, then $\varphi_1=\varphi_2$.  This can be seen, for example, by showing  that the non-linearity $\eta_\varphi$ completely determines $\varphi \in$ Diff$^2([0,1])$, namely given a continuos function $\eta:[0,1]\to \mathbb{R}$ there exists a unique orientation-preserving diffeomorphism $\varphi \in$ Diff$^2([0,1])$ such that $\eta_\varphi= \eta$, which is explicitely given (see for example \cite{Martens}) 
 by the formula
$$
\varphi(x)=\frac{\int_{0}^x \exp \left(\int_0^z \eta(y)\mathrm{d}y \right)\mathrm{d}z}{\int_{0}^1 \exp \left(\int_0^z \eta(y)\mathrm{d}y \right)\mathrm{d}z}.
$$
}

%We prove now that on bounded sets of $\mathrm{Diff}^2([0,1]) $, $ d_{\mathcal{C}^1} \leq L \,d_{\eta}$ and use it to prove~Lemma~\ref{lemma:distancesrel}, namely the corresponding comparison for the distances $ d_{\mathcal{C}^1}$, $d_{\eta}$ on $\mathcal{C}^2$-bounded subsets of the space $\mathcal{X}^r_d$ of GIETs with $r\geq 2$.
%\subsubsection{Comparison of $d_\eta$ and $d_{\mathcal{C}^1}$ on $\mathrm{Diff}^3([0,1])$}\label{sec:distancesAppendix}

\subsubsection{Comparison of $d_\eta$ and $d_{\mathcal{C}^1}$ on $\mathrm{Diff}^1([0,1])$.}
 Consider first $\Diff^r([0,1])$, where $r$ is an integer $r\geq 2$. Notice that it  is  an open subset of 
$$\mathcal{C}^r_{\partial}([0,1],\mathbb{R}) := \{ f \in \mathcal{C}^r([0,1]) \ | \ f(0) = 0 \ \text{and} \  f(1) = 1 \}.$$
\begin{proof}[Proof of Lemma~\ref{lemma:distancesrelDiff}] 
%Since $\mathcal{K}$ has bounded diameter with respect to $d^{\pm}_{\mathcal{C}^2}(f,g)= d_{\mathcal{C}^2}(f,g)+d_{\mathcal{C}^2}(f^{-1},g^{-1})$ (by definition of $\mathcal{C}^1$-bounded set  of $\mathrm{Diff}^2([0,1]) $, recall~\S~\ref{sec:distances}),  
%there exists a constant $L = L(\mathcal{K})>0$ such that for $f_1, f_2 \in \mathrm{Diff}^2([0,1]) $
%$$ d_{\mathcal{C}^1}(f_1,f_2) \leq L \,d_{\eta}(f_1,f_2).$$
First note that there exists $x_0 \in [0,1]$ such that $f_1'(x_0) = f_2'(x_0)$ (otherwise $f_1' > f_2'$ or $f_1' < f_2'$ which is incompatible with the fact that $\int_0^1{f'_1} = \int_0^1{f'_2} = 1$). In which case we have 
$$ | \log f_1'(x) -  \log f_2'(x) | = | \int_{x_0}^x{(\eta_{f_1} - \eta_{f_2})} | \leq  \int_{x_0}^x{|\eta_{f_1} - \eta_{f_2}|} \leq d_{\eta}(f_1,f_2).$$ The exponential function being Lipschitz on bounded sets, {\color{black}we can find a constant $L>0$ such that $|f_1'(x)-f_2'(x)| \leq L |\log f_1'(x) -  \log f_2'(x) |$ and hence control $|| f_1'-f_2'||_\infty$. From this control and $f_1(0)=f_2(0)=0$, we can then control also $|| f_1-f_2||_{\infty}$ and hence $d_{\mathcal{C}^1}(f_1,f_2)$.}
\end{proof}

\subsubsection{Comparison of $d_\eta$ and $d_{\mathcal{C}^1}$ distances from AIETs.}
We can now deduce Corollary~\ref{lemma:distancesrelDiff}, i.e.~the comparison of the $ d_{\mathcal{C}^1}$ and $d_{\eta}$ on $\mathcal{X}^2$ from the locus of AIETs.
\begin{proof}[Proof of Corollary~\ref{lemma:distancesrel}]{\color{black} For any $n\in\mathbb{N}$, denote by $\mathcal{V}^n(T)=(A_n,\varphi_n)$ are the shape-profile coordinates of $\mathcal{V}^n(T)$. Recall first of all that the infimum in $ d_{\eta}(\mathcal{V}^n(T), \mathcal{A}_d):=\inf_{A\in \mathcal{A}_d} d_{\eta}(\mathcal{V}^n(T), A)$ is realized by the \emph{shape} $A_n$ of $\mathcal{V}^n(T)$ (by Remark~\ref{rk:nonlinearityasd}, see also footnote~\ref{minimal}). 
It is sufficient to find $L=L(T)$ such that  $d_{\mathcal{C}^1}(\mathcal{V}^n(T), {A}_n)\leq L \, d_{\eta}(\mathcal{V}^n(T), {A}_n)$ for every $n\in\mathbb{N}$, since this then gives that
$$ d_{\mathcal{C}^1}(\mathcal{V}^n(T), \mathcal{A}_d)\leq d_{\mathcal{C}^1}(\mathcal{V}^n(T), {A}_n) \leq L \, d_{\eta}(\mathcal{V}^n(T), {A}_n)= L \, d_{\eta}(\mathcal{V}^n(T), \mathcal{A}_n).$$ 
Since the distances $ d_{\mathcal{C}^1}$ and $ \,d_{\eta}$ on $\mathcal{X}^2$ are both  products of distances in the shape-profile coordinates  (see \S~\ref{sec:distances}) and $d_\mathcal{A}(\mathcal{V}^n(T), A_n)=0$ by definition of $A_n$, 
the distances 
%from $\mathcal{A}_d$, namely $d_{\mathcal{C}^1}(\mathcal{V}^n(T), \mathcal{A}_d):=\inf_{A\in \mathcal{A}_d} d_{\mathcal{C}^1}(\mathcal{V}^n(T), A)$ and $ d_{\eta}(\mathcal{V}^n(T), \mathcal{A}_d):=\inf_{A\in \mathcal{A}_d} d_{\eta}(\mathcal{V}^n(T), A)$, are realized by $A_n\in\mathcal{A}_d$, namely by the \emph{shape} of $\mathcal{V}^n(T)$ (for which $d_{\mathcal{A}}(\mathcal{V}^n(T),A_n)=0$). This shows that
 $ d_{\mathcal{C}^1}(\mathcal{V}^n(T), A_n)$ and $ d_{\eta}(\mathcal{V}^n(T), A_n)$ depend only on $d^\mathcal{P}_{\mathcal{C}^1}(\varphi_n, (I,\dots, I))$ and  $d^\mathcal{P}_{\eta}(\varphi_n, (I,\dots, I))$ respectively\footnote{{\color{black}Here $(I,\dots, I)\in \Diff^r([0,1])^d$ denotes the identity vector with $I(x)=x$ identify function in every coordinate. Notice that $P_\mathcal{P}(A)=(I,\dots, I)$ for every $A\in\mathcal{A}_d$, i.e.~in particular $(I,\dots, I)$ is the profile of $A_T$.}}.
%Thus, 
%$$
%d_{\mathcal{C}^1}(\mathcal{V}^n(T), \mathcal{A}_d)\leq d_{\mathcal{C}^1}(\mathcal{V}^n(T), A_n)
%$$
\smallskip
Notice now that,  as a consequence of the classical distorsion bounds, the coordinates $\{ \varphi_n^i, \ n\in\mathbb{N}\}$ of the profiles 
 $\varphi_n =(\varphi_n^1,\dots, \varphi_n^d)$ of the orbit $(\mathcal{V}^nT)_n$, by Lemma~\ref{bound2},  are $\mathcal{C}^1$-bounded (in the sense of Definition~\ref{def:Ckbounded}) and therefore there exists a $\mathcal{C}^1$-bounded $\mathcal{K}= \mathcal{K}(T)\subset$ Diff$^r([0,1])$ which contains all coordinates $\varphi^i_n$, for every $1\leq i\leq d$ and every $n\in\mathbb{N}$.  
%$\{ ,\, n\in\mathbb{N}\}$.  %Thus, we can apply the  comparision given by Lemma~\ref{lemma:distancesrel} on each of the profile coordinates.
 Therefore the conclusion follows from the comparision given by Lemma~\ref{lemma:distancesrelDiff} for each profile coordinate, applied to the bounded set $\mathcal{K}(T)$.
}\end{proof}

\subsubsection{The Schwarzian derivative and the $\mathcal{C}^3$-distance}\label{app:C3S}
In this Appendix we give a proof of Lemma~\ref{lemma:distanceMoebius}, which shows that the $\mathcal{C}^3$-distance $d_{\mathcal{C}^3}(T, \mathcal{M})$ of a $T\in\mathcal{X}^r_d$ from the  the subspace $\mathcal{M}$ of Moebius IETs   can be controlled, on a bounded set,  by the Schwarzian derivative $\mathrm{S}(f)$ of $T$ (see \S~\ref{Schwarzian}).
We first prove the analogous statement in $\mathrm{Diff}^3([0,1])$ (namely the following Lemma~\ref{lemma:C3andS}), which will give control on each of the \emph{profile} coordinates of $T$. 

\smallskip
Let  $\mathcal{M}[0,1]$ denote the subspace of $\mathrm{Diff}^3([0,1])$ consisting of (restrictions of) Moebius maps. Recall that $\mathrm{S}(f)$ is the Schwarzian derivative of a diffeomorphism $f\in \mathrm{Diff}^3([0,1])$ (see \S~\ref{Schwarzian}).
%Let \in\mathcal{K}$ from the subspace of Moebius elements  can be controlled by the Schwarzian derivative $\mathrm{S}(f)$.
%Let  $\mathcal{K} \subset \mathrm{Diff}^3([0,1])$,  $\mathcal{C}^2$-bounded set the $\mathcal{C}^3$-distance $d_{\mathcal{C}^3}(f, \mathcal{M})$ of $f\in\mathcal{K}$ from  can be controlled by the Schwarzian derivative $\mathrm{S}(f)$. We first prove the analogous 
%  and 
%$\mathcal{M}$ is the subset of Moebius elements of $\mathrm{Diff}^3([0,1])$.

\begin{lemma}\label{lemma:C3andS}%\label{Schwarziancontrol}
Let $\mathcal{K} \subset \mathrm{Diff}^3([0,1])$ a $\mathcal{C}^2$-bounded set, meaning that there exists a constant $K > 0$ such that for all $f \in \mathcal{K}$, $||\log \mathrm{D}f|| \leq K$ and $||\mathrm{D}^2 f|| \leq K$. Then there exists a constant $L = L(\mathcal{K}) > 0$ such that for $f  \in \mathcal{K}$,
$$ d_{\mathcal{C}^3}(f, \mathcal{M}([0,1])) \leq L \cdot \mathrm{S}(f).$$
\end{lemma}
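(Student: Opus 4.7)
The strategy is to explicitly construct a Moebius diffeomorphism $m \in \mathcal{M}([0,1])$ that matches $f$ well, and then control $f-m$ and its derivatives by $\|\mathrm{S}(f)\|_\infty$ via the ODE satisfied by the non-linearity. By properties (m1)--(m3) of Moebius diffeomorphisms recalled in \S~\ref{MIETparameters}, given $f \in \mathcal{K}$, there exists a unique Moebius diffeomorphism $m : [0,1] \to [0,1]$ with $\overline{N}(m) = \overline{N}(f)$. Since $|\overline{N}(f)| \leq K$ on $\mathcal{K}$, the explicit formula for $m_u$ (see Appendix~\ref{distance}, footnote after (m3)) shows that $m$ lies in a $\mathcal{C}^3$-bounded subset $\mathcal{K}' = \mathcal{K}'(\mathcal{K}) \subset \mathrm{Diff}^3([0,1])$; in particular $\|\eta_m\|_\infty$ is bounded by a constant depending only on $\mathcal{K}$.

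Next, set $g := \eta_f - \eta_m$. Since $\int_0^1 g \, \mathrm{d}x = \overline{N}(f)-\overline{N}(m) = 0$, there exists $x_0 \in [0,1]$ with $g(x_0)=0$. The key observation is that $g$ satisfies a linear ODE driven by $\mathrm{S}(f)$: using the identity \eqref{Sviaeta} for both $f$ and $m$ and the fact that $\mathrm{S}(m) \equiv 0$, one computes
\[
g'(x) = \mathrm{S}(f)(x) + \tfrac{1}{2}\bigl(\eta_f(x)+\eta_m(x)\bigr)g(x).
\]
Since $\|\eta_f+\eta_m\|_\infty$ is bounded on $\mathcal{K}\cup\mathcal{K}'$, an application of Gr\"onwall's inequality starting from $g(x_0)=0$ yields a constant $C_1 = C_1(\mathcal{K})$ with $\|g\|_\infty \leq C_1 \|\mathrm{S}(f)\|_\infty$.

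With $\mathcal{C}^0$-control of $g = \eta_f-\eta_m$ in hand, we propagate the estimate to $\mathcal{C}^3$. First, from $\log \D f(x) = \log \D f(0) + \int_0^x \eta_f$ and the normalization $\int_0^1 \D f = \int_0^1 \D m = 1$ (which forces $f(1)-f(0) = m(1)-m(0) = 1$), the difference $\log \D f(0) - \log \D m(0)$ is expressed as a smooth function of $g$ vanishing when $g = 0$; hence $\|\log \D f - \log \D m\|_\infty = O(\|g\|_\infty)$, and by exponentiation (Lipschitz on bounded sets) and integration, $\|\D f - \D m\|_\infty$ and $\|f-m\|_\infty$ are $O(\|\mathrm{S}(f)\|_\infty)$. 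Second, $\D^2 f - \D^2 m = (\D f-\D m)\eta_f + \D m\,(\eta_f-\eta_m)$ is controlled by the previous estimates. Third, since $\D^3 h = \D h\cdot \mathrm{S}(h) + \tfrac{3}{2}\D h\,\eta_h^2$ for any diffeomorphism $h$, applied to $h=f$ and $h=m$ with $\mathrm{S}(m)=0$, we obtain
\[
\D^3 f - \D^3 m = \D f\cdot \mathrm{S}(f) + \tfrac{3}{2}\bigl(\D f\,\eta_f^2 - \D m\,\eta_m^2\bigr),
\]
and each term on the right is $O(\|\mathrm{S}(f)\|_\infty)$. Combining the three orders of derivatives produces a constant $L=L(\mathcal{K})$ with $d_{\mathcal{C}^3}(f,m) \leq L\|\mathrm{S}(f)\|_\infty$, which proves the lemma since $m \in \mathcal{M}([0,1])$.

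The main technical point—and the one requiring care—is ensuring that every constant appearing in the Gr\"onwall step and in the exponentiation of $\log \D f - \log \D m$ depends only on the $\mathcal{C}^2$-bounds defining $\mathcal{K}$; this is exactly what motivates the explicit a priori choice $\overline{N}(m)=\overline{N}(f)$, which places $m$ in a bounded set determined by $\mathcal{K}$.
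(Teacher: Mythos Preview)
Your proof is correct and follows essentially the same route as the paper: both pick the unique Moebius map $m$ with $\overline{N}(m)=\overline{N}(f)$, derive the linear ODE $g' = \mathrm{S}(f) + \tfrac{1}{2}(\eta_f+\eta_m)g$ for $g=\eta_f-\eta_m$, apply a Gr\"onwall-type bound (the paper isolates this as an auxiliary lemma), and then propagate the estimate on $g$ to $\mathcal{C}^1$, $\mathcal{C}^2$, $\mathcal{C}^3$ control. The only cosmetic difference is in the $\mathcal{C}^3$ step, where the paper first bounds $\|\mathrm{D}(\eta_f-\eta_m)\|$ from the ODE and then uses $\mathrm{D}\eta_f = (f'''f'-(f'')^2)/(f')^2$, while you go directly through the identity $\mathrm{D}^3 h = \mathrm{D}h\cdot\mathrm{S}(h)+\tfrac{3}{2}\mathrm{D}h\,\eta_h^2$; these are equivalent.
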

\noindent Let us first prove an auxiliary technical lemma.
\begin{lemma}
\label{lemma8}
Let $g \in \mathcal{C}^1([0,1], \mathbb{R})$ such that $g(x_0) = 0$ for some $x_0 \in [0,1]$, and let $f \in \mathcal{C}^0([0,1], \R)$. Assume that there exists $\epsilon >0$ such that $|\D g - f \cdot g|\leq  + \epsilon$. Then 
$$ ||g|| \leq \epsilon e^{||f||}.$$
\end{lemma}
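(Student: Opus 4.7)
The plan is to use a Gronwall-type argument applied to $v(x) := |g(x)|$, starting from the zero $x_0$. First I would note that the hypothesis $|\D g - f \cdot g| \leq \epsilon$ can be written as $g'(x) = f(x) g(x) + r(x)$ with $\|r\|_\infty \leq \epsilon$. Since $g(x_0) = 0$, integrating from $x_0$ to $x$ (assuming $x \geq x_0$; the case $x < x_0$ is symmetric) and taking absolute values gives
\[
v(x) \;\leq\; \int_{x_0}^x \bigl(|f(t)|\, v(t) + \epsilon\bigr)\, \mathrm{d}t \;=\; \epsilon |x-x_0| + \int_{x_0}^x |f(t)|\, v(t)\, \mathrm{d}t.
\]

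The second step would be to apply the general form of Gronwall's inequality: if $u(x) \leq \alpha(x) + \int_{x_0}^x \beta(t)\, u(t)\, \mathrm{d}t$ with $\alpha$ non-decreasing and $\beta \geq 0$, then $u(x) \leq \alpha(x)\exp\!\bigl(\int_{x_0}^x \beta(t)\, \mathrm{d}t\bigr)$. Taking $\alpha(x) = \epsilon|x-x_0|$ and $\beta(t) = |f(t)|$, this yields
\[
v(x) \;\leq\; \epsilon |x-x_0|\, \exp\!\left(\int_{x_0}^x |f(t)|\, \mathrm{d}t\right) \;\leq\; \epsilon \, e^{\|f\|_\infty},
\]
since $|x-x_0| \leq 1$ and $\int_{x_0}^x |f| \leq \|f\|_\infty$. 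Taking the supremum over $x \in [0,1]$ then gives $\|g\|_\infty \leq \epsilon\, e^{\|f\|_\infty}$, which is the desired conclusion.

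There is no serious obstacle in this proof; the only subtlety is to use the version of Gronwall with the variable (non-decreasing) majorant $\alpha(x) = \epsilon|x-x_0|$ rather than the constant-$\alpha$ version: the latter would produce the weaker bound $\epsilon(e^{\|f\|_\infty} - 1)/\|f\|_\infty$ times an undesirable extra factor, whereas the variable form produces exactly the factor $|x-x_0| \leq 1$ needed to absorb the length of the interval into the exponential.
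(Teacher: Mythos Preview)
Your proof is correct and is essentially the same integrating-factor argument as the paper's: the paper sets $F(x)=\int_0^x f$ and $\psi=ge^{-F}$, notes that $\D\psi=(\D g-fg)e^{-F}$ and $\psi(x_0)=0$, and bounds $|\psi|$ by the mean value theorem---which is precisely the computation underlying the Gronwall inequality you invoke. One small remark: your use of the variable majorant $\alpha(x)=\epsilon|x-x_0|$ recovers the stated bound $\epsilon e^{\|f\|}$ cleanly, whereas the paper's computation as literally written gives $|\psi(x)|\leq\epsilon e^{\|f\|}$ and then an extra factor $e^{\|F\|}\leq e^{\|f\|}$ when passing back from $\psi$ to $g=\psi e^{F}$; this slip is immaterial for the application, which only needs a constant depending on $\|f\|$.
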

\begin{proof}[Proof of Lemma~\ref{lemma:C3andS}]
Define $F := x \mapsto \int_{0}^x{f'(t)dt}$, so that $\D F = f$ and $F(0) =0$. Thus $||F|| \leq ||f||$. Consider now the auxiliary function $ \psi = ge^{-F}$. We have $\D \psi = (Dg - fg)e^{-F}$. We therefore have $||D \psi|| \leq \epsilon e^{||f||}$ and $\psi(x_0) = 0 $. We obtain this way that for all $x \in [0,1]$, $|\psi(x)| \leq \epsilon e^{||f||}$.
\end{proof}

\begin{proof}
We consider $f \in \mathcal{K}$, and let $a := \int_0^1{\eta_f}$. Let $m_a \in \mathcal{M}([0,1])$ be the unique Moebius diffeomorphism of  $\mathcal{K} \subset \mathrm{Diff}^3([0,1])$ which is such that $\int_0^1{\eta_{m_a}} = a$.   Since $m_a$ is a Moebius map, $S(m_a) = 0$ (see Property $(S2)$ in \S~\ref{Schwarzian}) and we thus, using the expression \eqref{Sviaeta} for $\mathrm{S}(f)$ in terms of $\eta_f$, have 
$$ \mathrm{S}(f) = \mathrm{S}(f) -\mathrm{S}(m_a) = \D(\eta_f) - \D(\eta_{m_a}) - \frac{1}{2}(\eta_f^2 - \eta_{m_a}^2).$$ Set $\epsilon := ||\mathrm{S}(f)||$. We have 
$$ |\D(\eta_f -\eta_{m_a}) - \frac{1}{2}(\eta_f + \eta_{m_a})(\eta_f - \eta_{m_a})| \leq \epsilon.$$  Since $f$ is assumed to belong a set $\mathcal{K}$ that is $K$-bounded in the $\mathcal{C}^2$-topology, there exists a constant $M = M(\mathcal{K}) > 0$ such that $||\frac{1}{2}(\eta_f + \eta_{m_a})|| \leq M$. We can thus apply Lemma \ref{lemma8} to obtain 
$$ || \eta_f -\eta_{m_a} || \leq \epsilon e^M.$$ Since $|\D(\eta_f -\eta_{m_a}) - \frac{1}{2}(\eta_f + \eta_{m_a})(\eta_f - \eta_{m_a})| \leq \epsilon$, we also get that 
$$ ||\D(\eta_f -\eta_{m_a})|| \leq  (M e^M +1) \epsilon.$$

\vspace{2mm}
\noindent {\it Control of the $\mathcal{C}^1$-norm.} Since $ || \eta_f -\eta_{m_a} || \leq \epsilon e^M$, we have in particular $d_{\eta}(f,m_a) \leq \epsilon e^M$. By {\color{black}Lemma~\ref{lemma8}}%\ref{lemma:dsitancesrel}
, there exists a constant $L$ (which only depends on $\mathcal{K}$) such that 
$$d_{\mathcal{C}^1}(f,m_a) \leq L_1e^M \epsilon.$$

\vspace{2mm}
\noindent {\it Control of the $\mathcal{C}^2$-norm.} We have $$\eta_f - \eta_{m_a} = \frac{f''}{f'} - \frac{m_a''}{m_a'} =  (\frac{f''}{f'} - \frac{m_a''}{f'}) + (\frac{m_a''}{f'} - \frac{m_a''}{m_a'}).$$ We obtain this way
$$ ||f'' - m_a''|| \leq ||f'|| \cdot ||\eta_f - \eta_{m_a}|| + ||f'|| \cdot ||m_a''|| \cdot || \frac{1}{f' m_a'}|| \cdot ||f' - m_a'||.$$ Since $f$ (and consequently $m_a$) belong to a $\mathcal{C}^2$-bounded set, the terms $||f'|| $ and $||f'|| \cdot ||m_a''|| \cdot || \frac{1}{f' m_a'}|| $ are bounded by constants depending only on $\mathcal{K}$. Together with the fact that $d_{\mathcal{C}^1}(f,m_a) \leq L_1 e^M \epsilon$, we get the existence of a constant $L_2 = L_2(\mathcal{K})$ such that 
$$ ||f'' - m_a''|| \leq L_2 \epsilon.$$

\vspace{2mm}
\noindent {\it Control of the $\mathcal{C}^3$-norm.} Using the fact that 
$$ \D \eta_f = \frac{f'''f' - (f'')^2}{(f')^2} $$ and the control  $ ||\D(\eta_f -\eta_{m_a})|| \leq  (M e^M +1) \epsilon$ we obtain, via a calculation similar to that done for the control of the $\mathcal{C}^2$-norm, the existence of $L_3 = L_3(\mathcal{K}) >0$ such that 
$$|| f''' - m_a'''|| \leq L_3 \epsilon.$$ This concludes the proof.
\end{proof}

\begin{proof}[Proof of Proposition~\ref{controlMoebius}]
The proof follows  recalling the definition of $d_{\mathcal{C}^3}$ on $\mathcal{X}^3$ (see \S~\ref{sec:distances}), in particular that the \emph{profile} coordinate $d_{\mathcal{C}^3}^\mathcal{P}$ is obtained summing up the distance $d_{\mathcal{C}^3}$ on each profile component, and  applying Lemma~\ref{lemma:C3andS} to each component of the profile.
%Remark~\ref{}.
\end{proof}

\subsection{Lipschitz regularity of composition and renormalization}\label{app:regularity}
%Notice that  
%is a open subset of $\mathcal{C}^3_{\partial}([0,1],\mathbb{R}) := \{ f \in \mathcal{C}^3([0,1]) \ | \ f(0) = 0 \ \text{and} \  f(1) = 1 \}$. The latter is a codimension $1$ affine subspace $\mathcal{C}^3([0,1])$ of tangent space $\mathcal{C}^3_{0}([0,1],\mathbb{R}) := \{ f \in \mathcal{C}^3([0,1]) \ | \ f(0) = 0 \ \text{and} \  f(1) = 0 \}$. It is endowed of the structure of a Banach space inherited from the $\mathcal{C}^3$. Arguably, this is the most natural topology on $\mathrm{Diff}^3([0,1])$. A drawback though is that for this Banach structure, 
Consider the composition map 
$$
\begin{array}{ccc}
\mathrm{Diff}^3([0,1]) \times  \mathrm{Diff}^3([0,1])  & \longrightarrow & \mathrm{Diff}^3([0,1])  \\
(f,g) & \longmapsto & f \circ g
\end{array}$$
A well known difficulty in the theory of renormalization of circle diffeomorphisms is that 
composition is \emph{not} differentiable with respect with the natural structure of a Banach space on $\mathrm{Diff}^3([0,1])$, which is inherited\footnote{Recall that $\mathrm{Diff}^3([0,1])$  is an open subset of $\mathcal{C}^3_{\partial}([0,1],\mathbb{R})$, defined in \S~\ref{sec:distancesAppendix}. The latter is a codimension $1$ affine subspace $\mathcal{C}^3([0,1])$ of tangent space $\mathcal{C}^3_{0}([0,1],\mathbb{R}) := \{ f \in \mathcal{C}^3([0,1]) \ | \ f(0) = 0 \ \text{and} \  f(1) = 0 \}$. It is endowed of the structure of a Banach space inherited from the $\mathcal{C}^3$.} 
from $( \mathcal{C}^3([0,1]),\mathbb{R}) $.

A way around this difficulty is to show that the composition, when restricted to bounded sets of $\mathrm{Diff}^3([0,1])$,  is on the other hand \emph{Lipschitz} with respect to the distance $d_\eta$ (see Proposition~\ref{lipschitz} below). From this, one can then show also that the renormalization operator given by Rauzy-Veech induction is Lipschitz (see \S~\ref{sec:LipR} and Propostion~\ref{prop:LipR}).

\subsubsection{Lipschitz regularity of composition}\label{app:regularity}
The following proposition crucially exploits the good behaviour of % properties
%\footnote{We recall that the non-linearity $\eta$  behaves rather well under composition, namely it satisfies the chain rule
%$ \eta(f \circ g) = \eta(f) \circ g \,\mathrm{D}g + \eta(g) $ and behaves even better after integrating: mean non-linearity is preserved under composition. If $f$ and $g$ are orientation-preserving $\mathcal{C}^2$ diffeomorphisms of $[0,1]$, one gets 
%$$\int_{[0,1]}{\eta(f\circ g)} = \int_{[0,1]}{\eta(f)} + \int_{[0,1]}{\eta(g)} , \qquad 
%\int_{[0,1]}{|\eta(f\circ g)|} \leq \int_{[0,1]}{|\eta(f)|} + \int_{[0,1]}{|\eta(g)|}.$$ It is thanks to these properties that the Proposition holds true.} 
of non-linearity $\eta$  under composition (see in particular the preservation of mean non-linearity $(ii)$ and the triangle inequality $(iii)$ for non-linearity in Lemma~\ref{lemma:nonlinearity}).
\begin{proposition}
\label{lipschitz}
The composition map $\mathrm{Diff}^3([0,1]) \times  \mathrm{Diff}^3([0,1])  \longrightarrow  \mathrm{Diff}^3([0,1])$ is Lipschtiz with respect to the distance $d_{\eta}$ on $\mathcal{C}^3$-bounded sets of $\mathrm{Diff}^3([0,1]) $. 
% taken with respect to the $\mathcal{C}^3$-xtopology.
\end{proposition}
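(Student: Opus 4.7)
The plan is to exploit the chain rule for non-linearity (property $(i)$ in Lemma~\ref{lemma:nonlinearity}), which, for $h = g \circ f$, reads
\[
\eta_{g \circ f}(x) = \eta_f(x) + f'(x) \cdot \eta_g(f(x)),
\]
and to bound the $L^1$-distance $d_\eta(g_1 \circ f_1, g_2 \circ f_2) = \int_0^1 |\eta_{g_1 \circ f_1} - \eta_{g_2 \circ f_2}| \, dx$ by a linear combination of $d_\eta(f_1,f_2)$ and $d_\eta(g_1,g_2)$ with a multiplicative constant depending only on the chosen $\mathcal{C}^3$-bounded set $\mathcal{K}$. Fix $f_1,f_2,g_1,g_2 \in \mathcal{K}$. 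Applying the chain rule to both compositions and adding and subtracting the two intermediate terms $f_1'(x)\,\eta_{g_2}(f_1(x))$ and $f_1'(x)\,\eta_{g_2}(f_2(x))$, I will write the pointwise difference as
\[
(\eta_{f_1}-\eta_{f_2}) + f_1'\bigl(\eta_{g_1}\circ f_1 - \eta_{g_2}\circ f_1\bigr) + f_1'\bigl(\eta_{g_2}\circ f_1 - \eta_{g_2}\circ f_2\bigr) + (f_1'-f_2')\bigl(\eta_{g_2}\circ f_2\bigr),
\]
and estimate the four contributions in turn.

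The first contribution, integrated against $dx$, is exactly $d_\eta(f_1,f_2)$. For the second, the substitution $y = f_1(x)$ (which has Jacobian $f_1'(x)\,dx$) turns the integral into $\int_0^1 |\eta_{g_1}(y) - \eta_{g_2}(y)|\,dy = d_\eta(g_1,g_2)$; this identity is the analytic reason why $d_\eta$ is a natural distance with respect to which composition is well-behaved. For the fourth, I will use that on the $\mathcal{C}^2$-bounded set $\mathcal{K}$ the sup-norm $\|\eta_{g_2}\|_\infty$ is bounded by a constant $C_1(\mathcal{K})$ and that, by Lemma~\ref{lemma:distancesrelDiff}, $\|f_1' - f_2'\|_\infty \leq \|f_1'-f_2'\|_{L^1} \cdot (\text{something})$ is controlled by $L(\mathcal{K}) \cdot d_\eta(f_1,f_2)$; thus this term is bounded by $C_1 L\, d_\eta(f_1,f_2)$.

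The delicate term is the third one. Here I will use that on a $\mathcal{C}^3$-bounded set, the derivative $D\eta_{g_2} = D(D^2 g_2/Dg_2)$ is uniformly bounded by some $C_2(\mathcal{K})$, so that $\eta_{g_2}$ is uniformly Lipschitz and
\[
|\eta_{g_2}(f_1(x)) - \eta_{g_2}(f_2(x))| \leq C_2\, |f_1(x) - f_2(x)| \leq C_2\, \|f_1-f_2\|_\infty.
\]
Combined with the bound on $\|f_1'\|_\infty$ coming from the $\mathcal{C}^1$-boundedness of $\mathcal{K}$ and once again with $\|f_1-f_2\|_\infty \leq L\, d_\eta(f_1,f_2)$ from Lemma~\ref{lemma:distancesrelDiff}, this term is also dominated by a constant multiple of $d_\eta(f_1,f_2)$. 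Collecting the four estimates yields
\[
d_\eta(g_1\circ f_1, g_2\circ f_2) \leq K(\mathcal{K})\bigl(d_\eta(f_1,f_2) + d_\eta(g_1,g_2)\bigr),
\]
which is the claimed Lipschitz property.

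The main obstacle — and the reason the statement requires $\mathcal{C}^3$ rather than $\mathcal{C}^2$ regularity — is precisely the third term: controlling $\eta_{g_2}\circ f_1 - \eta_{g_2}\circ f_2$ requires $\eta_{g_2}$ to be Lipschitz, hence $g_2$ to be $\mathcal{C}^3$. All other steps only use $\mathcal{C}^2$-type bounds, the change-of-variables identity, and the comparison between $d_\eta$ and $d_{\mathcal{C}^1}$ on $\mathcal{C}^1$-bounded subsets of $\mathrm{Diff}^2([0,1])$ provided by Lemma~\ref{lemma:distancesrelDiff}.
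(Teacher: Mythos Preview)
Your proof is correct and follows essentially the same route as the paper's: both apply the chain rule for non-linearity, telescope the resulting difference into four pieces, handle one by the change-of-variables $y=f_1(x)$ (giving exactly $d_\eta(g_1,g_2)$), use the $\mathcal{C}^3$-bound on $D\eta$ for the ``delicate'' composition term, and invoke Lemma~\ref{lemma:distancesrelDiff} to convert $\|f_1-f_2\|_\infty$ and $\|f_1'-f_2'\|_\infty$ into multiples of $d_\eta(f_1,f_2)$. The only cosmetic difference is the choice of intermediate terms in the telescoping and the naming of inner/outer functions; your identification of the third term as the one forcing $\mathcal{C}^3$ regularity is exactly the point the paper's argument hinges on as well.
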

\begin{proof}
Let $f_1,g_1,f_2,g_2$ belong to a fixed bounded $\mathcal{K} \subset \mathrm{Diff}^3([0,1])$, in the sense of Definition~\ref{def:Ckbounded}. Recall that by Lemma~\ref{lemma:equivbounded} {\color{black}(and recalling that the non-linearity is $\eta_f=\log Df /(\D^2 f)^2$, see \S~\ref{sec:nl})} 
this implies the existence of a constant $K>0$ such that 
$$ | \log \mathrm{D}f_i | , |\mathrm{D}^2f_i|,  | \log \mathrm{D}g_i | , |\mathrm{D}^2g_i|, |\eta_{f_i}|,  |\eta_{g_i}| \leq K $$ for $i = 1,2$. We want to estimate of $\int_0^1{|\eta_{f_1 \circ g_1} - \eta_{f_2 \circ g_2}| }$. Using the chain rule for non-linearity (see property $(i)$ in Lemma~\ref{lemma:nonlinearity}), we get 
\begin{align*}  \int_0^1{|\eta_{f_1 \circ g_1} - \eta_{f_2 \circ g_2} |}& = \int_0^1{| \eta_{f_1}| \circ g_1 \,\mathrm{D}g_1 + \eta_{g_1} - \eta_{f_2} \circ g_2 \,\mathrm{D}g_2 + \eta_{g_2}     | } ,  \\
\int_0^1{|\eta_{f_1 \circ g_1} - \eta_{f_2 \circ g_2}| } & \leq  \int_0^1{|  \eta_{g_1} - \eta_{g_2}| }  +  \int_0^1{| \eta_{f_1} \circ g_1 \,\mathrm{D}g_1  - \eta_{f_2} \circ g_2 \,\mathrm{D}g_2    | }  . 
\end{align*} 
The first term on the right hand side is $d_{\eta}(g_1,g_2)$, we then just focus on the second term. % $\int_0^1{| \eta(f_1) \circ g_1 \,\mathrm{D}g_1  - \eta(f_2)\circ g_2 \,\mathrm{D}g_2    | }. $ 
We now rewrite 
\begin{align*} 
\eta_{f_1} \circ g_1 \,\mathrm{D}g_1  - \eta_{f_2} \circ g_2 \,\mathrm{D}g_2  = (\eta_{f_1} \circ g_1 \,\mathrm{D}g_1 - \eta_{f_1} \circ g_1 \,\mathrm{D}g_2) 
%\eta(f_1) \circ g_1 \,\mathrm{D}g_1  - \eta(f_2) \circ g_2 \,\mathrm{D}g_2  = (\eta(f_1) \circ g_1 \,\mathrm{D}g_1 - \eta(f_1) \circ g_1 \,\mathrm{D}g_2) 
& +  (\eta_{f_1} \circ g_1 \,\mathrm{D}g_2 - \eta_{f_1} \circ g_2 \,\mathrm{D}g_2)\\&  +  (\eta_{f_1} \circ g_2 \,\mathrm{D}g_2 -  \eta_{f_2} \circ g_2 \,\mathrm{D}g_2).
\end{align*}
 We deal with each of the three terms on the right hand side individually.

\medskip
 \noindent {\it First term.} To estimate the first term, let us use that 
 $$| \eta_{f_1} \circ g_1 \,\mathrm{D}g_1 - \eta_{f_1} \circ g_1 \,\mathrm{D}g_2 | \leq  || \mathrm{D} (\eta_{f_1} \circ g_1) || \,|| \mathrm{D}g_1  - \mathrm{D}g_2 || .$$ Since $(f_1,g_1)$ are assumed to belong to a bounded set $\mathcal{K}$ of $\mathrm{Diff}^3([0,1])$  there exists a constant $C_1 = C_1(\mathcal{K})$ such that $ || \mathrm{D} (\eta_{f_1} \circ g_1) || \leq C_1$.

 \noindent {\it Second term.} To estimate it, we use that $$| \eta_{f_1} \circ g_1 \,\mathrm{D}g_2 - \eta_{f_1} \circ g_2 \,\mathrm{D}g_2)| \leq ||\mathrm{D}g_2|| \,||\eta_{f_1}|| \,||g_1 - g_2||.$$
 
 \noindent {\it Last term.}
Finally, for the last term, by integrating $\eta_{f_1} \circ g_2 \,\mathrm{D}g_2 -  \eta_{f_2} \circ g_2 \,\mathrm{D}g_2$ and changing variable using $g_2$ we get $\int_0^1{| \eta_{f_1} \circ g_2 \, \mathrm{D}g_2 -  \eta_{f_2} \circ g_2 \, \mathrm{D}g_2| } = \int_0^1{|\eta_{f_1} - \eta_{f_2}|}$. 
Because we are on a bounded set $\mathcal{K}$ of $\mathrm{Diff}^3([0,1])$, there exists by Lemma~\ref{lemma:distancesrel} a constant $L = L(\mathcal{K})$ such that $d_{\mathcal{C}^1}(g_1,g_2) \leq L\,d_{\eta}(g_1,g_2)$ and we can conclude that 
 $$\int_0^1{|\eta_{f_1} \circ g_1 - \eta_{f_2} \circ g_2| } \leq d_{\eta}(g_1,g_2) +  C_1 d_{\eta}(g_1,g_2) + K e^K L  d_{\eta}(g_1,g_2) + d_{\eta}(f_1,f_2) $$ and thus that the composition is Lispchitz for the constant $(1 + C_1 + Ke^K)$ on $\mathcal{K}$.
\end{proof}

\subsubsection{Lipschitz regularity of renormalization}\label{sec:LipR}
Let us recall that $\mathcal{V}$ denotes the renormalization map on GIETs defined (almost everywhere) by Rauzy-Veech induction, see~\S~\ref{Rauzysec}. We can now prove Proposition~\ref{prop:LipR}, namely that $\mathcal{V}$ is $K$-Lipschitz  with respect to $d_{\eta}$ on any set $\mathcal{K} \subset \mathcal{X}^3$ bounded  with respect to the $d_{\mathcal{C}^3}$-topology.
%%Recall that $$ \begin{array}{ccccc}
%%\mathcal{V} & := & \mathcal{X}^3\setminus \mathcal{Y}^3 & \longrightarrow &  \mathcal{X}^3 \\
%% & & T  & \longmapsto & \mathcal{V}(T)
%%\end{array}$$ is the renormalization operator corresponding to one step of Rauzy induction. 

\begin{proof}[Proof of Proposition~\ref{prop:LipR}]
Recall that the Banach structure that we are using  on $\mathcal{X}^3$ is given by the identification $\mathcal{X}^3 = \mathcal{A} \times \mathcal{P}$ given by the affine profile coordinates in \S~\ref{coordinates} (see the definition of the distance $d_\eta$ \S~\ref{sec:distances}, the norm is the norm which induces this distance)
%\footnote{{\color{black}Maybe to be pedantic we never defined the norm... we need the norm here not the distance, so perhaps we should first define the norm then the distance!}}
 we can therefore write $\mathcal{V} = (\mathcal{V}_{\mathcal{A}}, \mathcal{V}_{\mathcal{P}})$ and it is enough to show that the restrictin on both coordinates, namely $\mathcal{V}_{\mathcal{A}}$ and $\mathcal{V}_{\mathcal{P}}$ are Lipschitz. 

\smallskip
In \cite{Selim:loc}, it is proven that $\mathcal{V}_{\mathcal{A}}$ is actually differentiable and therefore Lispchitz on bounded sets (see Appendix A in \cite{Selim:loc}). 
Therefore, since  $\mathcal{V}_{\mathcal{P}}$  is obtained by modifying on component by pre or post-composing it by the restriction of another component, one get the result by  applying Proposition \ref{lipschitz}.
\end{proof}

\subsection{Saddles defined by a non-degenerate vector field}\label{saddles}{
In this Appendix we show that even if a vector field is \emph{smooth}, it does not have to define a \emph{smooth} foliation in the sense of Definition~\ref{def:Crfol}.} %ADD

\smallskip
\noindent Let $\vec{X}$ be a vector field on $\mathbb{R}^2$ vanishing at $0$ and assume that $0$ is not a critical point, \textit{i.e.} $\D \vec{X}_0$ is invertible. Assume furthermore that the matrix of $\D \vec{X}_0$ is of the form $$\begin{pmatrix}
\lambda & 0\\
0 & \mu
\end{pmatrix}$$ with $\lambda \mu < 0$, in such a way that the foliation induced by $\vec{X}$ in a neighbourhood of $0$ is saddle-like. We treat the case where $\vec{X}$ is actually equal to $\lambda x \partial_x + \mu y \partial_y$. For such an $\vec{X}$, solutions to the differential equation
$$ \frac{\mathrm{d}}{\mathrm{d}t}f = \vec{X}(f) $$ 

are given by the formula $f(t) = (x_0e^{\lambda t}, y_0e^{\mu t})$. If one sets $\alpha = - \frac{\mu}{\lambda}$, one easily checks that the integral curves of $\vec{X}$ are level sets of the function $ (x,y) \longmapsto yx^{\alpha}$.  
This function  is a \emph{Morse function} if and only if $\alpha=1$, or equivalently $\mu = -\lambda$.
 A non-linear version of this discussion can be obtained by applying a differentiable Hartman-Grobman theorem to the vector field $\vec{X}$ to bring ourselves back to the linear case.

\bibliographystyle{plain}
	\bibliography{biblio.bib}

\end{document}